\documentclass{acta}

\usepackage{harvard_acta}
\usepackage{amssymb}
\usepackage{amsmath}
\usepackage{verbatim}
\usepackage{pgf,tikz}
\usetikzlibrary{arrows}

\title[]{{Schwarz methods by domain truncation}}
\author[{M. J. Gander and H. Zhang}]{%
  Martin J. Gander\\
  {\it Department of Mathematics,}\\
  {\it University of Geneva, CP64, 1211 Geneva 4, Switzerland}\\
  {\tt martin.gander@unige.ch}\\
  \and
  Hui Zhang\\
  {\it Department of Applied Mathematics and }\\
  {\it Laboratory for Intelligent Computing \& Financial Technology,}\\
  {\it Xi'an Jiaotong-Liverpool University, Suzhou 215123, China}\\
  {\tt hui.zhang@xjtlu.edu.cn} }

\pagerange{\pageref{firstpage}--\pageref{lastpage}}
\doi{XXXXXX}

\newcommand{\eg}{{\it e.g.}}
\newcommand{\ie}{{\it i.e.}}
\newcommand{\etc}{{\it  etc.}}
\newcommand{\cn}{\citeasnoun}
\newcommand{\ignore}[1]{}

\newtheorem{theorem}{Theorem}[section]
\newtheorem{lemma}[theorem]{Lemma}
\newtheorem{remark}[theorem]{Remark}
\counterwithin{figure}{section}
\begin{document}

\label{firstpage}
\maketitle
\vspace{2em}

\begin{abstract}
Schwarz methods use a decomposition of the computational domain into
subdomains and need to put boundary conditions on the subdomain
boundaries. In domain truncation one restricts the unbounded domain to
a bounded computational domain and also needs to put boundary
conditions on the computational domain boundaries. In both fields there
are vast bodies of literature and research is very active and
ongoing. It turns out to be fruitful to think of the domain
decomposition in Schwarz methods as truncation of the domain onto
subdomains. Seminal precursors of this fundamental idea are
\cn{hagstrom1988numerical}, \cn{Despres90} and \cn{lions1990schwarz}. The first
truly optimal Schwarz method that converges in a finite number of
steps was proposed in \cn{Nataf93} and used precisely transparent
boundary conditions as transmission conditions between
subdomains. Approximating these transparent boundary conditions for
fast convergence of Schwarz methods led to the development of
optimized Schwarz methods -- a name that has become common for Schwarz
methods based on domain truncation. Compared to classical Schwarz
methods which use simple Dirichlet transmission conditions and have
been successfully used in a wide range of applications, optimized
Schwarz methods are much less well understood, mainly due to their
more sophisticated transmission conditions.

\vskip1em
A key application of Schwarz methods with such sophisticated
transmission conditions turned out to be time-harmonic wave
propagation problems, because classical Schwarz methods simply do not
work then. The last decade has brought many new Schwarz methods based
on domain truncation. A review from an algorithmic perspective
\cite{gander2019class} showed the equivalence of many of these new
methods to optimized Schwarz methods. The analysis of optimized
Schwarz methods is however lagging behind their algorithmic
development. The general abstract Schwarz framework can not be used
for the analysis of these methods, and thus there are many open
theoretical questions about their convergence. Like for practical
multigrid methods, Fourier analysis has been instrumental for
understanding the convergence of optimized Schwarz methods and to tune
their transmission conditions. Similar to Local Fourier Mode Analysis
in multigrid, the unbounded two subdomain case is used as a model for
Fourier analysis of optimized Schwarz methods due to its
simplicity. Many aspects of the actual situation, \eg, boundary
conditions of the original problem and the number of subdomains, were
thus neglected in the unbounded two subdomain analysis. While this
gave important insight, new phenomena beyond the unbounded two
subdomain models were discovered.

This present situation is the motivation for our survey: to give a
comprehensive review and precise exploration of convergence behaviors
of optimized Schwarz methods based on Fourier analysis taking into
account the original boundary conditions, many subdomain
decompositions and layered media. We consider as our model problem the
operator $-\Delta + \eta$ in the diffusive case $\eta>0$ (screened
Laplace equation) or the oscillatory case $\eta<0$ (Helmholtz
equation), in order to show the fundamental difference of the behavior
of Schwarz solvers for these problems. The transmission conditions we
study include the lowest order absorbing conditions (Robin), and also
more advanced perfectly matched layers (PML), both developed first for
domain truncation. Our intensive work over the last two years on this
review led to several new results presented here for the first time:
in the bounded two subdomain analysis for the Helmholtz equation, we
see a strong influence of the original boundary conditions imposed on
the global problem on the convergence factor of the Schwarz methods,
and the asymptotic convergence factors with small overlap can differ
from the unbounded two subdomain analysis. In the many subdomain
analysis, we find the scaling with the number of subdomains, \eg, when
the subdomain size is fixed, robust convergence of the double sweep
Schwarz method for the free space wave problem either with fixed
overlap and zeroth order Taylor conditions, or with a logarithmically
growing PML, and we find Schwarz methods with PML work like smoothers
that converge faster for higher Fourier frequencies; in particular,
for the free space wave problem plane waves (in the error) passing
through interfaces at a right angle converge slower. In addition to
the main part on analysis, we also give an expository historical
introduction to Schwarz methods at the beginning, and a brief
interpretation of the recently proposed optimal Schwarz methods for
decompositions with cross points from the viewpoint of transmission
conditions, before we conclude with a summary of open research
problems. In Appendix A, we provide a Matlab program for a block LU
form of an optimal Schwarz method with cross points, and in Appendix
B, we give the Maple program for the two subdomain Fourier analysis.  
  
\end{abstract}

\tableofcontents
\vspace{3mm}

\section{Introduction}

Schwarz domain decomposition methods are the oldest domain
decomposition methods. They were invented by Hermann Amandus Schwarz
(see Figure \ref{SchwarzOriginalFig} on the left) in 1869 with a
publication in the following year \cite{Schwarz:1870:UGA}.
\begin{figure}
  \centering
  \tabcolsep2em
  \begin{tabular}{cc}
  \includegraphics[width=0.3\textwidth]{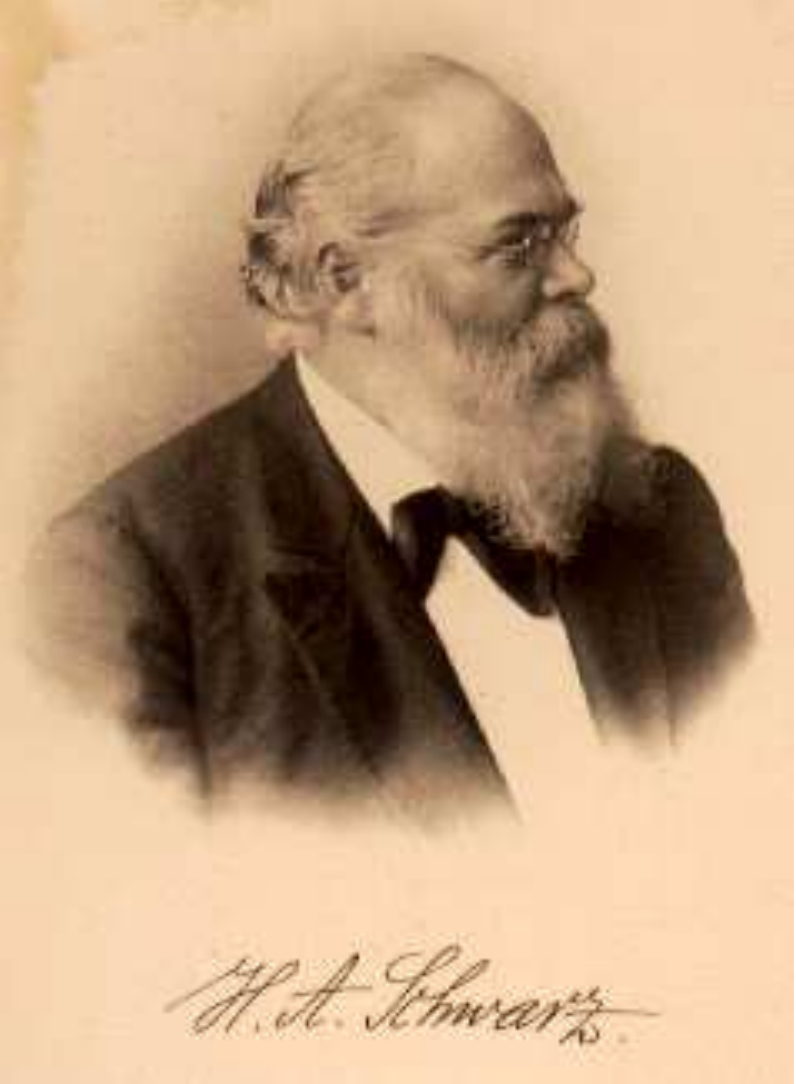}&
  \parbox[b]{0.4\textwidth}{\includegraphics[width=0.4\textwidth]{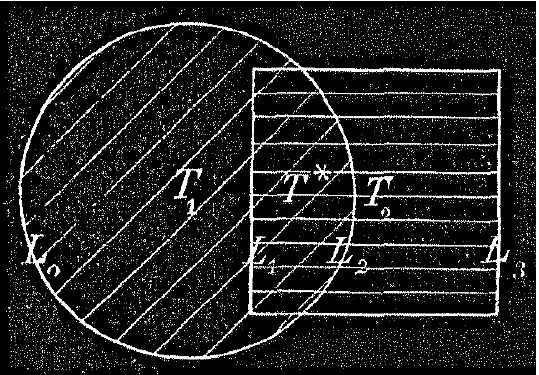}\vspace{1em}\\}
  \end{tabular}
  \caption{Left: Herman Amandus Schwarz (25.1.1843-30.11.1921). Right: original
    figure for the decomposition in the alternating Schwarz method.}
  \label{SchwarzOriginalFig}
\end{figure}
Schwarz invented the now called {\em alternating Schwarz method} in
order to close a gap in the proof of the Riemann mapping theorem at
the end of Riemann's PhD thesis \cite{riemann1851grundlagen}, an
English translation of which is available
\cite{riemann1851foundations}. In his proof, Riemann had assumed that
the Dirichlet problem
\begin{equation}\label{eq:DirichletProblem}
\Delta u=0\ \mbox{in $\Omega$},\quad \hbox{$u=g$ on $\partial \Omega$},
\end{equation}
always had a solution, just by taking the function $u(x,y)$ which
solves the minimization problem\footnote{Indeed, taking a variational
  derivative of ${\cal J}(u)$, we find for an arbitrary variation
  $v$ that vanishes on $\partial \Omega$ that $\frac{\D}{\D \epsilon}{\cal
    J}(u+\epsilon v)=2\iint _{\Omega}(\frac{\partial (u+\epsilon
    v)}{\partial x})\frac{\partial v}{\partial x}+(\frac{\partial
    (u+\epsilon v)}{\partial y})\frac{\partial v}{\partial y}\D x\D y$.
  Therefore at $\epsilon=0$, using integration by parts and that the
  variation $v$ vanishes on $\partial \Omega$, we get $\iint
  _{\Omega}\Delta u v\D x\D y=0$. Since this must hold for all variations
  $v$, we must have $\Delta u=0$, \ie\ equation
  \R{eq:DirichletProblem} holds at a stationary point.}
\begin{equation}\label{eq:DirichletIntegral}
  {\cal J}(u):=\iint _{\Omega} \bigl(\frac{\partial u}{\partial x}\bigr)^2+ \bigl(\frac{\partial u}{\partial y}\bigr)^2\D x\D y\longrightarrow \min,\quad
  \hbox{$u=g$ on $\partial \Omega$},
\end{equation}
and satisfies the boundary condition $u=g$ on $\partial \Omega$.  When asked why this minimization
problem had always a solution, Riemann replied that he had learned this in his analysis course
taught by Dirichlet, the functional ${\cal J}(u)$ being bounded from below, thus coining the term
Dirichlet principle.  \cn{weierstrass} then presented a counterexample to this way of arguing: the
functional to minimize, $\int_{-1}^{+1} (x\cdot u')^2\D x\longrightarrow \min$, is also bounded from
below, but when one tries to find a function that minimizes this integral with boundary conditions
$u(-1)=a$, $u(1)=b$, $a\ne b$, $u'$ should be small when $x$ is away from zero, best is taking
$u'=0$ so there is no contribution to the integral, and when $x=0$, $u'$ can be large, since it is
multiplied with $0$ and thus also does not contribute. The minimizing ``solution'' is thus piecewise
constant and discontinuous at $x=0$, and the gap in Riemann's proof remained. However, the Dirichlet
problem \R{eq:DirichletProblem} had a well known solution on rectangular and circular domains, using
Fourier techniques from 1822.  H.A. Schwarz invented his alternating method by choosing as domain
$\Omega$ the union of a disk and an overlapping rectangle (see Figure \ref{SchwarzOriginalFig}
(right)). If we call the overlapping subdomains $\Omega_1$ and $\Omega_2$ ($T_1$ and $T_2$ in the
original drawing of Schwarz), and the interfaces $\Gamma_1$ and $\Gamma_2$ ($L_2$ and $L_1$ in the
original drawing of Schwarz), the alternating Schwarz method computes alternatingly on the disk and
on the rectangle the Dirichlet problem and carries the newly computed solution from the interface
curves $\Gamma_1$ and $\Gamma_2$ over to the other subdomain as new boundary condition for the next
solve,
\begin{equation}\label{AlternatingSchwarzMethod}
  \begin{array}{rcllrcll}
    \Delta u_1^n & = & 0 & \mbox{in $\Omega_1$},&  
    \Delta u_2^n & = & 0 & \mbox{in $\Omega_2$},\\  
    u_1^n & = & g & \mbox{on $\partial \Omega \cap \overline{\Omega}_1$},\quad &
    u_2^n & = & g & \mbox{on $\partial \Omega \cap
    \overline{\Omega}_2$},\\
    u_1^n &=&  u_2^{n-1}\ & \mbox{on $\Gamma_1$}, &
    u_2^n &=& u_1^n\ & \mbox{on  $\Gamma_2$}.
   \end{array}
\end{equation}
Schwarz proved convergence of this alternating method using the
maximum principle, see \cn{gander2014origins} for more information
on the origins of the alternating Schwarz method. All this happened
during the fascinating time of the development of variational calculus
and functional analysis, which led eventually to the finite element
method, see the historical review \cn{gander2012euler}.

It took many decades before the alternating Schwarz method became a computational
tool. \cn{miller1965numerical} was the first to point out its usefulness for computations, but it
was with the seminal work of Lions \cite{Lions:1988:SAM,lions1989schwarzII,lions1990schwarz}, and
the additive Schwarz method of \cn{dryja1987additive}, that Schwarz methods became powerful and
mainstream parallel computational tools for solving discretized partial differential equations.
\cn{Lions:1988:SAM} also proposed a parallel variant of the method, by simply not using the newest
available value along $\Gamma_2$ but the previous one in \R{AlternatingSchwarzMethod},
\begin{equation}
    u_2^n = u_1^{n-1}\  \mbox{on  $\Gamma_2$},
\end{equation}
so that the subdomain solves can be performed in parallel. This is analogous to the Jacobi
stationary iterative method, compared to the Gauss-Seidel stationary iterative method in linear
algebra. Note that the additive Schwarz method is quite different from the parallel Schwarz method:
it is a preconditioner for the conjugate gradient method, and does not converge when used as a
stationary iteration without relaxation, in contrast to the parallel Schwarz method, see
\cn[Section 3.2]{gander2008schwarz} for more details.

\cn{lions1990schwarz} also introduced a non-overlapping variant of the Schwarz method\footnote{He
  considered therefore decompositions where $\Gamma_1=\Gamma_2$, but the method is equally
  interesting with overlap as well, since it converges faster than the classical Schwarz method, as
  we will see.} using Robin transmission conditions in \R{AlternatingSchwarzMethod},
$$
  \arraycolsep0.2em
    \begin{array}{rcllrcll}
    (\partial_{n_1}\!+\!p_1)u_1^n &=& (\partial_{n_1}\!+\!p_1) u_2^{n-1}\ & \mbox{on $\Gamma_1$}, &
    (\partial_{n_2}\!+\!p_2)u_2^n &=& (\partial_{n_2}\!+\!p_2)u_1^n\ & \mbox{on  $\Gamma_2$},
   \end{array}
$$
where $\partial_{n_j}$, $j=1,2$, denotes the unit outward normal derivative for $\Omega_j$ along the
interfaces $\Gamma_j$, and, following Lions, the $p_j$ can be constants, or functions along the
interface, or even operators. In contrast to the classical Schwarz method, where it was only
important to obtain convergence for closing the gap in the proof of the Riemann mapping theorem, and
convergence speed was not an issue, a good choice of the Robin parameters $p_j$ can greatly improve
the convergence of Schwarz methods. \cn{hagstrom1988numerical} worked on Schwarz methods for
non-linear problems, and advocated to use Robin transmission conditions involving non-local
operators. The work of \cn{tang1992generalized} on generalized Schwarz splittings also points into
this direction, for a more detailed review, see \cn{gander2008schwarz}.

What is the underlying idea of changing the transmission conditions
from Dirichlet to Robin conditions, even involving non-local
operators? In Schwarz methods for parallel computing, one decomposes a
domain into many subdomains, see Figure \ref{1Dand2DDecompositionFig}
for two typical examples.
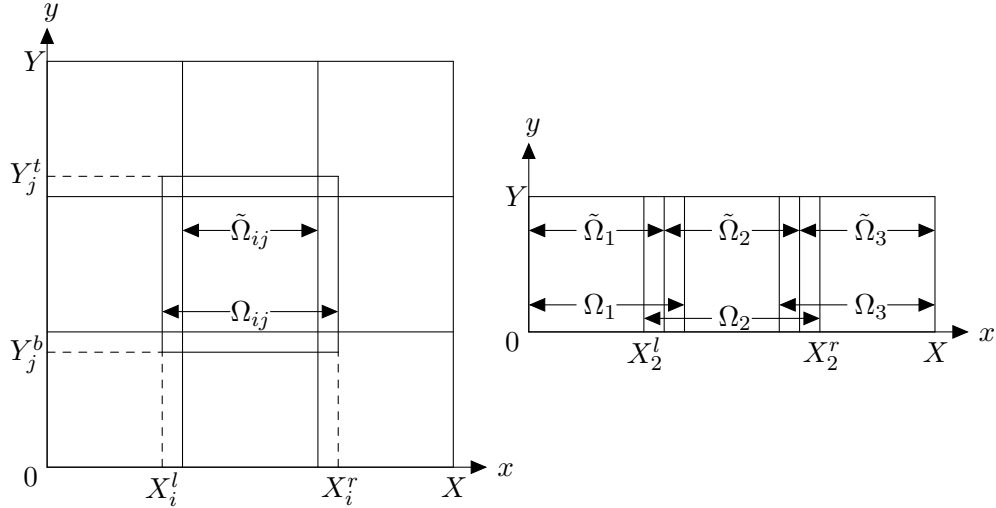
\begin{figure}
    \centering
\mbox{\begin{tikzpicture}[line cap=round,line join=round,>=triangle 45,x=1.0cm,y=1.0cm,scale=0.9]
\clip(-.6,-0.6) rectangle (7,7);
\draw [->] (0,0) -- (6.5,0);
\draw [->] (0,0) -- (0,6.5);
\draw (-0.2,7) node[anchor=north west] {$y$};
\draw (6.5,0.2) node[anchor=north west] {$x$};
\draw (-0.5,0.15) node[anchor=north west] {$0$};
\draw (-0.5,6.3) node[anchor=north west] {$Y$};
\draw (0,6)-- (6,6);
\draw (6,6)-- (6,0);
\draw (6,0)-- (0,0);
\draw (0,0)-- (0,6);
\draw (2,6)-- (2,0);
\draw (4,6)-- (4,0);
\draw (0,4)-- (6,4);
\draw (0,2)-- (6,2);
\draw (1.7,1.7)-- (1.7,4.3);
\draw (1.7,1.7)-- (4.3,1.7);
\draw (4.3,1.7)-- (4.3,4.3);
\draw (4.3,4.3)-- (1.7,4.3);
\draw [->] (2.7,2.3) -- (1.7,2.3);
\draw [->] (3.3,2.3) -- (4.3,2.3);
\draw (2.56,2.6) node[anchor=north west] {$\Omega_{ij}$};
\draw [->] (3.3,3.5) -- (4,3.5);
\draw [->] (2.7,3.5) -- (2,3.5);
\draw (2.56,3.9) node[anchor=north west] {$\tilde{\Omega}_{ij}$};
\draw (1.3,0.07) node[anchor=north west] {$X_i^l$};
\draw [dashed](1.7,0)-- (1.7,1.7);
\draw [dashed](4.3,0)-- (4.3,1.7);
\draw (3.9,0) node[anchor=north west] {$X_i^r$};
\draw (5.67,0) node[anchor=north west] {$X$};
\draw [dashed](0,1.7)-- (1.7,1.7);
\draw [dashed](0,4.3)-- (1.7,4.3);
\draw (-0.7,2.1) node[anchor=north west] {$Y_j^b$};
\draw (-0.7,4.7) node[anchor=north west] {$Y_j^t$};
\end{tikzpicture}\hspace{-1em}
\begin{tikzpicture}[line cap=round,line join=round,>=triangle 45,x=1.0cm,y=1.0cm,scale=0.9]
\clip(-.4,-2.6) rectangle (7,3.3);
\draw [->] (0,0) -- (6.5,0);
\draw [->] (0,0) -- (0,2.8);
\draw (-0.2,3.3) node[anchor=north west] {$y$};
\draw (6.5,0.2) node[anchor=north west] {$x$};
\draw (-0.5,0.15) node[anchor=north west] {$0$};
\draw (-0.5,2.3) node[anchor=north west] {$Y$};
\draw (0,2)-- (6,2);
\draw (6,2)-- (6,0);
\draw (6,0)-- (0,0);
\draw (0,0)-- (0,2);
\draw (2,2)-- (2,0);
\draw (4,2)-- (4,0);
\draw (1.7,2)-- (1.7,0);
\draw (2.3,2)-- (2.3,0);
\draw (3.7,2)-- (3.7,0);
\draw (4.3,2)-- (4.3,0);
\draw [->] (1.3,0.4) -- (2.3,0.4);
\draw [->] (0.7,0.4) -- (0,0.4);
\draw [->] (2.7,0.2) -- (1.7,0.2);
\draw [->] (3.3,0.2) -- (4.3,0.2);
\draw [->] (5.3,0.4) -- (6,0.4);
\draw [->] (4.7,0.4) -- (3.7,0.4);
\draw (0.66,0.7) node[anchor=north west] {$\Omega_1$};
\draw (2.65,0.53) node[anchor=north west] {$\Omega_2$};
\draw (4.66,0.7) node[anchor=north west] {$\Omega_3$};
\draw [->] (1.3,1.5) -- (2,1.5);
\draw [->] (0.7,1.5) -- (0,1.5);
\draw [->] (3.3,1.5) -- (4,1.5);
\draw [->] (2.7,1.5) -- (2,1.5);
\draw [->] (5.3,1.5) -- (6,1.5);
\draw [->] (4.7,1.5) -- (4,1.5);
\draw (0.66,1.9) node[anchor=north west] {$\tilde{\Omega}_1$};
\draw (2.65,1.9) node[anchor=north west] {$\tilde{\Omega}_2$};
\draw (4.66,1.9) node[anchor=north west] {$\tilde{\Omega}_3$};
\draw (1.3,0.07) node[anchor=north west] {$X_2^l$};
\draw (3.9,0) node[anchor=north west] {$X_2^r$};
\draw (5.67,0) node[anchor=north west] {$X$};
\end{tikzpicture}
}
\caption{Two typical domain decompositions: a two dimensional
  decomposition with cross points (left), and a one dimensional or
  sequential domain decomposition (right).}
  \label{1Dand2DDecompositionFig}
\end{figure}
To obtain an overlapping decomposition, it is convenient to first
define a decomposition into non-overlapping subdomains
$\tilde{\Omega}_{ij}$, and then to enlarge these subdomains by a thin
layer to get overlapping subdomains $\Omega_{ij}$, as indicated in
Figure \ref{1Dand2DDecompositionFig}. One then wants to compute only
subdomain solutions on the $\Omega_{ij}$ to approximate the global, so
called mono-domain solution on the entire domain $\Omega$. Let us
imagine for the moment that the source term of the partial
differential equation to be solved has only support in one subdomain
somewhere in the middle of the global domain, like for example in
subdomain $\Omega_{ij}$ in Figure \ref{1Dand2DDecompositionFig} on the
left, and assume that the global domain is infinitely large. Then it
would be best to put on the boundary of the subdomain containing the
source transparent boundary conditions, since then by solving the
subdomain problem we obtain by definition\footnote{Transparent
  boundary conditions are exactly defined by truncating the global
  domain such that the solution on the truncated domain coincides with
  the solution on the global domain.} the restriction of the global,
mono-domain solution to that subdomain. Robin boundary conditions are
approximations of the transparent boundary conditions, and one can
thus expect that with Robin boundary conditions subdomain solvers
compute better approximations to the overall mono-domain solution than
with Dirichlet boundary conditions.

To illustrate this, we solve the Poisson problem $\Delta u=f$ on the
unit square with zero Dirichlet boundary conditions and the right hand
side source function
$f(x,y):=100e^{-100((x-\frac{1}{2})^2+(y-\frac{1}{2})^2)}$. We show in
Figure \ref{PoissonExampleFig} on the left this source term, and on
the right the corresponding solution of the Poisson problem, using a
centered finite difference scheme with mesh size $h=1/40$.
\begin{figure}
  \centering
  \mbox{\includegraphics[width=0.4\textwidth]{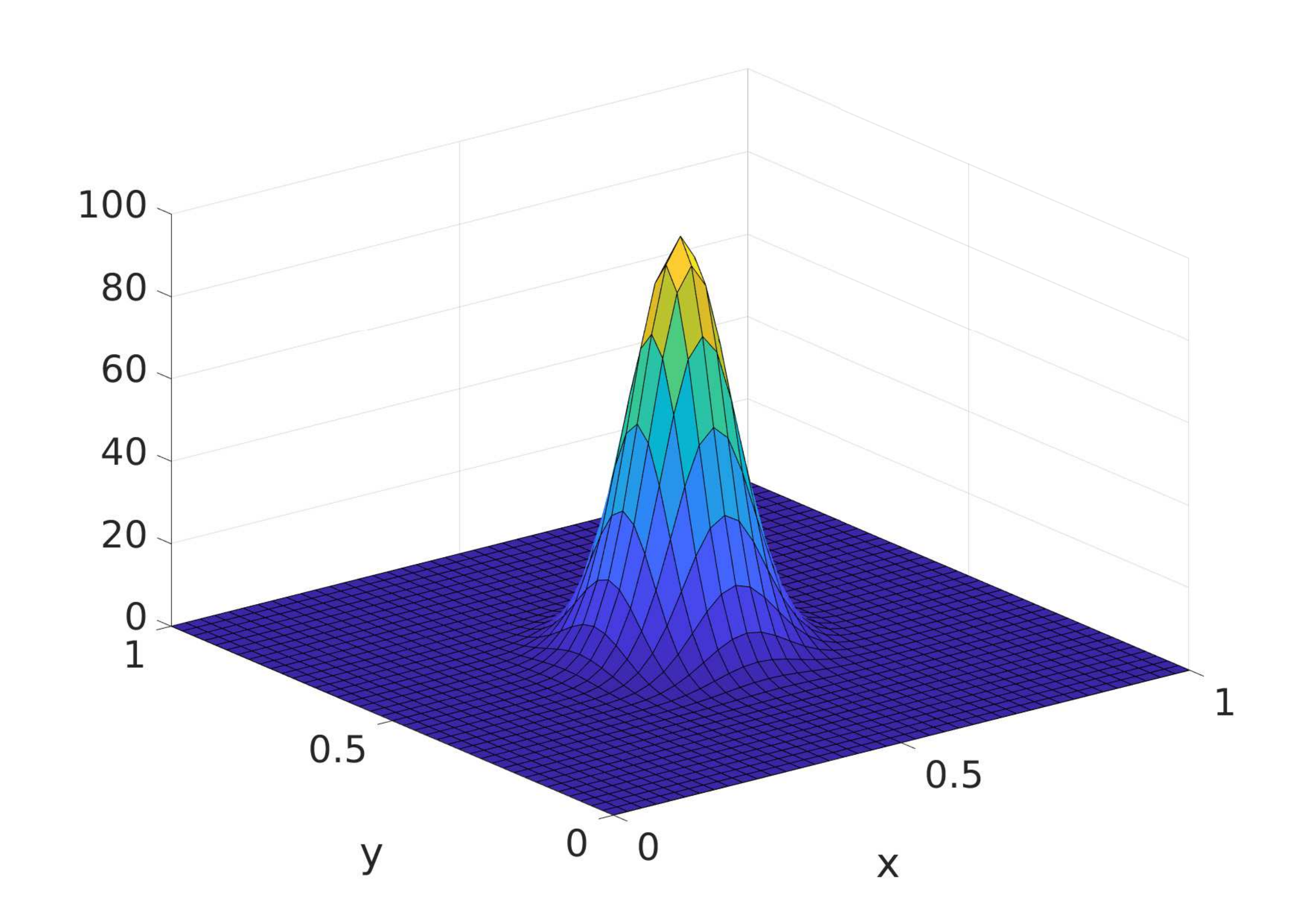}\quad
  \includegraphics[width=0.4\textwidth]{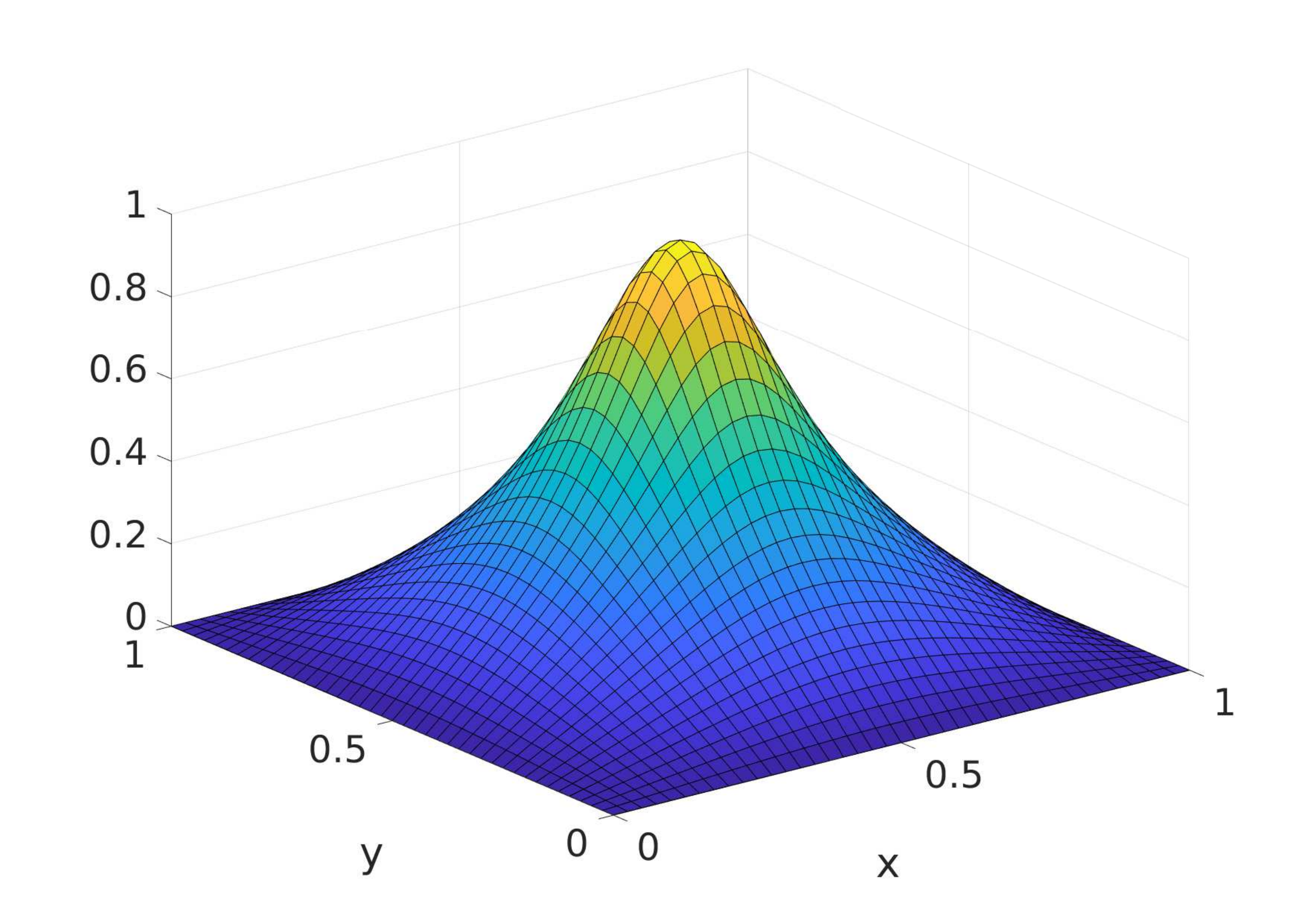}}
  \caption{Gaussian source term (left) and solution on of the
    corresponding Poisson problem (right).}
  \label{PoissonExampleFig}
\end{figure}
If we decompose the unit square domain into $3 \times 3$ subdomains as
indicated in Figure \ref{1Dand2DDecompositionFig} (left), and solve
the subdomain problem in the center with Dirichlet boundary conditions
$u=0$, we obtain the approximation shown in Figure
\ref{PoissonRASORASFig} (top left).
\begin{figure}
  \centering
  \mbox{\includegraphics[width=0.33\textwidth,trim=50 20 50 50,clip]{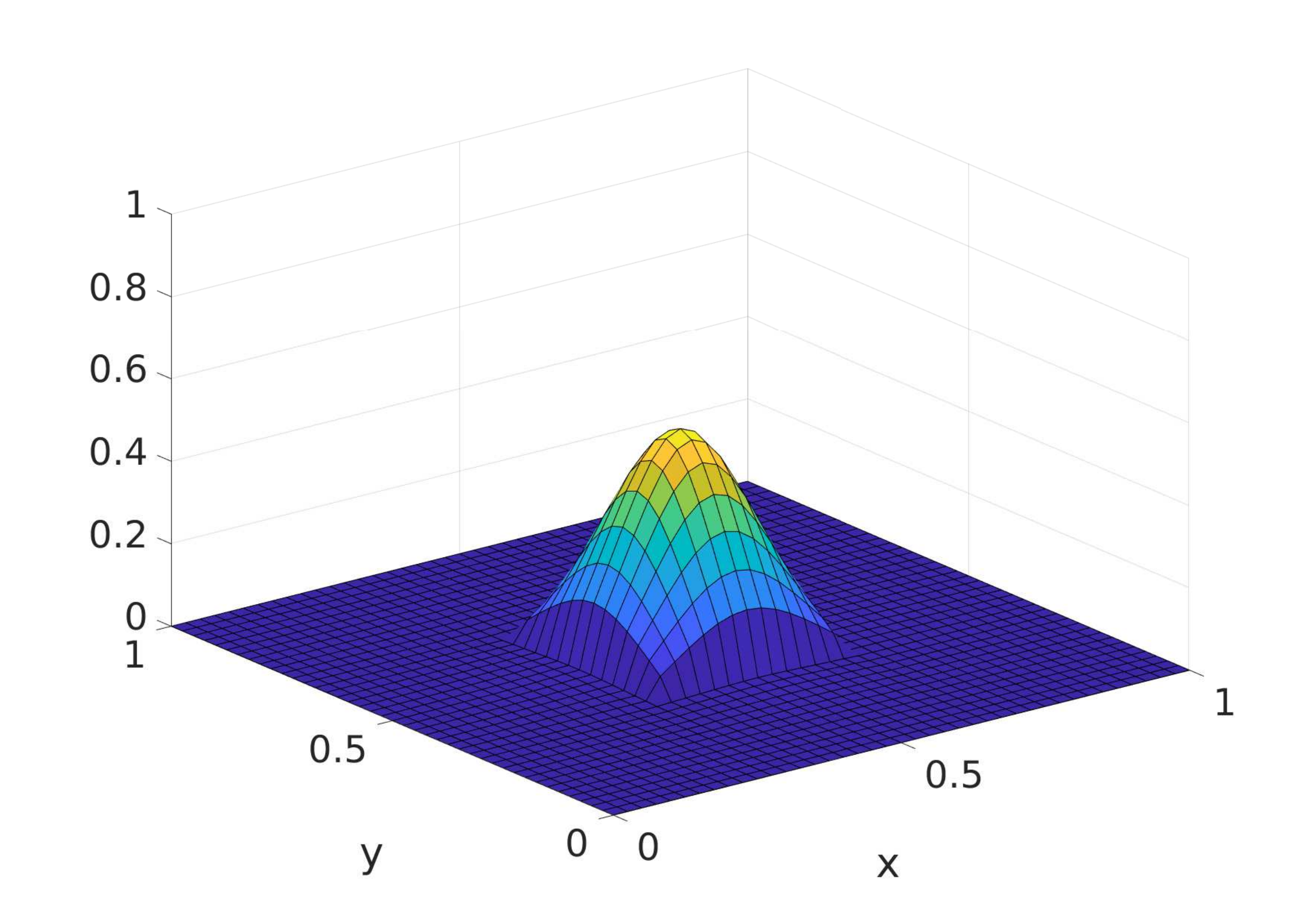}
  \includegraphics[width=0.33\textwidth,trim=50 20 50 50,clip]{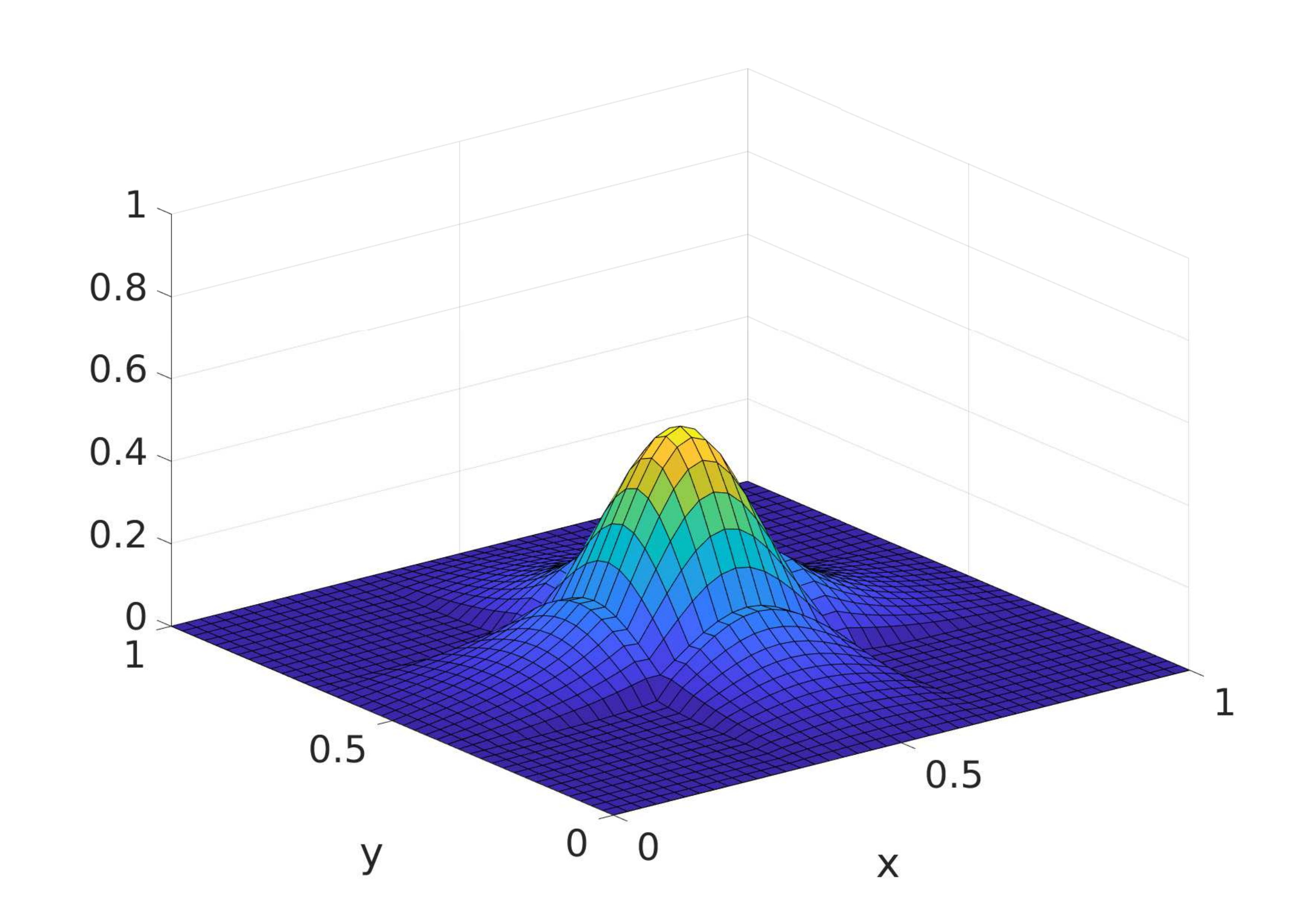}
  \includegraphics[width=0.33\textwidth,trim=50 20 50 50,clip]{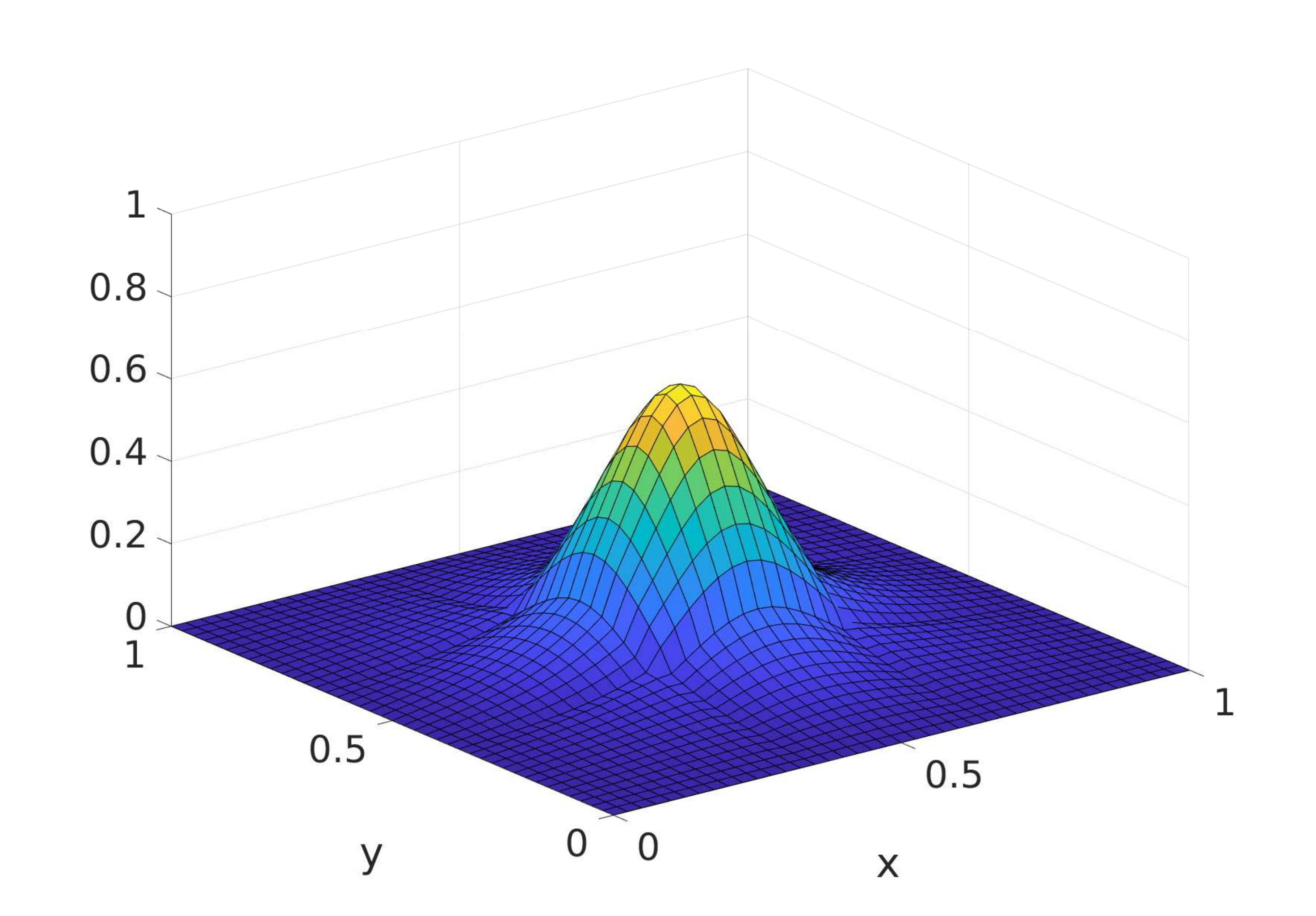}}
  \mbox{\includegraphics[width=0.33\textwidth,trim=50 20 50 50,clip]{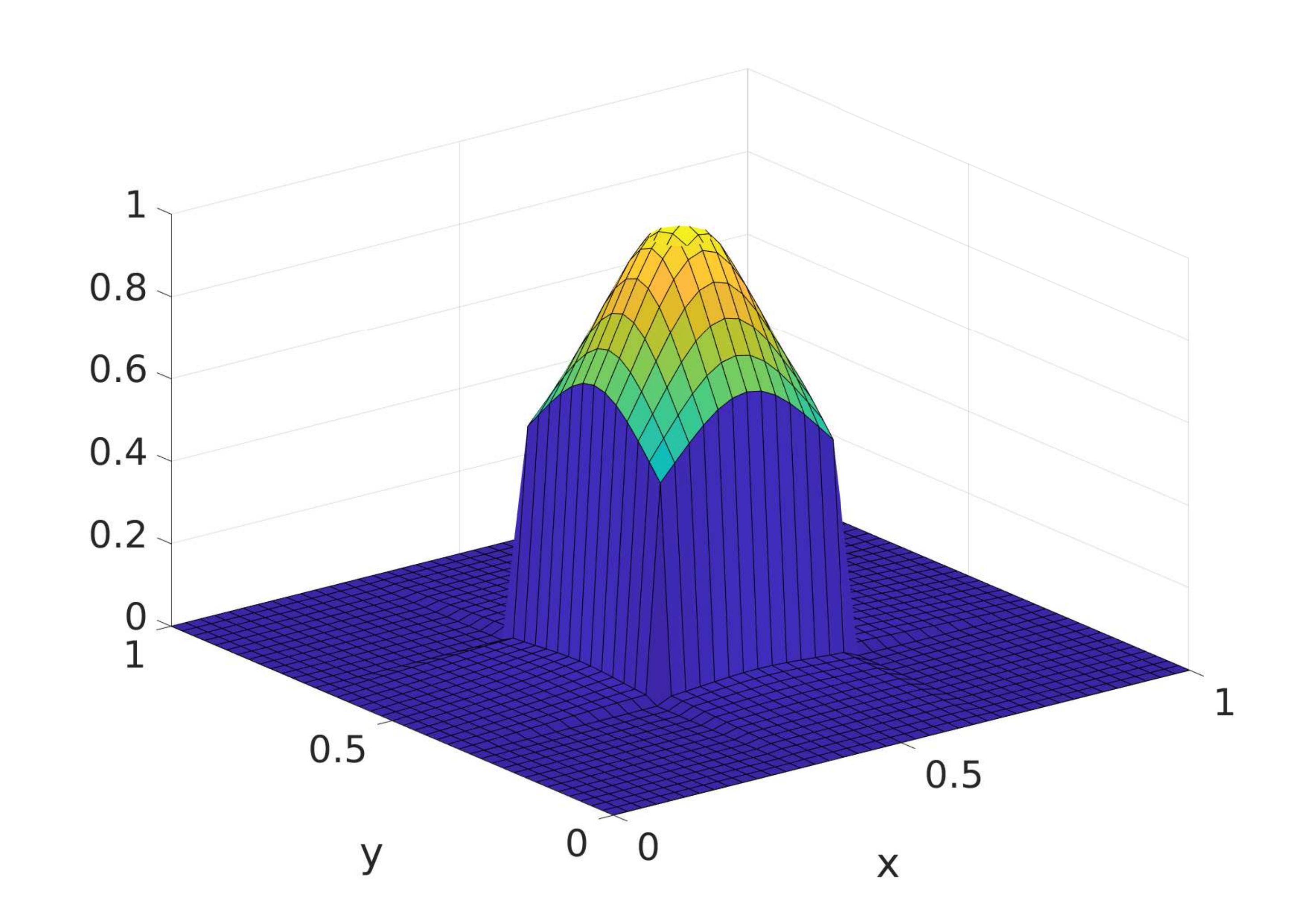}
    \includegraphics[width=0.33\textwidth,trim=50 20 50 50,clip]{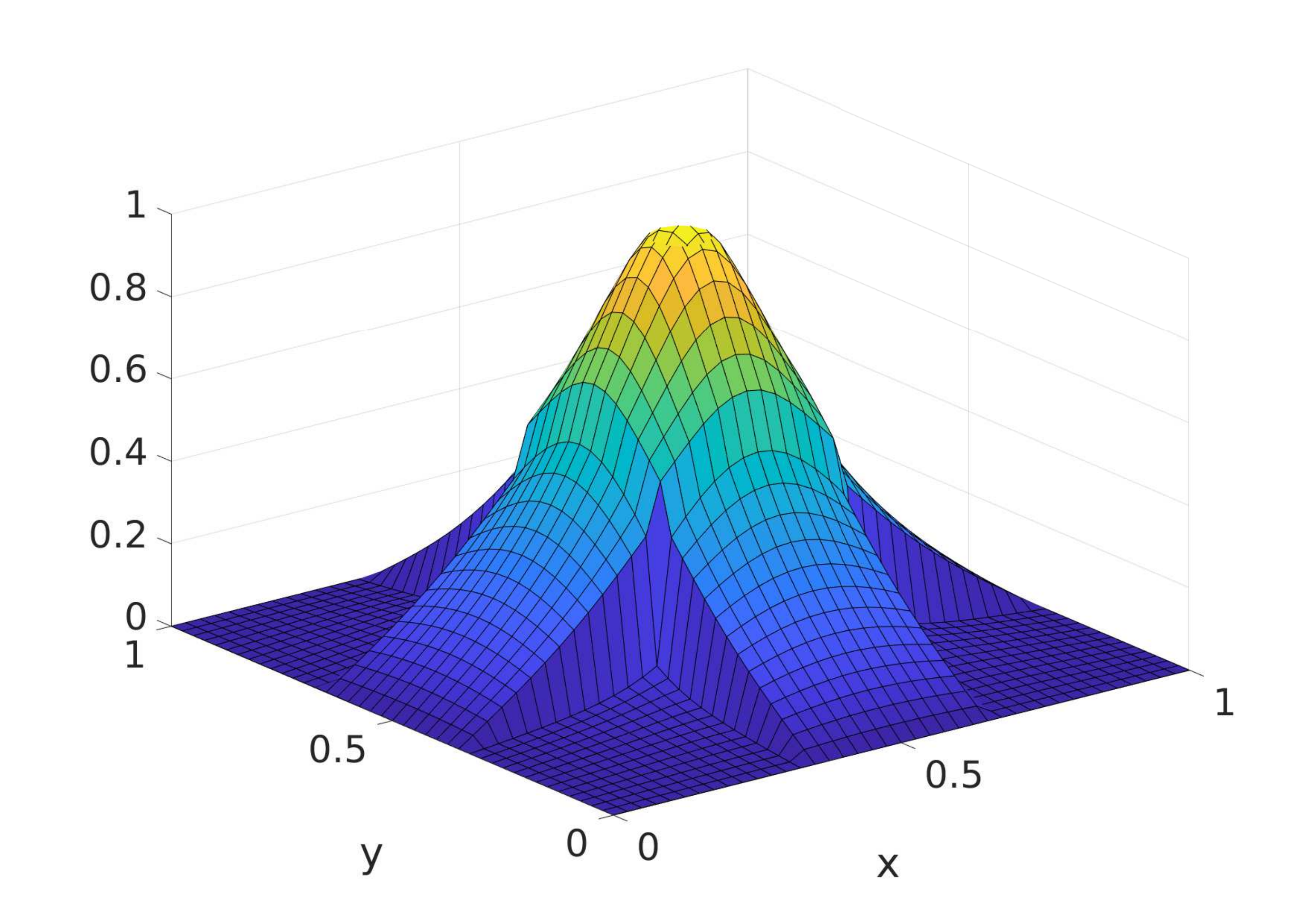}
    \includegraphics[width=0.33\textwidth,trim=50 20 50 50,clip]{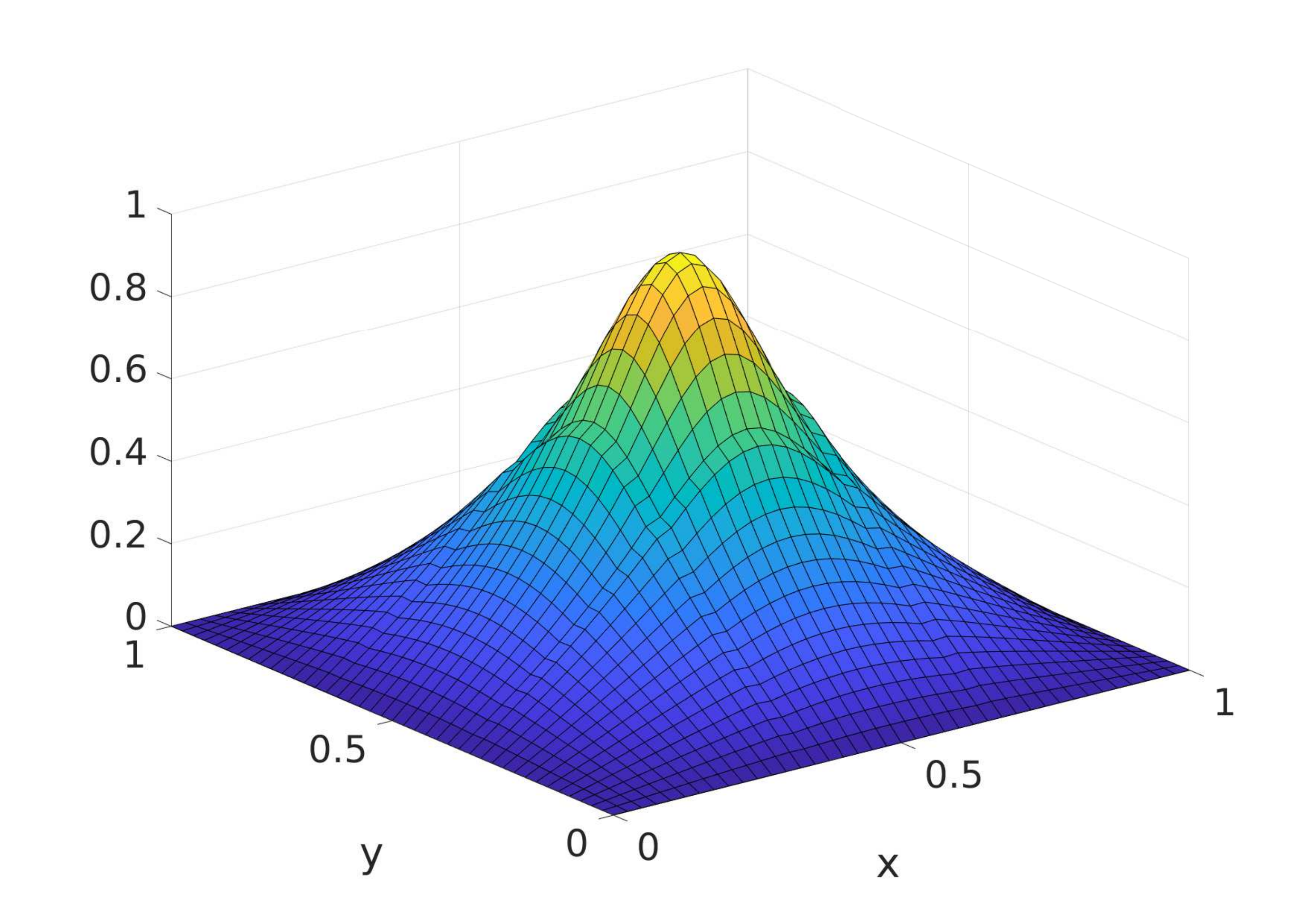}}
  \caption{First few iterations of classical (top) and optimized
    parallel Schwarz (bottom) for the Poisson problem with Gaussian
    source term from Figure \ref{PoissonExampleFig}.}
  \label{PoissonRASORASFig}
\end{figure}
If we perform the solve with Robin conditions $(\partial_n+p)u=0$ with
$p=4$, we get the approximation shown in Figure
\ref{PoissonRASORASFig} (bottom left). We clearly see that the result
when truncating with Dirichlet conditions is much further away from
the desired solution shown in Figure \ref{PoissonExampleFig} (right)
than when truncating with Robin conditions. This is however precisely
the first iteration of a classical Schwarz method with Dirichlet
transmission conditions, compared to an optimized Schwarz method with
Robin transmission conditions, when starting the iteration with a zero
initial guess. We show in Figure \ref{PoissonRASORASFig} also the next
two iterations of the classical (top) and optimized Schwarz method
(bottom) with algebraic overlap of two mesh layers\footnote{This
  corresponds for the classical Schwarz method to a physical overlap
  of $3h$ ($h$ the mesh size), see \cn[Figure 3.1]{gander2008schwarz}
  for the relation between algebraic and physical overlap, and the
  discussion in \cn[Section 4.1]{St-Cyr07} for the algebraic
  overlap when Robin conditions are used, and algebraic overlap of
  two mesh layers corresponds to physical overlap $h$ only.}, and we see the
great convergence enhancement due to the Robin transmission
conditions, which are much better approximations of the transparent
boundary conditions than the Dirichlet transmission conditions. This
illustrates well that Schwarz methods (and domain decomposition
methods in general) are methods which approximate solutions by domain
truncation, and we see that transmission conditions that are better at
truncating domains lead to better convergence.

This became a major new viewpoint on domain decomposition methods over
the past two decades. Naturally Dirichlet (and Neumann) conditions
then appear as not very good candidates to truncate domains and be
used as transmission conditions between subdomains: it is of interest
for rapid convergence to use absorbing boundary conditions which
approximate transparent conditions, like Robin conditions or higher
order Ventcell conditions, and also perfectly matched layers (PMLs), or
integral operators in the transmission conditions between subdomains.

Optimized Schwarz methods were pioneered in the early nineties by
Nataf et al. \cite{Nataf93,NRS94}; see in particular also the early
contributions of \cn{japhet1998optimized}, \cn{Chevalier}, \cn{EZ98},
and \cn{gander2000optimized} where the name optimized Schwarz methods
was coined. Optimized Schwarz methods use Robin or higher order
transmission conditions or PMLs at the interfaces between subdomains,
and all are approximations of transparent boundary conditions, see
\cn{GanderOSM} and references therein for an
introduction\footnote{\label{footnoteoptimalSchwarz} The term optimal
  Schwarz method for Schwarz methods with transparent boundary
  conditions appeared already in \cite{gander1999optimal} for time
  dependent problems, and this use of optimal means really faster is
  not possible, in contrast to the other common use of optimal meaning
  just scalable in the domain decomposition literature.}. This
conceptual change for domain decomposition methods is fundamental and
was discovered independently for solving hard wave propagation
problems by domain decomposition and iteration, since for such
problems classical domain decomposition methods are not effective, see
the seminal work by \cn{Despres90}, \cn{Despres}, and \cn{EG} for a review why it is
hard to solve such problems by iteration. Also rational approximations
have been proposed in the transmission conditions for such problems,
see for example \cn{Boubendir}, \cn{KimZhang}, \cn{Kimsweep}. This new
idea of domain truncation led, independently of the work on optimized
Schwarz methods, to the invention of the sweeping preconditioner
\cite{EY1,EY2,Poulson,Tsuji1,Tsuji2,LiuYingRecur}, the source transfer
domain decomposition \cite{Chen13a,Chen13b,xiang2019double}, the
single layer potential method \cite{Stolk,StolkImproved}, and the
method of polarized traces \cite{Zepeda,ZD,ZepedaNested}. All these
methods are very much related, and can be understood in the context of
optimized Schwarz methods, for a review and formal proofs of
equivalence, see \cn{gander2019class}, and also the references therein
for the many followup papers by the various groups. A key ingredient
of these independently developed methods is the use of perfectly
matched layers as absorbing boundary conditions, a technique which had
only rarely been used in the domain decomposition community before,
for exceptions see \cn{Toselli}, \cn{Schadle}, \cn{SZBKS}.

While optimized Schwarz methods were developed for general
decompositions of the domain into subdomains, including cross points,
as shown on the left in Figure \ref{1Dand2DDecompositionFig} and in
the corresponding example in Figure \ref{PoissonRASORASFig},
the sweeping preconditioner, source transfer domain decomposition, the
single layer potential method and the method of polarized traces were
all formulated for one dimensional or sequential domain
decompositions, as shown in Figure \ref{1Dand2DDecompositionFig} on
the right, without cross points. This is because these methods were
developed with the physical intuition of wave propagation in one
direction, and not with domain decomposition in mind. In these
methods, the authors had in mind the transparent boundary conditions
at the interfaces, which contain the Dirichlet to Neumann (DtN), or more
generally the Steklov Poincaré operator replacing $p$ in the Robin
transmission condition, and with this choice, the method becomes a
direct solver. We illustrate this in Figure
\ref{PoissonOptimalSchwarzFig} for the strip decomposition in Figure
\ref{1Dand2DDecompositionFig} (right) with three subdomains and our
Poisson model problem with Gaussian source from Figure
\ref{PoissonExampleFig}.
\begin{figure}
  \centering
  \mbox{\includegraphics[width=0.33\textwidth,trim=50 30 50 50,clip]{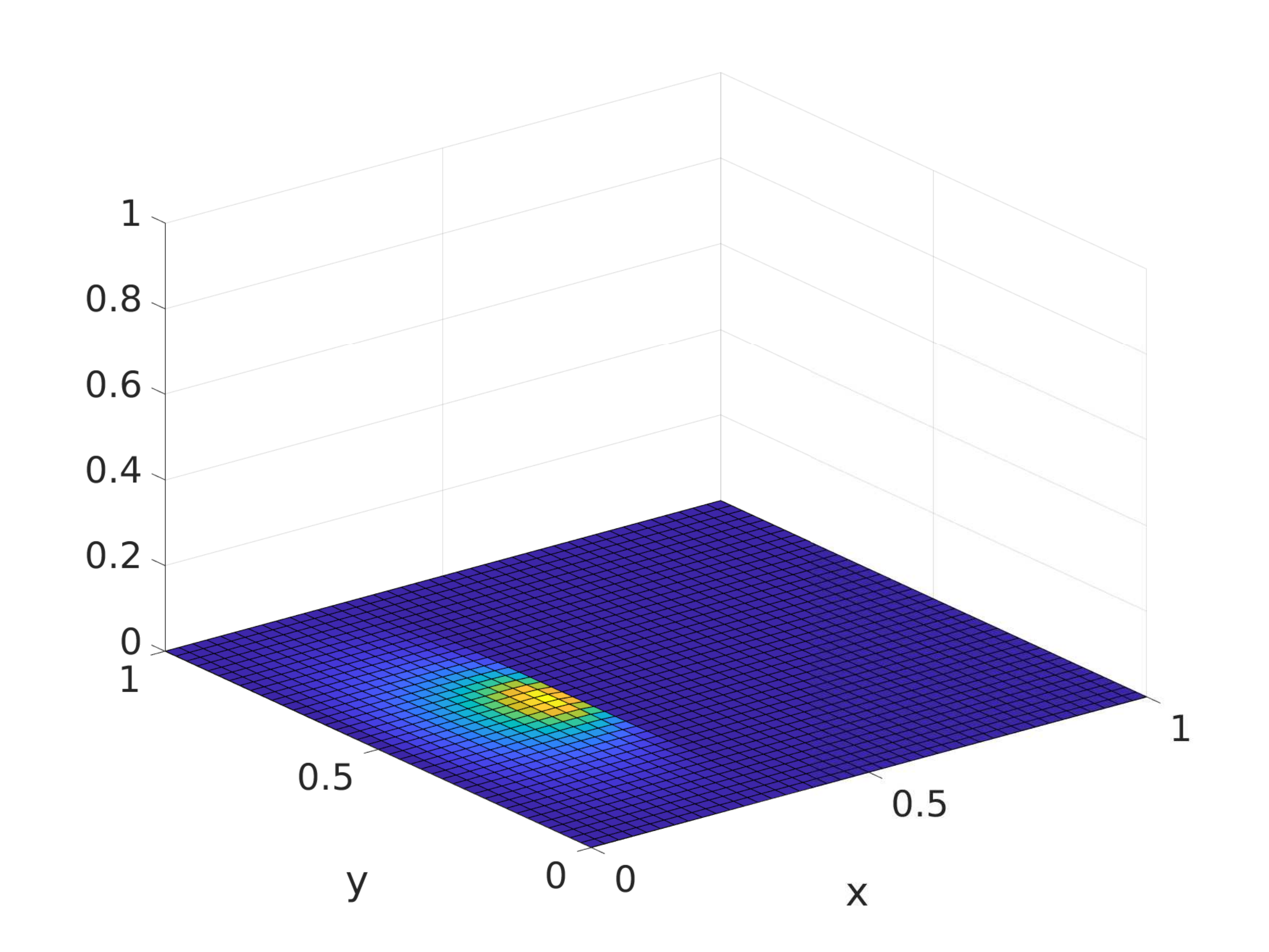}
    \includegraphics[width=0.33\textwidth,trim=50 30 50 50,clip]{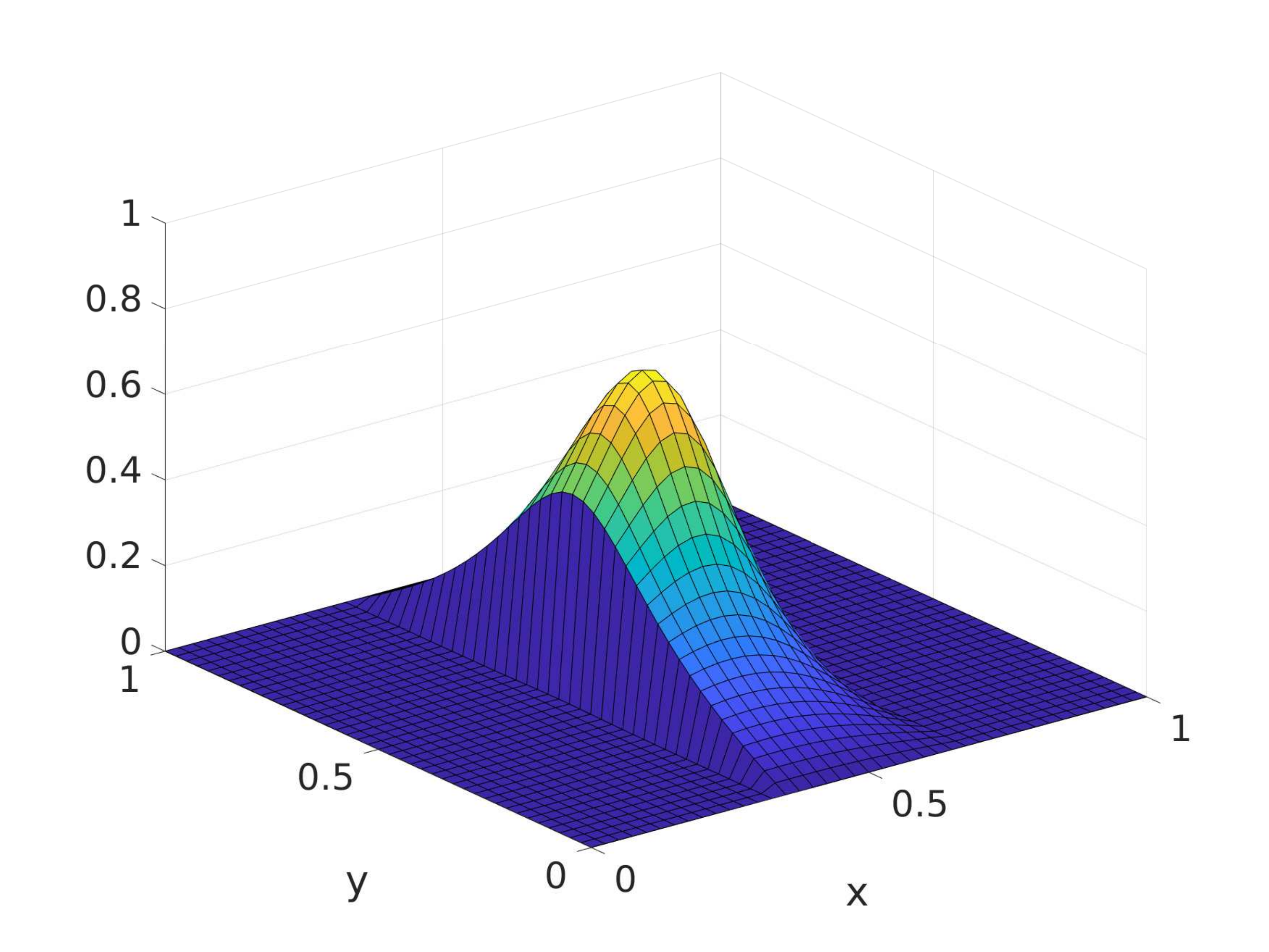}
  \includegraphics[width=0.33\textwidth,trim=50 30 50 50,clip]{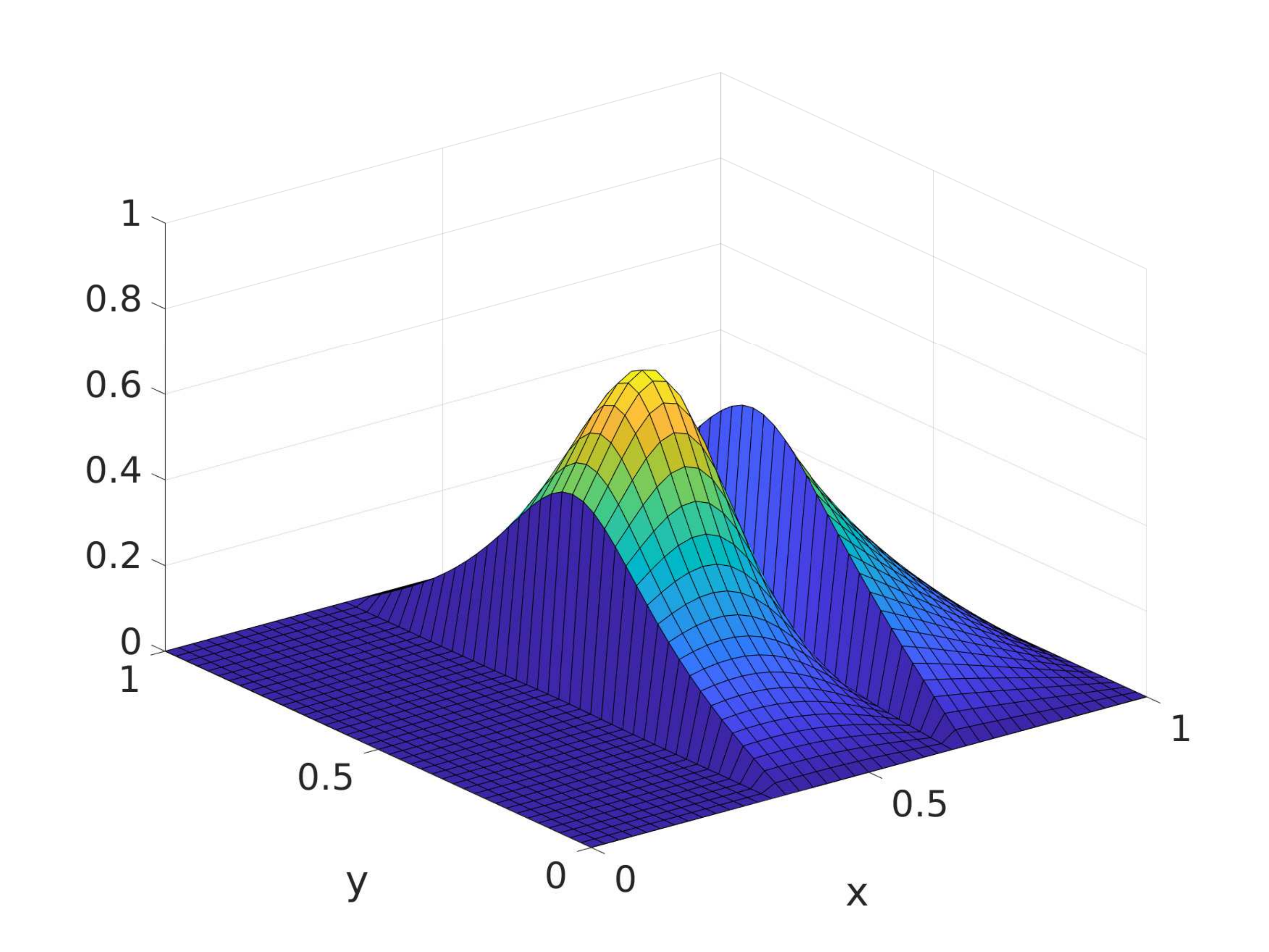}}
  \mbox{\includegraphics[width=0.33\textwidth,trim=50 30 50 50,clip]{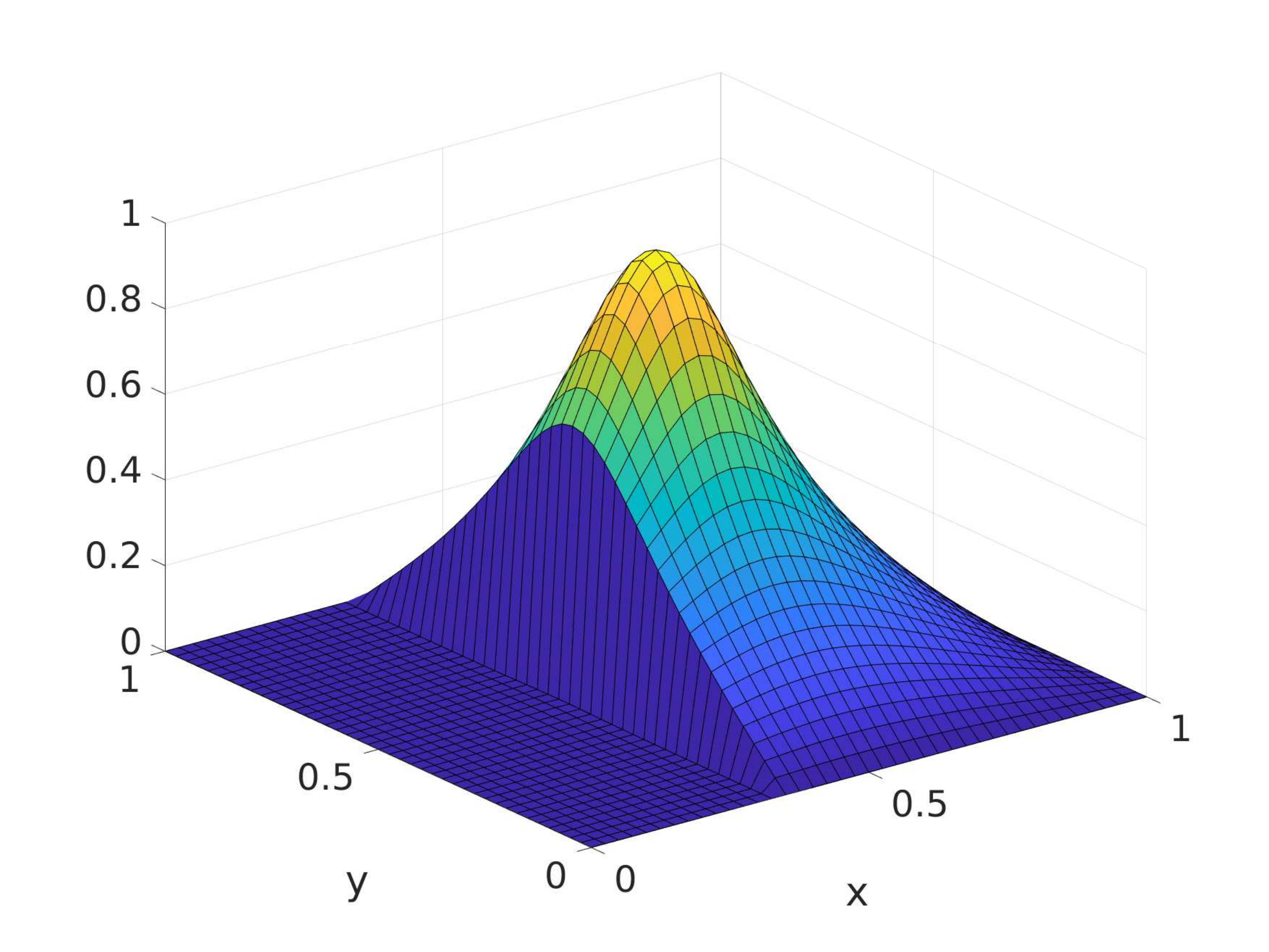}
  \includegraphics[width=0.33\textwidth,trim=50 30 50 50,clip]{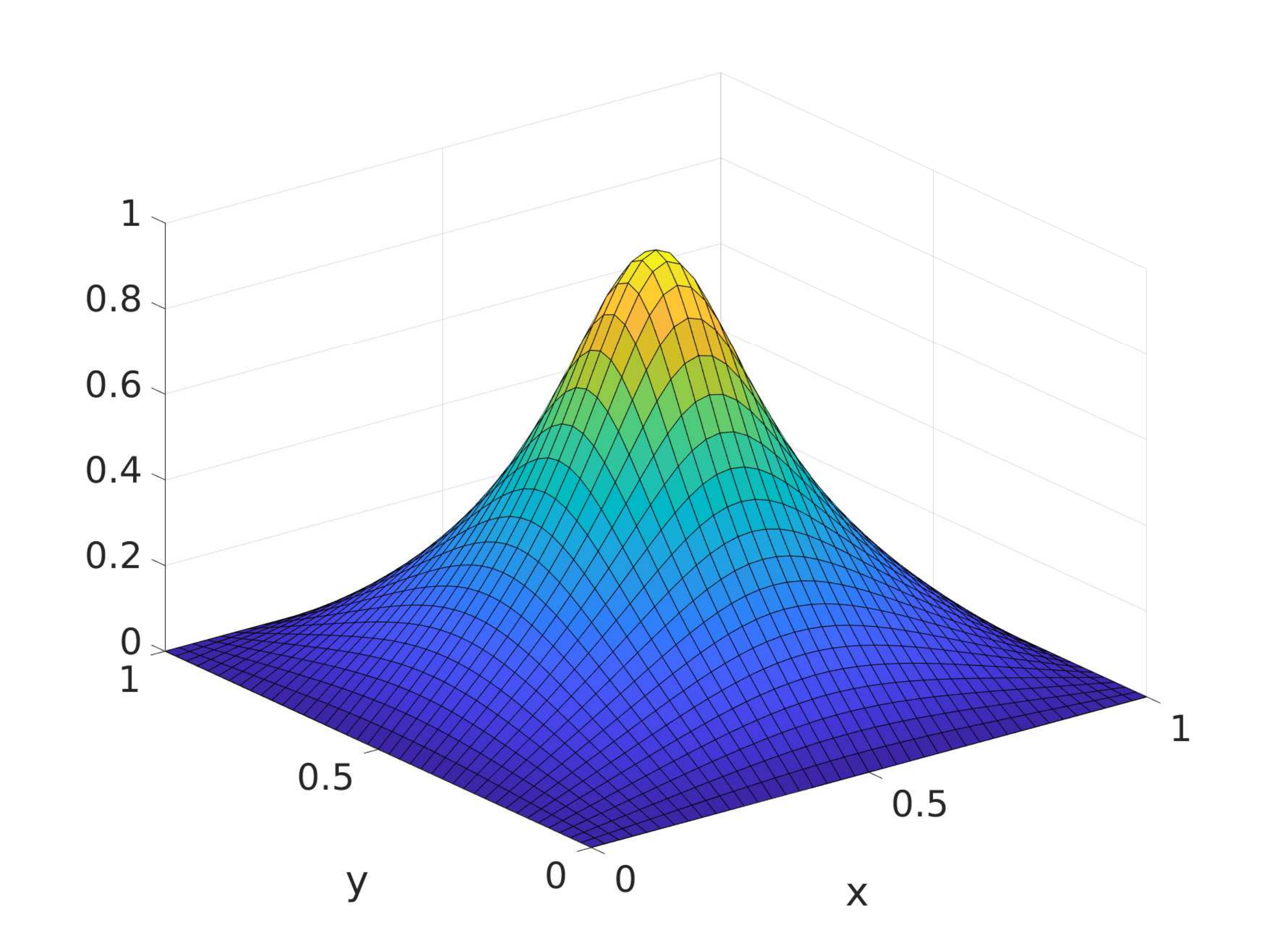}
  \includegraphics[width=0.33\textwidth,trim=50 30 50 50,clip]{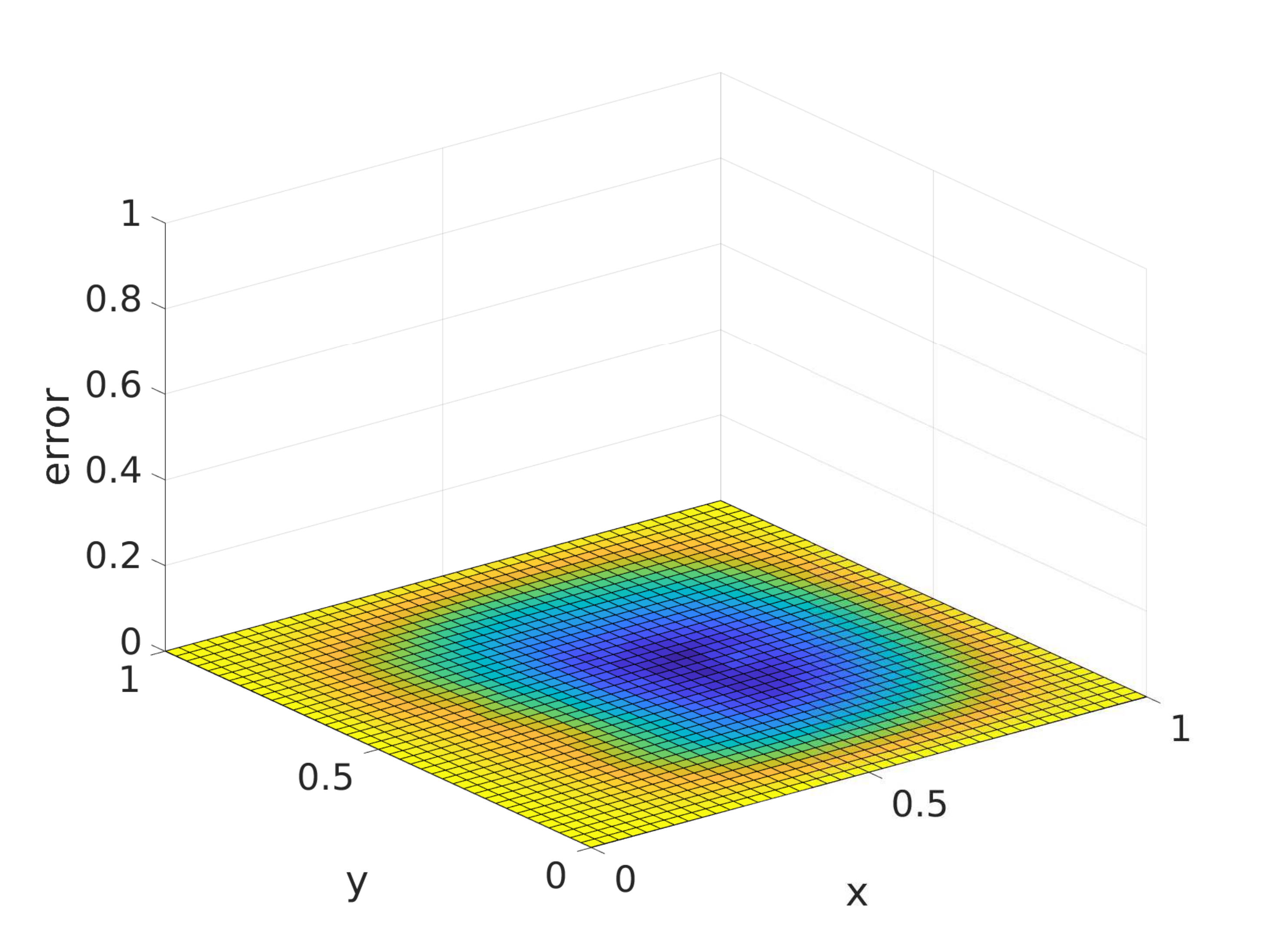}}
  \caption{Optimal alternating Schwarz using DtN
    transmission conditions sweeping with the iterates from left to
    right and then back to left (double sweep) for a sequential
    decomposition into three subdomains, solving the Poisson problem
    with Gaussian source term from Figure \ref{PoissonExampleFig}. The
    last figure shows the error after one double sweep, on the order of
    machine precision.}
  \label{PoissonOptimalSchwarzFig}
\end{figure}
We see that the method converges in one double sweep, \ie\ it is a
direct solver. The iteration matrix (or operator at the continuous
level) is nilpotent, and one can interpret this method as an exact
block LU factorization, where in the forward sweep the lower block
triangular matrix $L$ is solved, and in the backward sweep the
upper block triangular matrix $U$ is solved, and the blocks correspond
to the subdomains. This interpretation had already led earlier to the
Analytic Incomplete LU (AILU) preconditioners, see
\cn{GanderAILU00}, \cn{GanderAILU05} and references therein. Note that
other domain decomposition methods can also be nilpotent for
sequential domain decompositions: for Neumann-Neumann and FETI this is
however only possible for two-subdomain decompositions, and for
Dirichlet-Neumann up to three subdomains, and in some specific
cases more than three subdomains, see
\cn{chaouqui2017nilpotent}. The only domain decomposition method
which can in general become nilpotent for sequential domain
decompositions is the optimal Schwarz method.

If we use approximations of the optimal Schwarz method, for example an
optimized one with Robin transmission conditions, the method is not
nilpotent any more, as shown in Figure \ref{OSMSweepFig},
\begin{figure}
  \centering
  \mbox{\includegraphics[width=0.33\textwidth,trim=50 30 50 50,clip]{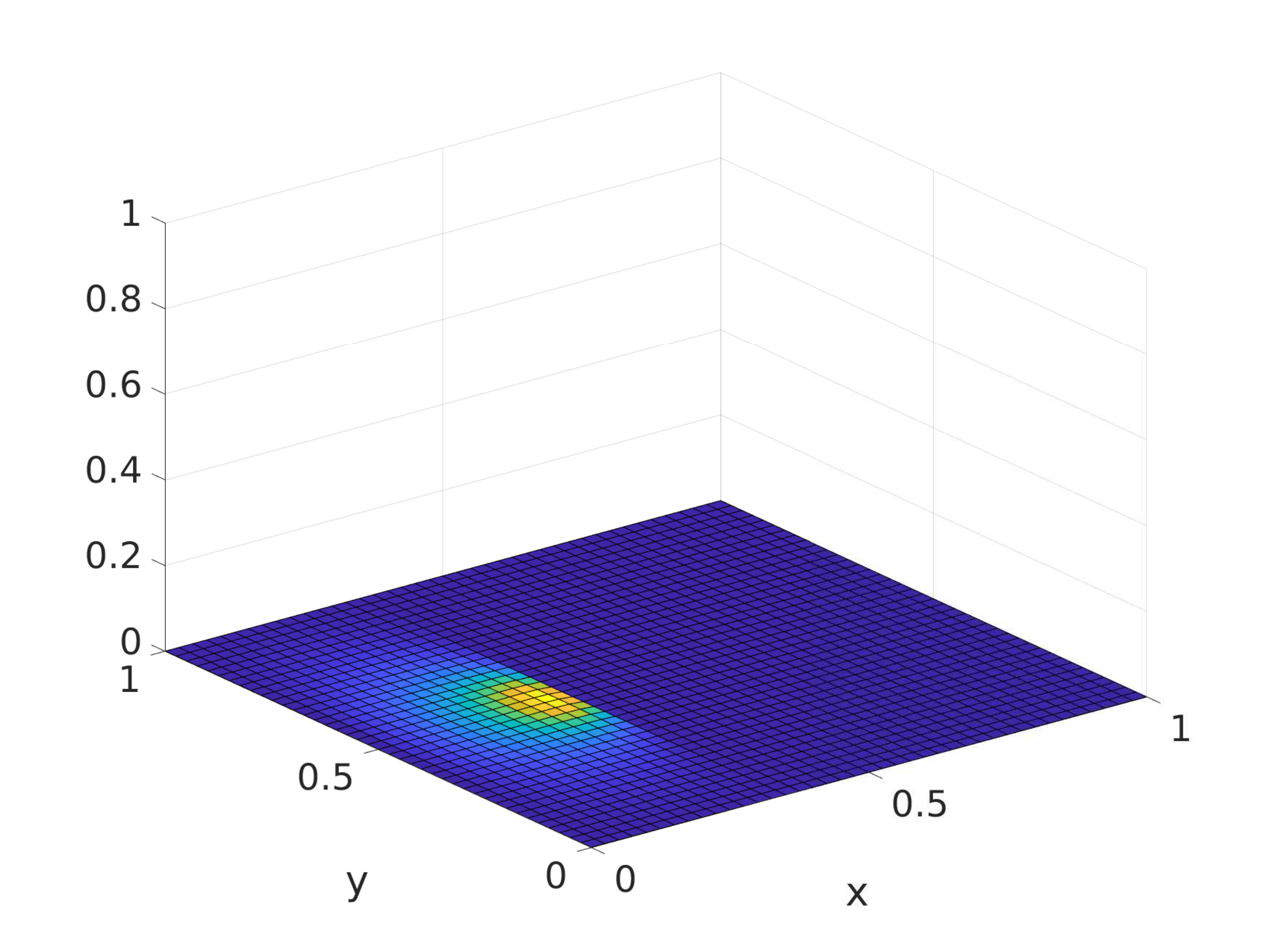}
    \includegraphics[width=0.33\textwidth,trim=50 30 50 50,clip]{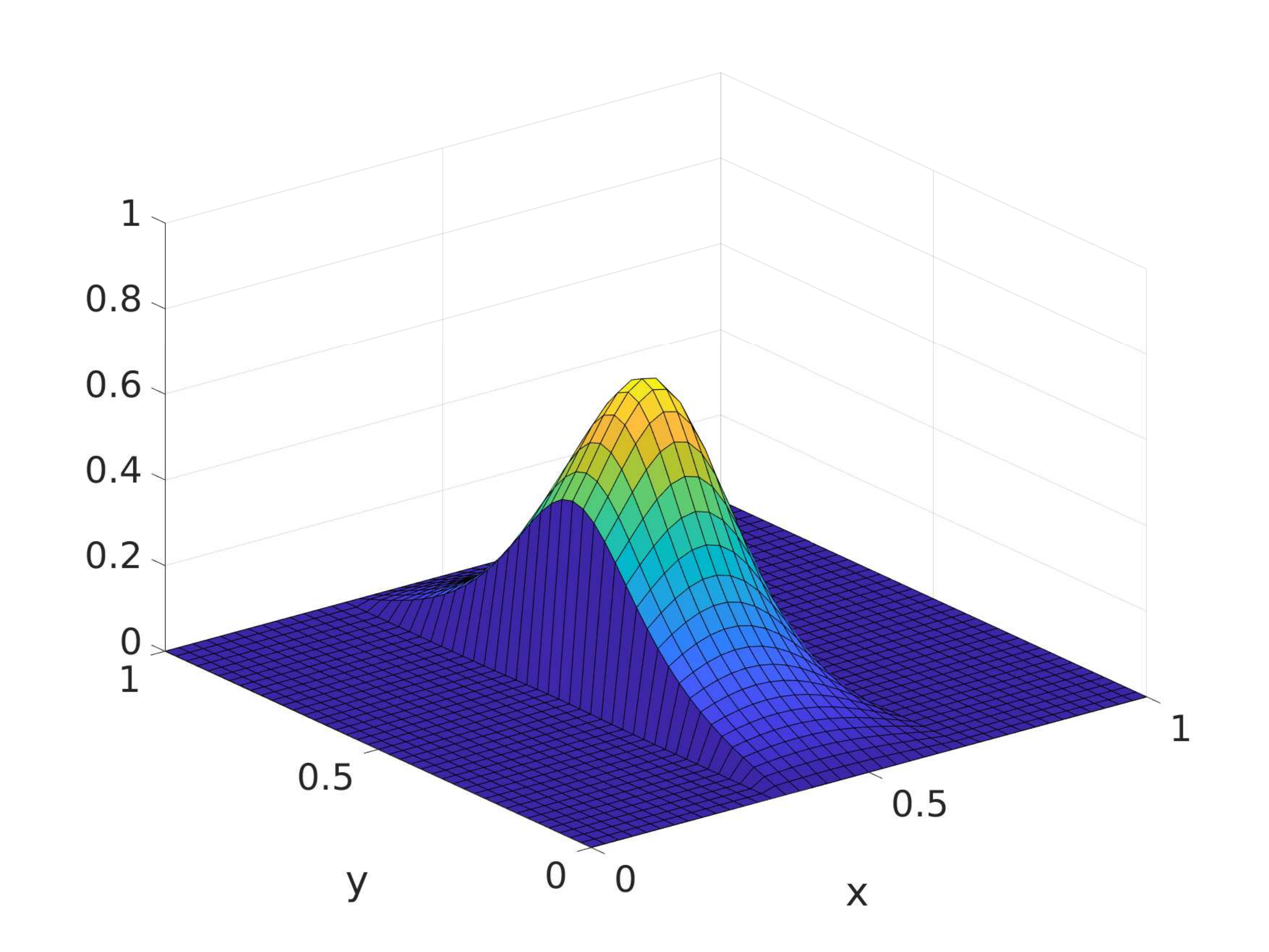}
  \includegraphics[width=0.33\textwidth,trim=50 30 50 50,clip]{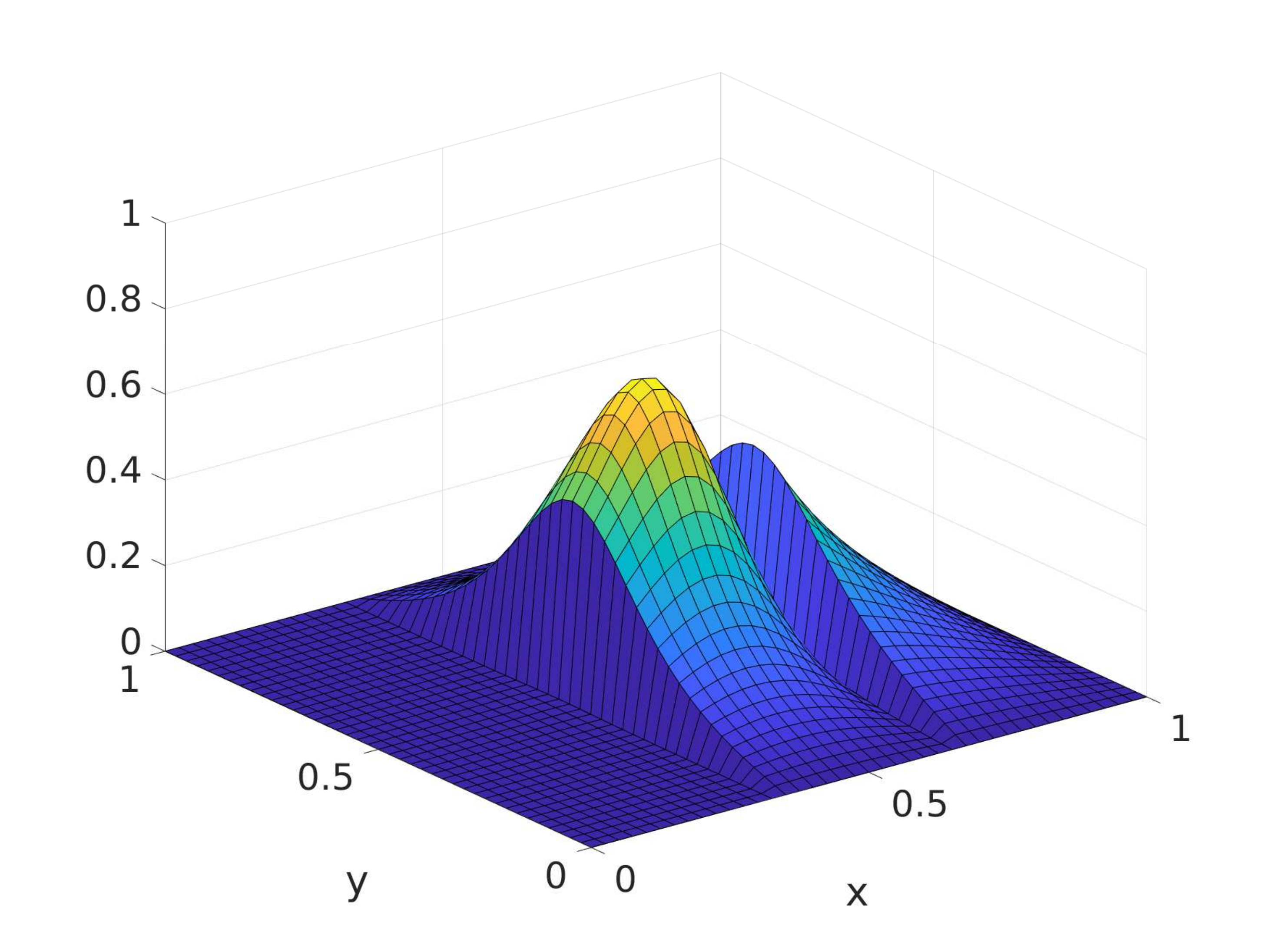}}
  \mbox{\includegraphics[width=0.33\textwidth,trim=50 30 50 50,clip]{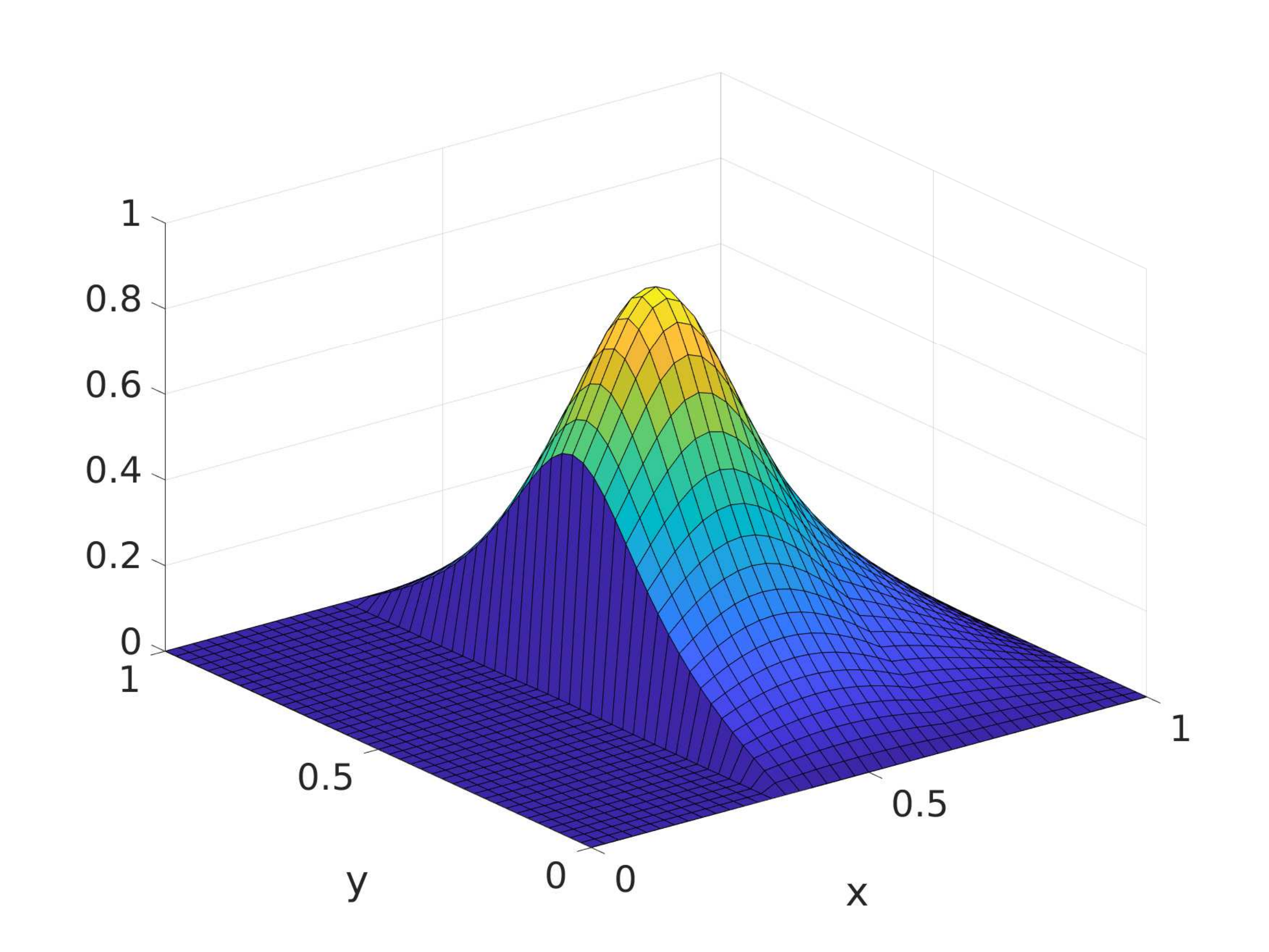}
  \includegraphics[width=0.33\textwidth,trim=50 30 50 50,clip]{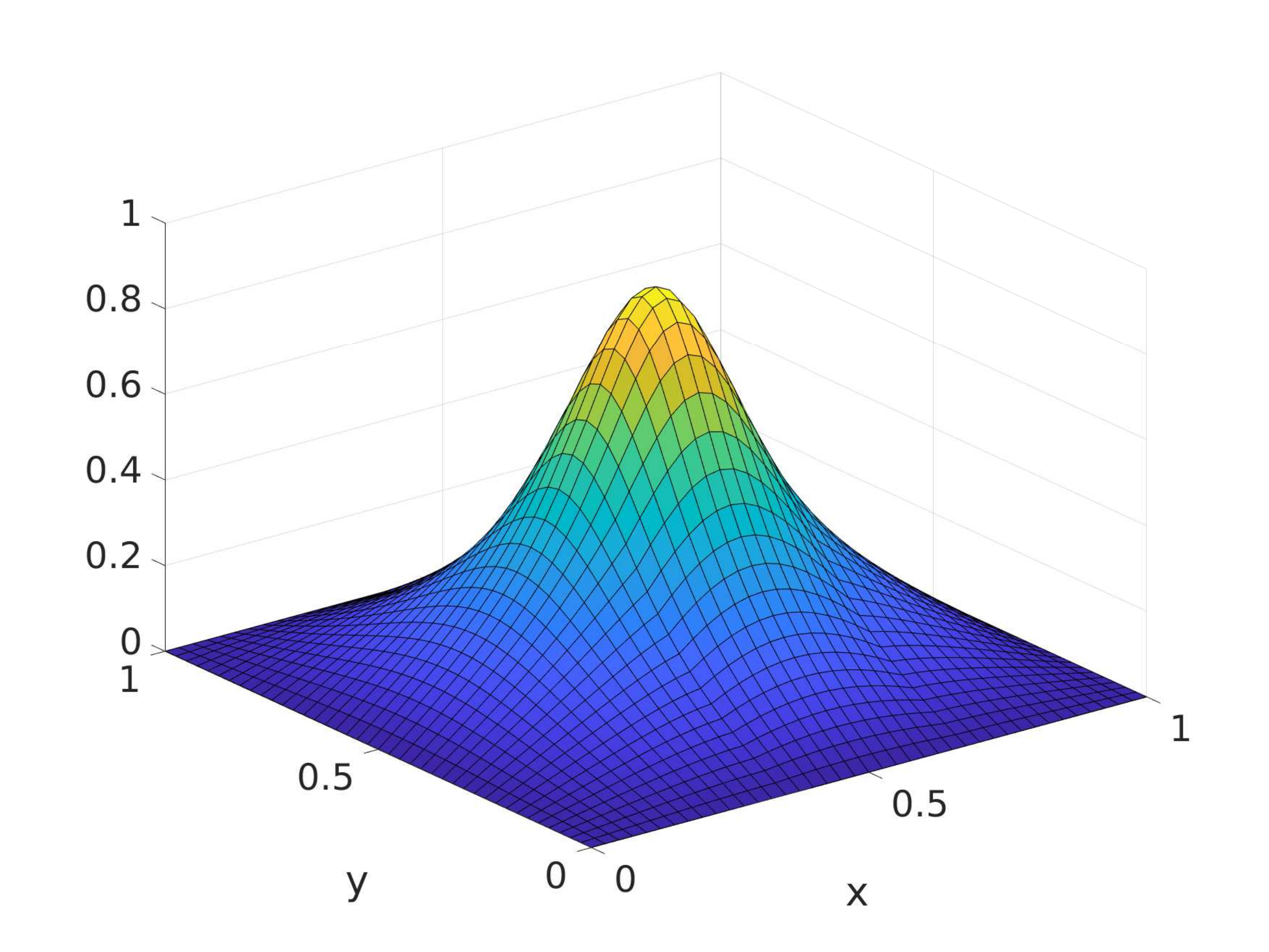}
  \includegraphics[width=0.33\textwidth,trim=50 30 50 50,clip]{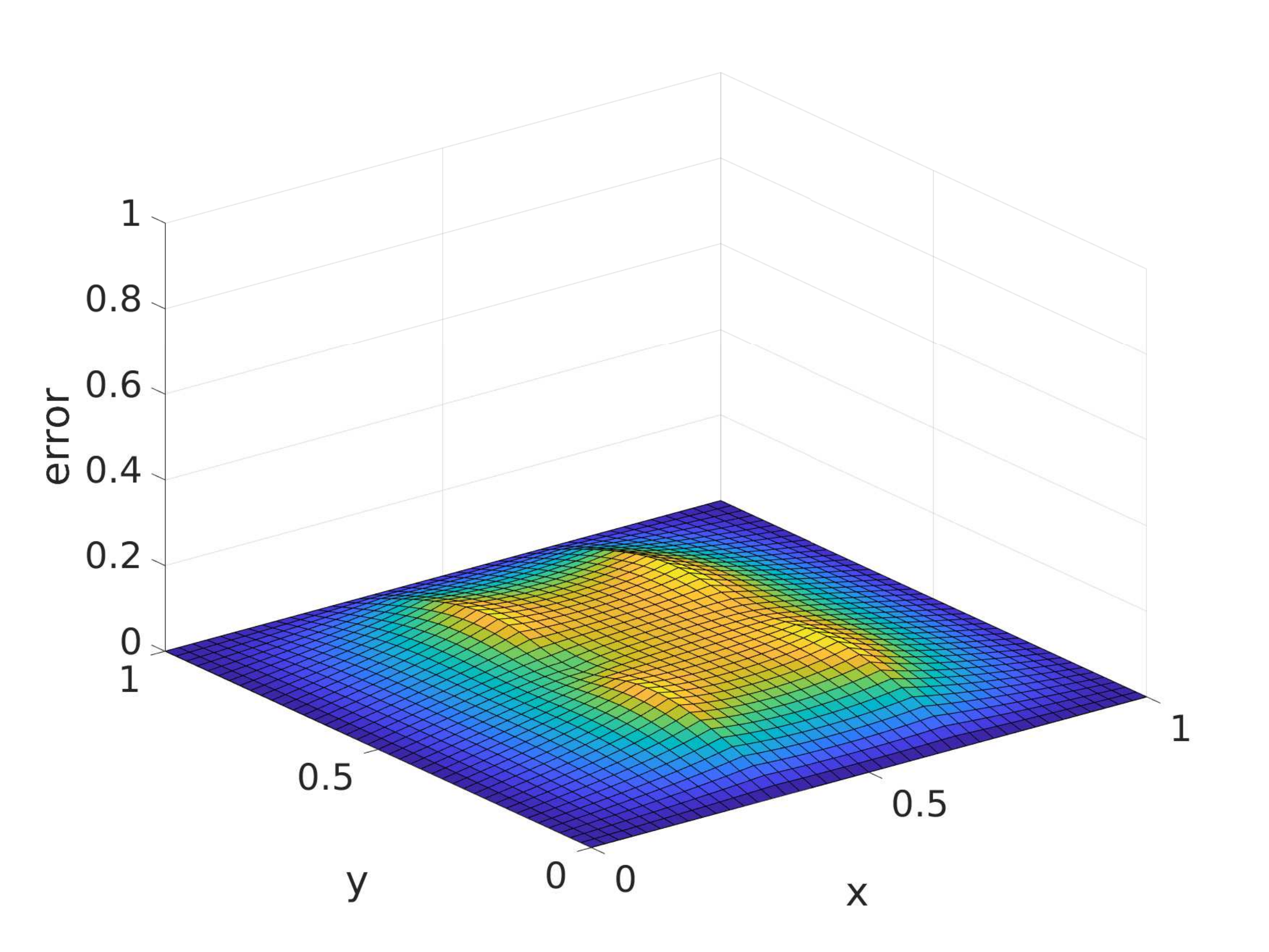}}
  \caption{Optimized alternating Schwarz using Robin transmission
    conditions sweeping like optimal alternating Schwarz in Figure
    \ref{PoissonOptimalSchwarzFig} (the last figure shows the error
    after one double sweep).}
  \label{OSMSweepFig}
\end{figure}
but convergence is still very fast, compared to the classical Schwarz
method with Dirichlet transmission conditions shown in Figure
\ref{SMSweepFig}.
\begin{figure}
  \centering
  \centering
  \mbox{\includegraphics[width=0.33\textwidth,trim=50 30 50 50,clip]{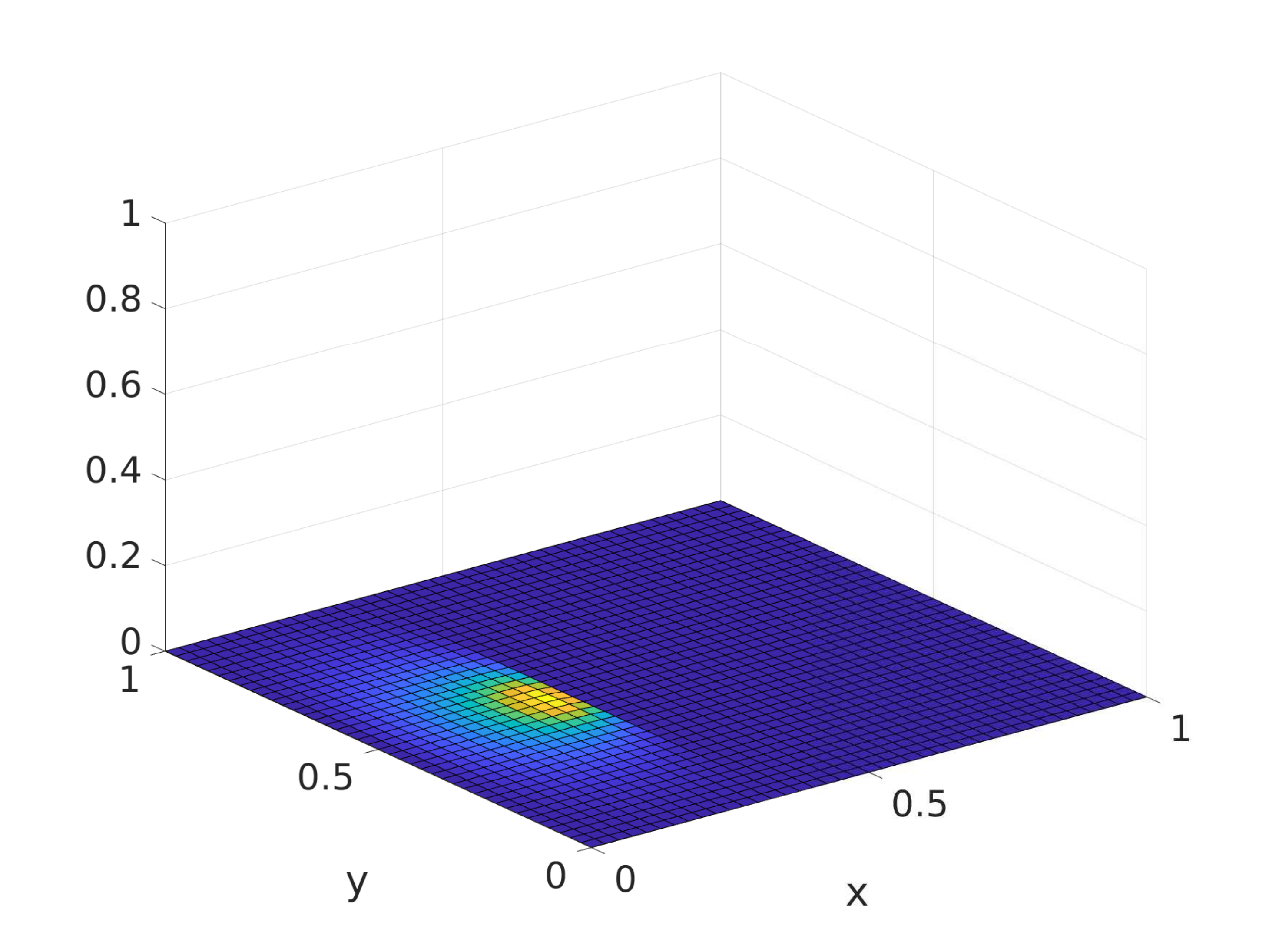}
    \includegraphics[width=0.33\textwidth,trim=50 30 50 50,clip]{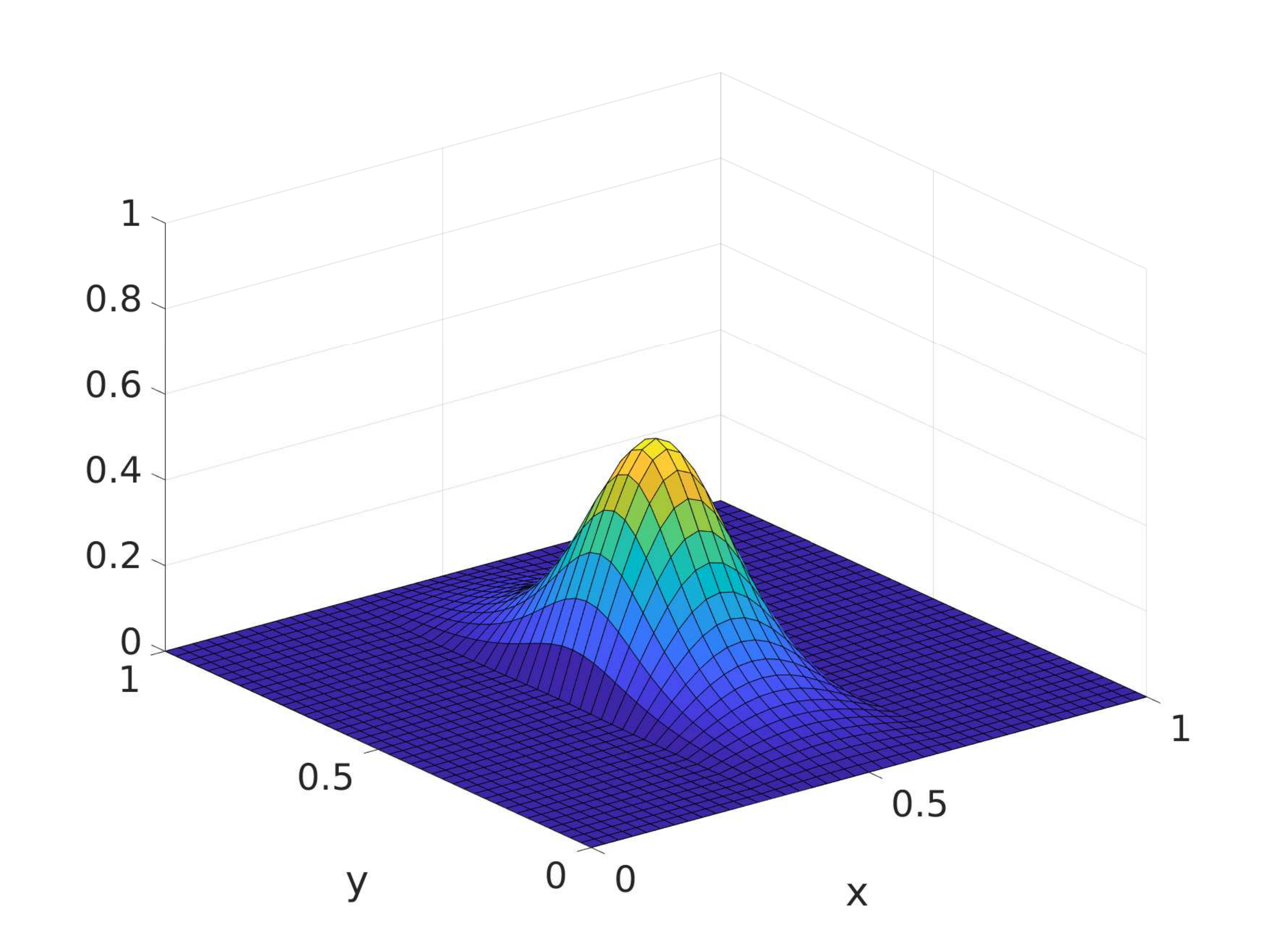}
  \includegraphics[width=0.33\textwidth,trim=50 30 50 50,clip]{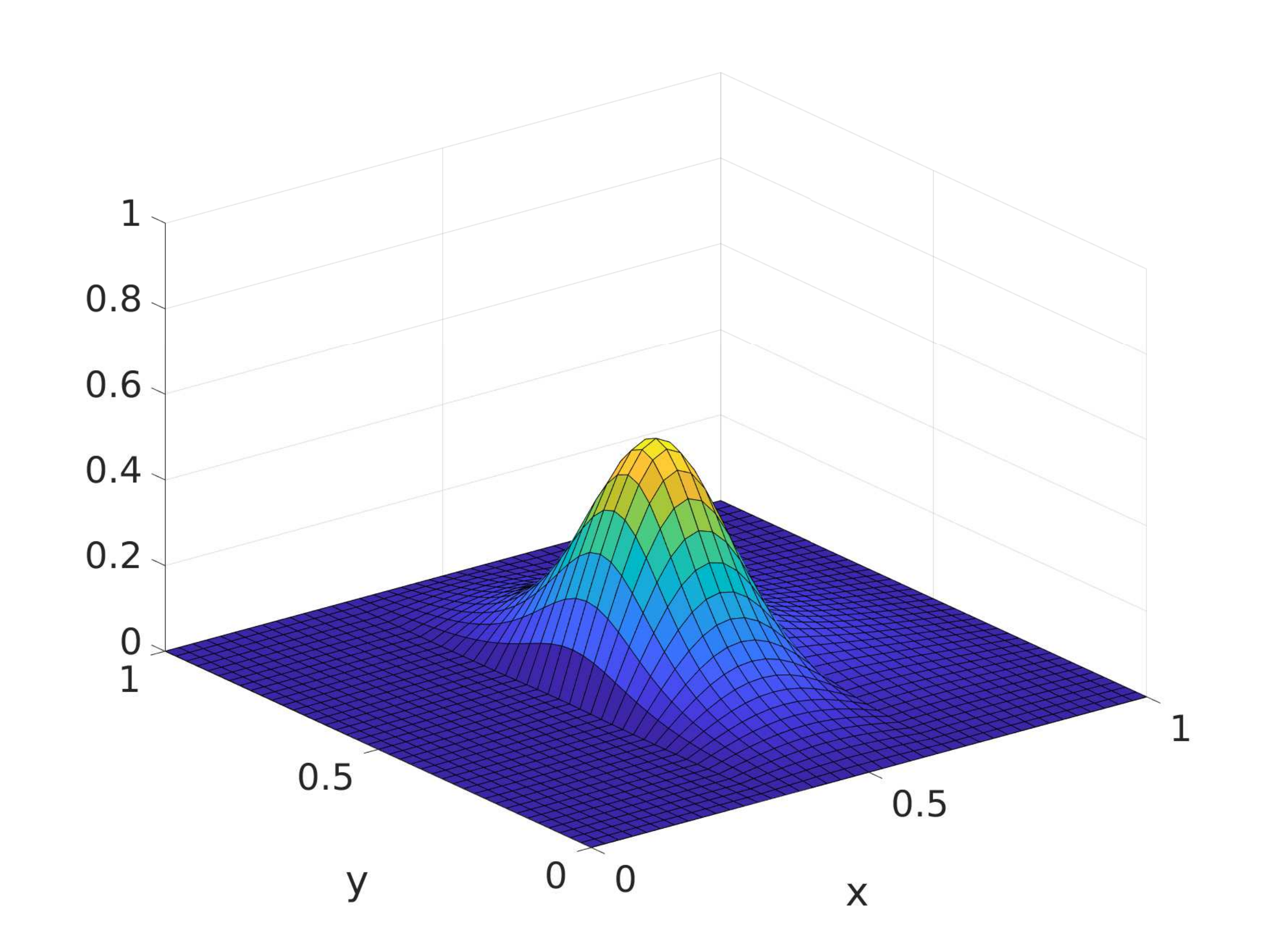}}
  \mbox{\includegraphics[width=0.33\textwidth,trim=50 30 50 50,clip]{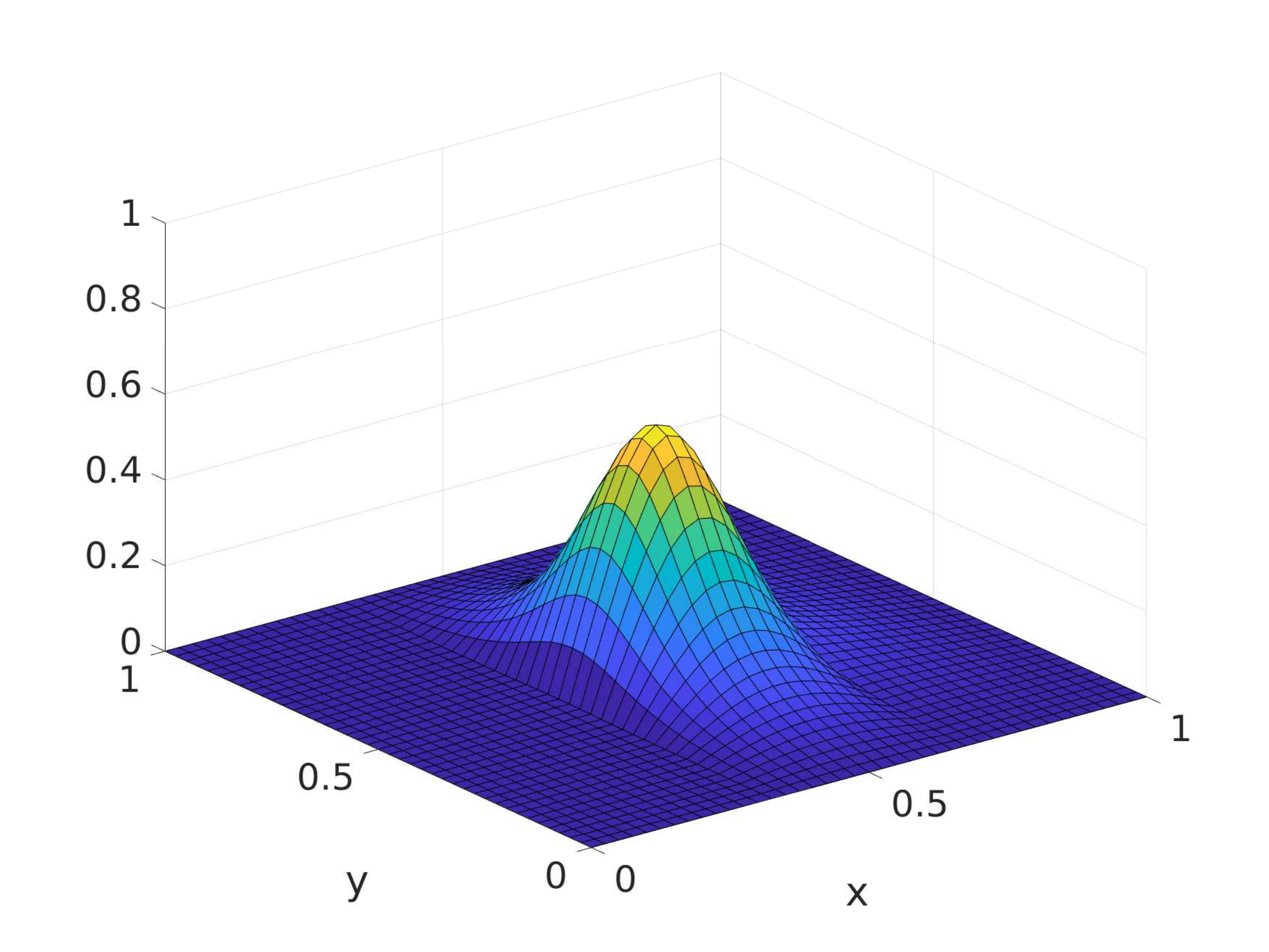}
  \includegraphics[width=0.33\textwidth,trim=50 30 50 50,clip]{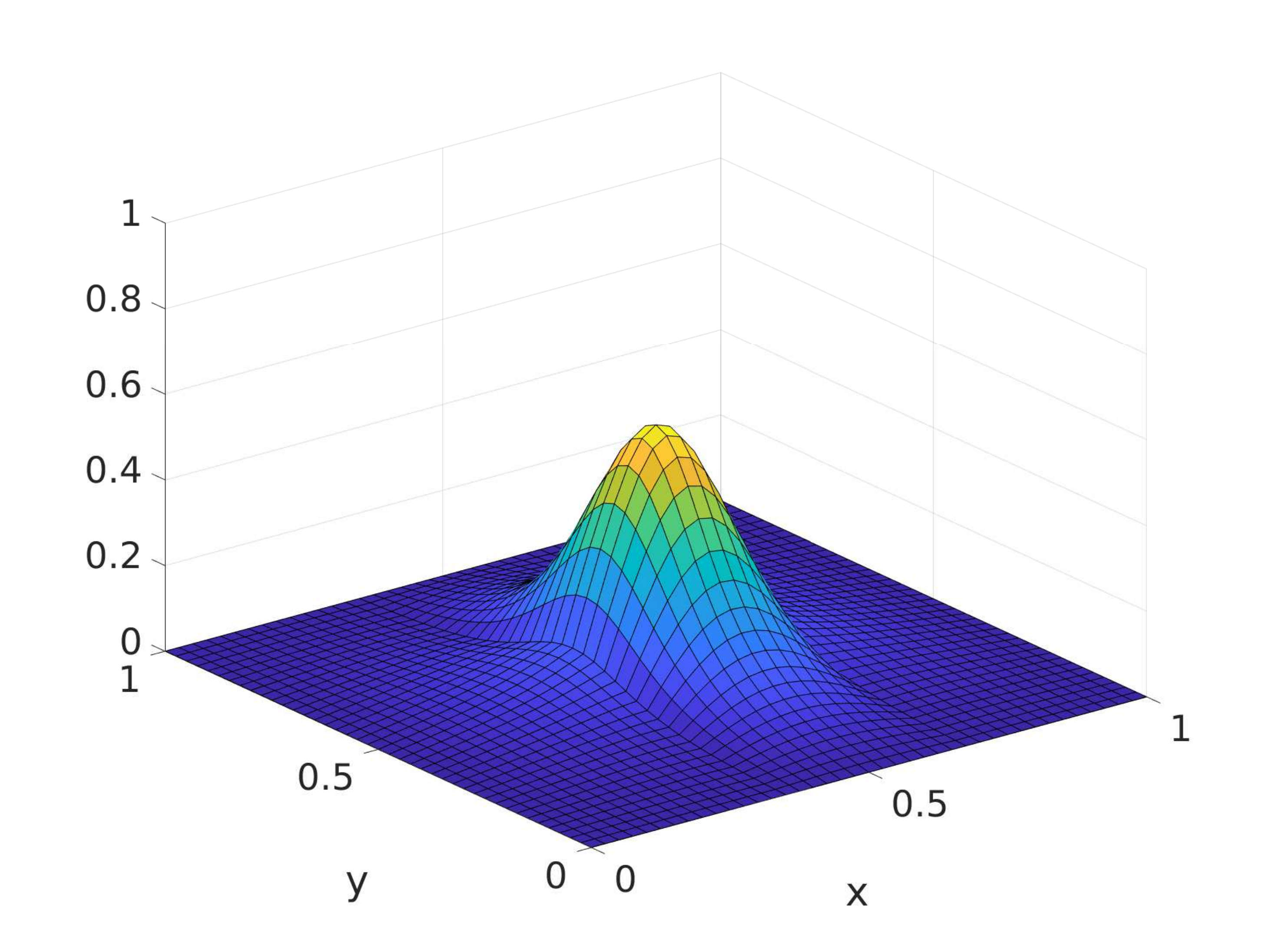}
  \includegraphics[width=0.33\textwidth,trim=50 30 50 50,clip]{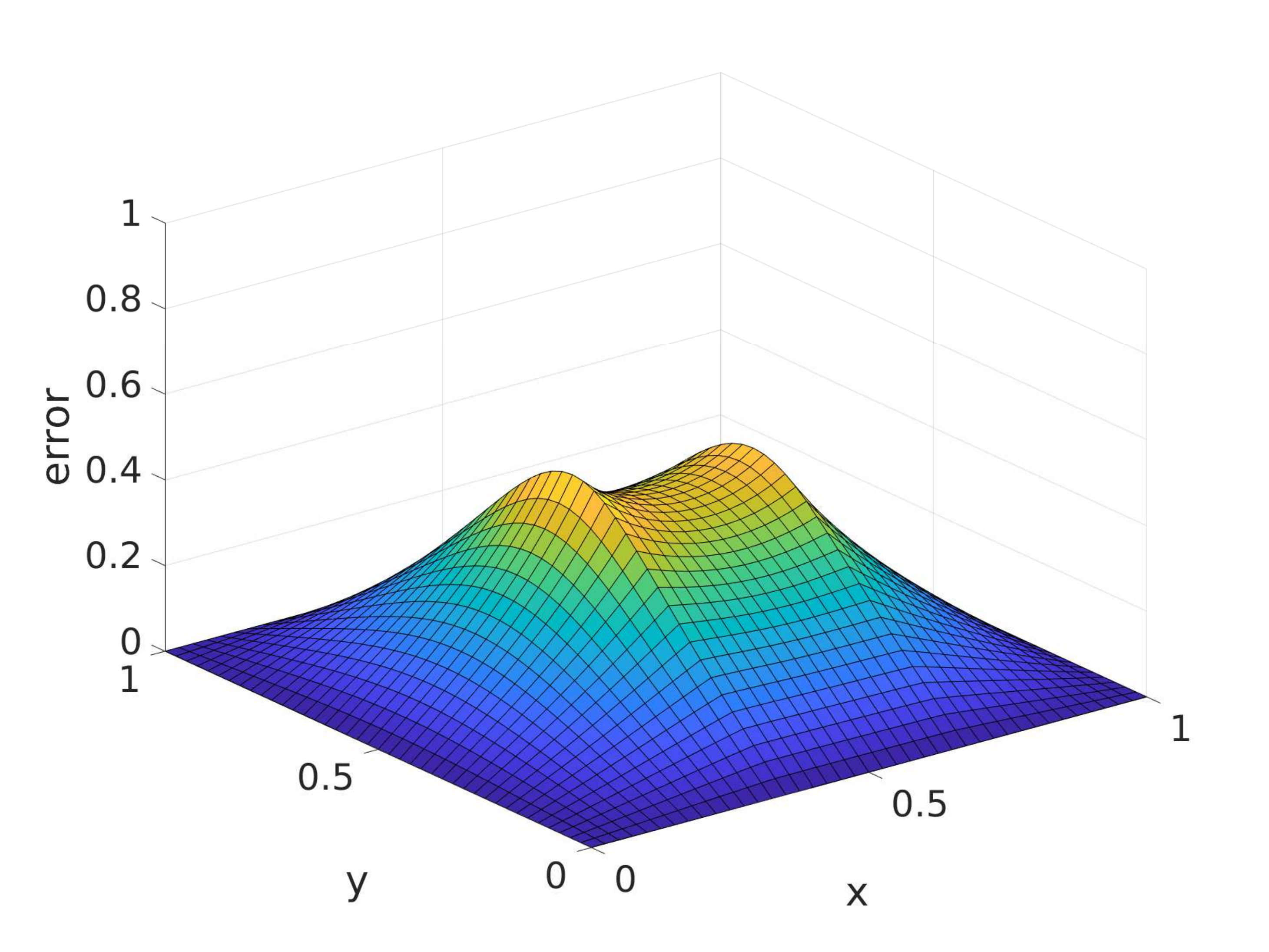}}
  \caption{Classical alternating Schwarz (using Dirichlet transmission
    conditions) sweeping like optimized alternating Schwarz in
    Figure \ref{OSMSweepFig} (the last figure shows the error after one
    double sweep).}
  \label{SMSweepFig}
\end{figure}

Using PML as transmission conditions in optimized Schwarz methods to
approximate the optimal DtN transmission conditions is currently a
very active field of research in the case of cross points.
\cn{leng2019additive} proposed a parallel Schwarz method for
checkerboard domain decompositions including cross points based on the
source transfer formalism which uses PML transmission conditions, and
the method still converges in a finite number of steps, see also
\cn{lengdiag} for a diagonal sweeping variant, and the earlier work
\cite{leng2015overlapping,leng2015fast}. In \cn{taus2020sweeps},
L-sweeps are proposed for the method of polarized traces
interpretation of the optimized Schwarz methods, which traverse a
square domain decomposed into little squares by subdomain solves
organized in the form of L's, from one corner going over the
domain. In these methods, storing the PML layers and using them in the
exchange of information is an essential ingredient, and it is not
clear at this stage if such a formulation using DtN transmission
conditions is possible.

It is however possible to use the block LU decomposition also in the
case of general decompositions including cross points, to obtain a
sweeping domain decomposition method which converges in one double
sweep. We show this in Figure \ref{OptimalSchwarzLUSweep}
\begin{figure}
  \centering
  \mbox{\includegraphics[width=0.33\textwidth,trim=50 20 50 50,clip]{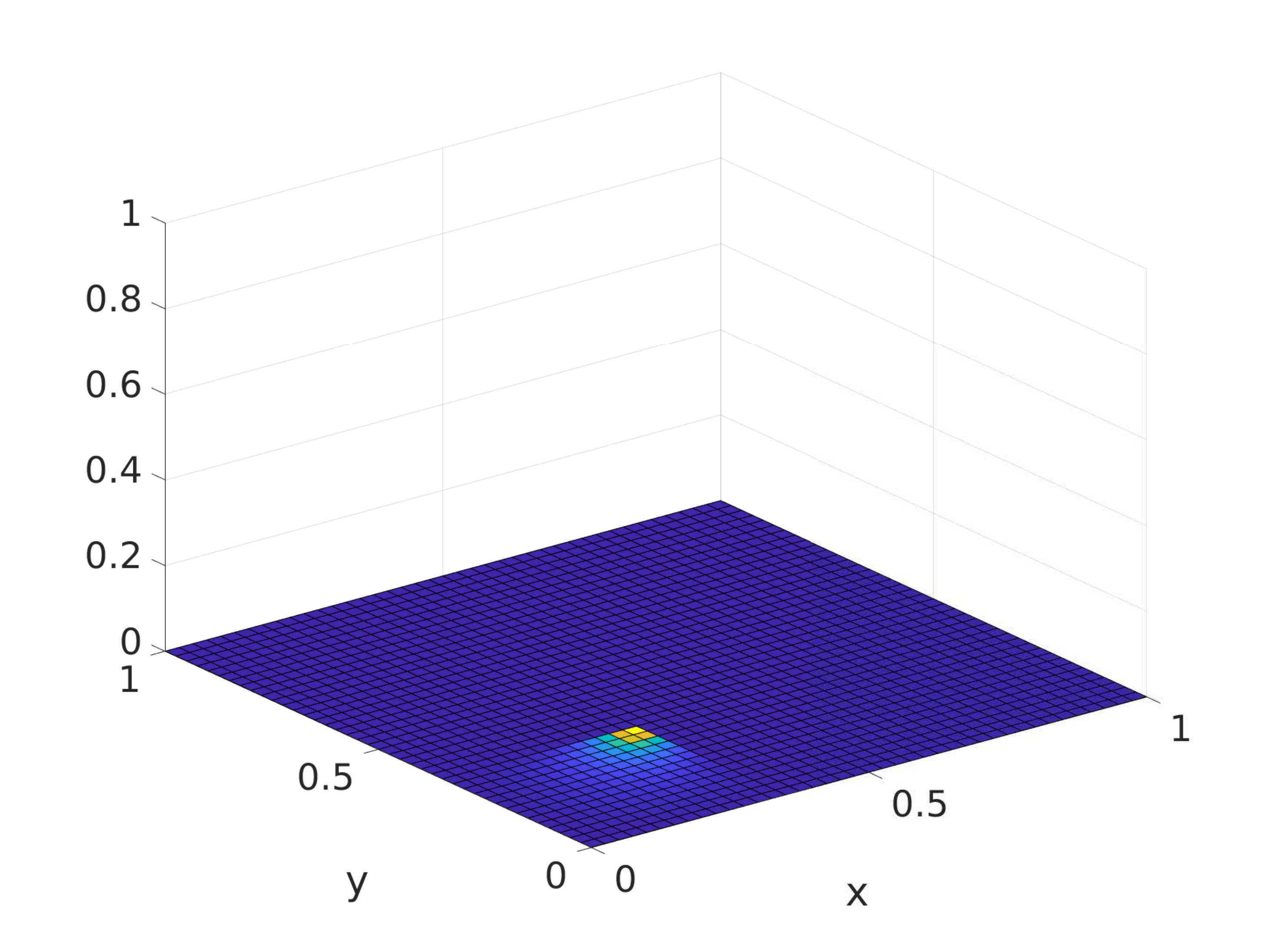}
\includegraphics[width=0.33\textwidth,trim=50 20 50 50,clip]{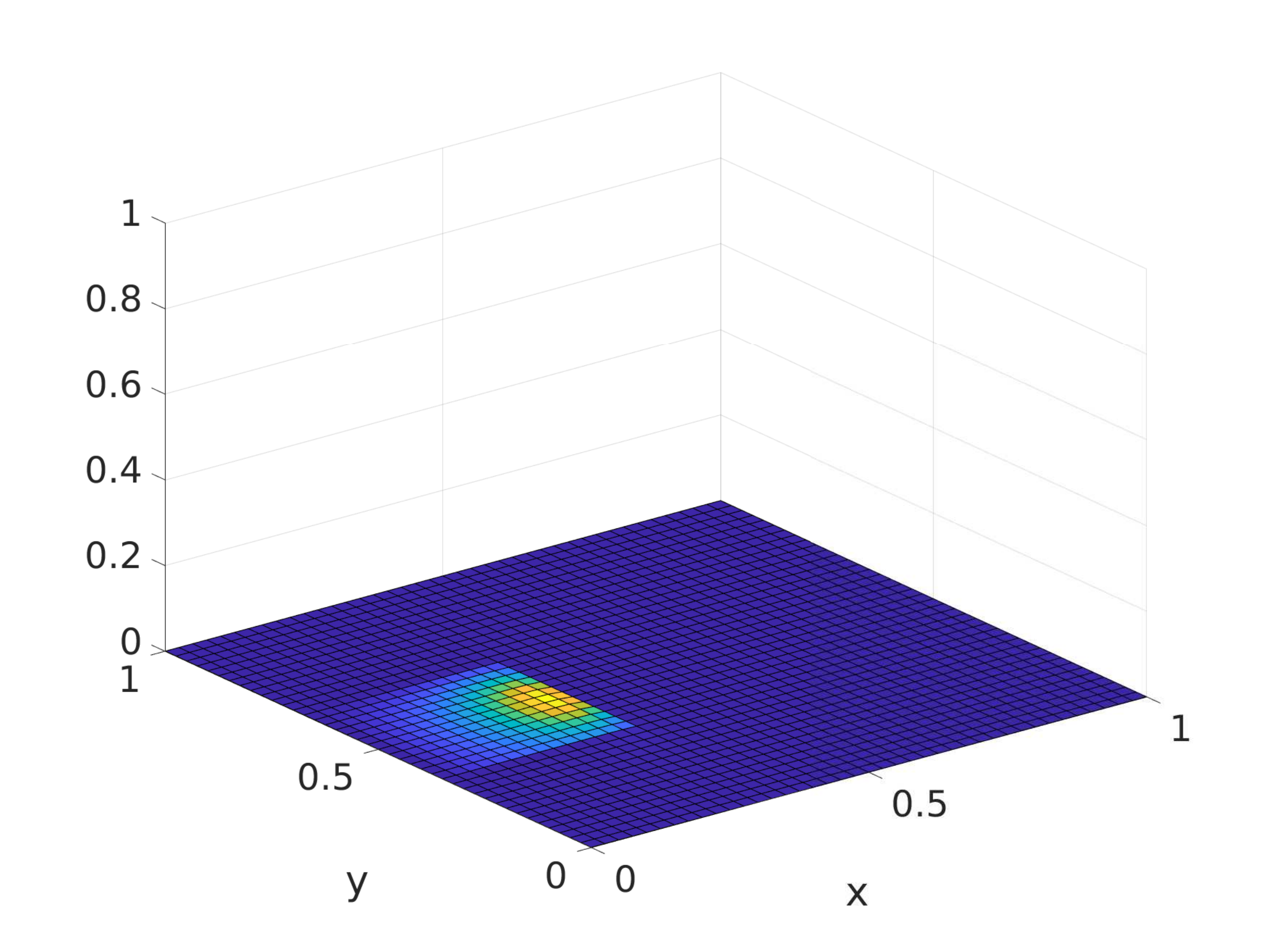}
\includegraphics[width=0.33\textwidth,trim=50 20 50 50,clip]{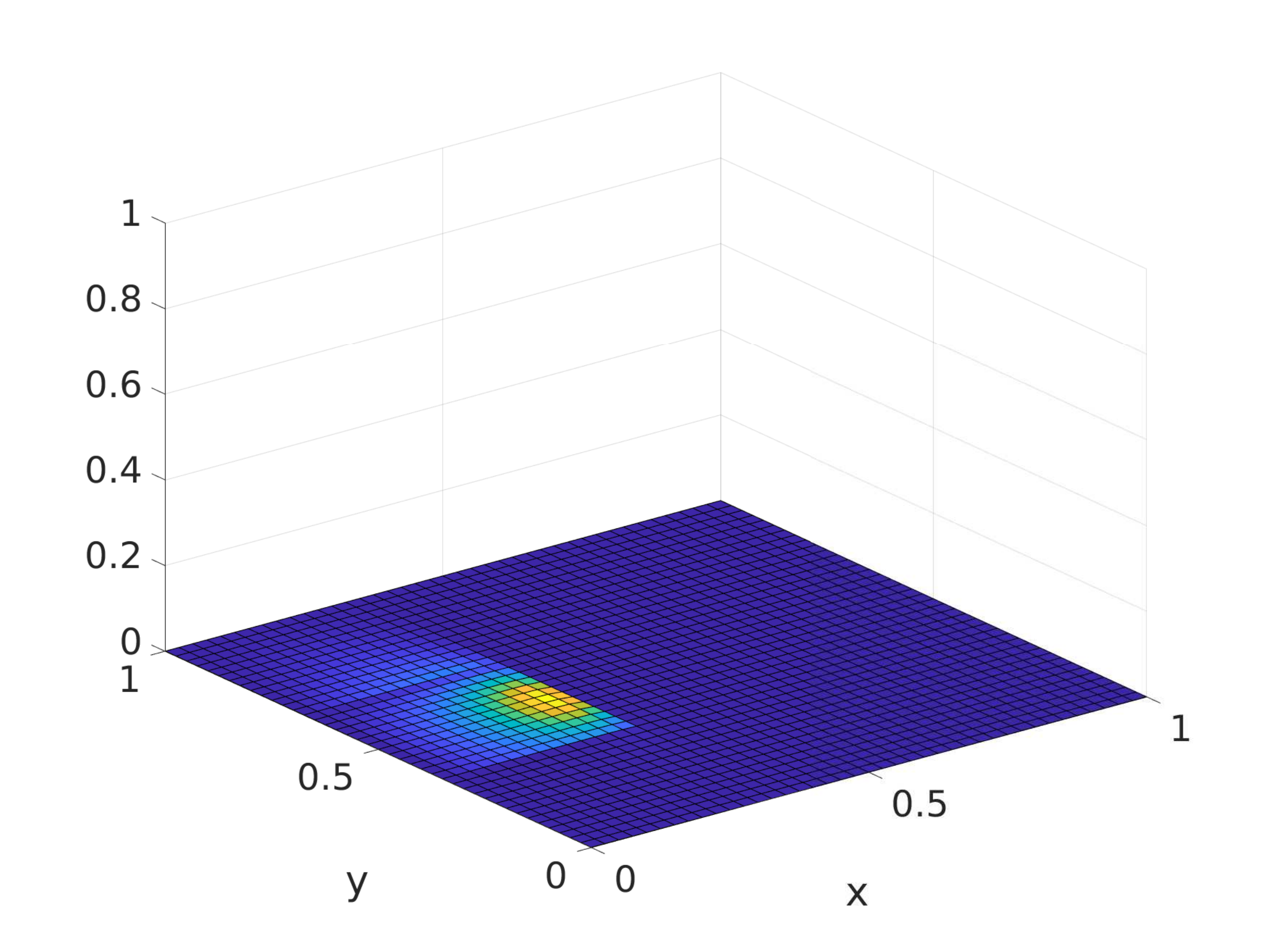}}
  \mbox{\includegraphics[width=0.33\textwidth,trim=50 20 50 50,clip]{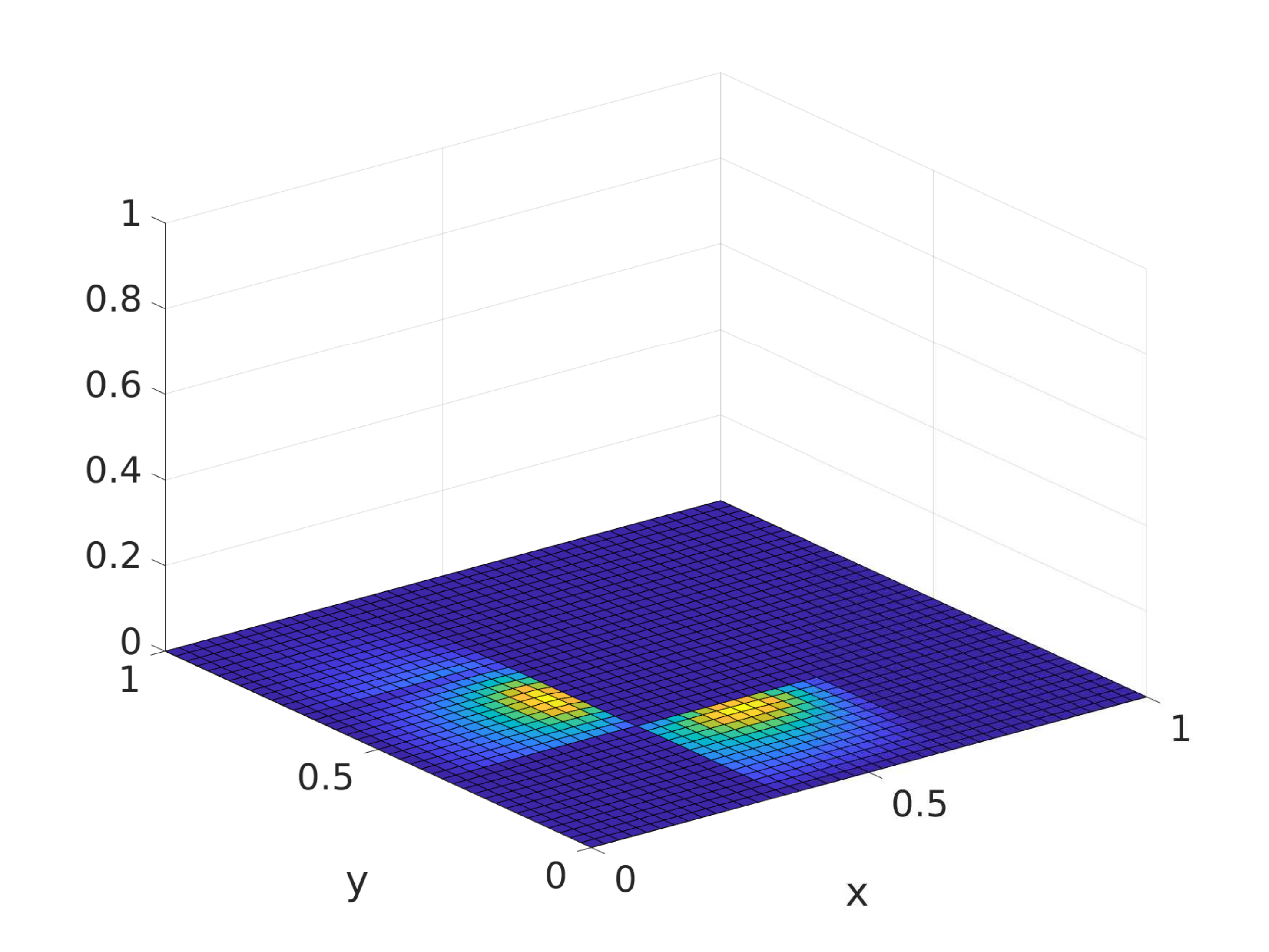}
\includegraphics[width=0.33\textwidth,trim=50 20 50 50,clip]{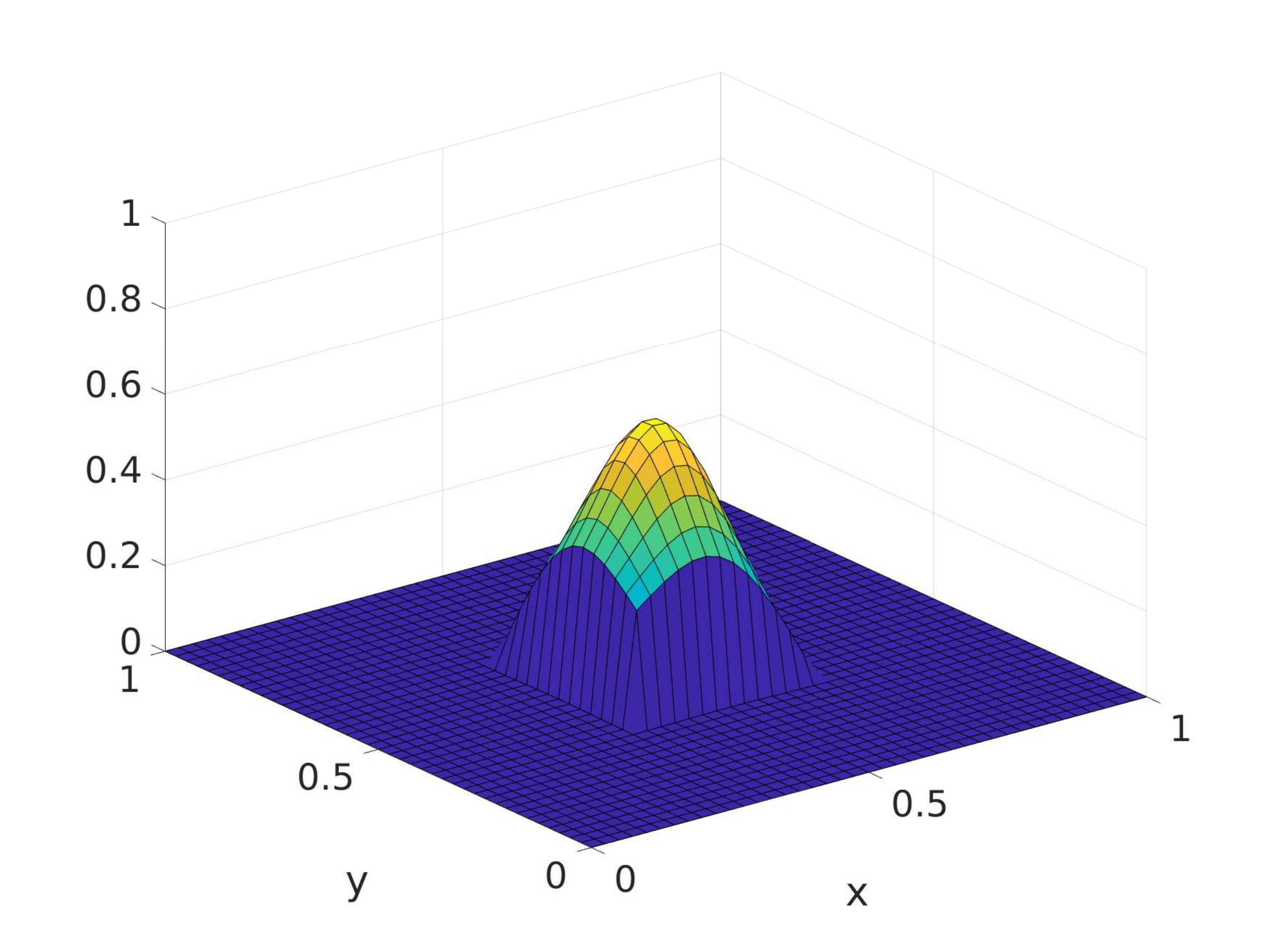}
\includegraphics[width=0.33\textwidth,trim=50 20 50 50,clip]{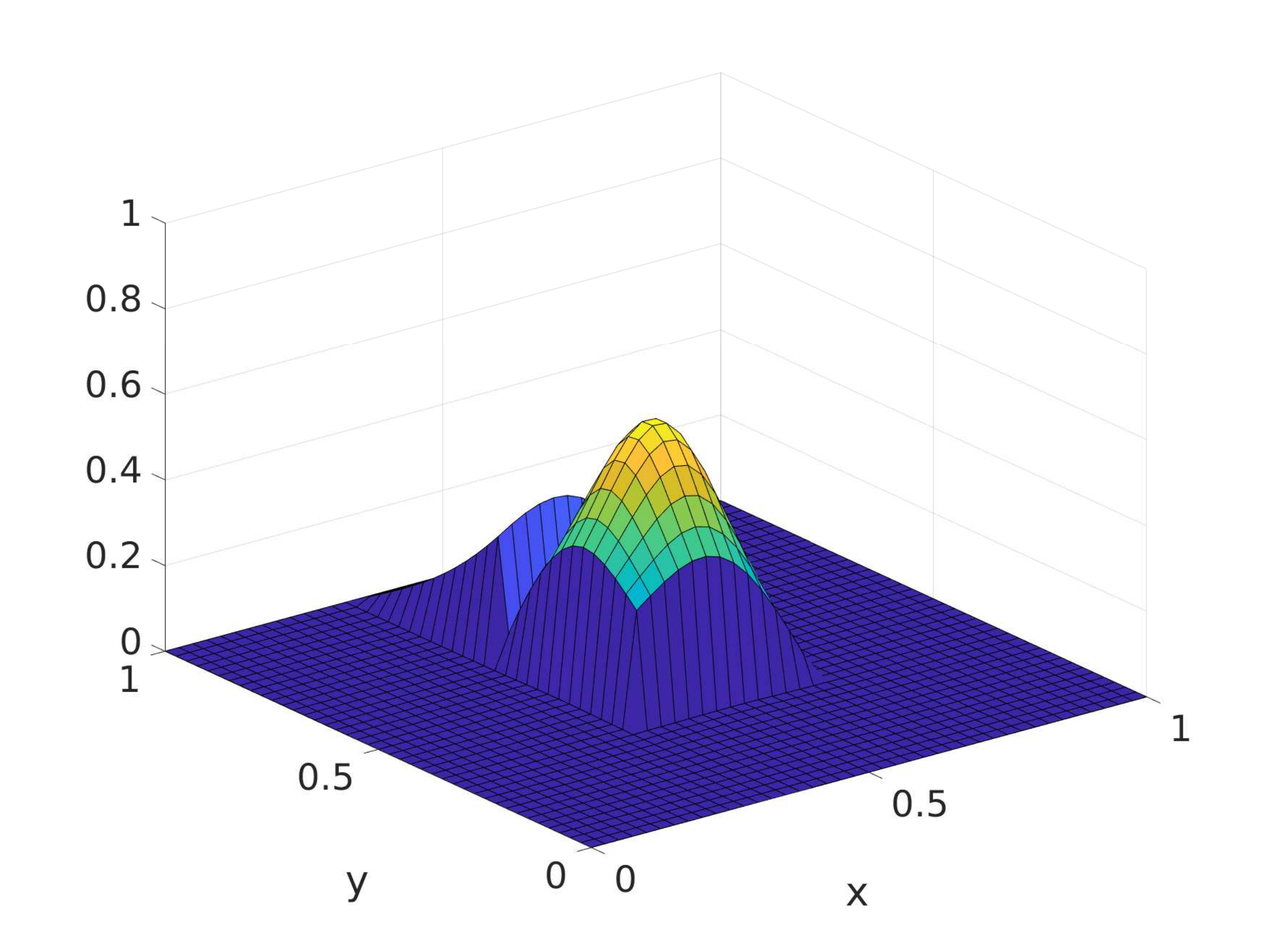}}
  \mbox{\includegraphics[width=0.33\textwidth,trim=50 20 50 50,clip]{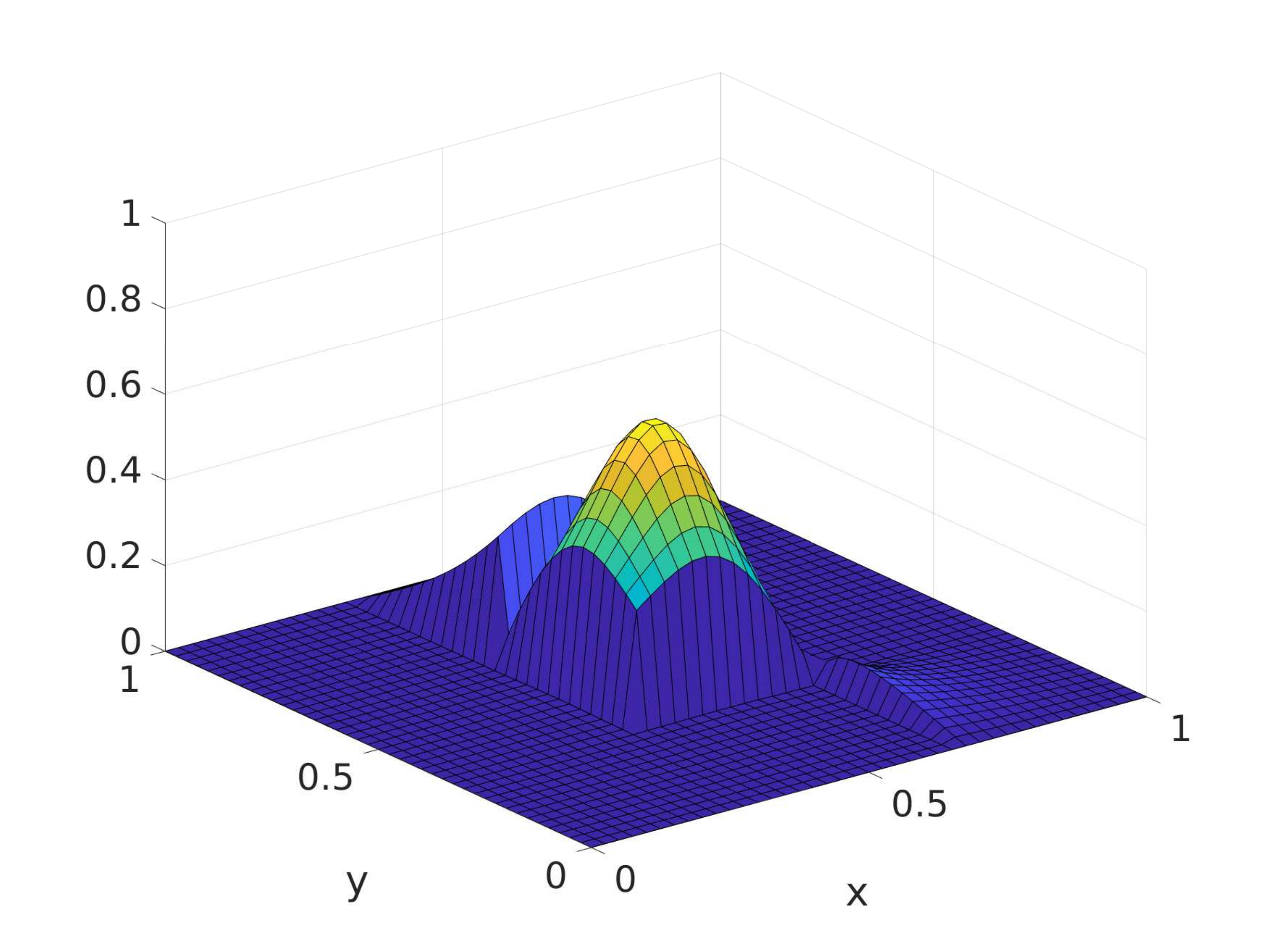}
\includegraphics[width=0.33\textwidth,trim=50 20 50 50,clip]{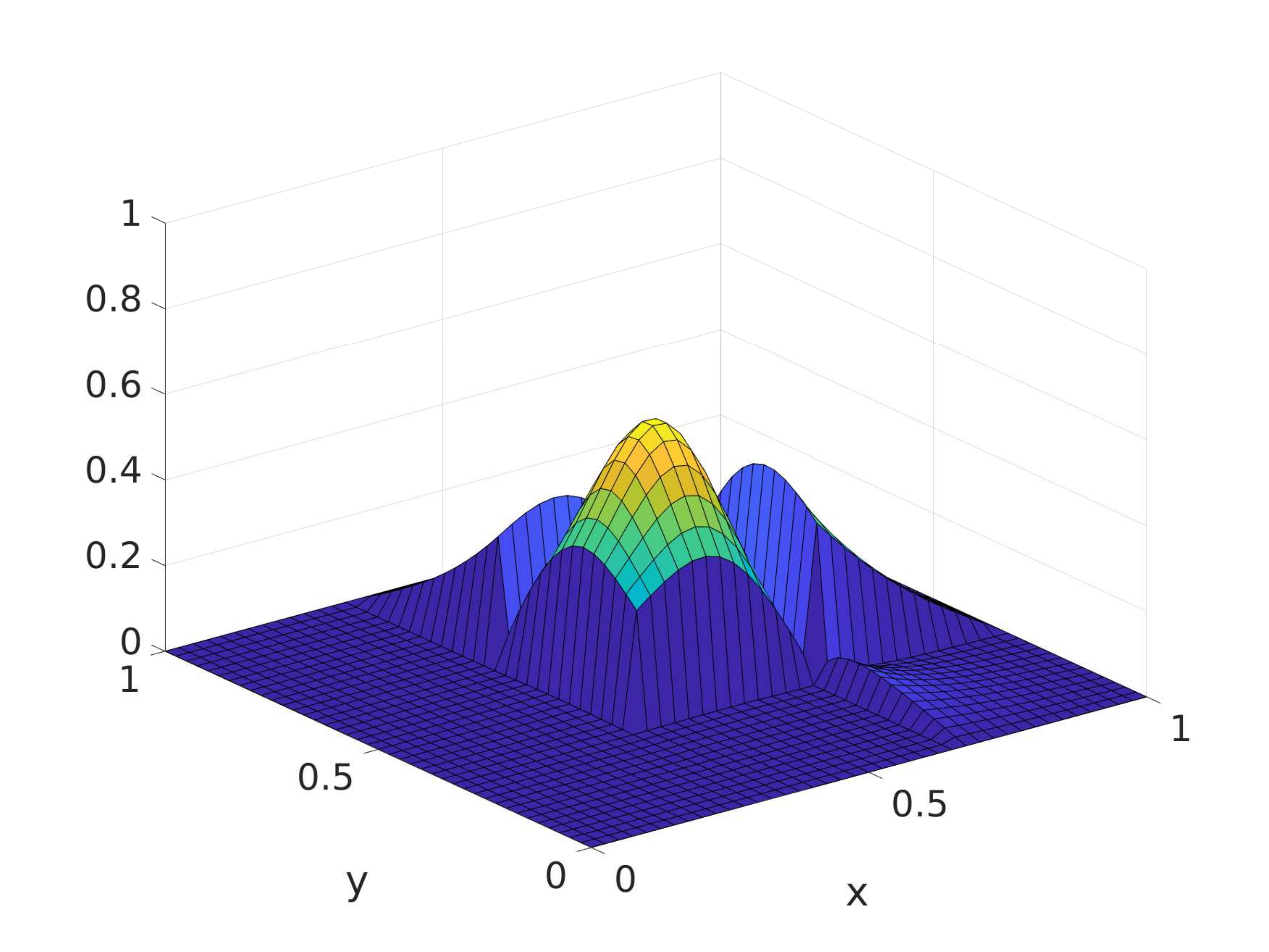}
\includegraphics[width=0.33\textwidth,trim=50 20 50 50,clip]{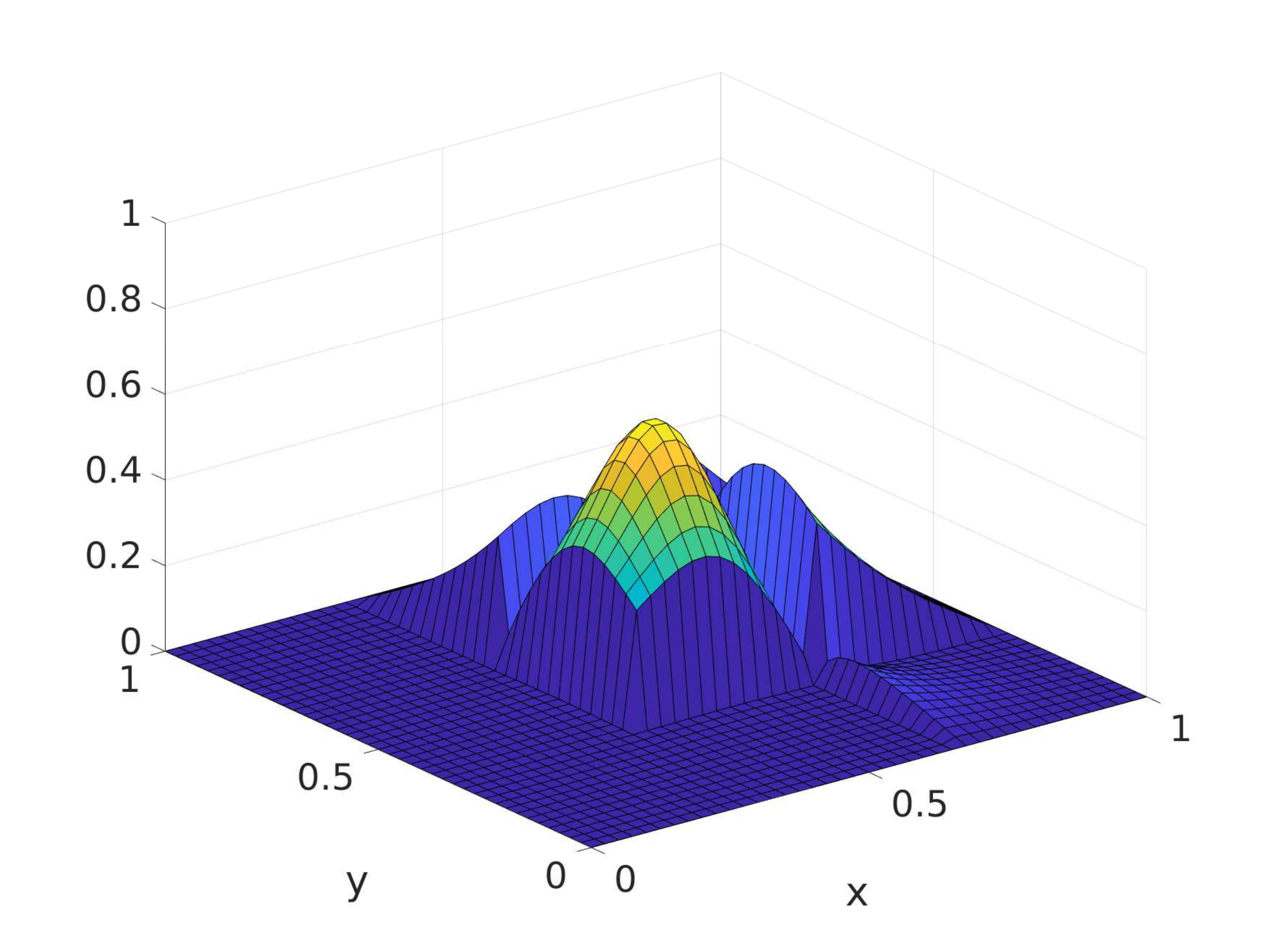}}
  \mbox{\includegraphics[width=0.33\textwidth,trim=50 20 50 50,clip]{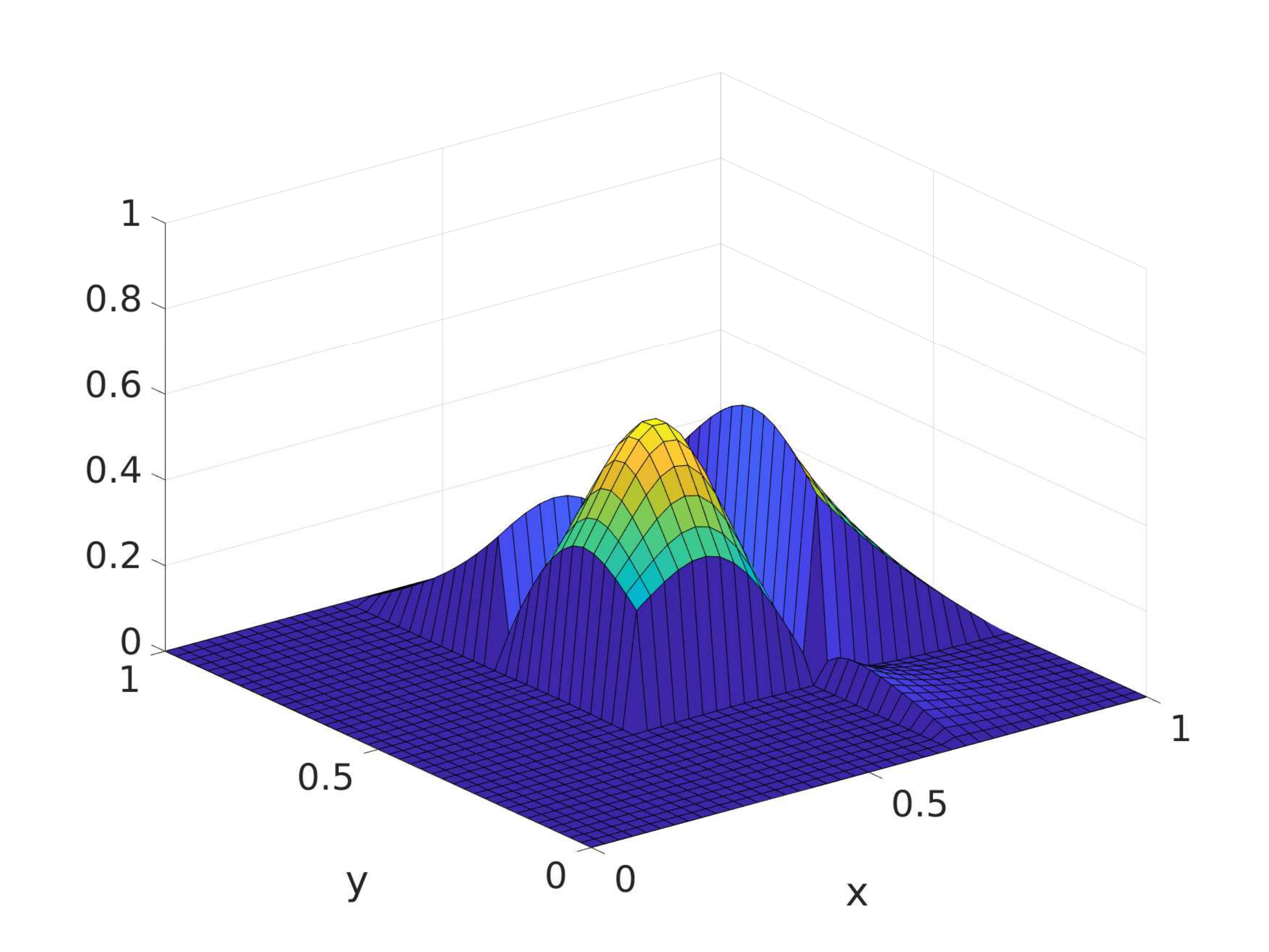}
\includegraphics[width=0.33\textwidth,trim=50 20 50 50,clip]{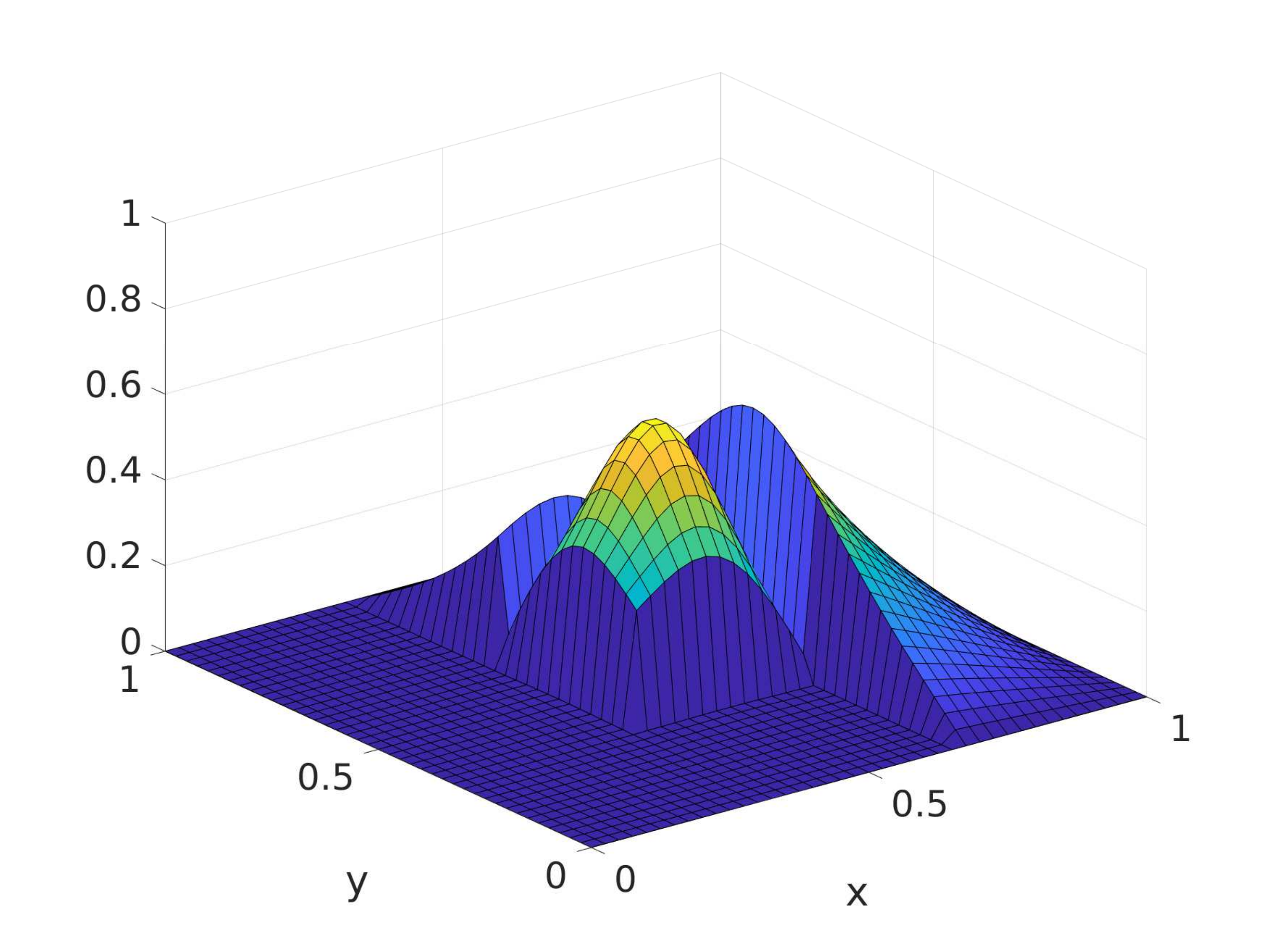}
\includegraphics[width=0.33\textwidth,trim=50 20 50 50,clip]{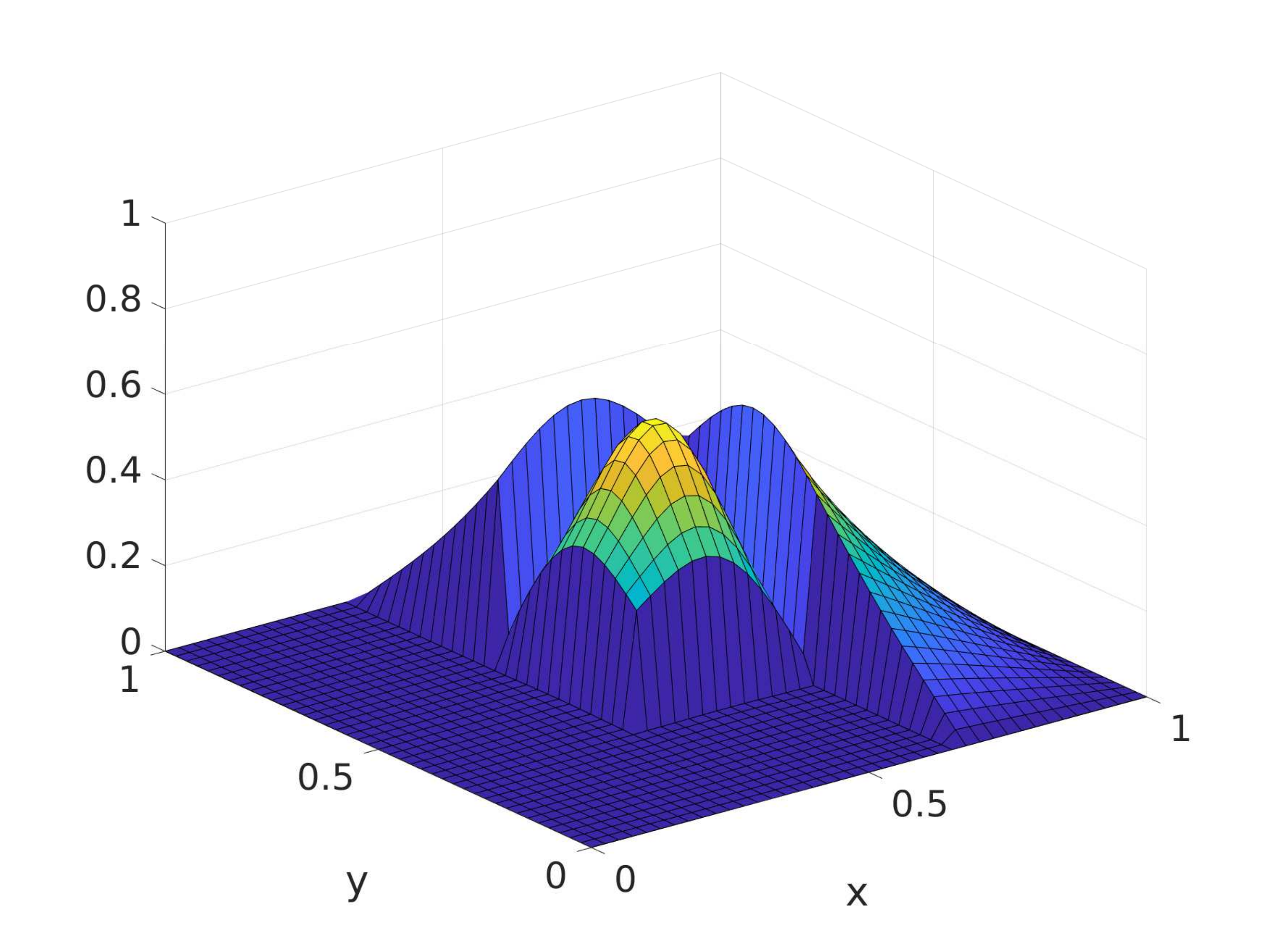}}
  \mbox{\includegraphics[width=0.33\textwidth,trim=50 20 50 50,clip]{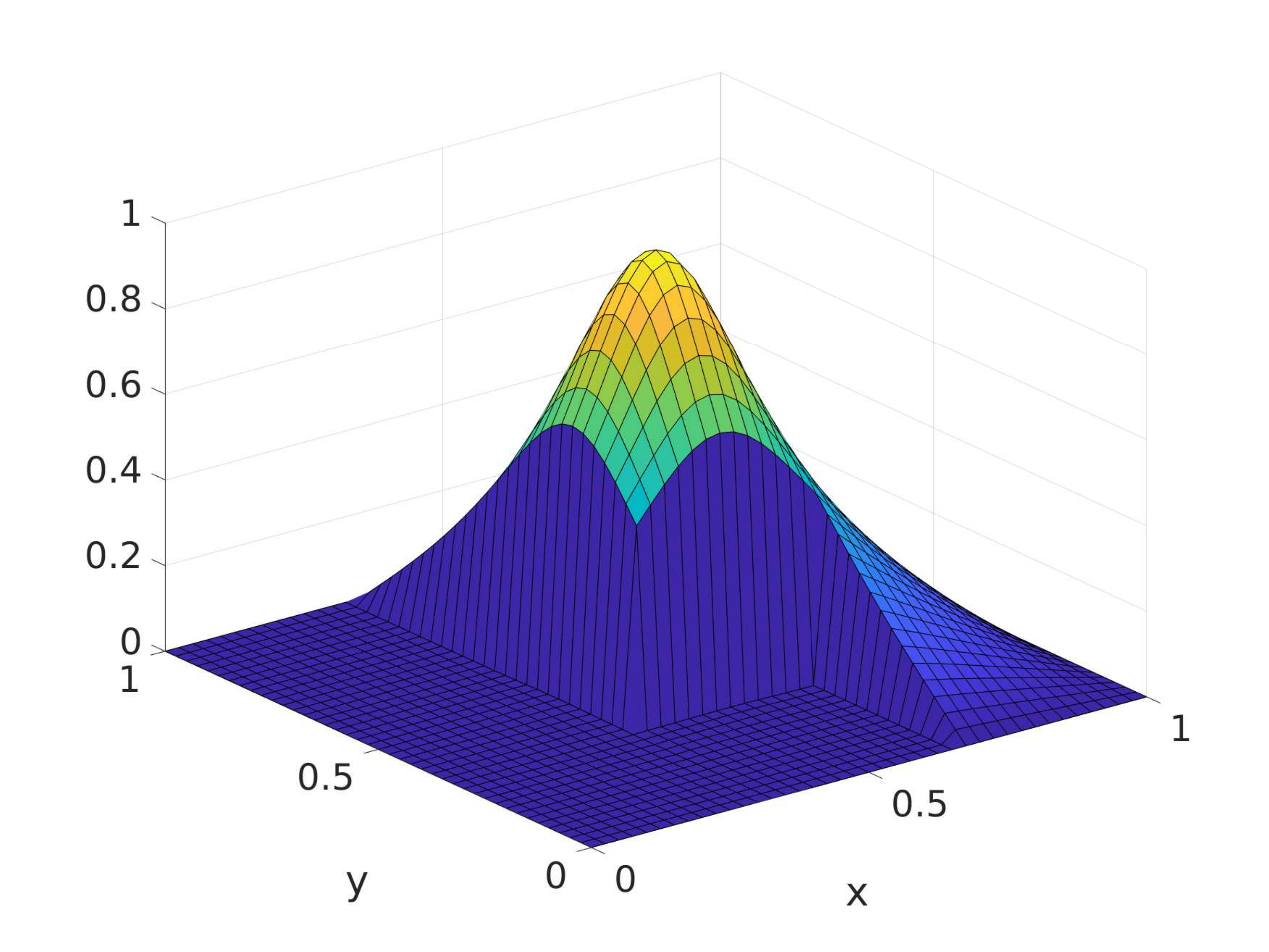}
\includegraphics[width=0.33\textwidth,trim=50 20 50 50,clip]{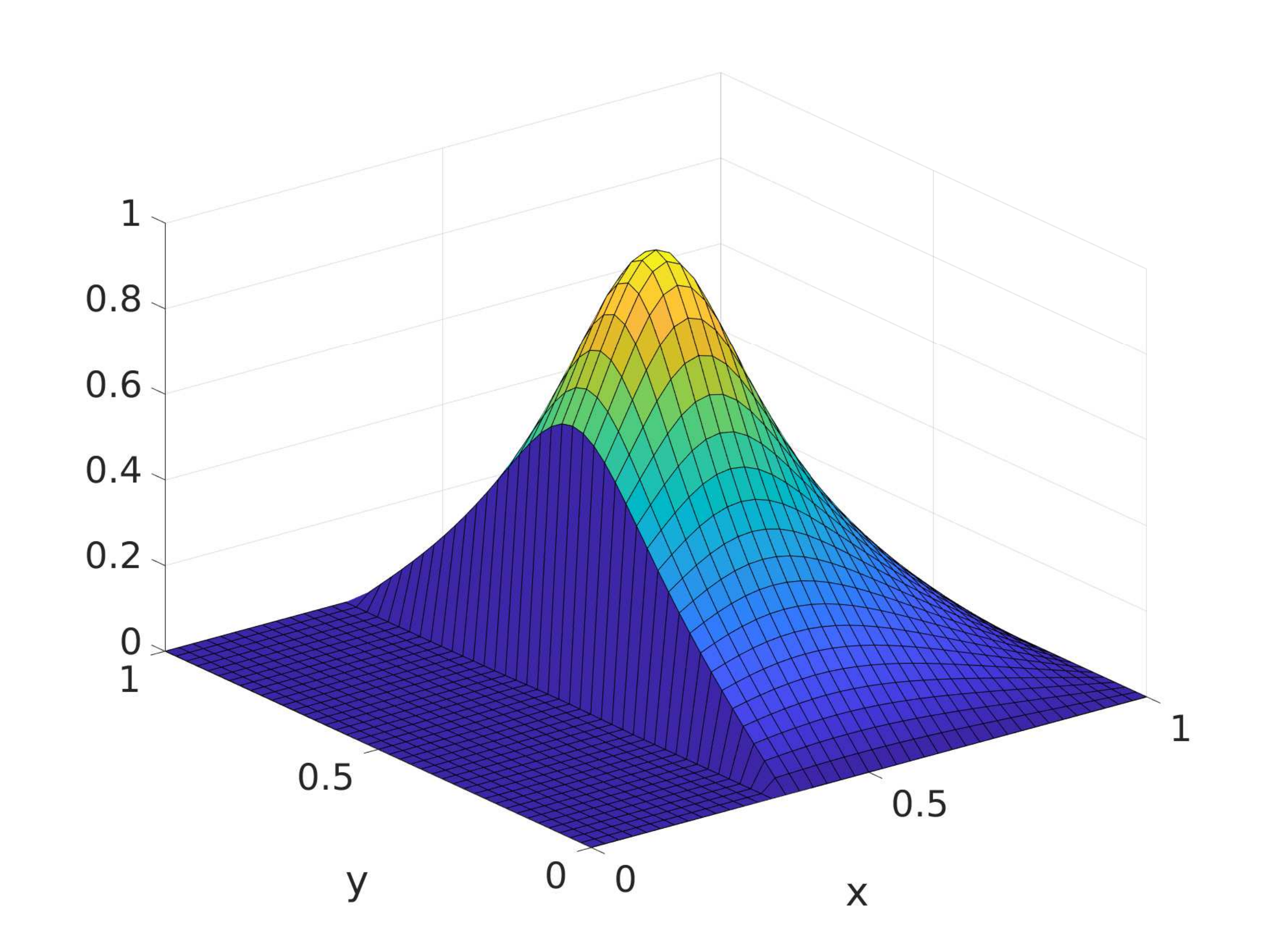}
\includegraphics[width=0.33\textwidth,trim=50 20 50 50,clip]{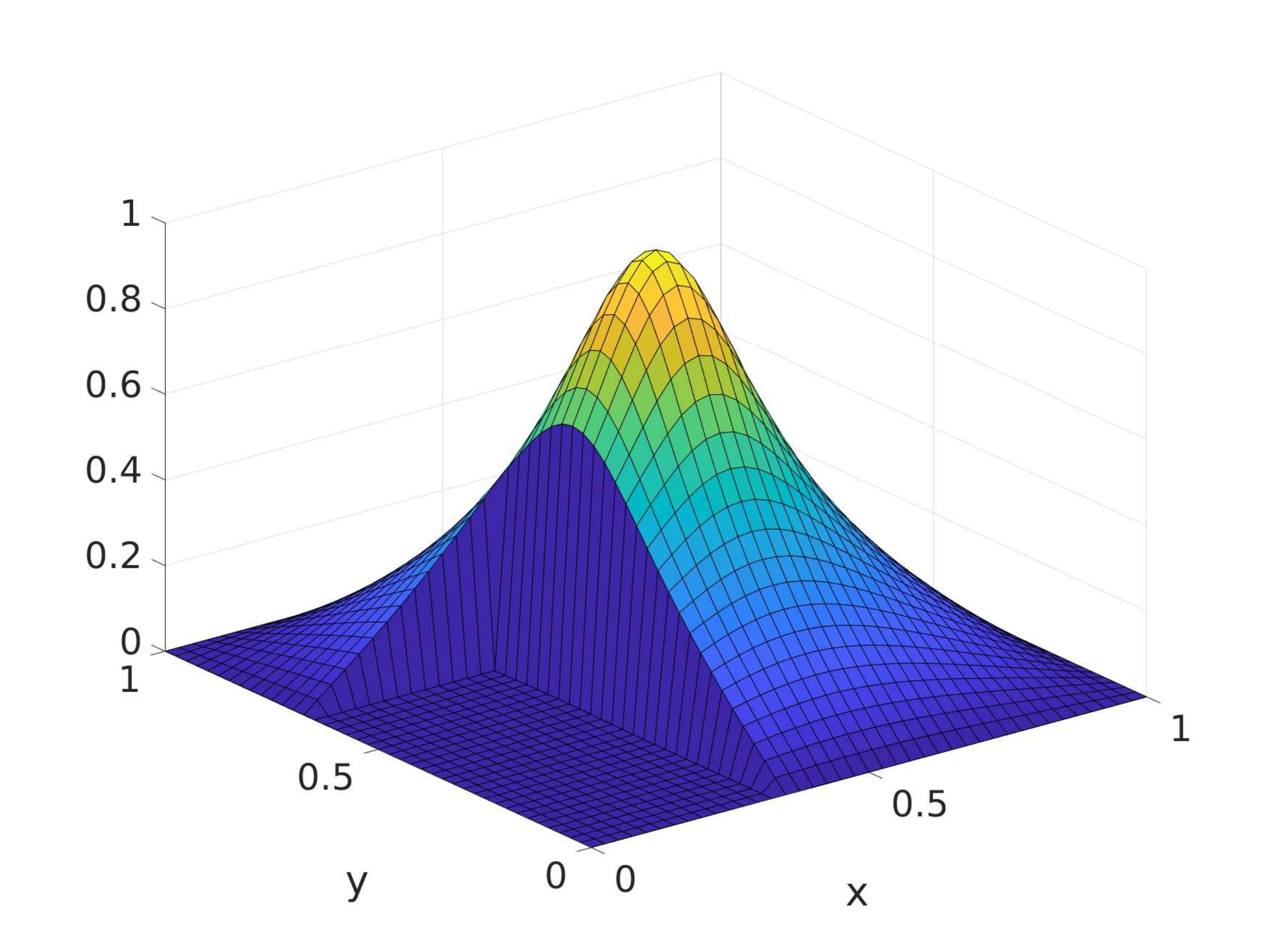}}
  \mbox{\includegraphics[width=0.33\textwidth,trim=50 20 50 50,clip]{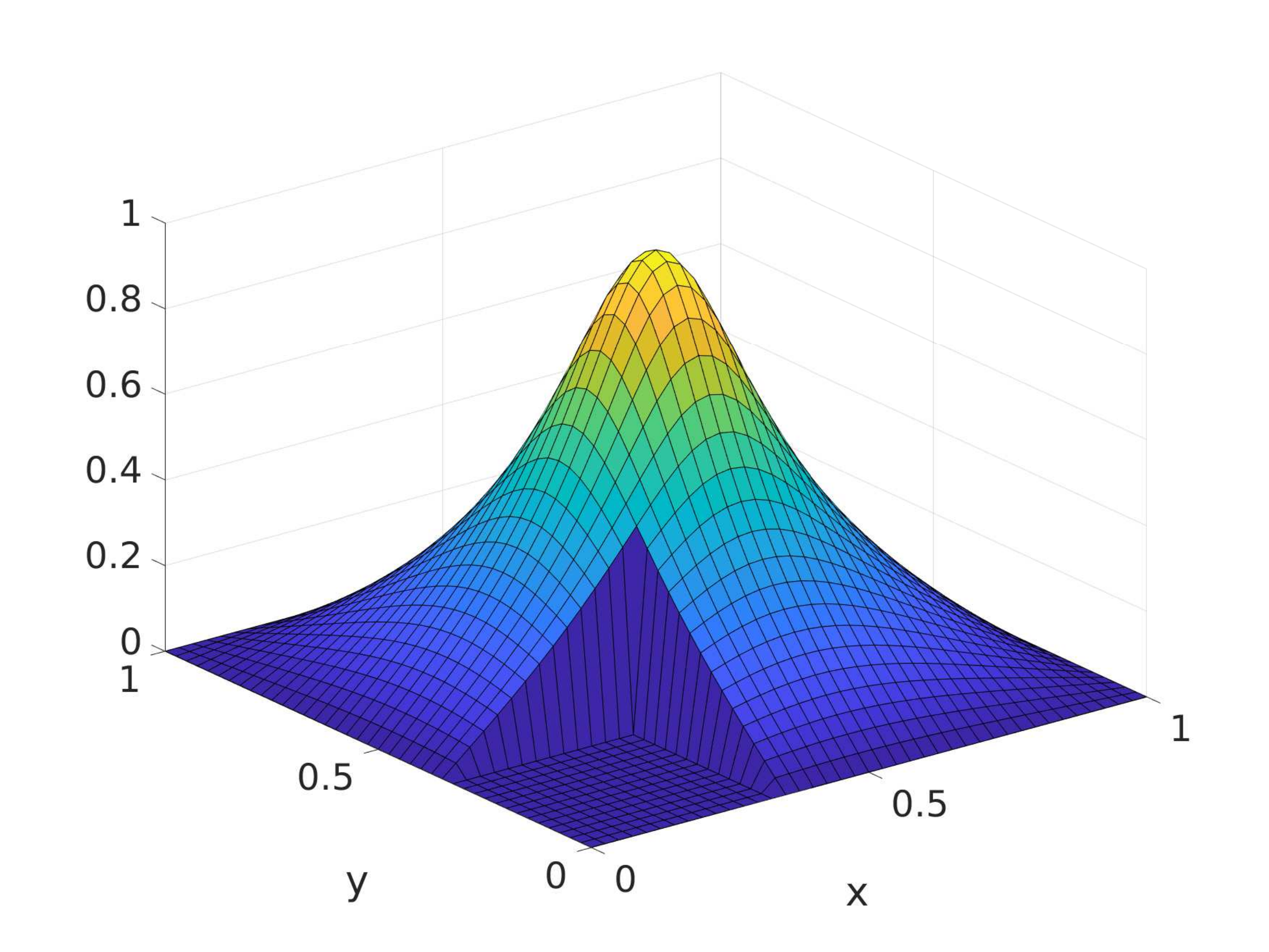}
\includegraphics[width=0.33\textwidth,trim=50 20 50 50,clip]{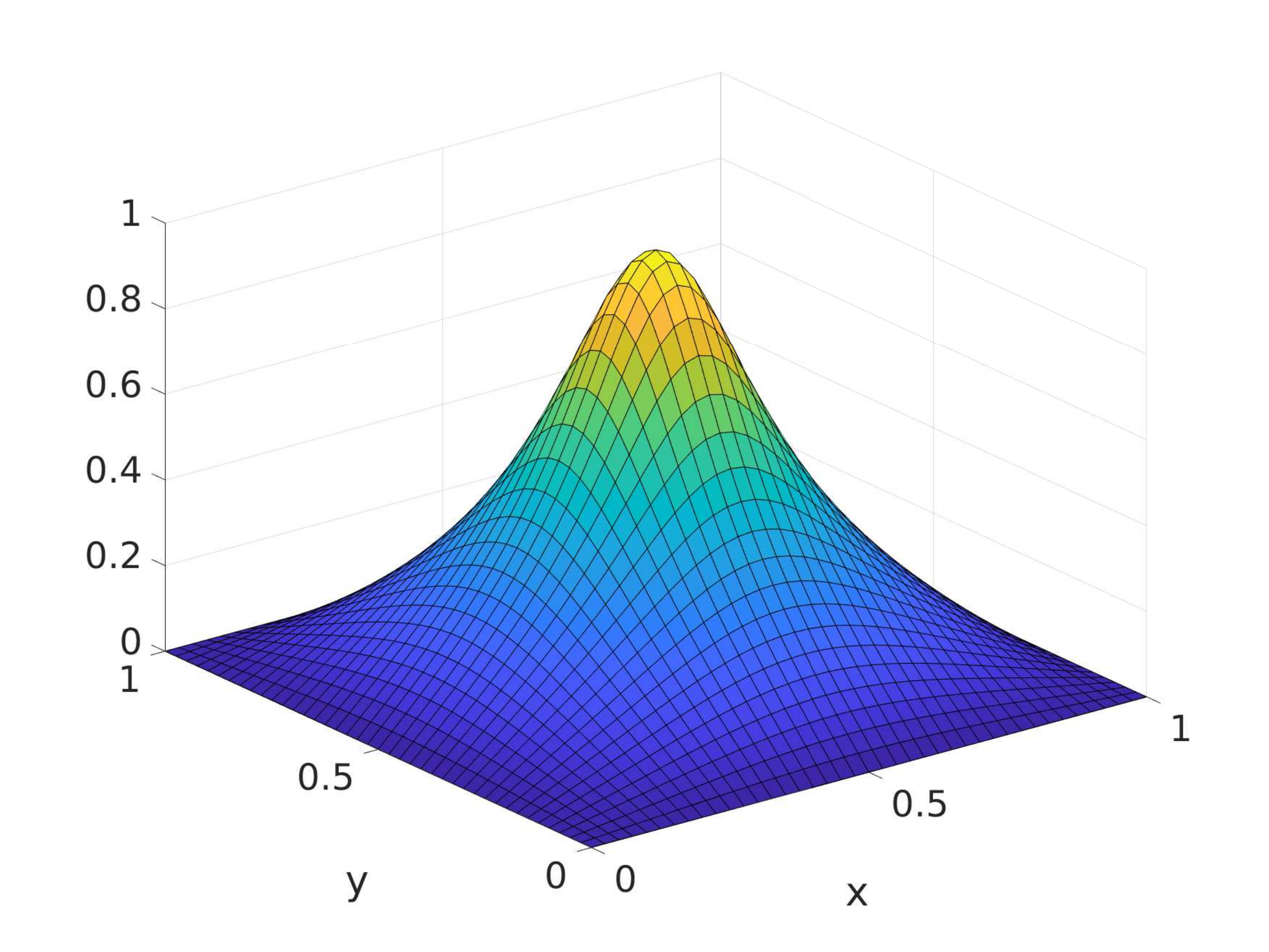}}\hfill
  \caption{Forward and backward sweep for an optimal Schwarz method
    obtained by a block LU decomposition for the model problem and
    $3\times 3$ subdomains. We observe convergence after one double
    sweep.}
  \label{OptimalSchwarzLUSweep}
\end{figure}
for our model problem and the same $3\times3$ domain decomposition
used in Figure \ref{PoissonRASORASFig}. We have chosen here simply a
lexicographic ordering of the subdomains, and we see that the method
converges in one double sweep. We show in Figure \ref{OSMLUfactors}
\begin{figure}
  \centering
  \mbox{\includegraphics[width=0.49\textwidth]{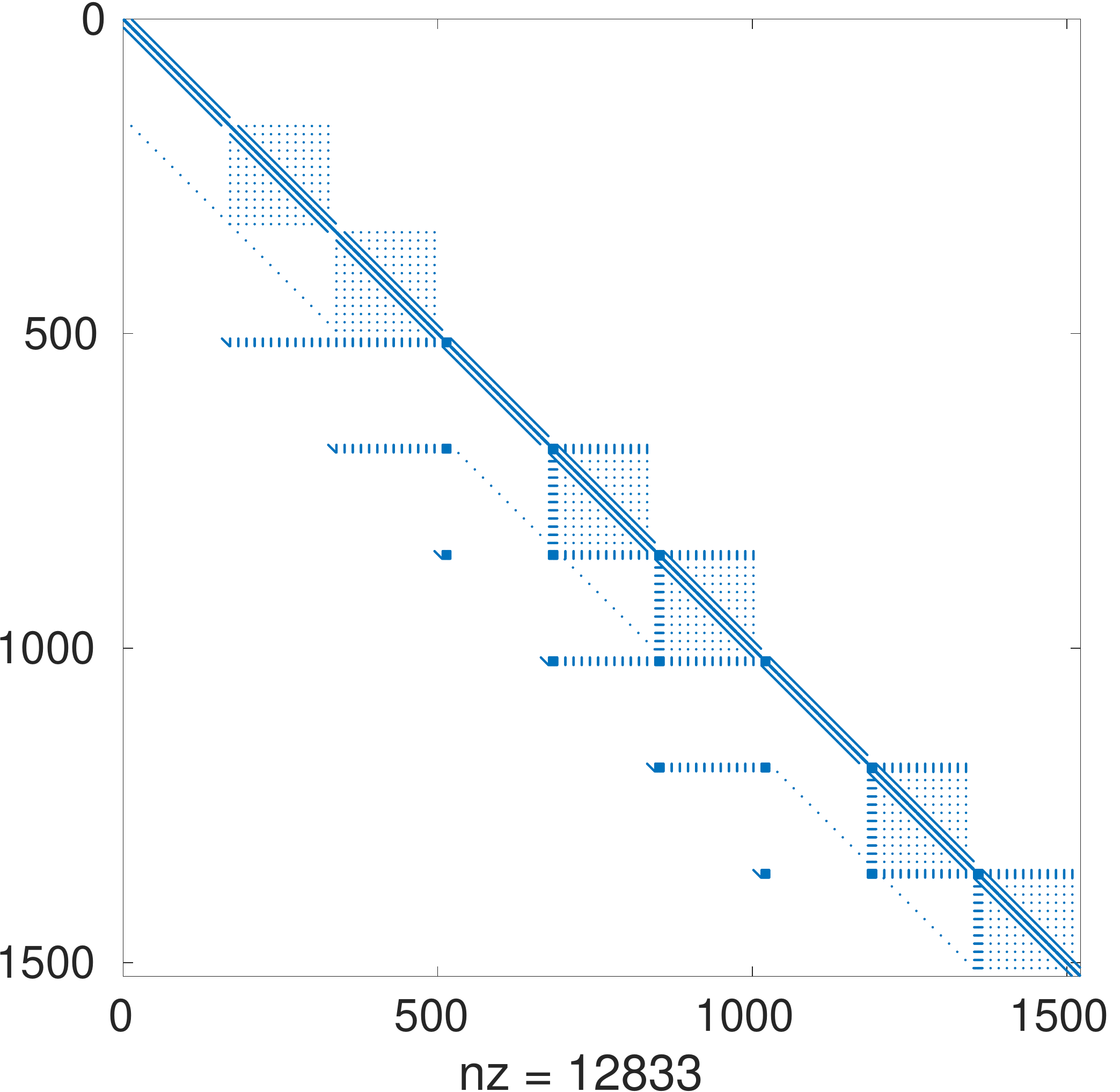}
\includegraphics[width=0.49\textwidth]{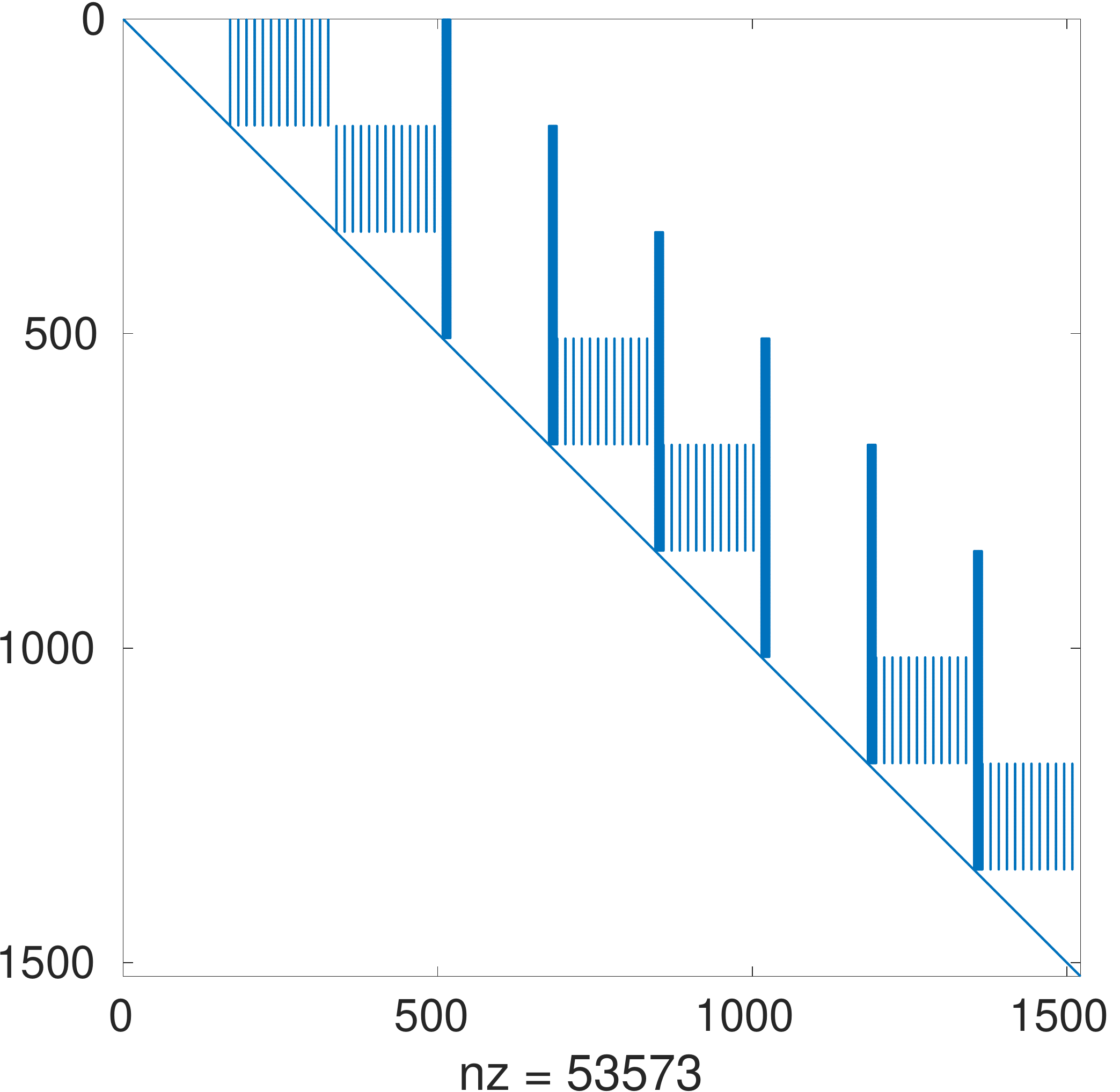}}
  \caption{Sparsity of the block $L$ and $U$ factors of the optimal
    Schwarz method for the $3\times 3$ subdomain decomposition from
    Figure \ref{OptimalSchwarzLUSweep}.}
  \label{OSMLUfactors}
\end{figure}
the sparsity structure of the corresponding LU factors, which indicate the structure of the
transmission conditions generated by the block LU decomposition in this case, a subject that needs
further investigation; if the domain decomposition is a strip decomposition without cross points,
\ie\ like in Figure \ref{1Dand2DDecompositionFig} on the right, it is known that the block LU
decomposition generates transparent transmission conditions on the left and Dirichlet conditions on
the right of the subdomains, like in the design of the source transfer domain decomposition method. There
is uniqueness in the block LU decomposition only once one chooses the diagonal of one of the
factors, \eg~the identity matrices on the diagonal of $U$ as we did here. For the strip
decomposition our block LU factorization gives therefore a different algorithm from the optimal
Schwarz method shown in Figure \ref{PoissonOptimalSchwarzFig} which used transparent boundary
conditions involving the DtN operator on both sides of the subdomains. We give a simple Matlab
implementation for these block LU factorizations in Appendix A for the interested
reader to experiment with.

Note that we could have used any other ordering of the subdomains for
the sweeping before performing the block LU factorization: we show for
example in Figure \ref{OptimalSchwarzLUSweepL}
\begin{figure}
  \centering
  \mbox{\includegraphics[width=0.33\textwidth,trim=50 20 50 50,clip]{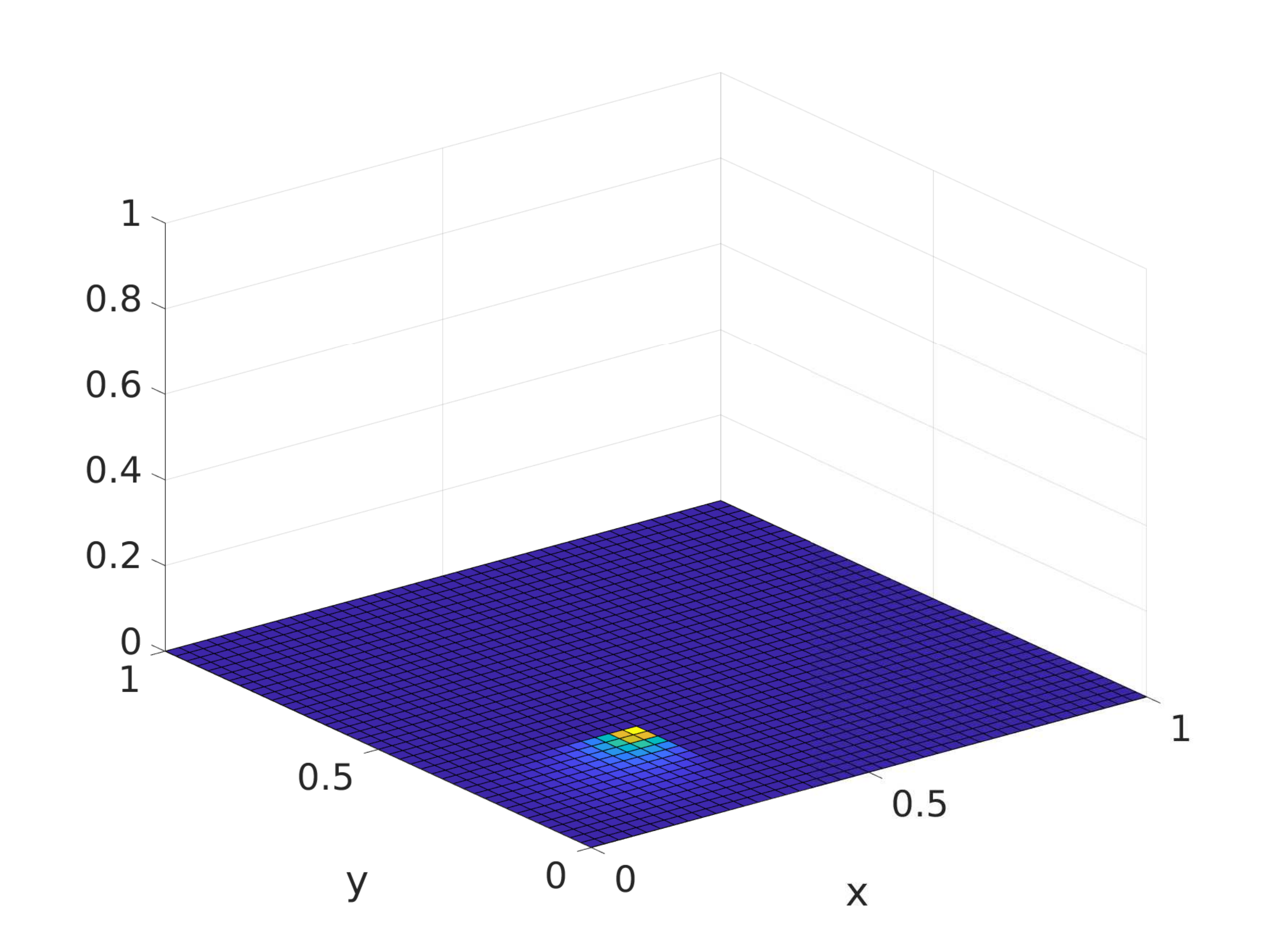}
\includegraphics[width=0.33\textwidth,trim=50 20 50 50,clip]{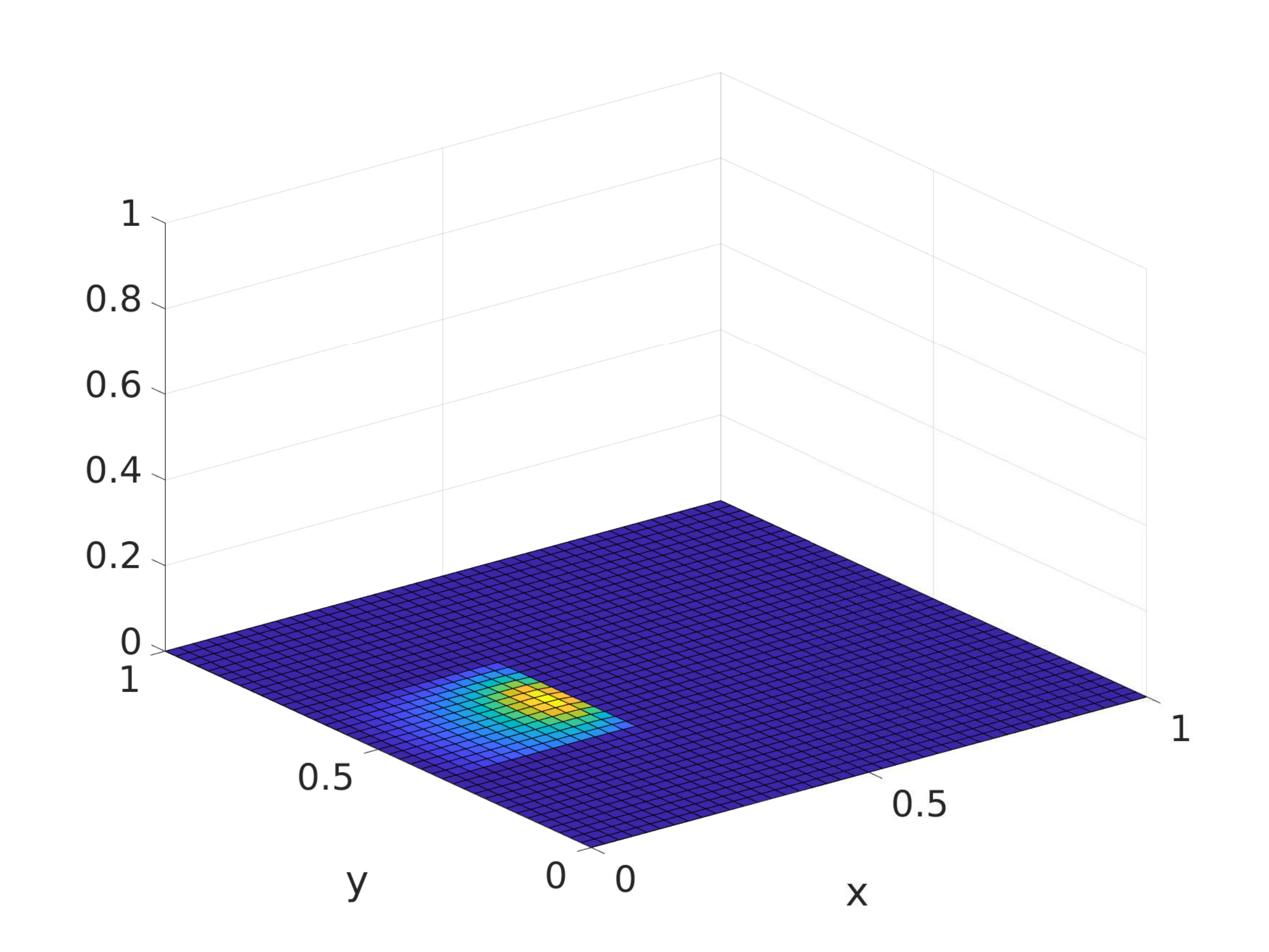}
\includegraphics[width=0.33\textwidth,trim=50 20 50 50,clip]{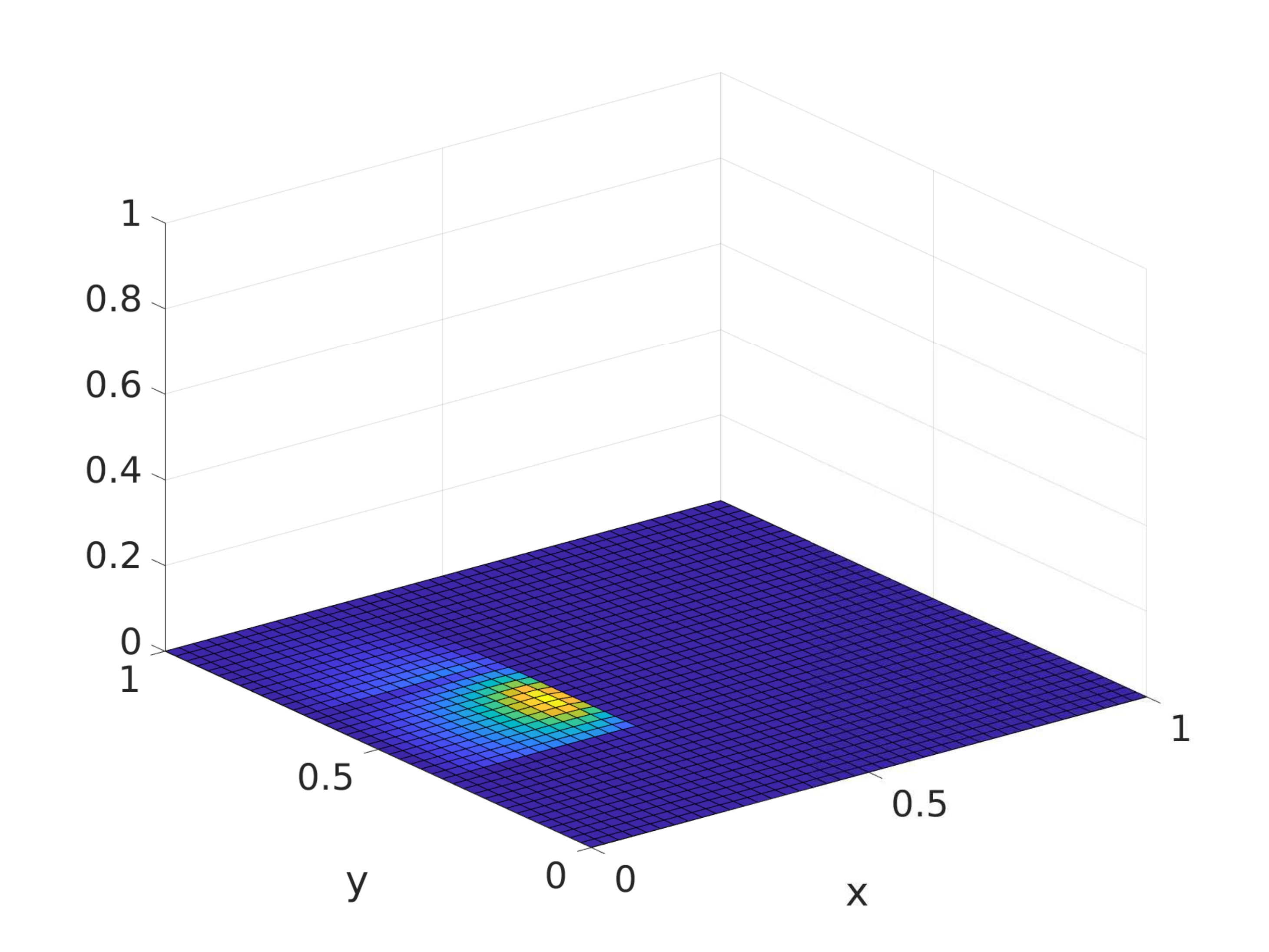}}
  \mbox{\includegraphics[width=0.33\textwidth,trim=50 20 50 50,clip]{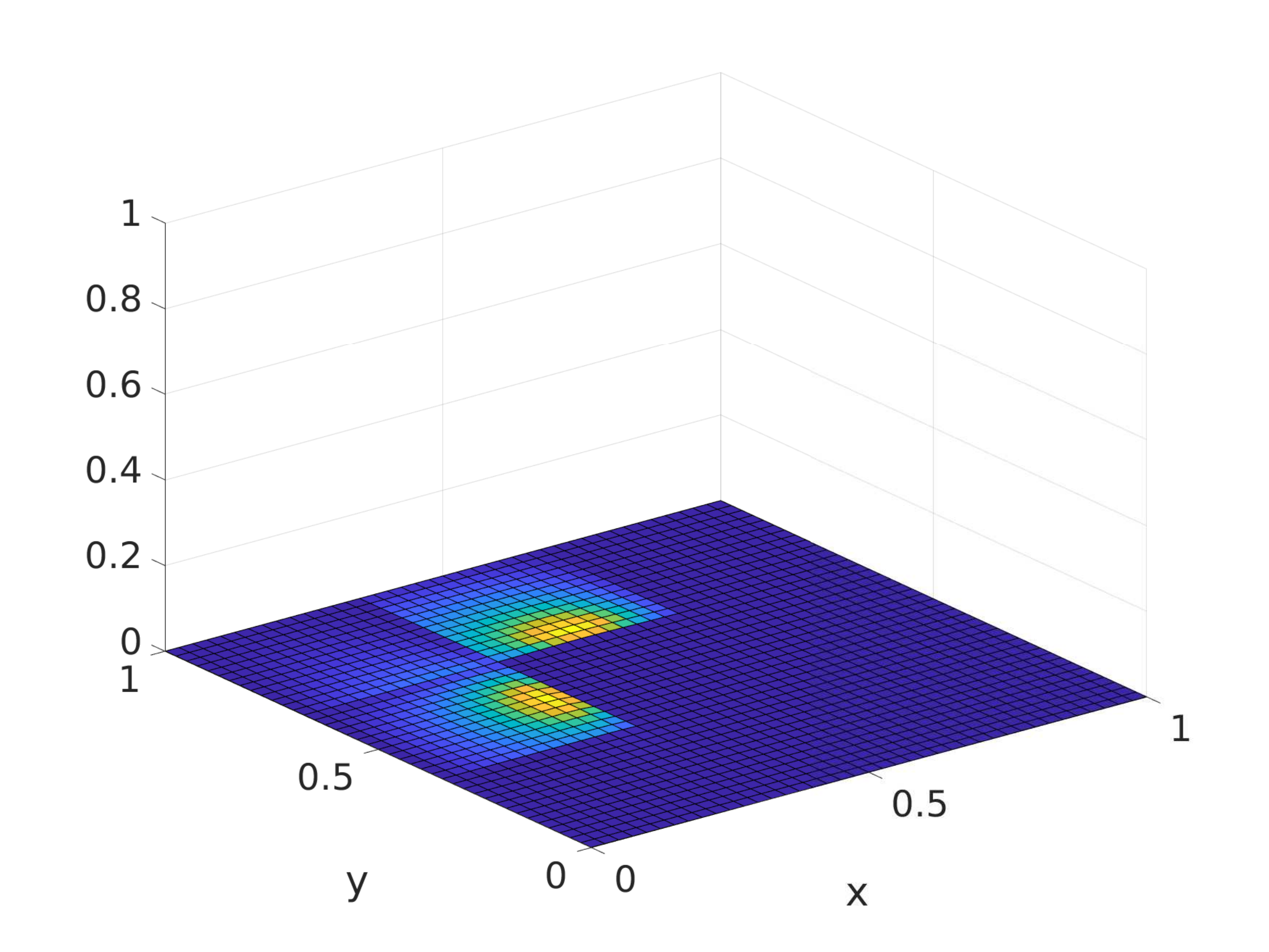}
\includegraphics[width=0.33\textwidth,trim=50 20 50 50,clip]{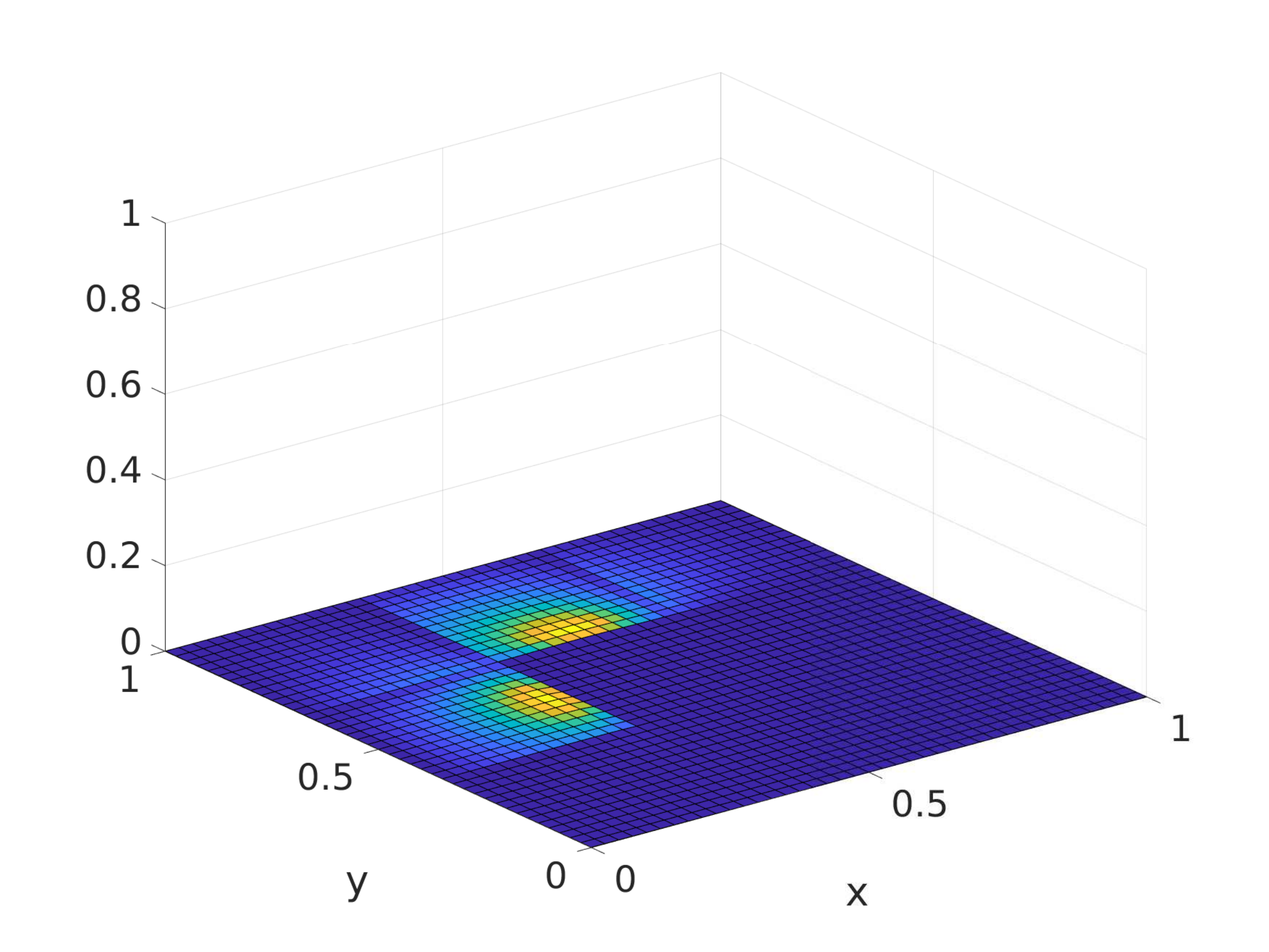}
\includegraphics[width=0.33\textwidth,trim=50 20 50 50,clip]{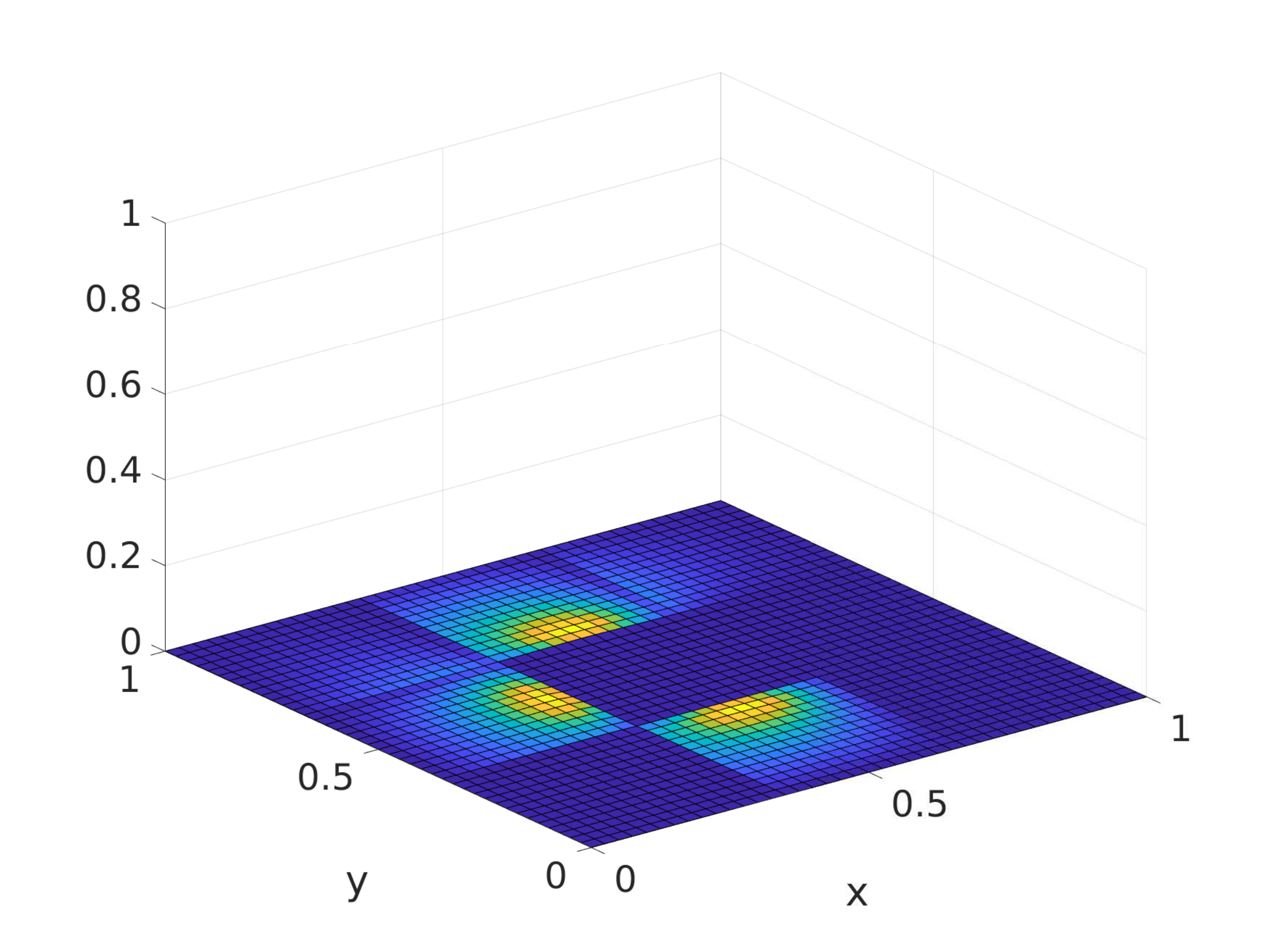}}
  \mbox{\includegraphics[width=0.33\textwidth,trim=50 20 50 50,clip]{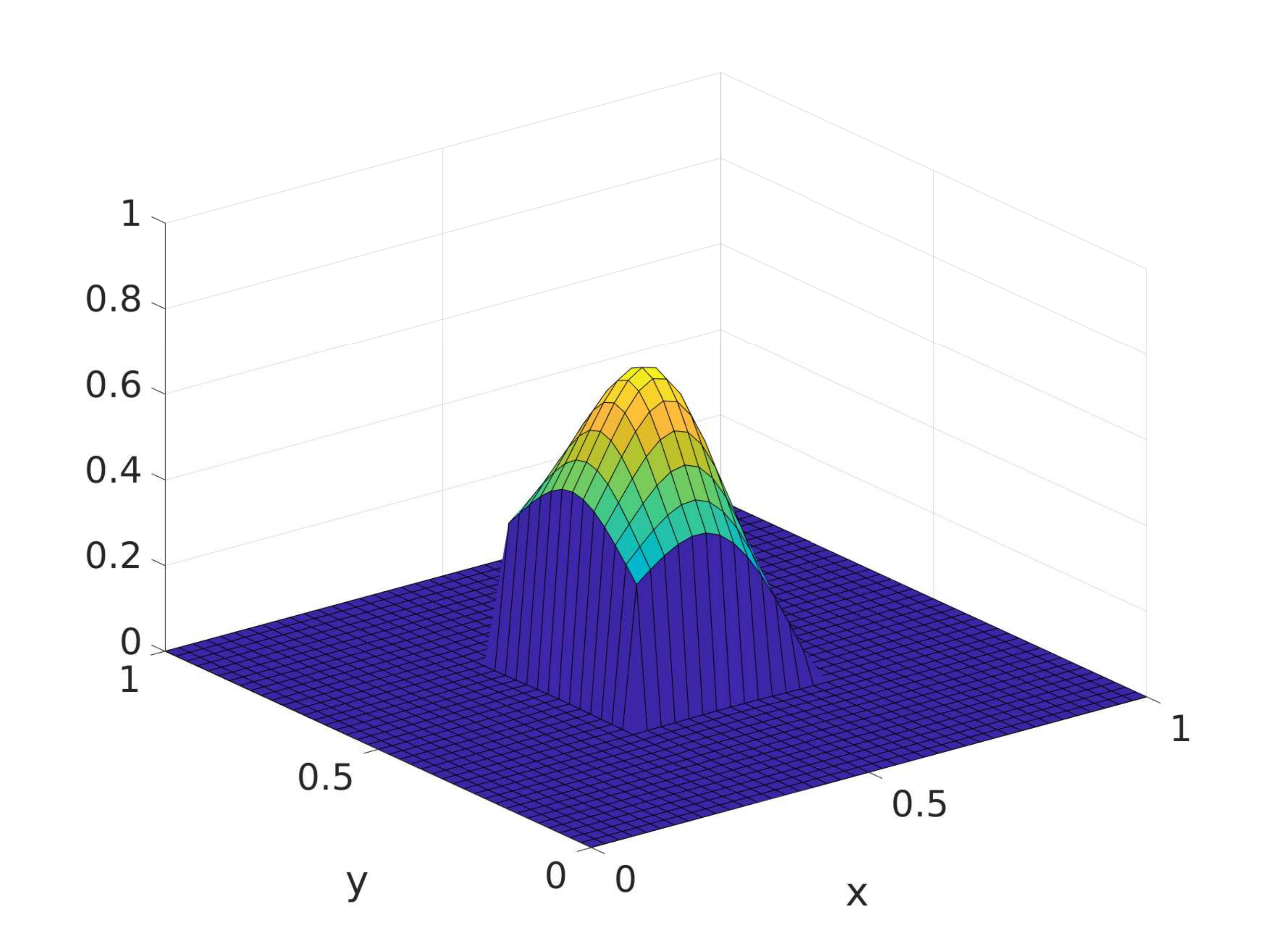}
\includegraphics[width=0.33\textwidth,trim=50 20 50 50,clip]{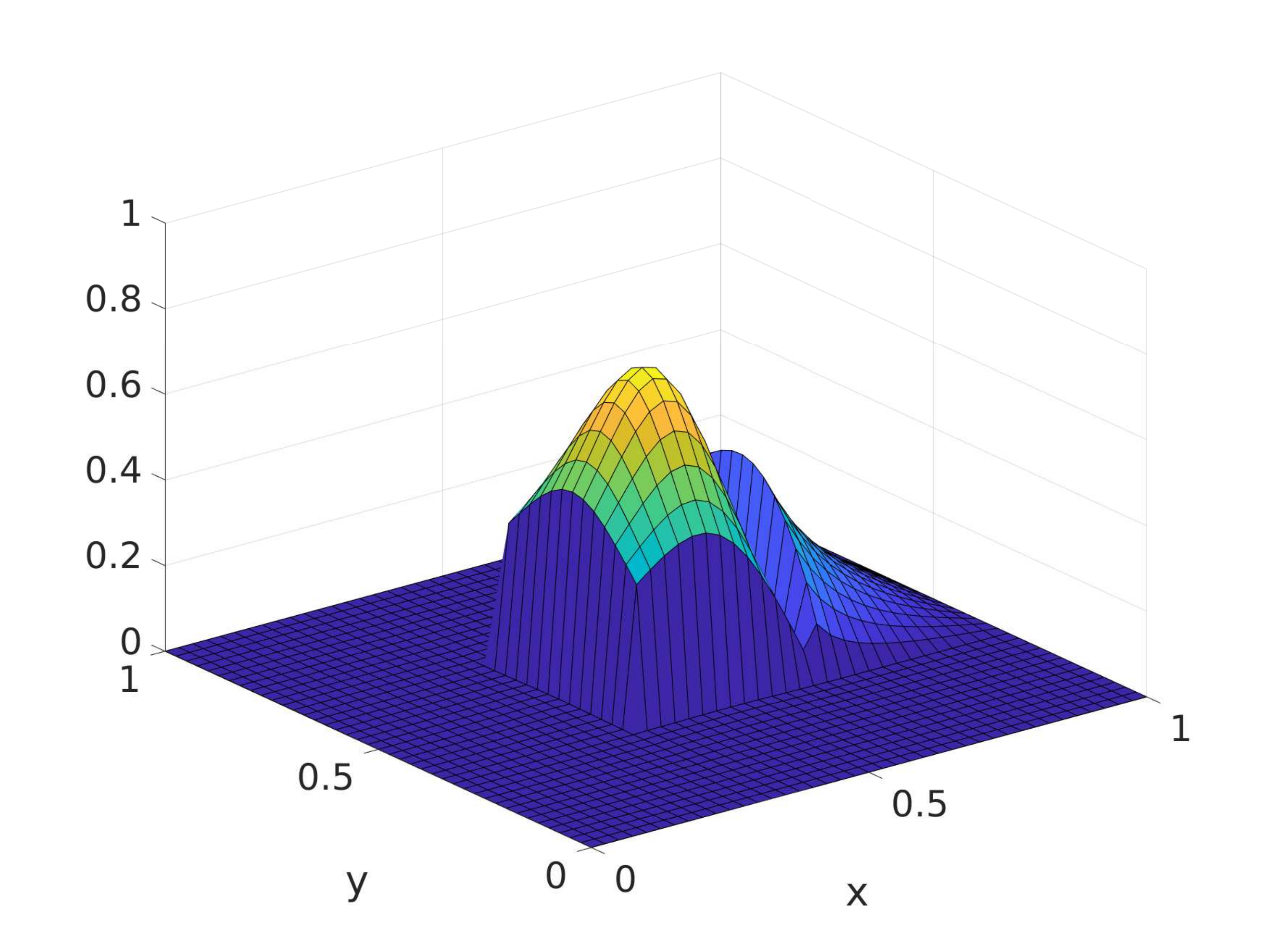}
\includegraphics[width=0.33\textwidth,trim=50 20 50 50,clip]{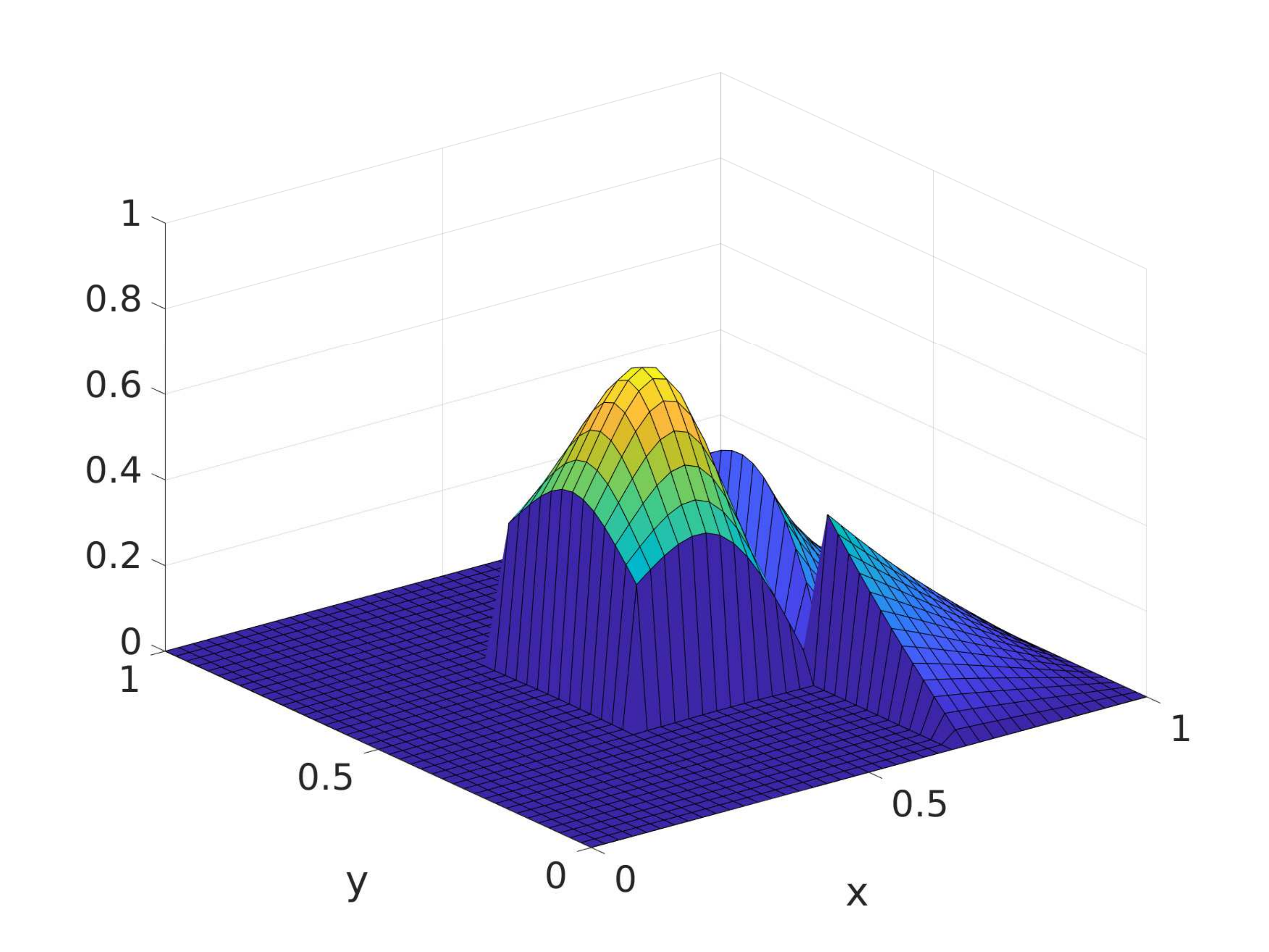}}
  \mbox{\includegraphics[width=0.33\textwidth,trim=50 20 50 50,clip]{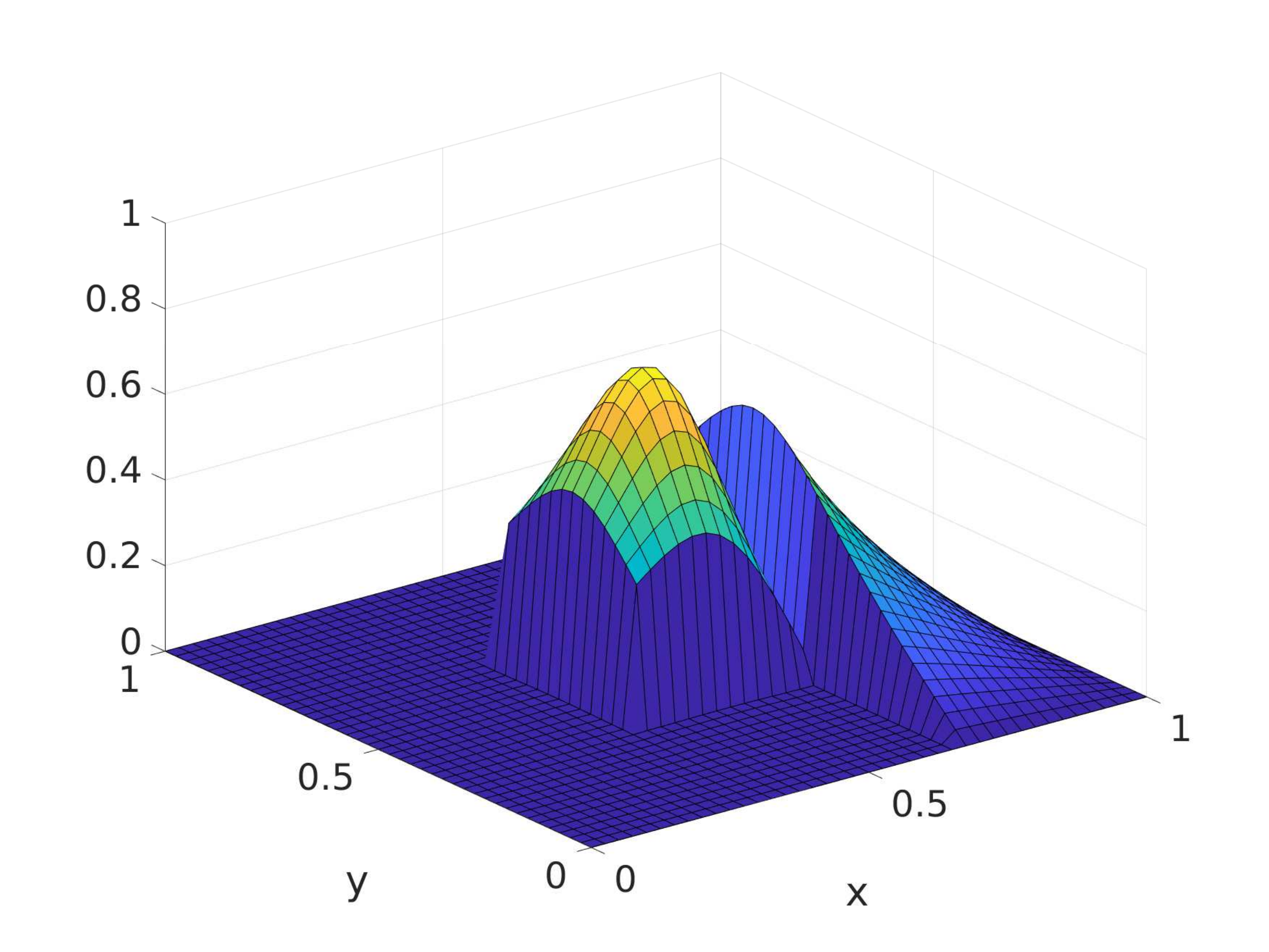}
\includegraphics[width=0.33\textwidth,trim=50 20 50 50,clip]{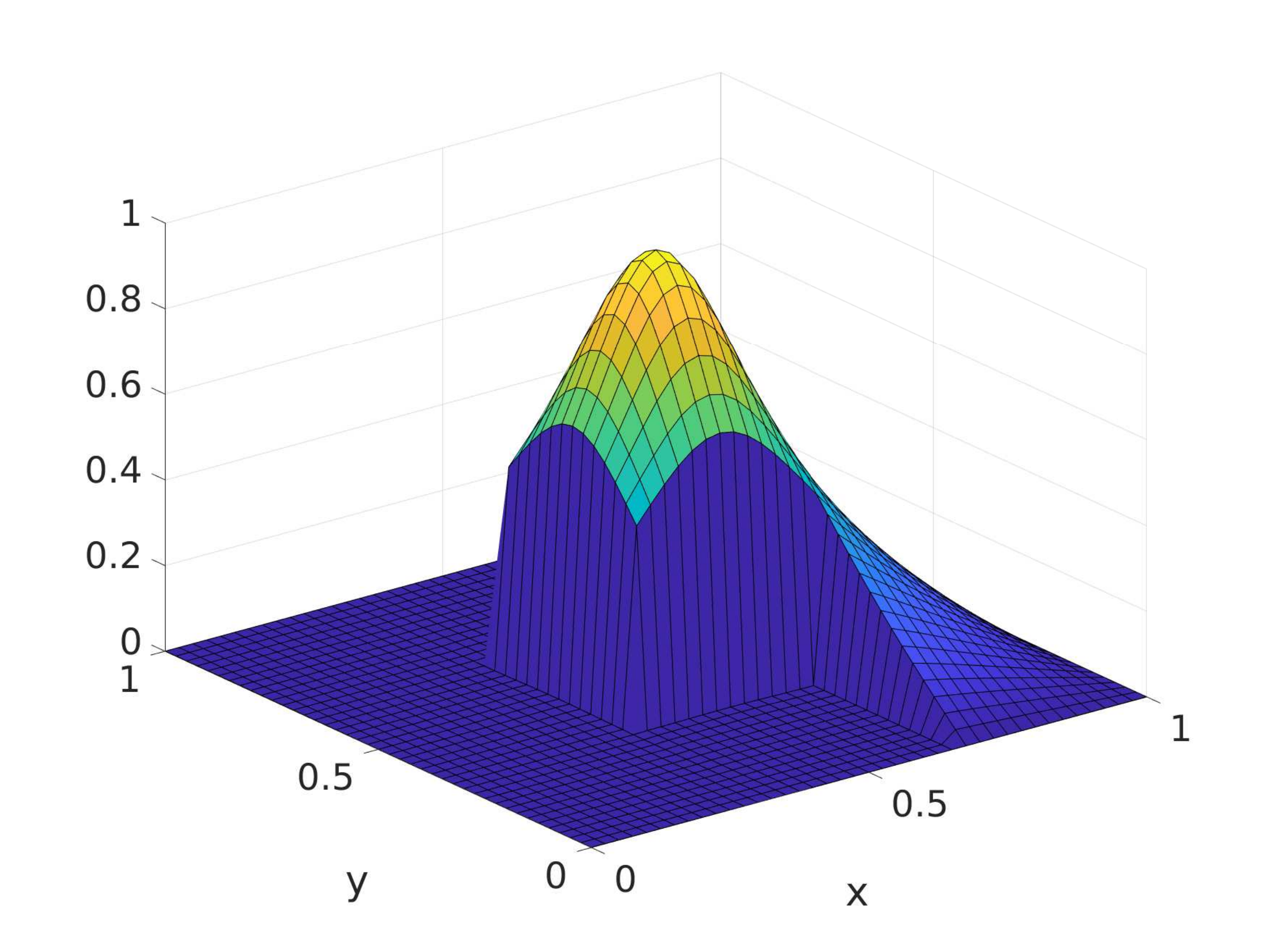}
\includegraphics[width=0.33\textwidth,trim=50 20 50 50,clip]{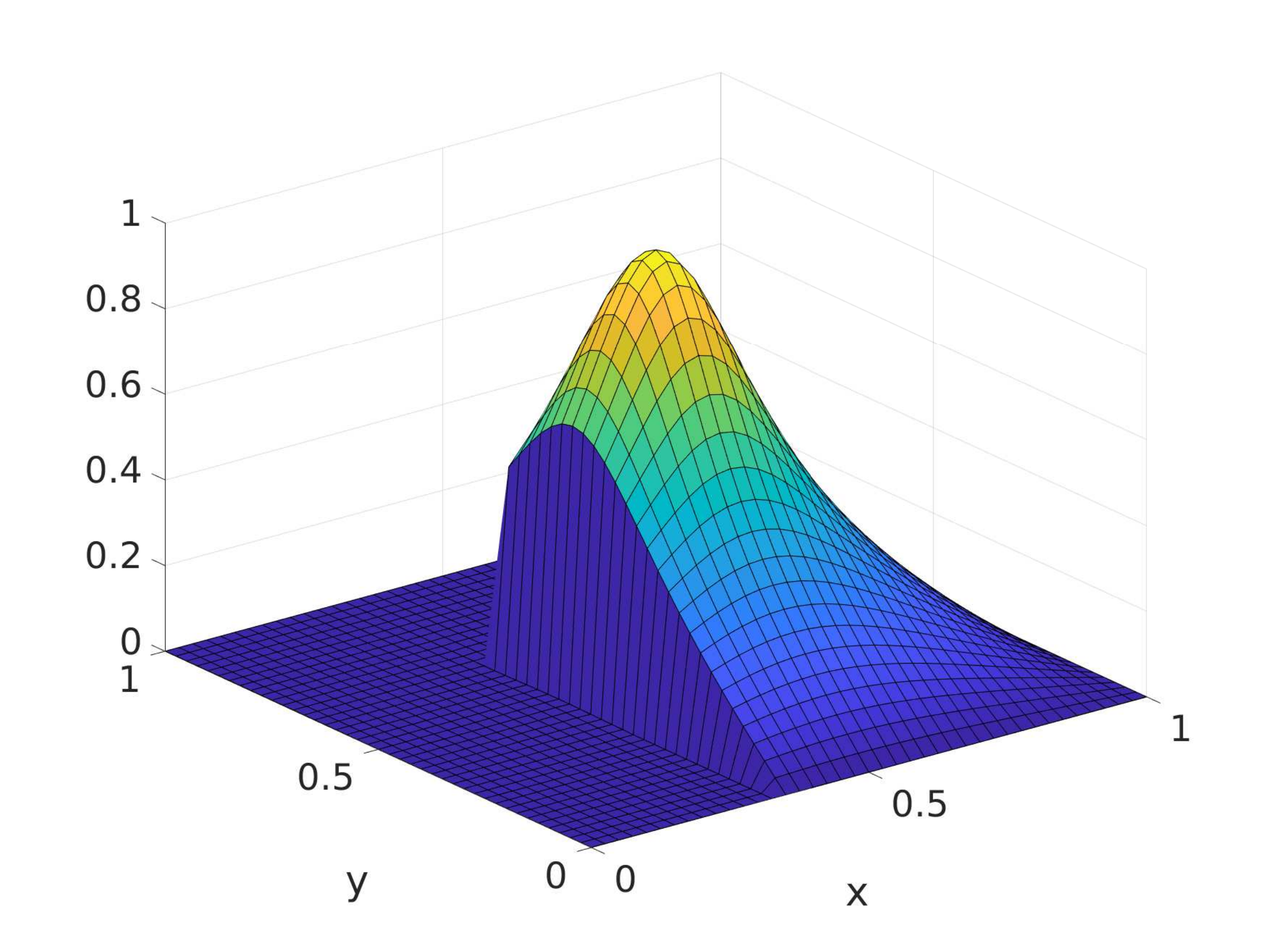}}
  \mbox{\includegraphics[width=0.33\textwidth,trim=50 20 50 50,clip]{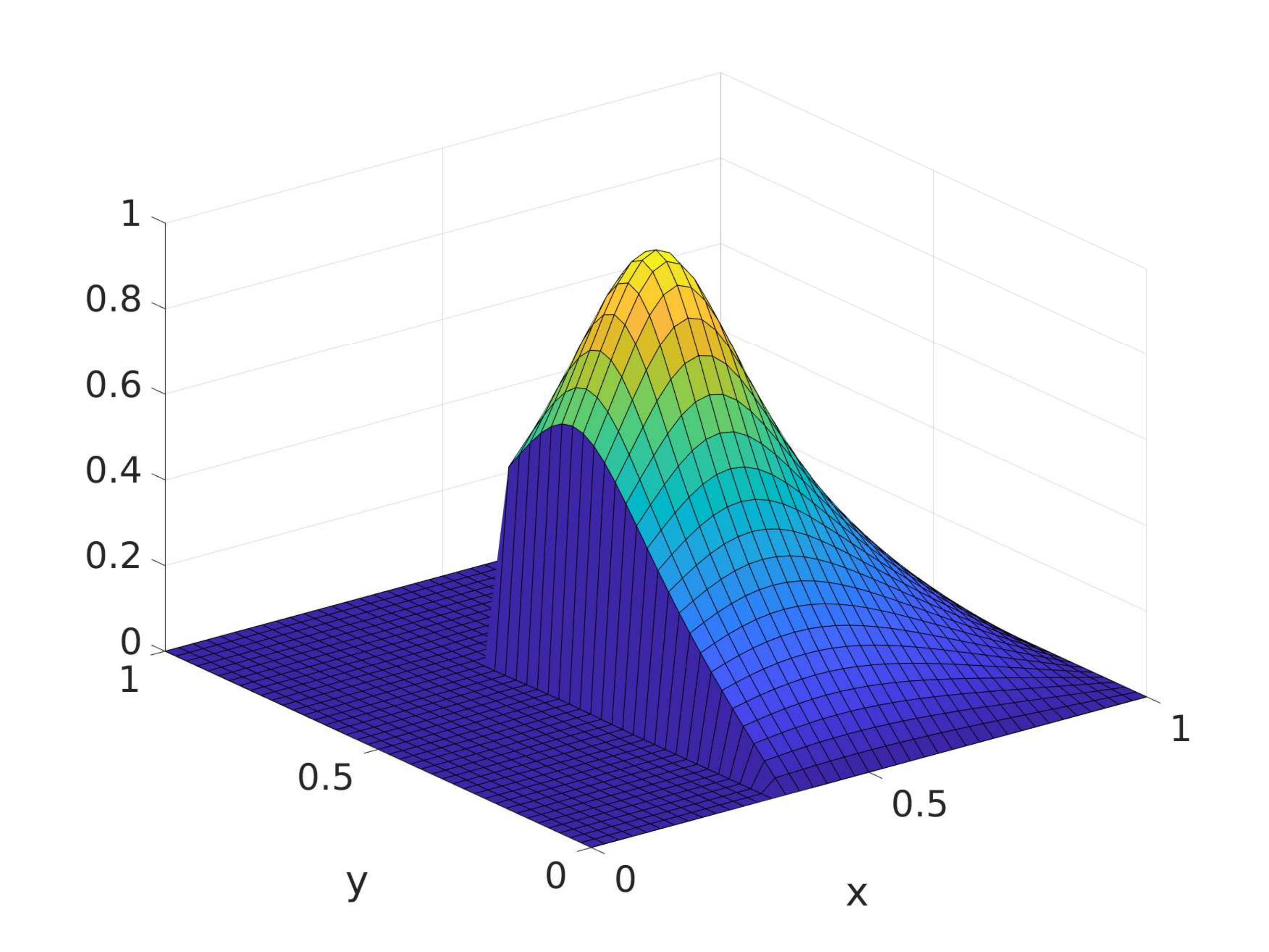}
\includegraphics[width=0.33\textwidth,trim=50 20 50 50,clip]{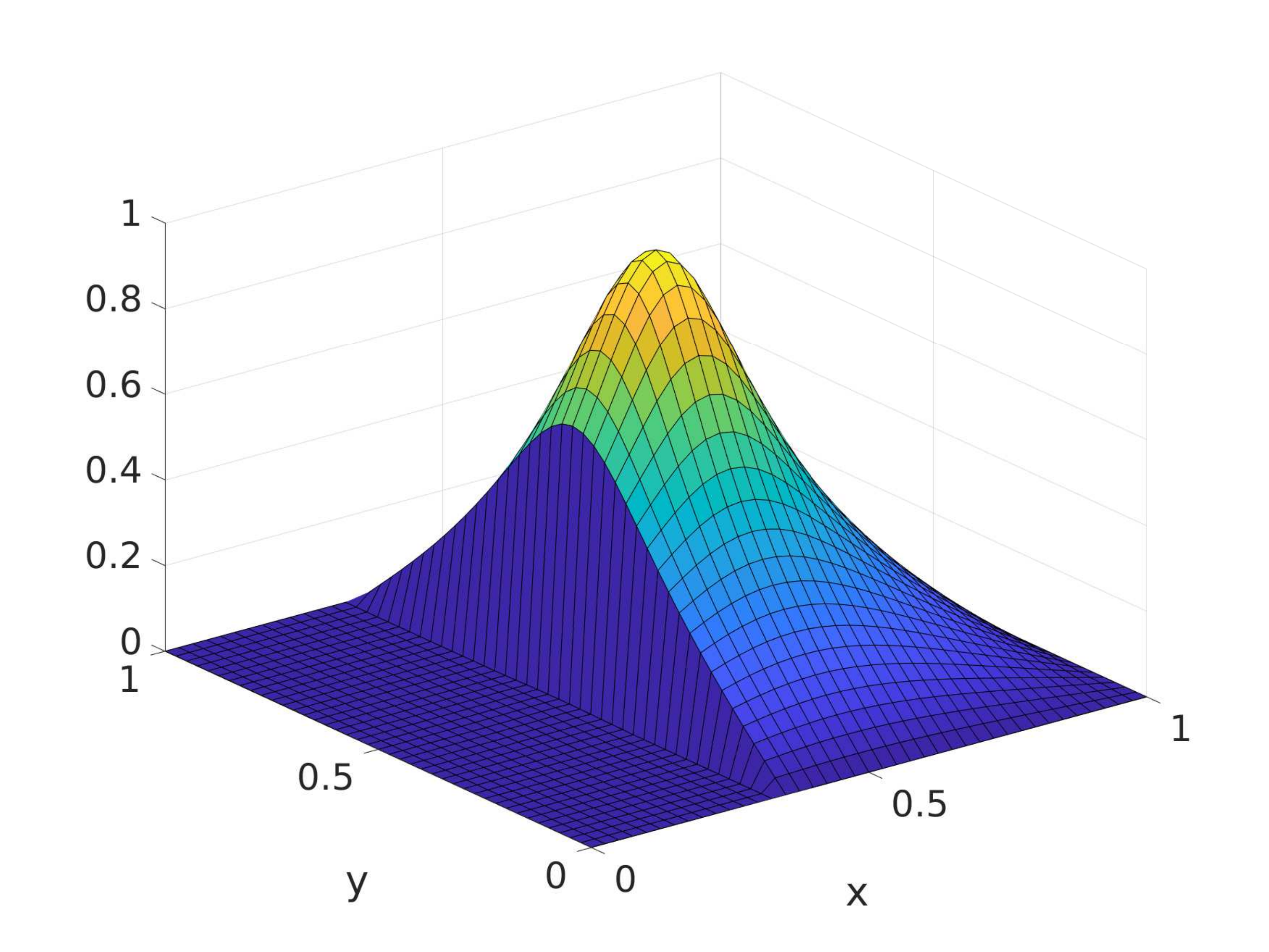}
\includegraphics[width=0.33\textwidth,trim=50 20 50 50,clip]{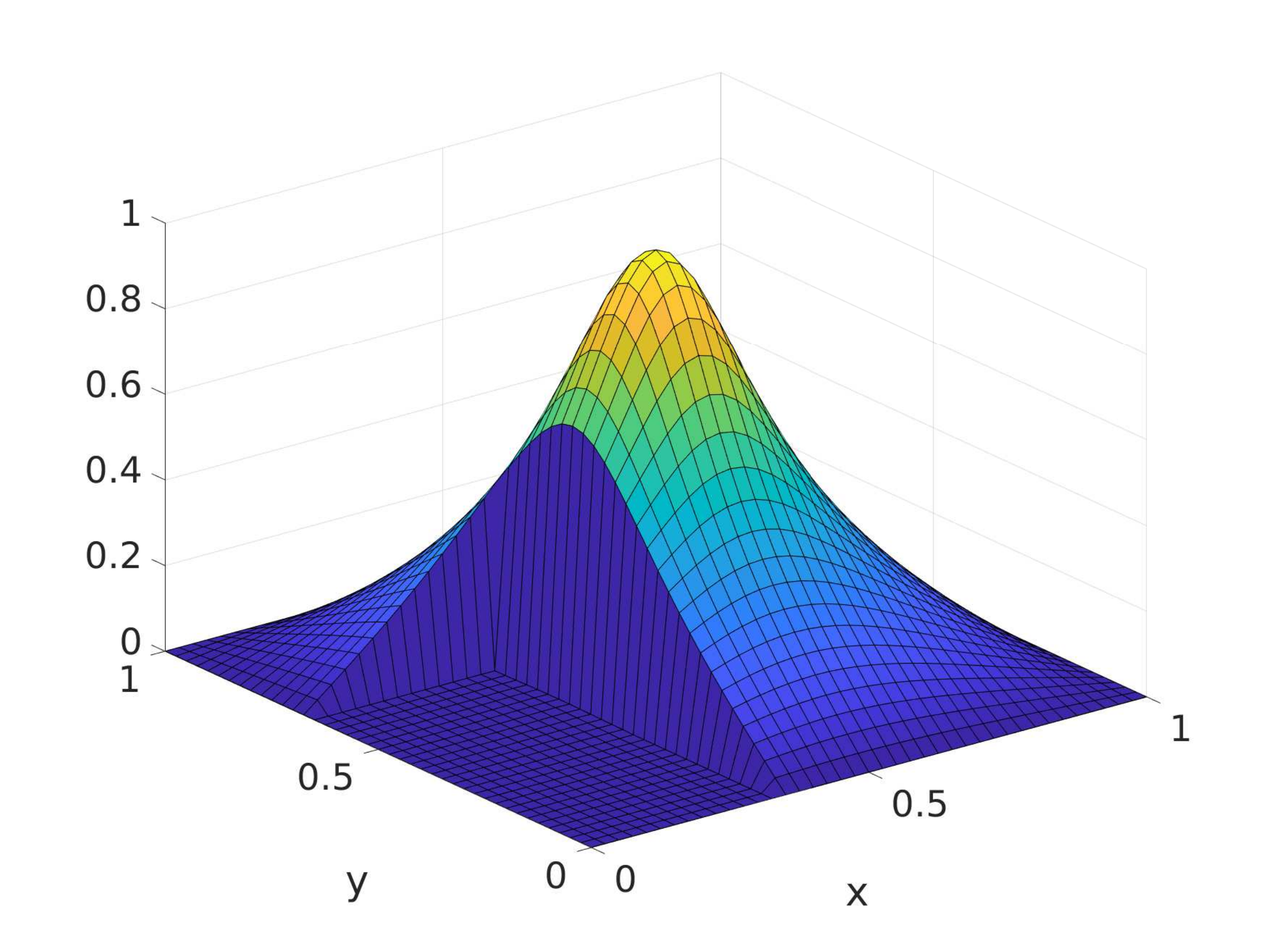}}
  \mbox{\includegraphics[width=0.33\textwidth,trim=50 20 50 50,clip]{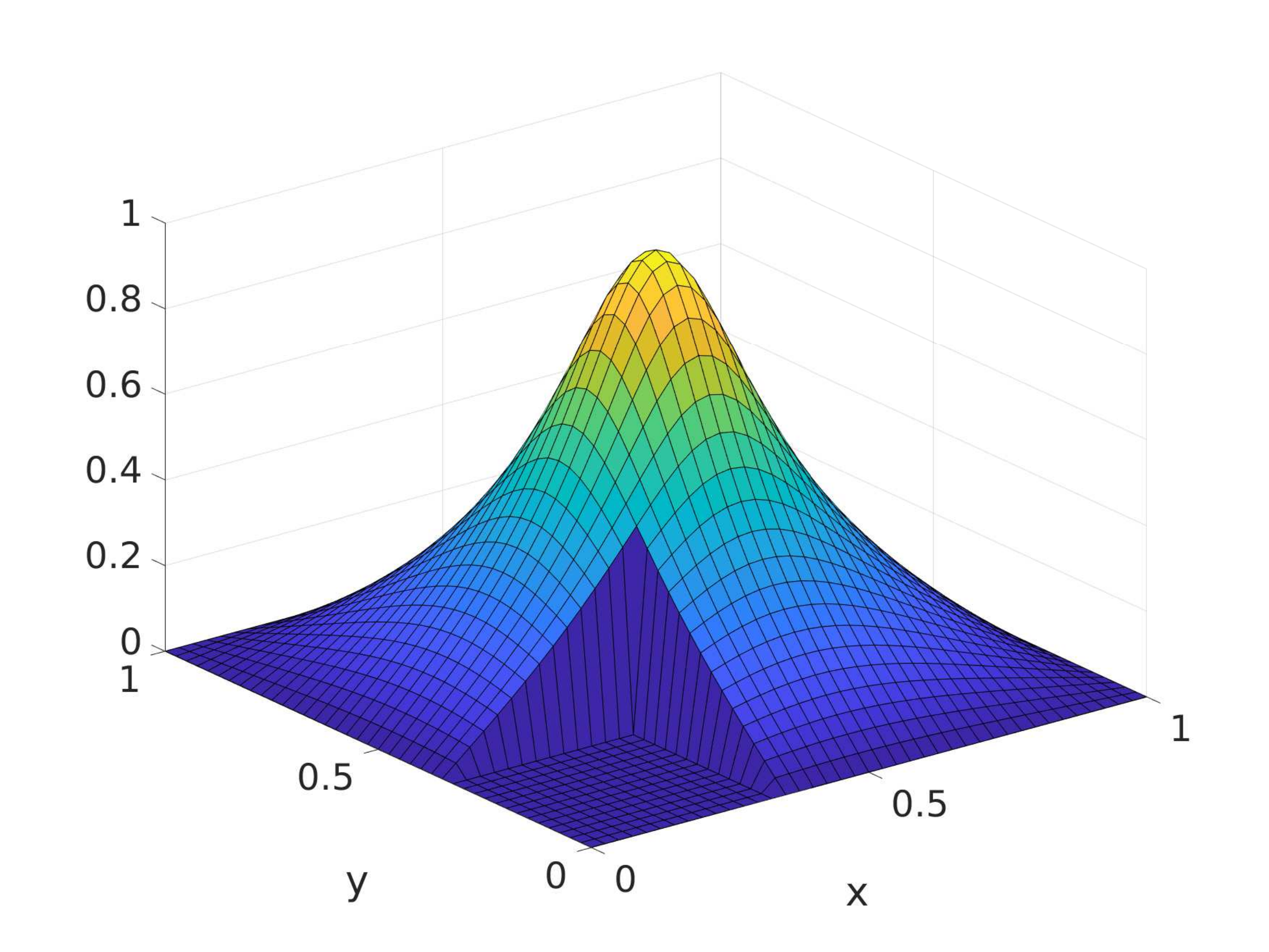}
\includegraphics[width=0.33\textwidth,trim=50 20 50 50,clip]{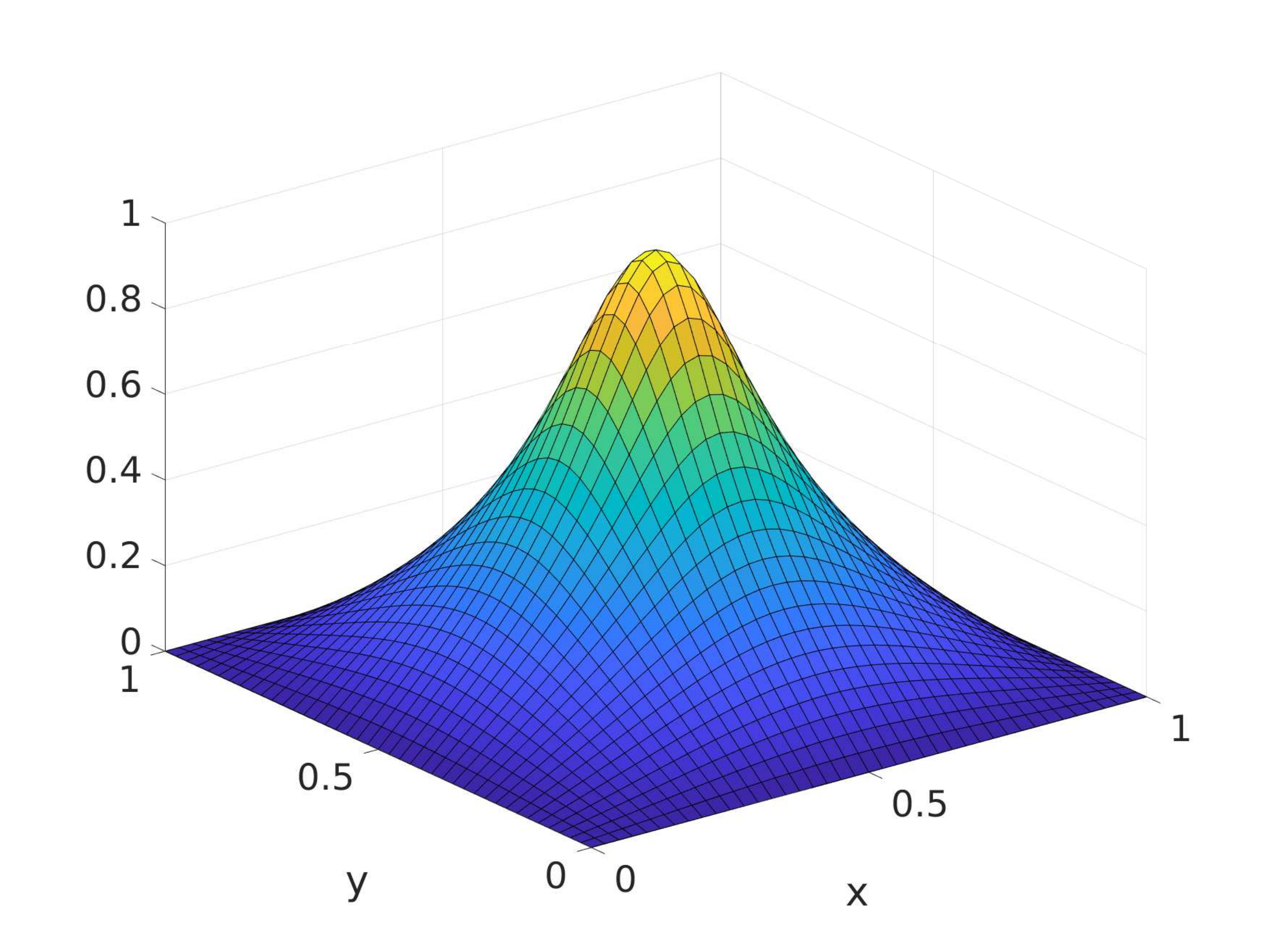}}\hfill
  \caption{Forward and backward sweep for an optimal Schwarz method
    obtained by a block LU decomposition for the model problem and
    $3\times 3$ subdomains using an L-sweep. We observe again
    convergence after one double sweep.}
  \label{OptimalSchwarzLUSweepL}
\end{figure}
the ordering for L-sweeps. We see that the algorithm also converges in
one double sweep, by construction. The corresponding block LU factors
are shown in Figure \ref{OSMLUfactorsL}.
\begin{figure}
  \centering
  \mbox{\includegraphics[width=0.49\textwidth]{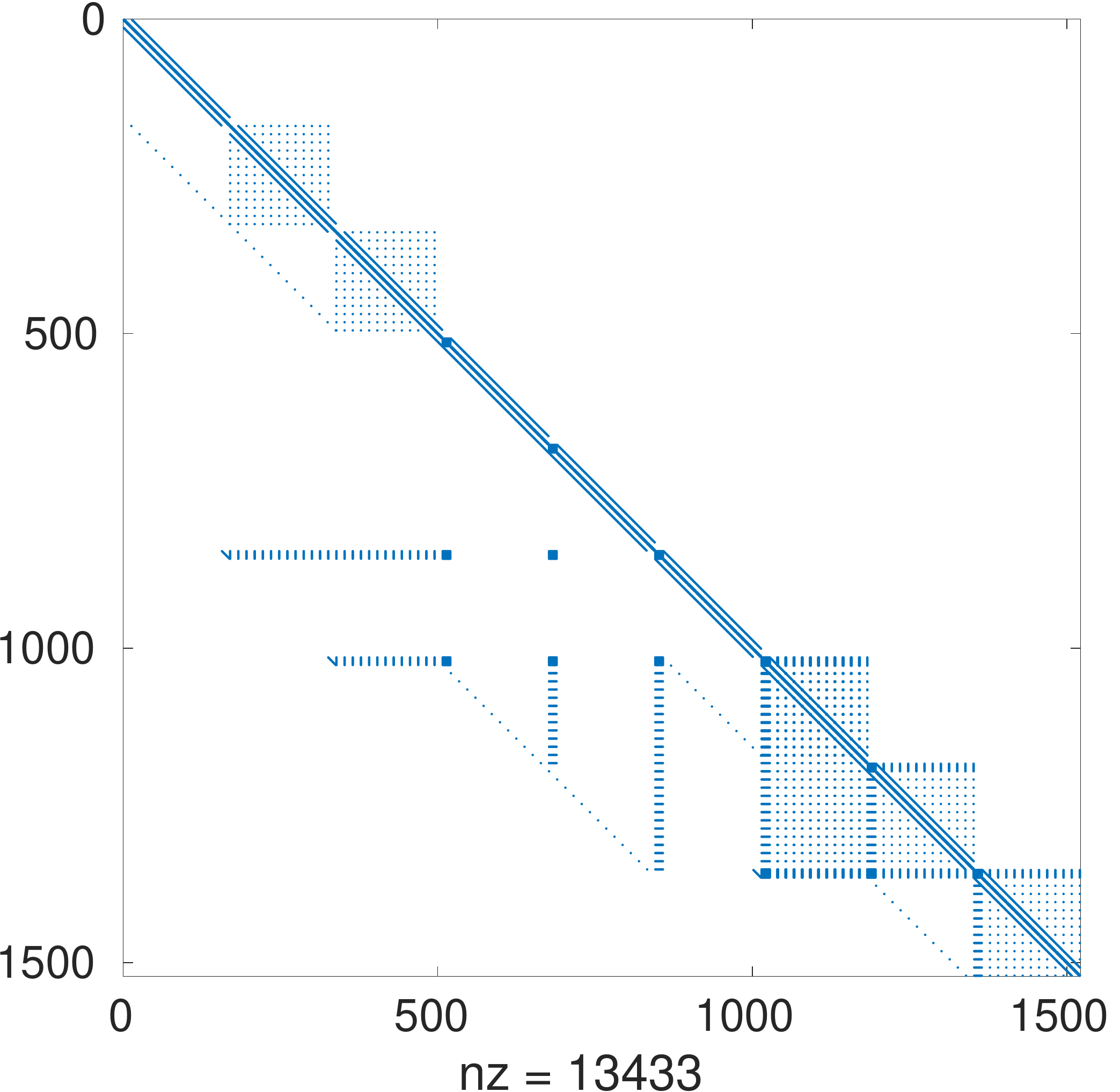}
\includegraphics[width=0.49\textwidth]{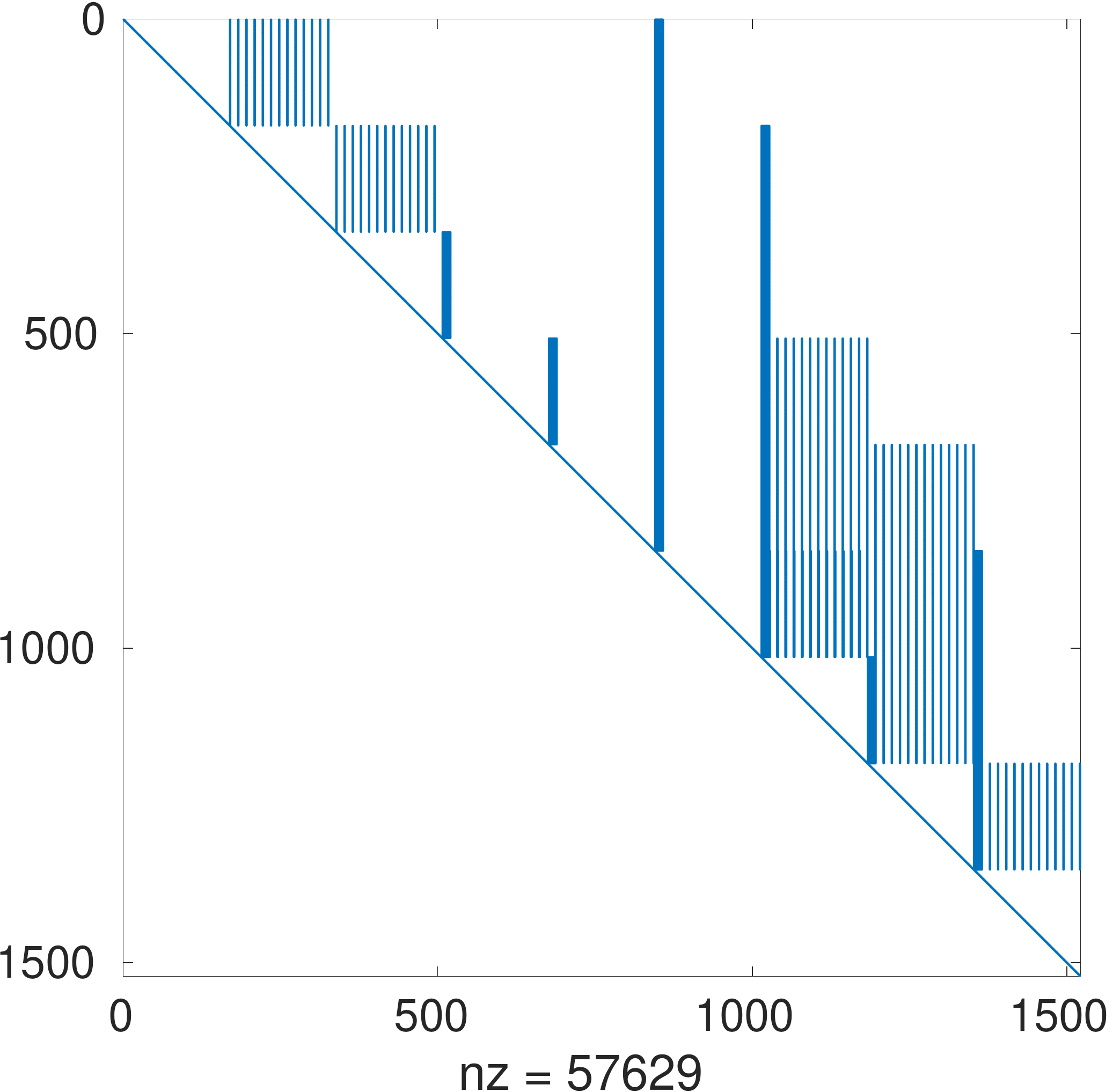}}
  \caption{Sparsity of the block $L$ and $U$ factors of the optimal
    Schwarz method for the $3\times 3$ subdomain decomposition from
    Figure \ref{OptimalSchwarzLUSweep} when using the L-sweep
    ordering.}
  \label{OSMLUfactorsL}
\end{figure}

We show in Figure \ref{OptimalSchwarzLUSweepD} the further popular
diagonal ordering for sweeping.
\begin{figure}
  \centering
  \mbox{\includegraphics[width=0.33\textwidth,trim=50 20 50 50,clip]{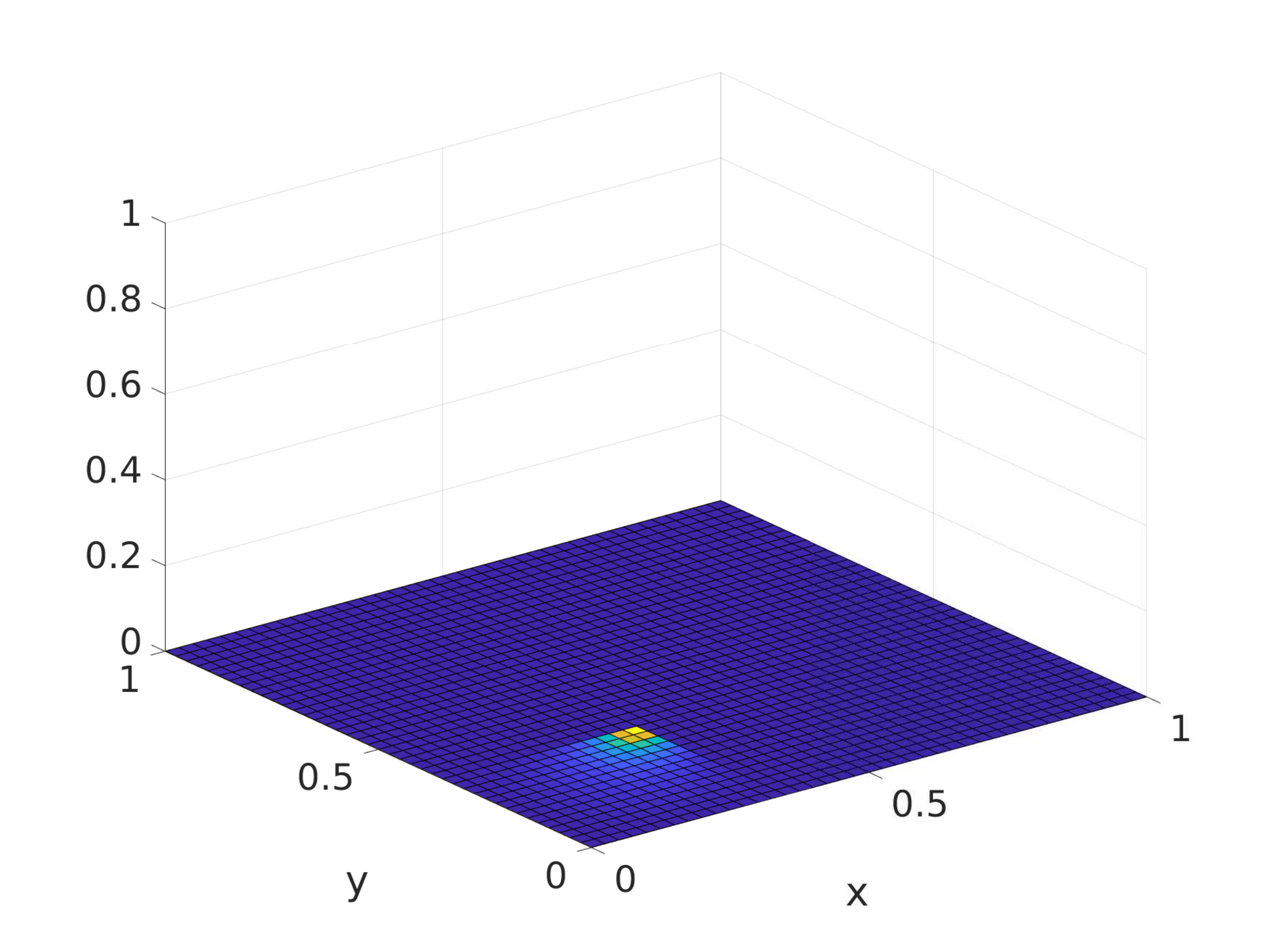}
\includegraphics[width=0.33\textwidth,trim=50 20 50 50,clip]{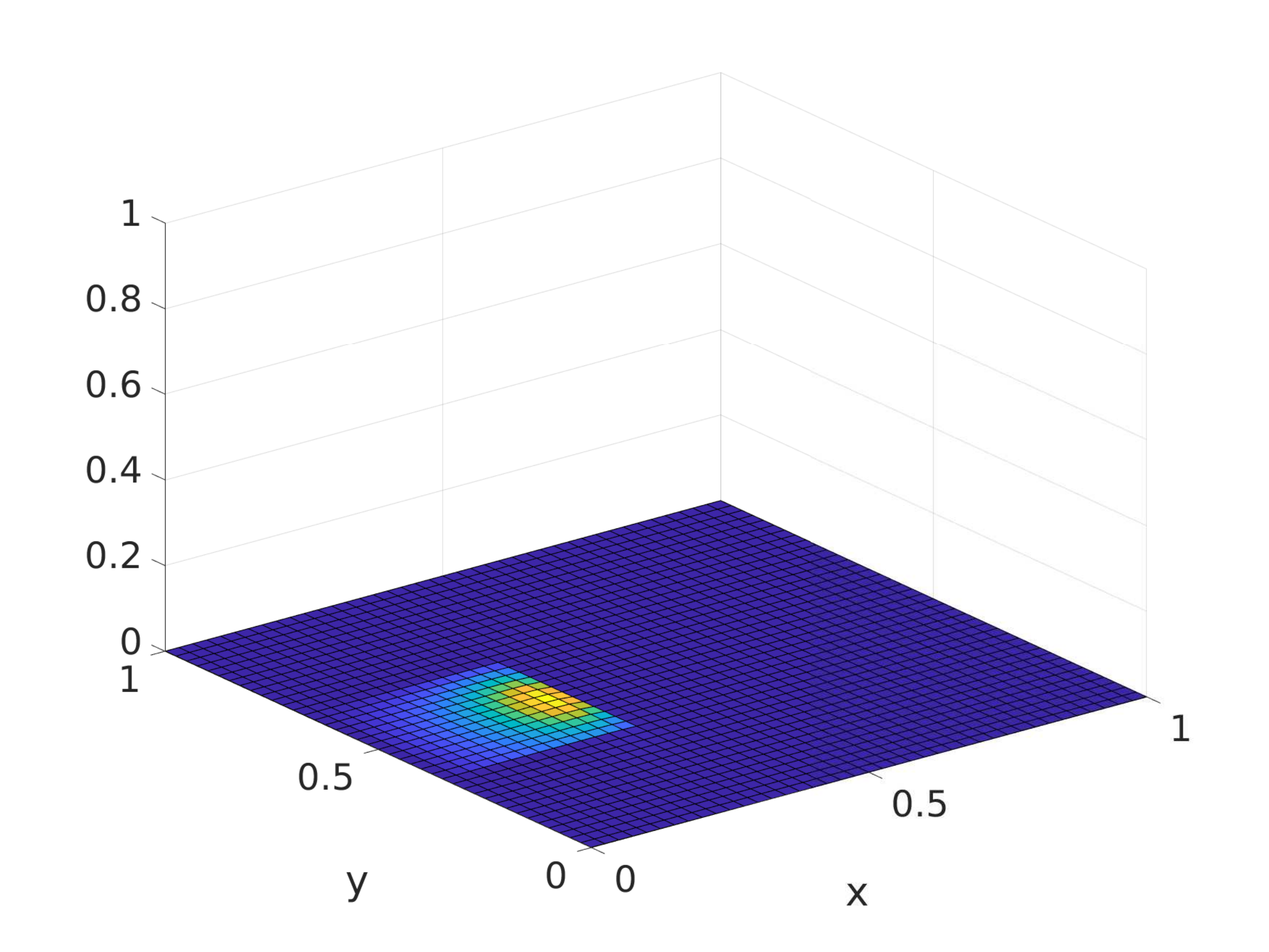}
\includegraphics[width=0.33\textwidth,trim=50 20 50 50,clip]{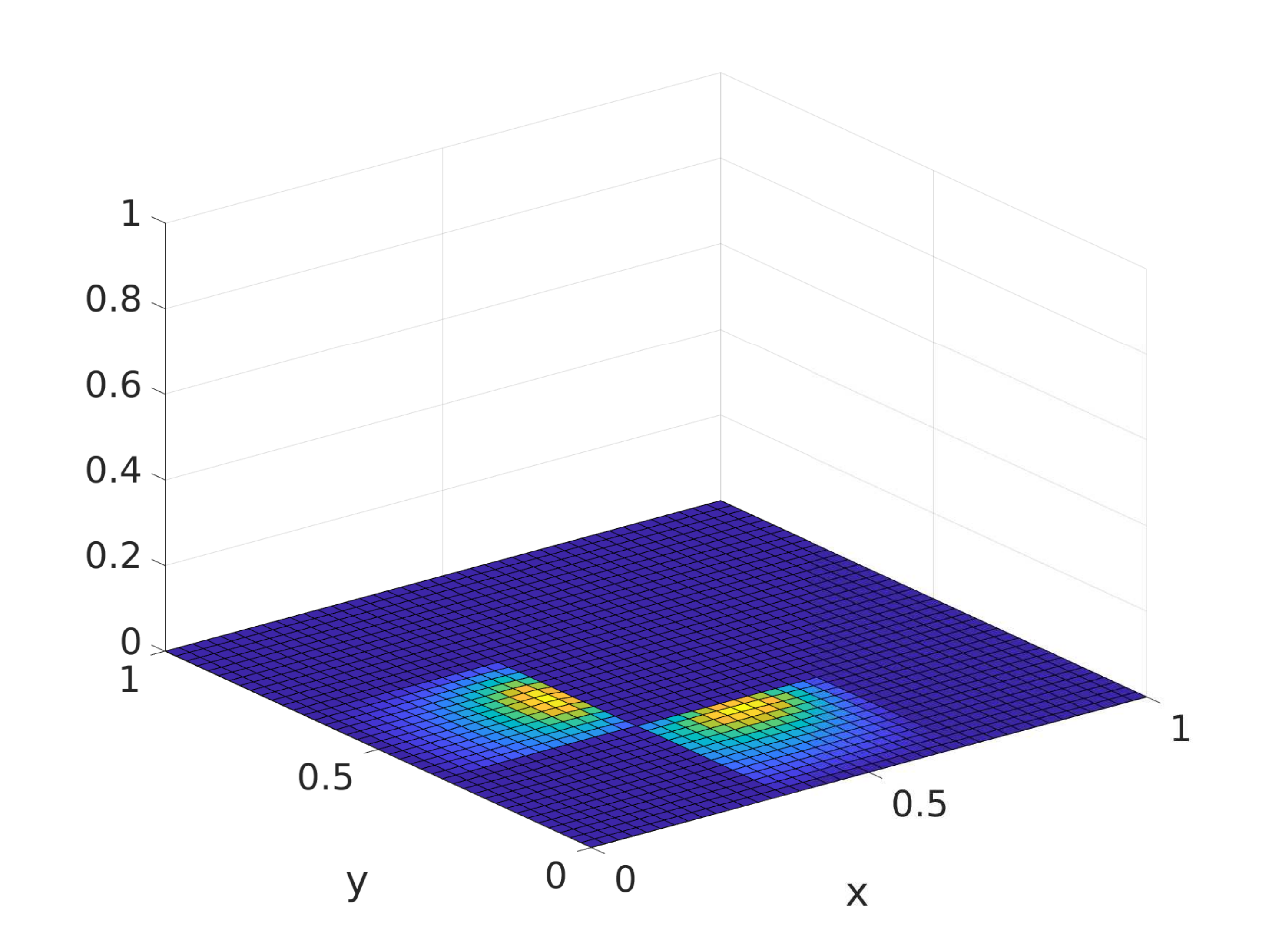}}
  \mbox{\includegraphics[width=0.33\textwidth,trim=50 20 50 50,clip]{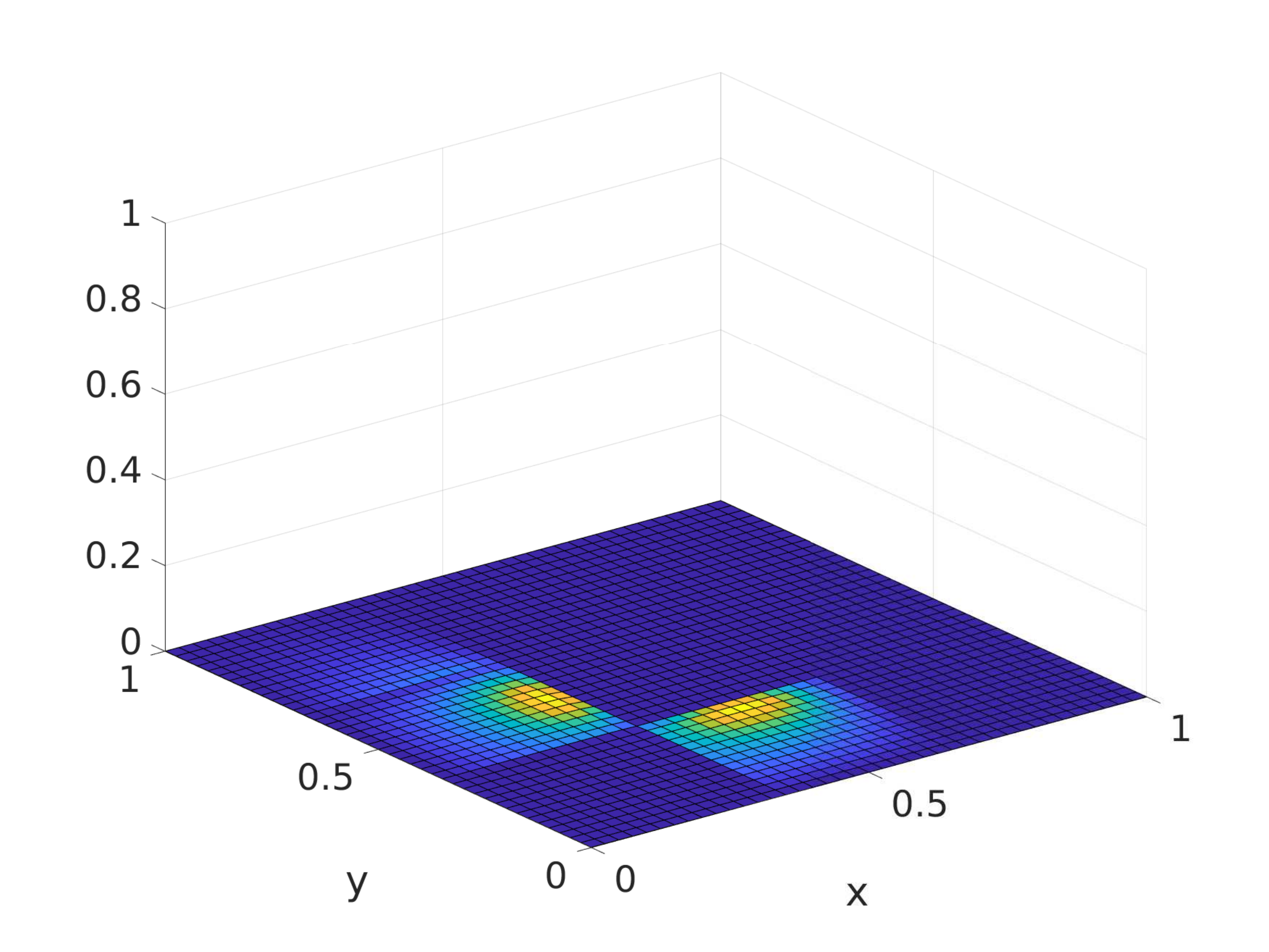}
\includegraphics[width=0.33\textwidth,trim=50 20 50 50,clip]{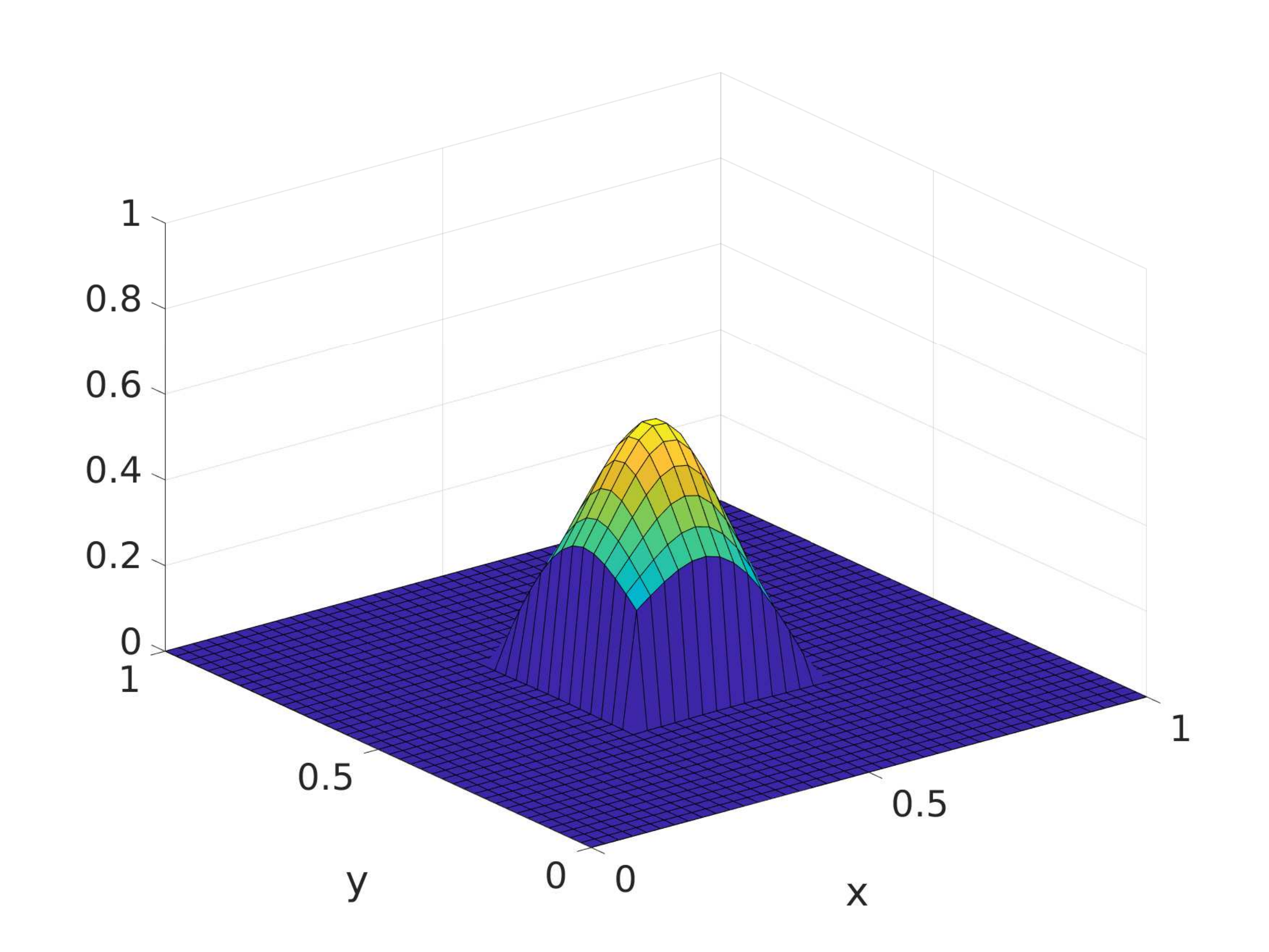}
\includegraphics[width=0.33\textwidth,trim=50 20 50 50,clip]{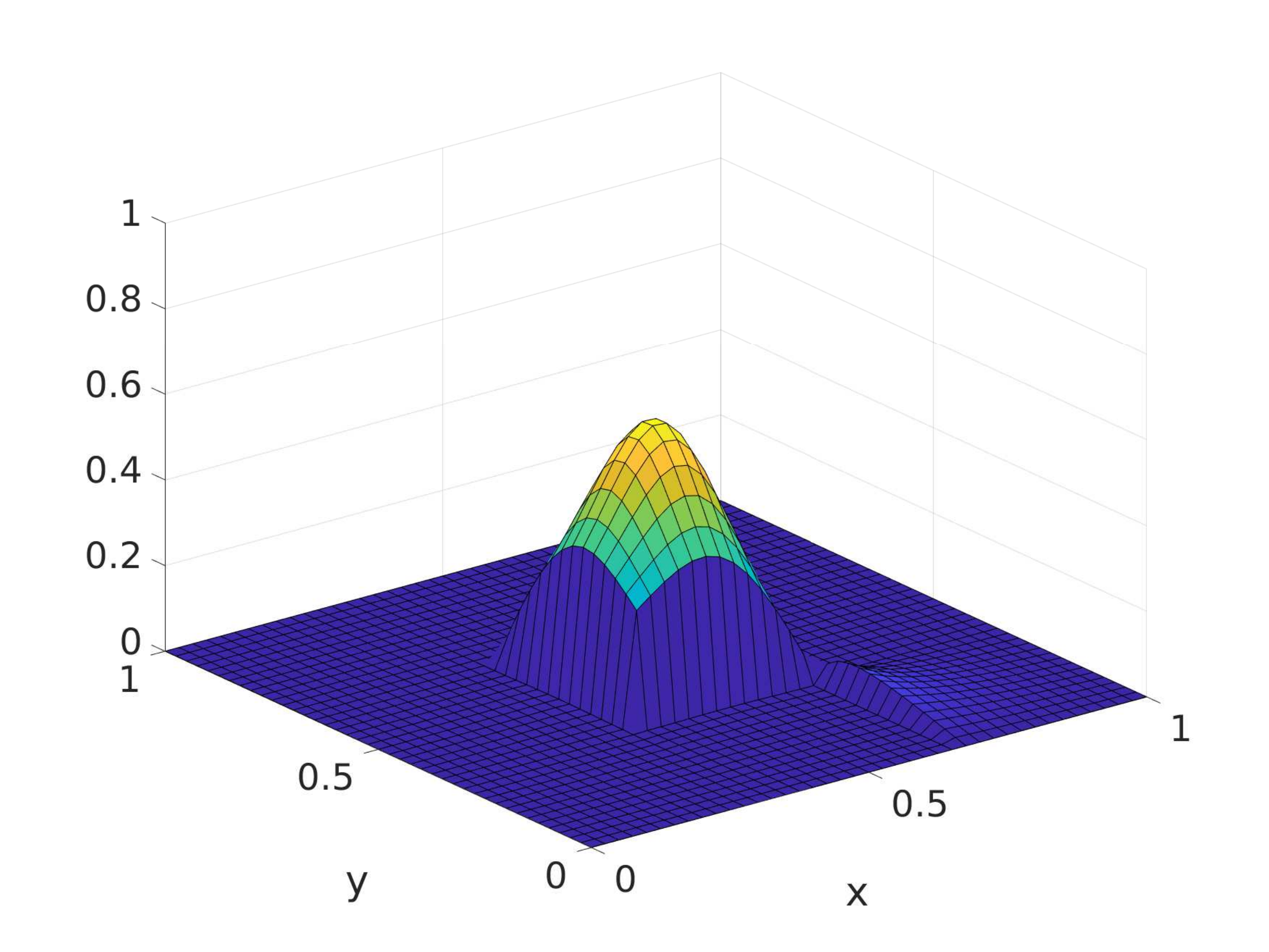}}
  \mbox{\includegraphics[width=0.33\textwidth,trim=50 20 50 50,clip]{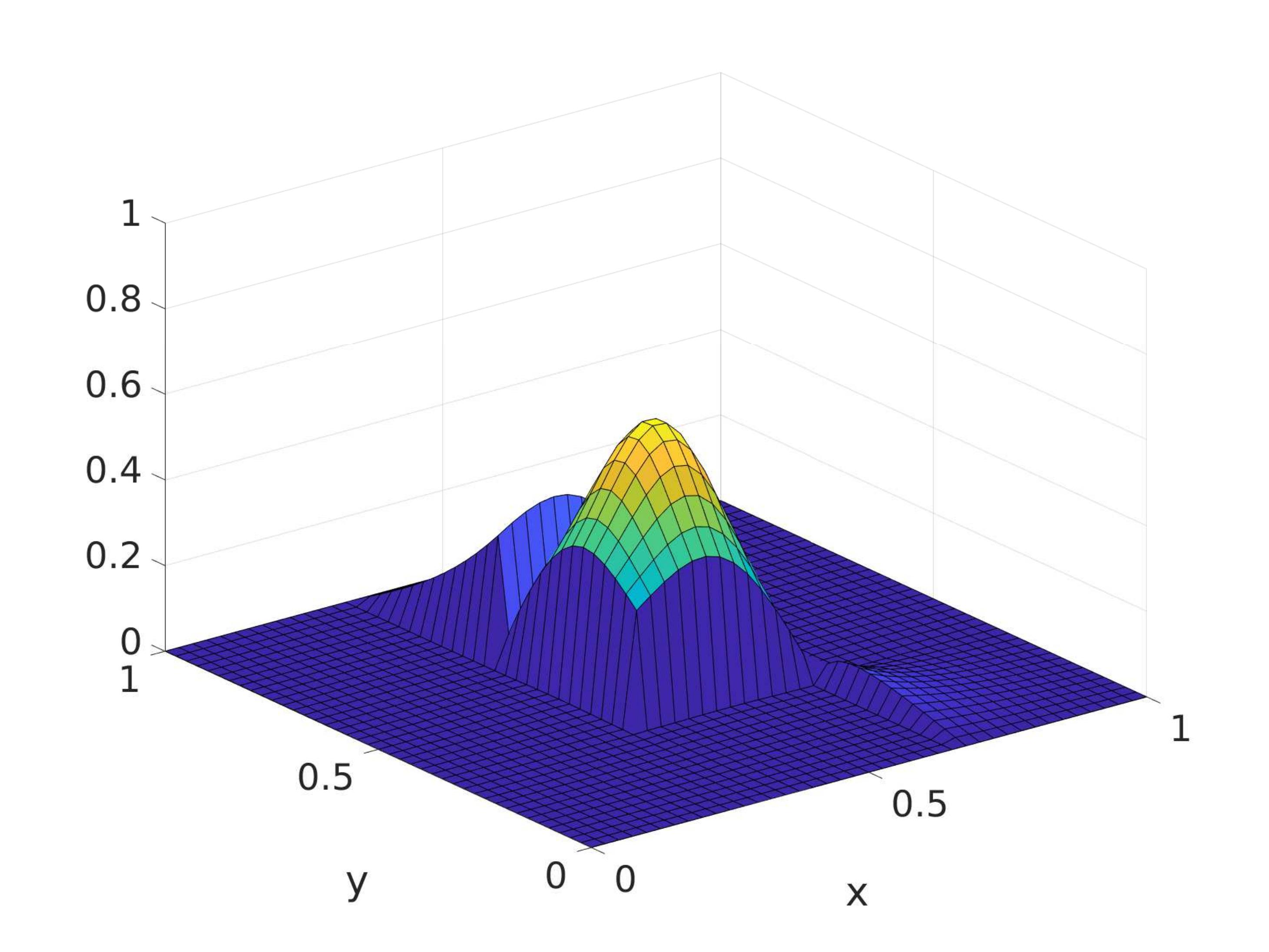}
\includegraphics[width=0.33\textwidth,trim=50 20 50 50,clip]{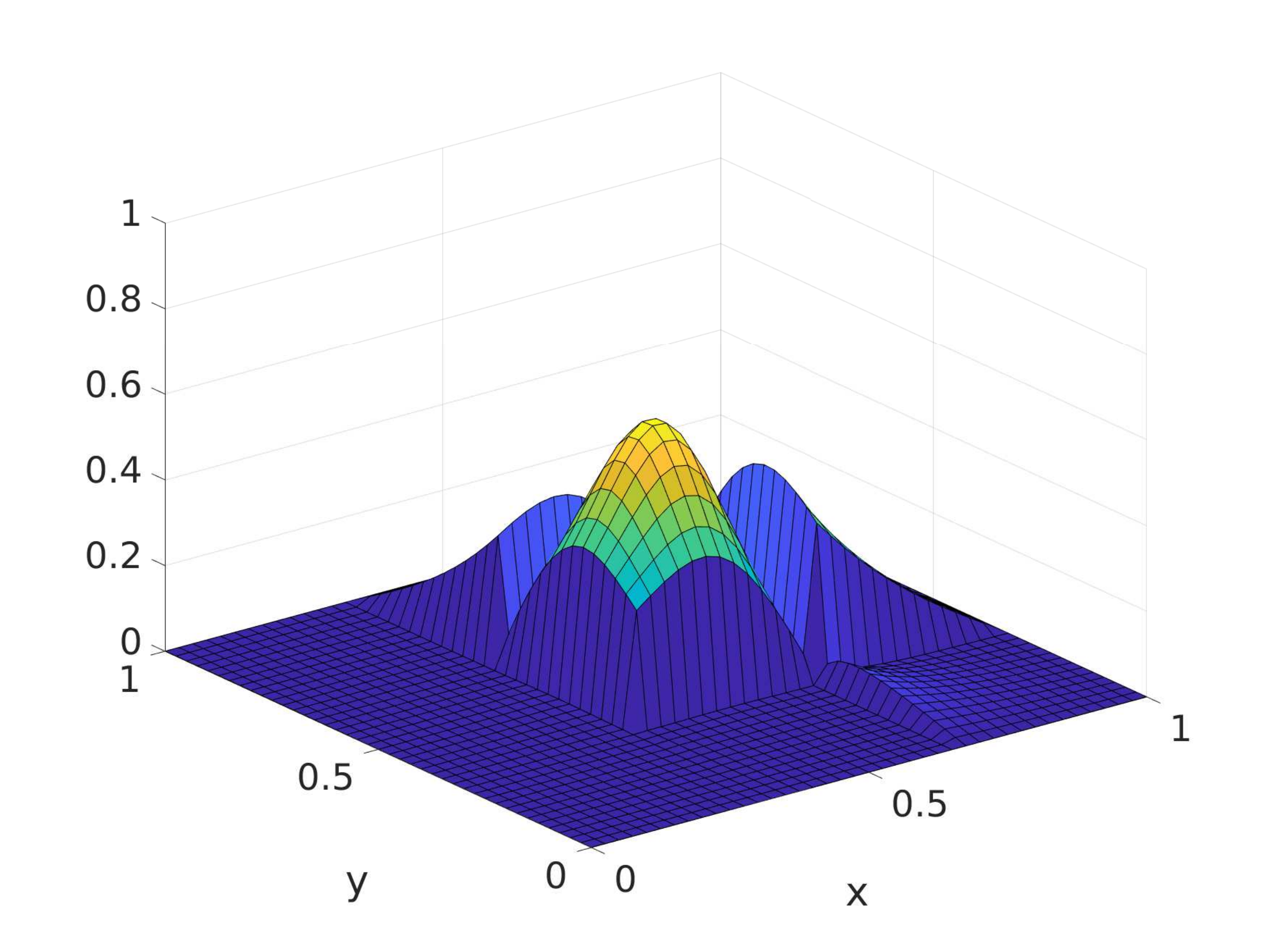}
\includegraphics[width=0.33\textwidth,trim=50 20 50 50,clip]{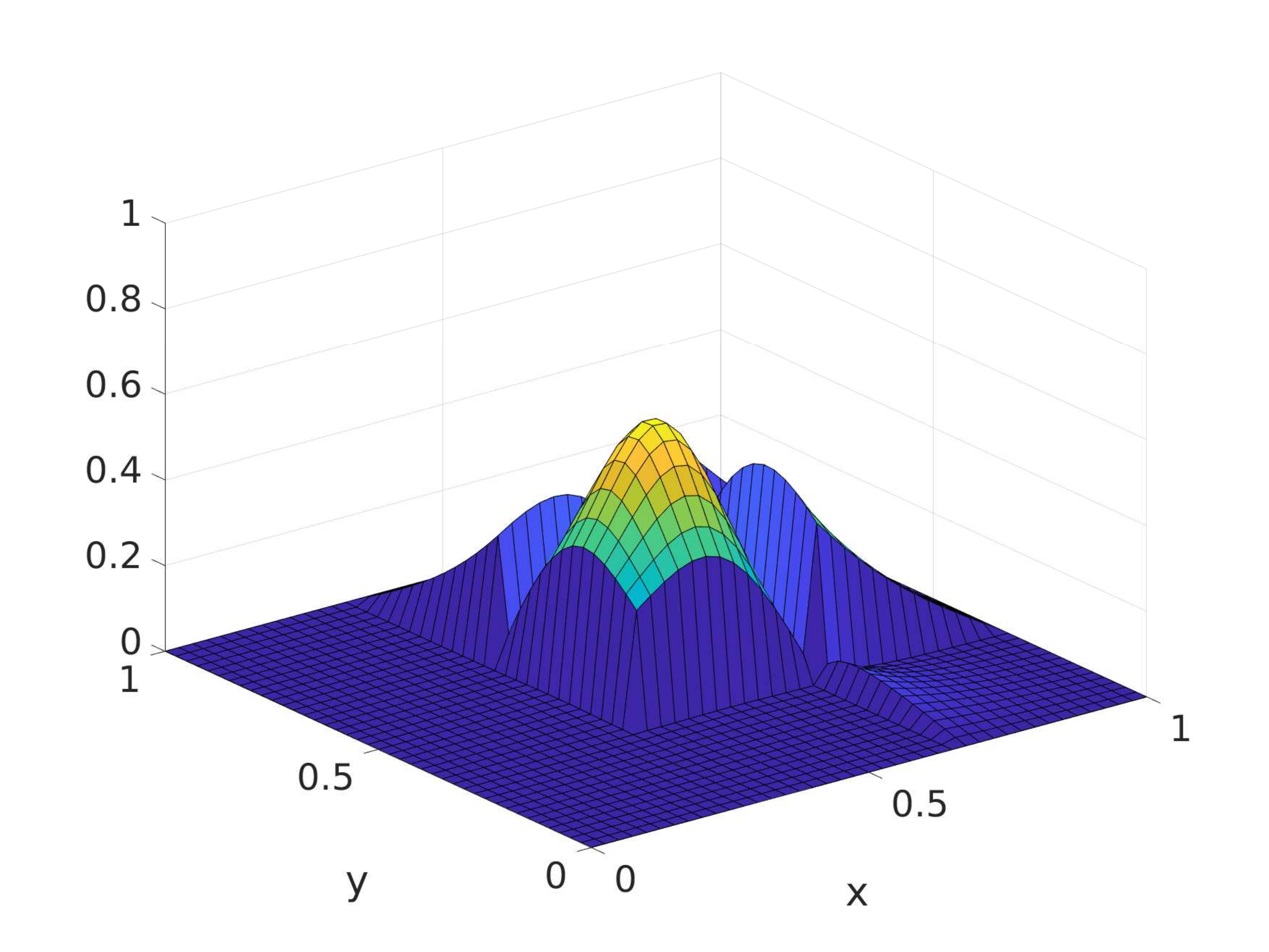}}
  \mbox{\includegraphics[width=0.33\textwidth,trim=50 20 50 50,clip]{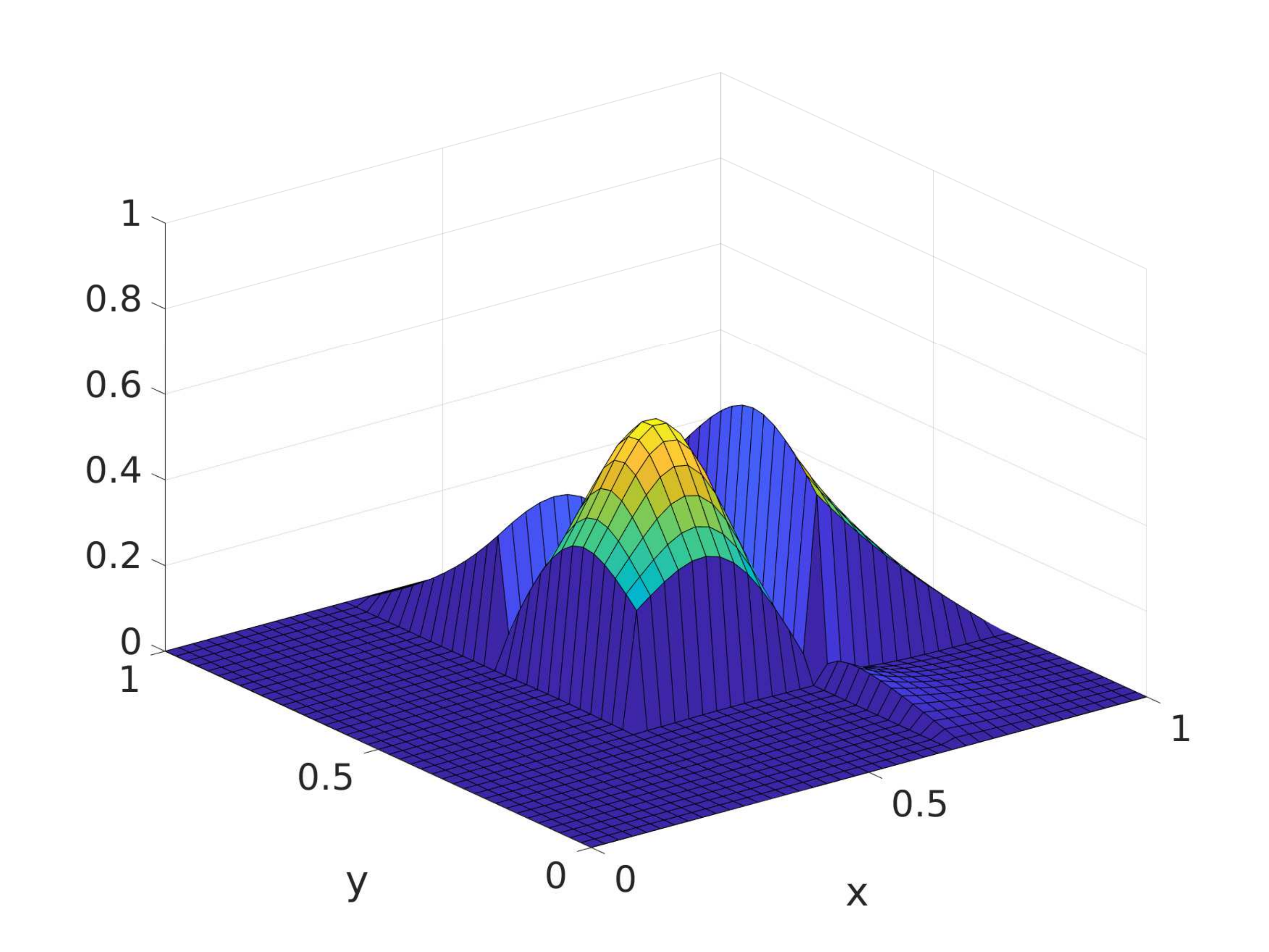}
\includegraphics[width=0.33\textwidth,trim=50 20 50 50,clip]{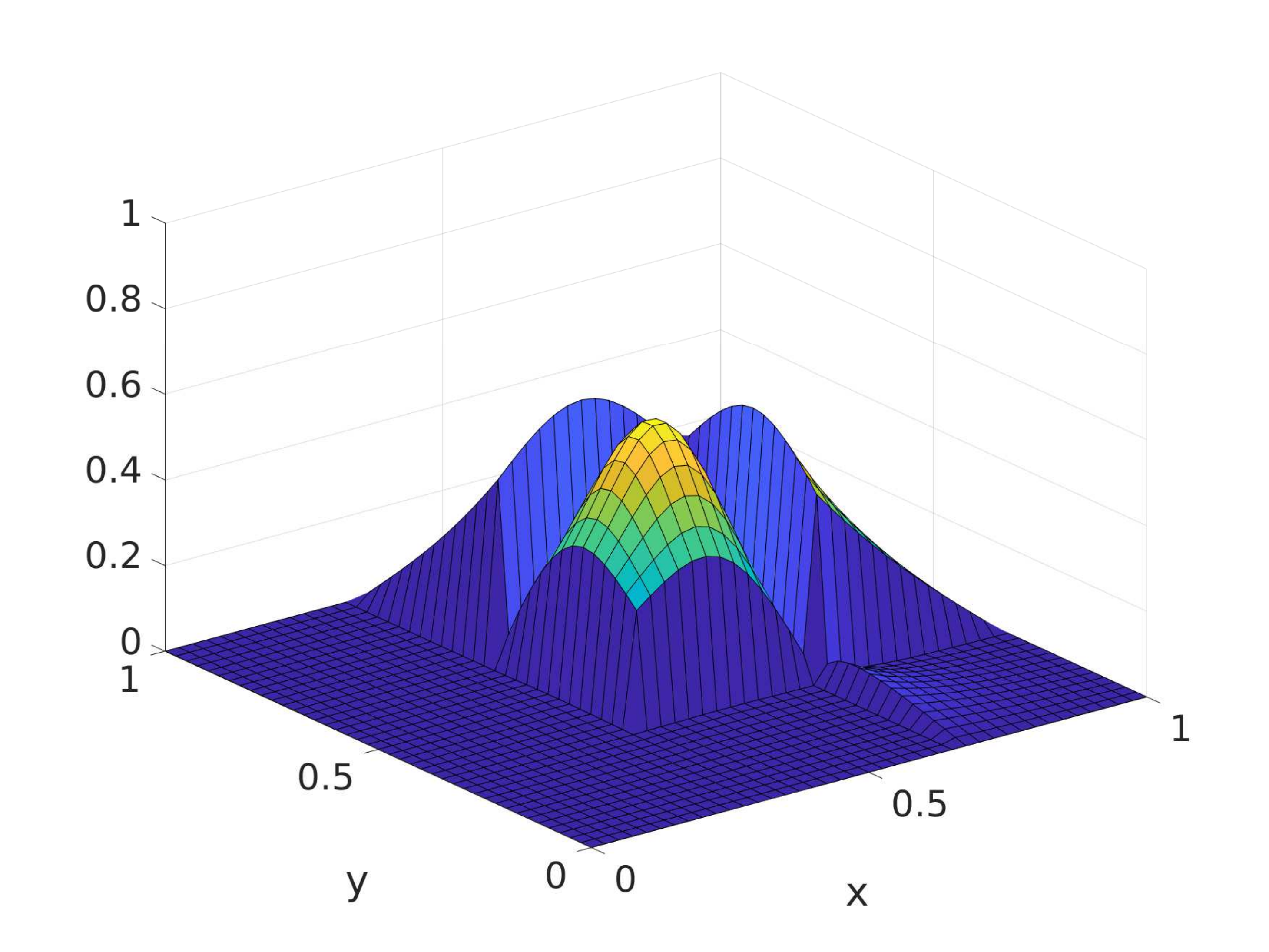}
\includegraphics[width=0.33\textwidth,trim=50 20 50 50,clip]{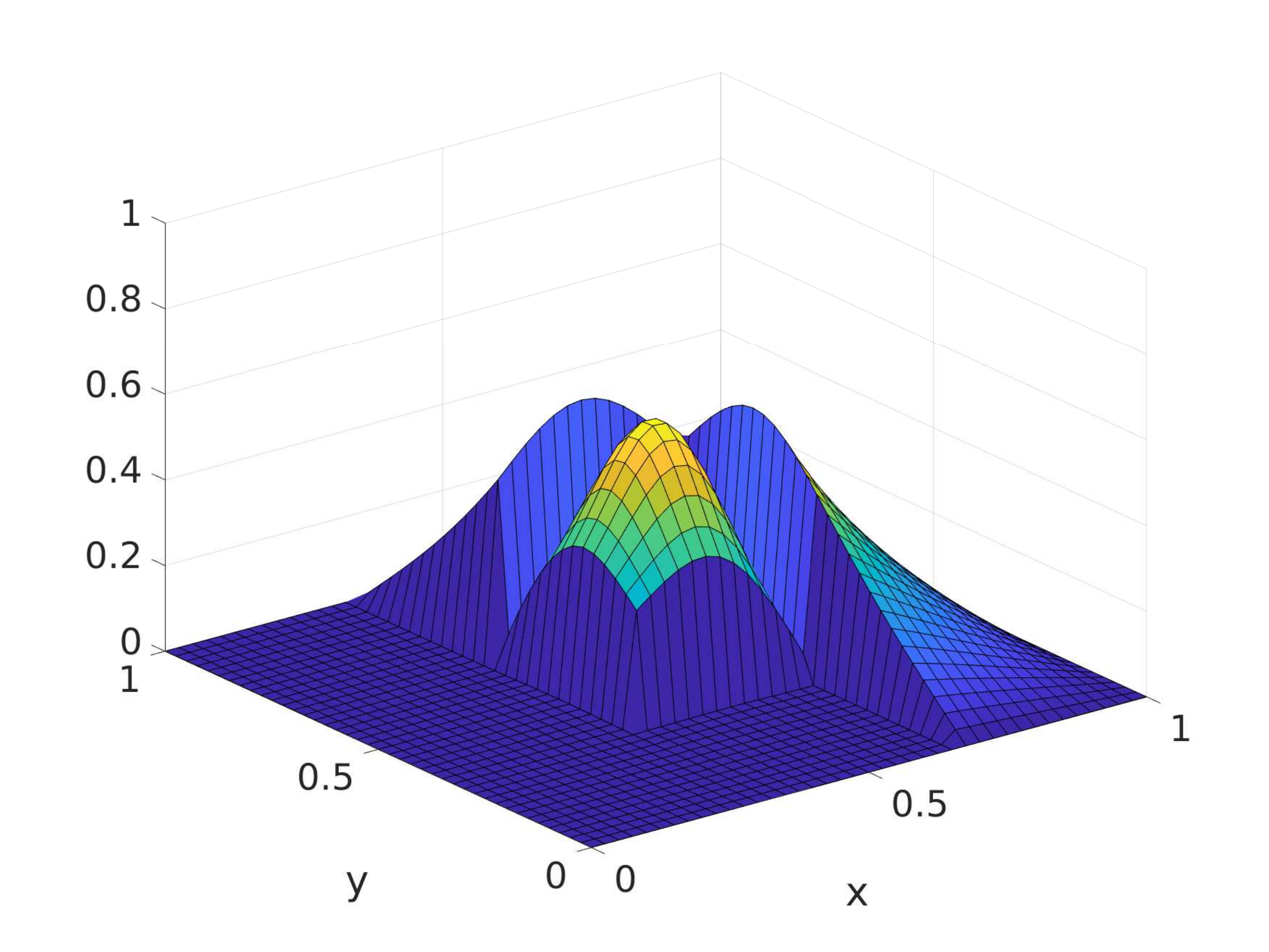}}
  \mbox{\includegraphics[width=0.33\textwidth,trim=50 20 50 50,clip]{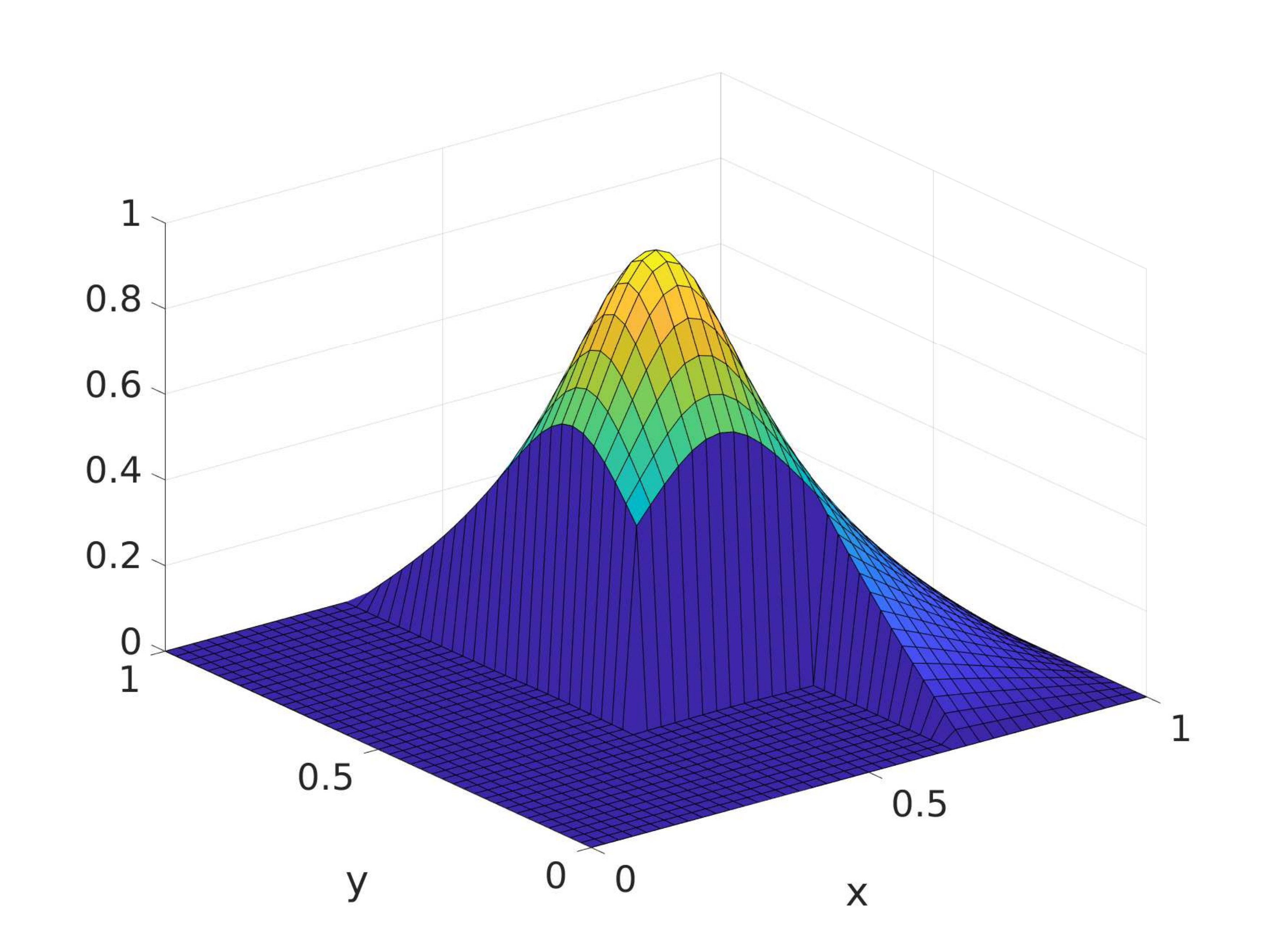}
\includegraphics[width=0.33\textwidth,trim=50 20 50 50,clip]{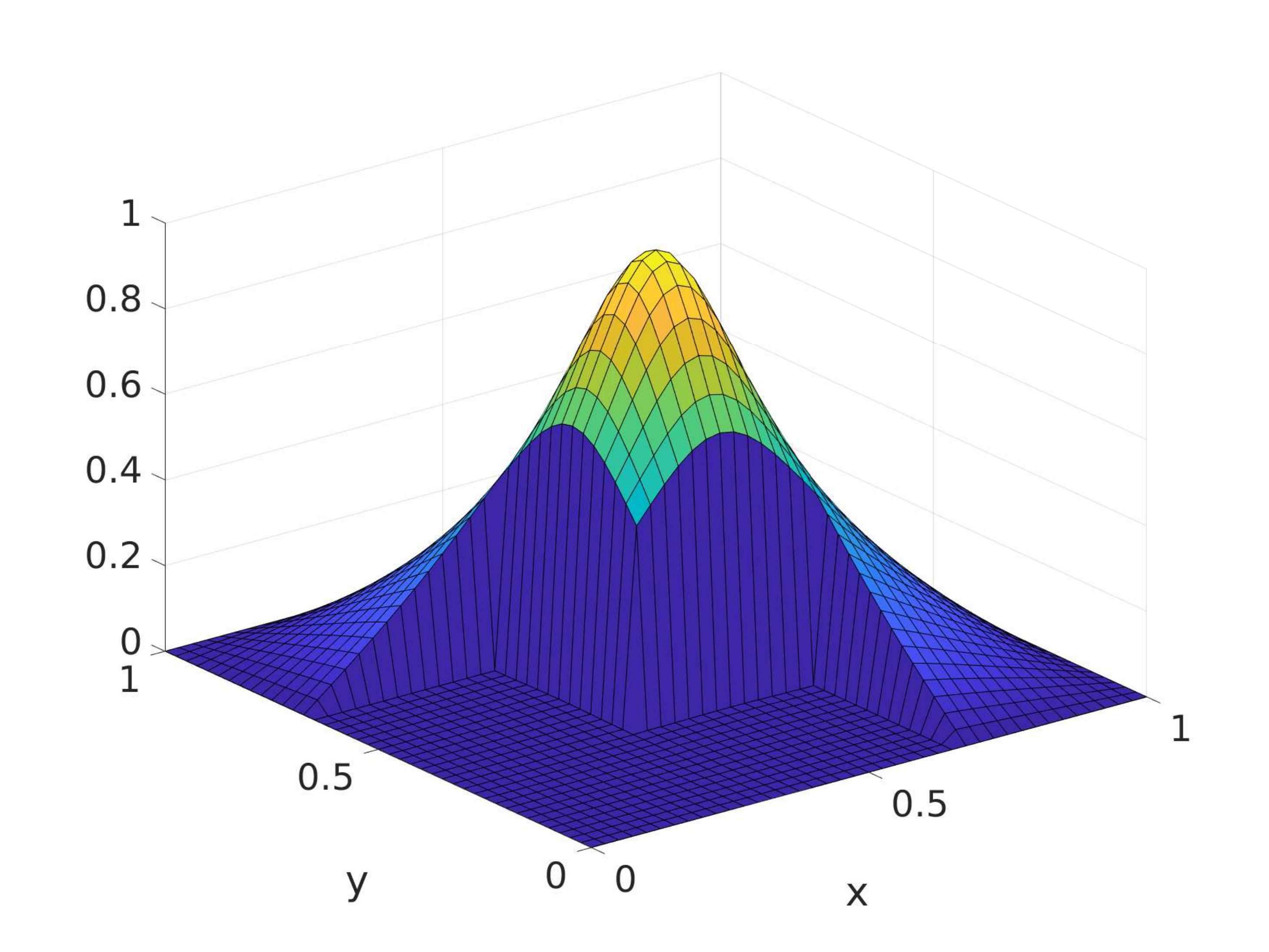}
\includegraphics[width=0.33\textwidth,trim=50 20 50 50,clip]{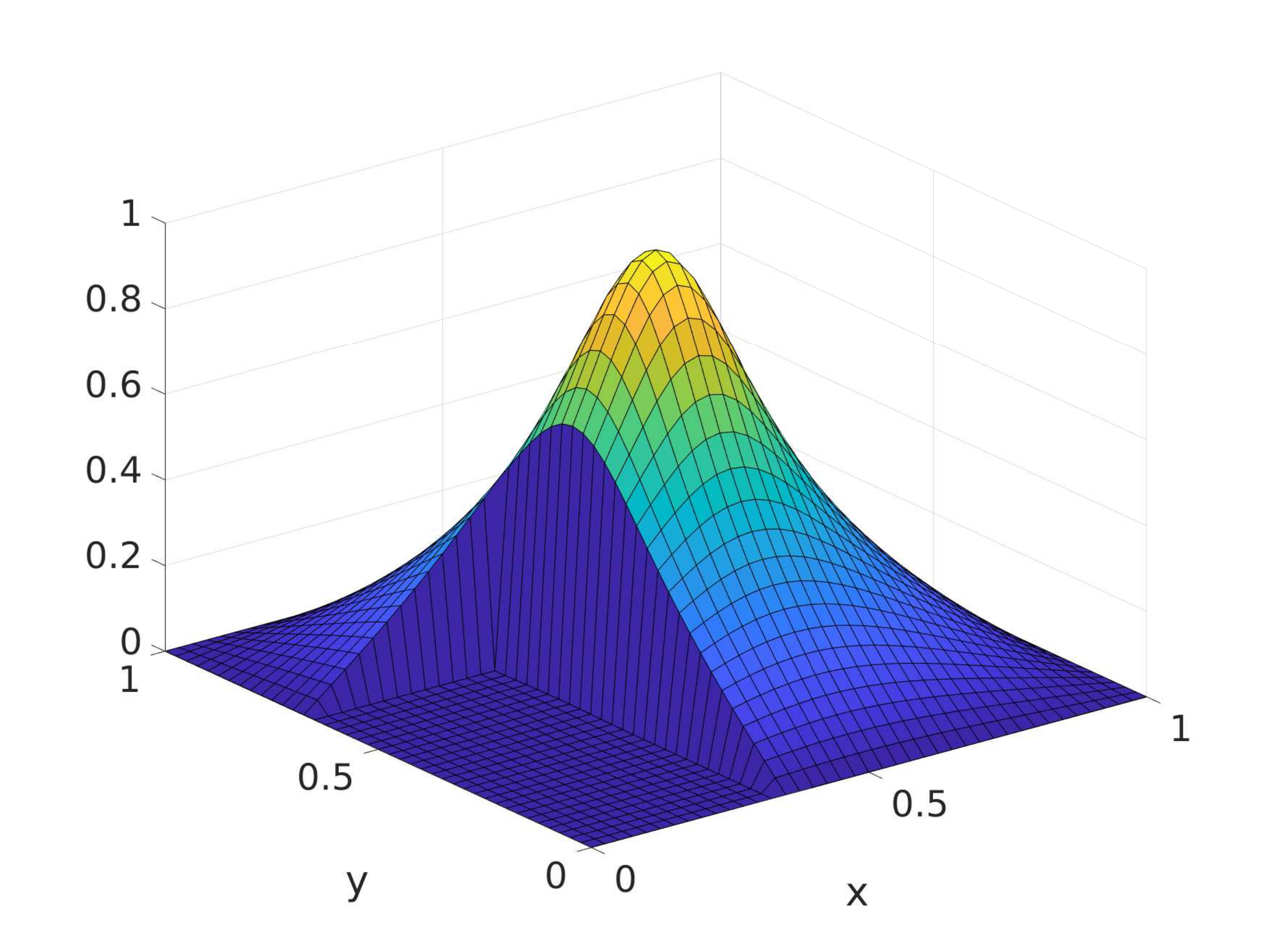}}
  \mbox{\includegraphics[width=0.33\textwidth,trim=50 20 50 50,clip]{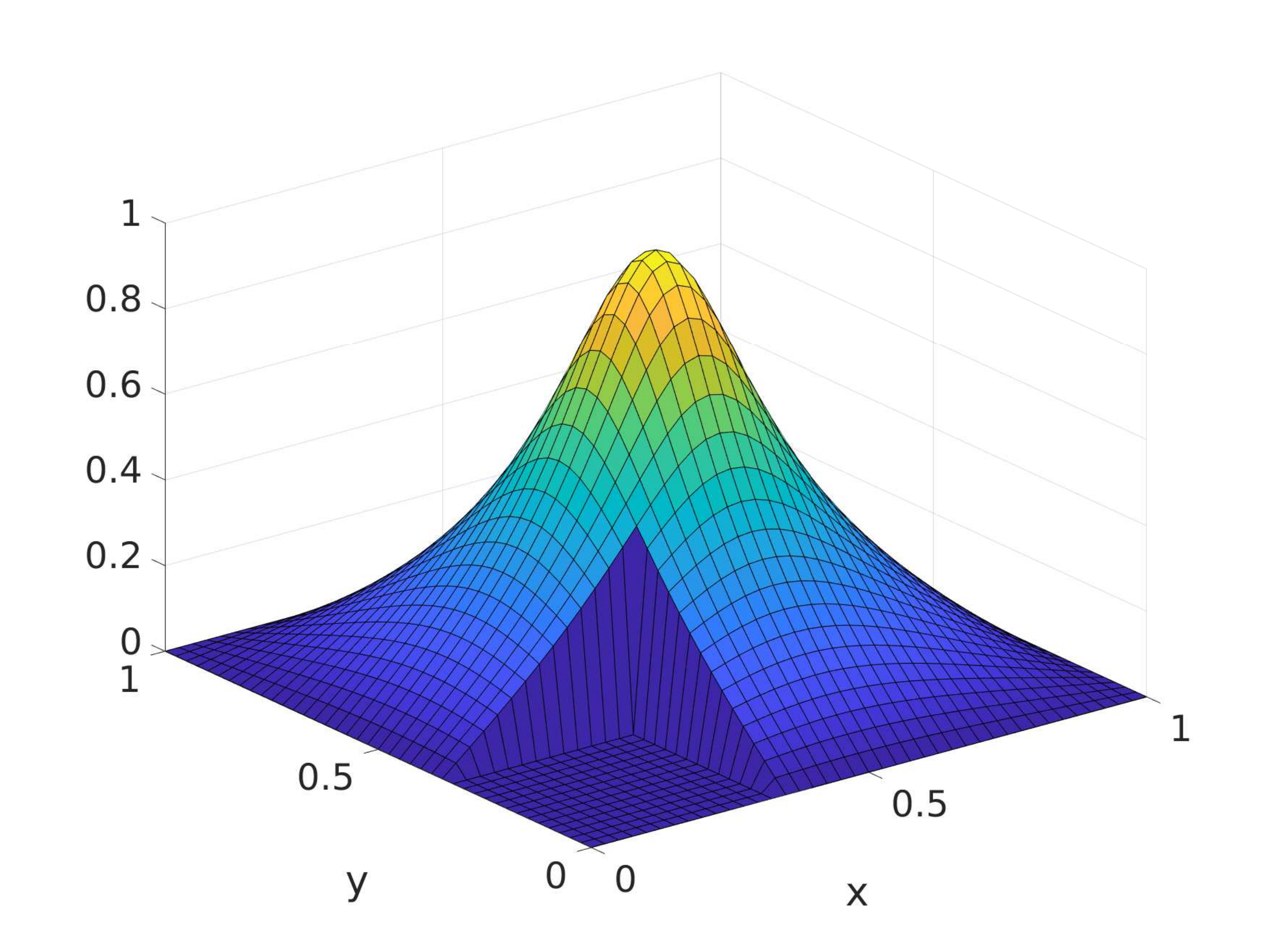}
\includegraphics[width=0.33\textwidth,trim=50 20 50 50,clip]{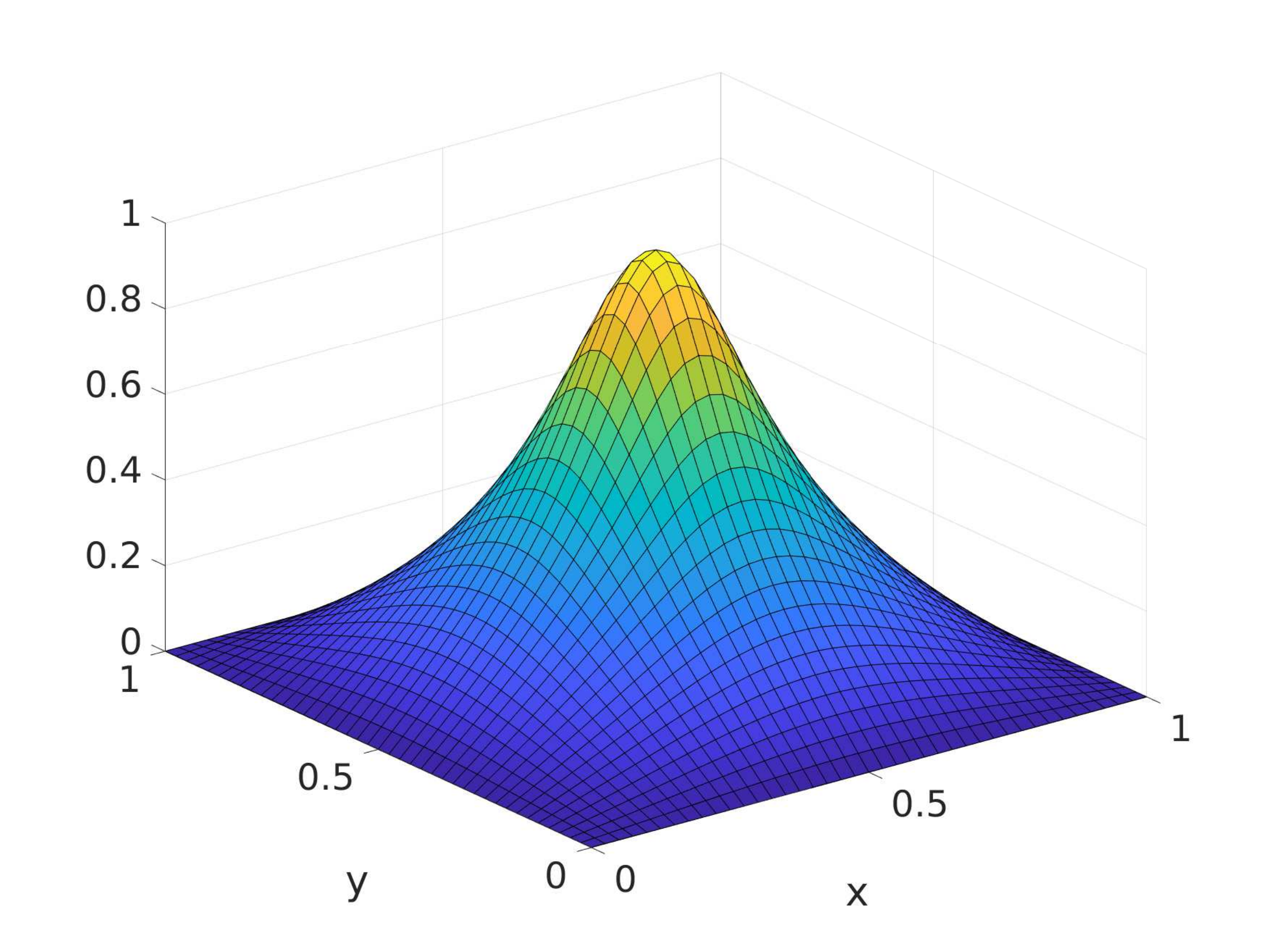}}\hfill
  \caption{Forward and backward sweep for an optimal Schwarz method
    obtained by a block LU decomposition for the model problem and
    $3\times 3$ subdomains using a D-sweep. We observe again
    convergence after one double sweep.}
  \label{OptimalSchwarzLUSweepD}
\end{figure}
Again we see convergence in one double sweep. The corresponding
block LU factors are shown in Figure \ref{OSMLUfactorsD}.
\begin{figure}
  \centering
  \mbox{\includegraphics[width=0.49\textwidth]{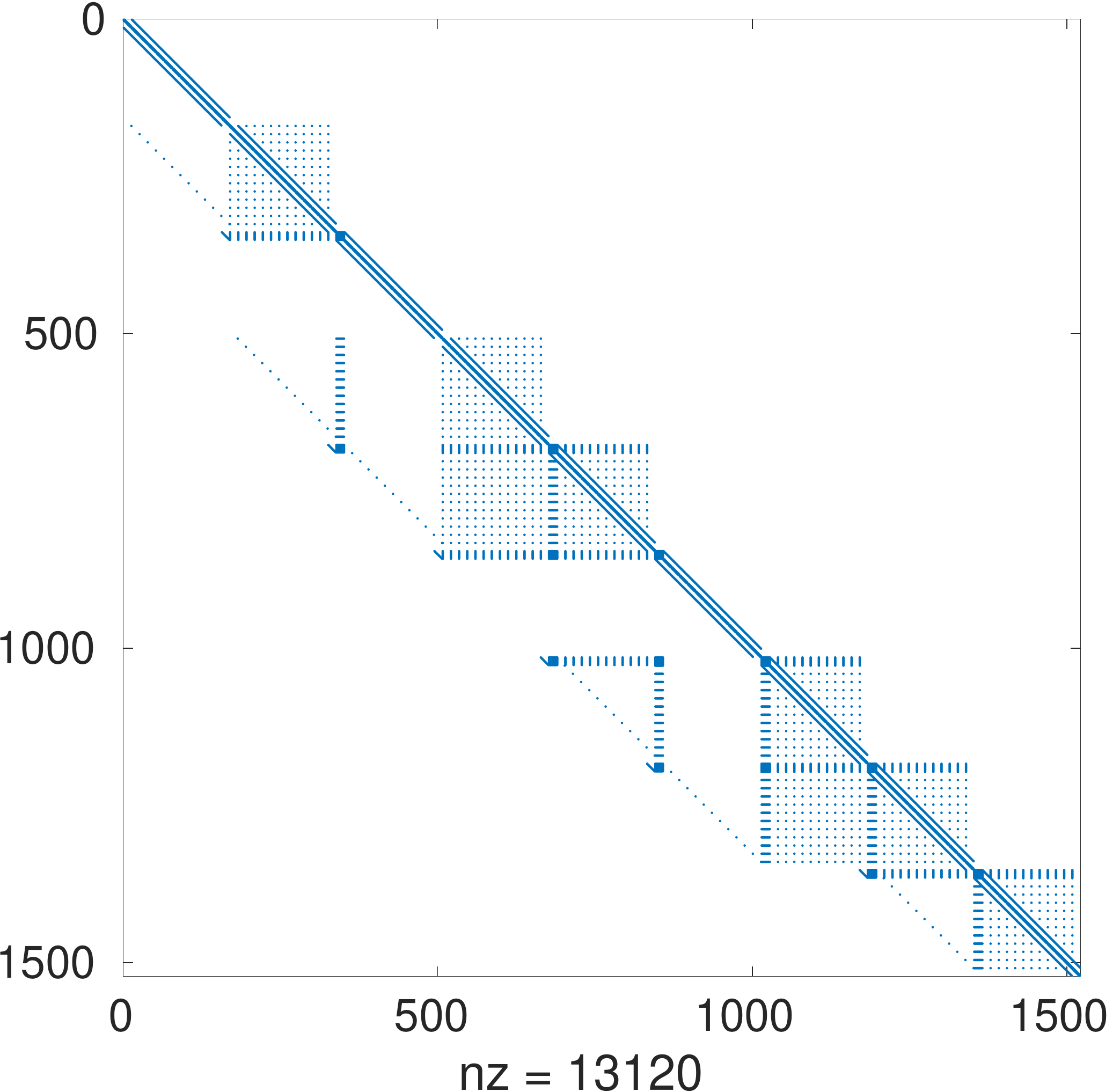}
\includegraphics[width=0.49\textwidth]{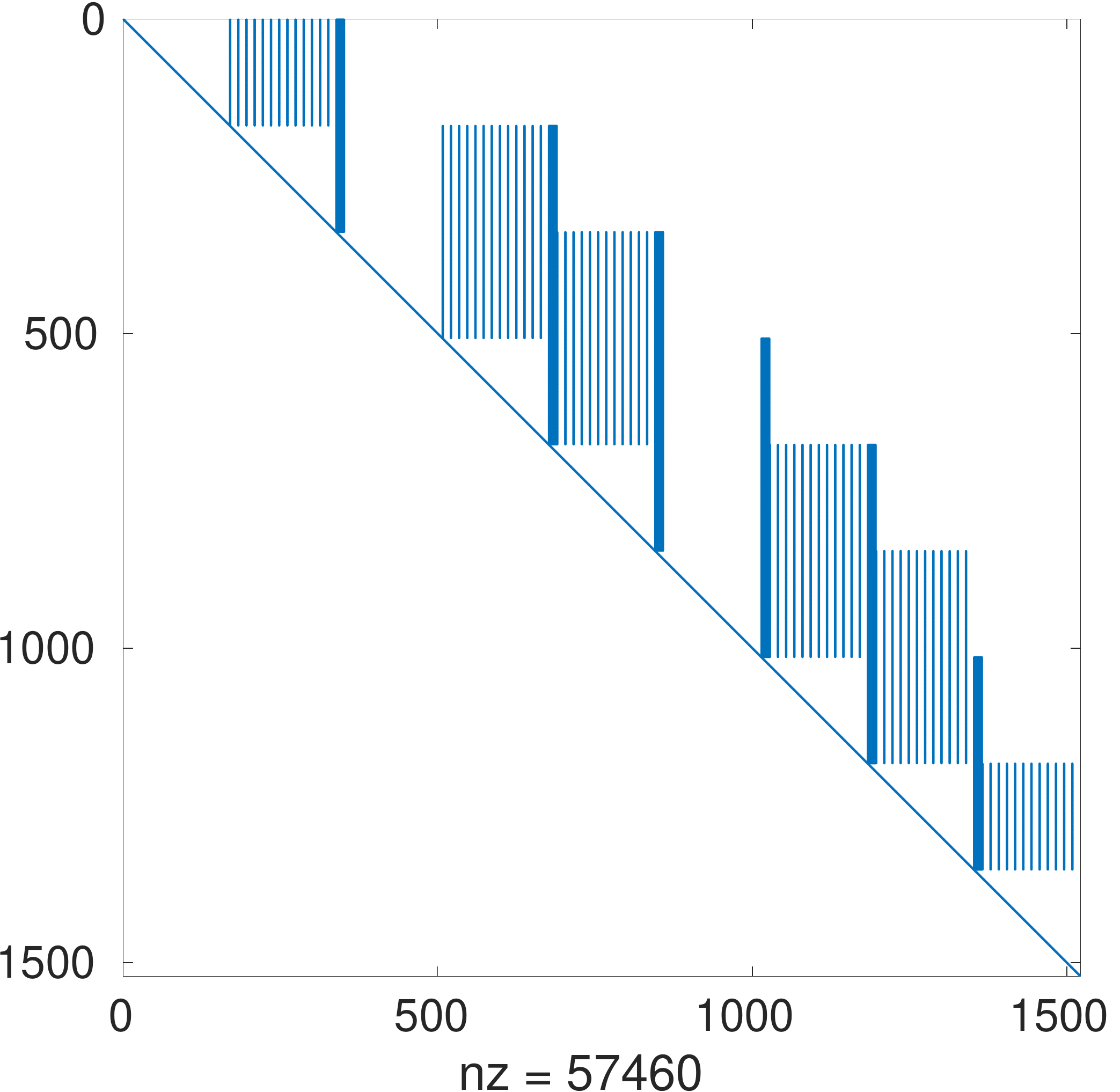}}
  \caption{Sparsity of the block $L$ and $U$ factors of the optimal
    Schwarz method for the $3\times 3$ subdomain decomposition from
    Figure \ref{OptimalSchwarzLUSweep} when using the D-sweep
    ordering.}
  \label{OSMLUfactorsD}
\end{figure}

It is currently not known how to obtain a general parallel nilpotent
Schwarz method for such general decompositions including cross
points. It was however discovered in \cn{GK1} that it is possible to
define transmission conditions at the algebraic level, including a
global communication component, such that the associated optimal
parallel Schwarz method converges in two (!) iterations, independently
of the number of subdomains, the decomposition and the partial
differential equation that is solved. Such a global
communication component is also present in the recent work
\cite{Claeys:2019:ADD,Claeys:2021:RTO} for time harmonic wave
propagation problems, which is based on earlier work of
\cn{claeys2019new}, where the multi-trace formulation was
interpreted as an optimized Schwarz method, including cross points.

We will use in what follows the two specific domain decompositions
shown in Figure \ref{1Dand2DDecompositionFig}, namely one dimensional,
sequential or strip decompositions, and two dimensional decompositions
including cross points. For sequential domain decompositions, we can
use Fourier analysis techniques to accurately study the influence of
the transmission conditions used on the convergence of the domain
decomposition iteration, whereas for two dimensional domain
decompositions, such results are not yet available.

\section{Two subdomain analysis}\label{2SubSec}

It is very instructive to understand Schwarz methods by domain truncation starting first with a
simple two subdomain decomposition, since then many detailed convergence properties of the Schwarz
methods can be obtained by direct, analytical computations. We consider therefore the strip
decomposition shown in Figure \ref{1Dand2DDecompositionFig} on the right but with only two
subdomains, $\Omega_1:=(0,X_1^r)\times(0,Y)$ and $\Omega_2:=(X_2^l,1)\times(0,Y)$. Throughout the
paper, by $y=O(x)$ we mean $C_1|x|\le |y|\le C_2|x|$ for some constant $C_1, C_2>0$ independent of
$x$, and we write the asymptotic $y\sim x$ if $\lim y/x =1$ and $x$ consists of the leading terms.

\subsection{Laplace type problems}

We study first the screened Laplace equation (sometimes also called
the Helmholtz equation with the ``good sign''),
\begin{equation}\label{ScreenedLaplaceEquation}
  (\eta-\Delta)u=f \quad \mbox{in $\Omega:=(0,1)\times(0,Y)$},
\end{equation}
with $\eta\ge 0$. As boundary conditions, we impose on the left and right
a Robin boundary condition,
\begin{equation}\label{bclr}
  {\cal B}^l(u):=\partial_nu+p^lu=g^l,\quad 
  {\cal B}^r(u):=\partial_nu+p^ru=g^r,
\end{equation}
where $\partial_n$ denotes the unit outer normal derivative
(\ie\ $-\partial_x$ on the left and $\partial_x$ on the right). On
top and bottom we impose either a Dirichlet or a Neumann condition,
\begin{equation}\label{bctb}
  \arraycolsep0.2em
  \begin{array}{rcll}
    {\cal B}^b(u)&=&g^b &\mbox{with either ${\cal B}^b(u):=u$ or
      ${\cal B}^b(u):=\partial_nu$}, \\
    {\cal B}^t(u)&=&g^t &\mbox{with either ${\cal B}^t(u):=u$ or
      ${\cal B}^t(u):=\partial_nu$}.
  \end{array}
\end{equation}
An alternating Schwarz method then starts with an initial guess
$u_2^0$ in subdomain $\Omega_2$ and performs for iteration index
$n=1,2,\ldots$ alternatingly solutions on the subdomains,
\begin{equation}\label{SMDT}
  \arraycolsep0.3em
  \begin{array}{rcllrcll}
  (\eta-\Delta)u_1^n&=&f  &\mbox{in $\Omega_1$},&
  (\eta-\Delta)u_2^n&=&f  &\mbox{in $\Omega_2$},\\
    {\cal B}_1^r(u_1^n)&=& {\cal B}_1^r(u_2^{n-1}) &\mbox{at $x=X_1^r$}, &
    {\cal B}_2^l(u_2^n)&=& {\cal B}_2^l(u_1^n) &\mbox{at $x=X_2^l$},\\
    {\cal B}^l(u_1^n)&=&g^l &\mbox{at $x=0$},  &
    {\cal B}^r(u_2^n)&=&g^r &\mbox{at $x=1$}, \\
    {\cal B}^b(u_1^n)&=&g^b &\mbox{at $y=0$},  &
    {\cal B}^b(u_2^n)&=&g^b &\mbox{at $y=0$}, \\
    {\cal B}^t(u_1^n)&=&g^t &\mbox{at $y=Y$},  &
    {\cal B}^t(u_2^n)&=&g^t &\mbox{at $y=Y$}.
  \end{array}
\end{equation}
Here the key ingredient are the transmission conditions ${\cal B}_1^r$
and ${\cal B}_2^l$, which in the classical Schwarz method are
Dirichlet, see \R{AlternatingSchwarzMethod}, but in Schwarz
methods based on domain truncation, \eg~optimized Schwarz methods,
they are of the form
\begin{equation}\label{TC}
  {\cal B}_1^r(u):=\partial_nu+{\cal S}_1^r(u),\quad 
  {\cal B}_2^l(u):=\partial_nu+{\cal S}_2^l(u).
\end{equation}
Here ${\cal S}_1^r$ and ${\cal S}_2^l$ can either be constants, ${\cal
    S}_1^r=p_1$, ${\cal S}_2^l=p_2$, $p_j\in \mathbb{R}$, which leads
  to Robin transmission conditions, or more general tangential
  operators acting along the interface, \eg\ ${\cal
    S}_1^r=p_1-q_1\partial_{yy}$ with $p_1,q_1\in\mathbb{R}$, which
  would be Ventcell transmission conditions, or one can also consider
  more general transmission conditions involving rational functions or
  PMLs, see \cn{gander2019class} and references therein.

In order to study the convergence of the Schwarz method \R{SMDT},
we introduce the error $e_j^n:=u-u_j^n$, $j=1,2$, which by linearity satisfies
the same iteration equations as the original algorithm, but with zero
data, \ie~
\begin{equation}\label{SMDTerror}
  \arraycolsep0.3em
  \begin{array}{rcllrcll}
  (\eta-\Delta)e_1^n&=&0  &\mbox{in $\Omega_1$},&
  (\eta-\Delta)e_2^n&=&0  &\mbox{in $\Omega_2$},\\
    {\cal B}_1^r(e_1^n)&=& {\cal B}_1^r(e_2^{n-1}) &\mbox{at $x=X_1^r$}, &
    {\cal B}_2^l(e_2^n)&=& {\cal B}_2^l(e_1^n) &\mbox{at $x=X_2^l$},\\
    {\cal B}^l(e_1^n)&=&0 &\mbox{at $x=0$},  &
    {\cal B}^r(e_2^n)&=&0 &\mbox{at $x=1$}, \\
    {\cal B}^b(e_1^n)&=&0 &\mbox{at $y=0$},  &
    {\cal B}^b(e_2^n)&=&0 &\mbox{at $y=0$}, \\
    {\cal B}^t(e_1^n)&=&0 &\mbox{at $y=Y$},  &
    {\cal B}^t(e_2^n)&=&0 &\mbox{at $y=Y$},
  \end{array}
\end{equation}
as one can easily verify by directly evaluating the expressions on the
left of the equal signs, \eg
$$
(\eta-\Delta)e_1^n=(\eta-\Delta)(u-u_1^n)=f-f=0.
$$
To obtain detailed information on the functioning of such Schwarz
methods, it is best to expand the errors in a Fourier series in the
$y$ direction\footnote{Note the different symbols $\hat{e}_j^n$ for
  the cosine, and $\hat{\epsilon}_j^n$ for the sine coefficients.},
\begin{equation}\label{CosSineExpansion}
  e_j^n=\sum_{\tilde{k}=0}^\infty\hat{e}_j^n(\tilde{k})\cos(\frac{\tilde{k}\pi}{Y}y)+
  \sum_{\tilde{k}=1}^\infty\hat{\epsilon}_j^n(\tilde{k})\sin(\frac{\tilde{k}\pi}{Y}y).
\end{equation}
Now if at the bottom and top we have Dirichlet conditions, all the
error coefficients of the cosine are zero, and if we have Neumann
conditions, all the error coefficients of the sine are zero. In either
case, inserting the Fourier series into the error equations
\R{SMDTerror}, we obtain by orthogonality of the sine and cosine
functions for each cosine error Fourier mode $\hat{e}_j^n$, $j=1,2$
(and analogously for the sine error Fourier mode $\hat{\epsilon}_j^n$)
the Schwarz iteration
\begin{equation}\label{SMDTerrorF}
  \thickmuskip=0.6mu
  \medmuskip=0.2mu
  \begin{aligned}
  (\eta-\partial_{xx}+k^2)\hat{e}_1^n&=0& &\mbox{in $(0,X_1^r)$},&
  (\eta-\partial_{xx}+k^2)\hat{e}_2^n&=0& &\mbox{in $(X_2^l,1)$},\\
    \beta_1^r(\hat{e}_1^n)&=\beta_1^r(\hat{e}_2^{n-1})& &\mbox{at $x=X_1^r$}, &
    \beta_2^l(\hat{e}_2^n)&=\beta_2^l(\hat{e}_1^n)& &\mbox{at $x=X_2^l$},\\
    \beta^l(\hat{e}_1^n)&=0& &\mbox{at $x=0$},  &
    \beta^r(\hat{e}_2^n)&=0& &\mbox{at $x=1$},
  \end{aligned}
\end{equation}
where we defined the frequency variable $k:=\frac{\tilde{k}\pi}{Y}$,
and $\beta_1^r$, $\beta_2^l$, $\beta^l$, $\beta^r$ denote the Fourier
transforms of the boundary operators, also called their symbols,
\eg~$\beta_1^r=\partial_n+\sigma_1^r$ with $\sigma_1^r$ the symbol of
the tangential operator chosen in \R{TC}. If this operator was for
example ${\cal S}_1^r=p_1-q_1\partial_{yy}$, $p_1$, $q_1$ constants, then its
symbol would be $\sigma_1^r=p_1+q_1k^2$, from the two derivatives acting
on the cosine, which also leads to the sign change from minus to plus.

Solving the ordinary differential equations in the error iteration
\R{SMDTerrorF}, we obtain using the outer Robin boundary conditions
for each Fourier mode $k$ the solutions\footnote{These computations can easily be performed in Maple, see Appendix B.} (with $\underline{x}:=1-x$)
\begin{equation}\label{ScreenedLaplaceSols}
  \begin{aligned}
    \hat{e}_1^n(x,k)&=A_1^n(k)\left({\sqrt{k^2+\eta}\cosh(\sqrt{k^2+\eta}x)
      + p^l\sinh(\sqrt{k^2 +\eta}x)}\right),\\
    \hat{e}_2^n(x,k)&=A_2^n(k)\left({\sqrt{k^2+\eta}\cosh(\sqrt{k^2+\eta}\underline{x}) + p^r\sinh(\sqrt{k^2 +\eta}\underline{x})}\right).
  \end{aligned}  
\end{equation}
To determine the two remaining constants $A_1^n(k)$ and $A_2^n(k)$ we
insert the solutions into the transmission conditions in
\R{SMDTerrorF}, which leads to
\begin{equation}
  A_1^n=\rho_1(k,\eta,p^l,p^r,\sigma_1^r)A_2^{n-1},\quad
    A_2^n=\rho_2(k,\eta,p^l,p^r,\sigma_2^l)A_1^{n},
\end{equation}
with the two quantities
\begin{equation}\label{rho12}
  \arraycolsep0.1em
  \begin{array}{rcl}
    \rho_1 &=& \frac{(k^2 - \sigma_1^rp^r + \eta)
      \sinh(\sqrt{k^2 + \eta}(X_1^r - 1))
        - \cosh(\sqrt{k^2 + \eta}(X_1^r - 1))
        \sqrt{k^2 + \eta}(p^r - \sigma_1^r)}
           {(k^2 + \sigma_1^rp^l + \eta)\sinh(\sqrt{k^2 + \eta}X_1^r)
             + \cosh(\sqrt{k^2 + \eta}X_1^r)\sqrt{k^2 + \eta}(p^l + \sigma_1^r)},\\
    \rho_2&=&\frac{(k^2 - \sigma_2^lp^l + \eta)
      \sinh(\sqrt{k^2 + \eta}X_2^l)
      + \cosh(\sqrt{k^2 + \eta}X_2^l)
      \sqrt{k^2 + \eta}(p^l - \sigma_2^l)}
        {(k^2 + \sigma_2^lp^r + \eta)\sinh(\sqrt{k^2 + \eta}(X_2^l - 1))
          - \cosh(\sqrt{k^2 + \eta}(X_2^l - 1))\sqrt{k^2 + \eta}(p^r + \sigma_2^l)}.
  \end{array}  
\end{equation}
Their product represents the convergence factor of the Schwarz method,
\begin{equation}\label{RhoOSM}
  \rho(k,\eta,p^l,p^r,\sigma_1^r,\sigma_2^l):=\rho_1(k,\eta,p^l,p^r,\sigma_1^r)\rho_2(k,\eta,p^l,p^r,\sigma_2^l),
\end{equation}
\ie\ it determines by which coefficient the corresponding error
Fourier mode $k$ is multiplied over each alternating Schwarz
iteration.

Since the expression for the convergence factor $\rho$ looks rather
complicated, it is instructive to look at the special case first where
the domain, and thus the subdomains, are unbounded on the left and
right. This can be obtained from the result above by introducing for
the outer boundary conditions a transparent one, \ie\ choosing for the
Robin parameters the symbol of the DtN
operator\footnote{\label{footnoteDtN} The symbol of the DtN operator
  for the transparent boundary condition can be obtained by solving
  \eg~$(\eta-\partial_{xx}+k^2)\hat{e}_1=0$ on the outer domain
  $(-\infty,0)$ with Dirichlet data $\hat{e}_1(0)=\hat{g}$, which
  gives $\hat{e}_1=\hat{g}e^{\sqrt{k^2+\eta}x}$, because solutions
  must remain bounded as $x\to-\infty$.  Since
  $\partial_x\hat{e}_1=\sqrt{k^2+\eta}\hat{e}_1$, the outer solution
  $\hat{e}_1$ satisfies for any $x\in(-\infty,0]$ the identity
$-\partial_x\hat{e}_1+\sqrt{k^2+\eta}\hat{e}_1=0$. One could therefore
also solve this outer problem on any bounded domain, \eg~$(a,0)$
imposing at $x=a$ the transparent boundary condition
$\partial_n\hat{e}_1+\sqrt{k^2+\eta}\hat{e}_1=0$, since the outward
normal $\partial_n=-\partial_x$, and get the same solution.  Note that
$\sqrt{k^2+\eta}$ is the symbol of the DtN map $\hat{g}\mapsto
\partial_n \hat{e}_1$ with $\partial_n=\partial_x$, \ie\ the operator
that takes the Dirichlet data $\hat{g}$, solves the problem on the
domain, and then computes the Neumann data for the outward normal
derivative.}  $p^r:=\sqrt{k^2+\eta}$, $p^l:=\sqrt{k^2+\eta}$. This
  leads after simplification to the convergence factor
\begin{equation}\label{RhoOSMUnbounded}
  \rho(k,\eta,\sigma_1^r,\sigma_2^l)=
  \frac{\sqrt{k^2+\eta}-\sigma_1^r}{\sqrt{k^2+\eta}+\sigma_1^r}
  \frac{\sqrt{k^2+\eta}-\sigma_2^l}{\sqrt{k^2+\eta}+\sigma_2^l}
  e^{-2(X_1^r-X_2^l)\sqrt{k^2+\eta}}.
\end{equation}
This convergence factor shows us a very important property of these
Schwarz methods: if we choose
$\sigma_1^r=\sigma_2^l:=\sqrt{k^2+\eta}$, the tangential symbol of the
transparent boundary condition, the convergence factor vanishes
identically for all Fourier frequencies $k$, \ie\ after two
consecutive subdomain solves, one on the left and one on the right (or
vice versa), the error in each Fourier mode is zero, \ie\ we have the
exact solution on the subdomain after two alternating subdomain
solves! This is called an optimal Schwarz method, see footnote
\ref{footnoteoptimalSchwarz}. If we thus used a double sweep, first
solving on the left subdomain and then on the right, followed by
another solve on the left (or a double sweep in the other direction),
we would have the solution on both subdomains, which shows for two
subdomains the double sweep result illustrated in Figure
\ref{PoissonOptimalSchwarzFig} for three subdomains. This result holds
in general for many subdomains in the strip decomposition case; it was
first proved in \cn[Result 3.1]{Nataf93} for an advection diffusion
problem. If we use the parallel Schwarz method for two subdomains,
then we need two iterations in order to have each subdomain solved one
after the other, so the optimal parallel Schwarz method for two
subdomains will converge in two iterations, a result that generalizes
to convergence in $J$ iterations for $J$ subdomains in the strip
decomposition case, first proved in \cn[Proposition 2.4]{NRS94}.

In the bounded domain case, we can still obtain these same results by
chosing in the transmission conditions the tangential symbols of the
transparent boundary conditions for the bounded subdomains, which is
equivalent to choosing $\sigma_1^r$ and $\sigma_2^l$ such that
$\rho_1$ and $\rho_2$ in \R{rho12} vanish, which leads to
\begin{equation}\label{DtNbdd}
  \begin{array}{rcl}
     \sigma_1^r&=&\frac{\sqrt{k^2+\eta}\left(\tanh(\sqrt{k^2+\eta}(X_1^r-1))\sqrt{k^2+\eta} - p^r\right)}{\tanh(\sqrt{k^2+\eta}(X_1^r - 1))p^r-\sqrt{k^2+\eta}},\\
     \sigma_2^l&=&\frac{\sqrt{k^2+\eta}\left(\tanh(\sqrt{k^2+\eta}X_2^l)\sqrt{k^2+\eta} + p^l\right)}{\tanh(\sqrt{k^2+\eta}X_2^l)p^l+\sqrt{k^2+\eta}}.
  \end{array}
\end{equation}
As in the unbounded domain case, these values correspond to the
symbols of the associated DtN operators on the bounded domain, as one
can verify by a direct computation using the solutions
\R{ScreenedLaplaceSols} on their respective domains, as done for
the unbounded domain in footnote \ref{footnoteDtN}. We see that in the
bounded domain case, the symbols of the DtN maps in \R{DtNbdd},
and hence the DtN maps also, depend on the outer boundary conditions,
since both the domain parameters ($X_1^r$ and $X_2^l$) and the Robin
parameters in the outer boundary conditions ($p^l$ and $p^r$) appear
in them. However, for large frequencies $k$, we have
$$
\sigma_1^r\sim\sqrt{k^2+\eta},\quad  \sigma_2^l\sim\sqrt{k^2+\eta},
$$
since $\tanh z\to\pm1$ as $z\to\pm\infty$, and thus only low frequencies see the difference
from the bounded to the unbounded case in the screened Laplace problem.
  
Choosing Robin transmission conditions, ${\cal
  S}_1^r:=p_1^r=\sigma_1^r$ and ${\cal S}_2^l:=p_2^l=\sigma_2^l$ with
$p_1^r,p_2^l\in\mathbb{R}$ in \R{TC}, and taking the limit in the
convergence factor \R{RhoOSMUnbounded} as the Robin parameters
$p_1^r$ and $p_2^l$ go to infinity, we find
\begin{equation}\label{RhoAltSUnbounded}
  \rho(k,\eta,)=e^{-2(X_1^r-X_2^l)\sqrt{k^2+\eta}},
\end{equation}
which is the convergence factor of the classical alternating Schwarz
method on the unbounded domain, since the transmission conditions
\R{TC} become Dirichlet transmission conditions in this limit of the
Robin transmission conditions.  We see now explicitly the overlap
$L:=X_1^r-X_2^l$ appearing in the exponential function. The classical
Schwarz method for the screened Laplace problem therefore converges
for all Fourier modes $k$, provided there is overlap, and $\eta>0$. If
$\eta=0$, \ie\ we consider the Laplace problem, then the Fourier mode
$k=0$ does not contract on the unbounded domain. Recall however that
$k=0$ is only present in the cosine expansion for Neumann boundary
conditions on top and bottom, it is the constant mode, and the Schwarz
method on the unbounded domain does indeed not contract for this
mode. For Dirichlet boundary conditions on top and bottom, \ie\ 
considering the sine series in \eqref{CosSineExpansion}, the smallest
Fourier frequency is $k=\frac{\pi}{Y}>0$ so the Schwarz method is
contracting, even with $\eta=0$. Note also that the contraction is
faster for large Fourier frequencies $k$ than for small ones due to
the exponential function, and hence the Schwarz method is a smoother
for the screened Laplace equation. A comparison of the classical
Schwarz convergence factor \R{RhoAltSUnbounded} with the convergence
factor for the optimized Schwarz method with Robin transmission
conditions in \R{RhoOSMUnbounded} shows that the latter contains the
former, but in addition also the two fractions in front which are also
smaller than one for suitable choices of the Robin parameters. The
optimized Schwarz method therefore always converges faster than the
classical Schwarz method, and furthermore can also converge without
overlap, which was the original reason for Lions to propose Robin
transmission conditions.

To see how the classical Schwarz method contracts on a bounded domain,
we show in Figure \ref{SchwarzScreenedLaplaceRhos} (left) the
different cases for the outer boundary conditions ($p^l=p^r=5$ in the
Robin case), by plotting \R{RhoOSM} for when the parameters in the
Robin transmission conditions go to infinity, and the model parameter
$\eta=1$, for a small overlap, $L=X_1^r-X_2^l=0.51-0.49=0.02$.
\begin{figure}
  \centering
  \includegraphics[width=0.48\textwidth]{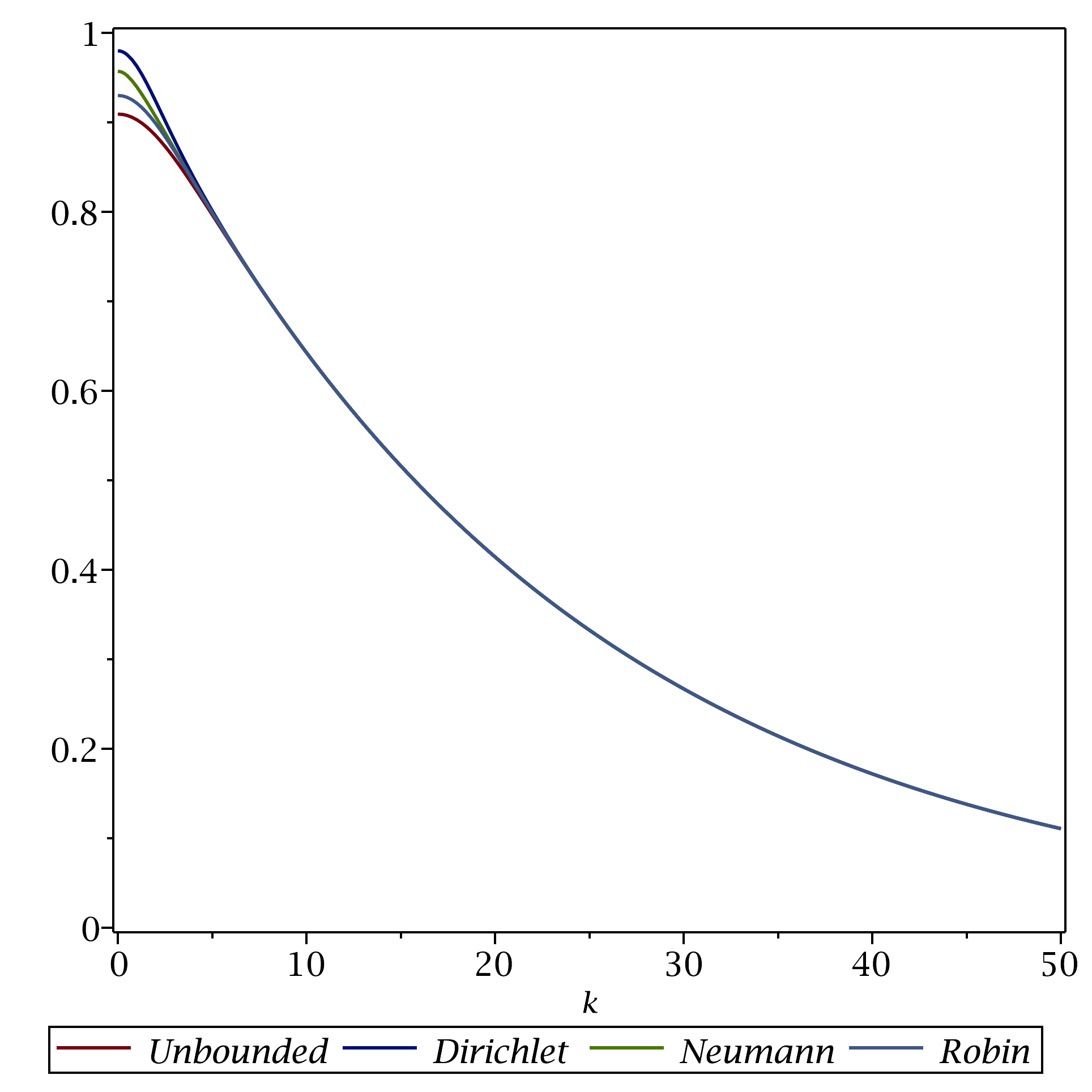}
  \includegraphics[width=0.48\textwidth]{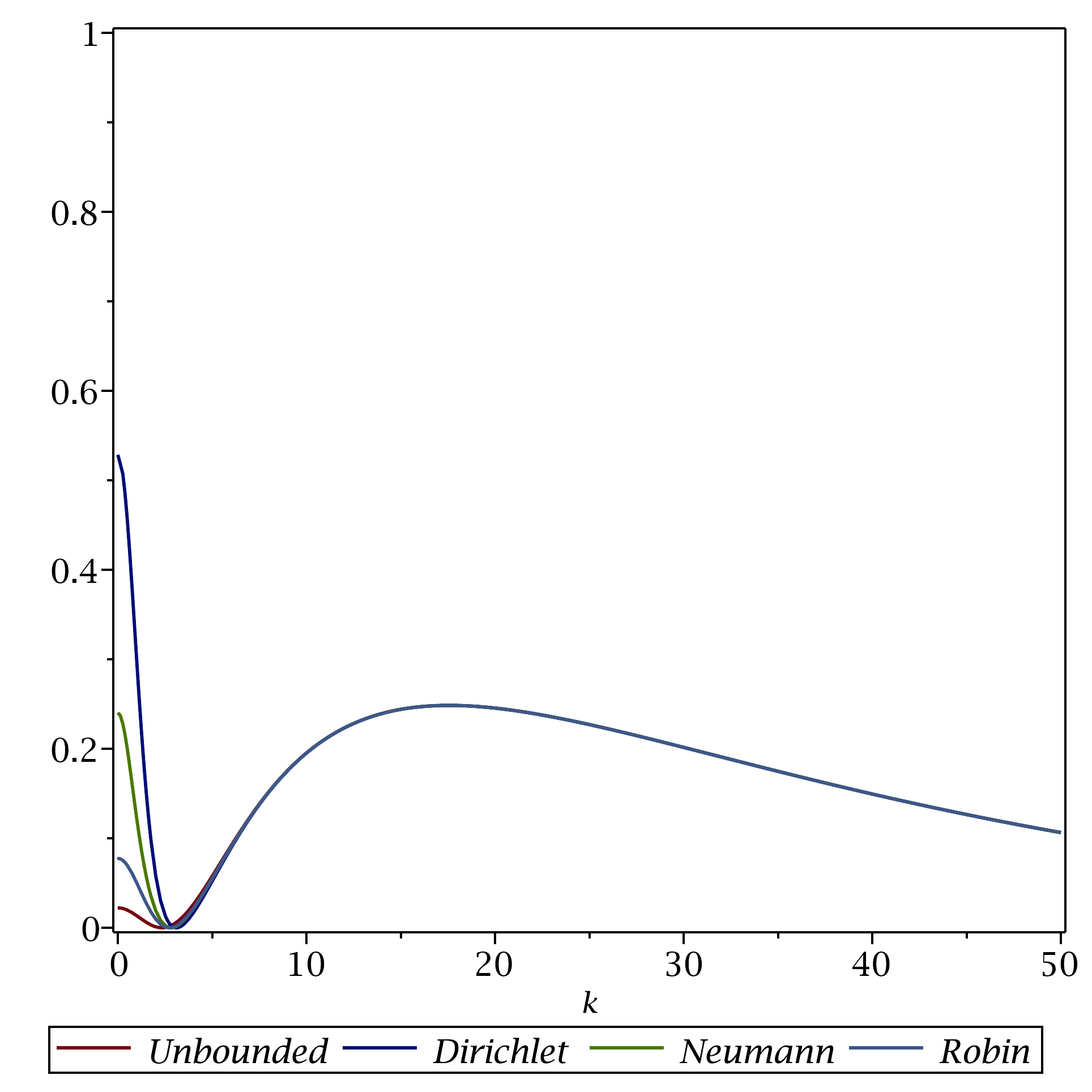}
  \caption{Left: classical Schwarz convergence factor for
    $\eta-\Delta$ and different outer boundary conditions on the left
    and right. Right: corresponding Schwarz convergence
    factor with Robin transmission conditions.}
  \label{SchwarzScreenedLaplaceRhos}
\end{figure}
This shows that the different outer boundary conditions only influence the convergence of the lowest
frequencies ($k$ small) in the error, for larger frequencies there is no influence. This is due to
the diffusive nature of the screened Laplace equation: high freqency components are damped rapidly
by the (screened) Laplace operator, and thus they don't see the different outer boundary conditions.
On the right in Figure \ref{SchwarzScreenedLaplaceRhos} we show the corresponding convergence
factors with Robin transmission conditions, chosen as $p_1^r=p_2^l=3$. This indicates that there is
an optimal choice which leads to the fastest possible convergence, which in our example is achieved
for the Neumann outer boundary condition with the choice $p_1^r=p_2^l\approx 3$, since the maximum
of the convergence factor is minimized by equioscillation, \ie\ the convergence factor at $k=0$
equals the convergence factor at the interior maximum at around $k=18$. For different outer boundary
conditions, there would be a better value of $p_1^r=p_2^l$ that makes \eg~ the blue curve
for Dirichlet outer boundary conditions equioscillating. This optimization process led to the name
optimized Schwarz methods.

Let us compute the optimized parameter values: for the simplified
situation where we choose the same Robin parameter in the transmission
condition, $p_1^r=p_2^l=p$, we need to solve the min-max problem
\begin{equation}
  \min_p\max_{k\in[k_{\min},\infty)}|\rho(k,\eta,p^l,p^r,p,p)|
\end{equation}
with the convergence factor $\rho$ from \R{RhoOSM}. The solution
is given by equioscillation, as indicated in Figure
\ref{SchwarzScreenedLaplaceRhos}, \ie\ we have to solve
\begin{equation}\label{equioscillationEq}
  \rho(k_{\min},\eta,p^l,p^r,p^*,p^*)=\rho(\bar{k},\eta,p^l,p^r,p^*,p^*),
\end{equation}
for the optimal Robin parameter $p^*$, where $\bar{k}$ denotes the
location of the interior maximum visible in Figure
\ref{SchwarzScreenedLaplaceRhos} (right). We observe numerically that
$\bar{k}\sim C_k\frac{1}{L^{2/3}}$, and $p^*\sim
C_p\frac{1}{L^{1/3}}$, see also \cn{GanderOSM} for a proof of this,
and \cn{bennequin2009homographic} for a more comprehensive analysis of such
min-max problems. Inserting this Ansatz into the system of
equations formed by \R{equioscillationEq} and the derivative
condition for the local maximum
\begin{equation}
  \partial_k\rho(\bar{k},\eta,p^l,p^r,p^*,p^*)=0,
\end{equation}
we find by asymptotic expansion\footnote{For the expansions involving
  $\bar{k}$, it is much easier to expand the convergence factor of the
  unbounded domain analysis \R{RhoOSMUnbounded}, which gives the
  same result as the expansion of the convergence factor
  \R{RhoOSM} with \R{rho12} from the bounded domain analysis,
  since they behave the same for large $k$, see Figure
  \ref{SchwarzScreenedLaplaceRhos} on the right. This approach was
  discovered in \cn{gander2014optimized} under the name asymptotic
  approximation of the convergence factor, see also
  \cn{gander2017optimized}, \cn{chen2021optimized} where this new
  technique was used.}  for small overlap $L$
\begin{eqnarray}
  \partial_k\rho(\bar{k},\eta,p^l,p^r,p^*,p^*)&=&
  \frac{2(2C_p-C_k^2)}{C_k^2}L+O(L^{4/3}),\label{de}\\
  \rho(k_{\min},\eta,p^l,p^r,p^*,p^*)&=&1-\frac{C}{C_p}L^{1/3}+O(L^{2/3}),\label{lfe}\\
  \rho(\bar{k},\eta,p^l,p^r,p^*,p^*)&=&1-2\frac{C_k^2 + 2C_p}{C_k}L^{1/3}+O(L^{2/3}),\label{hfe}
\end{eqnarray}
where all the information about the geometry\footnote{See also
  \cn{gander2016optimized} where it is shown that variable
  coefficients also influence essentially the low frequency
  behavior.} is in the constant stemming from the value of the
convergence factor at the lowest frequency \R{lfe},
\begin{equation}\label{Cgeom}
  C=\frac{2s((k_{\min}^2 + p_lp_r + \eta)s_s + s c_s(p_l +
    p_r))}{(((s_sp_r+sc_s)c_{sx}-(c_sp_r + s_ss)s_{sx})(s_{sx}p_l + c_{sx}s))}.
\end{equation}
Here we let $s:=\sqrt{k^2+\eta}$, $c_s:=\cosh(s)$, $s_s:=\sinh(s)$,
$c_{sx}:=\cosh(sX_2^l)$, $s_{sx}:=\sinh(sX_2^l)$ to shorten the
formula. Setting the leading order term of the derivative in
\R{de} to zero, and the other two leading terms from \R{lfe}
and \R{hfe} to be equal for equioscillation leads to the system
for the constants $C_k$ and $C_p$,
$$
\frac{2(2C_p-C_k^2)}{C_k^2}=0,\quad \frac{C}{C_p}=2\frac{C_k^2 +
  2C_p}{C_k},
$$
whose solution is
$$
  C_k=\left(\frac{C}{2}\right)^{1/3}, \quad
  C_p=\frac{1}{2}\left(\frac{C}{2}\right)^{2/3}.
$$
The best choice of the Robin parameter is therefore given for small
overlap $L$ by
\begin{equation}\label{pstar}
  p^*=\frac{1}{2}\left(\frac{C}{2}\right)^{2/3}L^{-1/3},
\end{equation}
with the geometry constant $C$ from \R{Cgeom}, which leads to a
convergence factor of the associated optimized Schwarz method
\begin{equation}\label{OptRho}
  \rho^*\sim 1-2\left(\frac{2}{C}\right)^{2/3}L^{1/3}.
\end{equation}

We show in Figure \ref{SchwarzScreenedLaplaceOptRhos} on the left the
contraction factor for this optimized Schwarz method for the different
outer boundary conditions, and also for the unbounded domain case, for
the same parameter choice as in Figure
\ref{SchwarzScreenedLaplaceRhos}.
\begin{figure}
  \centering
  \includegraphics[width=0.48\textwidth]{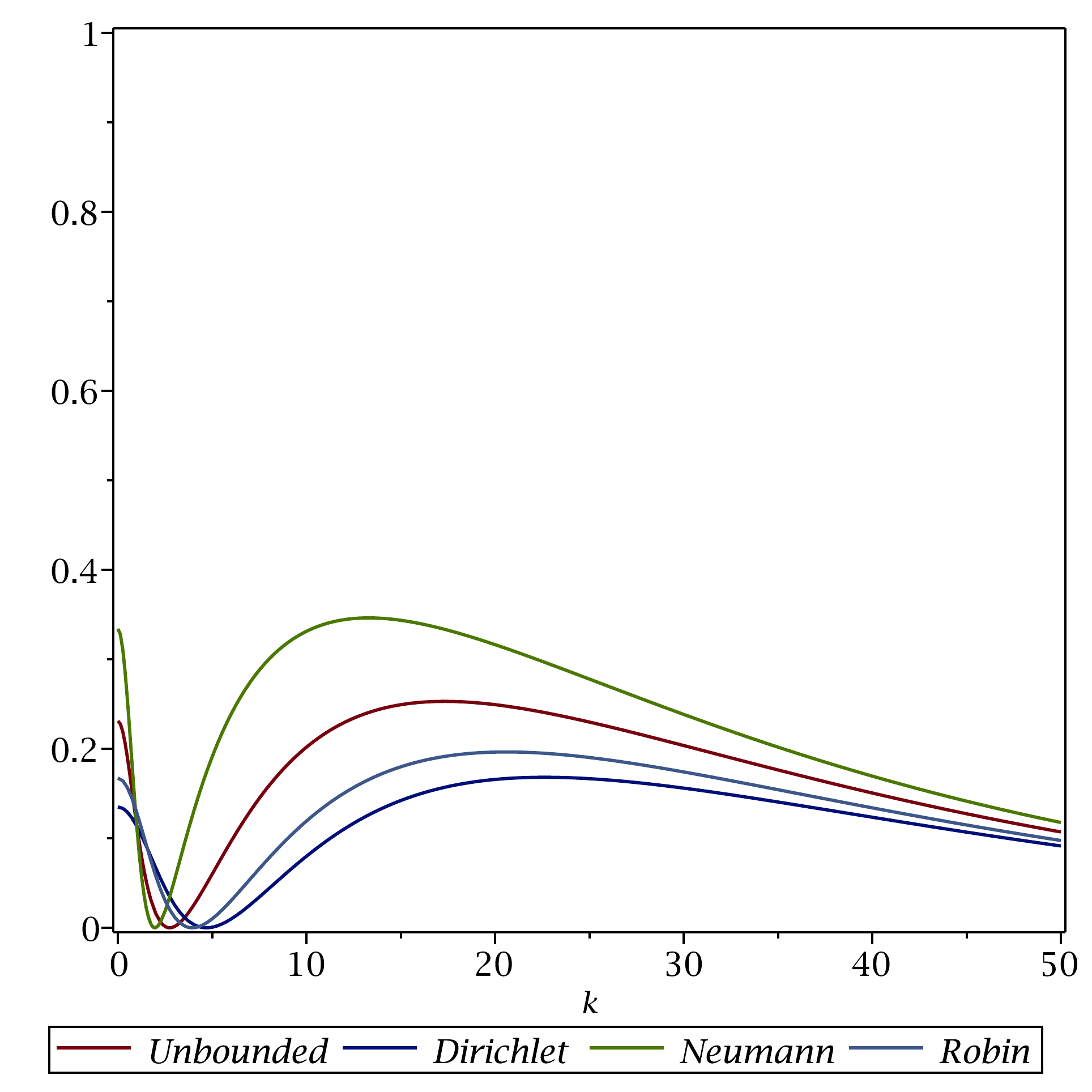}
  \includegraphics[width=0.48\textwidth]{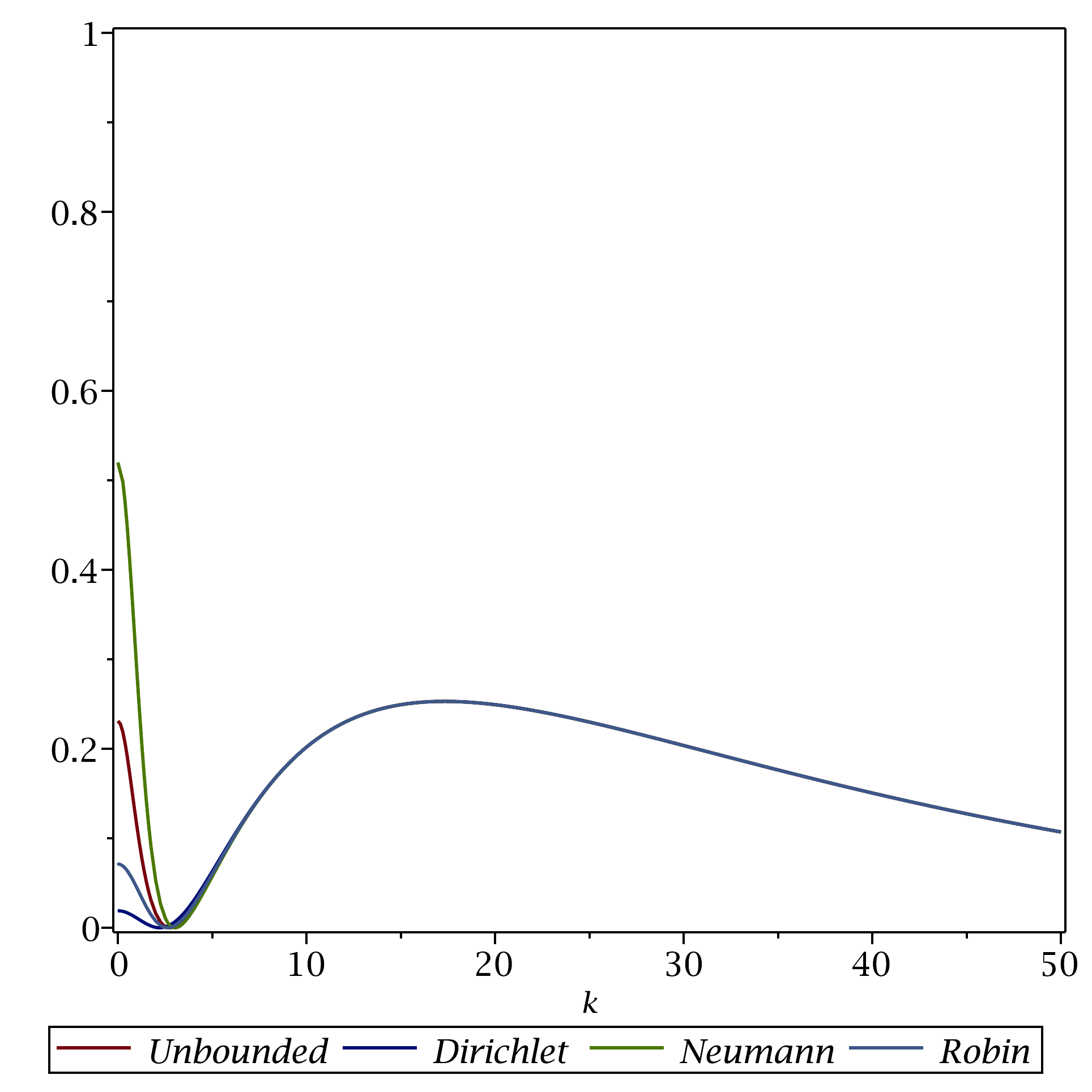}
  \caption{Left: optimized Schwarz convergence factor for
    $\eta-\Delta$ and different outer boundary conditions. Right:
    corresponding convergence factors when using the optimized
    parameter from the simpler unbounded domain analysis.}
  \label{SchwarzScreenedLaplaceOptRhos}
\end{figure}
We see that the contraction factor equioscillates using the asymptotic
formulas already for an overlap $L=0.02$, so the formulas are very
useful in practice. We see also that the unbounded domain analysis
which does not take into account the geometry leads to an optimized
convergence factor in between the other ones. In practice one
can also use the corresponding simpler formula from \cn[equation
  (4.21)]{GanderOSM}, namely
\begin{equation}\label{pstarunbounded}
  p^*=\frac{1}{2}(4(k_{\min}^2+\eta))^{1/3}L^{-1/3}
\end{equation}
in the case of the screened Laplace equation, which only deteriorates
the performance a little, see Figure
\ref{SchwarzScreenedLaplaceOptRhos} on the right.

Another, easy to use choice is to use a low frequency Taylor expansion
about $k=0$ of the optimal symbol of the DtN operator
\R{DtNbdd}, since the classical Schwarz method is not working
well for low frequencies, as we have seen in Figure
\ref{SchwarzScreenedLaplaceRhos} on the left. Expanding
the optimal DtN symbols in \R{DtNbdd}, we obtain 
\begin{equation}\label{pT}
  \begin{array}{rcl}
  p_1^r&=&\frac{\sqrt{\eta}\left(\tanh(\sqrt{\eta}(X_1^r-1))\sqrt{\eta} - p^r\right)}{\tanh(\sqrt{\eta}(X_1^r - 1))p^r-\sqrt{\eta}}+O(k^2),\\
  p_2^l&=&\frac{\sqrt{\eta}\left(\tanh(\sqrt{\eta}X_2^l)\sqrt{\eta} + p^l\right)}{\tanh(\sqrt{\eta}X_2^l)p^l+\sqrt{\eta}}+O(k^2).
  \end{array}
\end{equation}
We see that the Taylor parameters also take into account the
geometry. Using just the zeroth order term leads to the Taylor
convergence factors shown in Figure
\ref{SchwarzScreenedLaplaceTaylorRhos} on the left.
\begin{figure}
  \centering
  \includegraphics[width=0.48\textwidth]{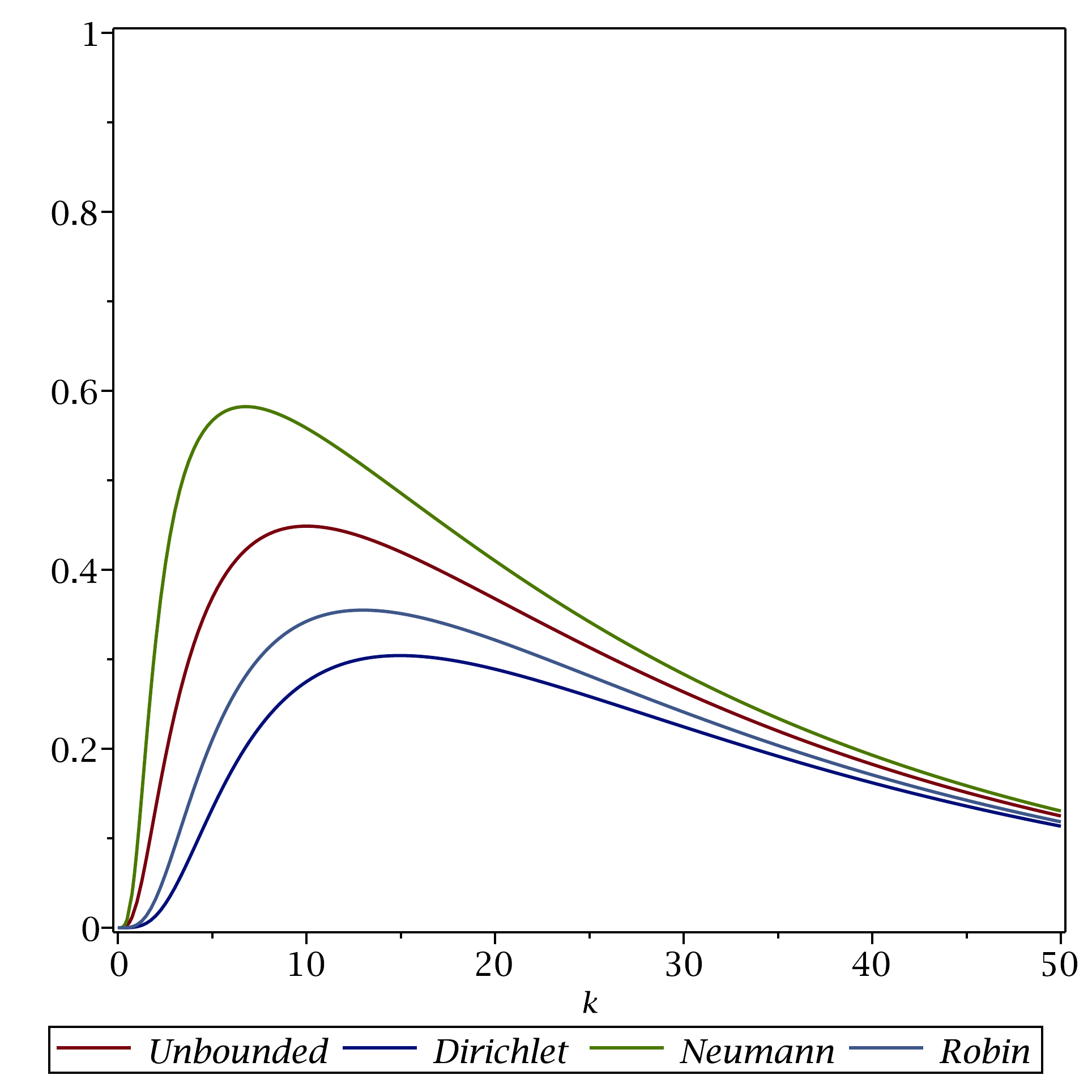}
  \includegraphics[width=0.48\textwidth]{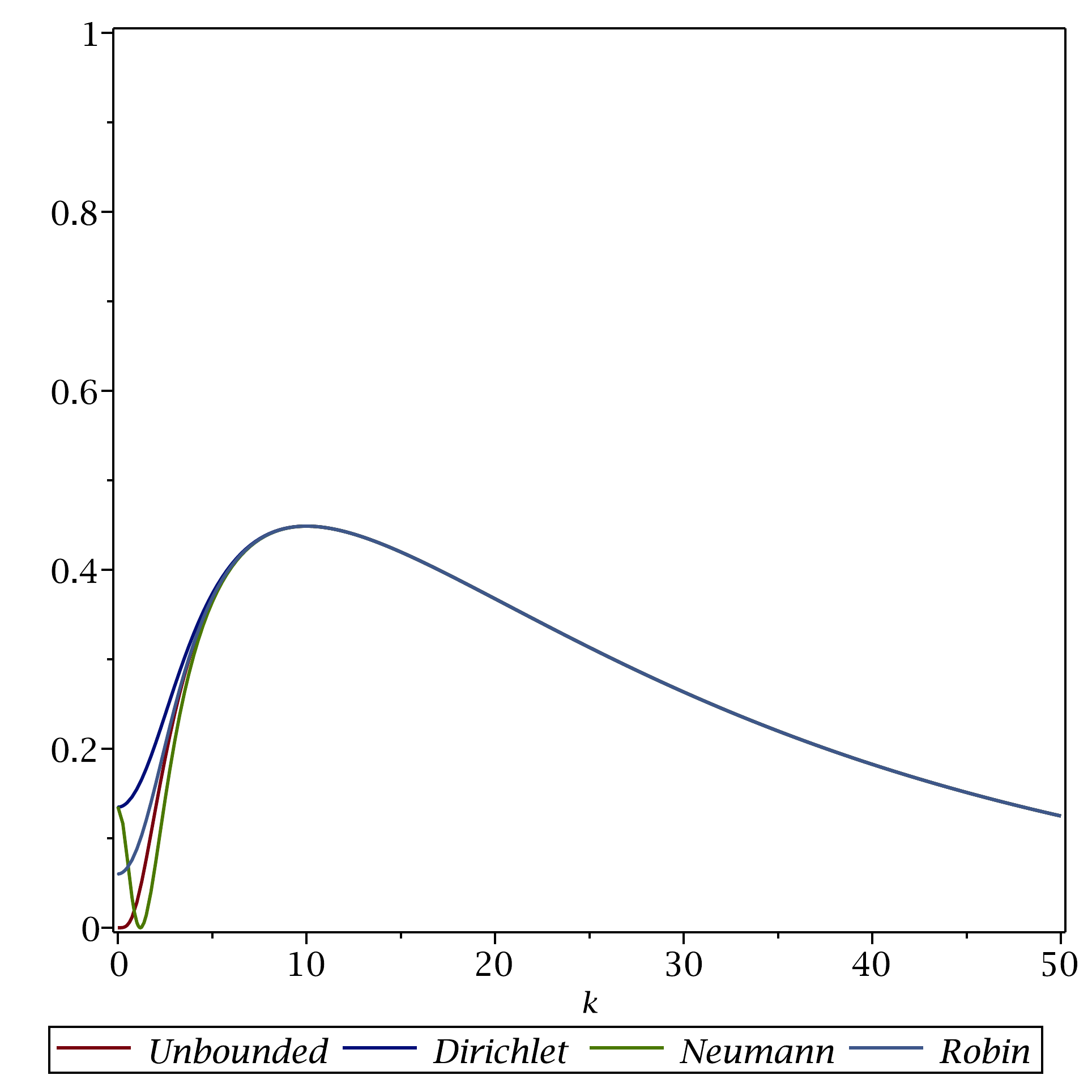}
  \caption{Left: convergence factor for Schwarz with Taylor based
    Robin transmission conditions for $\eta-\Delta$ and different
    outer boundary conditions on the left Right: corresponding Taylor
    Schwarz convergence factor using the zeroth order Taylor
    transmission conditions from the unbounded domain analysis,
    $p_1^r=p_2^l=\sqrt{\eta}$.}
  \label{SchwarzScreenedLaplaceTaylorRhos}
\end{figure}
Their maximum $\bar{k}$ is affected by the outer boundary conditions,
and can be explicitly computed when the overlap $L:=X_2^l-X_1^r$ becomes
small\footnote{We give in Appendix B the Maple commands
  to show how such technical computations can be performed automatically.},
\begin{equation}\label{TaylorRho}
   \bar{k}\sim \frac{\sqrt{p_1^r+p_2^l}}{\sqrt{L}},\quad
   \rho\sim1-4\sqrt{p_1^r+p_2^l}\sqrt{L},
\end{equation}
where $p_1^r$ and $p_2^l$ are from \eqref{pT} without the order term,
and setting $X_1^r:=X_2^l$ in $p_1^r$ due to the expansion for $L$
small.

On the right, we show the results when using the first term in the
Robin transmission conditions from the Taylor expansion of the optimal
symbols from the unbounded domain analysis,
$\sqrt{k^2+\eta}=\sqrt{\eta}+\frac{1}{2\sqrt{\eta}}k^2+O(k^4)$,
\ie\ $p_1^r=p_2^l=\sqrt{\eta}$. We see that this choice works even
better compared to the precise Taylor coefficients adapted to the
outer boundary conditions in the Neumann case, but slightly worse for
the Dirichlet and Robin case. Nevertheless, convergence with the
simple Taylor conditions is not as good as with the optimized ones in
Figure \ref{SchwarzScreenedLaplaceOptRhos}, and this becomes more
pronounced when the overlap $L$ becomes small as shown in
\eqref{TaylorRho} compared to \eqref{OptRho}. Higher order
transmission conditions can also be used and analyzed, see
\cn{GanderOSM}, and give even better performance with weaker
asymptotic dependence on the overlap.

\subsection{Helmholtz problems}

We now investigate what changes if the equation is the Helmholtz equation,
\begin{equation}\label{HelmholtzEquation}
  (\Delta+\omega^2)u=f \quad \mbox{in $\Omega:=(0,1)\times(0,Y)$},
\end{equation}
with $\omega\in\mathbb{R}$. As boundary conditions, we impose on the left and
right again the Robin boundary condition \R{bclr}, and on top and
bottom first a Dirichlet or a Neumann condition as in \R{bctb}.  The
alternating Schwarz method remains as in \R{SMDT}, but with the
differential operator $(\eta-\Delta)$ replaced by the Helmholtz
operator $(\Delta+\omega^2)$. The corresponding error equations for
the Fourier coefficients are
\begin{equation}\label{SMDTerrorFH}
  \thickmuskip=0.6mu
  \medmuskip=0.2mu
  \thinmuskip=0.1mu
  \nulldelimiterspace=-0.1pt
  \scriptspace=0pt
  \begin{aligned}
  (\partial_{xx}+\omega^2-k^2)\hat{e}_1^n&=0& &\mbox{in $(0,X_1^r)$},&
  (\partial_{xx}+\omega^2-k^2)\hat{e}_2^n&=0& &\mbox{in $(X_2^l,1)$},\\
    \beta_1^r(\hat{e}_1^n)&=\beta_1^r(\hat{e}_2^{n-1})& &\mbox{at $x=X_1^r$}, &
    \beta_2^l(\hat{e}_2^n)&=\beta_2^l(\hat{e}_1^n)& &\mbox{at $x=X_2^l$},\\
    \beta^l(\hat{e}_1^n)&=0& &\mbox{at $x=0$},  &
    \beta^r(\hat{e}_2^n)&=0& &\mbox{at $x=1$}.
  \end{aligned}
\end{equation}
Solving the ordinary differntial equations, we obtain using the outer
Robin boundary conditions for each Fourier mode $k\ne\pm\omega$
the solutions ($\underline{x}:=1-x$)
\begin{equation}\label{HelmholtzSols}
  \thickmuskip=0.9mu
  \begin{aligned}
    \hat{e}_1^n(x,k)&=A_1^n(k)\left({\sqrt{\omega^2-k^2}\cos(\sqrt{\omega^2-k^2}x)
      + p^l\sin(\sqrt{\omega^2-k^2}x)}\right),\\
    \hat{e}_2^n(x,k)&=A_2^n(k)\left({\sqrt{\omega^2-k^2}\cos(\sqrt{\omega^2-k^2}\underline{x}) + p^r\sin(\sqrt{\omega^2-k^2}\underline{x})}\right).
  \end{aligned}
\end{equation}
Comparing with the solutions for the screened Laplace equation in
\R{ScreenedLaplaceSols}, we see that the hyperbolic sines and
cosines are simply replaced by the normal sines and cosines, which
shows that the solutions are oscillatory, instead of decaying. Note
however that the arguments of the sines and cosines are only real if
$k^2<\omega^2$, \ie\ for Fourier modes below the frequency parameter
$\omega$. For larger Fourier frequencies, the argument becomes
imaginary, and the sines and cosines need to be replaced by their
hyperbolic variants and we obtain that solutions behave like for the
screened Laplace problem in \R{ScreenedLaplaceSols}.

The two remaining constants $A_1^n(k)$ and $A_2^n(k)$ are determined
again using the transmission conditions, and we obtain after a short
calculation again the convergence factor of the form \R{RhoOSM},
with
\begin{equation}\label{rho12H}
  \arraycolsep0.1em
  \begin{array}{rcl}
    \rho_1 &=& \frac{(k^2 - \sigma_1^rp^r - \omega^2)
      \sin(\sqrt{\omega^2-k^2}(X_1^r - 1))
        - \cos(\sqrt{\omega^2-k^2}(X_1^r - 1))
        \sqrt{\omega^2-k^2}(p^r - \sigma_1^r)}
           {(k^2 + \sigma_1^rp^l - \omega^2)\sin(\sqrt{\omega^2-k^2}X_1^r)
             + \cos(\sqrt{\omega^2-k^2}X_1^r)\sqrt{\omega^2-k^2}(p^l + \sigma_1^r)},\\
    \rho_2&=& \frac{(k^2 - \sigma_2^lp^l -\omega^2)
      \sin(\sqrt{\omega^2-k^2}X_2^l)
      + \cos(\sqrt{\omega^2-k^2}X_2^l)
      \sqrt{\omega^2-k^2}(p^l - \sigma_2^l)}
        {(k^2 + \sigma_2^lp^r -\omega^2)\sin(\sqrt{\omega^2-k^2}(X_2^l - 1))
          - \cos(\sqrt{\omega^2-k^2}(X_2^l - 1))\sqrt{\omega^2-k^2}(p^r + \sigma_2^l)}.
  \end{array}  
\end{equation}
It is again instructive to look at the special case first where the
domain, and thus the subdomains, are unbounded on the left and right,
which can be obtained from the result above by introducing for the
outer Robin parameters the symbol of the DtN operator,
\ie\ $p^r:=\vartheta\I\sqrt{\omega^2-k^2}$,
$p^l:=\vartheta\I\sqrt{\omega^2-k^2}$, where
$\vartheta=\mathrm{sign}(\omega^2-k^2)$ if $\sqrt{z}$ for
$z\in\mathbb{C}$ uses the branch cut $(-\infty,0)$. These symbols can
be obtained as shown in footnote \ref{footnoteDtN} for the screened
Laplace problem, and leads after simplification to the convergence factor
\begin{equation}\label{RhoOSMUnboundedH}
  \thickmuskip=0.6mu
  \medmuskip=0.3mu
  \rho(k,\omega,\sigma_1^r,\sigma_2^l)=
  \frac{\vartheta\I\sqrt{\omega^2-k^2}-\sigma_2^l}{\vartheta\I\sqrt{\omega^2-k^2}+\sigma_2^l}\,
  \frac{\vartheta\I\sqrt{\omega^2-k^2}-\sigma_1^r}{\vartheta\I\sqrt{\omega^2-k^2}+\sigma_1^r}
  e^{-2(X_1^r-X_2^l)\sqrt{k^2-\omega^2}}.
\end{equation}
As in the case of the screened Laplace equation, we see again that if
we choose in the transmission conditions the symbol of the DtN
operators, $\sigma_1^r=\sigma_2^l=\vartheta\I\sqrt{\omega^2-k^2}$,
the convergence factor vanishes identically for all Fourier
frequencies $k$, and we get an optimal Schwarz method,
see footnote \ref{footnoteoptimalSchwarz}, as for the screened Laplace
problem.

In the bounded domain case, we can still obtain an optimal Schwarz
method choosing in the transmission conditions the tangential symbols of the
transparent boundary conditions for the bounded subdomains, which is
equivalent to choosing $\sigma_1^r$ and $\sigma_2^l$ such that
$\rho_1$ and $\rho_2$ in \R{rho12H} vanish, and leads to
\begin{equation}\label{DtNbddH}
  \begin{array}{rcl}
     \sigma_1^r&=&\frac{\sqrt{\omega^2-k^2}\left(-\tan(\sqrt{\omega^2-k^2}(X_1^r-1))\sqrt{\omega^2-k^2} - p^r\right)}{\tan(\sqrt{\omega^2-k^2}(X_1^r - 1))p^r-\sqrt{\omega^2-k^2}},\\
     \sigma_2^l&=&\frac{\sqrt{\omega^2-k^2}\left(-\tan(\sqrt{\omega^2-k^2}X_2^l)\sqrt{\omega^2-k^2} + p^l\right)}{\tan(\sqrt{\omega^2-k^2}X_2^l)p^l+\sqrt{\omega^2-k^2}}.
  \end{array}
\end{equation}
As in the unbounded domain case, these values correspond to the
symbols of the associated DtN operators, as one can verify by a direct
computation using the solutions \R{HelmholtzSols} on their
respective domains, as done for the unbounded domain case of the
screened Laplace problem in footnote \ref{footnoteDtN}. We see that in
the bounded domain case, the symbols of the DtN maps in
\R{DtNbddH}, and hence the DtN maps also, depend on the outer
boundary conditions, since again both the domain parameters ($X_1^r$
and $X_2^l$) and the Robin parameters in the outer boundary conditions
($p^l$ and $p^r$) appear in them. For large frequencies $k$,
we still have
$$
\sigma_1^r\sim \sqrt{k^2-\omega^2},\quad  \sigma_2^l\sim \sqrt{k^2-\omega^2},
$$
since the tangent function for imaginary argument $\I z$ becomes $\I\tanh z\to\pm\I$ as
  $z\to\pm\infty$, and thus high frequencies, also called evanescent modes in the Helmholtz
solution, still do not see the difference from the bounded to the unbounded case, as in the screened
Laplace problem.
  
Choosing Robin transmission conditions, ${\cal
  S}_1^r:=p_1^r=\sigma_1^r$ and ${\cal S}_2^l:=p_2^l=\sigma_2^l$ with
$p_1^r,p_2^l\in\mathbb{R}$ in \R{TC}, and taking the limit in the
convergence factor \R{RhoOSMUnbounded} as the Robin parameters
$p_1^r$ and $p_2^l$ go to infinity, we find
\begin{equation}\label{RhoAltSUnboundedH}
  \rho(k,\eta,)=e^{-2(X_1^r-X_2^l)\sqrt{k^2-\omega^2}},
\end{equation}
which is the convergence factor of the classical alternating Schwarz
method for the Helmholtz equation on the unbounded domain, and we see
again the overlap $L:=X_1^r-X_2^l$ appearing in the exponential
function. The classical Schwarz method therefore converges for all
Fourier modes $k^2>\omega^2$, provided there is overlap. However for
smaller Fourier frequencies, the method does not contract on the
unbounded domain.

To see how the Schwarz method contracts on a bounded domain, we show
in Figure \ref{SchwarzHelmholtzRhos} (left) the different cases for
the outer boundary conditions ($p^l=p^r=\I\,\omega$ in the Robin case),
by plotting \R{RhoOSM} for when the parameters in the Robin
transmission conditions goes to infinity, and the Helmholtz frequency
parameter $\omega=10$, again with overlap $L=0.02$.
\begin{figure}
  \centering
  \includegraphics[width=0.48\textwidth]{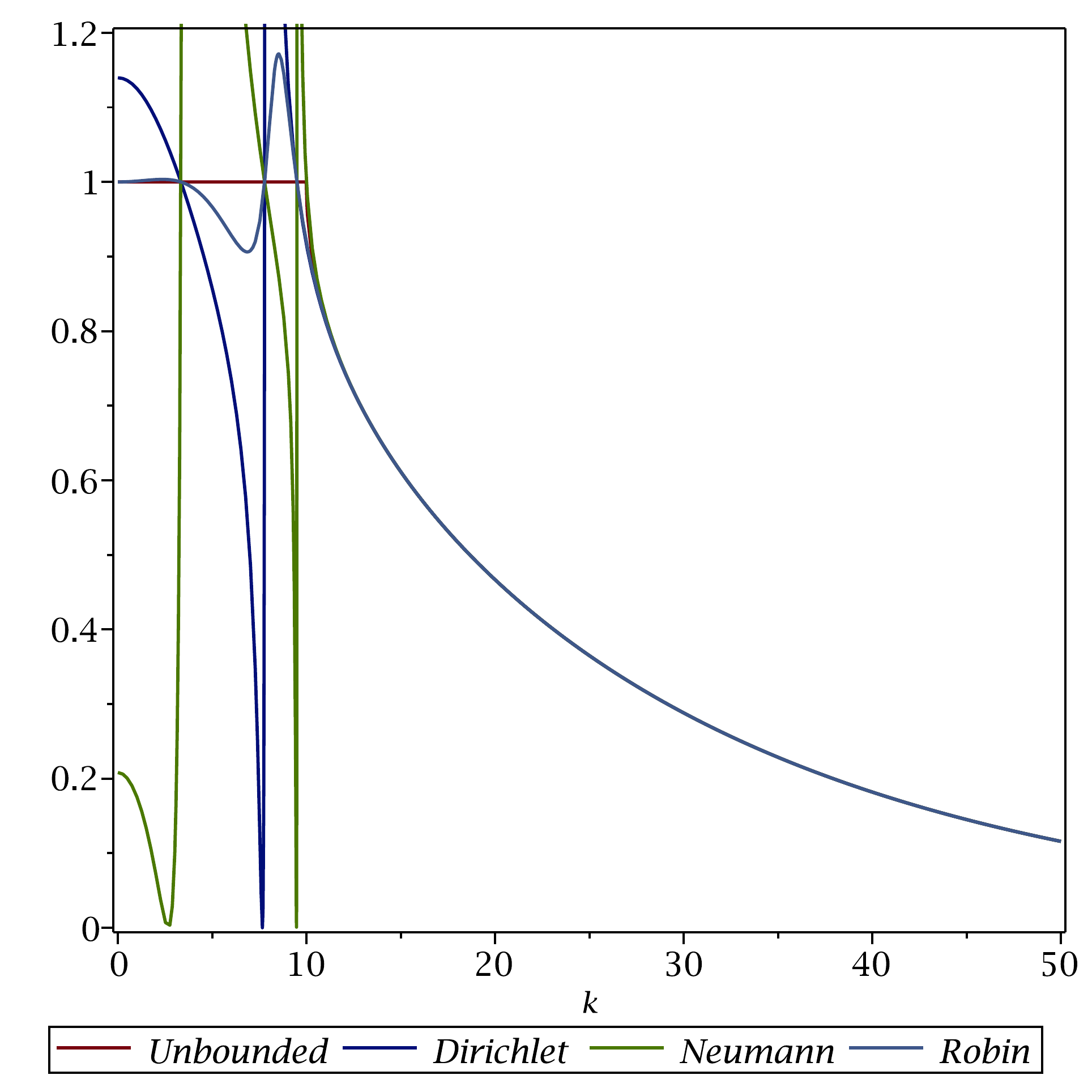}
  \includegraphics[width=0.48\textwidth]{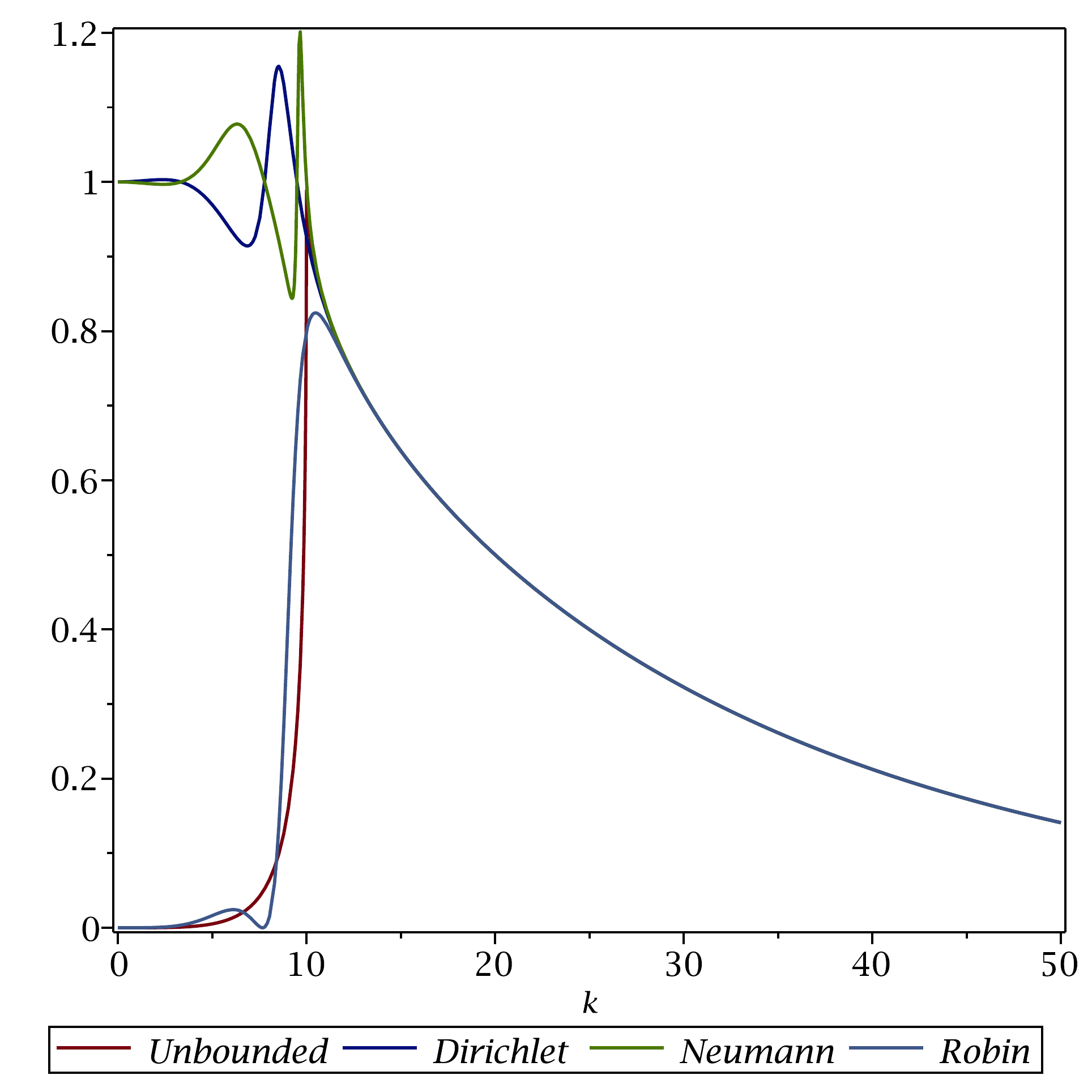}
  \caption{Left: classical Schwarz convergence factor for Helmholtz
    and different outer boundary conditions on the left and
    right. Right: corresponding optimized Schwarz convergence factor
    with Taylor transmission conditions from the unbounded domain
    analysis.}
  \label{SchwarzHelmholtzRhos}
\end{figure}
This shows that the different outer boundary conditions greatly
influence the convergence for Fourier modes $k^2<\omega^2$, the
Schwarz method violently diverges for these modes, except for the
unbounded case where we obtain stagnation, but still no
convergence. For larger frequencies $k^2>\omega^2$, there is no
influence of the outer boundary conditions on the convergence.  On the
right in Figure \ref{SchwarzHelmholtzRhos} we show the corresponding
convergence factors with Robin transmission conditions, chosen as
$p_1^r=p_2^l=\I\,\omega$, which correspond to Taylor transmission
conditions of order zero from the unbounded domain
anaylsis\footnote{It is interesting to note that the same result
    is also obtained for the bounded domain case when the outer Robin
    transmission conditions are chosen as $p^r=p^l=\I\,\omega$.}, by
expansion of the corresponding optimal symbol around $k=0$,
$\I\sqrt{\omega^2-k^2}=\I\,\omega+O(k^2)$. Here we see that again the
different outer boundary conditions greatly influence the convergence
for Fourier modes $k^2<\omega^2$: for unbounded subdomains, and also
subdomains with Robin radiation conditions at the ends, the Schwarz
method converges well for small Fourier frequencies. With Dirichlet
and Neumann outer boundary conditions however, we obtain again
divergence, although a bit less violent than in the classical Schwarz
method. For larger frequencies $k^2>\omega^2$, there is again very
little influence of the outer boundary conditions on the convergence.
We also observe an interesting phenomenon around $k^2=\omega^2$ the so
called resonnance frequency: with Robin radiation conditions, the
Schwarz method also converges in this case, while for unbounded
subdomains we obtain a convergence factor with modulus one there.

It does not help to use the corresponding Taylor transmission
conditions of order zero adapted to each outer boundary conditions, as we show
in Figure \ref{SchwarzHelmholtzRhos2} on the left.
\begin{figure}
  \centering
  \includegraphics[width=0.48\textwidth]{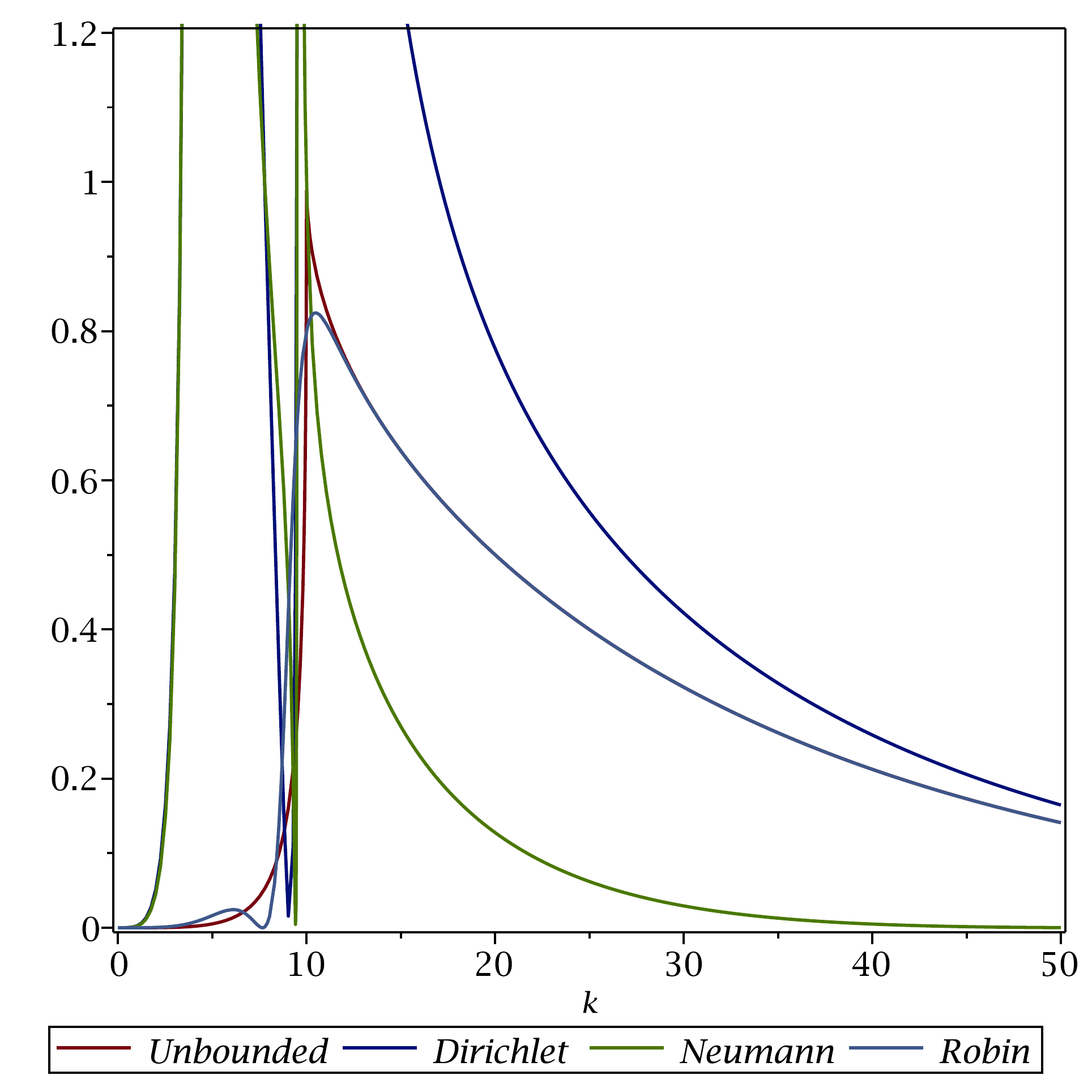}
  \includegraphics[width=0.48\textwidth]{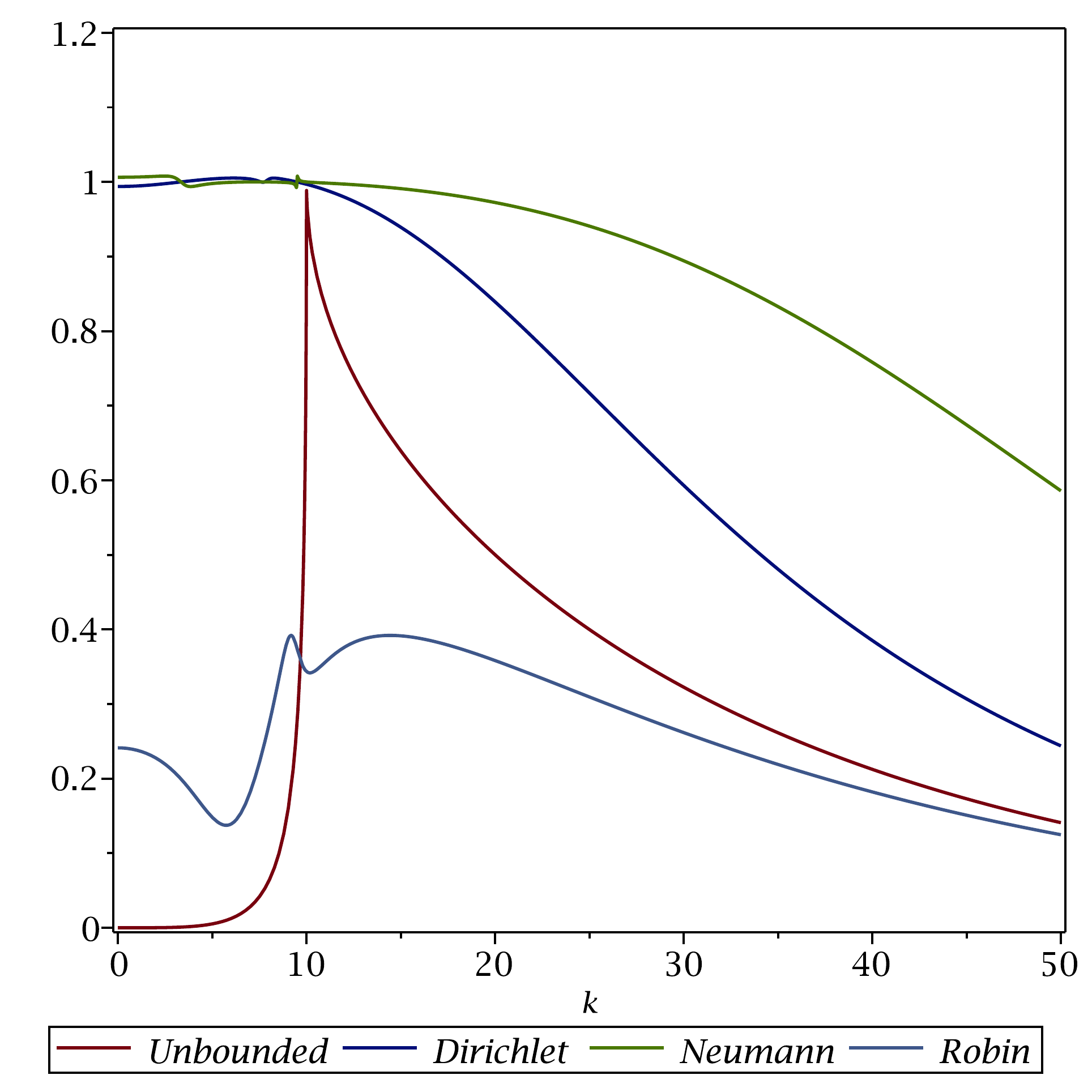}
  \caption{Left: optimized Schwarz convergence factor for
      Helmholtz and different outer boundary conditions on the left
      and right, using the correspondingly adapted Taylor transmission
      conditions. Right: corresponding optimized Schwarz convergence
      factor minimizing the maximum of the convergence factor
      numerically with complex Robin transmission
      conditions.}  \label{SchwarzHelmholtzRhos2}
\end{figure}
Now the low frequencies converge well in all cases, but divergence is
even more violent for other frequencies with Dirichlet and Neumann
outer boundary conditions. We finally show the results of numerically
optimized Robin transmission conditions with
$p_1^r,p_2^l\in\mathbb{C}$ on the right in Figure
\ref{SchwarzHelmholtzRhos2}, minimizing the convergence factor in
modulus\footnote{For the unbounded case we did not optimize since the
  convergence factor equals $1$ there for $k=\omega$. There are
  however optimization techniques in this case as well, see
  \eg\ \cn{GMN02}, \cn{GHM07} and references therein.}. We see that there do not exist complex Robin parameters
that can make the optimized Schwarz method work well for Dirichlet and
Neumann outer boundary conditions on the left and right, low
frequencies simply converge badly or not at all. However with Robin
outer boundary conditions on the left and right, the optimization
leads to a quite fast method now, with a convergence factor bounded by
$0.4$, compared to the Taylor transmission conditions where the
convergence factor was bounded by about $0.8$. This shows that for
Helmholtz problems absorbing boundary conditions on the original
problem are very important for Schwarz methods to converge, and we
thus focus on this case in what follows, \ie\ $p^l=p^r=\I \omega$.

We start by studying the performance with Taylor transmission
  conditions, $p_1^r=p_2^l=\I \omega$, when the overlap is varying,
  and the Helmholtz frequency $\omega$ is increasing. We show in
  Figure \ref{SchwarzHelmholtzLsmall}
\begin{figure}
  \centering
  \includegraphics[width=0.48\textwidth]{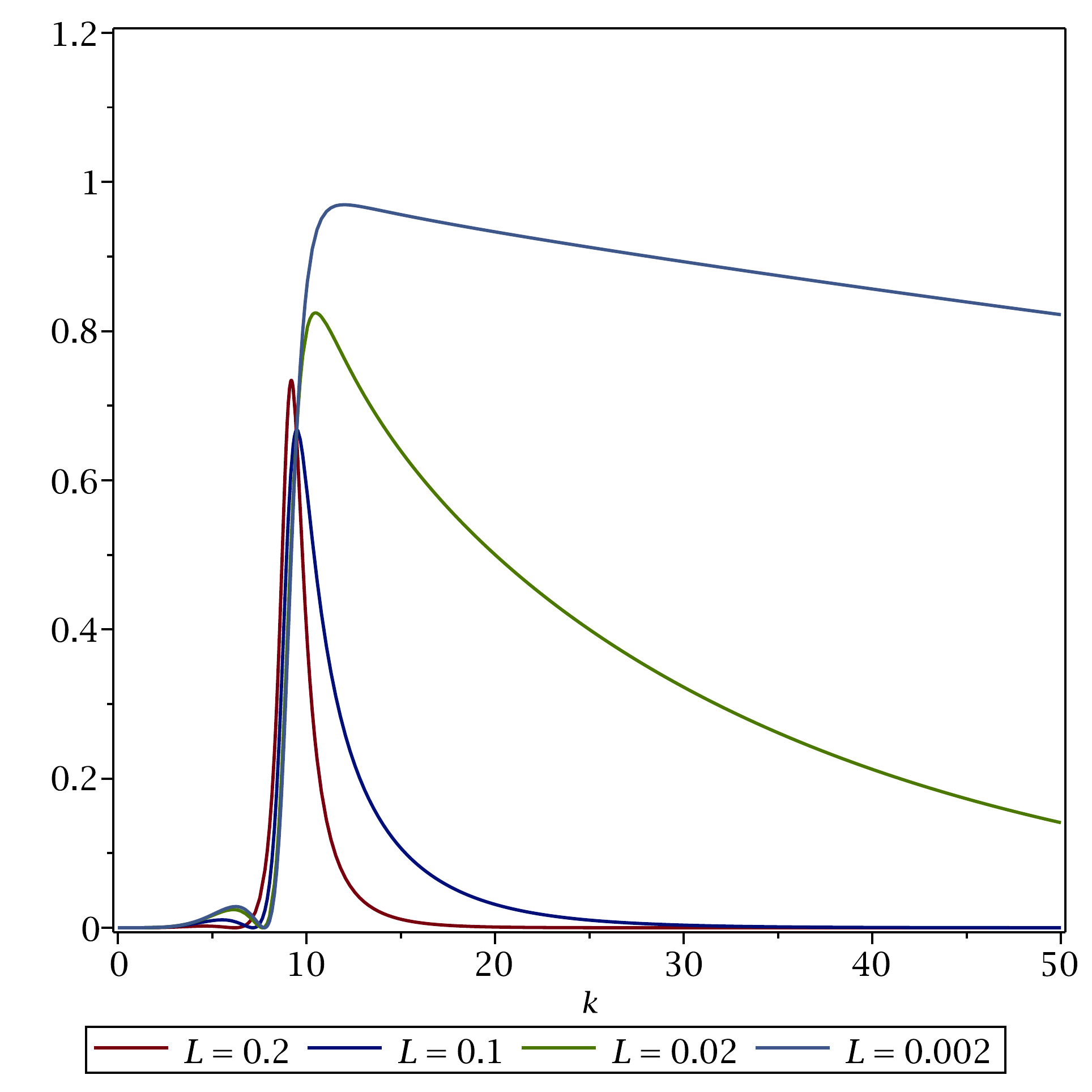}
  \includegraphics[width=0.48\textwidth]{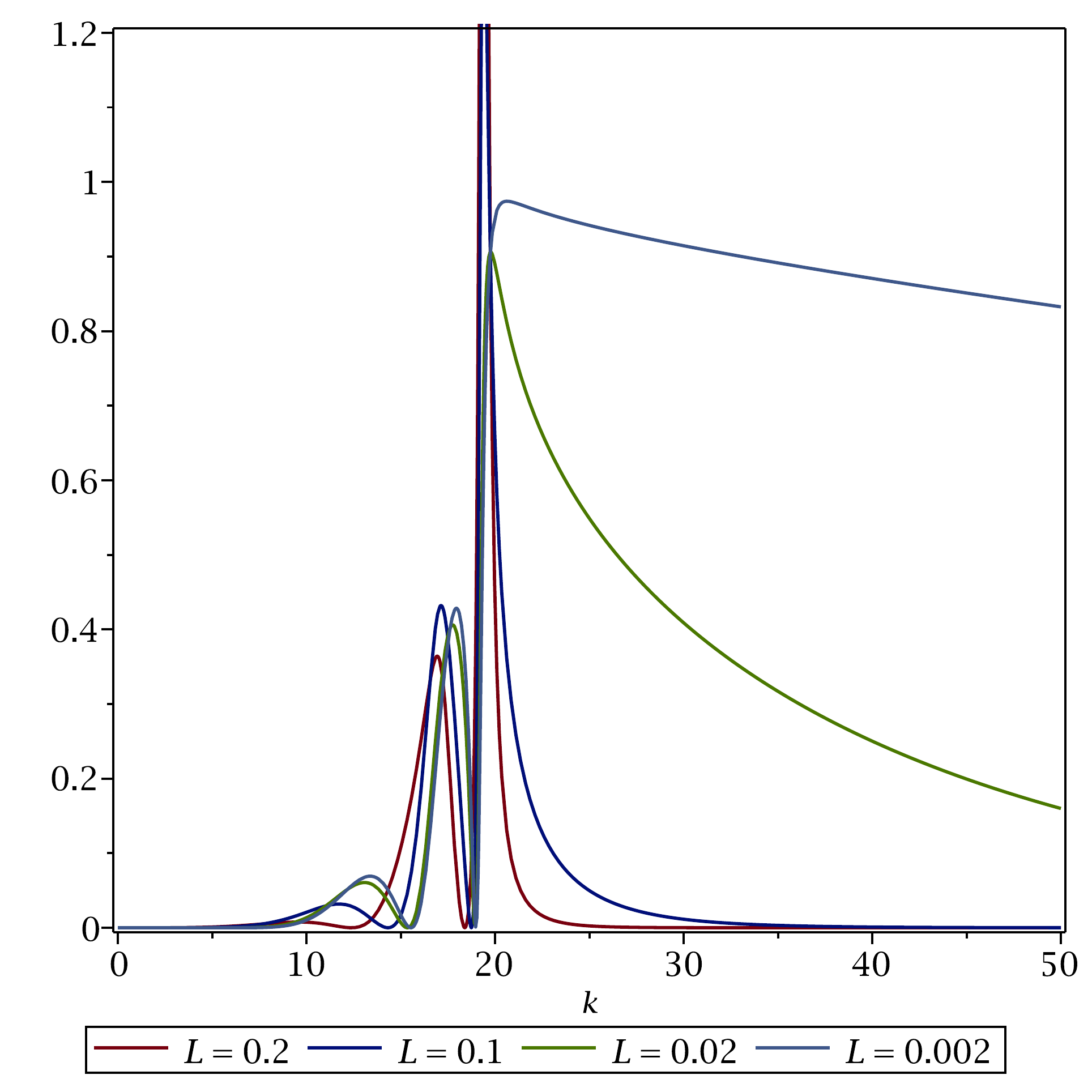}
  \includegraphics[width=0.48\textwidth]{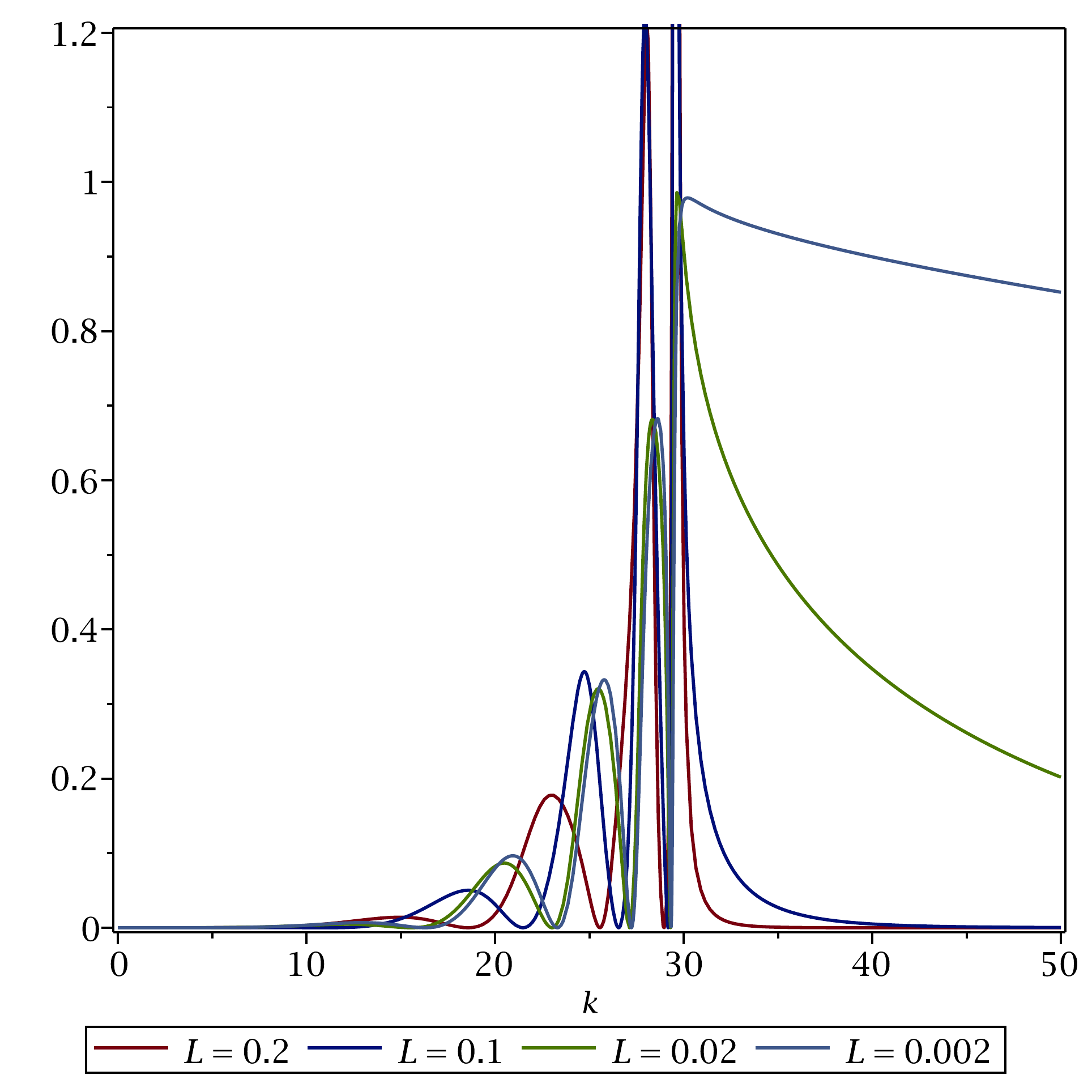}
  \includegraphics[width=0.48\textwidth]{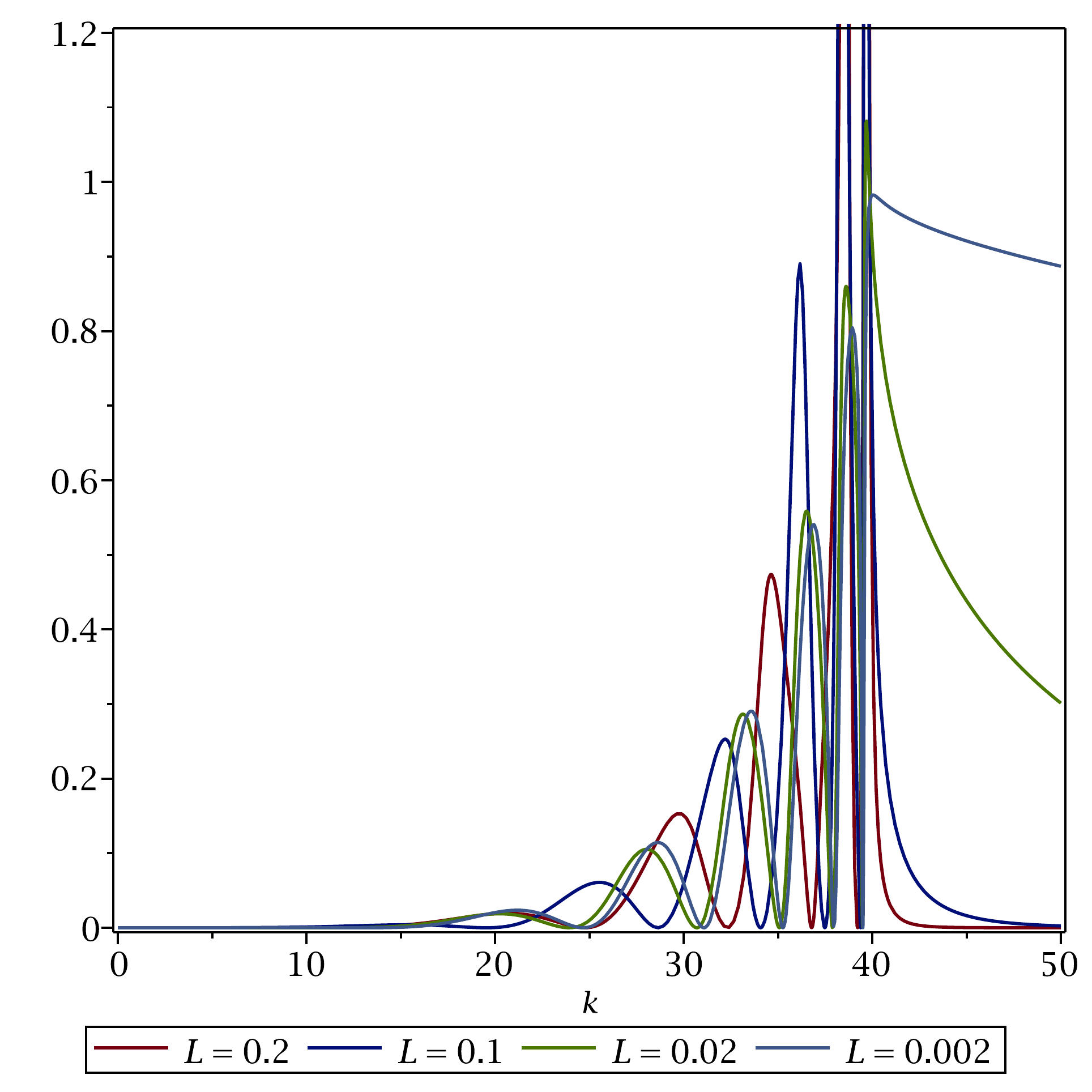}
  \caption{Optimized Schwarz convergence factor for Helmholtz with
      Robin outer boundary conditions and Taylor transmission
      conditions for different overlap sizes: from top left to bottom
      right $\omega=10,20,30,40$.}  \label{SchwarzHelmholtzLsmall}
\end{figure}
the convergence factors for four different overlap sizes
$L=0.2,0.1,0.02,0.002$ and Helmholtz frequencies $\omega=10,20,30,40$.
We see that convergence difficulties increase dramatically with
increasing $\omega$: while for $\omega=10$, the convergence factor is
smaller than one for all overlap sizes $L$, we see already that the
largest overlap $L=0.2$ is not leading to better convergence than the
next smaller one, $L=0.1$. For $\omega=20$ now the two larger overlaps
$L=0.2$ and $L=0.1$ lead to divergence.  For $\omega=30$ the best
performance is obtained for the smallest overlap $L=0.002$, and for
$\omega=40$ the smallest overlap is the only overlap for which the
method still contracts. We thus need small overlap in this waveguide
type setting with Dirichlet or Neumann conditions on top and bottom
for the method to work with Taylor transmission conditions.

Let us investigate this analytically when the overlap $L$ goes to
zero. Computing the location of the maximum at $\bar{k}>\omega$
visible in Figure \ref{SchwarzHelmholtzLsmall} and then evaluating the
value of the convergence factor in modulus at this frequency
$\bar{k}$, we obtain for the convergence factor for small overlap $L$
after a long and technical computation
  \begin{equation}\label{TaylorRhoHelmholtz}
    \max_k|\rho(k,\omega,L)|= 1 - (8-4\ln 2 - 4\ln L)L + O(L^2).
  \end{equation}
  The method therefore converges for small enough overlap also when
  $\omega$ becomes large for this two subdomain decomposition and
  outer Robin conditions on the left and right, $p^l=p^r=\I \omega$
  with Taylor transmission conditions $p_1^r=p_2^l=\I \omega$. However
  convergence is rather slow, compared to the case of the screened
  Laplace equation with convergence factor $1-O(\sqrt{L})$ as shown in
  \eqref{TaylorRho}, even though there is absorption for Helmholtz on
  the left and right. We show in Table \ref{TabTaylorHelmholtz}
  
  \begin{table}
    \centering
    \begin{tabular}{|c|c|c|}
      \hline
      $L$ & $\max_{k}|\rho|$ & $1-\max_{k}|\rho|$ \\\hline
   0.1000000000 &22.7898956625 &-21.7898956625 \\
   0.0100000000 & 1.2309097387 & -0.2309097387 \\
   0.0010000000 & 0.9962177046 &  0.0037822954 \\
   0.0001000000 & 0.9987180209 &  0.0012819791 \\
   0.0000100000 & 0.9998069224 &  0.0001930776 \\
   0.0000010000 & 0.9999749357 &  0.0000250643 \\
   0.0000001000 & 0.9999969499 &  0.0000030501 \\
   0.0000000100 & 0.9999996424 &  0.0000003576 \\
   0.0000000010 & 0.9999999591 &  0.0000000409 \\
   0.0000000001 & 0.9999999954 &  0.0000000046 \\\hline
    \end{tabular}
    \caption{Maximum of the convergence factor for Taylor transmission
      conditions with the same outer boundary conditions on the left
      and right and $\omega=100$, for decreasing overlap size $L$.}
    \label{TabTaylorHelmholtz}    
  \end{table}
  the convergence factor for $\omega=100$ when $L$ goes to zero for a
  symmetric subdomain decomposition, $X_2^l=\frac{1}{2}-L/2$,
  $X_1^r=\frac{1}{2}+L/2$, and one can clearly see the logarithmic term $\ln
  L$ in the last column which displays $1-\max_{k}|\rho|$, from the
  slow increase in the first non-zero digit. One can also see from
  Table \ref{TabTaylorHelmholtz} that for a given Helmholtz frequency
  $\omega$, there is an optimal choice for the overlap, here around
  $L=0.001$ for best performance. 

We next investigate if an optimized choice of the complex
  transmission parameters $p_1^r,p_2^l\in\mathbb{C}$ can further
  improve the convergence behavior with Robin conditions with
  $p^r=p^l=\I\omega$ on the left and right outer boundaries. A
  numerical optimization minimizing the maximum of the convergence
  factor gives the results shown in Figure
  \ref{SchwarzHelmholtzOptLsmall}.
\begin{figure}
  \centering
  \includegraphics[width=0.48\textwidth]{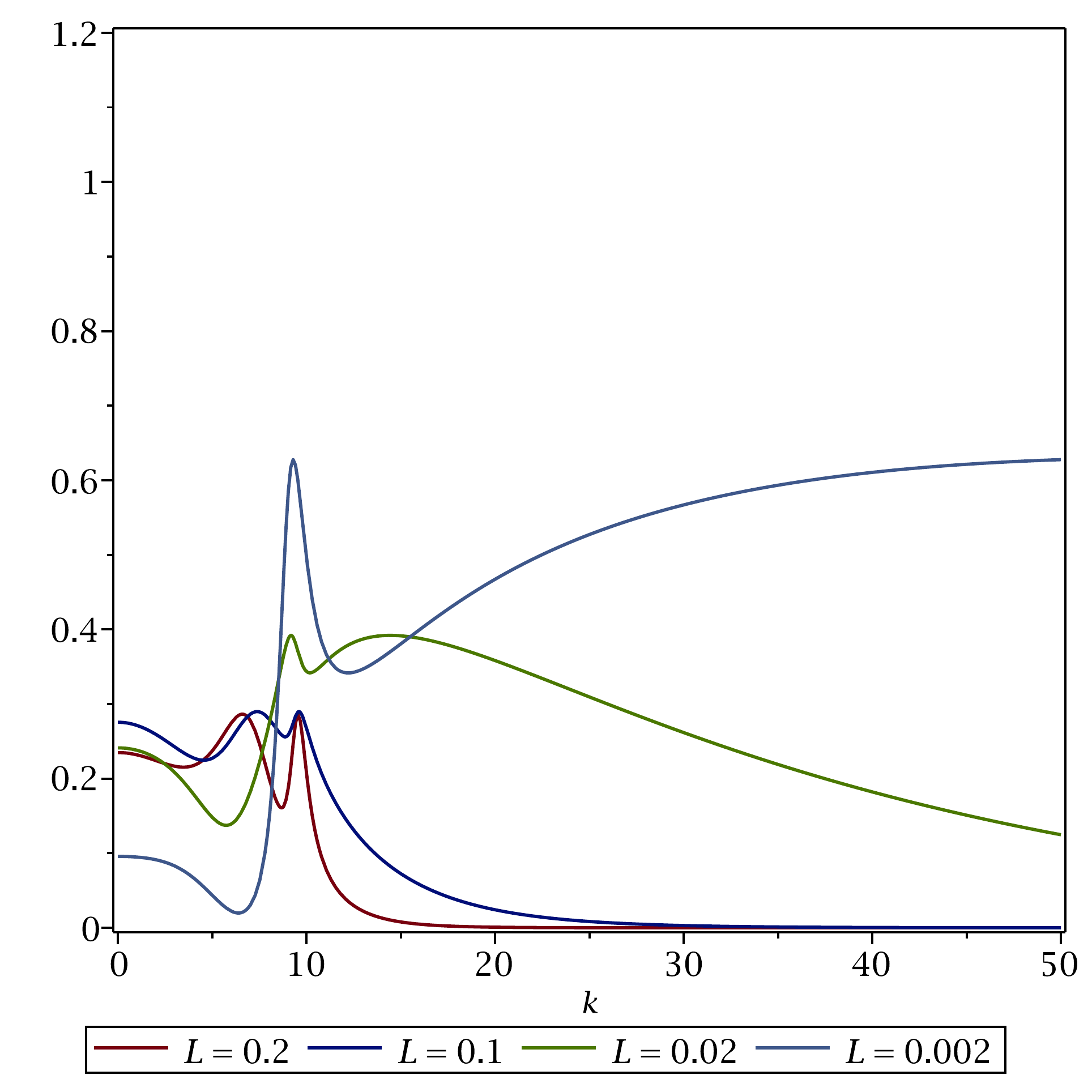}
  \includegraphics[width=0.48\textwidth]{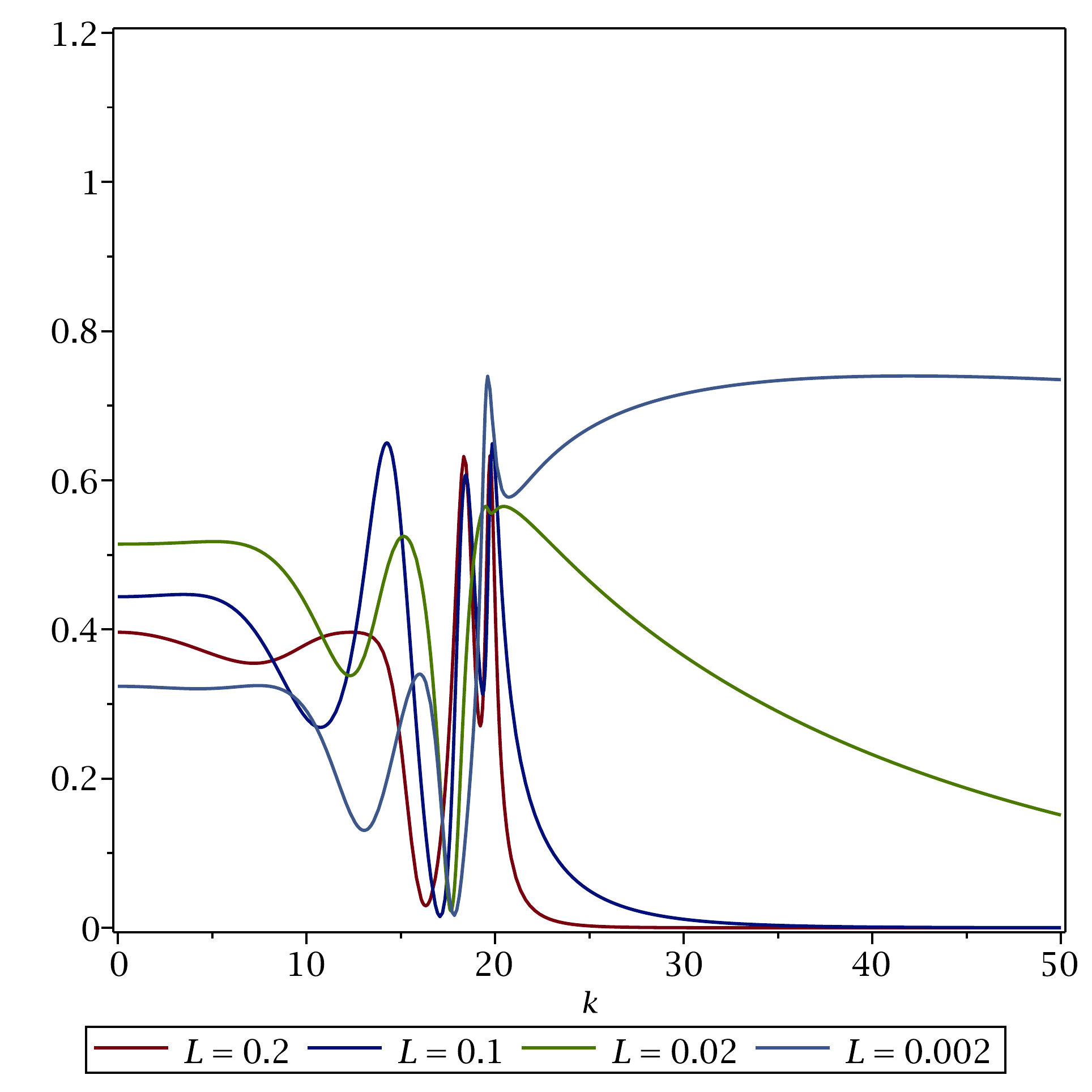}
  \includegraphics[width=0.48\textwidth]{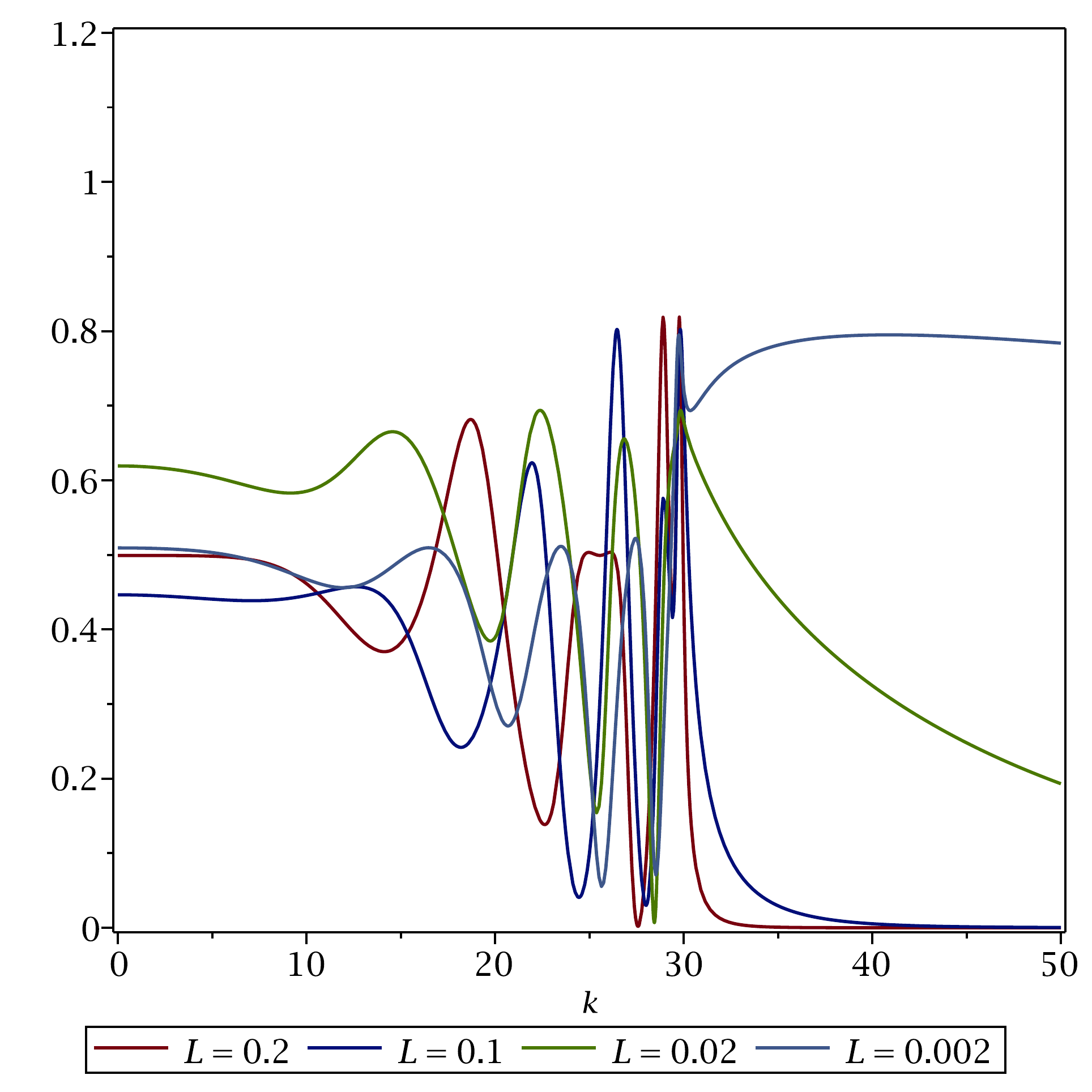}
  \includegraphics[width=0.48\textwidth]{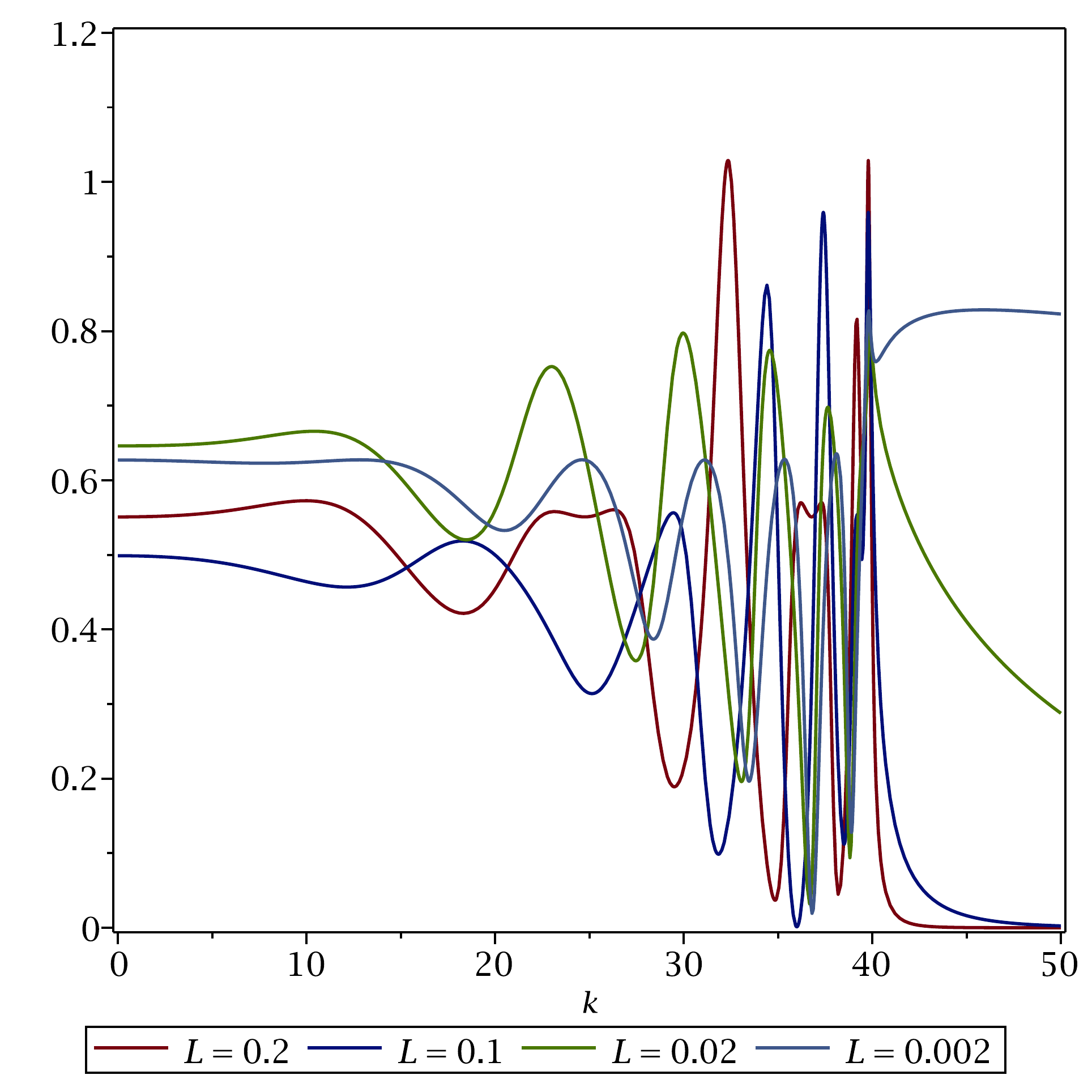}
  \caption{Optimized Schwarz convergence factor for Helmholtz with
      Robin outer boundary conditions and optimized complex Robin
      transmission conditions for different overlap sizes: from top
      left to bottom right
      $\omega=10,20,30,40$.}  \label{SchwarzHelmholtzOptLsmall}
\end{figure}
Comparing with the corresponding Figure \ref{SchwarzHelmholtzLsmall}
for the Taylor transmission conditions, we see that much better
contraction factors can be obtained using the optimized transmission
conditions, and also that the method is convergent for all overlaps
for $\omega=10,20,30$. However, for $\omega=40$, we see again that for
the largest overlap, the optimized contraction factor is above $1$,
and thus as in the Taylor transmission conditions, for large Helmholtz
frequency $\omega$, the overlap will need to be small enough for the
optimized method to converge, and there will be a best choice for the
overlap.

To investigate this further, we show in Table
  \ref{TabOptHelmholtz} the best contraction factor that can be
  obtained for $\omega=100$, as we did in Table
  \ref{TabTaylorHelmholtz} for the Taylor transmission conditions, and
  we also show the value of the optimized complex transmission
  condition parameter.
\begin{table}
    \centering
    \begin{tabular}{|c|c|c|c|}
      \hline
      $L$ & $p_1^r=p_2^l$ & $\max_{k}|\rho|$ & $1-\max_{k}|\rho|$ \\\hline
   0.1000000000 &  -13.7512+\I   17.6068  & 1.366317  & -0.366317 \\
   0.0100000000 &   -0.7183+\I   26.7434  & 0.963833  &  0.036166 \\
   0.0010000000 &    0.7700+\I    7.8147  & 0.882396  &  0.117603 \\
   0.0001000000 &    1.6590+\I   11.5704  & 0.929865  &  0.070134 \\
   0.0000100000 &    3.6059+\I   24.6349  & 0.966615  &  0.033384 \\
   0.0000010000 &    7.7838+\I   52.9876  & 0.984342  &  0.015657 \\
   0.0000001000 &   16.7767+\I  114.0584  & 0.992699  &  0.007300 \\
   0.0000000100 &   36.1477+\I  245.6848  & 0.996604  &  0.003395 \\
   0.0000000010 &   77.8794+\I  529.2904  & 0.998422  &  0.001577 \\
   0.0000000001 &  167.7869+\I 1140.3116  & 0.999267  &  0.000732 \\\hline
    \end{tabular}
    \caption{Best complex parameter choice and maximum of the
      convergence factor for optimized complex Robin transmission
      conditions with outer Robin boundary conditions
      $p^l=p^r=\I \omega$ on the left and right and $\omega=100$, for
      decreasing overlap size $L$.}
    \label{TabOptHelmholtz}    
\end{table}
We see that the optimization leads to a method which degrades much
less, compared to the Taylor transmission conditions, and there is
still a best choice for the overlap that leads to the most
advantageous contraction, around overlap $L=0.001$ as in the Taylor
case. But using the optimized parameters the method will converge
approximately $100$ times faster, since
$0.9987180208^{100}=0.879607\approx 0.882396$. There are however so
far no analytical asymptotic formulas for this optimized choice
available. Nevertheless, plotting the results from Table
\ref{TabOptHelmholtz} in Figure \ref{HelmholtzOO0AsymptPlot}
\begin{figure}
  \centering
  \includegraphics[width=0.48\textwidth]{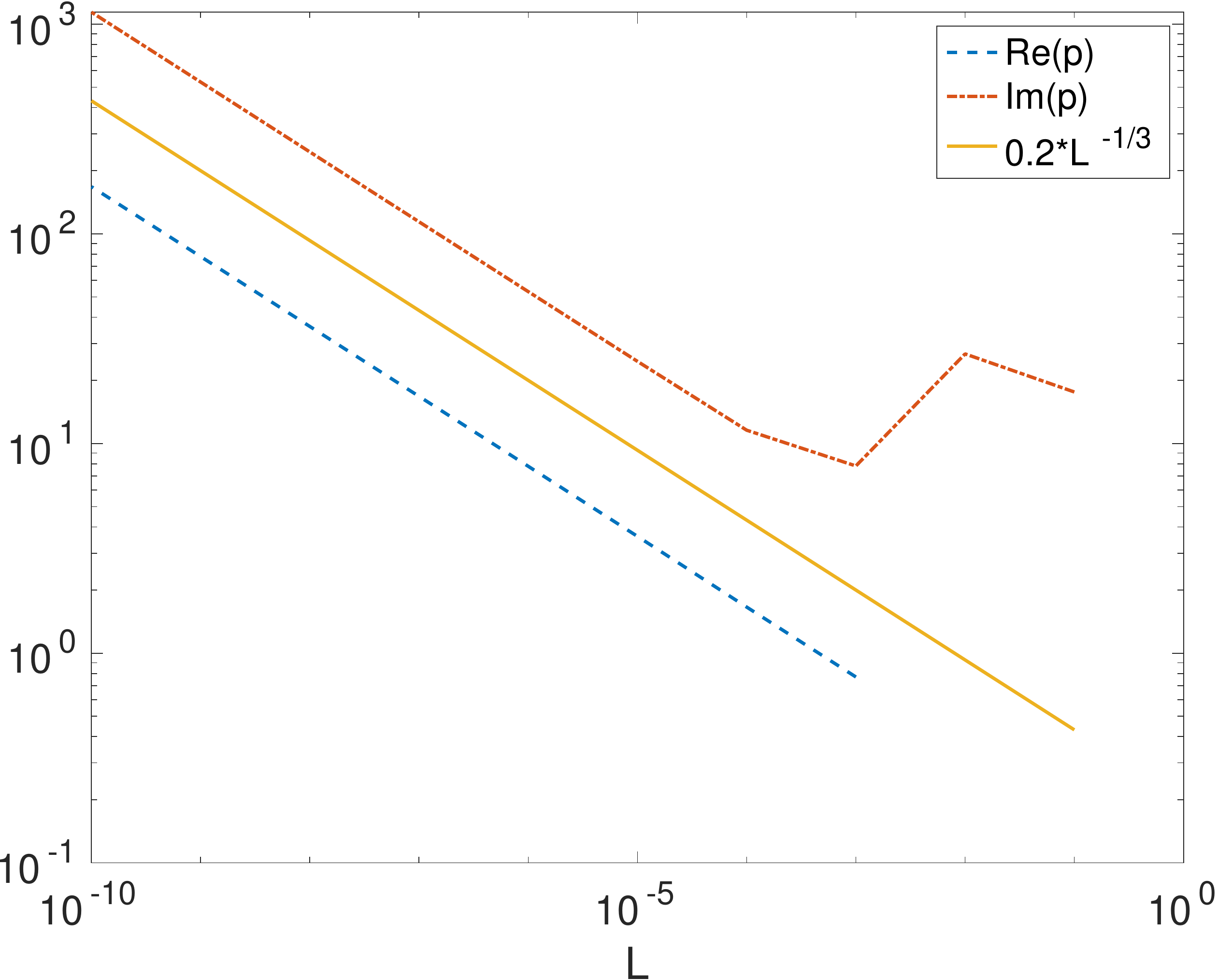}
  \includegraphics[width=0.48\textwidth]{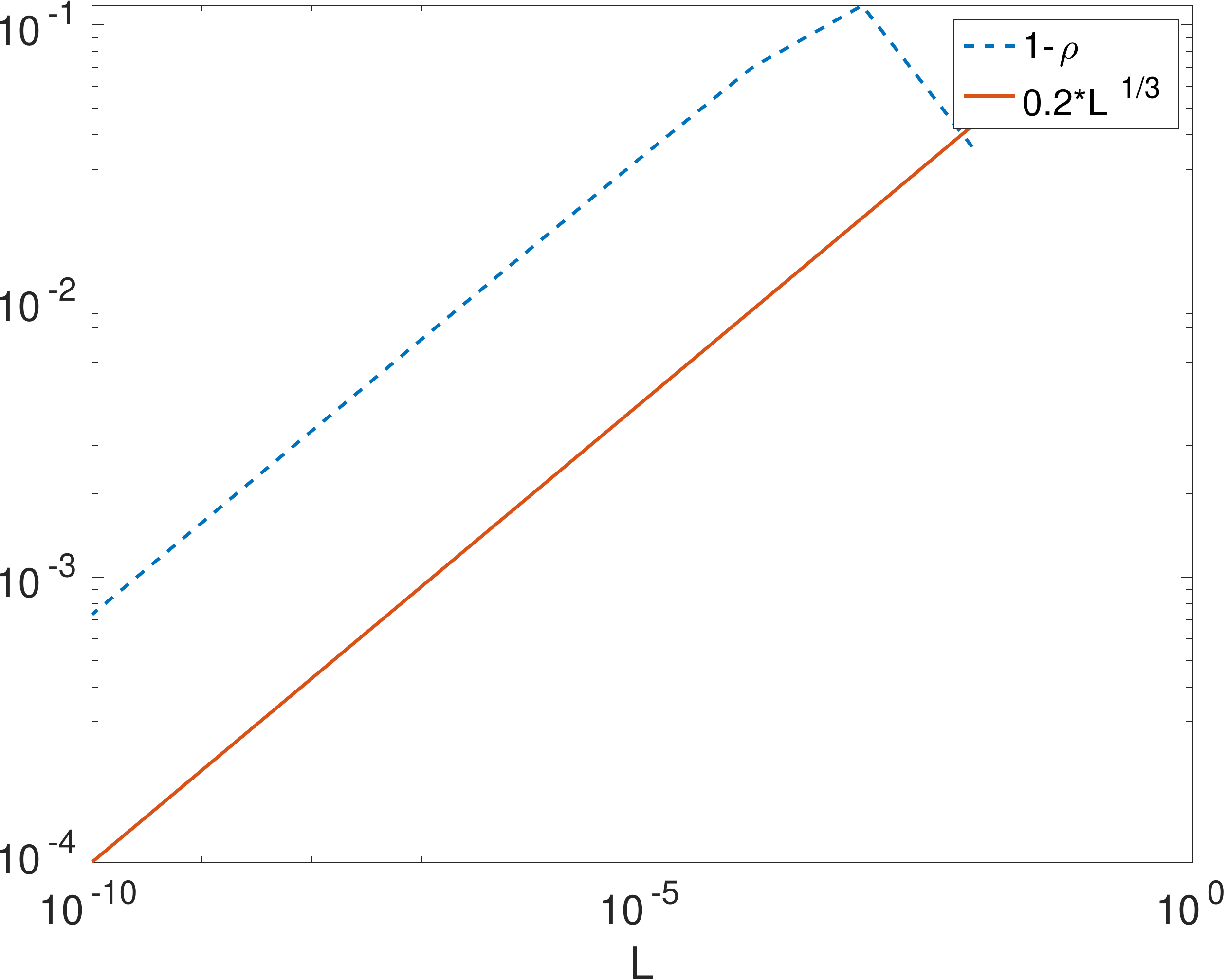}
  \caption{Asymptotic behavior of the optimized complex Robin parameter
    and associated convergence factor distance from $1$.}
  \label{HelmholtzOO0AsymptPlot}
\end{figure}
cleary shows that the optimized choice behaves asymptotically for
small overlap $L$ as
\begin{equation}
  \thickmuskip=0.6mu
  \medmuskip=0.2mu
  \thinmuskip=0.1mu
  \scriptspace=0pt
  \nulldelimiterspace=-0.1pt
  p_1^r\sim(C_1^r+\I \tilde{C}_1^r)L^{-1/3},\quad
  p_2^l\sim(C_2^l+\I \tilde{C}_2^l)L^{-1/3},\quad
  \max_{k}|\rho|= 1-O(L^{1/3}),
\end{equation}
which is a much better result than for the Taylor transmission
conditions that gave $\max_{k}|\rho|= 1-O(L)$ up to the logarithmic
term.
  
Since we have seen how important absorption is for the Helmholtz
  problem, in contrast to the screened Laplace problem, we now study
  the Helmholtz problem also with Robin conditions all around, also on
  top and bottom.
  This needs the solution of the corresponding Sturm-Liuville problem
  (details can be found in Section 3).
  We start again by studying the performance with Taylor transmission
  conditions, $p_1^r=p_2^l=\I \omega$, when the overlap is varying,
  and the Helmholtz frequency $\omega$ is increasing. We show in
  Figure \ref{SchwarzHelmholtzRRLsmall}
\begin{figure}
  \centering
  \includegraphics[width=0.48\textwidth]{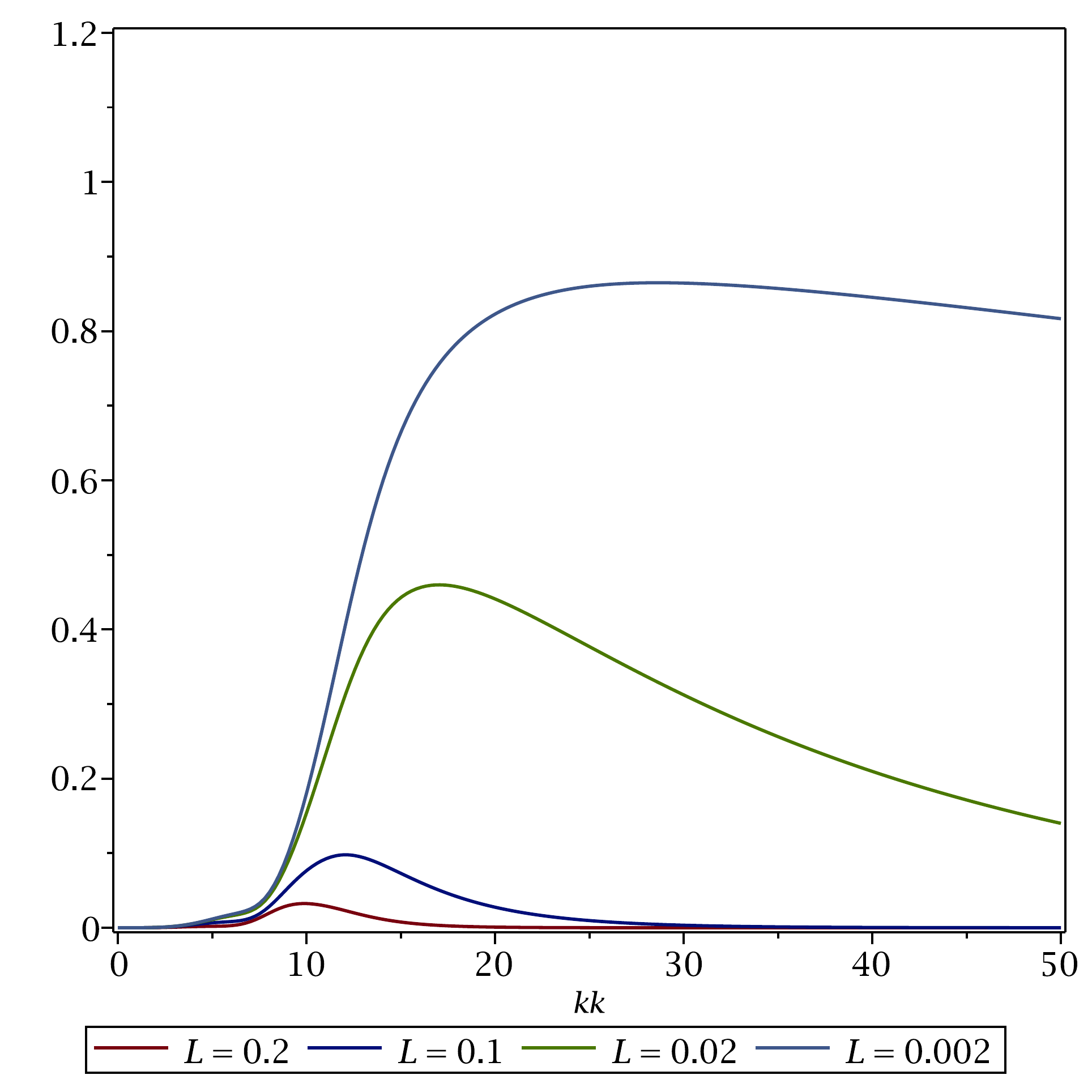}
  \includegraphics[width=0.48\textwidth]{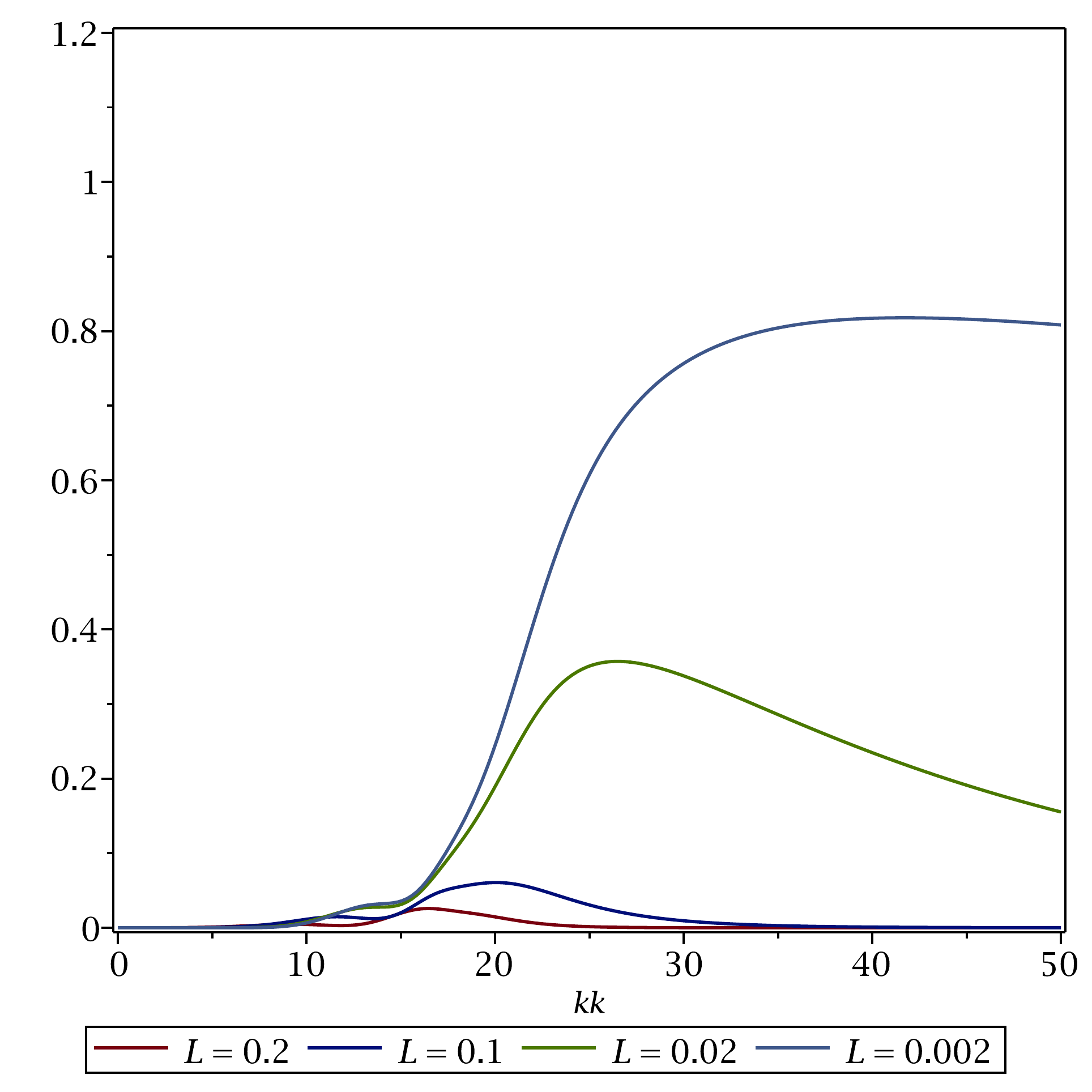}
  \includegraphics[width=0.48\textwidth]{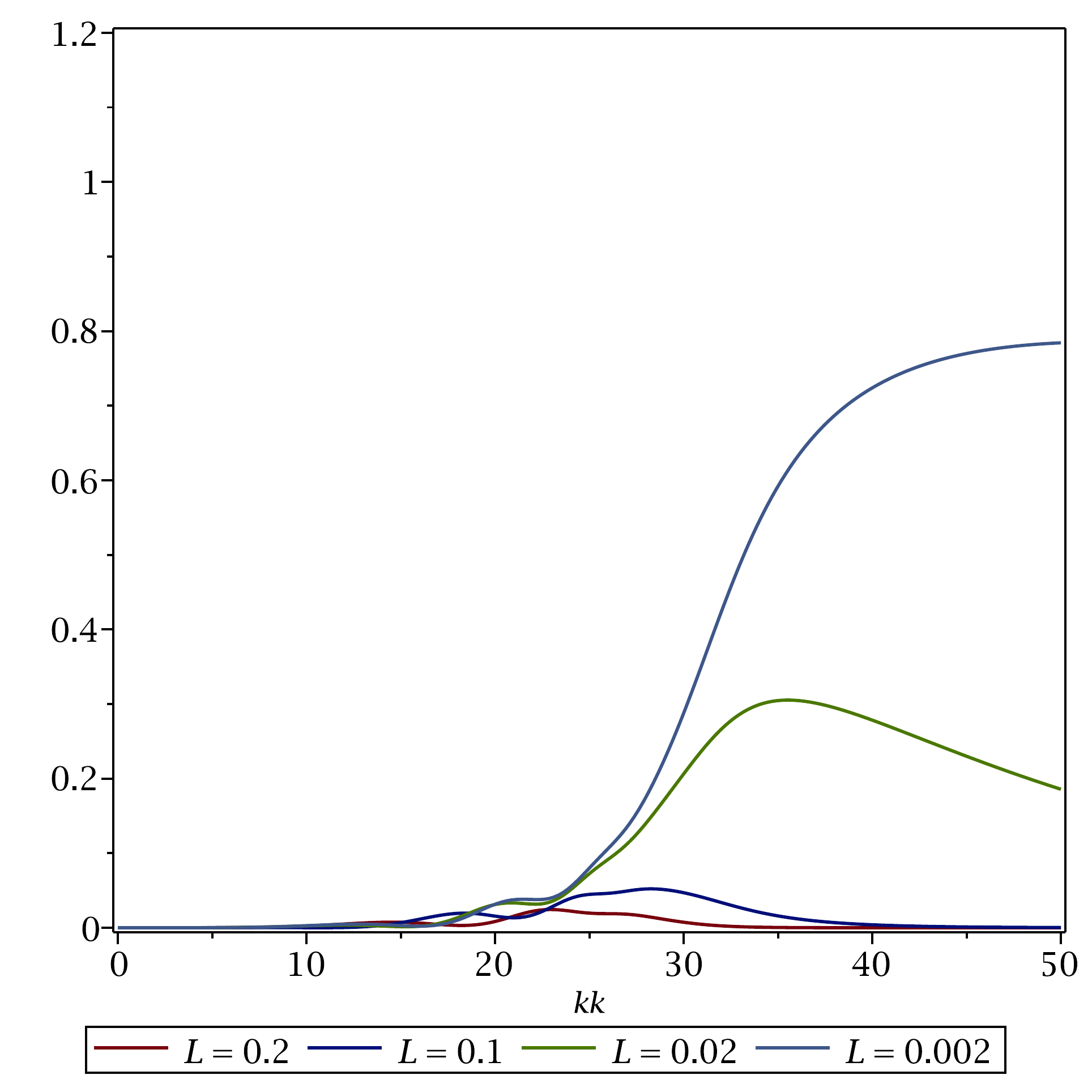}
  \includegraphics[width=0.48\textwidth]{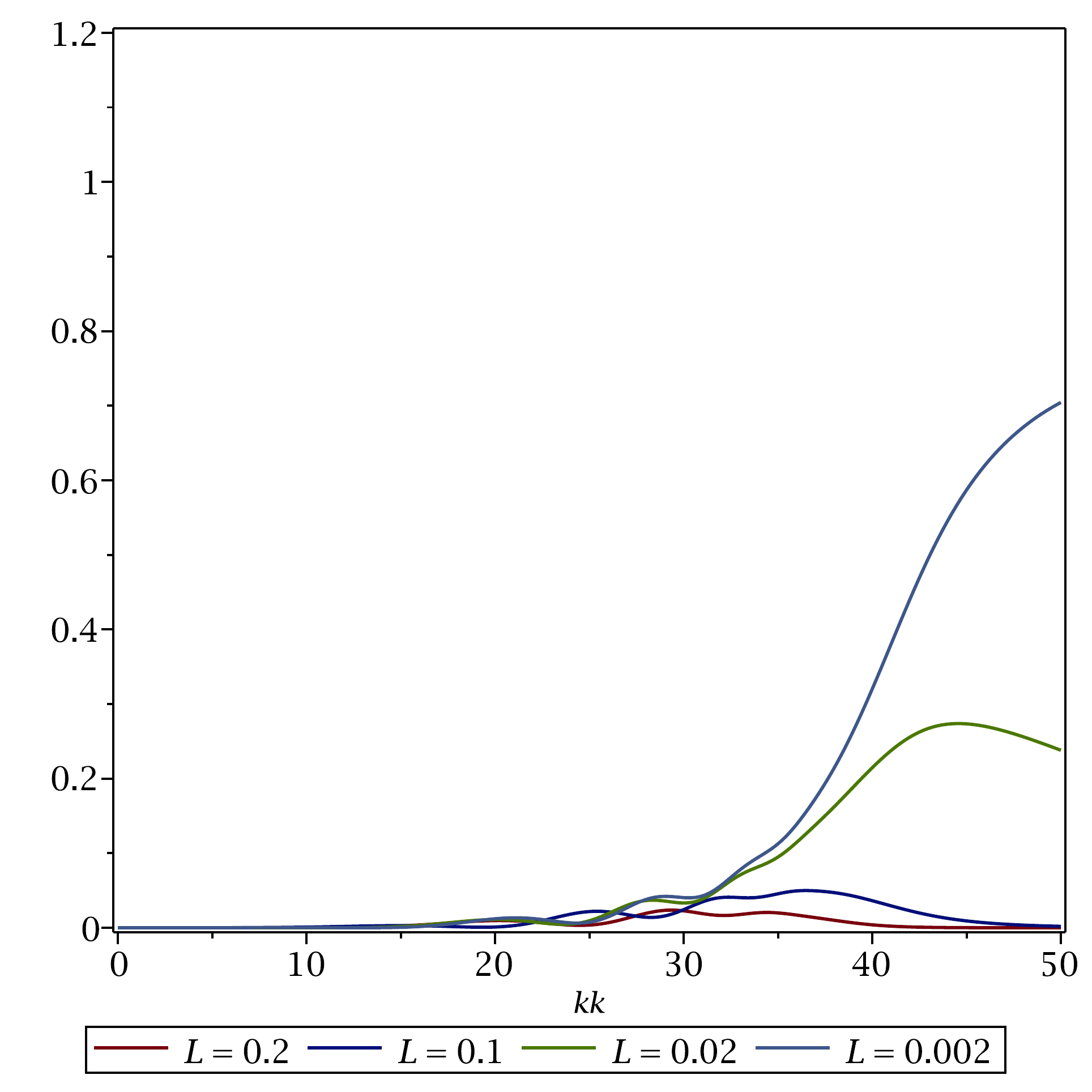}
  \caption{Optimized Schwarz convergence factor for Helmholtz with
      Robin outer boundary conditions all around and Taylor transmission
      conditions for different overlap sizes: from top left to bottom
      right $\omega=10,20,30,40$.}  \label{SchwarzHelmholtzRRLsmall}
\end{figure}
the convergence factors for four different overlap sizes
$L=0.2,0.1,0.02,0.002$ and Helmholtz frequencies $\omega=10,20,30,40$.
We see immediately the tremendous improvement by having now also Robin
conditions on the top and bottom in the Helmholtz equation: the
Schwarz method converges for all overlap sizes $L$, even if the
Helmholtz frequency is increaseing, there is no requirement any more
for the overlap $L$ to be small. We do currently not yet have an
analysis of this behavior, but we show in Table
\ref{TabTaylorRRHelmholtz}
  \begin{table}
    \centering
    \begin{tabular}{|c|c|c|}
      \hline
      $L$ & $\max_{k}|\rho|$ & $1-\max_{k}|\rho|$ \\\hline
   0.1000000000 & 0.0471851340 & 0.9528148659 \\
   0.0100000000 & 0.3360508362 & 0.6639491637 \\
   0.0010000000 & 0.7854583112 & 0.2145416887 \\
   0.0001000000 & 0.9537799195 & 0.0462200804 \\
   0.0000100000 & 0.9913446318 & 0.0086553681 \\
   0.0000010000 & 0.9984392375 & 0.0015607624 \\
   0.0000001000 & 0.9997213603 & 0.0002786396 \\
   0.0000000100 & 0.9999503928 & 0.0000496071 \\
   0.0000000010 & 0.9999911753 & 0.0000088246 \\
   0.0000000001 & 0.9999984305 & 0.0000015694 \\\hline
    \end{tabular}
    \caption{Maximum of the convergence factor for Taylor transmission
      conditions with the same Robin outer boundary conditions all around
      and $\omega=100$, for decreasing overlap size $L$.}
    \label{TabTaylorRRHelmholtz}    
  \end{table}
  the convergence factor for $\omega=100$ when $L$ goes to zero for a
  symmetric subdomain decomposition, $X_2^l=\frac{1}{2}-L/2$,
  $X_1^r=\frac{1}{2}+L/2$, and one can clearly see the much better
  asymptotic performance due to the Robin boundary conditions on top
  and bottom, compared to the waveguide case shown in Table
  \ref{TabTaylorHelmholtz}. We can also numerically observe the
  dependence of the convergence factor in this case to be
  \begin{equation}\label{TaylorRhoRRHelmholtz}
    \max_k|\rho(k,\omega,L)|= 1 - O(L^{3/4})
  \end{equation}
  as we show in Figure \ref{HelmholtzTaylorRRAsymptPlot}
  \begin{figure}
    \centering
    \includegraphics[width=0.48\textwidth]{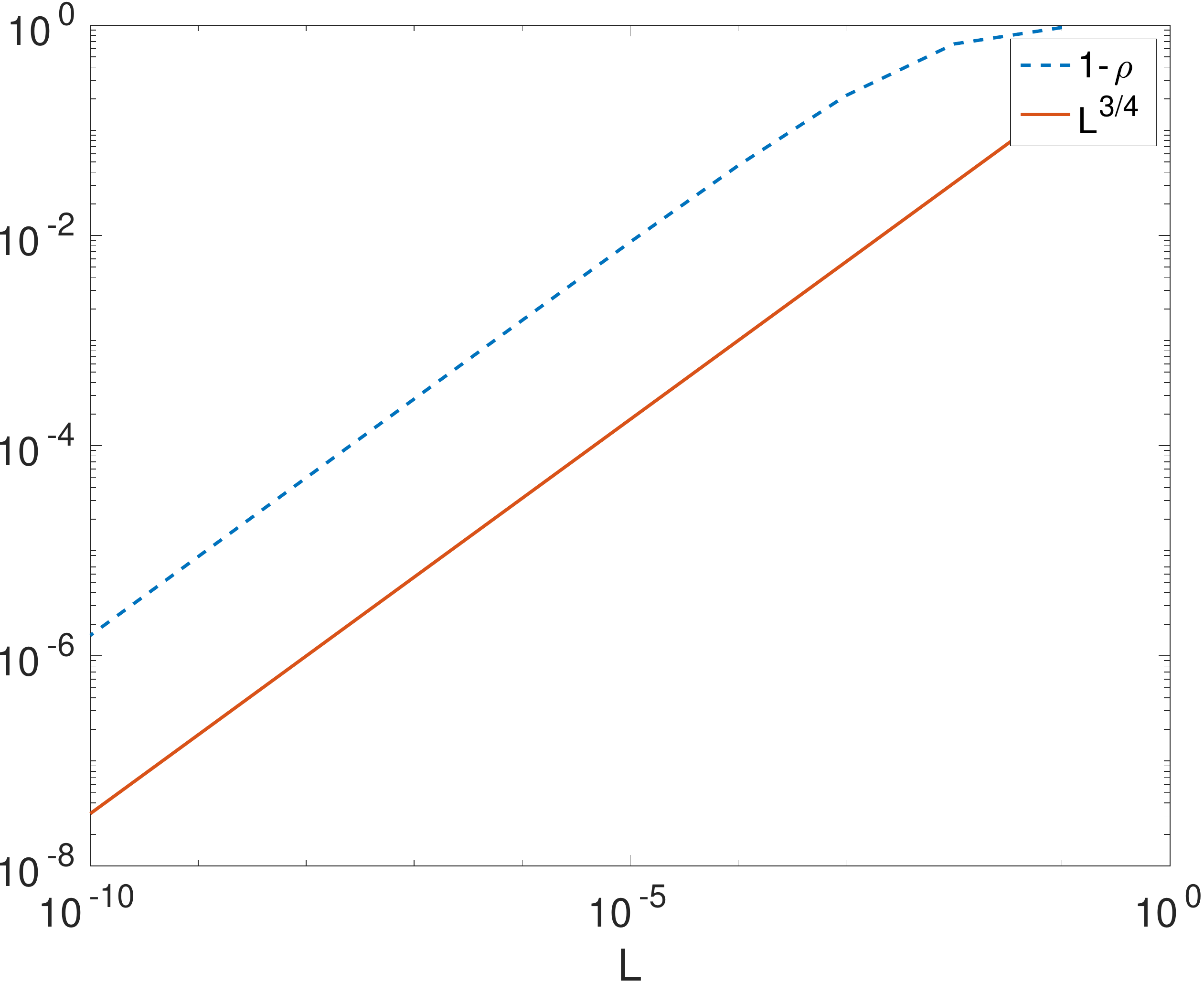}
    \includegraphics[width=0.48\textwidth]{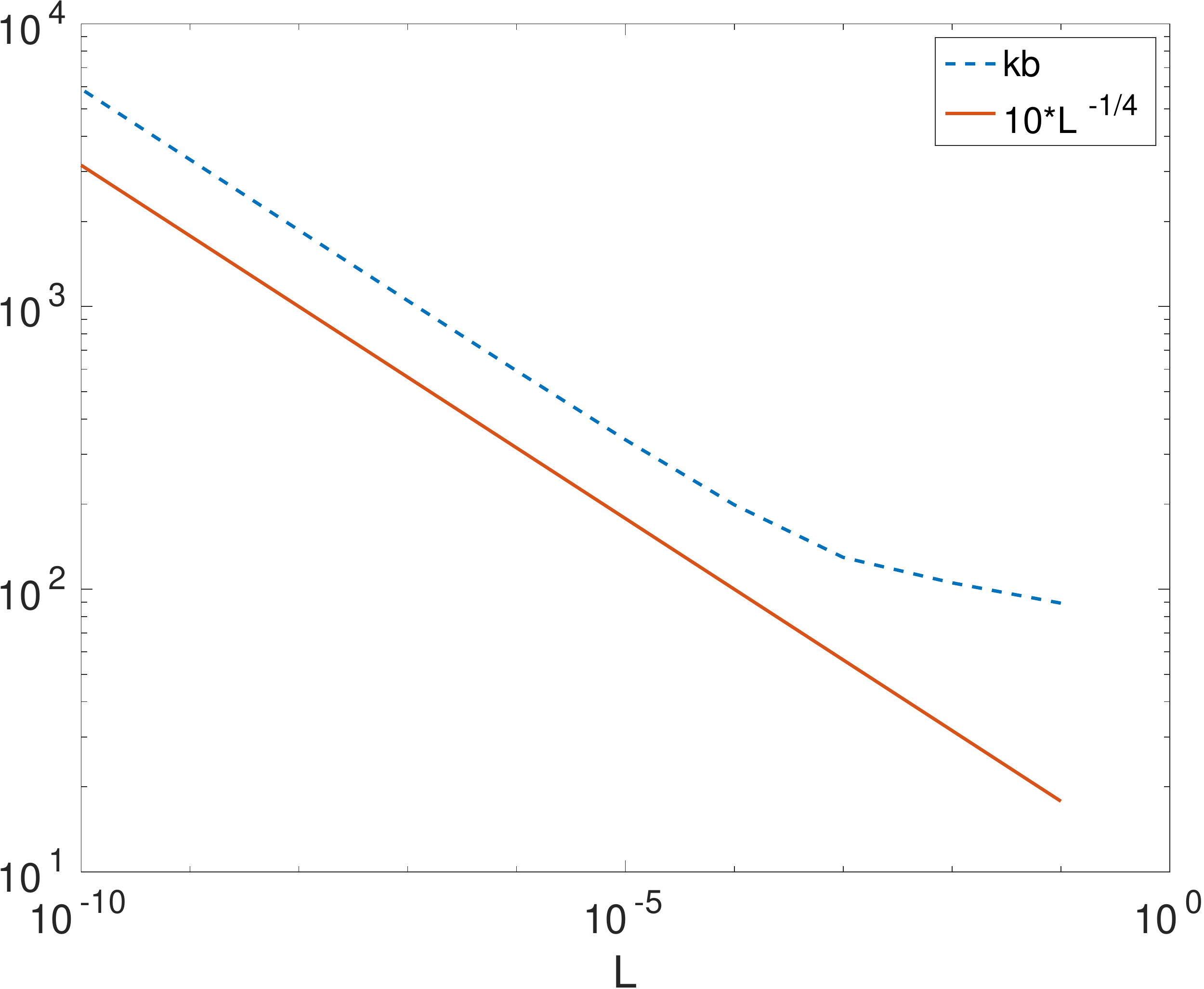}
    \caption{Asymptotic behavior of the convergence factor with Robin
      conditions all around (left) and of the maximum location $\bar{k}$
       (right).}
  \label{HelmholtzTaylorRRAsymptPlot}
  \end{figure}
  on the left. On the right, we show
  how the maximum location $\bar{k}= O(L^{-1/4})$ is increasing now.

We next show that an optimized choice of the complex transmission
  parameters $p_1^r,p_2^l\in\mathbb{C}$ can still further improve the
  convergence behavior with Robin conditions all around the global
  domain. A numerical optimization minimizing the maximum of the
  convergence factor gives the results shown in Figure
  \ref{SchwarzHelmholtzOptRRLsmall}.
\begin{figure}
  \centering
  \includegraphics[width=0.48\textwidth]{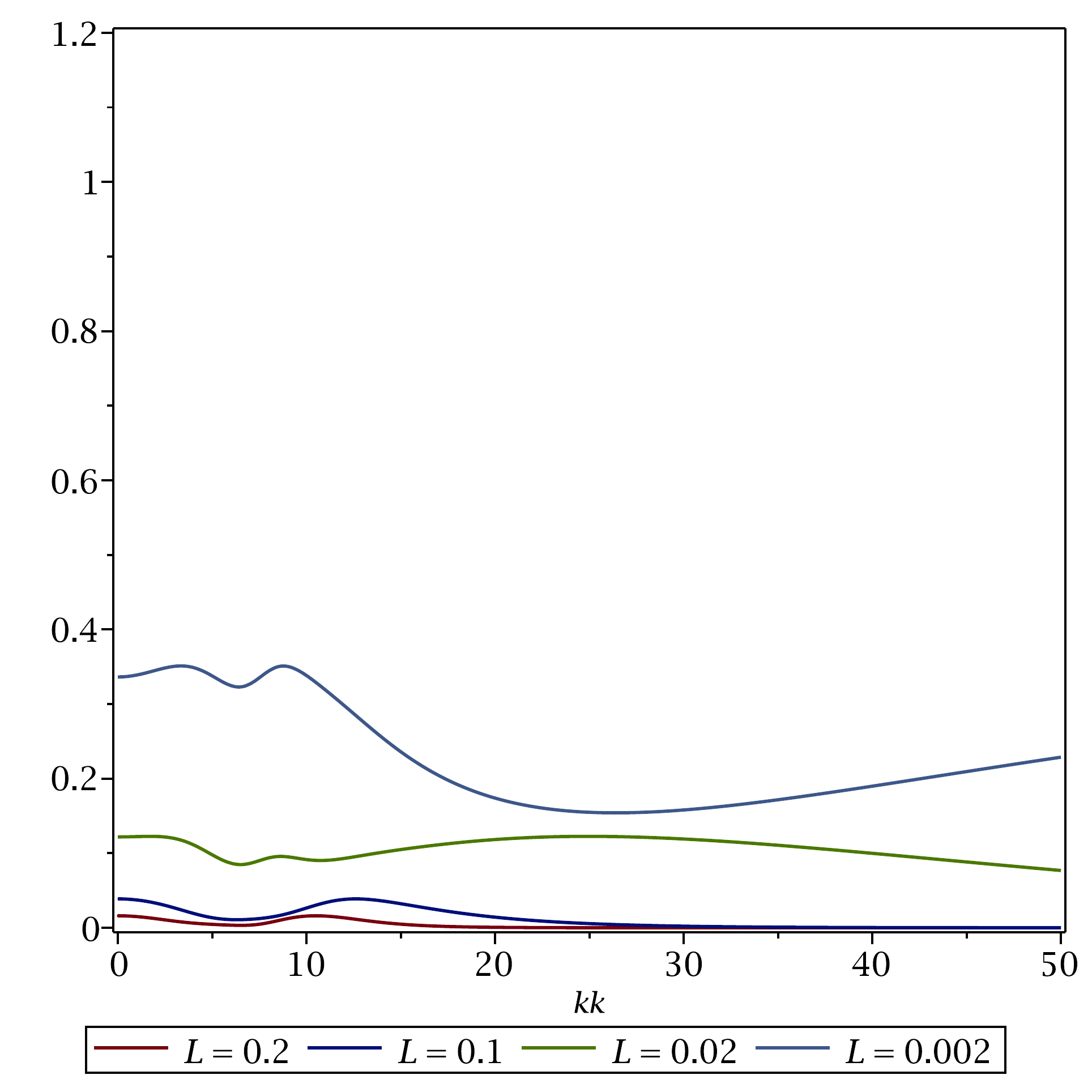}
  \includegraphics[width=0.48\textwidth]{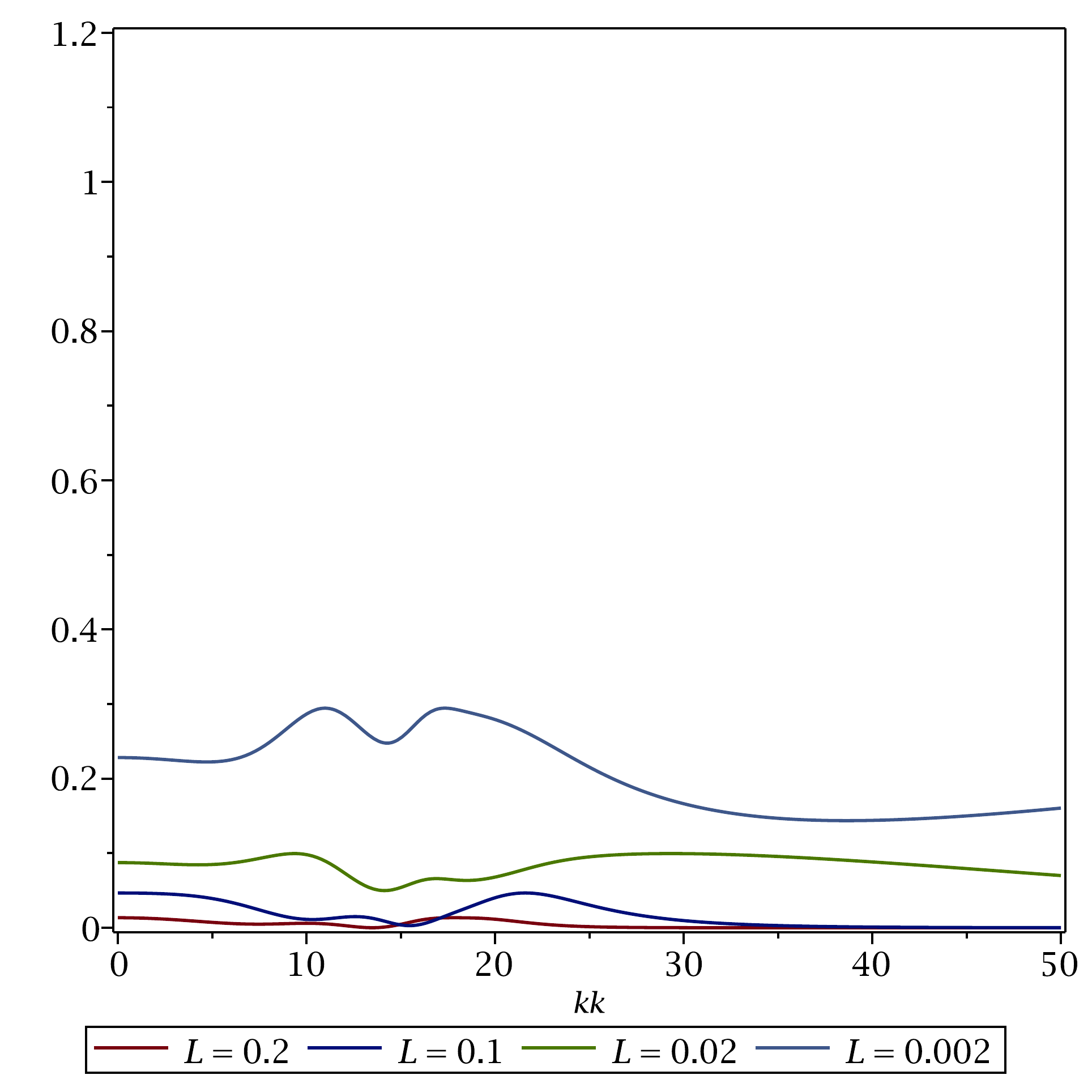}
  \includegraphics[width=0.48\textwidth]{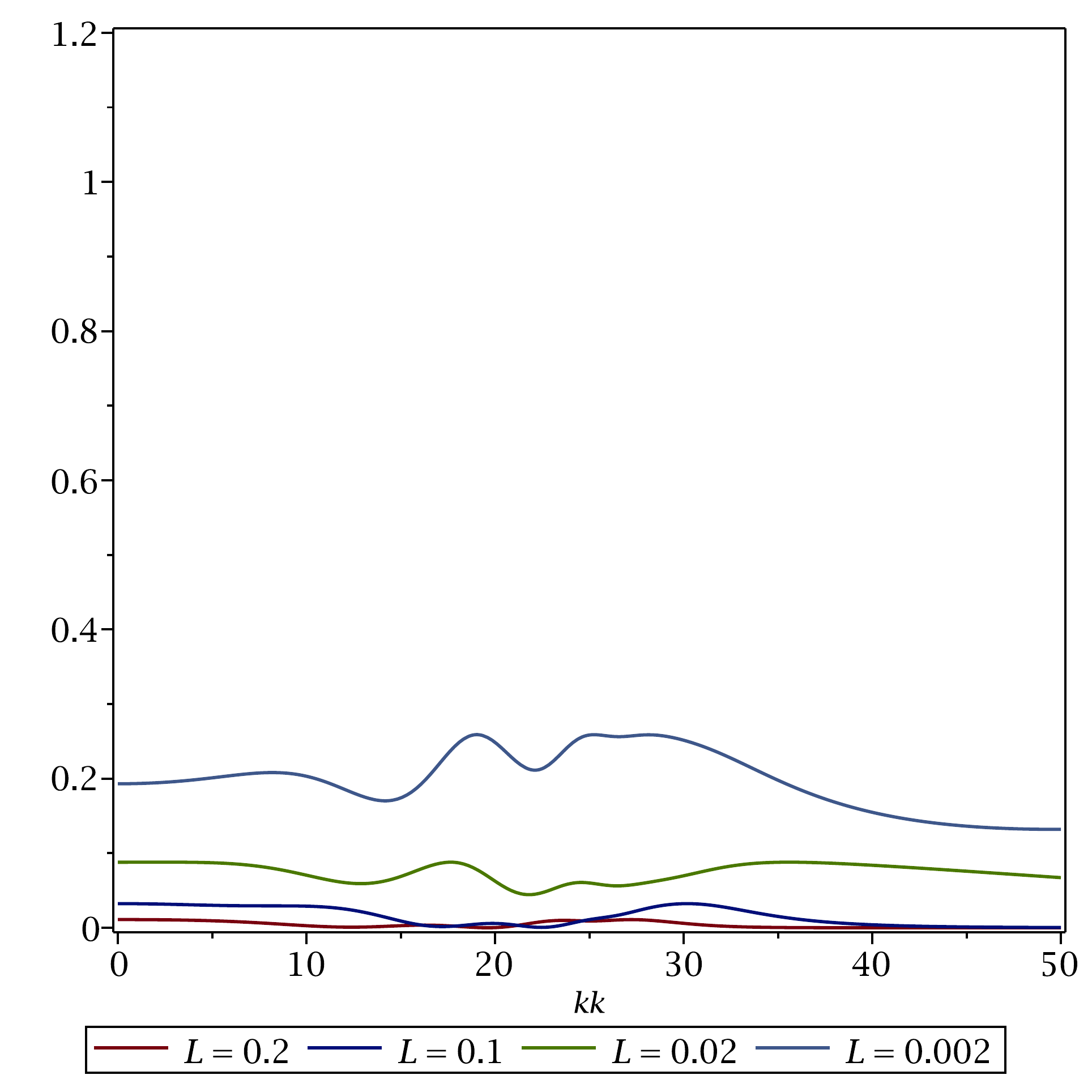}
  \includegraphics[width=0.48\textwidth]{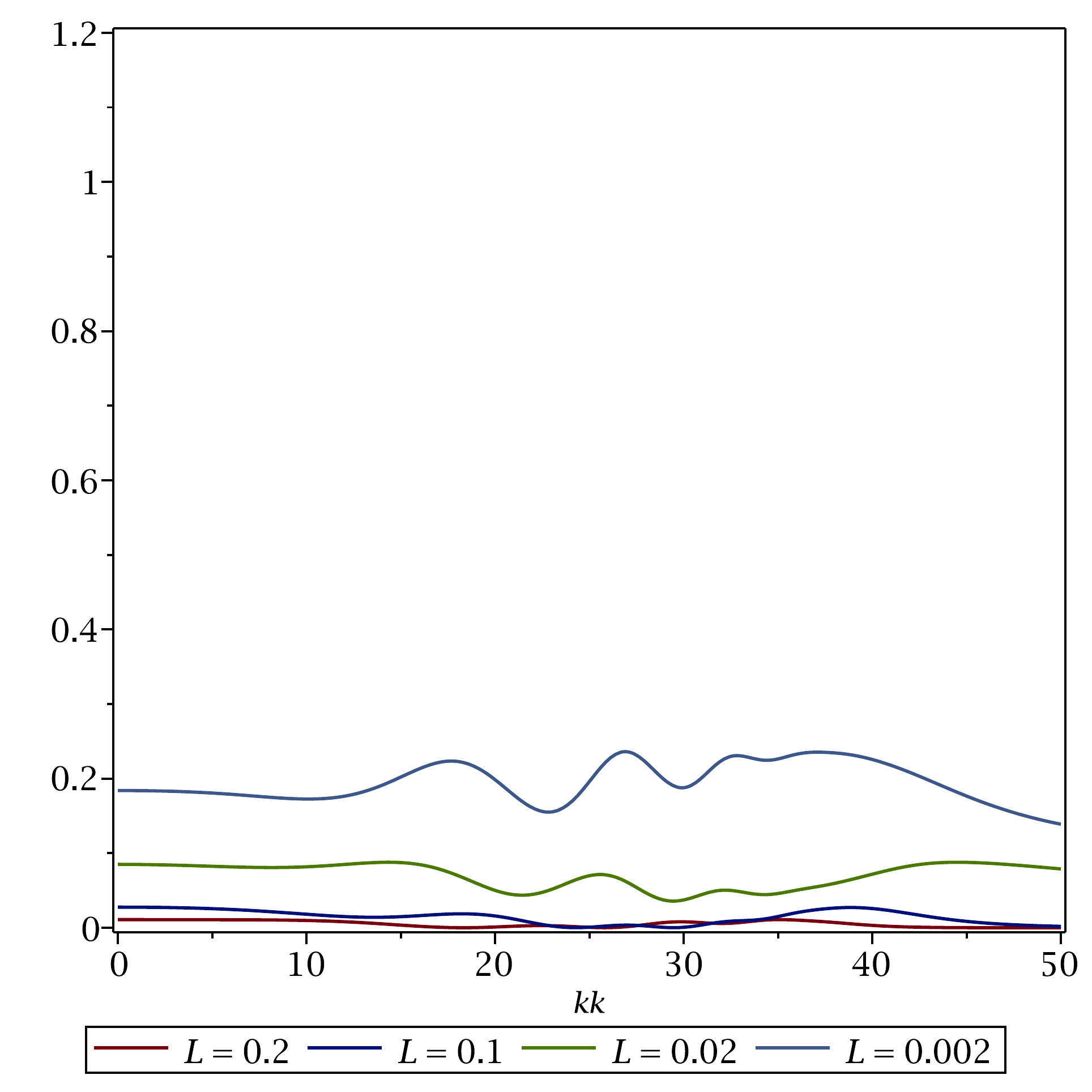}
  \caption{Optimized Schwarz convergence factor for Helmholtz with
      Robin outer boundary conditions all around and optimized complex
      Robin transmission conditions for different overlap sizes: from
      top left to bottom right
      $\omega=10,20,30,40$.}  \label{SchwarzHelmholtzOptRRLsmall}
\end{figure}
Comparing with the corresponding Figure \ref{SchwarzHelmholtzRRLsmall}
for the Taylor transmission conditions, we see that again much better
contraction factors can be obtained using the optimized transmission
conditions. To investigate this further, we show in Table
  \ref{TabOptRRHelmholtz} the best contraction factor that can be
  obtained for $\omega=100$, as we did in Table
  \ref{TabTaylorRRHelmholtz} for the Taylor transmission conditions, and
  we also show the value of the optimized complex transmission
  condition parameter.
\begin{table}
    \centering
    \begin{tabular}{|c|c|c|c|}
      \hline
      $L$ & $p_1^r=p_2^l$ & $\max_{k}|\rho|$ & $1-\max_{k}|\rho|$ \\\hline
   0.1000000000 &      0.0002+\I    82.4166 & 0.057800  & 0.942200  \\
   0.0100000000 &      0.0003+\I    50.8853 & 0.292866  & 0.707134  \\
   0.0010000000 &     47.7606+\I    79.4621 & 0.301674  & 0.698326  \\
   0.0001000000 &    120.2543+\I   142.8230 & 0.538849  & 0.461151  \\
   0.0000100000 &    266.8504+\I   287.5608 & 0.746725  & 0.253275  \\
   0.0000010000 &    578.4744+\I   609.4633 & 0.872807  & 0.127193  \\
   0.0000001000 &   1247.9382+\I  1308.2677 & 0.938763  & 0.061237  \\
   0.0000000100 &   2689.3694+\I  2816.3510 & 0.971090  & 0.028910  \\
   0.0000000010 &   5794.4274+\I  6066.6093 & 0.986475  & 0.013525  \\
   0.0000000001 &  12483.8801+\I 13069.6327 & 0.993699  & 0.006301  \\\hline
    \end{tabular}
    \caption{Best complex parameter choice and maximum of the
      convergence factor for optimized complex Robin transmission
      conditions with outer Robin boundary conditions
      all around and $\omega=100$, for
      decreasing overlap size $L$.}
    \label{TabOptRRHelmholtz}    
\end{table}
We see that the optimization leads to a method which again degrades much
less, compared to the Taylor transmission conditions, and the best choice
is the largest overlap when there are Robin conditions all around the
domain. There are currently no analytical asymptotic formulas for this optimized choice
either, but plotting the results from Table
\ref{TabOptRRHelmholtz} in Figure \ref{HelmholtzOO0RRAsymptPlot}
\begin{figure}
  \centering
  \includegraphics[width=0.48\textwidth]{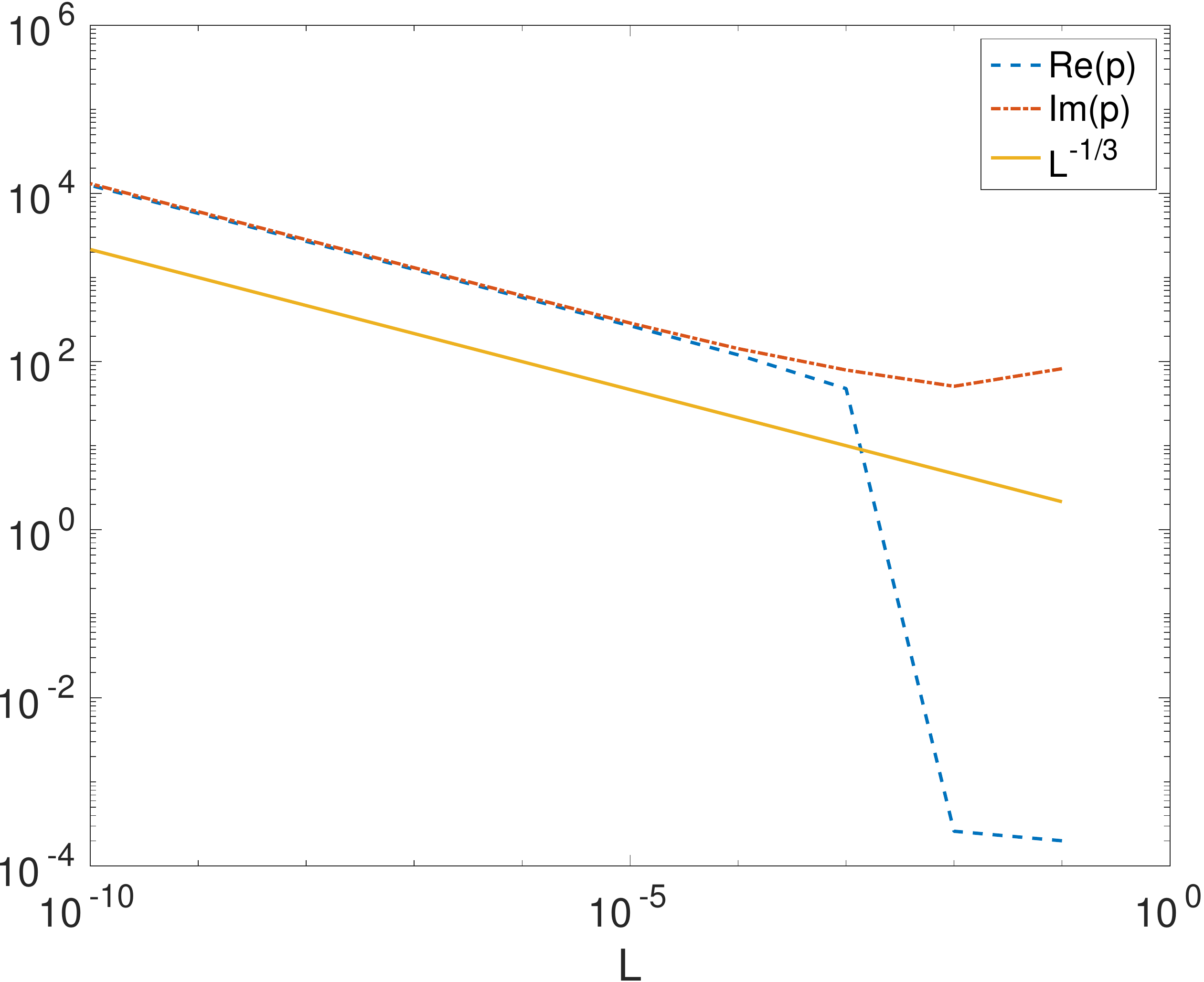}
  \includegraphics[width=0.48\textwidth]{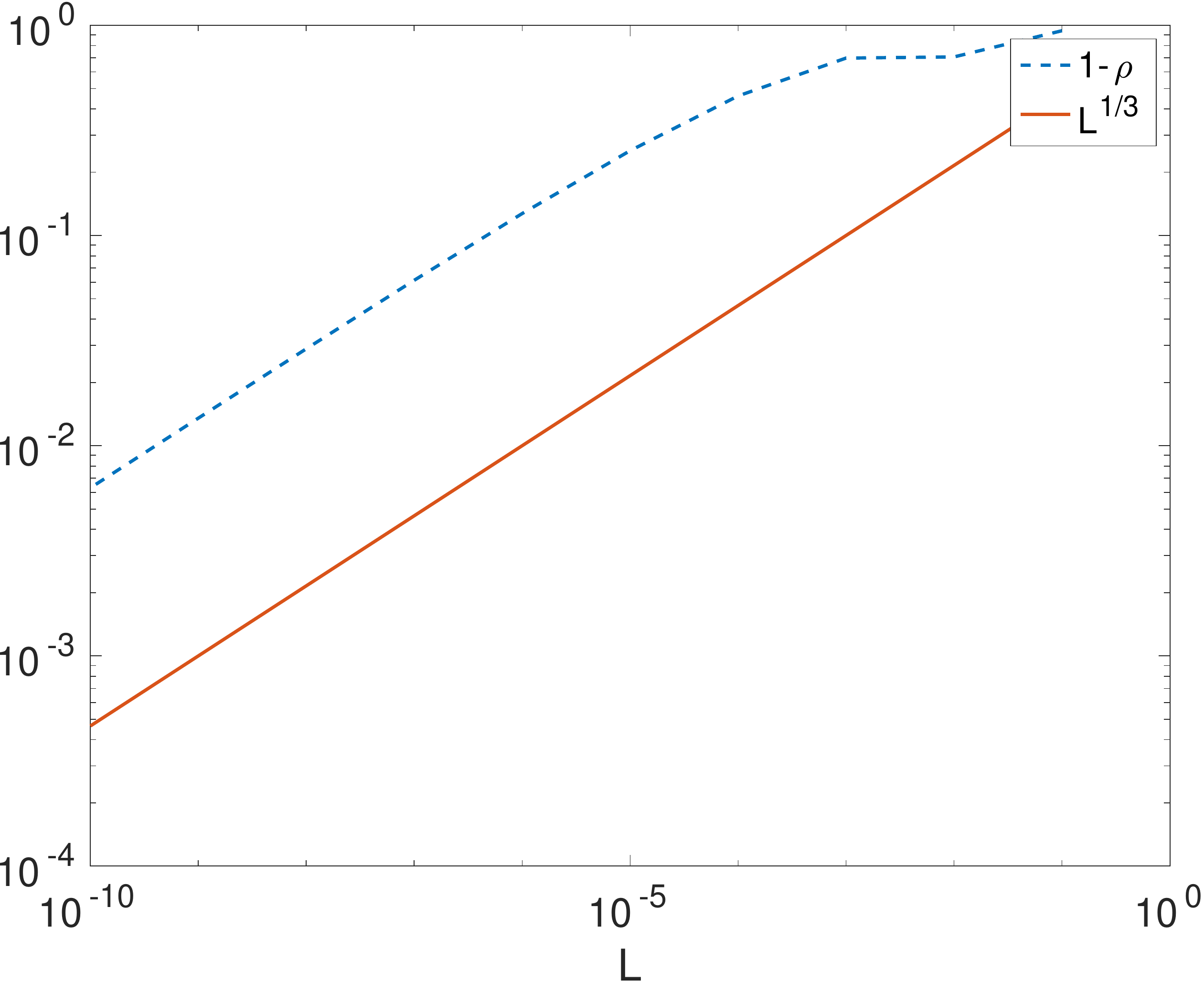}
  \caption{Asymptotic behavior of the optimized complex Robin parameter
    and associated convergence factor distance from $1$ when there are Robin conditions
    all around the domain.}
  \label{HelmholtzOO0RRAsymptPlot}
\end{figure}
cleary shows that the optimized choice behaves asymptotically for
small overlap $L$ again as
\begin{equation}
  \thickmuskip=0.6mu
  \medmuskip=0.2mu
  \thinmuskip=0.1mu
  \scriptspace=0pt
  \nulldelimiterspace=-0.1pt
  p_1^r\sim(C_1^r+\I \tilde{C}_1^r)L^{-1/3},\quad
  p_2^l\sim(C_2^l+\I \tilde{C}_2^l)L^{-1/3},\quad
  \max_{k}|\rho|= 1-O(L^{1/3}),
\end{equation}
like in the waveguide case with Dirichlet or Neumann conditions on top and bottom.
This is still much better than for the Taylor transmission
conditions that gave $\max_{k}|\rho|= 1-O(L^{3/4})$.
  
From the two subdomain analysis in this section, we have already
  learned many things for Schwarz methods by domain truncation, or
  optimized Schwarz methods: for the screened Laplace problem, outer
  boundary conditions are not influencing convergence very much, and
  complete analysis is available for the convergence of these methods,
  also with optimized formulas for Robin transmission
  conditions. Compared to classical Schwarz methods with contraction
  factors $1-O(L)$ when the overlap $L$ becomes small, Taylor
  transmission conditions give a contraction factor $1-O(\sqrt{L})$,
  and optimized Robin transmission conditions give
  $1-O(L^{1/3})$. This is very different for the Helmholtz case, where
  the performance of Schwarz methods is very much dependent on the
  outer boundary conditions imposed on the domain on which the problem
  is considered. Radiation boundary conditions are very important for
  Schwarz methods to function here: in a waveguide setting with Robin
  conditions on the left and right, classical Schwarz methods still do
  not work, but Taylor transmission conditions of Robin type, the
  simplest absorbing boundary conditions, can then lead to convergent
  Schwarz methods, provided the overlap is small enough, with
  convergence factor $1-O(L)$ up to a logarithmic term. Optimized
  Robin transmission conditions lead again to $1-O(L^{1/3})$, as in
  the screened Laplace case. If there are radiation conditions of
  Robin type all around, using these same conditions also as
  transmission conditions leads to contraction factors of the form
  $1-O(L^{3/4})$, but now the methods also work with larger overlap,
  and optimized Robin conditions give $1-O(L^{1/3})$ with numerically
  observed much better constants, asymptotically comparable to the
  screened Laplace case!

%
%
\section{Many subdomain analysis}
%
%

We now investigate the performance of optimized Schwarz methods,
  or equivalently Schwarz methods by domain truncation, when more than
  two subdomains are used. Since we have seen in Section \ref{2SubSec}
  that absorbing boundary conditions (ABCs) play an important role for
  the Helmholtz case, and perfectly matched layers (PMLs) are another
  technique to truncate domains, we now study the convergence of
  Schwarz methods for a sligthly more general problem where the
  operators in the $x$ and $y$ direction are written separately, so
  that PML modifications can be introduced, namely
\begin{equation}\label{eqprb}
  \begin{aligned}
    (\mathcal{L}_x + \mathcal{L}_y)u + \eta u &= f & &\text{ on }\Omega=[0,X]\times[0,Y],\\
    \mathcal{B}^{l}u&=0 & &\text{ on } \{0\}\times [0,Y],\\
    \mathcal{B}^ru&=0 & &\text{ on } \{X\}\times[0,Y],\\
    \mathcal{B}^bu&=0 & &\text{ on } [0,X]\times \{0\},\\
    \mathcal{B}^tu&=0 & &\text{ on } [0,X]\times \{Y\},
  \end{aligned}
\end{equation}
where $\mathcal{L}_{*}$ is a linear partial differential operator
about $*$, $\eta$ is a constant which we will choose to handle both
  the screened Laplace and the Helmholtz case, the unknown $u$ and
the data $f$ are functions on $\Omega$, and $\mathcal{B}^{*}$'s are
linear trace operators.  For Schwarz methods, the domain is decomposed
first into $N$ subdomains $[X_{j}^{l},X_j^{r}]\times[0,Y]$ with
$X_j^{l}=(j-1)H$, $X_j^r=jH+2L$ and $H=(X-2L)/N$, as shown for an
  example in Figure \ref{1Dand2DDecompositionFig} on the right. The
restriction of \R{eqprb} onto the subdomains $\Omega_j$ are
solved with the solutions $u_j$ to satisfy the transmission
conditions
\begin{equation}\label{eqtc}
  \mathcal{B}_j^{l,r}u_j=\mathcal{B}_j^{l,r}u_{j\mp1} \text{ on } \{X_{j}^{l,r}\}\times[0,Y],
\end{equation}
except on the original boundary $\{0,X\}\times[0,Y]$.

\begin{remark}
  We note that if $u_j$ satisfies the restriction of \R{eqprb} onto
  $\Omega_j$, by linearity of \R{eqprb}, the partial differential
  equation for the error $u-u_j$ on $\Omega_j$ is homogeneous (with
  $f=0$) and invariant under the transformation $(x, y,\eta) \mapsto
  (\frac{x}{Y}, \frac{y}{Y}, Y^2\eta)$. We further assume that the
  boundary and transmission conditions have the same invariance. Then,
  we can let $Y=1$ without loss of generality for the convergence
  analysis.
\end{remark}

The generalised Fourier frequencies $\pm\xi$ correspond to the
square-roots of the Sturm-Liouville eigenvalues $\xi^2$,
\begin{equation}\label{eqsturm}
  \begin{aligned}
    \mathcal{L}_y\phi &= \xi^2\phi &  &\text{ on }[0,Y],\\
    \mathcal{B}^b\phi &= 0 & &\text{ at }\{0\},\\
    \mathcal{B}^t\phi &= 0 & &\text{ at }\{Y\}.
  \end{aligned}
\end{equation}
We assume that the eigenfunctions $\{\phi(y,\xi_n)\}$, $\xi\in
\{\xi_0,\xi_1,\xi_2,\ldots\}\subset\mathbb{C}$ allow the solution of
\R{eqprb} to be represented as
$\sum_{n=0}^{\infty}u(x,\xi_n)\phi(y,\xi_n)$\footnote{We still denote
  the Fourier transformed quantities for simplicity by the same
  symbols $u$, $f$, $\mathcal{B}^l$ and $\mathcal{B}^r$ to avoid a
  more complicated notation.}, with $u(x,\xi)$ satisfying the problem
\begin{equation}\label{eqode}
  \begin{aligned}
    \mathcal{L}_xu + (\xi^2+\eta)u &= f\quad & &\mbox{on }[0,X],\\
    \mathcal{B}^lu&=0 \quad & &\mbox{at }x=0,\\
    \mathcal{B}^ru&=0 \quad & &\mbox{at }x=X.
  \end{aligned}  
\end{equation}
Let $g_{j}^l:=\mathcal{B}_j^lu_j$ at $x=X_j^l$ and $g_{j}^r:=\mathcal{B}_j^ru_j$ at $x=X_j^r$. To
rewrite \R{eqtc} in terms of the interface data $g_2^l,\ldots, g_N^l, g_1^r,\ldots, g_{N-1}^r$, \ie\ in substructured form, we
define the interface-to-interface operators (see also Figure~\ref{figi2i})
\[
  \thickmuskip=0.6mu
  \medmuskip=0.2mu
  \begin{aligned}
  a_j:&\left(\ell_j\mbox{\ at }x=X_j^l\right)\rightarrow
  \left(\mathcal{B}_{j+1}^lv_j\mbox{\ at }x=X_{j+1}^l\right)\ 
  \mbox{with $v_j$ solving}\\
  &\mathcal{L}_xv_j+(\xi^2+\eta)v_j=0 \text{ in $(X_j^l,X_j^r)$},\;\,
  \mathcal{B}_j^lv_j=\ell_j\text{ at $x=X_j^l$},\;\,
  \mathcal{B}_j^rv_j=0\text{ at $x=X_j^r$},\\
\end{aligned}
\]
\[
  \thickmuskip=0.6mu
  \medmuskip=0.2mu
  \begin{aligned}
  b_j:&\left(\gamma_j\mbox{\ at }x=X_j^r\right)\rightarrow
  \left(\mathcal{B}_{j+1}^lv_j\mbox{\ at }x=X_{j+1}^l\right)\ 
  \mbox{with $v_j$ solving}\\
  &\mathcal{L}_xv_j+(\xi^2+\eta)v_j=0\text{ in $(X_j^l,X_j^r)$},\;\,
  \mathcal{B}_j^lv_j=0\text{ at $x=X_j^l$},\;\,
  \mathcal{B}_j^rv_j=\gamma_j\text{ at $x=X_j^r$},\\
  \end{aligned}
\]
\[
  \thickmuskip=0.6mu
  \medmuskip=0.2mu
  \begin{aligned}
  c_j:&\left(\gamma_j\mbox{\ at }x=X_j^r\right)\rightarrow
  \left(\mathcal{B}_{j-1}^rv_j\mbox{\ at }x=X_{j-1}^r\right)\ 
  \mbox{with $v_j$ solving}\\
  &\mathcal{L}_xv_j+(\xi^2+\eta)v_j=0\text{ in $(X_j^l,X_j^r)$},\;\,
  \mathcal{B}_j^lv_j=0\text{ at $x=X_j^l$},\;\,
  \mathcal{B}_j^rv_j=\gamma_j\text{ at $x=X_j^r$},\\
\end{aligned}
\]
\[
  \thickmuskip=0.6mu
  \medmuskip=0.2mu
 \begin{aligned}
  d_j:&\left(\ell_j\mbox{\ at }x=X_j^l\right)\rightarrow
  \left(\mathcal{B}_{j-1}^rv_j\mbox{\ at }x=X_{j-1}^r\right)\ 
  \mbox{with $v_j$ solving}\\
  &\mathcal{L}_xv_j+(\xi^2+\eta)v_j=0\text{ in $(X_j^l,X_j^r)$},\;\,
  \mathcal{B}_j^lv_j=\ell_j\text{ at $x=X_j^l$},\;\,
  \mathcal{B}_j^rv_j=0\text{ at $x=X_j^r$},
  \end{aligned}
\]
where $\mathcal{B}_1^l:=\mathcal{B}^l$ and $\mathcal{B}_N^r:=\mathcal{B}^r$. Using these operators,
we can rewrite \R{eqtc} as a linear system, namely
\begin{figure}
  \centering
  \includegraphics[scale=.25]{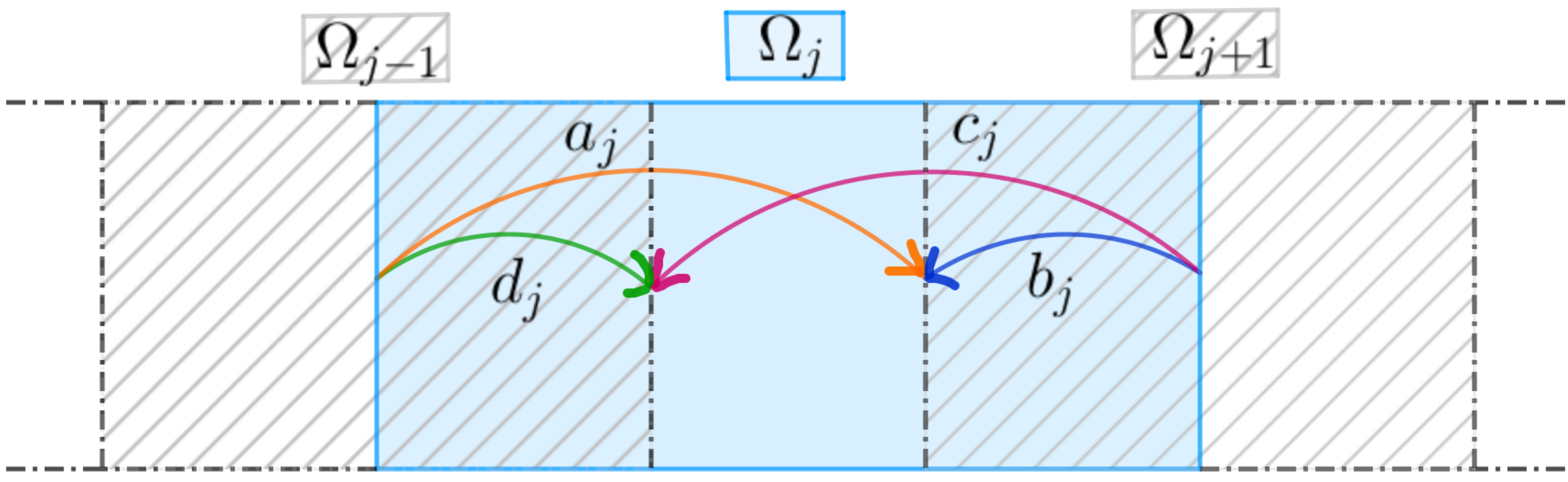}
  \caption{Illustration of the interface-to-interface
    operators.}\label{figi2i}
\end{figure}
\begin{equation}\label{eq:gp}
  {\footnotesize
    \left[
    \begin{array}{cccc|cccc}
      1\vphantom{g_1^l} & & & & -b_1 & & &\\
      -a_2 & 1\vphantom{g_2^l\ddots} & & & & -b_2 & &\\
        & \ddots & \ddots\vphantom{\vdots} & & & & \ddots &\\
        & & -a_{N-1} & 1\vphantom{g_N^l} & & & & -b_{N-1}\\
      \hline
      -d_2 & & & & 1\vphantom{g_2^r} & -c_2 & &\\
        & -d_3 & & &  & 1\vphantom{g_3^r} & \ddots & \\
        & & \ddots & & &  & \ddots & -c_{N-1}\vphantom{\vdots} \\
      & & & -d_{N} & & & & 1\vphantom{g_{N-1}^r}
    \end{array}
  \right]
  \left[
    \begin{array}{c}
      g_2^l\\ g_3^l\vphantom{\ddots} \\ \vdots \\ g_N^l\\\hline
      g_1^r\\ g_2^r\vphantom{\ddots} \\ \vdots \\ g_{N-1}^r
    \end{array}
  \right]=
  \left[
    \begin{array}{c}
      \tau_2^l\\ \tau_3^l\vphantom{\ddots} \\ \vdots \\ \tau_N^l\\\hline
      \tau_1^r\\ \tau_2^r\vphantom{\ddots} \\ \vdots \\ \tau_{N-1}^r
    \end{array}
  \right],
  }
\end{equation}
which we denote by
\begin{equation}\label{eq:bp}
  \begin{bmatrix}
    I-A & -B\\
    -D & I-C    
  \end{bmatrix}
  \begin{bmatrix}
    g^l\\ g^r
  \end{bmatrix}=
  \begin{bmatrix}
    \tau^l\\ \tau^r
  \end{bmatrix},
\end{equation}
where $\tau_j^l:=\mathcal{B}_j^lv_{j-1}$, $\tau_j^r:=\mathcal{B}_j^rv_{j+1}$ with $v_j$ satisfying
\[
  \begin{aligned}
    \mathcal{L}_xv_j+(\xi^2+\eta)v_j&=f \quad & &\mbox{on }[X_j^l, X_j^r],\\
    \mathcal{B}_j^lv_j&=0\quad & &\mbox{at $x=X_j^l$},\\
    \mathcal{B}_j^rv_j&=0\quad & &\mbox{at $x=X_j^r$}.
  \end{aligned}  
\]
The double sweep Schwarz method amounts to a block Gauss-Seidel iteration for \R{eq:bp}: given an
initial guess $g^{r,0}$ of $g^r$, we compute for iteration index $m=0,1,\ldots$
\begin{equation}\label{eq:gs}
  g^{r,m+1}:=(I-C)^{-1}\left[\tau^r+D(I-A)^{-1}(\tau^l+Bg^{r,m})\right].
\end{equation}  
We denote by $\epsilon^{r,m}:=g^{r}-g^{r,m}$ the error, which then by \R{eq:gs}
satisfies a recurrence relation with iteration matrix $T_{\rm{DOSM}}$,
\begin{equation}\label{eq:Tdosm}
  \epsilon^{r,m+1}=T_{\rm{DOSM}}\epsilon^{r,m}:=(I-C)^{-1}D(I-A)^{-1}B\epsilon^{r,m}.
\end{equation}
The Jacobi iteration for \R{eq:gp} gives the parallel Schwarz method with the recurrence relation
\begin{equation}\label{eq:Tposm}
  \epsilon^{m+1}=T_{\rm{POSM}}\epsilon^{m}:=
  \begin{bmatrix}
    A & B\\
    D & C
  \end{bmatrix}
  \begin{bmatrix}
    \epsilon^{l,m}\\ \epsilon^{r,m}
  \end{bmatrix}.
\end{equation}
\begin{remark}
  The block Jacobi iteration for \R{eq:bp} leads to the X (cross) sweeps
  \cite{StolkImproved,Zepeda,ZD} starting from the first and last subdomains simultaneously, namely,
  \begin{equation}\label{eq:Tcosm}
    \epsilon^{m+1}=T_{\rm{COSM}}\epsilon^{m}:=
    \begin{bmatrix}
      O & (I-A)^{-1}B\\
      (I-C)^{-1}D & O
    \end{bmatrix}
    \begin{bmatrix}
      \epsilon^{l,m}\\ \epsilon^{r,m}
    \end{bmatrix}.
  \end{equation}
  It is easy to see that $T_{\rm{COSM}}^2$ has the same spectra as $T_{\rm{DOSM}}$ does.
\end{remark}

\begin{remark}
  For two subdomains $N=2$, \R{eq:gp} reduces to
  \[
    \begin{bmatrix}
      1 & -b_1\\ -d_2 & 1
    \end{bmatrix}
    \begin{bmatrix}
      g_2^l\\ g_1^r
    \end{bmatrix}=
    \begin{bmatrix}
      \tau_2^l\\ \tau_1^r
    \end{bmatrix}.
  \]
  Then, $T_{\rm{DOSM}}=b_1d_2$ is exactly the two-subdomain
  convergence factor \R{RhoOSM}. On the one hand, the influence of the
  outer left/right boundary conditions $\mathcal{B}^{l,r}u=0$ is
  totally contained in $b_1$ and $d_N$. For fast convergence, the
  $b_j$ and $d_j$ are the only entries that can be made arbitrarily
  small by approximating the transparent transmission conditions. On
  the other hand, the influence of $N$ is mainly through $(I-C)^{-1}$,
  $(I-A)^{-1}$ in \R{eq:Tdosm} or the nilpotent $A$, $C$ in
  \R{eq:Tposm}, not directly related to the diagonal $B$, $D$. This
  explains why and how the two-subdomain analysis from Section
    \ref{2SubSec} is useful.
\end{remark}

Let $s:=\sqrt{\xi^2+\eta}$ be the symbol of the square-root operator $\sqrt{\mathcal{L}_y+\eta}$
(also known as the half-plane Dirichlet-to-Neumann operator, see footnote~\ref{footnoteDtN}). Assume
the boundary operator $\mathcal{B}_{*}^{l,r}$ has the symbol
$\mp q_{*}^{l,r}(\xi)\partial_x + p_{*}^{l,r}(\xi)$. Then, one can find the symbols of the operators
$a_j$, $b_j$, $c_j$ and $d_j$ in \R{eq:gp} so that the iteration matrices $T_{\rm{POSM}}$ and
$T_{\rm{DOSM}}$ become symbol matrices; see \eg~\cn{NN97}, \cn{bootland2022analysis},
\cn{GZDD25}. Let
\[
  \mathfrak{A} = \begin{bmatrix} A & O\\ O & O\end{bmatrix},
  \mathfrak{B} = \begin{bmatrix} O & B\\ O & O\end{bmatrix},
  \mathfrak{C} = \begin{bmatrix} O & O\\ O & C\end{bmatrix},
  \mathfrak{D} = \begin{bmatrix} O & O\\ D & O\end{bmatrix}.
\]
There exist the following results in norm.

\begin{theorem}[\cn{NN97}] Let $\|\cdot\|$ be an algebra norm of matrices (so that the submultiplicative property
  $\|AB\|\le\|A\|\|B\|$ holds), with $\|I\|=1$. If for some $\xi$ we have
  \[
    {\varrho}(\xi):=\|\mathfrak{B}(\xi)\| \|\mathfrak{D}(\xi)\|
    \left(\sum_{n=0}^{N-2}\|\mathfrak{A}(\xi)\|^n\right)
    \left(\sum_{n=0}^{N-2}\|\mathfrak{C}(\xi)\|^n\right)<1,
  \]
  then the estimates
  \[
    \begin{aligned}
      \|T_{\rm{POSM}}^n(\xi)\|&\le
      C_0(\xi)\frac{1}{1-{\varrho}(\xi)}{\varrho}(\xi)^{\left[\frac{n}{2N-2}-\frac{3}{2}\right]},
      & &\forall n\ge 2N,\\
      \|T_{\rm{DOSM}}^{n}(\xi)\|&\le C_0(\xi)(1+{\varrho}(\xi)){\varrho}(\xi)^{n-1}, & &\forall
      n\ge 2,
    \end{aligned}
  \]
  hold with
  $C_0(\xi)=\left(1+\frac{{\varrho}(\xi)}{\|\mathfrak{B}(\xi)\|}\right)\left(1+\frac{{\varrho}(\xi)}{\|\mathfrak{D}(\xi)\|}+\frac{{\varrho}(\xi)}{\|\mathfrak{B}(\xi)\|\|\mathfrak{D}(\xi)\|}\right)$.
\end{theorem}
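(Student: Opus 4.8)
The plan is to exploit the graded, two–node structure of the four blocks. Thinking of $\mathfrak{A},\mathfrak{B},\mathfrak{C},\mathfrak{D}$ as the directed edges of a graph on the two interface spaces $V^l$ (top block) and $V^r$ (bottom block), we have $\mathfrak{A}:V^l\to V^l$, $\mathfrak{C}:V^r\to V^r$, $\mathfrak{D}:V^l\to V^r$, $\mathfrak{B}:V^r\to V^l$. A direct multiplication check gives the selection rules $\mathfrak{B}\mathfrak{A}=\mathfrak{A}\mathfrak{C}=\mathfrak{A}\mathfrak{D}=\mathfrak{C}\mathfrak{A}=\mathfrak{C}\mathfrak{B}=\mathfrak{D}\mathfrak{C}=0$ and $\mathfrak{B}^2=\mathfrak{D}^2=0$, while $A,C$ are strictly triangular (from the bidiagonal structure of the system) and hence nilpotent, so $\mathfrak{A}^{N-1}=\mathfrak{C}^{N-1}=0$. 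Writing $\alpha:=\sum_{n=0}^{N-2}\|\mathfrak{A}\|^n$ and $\gamma:=\sum_{n=0}^{N-2}\|\mathfrak{C}\|^n$, we have $\varrho=\|\mathfrak{B}\|\,\|\mathfrak{D}\|\,\alpha\gamma$, and note the useful identities $\varrho/\|\mathfrak{B}\|=\|\mathfrak{D}\|\alpha\gamma$, $\varrho/\|\mathfrak{D}\|=\|\mathfrak{B}\|\alpha\gamma$, $\varrho/(\|\mathfrak{B}\|\,\|\mathfrak{D}\|)=\alpha\gamma$ underlying $C_0$.

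I would first dispose of the double-sweep bound, which is almost immediate. Since the Neumann series terminate by nilpotency, $(I-\mathfrak{A})^{-1}=\sum_{n=0}^{N-2}\mathfrak{A}^n$ and likewise for $\mathfrak{C}$, whence $\|(I-\mathfrak{A})^{-1}\|\le\alpha$ and $\|(I-\mathfrak{C})^{-1}\|\le\gamma$. One checks that $(I-\mathfrak{C})^{-1}\mathfrak{D}(I-\mathfrak{A})^{-1}\mathfrak{B}$ equals the matrix with $T_{\rm{DOSM}}$ in the lower-right block and zeros elsewhere, so for the operator norm (the relevant one for these symbol matrices) submultiplicativity gives $\|T_{\rm{DOSM}}\|\le\gamma\,\|\mathfrak{D}\|\,\alpha\,\|\mathfrak{B}\|=\varrho$ and hence $\|T_{\rm{DOSM}}^n\|\le\varrho^n$. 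Since $C_0\ge1$ and $1+\varrho\ge1$ force $C_0(1+\varrho)\ge1+\varrho>\varrho$, this is already stronger than the claimed $C_0(1+\varrho)\varrho^{n-1}$, so the second estimate follows.

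The real work is the parallel bound. Here I would write $T_{\rm{POSM}}=\mathfrak{A}+\mathfrak{B}+\mathfrak{C}+\mathfrak{D}$ and expand $T_{\rm{POSM}}^n$ as a sum over words of length $n$ in the four letters, discarding the many words annihilated by the selection rules. A surviving word is precisely a walk on the two-node graph: read right to left it alternates between staying at a node (at most $N-2$ consecutive $\mathfrak{A}$'s or $\mathfrak{C}$'s, by nilpotency) and crossing by a single $\mathfrak{D}$ or $\mathfrak{B}$, the crossings necessarily alternating because $\mathfrak{B}^2=\mathfrak{D}^2=0$. A word with $j$ crossings has $j+1$ self-loop segments of length $\le N-2$, so $n\le j+(j+1)(N-2)$; solving for $j$ bounds the number of complete $\mathfrak{B}$–$\mathfrak{D}$ cycles by $m=\lfloor j/2\rfloor\ge\bigl[\tfrac{n}{2N-2}-\tfrac32\bigr]$, the $-\tfrac32$ absorbing the two boundary segments and integer rounding. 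Bounding each word by submultiplicativity and grouping by crossing count $j$, I drop the constraint that the self-loop exponents sum to $n-j$ and let each segment range freely over $0,\dots,N-2$ (this only enlarges the sum), turning each full cycle into a factor $\|\mathfrak{B}\|\,\|\mathfrak{D}\|\,\alpha\gamma=\varrho$. Since every length-$n$ word carries at least $m_{\min}:=\bigl[\tfrac{n}{2N-2}-\tfrac32\bigr]$ cycles, summing the geometric tail $\sum_{m\ge m_{\min}}\varrho^m$ gives the factor $\frac{1}{1-\varrho}\varrho^{m_{\min}}$, valid once $n\ge 2N$.

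The main obstacle — everything else being careful but routine bookkeeping — is the exact identification of the boundary contributions with the constant $C_0$. One must track the parity of $j$, the two choices of starting node, and which end of the walk carries the unpaired crossing and unpaired self-loop segments; summing these leftover $\|\mathfrak{B}\|^{0\text{ or }1}$, $\|\mathfrak{D}\|^{0\text{ or }1}$, $\alpha^{0\text{ or }1}$, $\gamma^{0\text{ or }1}$ factors must collapse, via the identities above, into exactly $C_0=\bigl(1+\varrho/\|\mathfrak{B}\|\bigr)\bigl(1+\varrho/\|\mathfrak{D}\|+\varrho/(\|\mathfrak{B}\|\,\|\mathfrak{D}\|)\bigr)$. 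Getting merely the right power of $\varrho$ is easy; making the prefactor come out precisely as stated, rather than as some larger combination, is the delicate step and is where I would spend most of the effort.
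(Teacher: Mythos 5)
The paper itself offers no proof of this theorem: it is quoted from \cn{NN97} and used as a black box, so there is no in-paper argument to compare yours against. That said, your reconstruction is essentially the classical route one would expect in the source: expand $T_{\rm{POSM}}^n=(\mathfrak{A}+\mathfrak{B}+\mathfrak{C}+\mathfrak{D})^n$ into words, discard those killed by the block selection rules (which you state correctly), use the nilpotency $\mathfrak{A}^{N-1}=\mathfrak{C}^{N-1}=O$ coming from the strictly bidiagonal $A$, $C$ to cap each self-loop run at $N-2$ letters, count crossings to extract the power of ${\varrho}$, and sum a geometric series. Your treatment of the double-sweep bound is correct and in fact yields the stronger estimate $\|T_{\rm{DOSM}}^n\|\le{\varrho}^n$, which dominates the stated bound since $C_0\ge1$; the only thing to make explicit there is that the algebra norm is applied consistently to the block embedding $(I-\mathfrak{C})^{-1}\mathfrak{D}(I-\mathfrak{A})^{-1}\mathfrak{B}$ of $T_{\rm{DOSM}}$, rather than appealing to ``the operator norm''.

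The step you flag as delicate --- that the boundary bookkeeping collapses into exactly $C_0$ --- does close, and with room to spare, because you only need domination by $C_0$, not equality. Grouping the surviving length-$n$ words by crossing count $j$ and starting node, and relaxing the constraint that the segment lengths sum to $n-j$ exactly as you propose, the four families contribute at most ${\varrho}^m\alpha$ and ${\varrho}^m\gamma$ for $j=2m$, and ${\varrho}^{m+1}/\|\mathfrak{B}\|$ and ${\varrho}^{m+1}/\|\mathfrak{D}\|$ for $j=2m+1$. Summing over $m\ge m_{\min}$ gives the prefactor $\alpha+\gamma+{\varrho}/\|\mathfrak{B}\|+{\varrho}/\|\mathfrak{D}\|$ times ${\varrho}^{m_{\min}}/(1-{\varrho})$, and since $\alpha,\gamma\ge1$ imply $\alpha+\gamma\le 1+\alpha\gamma=1+{\varrho}/(\|\mathfrak{B}\|\,\|\mathfrak{D}\|)$, this prefactor is bounded by $C_0$ once you expand the product defining $C_0$ and use your identities ${\varrho}/\|\mathfrak{B}\|=\|\mathfrak{D}\|\alpha\gamma$, ${\varrho}/\|\mathfrak{D}\|=\|\mathfrak{B}\|\alpha\gamma$. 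The exponent bound also checks out: $j\ge(n-N+2)/(N-1)$ and $m\ge(j-1)/2$ give $m>\frac{n}{2N-2}-\frac{3}{2}$, so ${\varrho}^m\le{\varrho}^{\left[\frac{n}{2N-2}-\frac{3}{2}\right]}$. So your plan is sound and complete in all essentials; the remaining work is exactly the routine verification above.
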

This theorem tells us the optimized Schwarz methods converge as soon as $|b_j|$, $|d_j|$ are
sufficiently small while $|a_j|, |c_j|$ remain bounded, which corresponds to having nearly
transparent conditions on the left and right boundaries of each subdomain. The upper bounds
  suggest that a double sweep iteration $T_{\mathrm{DOSM}}$ is worth $2N-2$ parallel iterations
  $T_{\mathrm{POSM}}^{2N-2}$. But since we do not know how sharp the bounds are, we can still not
answer precisely the scalability questions such as the scaling with $N\to\infty$. For that purpose,
we opt to calculate the eigenvalues of the symbol matrices numerically. In case of $T_{\rm{DOSM}}$
with real-valued symbols, a two-sided estimate is also available \cite{GZDD25}.

Let $\rho=\rho(\xi,\eta,N,H,L,\ldots)$ be the spectral radius of the symbol iteration matrix, where
the dots represent the parameters to be introduced in the transmission operator
$\mathcal{B}^{l,r}_j$.  \emph{The goal} of this section is to find the asymptotic scalings of
$\max_{\xi}\rho$ in terms of $\eta$, $N$, $H$, $L$, \etc\ One scaling is $N\to\infty$ with
$\mathcal{B}^{l,r}=\mathcal{B}_j^{l,r}$ and $\eta$, $H$, $L$ fixed (or generally all the symbol
values $a$, $b$ fixed). Another scaling is to fix $X$ and let $N\to \infty$ with $\eta$ either fixed
or growing. In this scaling, $H=(X-2L)/N\to 0$ and $L=CH^{\nu}$ ($\nu\ge 1$). To distinguish the
effects of $N\to \infty$ and $H\to 0$, it is useful to study a third scaling with $H\to 0$,
$L=CH^{\nu}$ and all the other parameters fixed. In particular, for $N=\infty$, we can see how the
subdomain size (or block size in the matrix language) influences the convergence. Also, the effect
of small overlap can be revealed with $L\to 0$ and $N$, $X$ fixed. 

For the parallel Schwarz iteration with $a_j=a$, $b_j=b$, $c_j=c$, $d_j=d$ independent of $j$ and
$a=c$, a closed formula for $\rho_{\infty}:=\lim_{N\to\infty}\rho$ has been found rigorously
\cite{bootland2022analysis}, which we shall use later. Here, we give a formal derivation when the
domain becomes $(-\infty, \infty)\times(0,Y)$ and $\Omega_j=(X_j^l,X_j^r)\times(0,Y)$ for
$j=0,\pm1,\pm2, ...$. The interface data are collected as the bi-infinite sequence
$g:=(\ldots, g_{j}^l, g_{j-1}^r, g_{j+1}^l, g_j^r, \ldots)$ with the displayed four entries indexed
by $2j-1$, $2j$, $2j+1$ and $2j+2$. We define the splitted Fourier sequences (entries displayed at
the same indices as before)
\begin{align*}
  \phi_{\mu,1}&:=(\ldots,\mathrm{e}^{\mathrm{i}\mu (2j-1)},0,\mathrm{e}^{\mathrm{i}\mu (2j+1)}, 0,\ldots),\\
  \phi_{\mu,2}&:=(\ldots,0,\mathrm{e}^{\mathrm{i}\mu (2j)},0, \mathrm{e}^{\mathrm{i}\mu (2j+2)},\ldots),
\end{align*}
with $\mu\in [-\pi,\pi]$. Then, one can find that $\mathrm{span} \{\phi_{\mu,1}, \phi_{\mu,2}\}$ is
an invariant subspace of $T_{\rm{POSM}}$.  In fact, using the definition \R{eq:Tposm}, we can
calculate
\begin{align*}
  (T_{\rm{POSM}}\phi_{\mu,1})(2j-1)&= a\mathrm{e}^{\mathrm{i}\mu(2j-3)} = a\,\mathrm{e}^{-2\mathrm{i}\mu}
                                     \cdot \phi_{\mu,1}(2j-1),\\
  (T_{\rm{POSM}}\phi_{\mu,1})(2j)&= d\mathrm{e}^{\mathrm{i}\mu(2j-1)} = d\,\mathrm{e}^{-\mathrm{i}\mu}
                                   \cdot \phi_{\mu,2}(2j),
\end{align*}
so we have
$ T_{\rm{POSM}}\phi_{\mu,1}= a\,\mathrm{e}^{-2\mathrm{i}\mu}\cdot \phi_{\mu,1} +
d\,\mathrm{e}^{-\mathrm{i}\mu} \cdot \phi_{\mu,2}.$ Therefore, we find
\begin{equation*}
  T_{\rm{POSM}}(\phi_{\mu,1},\phi_{\mu,2})= (\phi_{\mu,1},\phi_{\mu,2})\left(
    \begin{array}{ll}
      a\,\mathrm{e}^{-2\mathrm{i}\mu} & b\,\mathrm{e}^{\mathrm{i}\mu} \\ 
      d\,\mathrm{e}^{-\mathrm{i}\mu} & c\,\mathrm{e}^{2\mathrm{i}\mu}
    \end{array}
  \right).
\end{equation*}  
When $a=c$, the eigenvalues of the last matrix are
\begin{equation*}
  \lambda_{\mu,\pm}:=a \cos(2\mu) \pm \sqrt{bd-a^2\sin^2(2\mu)},
\end{equation*}
which for $\mu\in[-\pi,\pi]$ generate the continuum part of the limiting spectra proved by
\cn{bootland2022analysis}. But there may be also isolated points to appear in the limiting
  spectra under some circumstances \cite{bootland2022analysis}, which we did not find here. In all
of our numerical observations, we found that no outlier contributed to the spectral radius.

When we graph the function $\rho(\xi)$, in contrast to the two
  subdomain Section \ref{2SubSec}, we will rescale $\xi$ as
  $\sqrt{\xi^2+\eta}/\sqrt{\eta}$ for $\eta>0$ and $\Re \xi/\omega$
  for $\eta=-\omega^2$, $\omega>0$ so that in both cases the
  evanescent modes among $\mathrm{e}^{\pm\sqrt{\xi^2+\eta}x+\I\xi y}$
  always correspond to the rescaled variables greater than
  one. Moreover, in the transmission conditions for domain truncation,
  it is essential to approximate the square-root symbol
  $\sqrt{\xi^2+\eta}$. For example, the Taylor of order zero
  approximation gives $\sqrt{\xi^2+\eta}\approx \sqrt{\eta}$. The
  reflection coefficient without overlap given by
\[
  R=\frac{\sqrt{\eta}-\sqrt{\xi^2+\eta}}{\sqrt{\eta}-\sqrt{\xi^2+\eta}}
  =\frac{1-\frac{\sqrt{\xi^2+\eta}}{\sqrt{\eta}}}{1+\frac{\sqrt{\xi^2+\eta}}{\sqrt{\eta}}}
\]
can naturally be understood as a function of the rescaled variables.

Two different elliptic partial differential equations will be considered. One is the diffusion
equation with $\mathcal{L}_x+\mathcal{L}_y=-\Delta$, $\eta>0$, also known as the screened Laplace
equation, the modified Helmholtz equation, or the Helmholtz equation with the good sign. The
other is the time-harmonic wave equation with $\mathcal{L}_x+\mathcal{L}_y=-\Delta$ in $\Omega$,
$\eta=-\omega^2$, $\omega>0$, also known as the Helmholtz equation. For free space waves, the bottom
and top boundary $[0,X]\times\{0,Y\}$ is from domain trunction of $[0,X]\times(-\infty,\infty)$. In
case PMLs are used for this along the bottom and top boundaries, we consider the problem
\R{eqprb} on the PML-augmented domain $[0,X]\times[-D,Y+D]$ with
$\mathcal{L}_y=-\tilde{s}(y)\partial_y(\tilde{s}(y)\partial_y)$ in the PML,
$\mathcal{B}^{b,t}=\mathcal{C}=\mathcal{I}$ or $\mp \partial_y$ on $[0,X]\times\{-D, Y+D\}$, and the complex
stretching function
\begin{equation}
  \tilde{s}(y)=\left\{
    \begin{array}{ll}
      1 & \text{ on }[0,Y],\\
      (1-\I\tilde{\sigma}(-y))^{-1} & \text{ on }[-D,0],\\
      (1-\I\tilde{\sigma}(y-Y))^{-1} & \text{ on }[Y,Y+D],
    \end{array}
  \right.
  \label{eqpml}
\end{equation}
$\tilde{\sigma}(0)=0$,
$\int_0^D\tilde{\sigma}(y)~\mathrm{d}y=\frac{1}{2}D\gamma$ (a specific
form of $\tilde{\sigma}$ has no influence on the convergence
analysis), and $\gamma>0$ is the PML strength. In this case, we will
consider the generalised Fourier frequency from the Sturm-Liouville
problem \R{eqsturm} defined on $[-D, Y+D]$ and we have
$\xi=j\pi/(Y+(2-\I\gamma)D)$, $j$ a nonnegative integer for
$\mathcal{C}=\mp\partial_y$ or positive integer for
$\mathcal{C}=\mathcal{I}$.  The trace operators in the transmission
conditions \R{eqtc} are chosen from Table~\ref{tabB}, where the PML
operators need more explanation. For the diffusion problem, we will
use the following Dirichlet-to-Neumann operator:
\begin{equation}
  \begin{aligned}
    \mathcal{S}: g\mapsto \partial_xv(0) \text{ with $v$ satisfying}& \\
    (-\hat{s}(x)\partial_x({\hat{s}(x)\partial_x}\cdot)-\partial_{yy}+\eta) v&=0 & &\text{ on }[-D, 0]\times[0,Y], \\
    \mathcal{B}^{b,t}v&=0 & &\text{ on }[-D,0]\times\{0,Y\},\\
    \mathcal{C}v&=0 & &\text{ on }\{-D\}\times[0,Y],\\
    v&=g & &\text{ on }\{0\}\times[0,Y],
  \end{aligned}
  \label{eqsd}
\end{equation}
where $\hat{s}(x)=(1+\tilde{\sigma}(-x))^{-1}$ on $(-D,0)$, $\tilde{\sigma}(0)=0$,
$\int_0^D\tilde{\sigma}(x)\D{x}=\frac{1}{2}D\gamma$, $\gamma>0$, and $\mathcal{C}$ is either
Dirichlet or Neumann trace operator. Let $\hat{\mathcal{S}}$ be the symbol of the above
$\mathcal{S}$ and $\mathbf{n}$ be the outer normal unit vector. We have
\begin{equation}
  \hat{\mathcal{S}} = \left\{
    \begin{array}{ll}
      \displaystyle
      \sqrt{\xi^2+\eta}{{1+E}\over{1-E}},\quad & \text{if }\mathcal{C}=\mathcal{I},\\[10pt]
      \displaystyle
      \sqrt{\xi^2+\eta}{{1-E}\over{1+E}},\quad & \text{if }\mathcal{C}=\partial_{\mathbf{n}},
    \end{array}
  \right.
  \label{eqhats}
\end{equation}
with $E=\exp(-(2+\gamma)D\sqrt{\xi^2+\eta})$.  For the free space wave problem, we will use the
following Dirichlet-to-Neumann operator:
\begin{equation}
  \thickmuskip=0.6mu
  \medmuskip=0.2mu
  \thinmuskip=0.1mu
  \scriptspace=0pt
  \nulldelimiterspace=-0.1pt
  \begin{aligned}
    \mathcal{S}: g\mapsto \partial_xv(0) \text{ with $v$ satisfying}& \\
    \tilde{s}(x)\partial_x({\tilde{s}(x)\partial_x}v)+\tilde{s}(y)\partial_y(\tilde{s}(y)\partial_{y}v)&=\eta v& &\text{ on }[-D, 0]\times[-D,Y+D], \\
    \mathcal{C}v&=0& &\text{ on }[-D,0]\times\{-D,Y+D\},\\
    \mathcal{C}v&=0& &\text{ on }\{-D\}\times[-D,Y+D],\\
    v&=g& &\text{ on }\{0\}\times[-D,Y+D],
  \end{aligned}
  \label{eqsw}
\end{equation}
where the complex stretching function $\tilde{s}$ has been defined in \R{eqpml}. The symbol of the
above $\mathcal{S}$ is \R{eqhats} but with $\displaystyle E=\exp(-(2-\I\gamma)D\sqrt{\xi^2+\eta})$.

\begin{table}
  \caption{Trace operators $\mathcal{B}_j^{l,r}$ in the transmission conditions \R{eqtc}: column 2
    for the diffusion problem, column 3 for the wave problem}
  \begin{tabular}{c|c|c}
    \hline
    \hline
    Problem & $(-\Delta+\eta)u=f$, $\eta>0$ & $(-\Delta-\omega^2)u=f$, $\omega>0$\\
    \hline
    Dirichlet & $\mathcal{I}$ & not used\\
    \hline
    Taylor of order 0 & $\mp\partial_{x}+\sqrt{\eta}$ & $\mp\partial_{x}+\I\omega$ \\
    \hline
    Continuous PML & $\mp\partial_{x}+\mathcal{S}$ with $\mathcal{S}$ in \R{eqsd}
                                            & $\mp\partial_{x}+\mathcal{S}$ with $\mathcal{S}$ in \R{eqsw} \\
    \hline
  \end{tabular}
  \label{tabB}
\end{table}

\begin{remark}
  The signs in front of the imaginary unit $\I$, \eg, $-$ in \R{eqpml} and $+$ in Table~\ref{tabB},
  reflect that we are using the time-harmonic convention $\mathrm{e}^{\I\omega t}$, and the signs
  would be opposite for the other convention $\mathrm{e}^{-\I\omega t}$ (more popular). Our choice
  is consistent with $\sqrt{\xi^2+\eta}$ using the branch cut $(-\infty, 0)$.
\end{remark}

In the following subsections, we shall explore the various specifications of the Schwarz methods for
the diffusion and free space wave problems in detail based on the numerical calculations of the
eigenvalues of the symbol iteration matrices. For the anxious readers, we first list the main
observations in Table~\ref{tabd} for diffusion and Table~\ref{tabw} for free space waves. More
information, \eg, the dependence on the other parameters, will be revealed later.

\begin{table}
  \caption{Scalings of $\displaystyle\max_{\xi\in [0,\infty)}\rho$ of the Schwarz methods for the
    \emph{diffusion} problem. $N$ number of subdomains; $L$ overlap width; $X$ domain width; $H$
    subdomain width}
  \begin{tabular}{c|c|c}
    \hline
    \hline
    Parallel Schwarz & $N\to\infty$, $H$, $L$ fixed & $N\to\infty$, $X$, $\frac{L}{H}$ fixed \\
    \hline
    Dirichlet & $O(1)\in (0,1)$ & $1-O(N^{-2})$\\
    \hline
    Taylor of order zero & $O(1)\in (0,1)$ & $1-O(N^{-1})$\\
    \hline
    PML fixed & $O(1)\in (0,1)$ & $1-O(N^{-1})$\\
    \hline
  \end{tabular}
  \begin{tabular}{c|c|c}  
    \hline
    \hline
    Double sweep Schwarz &  $N\to\infty$, $H$, $L$ fixed & $N\to\infty$, $X$, $\frac{L}{H}$ fixed \\
    \hline
    Dirichlet & $O(1)\in (0,1)$ & $1-O(N^{-2})$\\
    \hline
    Taylor of order zero & $O(1)\in (0,1)$ & $1-O(N^{-1})$\\
    \hline
    PML fixed & $O(1)\in (0,1)$ & $O(1)\in(0,1)$\\
    \hline
  \end{tabular}
  \label{tabd}  
\end{table}

\begin{table}
  \caption{Scalings of $\max_{\xi}\rho$ of the Schwarz methods for the free space \emph{wave}
    problem. $N$ number of subdomains; $L$ overlap width; $X$ domain width; $H$ subdomain width;
    $\omega$ wavenumber; $D$ PML width}
  \begin{tabular}{c|c|c|c|c}
    \hline
    \hline
    $\begin{aligned}\text{Parallel}\\\text{Schwarz}\end{aligned}$ & $\begin{aligned}&N\to\infty, \omega, \\&H, L\text{ fixed}\end{aligned}$ & $\begin{aligned}&N\to\infty, \omega,\\&X, \Frac{L}{H}\text{ fixed}\end{aligned}$ & $\begin{aligned}&\omega\to\infty, N,\\&X, L\omega\text{ fixed}\end{aligned}$ & $\begin{aligned}&N\to\infty, \Frac{\omega}{N},\\&X, L\omega^{\frac{3}{2}}\text{ fixed}\end{aligned}$\\
    \hline
    Taylor & $1-O(N^{-1})^*$ & $1-O(N^{-\frac{5}{3}})$ & $1-O(\omega^{-\frac{9}{20}})$ & $1-O(N^{-2})$\\
    \hline
    $\begin{aligned}\text{PML}\\L=0\end{aligned}$ & $\begin{aligned}1-O(N^{-1})\\\text{with }D\text{ fixed}\end{aligned}$ & $\begin{aligned}1-O(N^{-1})\\\text{with }D\text{ fixed}\end{aligned}$ & $\begin{aligned}&O(1)\\\text{with }&D\text{ fixed}\end{aligned}$ & $\begin{aligned}1-O(N^{-1})\\\text{with }D\text{ fixed}\end{aligned}$\\
    \hline
  \end{tabular}
  {\raggedright \footnotesize *~We observed also one exception for which
    $\max_{\xi}\rho=1-O(N^{-3/2})$; see Figure~\ref{figfspt0n}. \par}
  \begin{tabular}{c|c|c|c|c}
    \hline
    \hline
    $\begin{aligned}&\text{Double}\\&\text{sweep}\\&\text{Schwarz}\end{aligned}$ & $\begin{aligned}&N\to\infty, \omega, \\&H, L\text{ fixed}\end{aligned}$ & $\begin{aligned}&N\to\infty, \omega,\\&X, \Frac{L}{H}\text{ fixed}\end{aligned}$ & $\begin{aligned}&\omega\to\infty, N,\\&X, L\omega\text{ fixed}\end{aligned}$ & $\begin{aligned}&N\to\infty, \Frac{\omega}{N},\\&X, L\omega^{\frac{3}{2}}\text{ fixed}\end{aligned}$\\
    \hline
    Taylor & $O(1)$ & diverges & $1-O(\omega^{-\frac{9}{20}})$ & diverges \\
    \hline
    $\begin{aligned}\text{PML}\\L=0\end{aligned}$ & $\begin{aligned}&\to0\text{ with }D\text{ }\\&=O(\log N)\end{aligned}$ & $\begin{aligned}&\to0\text{ with }D\text{ }\\&=O(\log N)\end{aligned}$ & $\begin{aligned}&O(1)\\\text{with }&D\text{ fixed}\end{aligned}$ & $\begin{aligned}\to0\text{ with }D\\=O(\log N)\end{aligned}$\\
    \hline
  \end{tabular}
  \label{tabw}
\end{table}


\subsection{Parallel Schwarz methods for the diffusion problem}


In this case, the original problem \R{eqprb} has $\mathcal{L}_x+\mathcal{L}_y=-\Delta$, $\eta>
0$. The Neumann boundary condition $\mathcal{B}^{b,t}=\mp\partial_y$ is imposed on top and bottom of
$\Omega$ so that $\xi=0, \pi, 2\pi, 3\pi, ...$ but for simplicity the continuous range
$\xi\in [0, \infty)$ is considered for the diffusion problem.  The boundary condition on left and
right of $\Omega$ is Neumann, Dirichlet or the same as the transmission condition:
$\mathcal{B}^{l,r}=\mp\partial_x$, $\mathcal{I}$ or $\mathcal{B}_j^{l,r}$.


\subsubsection{Parallel Schwarz methods with Dirichlet transmission for the diffusion problem}

We begin with the parallel Schwarz method with classical Dirichlet transmission
$\mathcal{B}_j^{l,r}=\mathcal{I}$ for which the overlap width $L>0$ is necessary for
convergence. For a general sequential decomposition, a variational interpretation of the convergence
was given by \cn{Lions:1988:SAM}. An estimate of the convergence rate was derived recently
\cite{ciaramella3}. A general theory for the parallel Schwarz method is far from being as complete
as the theory for the additive Schwarz method \cite{TWbook}. For example, a convergence theory of
the restricted additive Schwarz (RAS) method \cite{cai1999restricted} has been an open problem for
two decades, and RAS is equivalent to the parallel Schwarz method \cite{Efstathiou,St-Cyr07}.

\begin{paragraph}{Convergence with increasing number of fixed size subdomains}
  
  With the number of subdomains $N\to\infty$, the subdomain width $H$ fixed, and the overlap width
  $L$ fixed, the convergence factor $\rho=\rho(\xi)$ is illustrated in the top half of
  Figure~\ref{figpdn}. The plots of $\rho$ on the left show that $\rho_{\infty}$ from the limiting
  spectrum formula is a very accurate approximation of $\rho$ already starting from $N=10$. It is
  also clear that the Schwarz method is a smoother that performs better for larger cross-sectional
  frequency $\xi$.  The plots of the $\log$-scaled $1-\rho$ on the right display a constant slope of
  the initial parts of the curves. The slope is estimated to be $2$ for $\rho_{\infty}$. The
  influence of the original boundary condition is also manisfested in those plots: the asymptotic
  $\rho\to \rho_{\infty}$ as $N\to\infty$ comes later for the Dirichlet problem (more precisely,
  mixed with the Neumann conditions on top and bottom, similarly hereinafter) than for the Neumann
  problem.  In the bottom half of Figure~\ref{figpdn}, the scaling of $1-\max_{\xi}\rho$ (attained
  at $\xi=0$ as seen before) is shown. The conclusion is that $\max_{\xi}\rho=O(1)<1$ independent
  of $N\to\infty$.  But a preasymptotic deterioration with growing $N$ is visible for
  $\mathcal{B}^{l,r}=\mathcal{I}$ and small $\eta$, $H$.

\begin{figure}
  \centering
  \includegraphics[width=.56\textwidth,trim=10 10 0 6,clip]{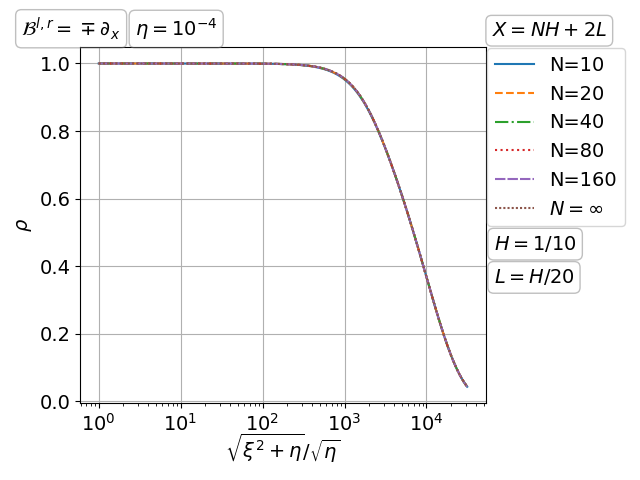}%
  \includegraphics[width=.44\textwidth,height=13em,trim=0 10 0 6,clip]{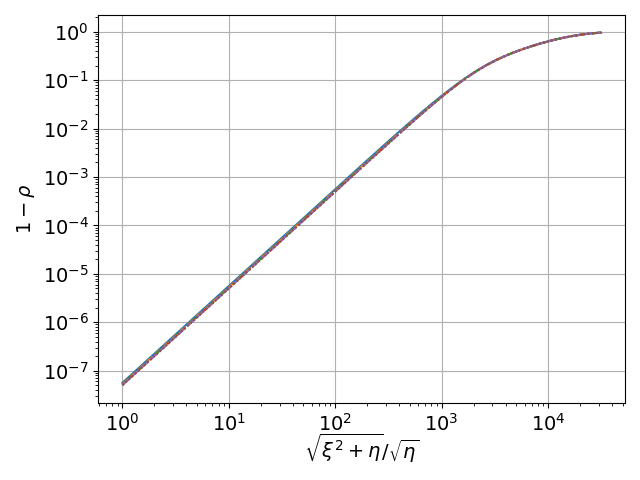}\\
  \includegraphics[width=.56\textwidth,trim=10 10 0 6,clip]{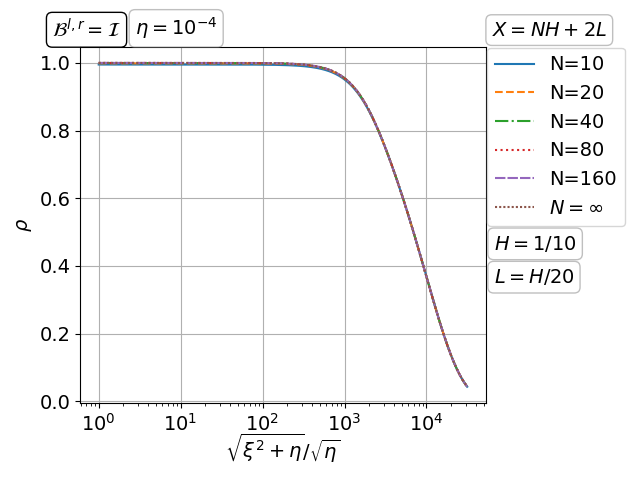}%
  \includegraphics[width=.44\textwidth,height=13em,trim=0 10 0 6,clip]{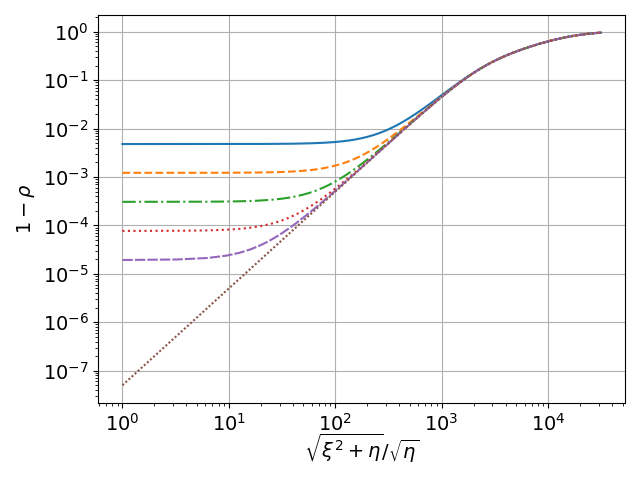}
  \includegraphics[width=.5\textwidth,trim=5 6 0 2,clip]{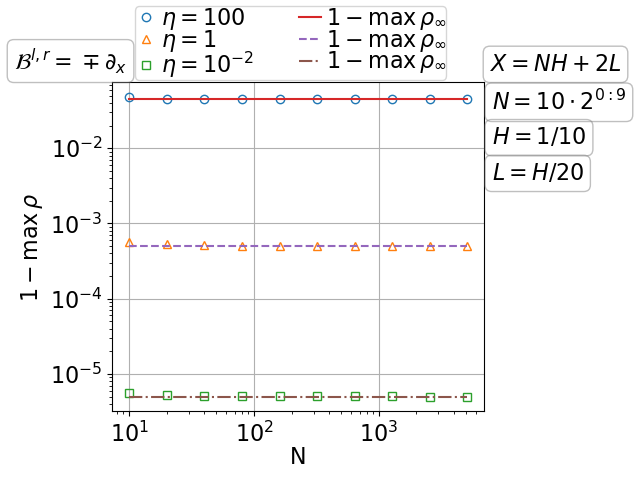}%
  \includegraphics[width=.5\textwidth,trim=5 6 0 2,clip]{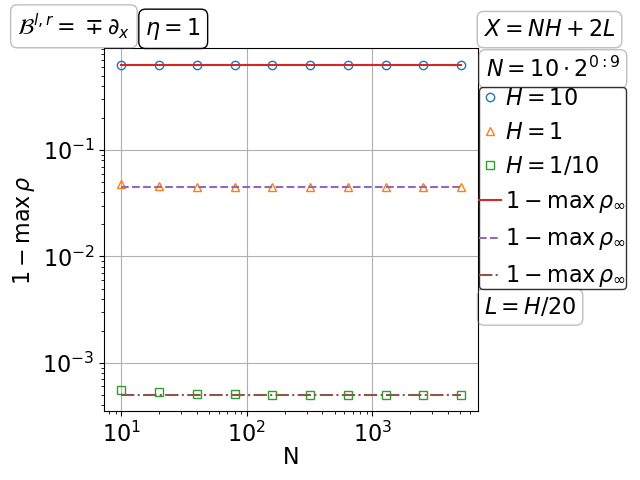}\\
  \includegraphics[width=.5\textwidth,trim=5 6 0 2,clip]{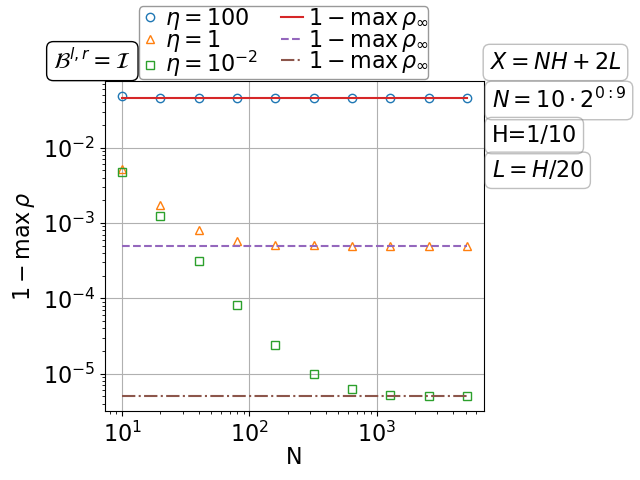}%
  \includegraphics[width=.5\textwidth,trim=5 6 0 2,clip]{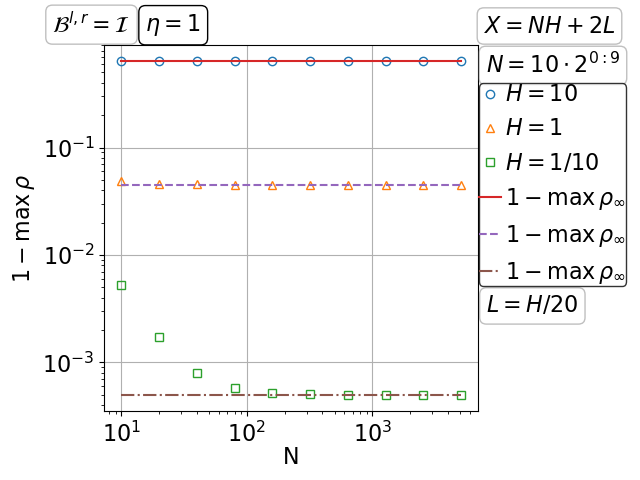}
  \caption{Convergence of the parallel Schwarz method with Dirichlet transmission for
    diffusion with increasing number of {fixed size subdomains}.}
  \label{figpdn}
\end{figure}

\end{paragraph}

\begin{paragraph}{Convergence on a fixed domain with increasing number of subdomains}
  
  With the number of subdomains $N\to\infty$ and the domain width $X$ fixed, the
  results are shown in Figure~\ref{figpd1} in a format similar to
  Figure~\ref{figpdn}. In this scaling, the symbol values $a$, $b$ vary with $N$
  and no limiting spectral radius is known in closed form, since the matrix entries change with the scaling. But the plots of the
  convergence factor $\rho=\rho(\xi)$ in the left column of the top half of
  Figure~\ref{figpd1} indicate the limiting curve is the constant one, and the
  plots in the right column display a constant slope in the $\log$ scale of
  $1-\rho$ for small $\sqrt{\xi^2+\eta}$. The slope is estimated to be $2$ for the
  Neumann problem and $0$ for the Dirichlet problem. The bottom half of
  Figure~\ref{figpd1} shows the scaling $\max_{\xi}\rho = 1-O(N^{-2})$
  for various values of the coefficient $\eta$ and the overlap width $L$.  The hidden
  constant factor in $O(N^{-2})$ can be seen depending linearly on
  $\eta$ and $\frac{L}{H}$ for the Neumann problem, and linearly on
  $\frac{L}{H}$ but is independent of small $\eta$ for the Dirichlet problem.

\begin{figure}
  \centering
  \includegraphics[width=.56\textwidth,trim=10 10 0 6,clip]{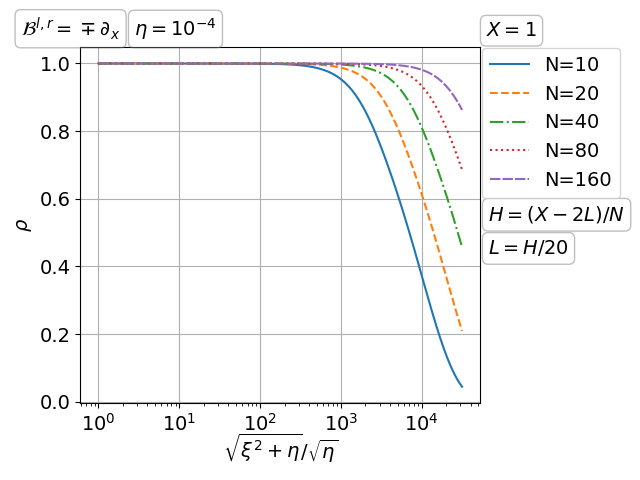}%
  \includegraphics[width=.44\textwidth,height=13em,trim=0 10 0 6,clip]{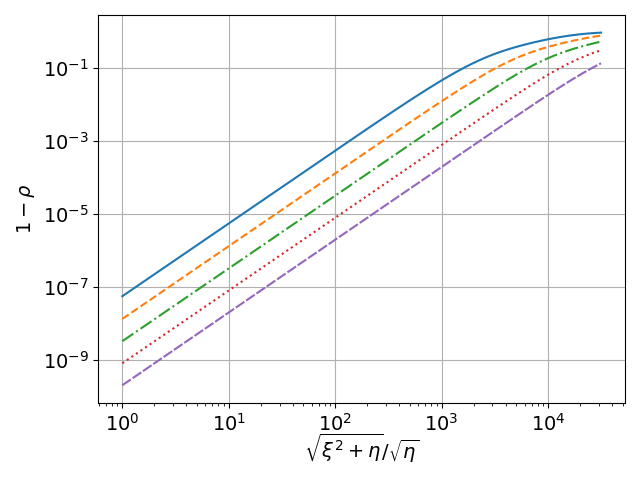}\\
  \includegraphics[width=.56\textwidth,trim=10 10 0 6,clip]{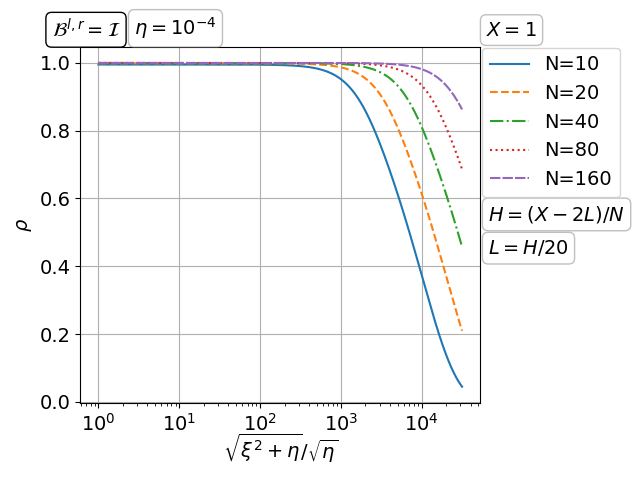}%
  \includegraphics[width=.44\textwidth,height=13em,trim=0 10 0 6,clip]{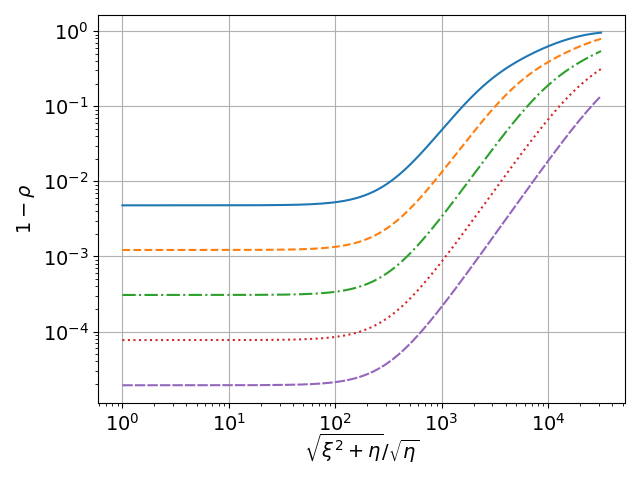}\\
  \includegraphics[width=.5\textwidth,trim=5 6 0 2,clip]{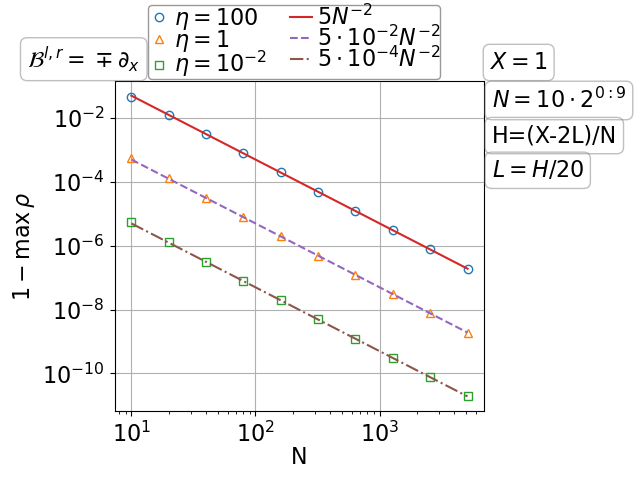}%
  \includegraphics[width=.5\textwidth,trim=5 6 0 2,clip]{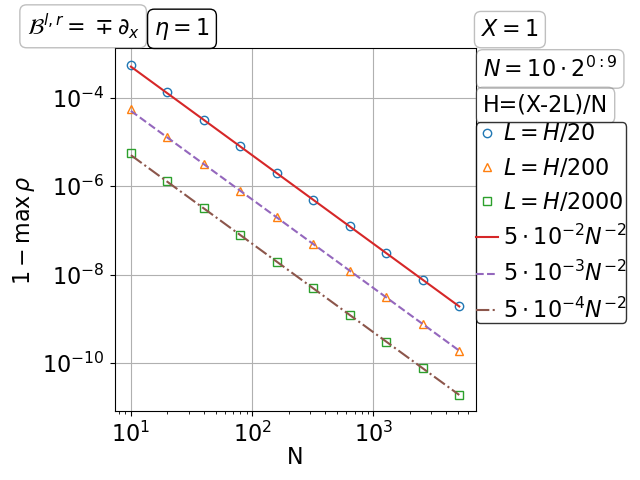}\\
  \includegraphics[width=.5\textwidth,trim=5 6 0 2,clip]{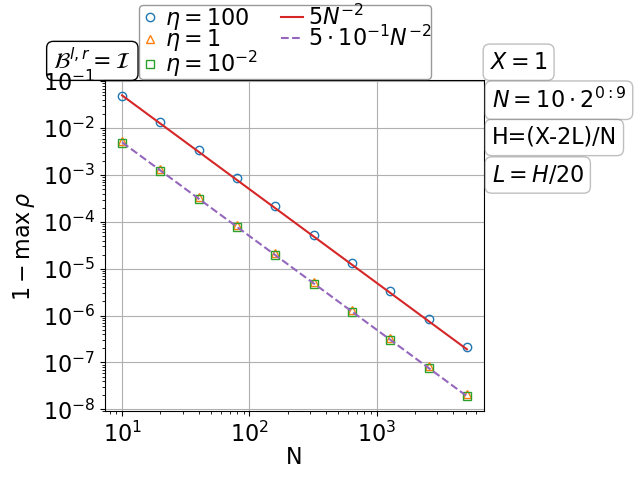}%
  \includegraphics[width=.5\textwidth,trim=5 6 0 2,clip]{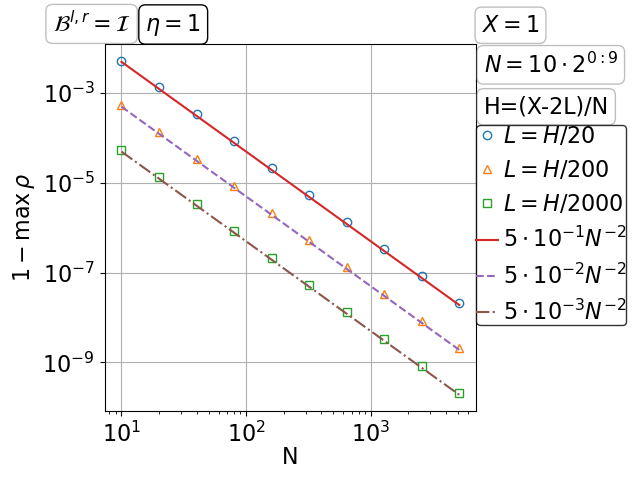}
  \caption{Convergence of the parallel Schwarz method with Dirichlet transmission for
    diffusion on a {fixed domain} with {increasing number of
      subdomains}.}
  \label{figpd1}
\end{figure}

\end{paragraph}

\begin{paragraph}{Convergence with a fixed number of subdomains of shrinking
    width}

  With the number of subdomains $N$ fixed and the subdomain width $H\to 0$, we try to understand the
  dependence of the convergence factor $\rho$ on $H$ separately from the dependence on $N$. As we
  can see from the top half of Figure~\ref{figpdh}, smaller $H$ leads to slower convergence for all
  the Fourier frequencies $\xi$. Note that each graph of $\rho(\xi)$ for $N=10$ is accompanied with
  a graph of the corresponding $\rho_{\infty}$ for infinitely many subdomains $N=\infty$. So by
  ``interpolation'' one can imagine roughly the graphs for other values of $N$ based on the
  knowledge of Figure~\ref{figpdn}. In the right column of Figure~\ref{figpdh}, we see that, for the
  Neumann problem, the process of the subdomain width $H\to 0$ alone incurs the deterioration of
  convergence, while for the Dirichlet problem the process of $H\to 0$ needs to be combined with
  number of subdomains $N\to\infty$ to incur a deterioration. The bottom half of Figure~\ref{figpdh}
  presents the scaling of $1-\max_{\xi}\rho$ with $H\to 0$. On the left for the overlap width
  $L=O(H)$, it shows that $\max_{\xi} \rho = 1-O(H^2)$ for the Neumann problem where $O(H^2)$
  depends linearly on the coefficient $\eta$, and $\max_{\xi}\rho=O(1)$ for the Dirichlet problem
  where $O(1)$ is independent of $\eta$. On the right, different values of $\nu$ for the overlap
  width $L=O({H^\nu})$ are used, which suggests $\max_{\xi}\rho = 1-O(HL)$ for the Neumann problem
  and $\max_{\xi}\rho = 1-O(H^{-1}L)$ for the Dirichlet problem.

\begin{figure}
  \centering
  \includegraphics[width=.56\textwidth,trim=10 10 0 6,clip]{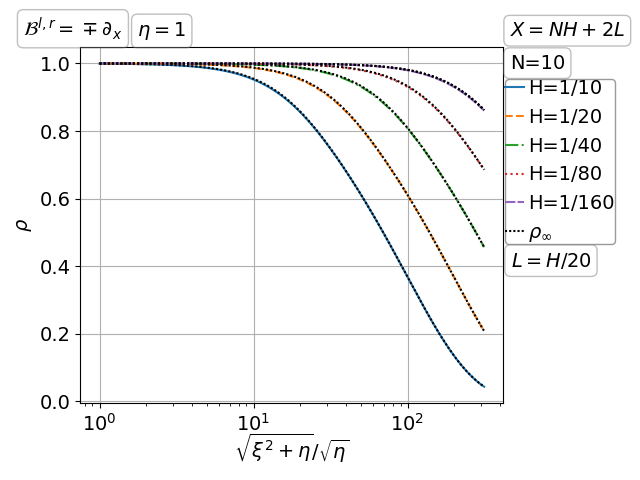}%
  \includegraphics[width=.44\textwidth,height=13em,trim=0 10 0 6,clip]{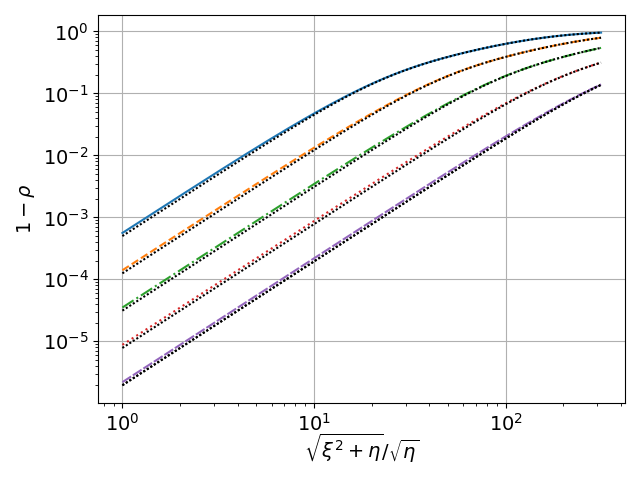}\\
  \includegraphics[width=.56\textwidth,trim=10 10 0 6,clip]{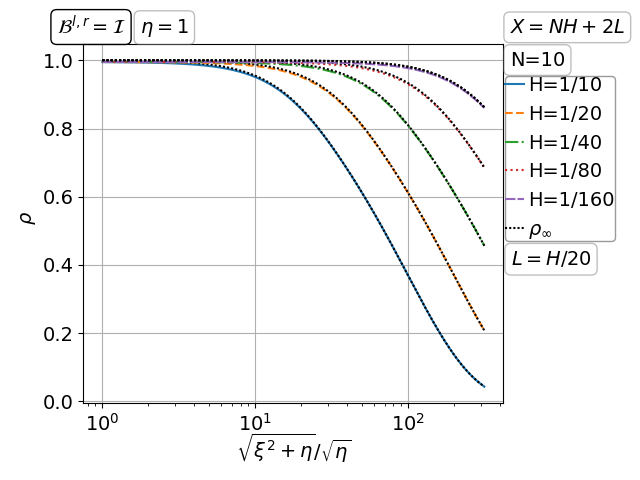}%
  \includegraphics[width=.44\textwidth,height=13em,trim=0 10 0 6,clip]{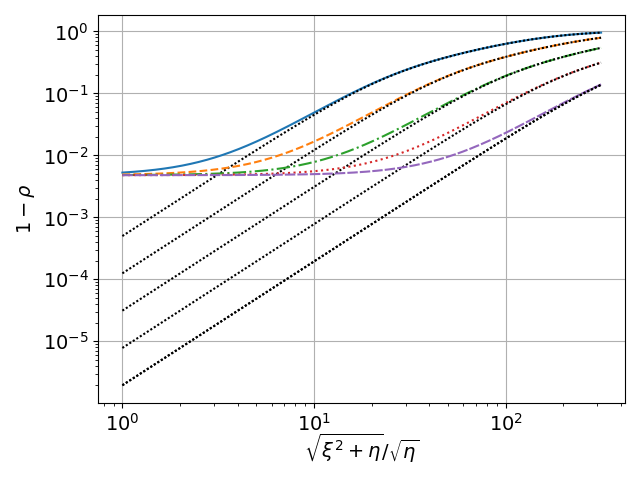}\\
  \includegraphics[width=.5\textwidth,trim=5 6 0 2,clip]{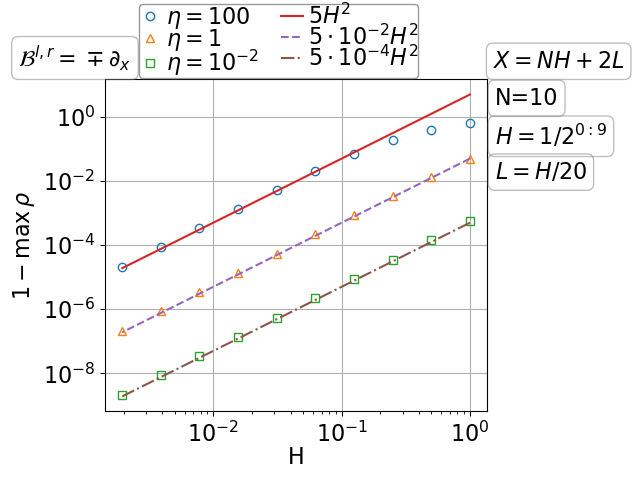}%
  \includegraphics[width=.5\textwidth,trim=5 6 0 2,clip]{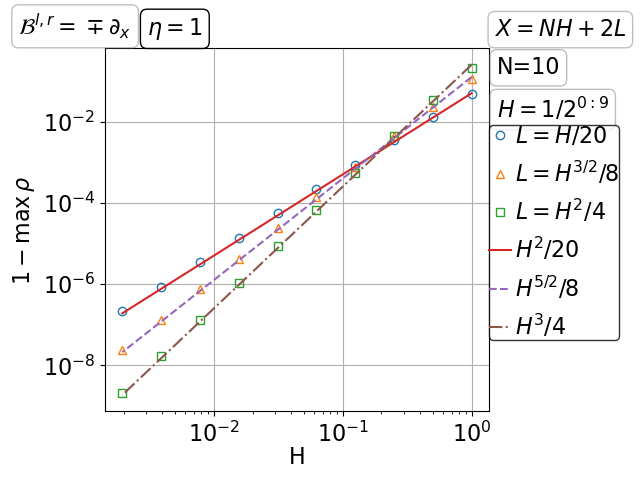}\\
  \includegraphics[width=.5\textwidth,trim=5 6 0 2,clip]{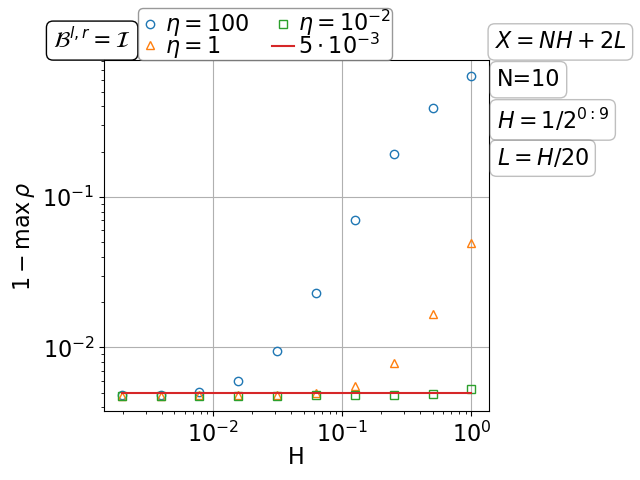}%
  \includegraphics[width=.5\textwidth,trim=5 6 0 2,clip]{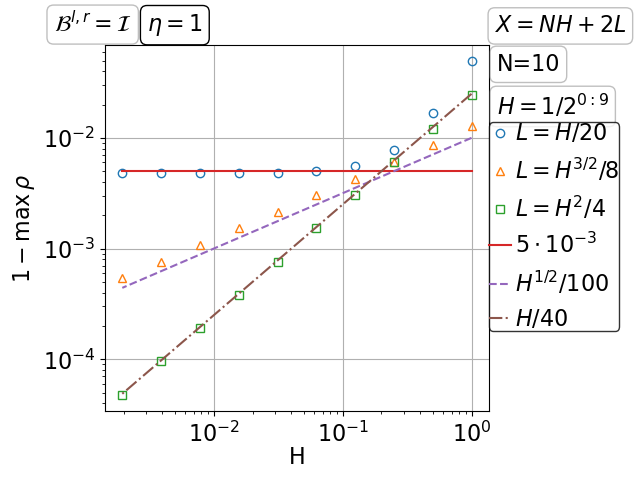}
  \caption{Convergence of the parallel Schwarz method with Dirichlet transmission for
    diffusion with a {fixed number of subdomains} of {shrinking width}.}
  \label{figpdh}
\end{figure}

\end{paragraph}

\begin{paragraph}{Convergence on a fixed domain with a fixed number of subdomains of shrinking
    overlap}

  With both number of subdomains $N$ and the domain width $X$ fixed and the overlap width $L\to 0$,
  we find the convergence factor $\rho(\xi)\to 1$; see the top half of Figure~\ref{figpdl}.  In
  particular, the right column shows a faster convergence of the Schwarz method for the Dirichlet
  problem than for the Neumann problem. The speed of $\max_{\xi}\rho =\rho(0)\to 1$ appears linear
  in $L\to 0$; see the bottom half of Figure~\ref{figpdl}.  The hidden constant factor in
  $O(L)=1-\rho$ is $O(\eta H)$ for the Neumann problem and robust in the
  coefficient $\eta$ and subdomain width $H$ for the Dirichlet problem.

\begin{figure}
  \centering
  \includegraphics[width=.56\textwidth,trim=10 10 0 6,clip]{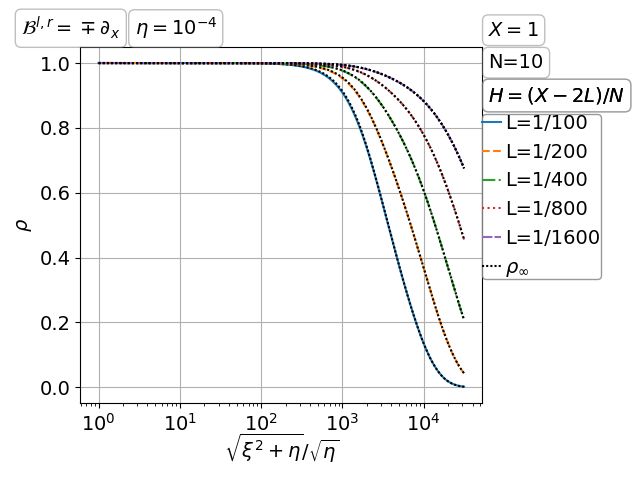}%
  \includegraphics[width=.44\textwidth,height=13em,trim=0 10 0 6,clip]{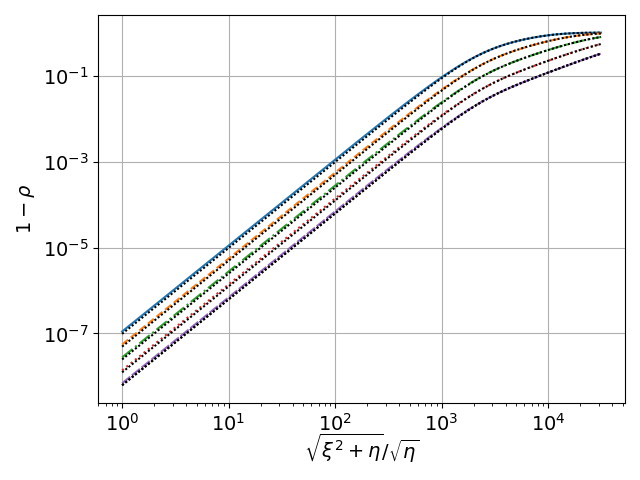}\\
  \includegraphics[width=.56\textwidth,trim=10 10 0 6,clip]{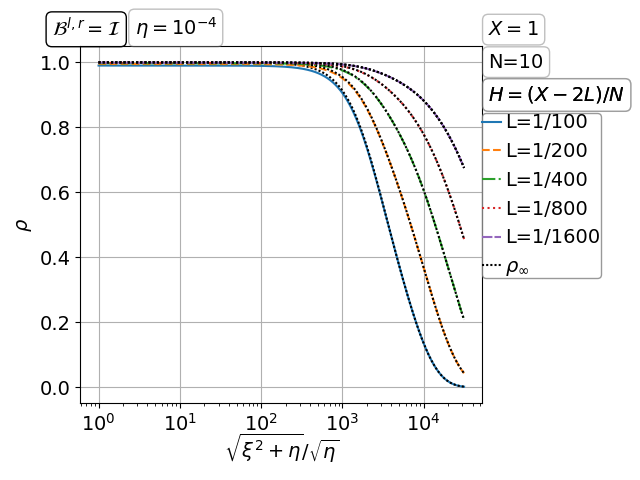}%
  \includegraphics[width=.44\textwidth,height=13em,trim=0 10 0 6,clip]{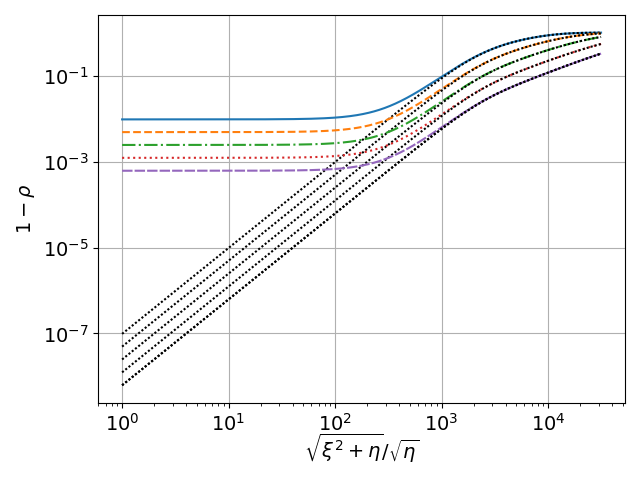}\\
  \includegraphics[width=.5\textwidth,trim=5 6 0 2,clip]{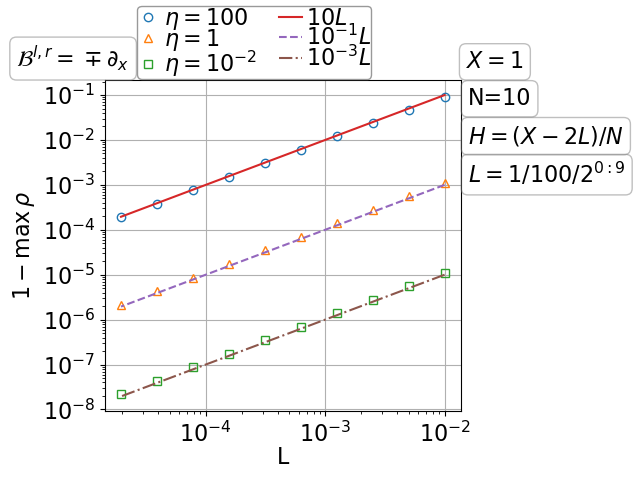}%
  \includegraphics[width=.5\textwidth,trim=5 6 0 2,clip]{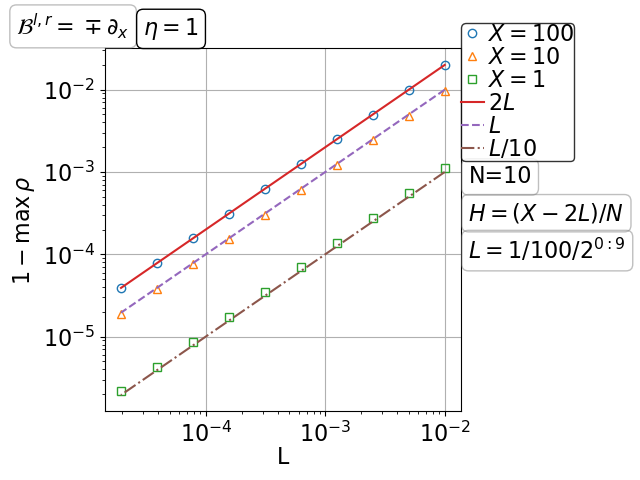}
  \\
  \includegraphics[width=.5\textwidth,trim=5 6 0 2,clip]{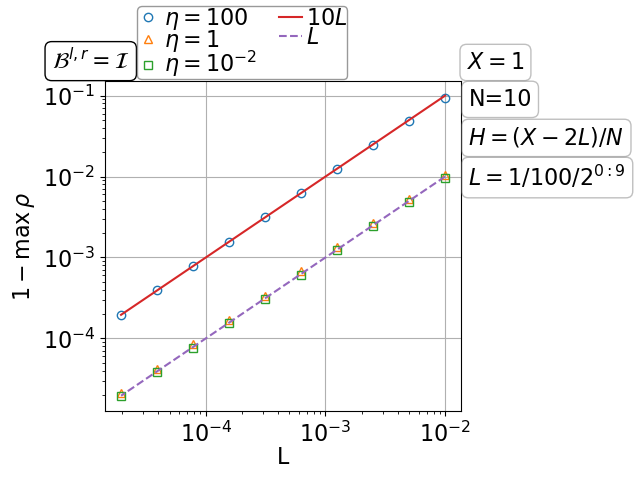}%
  \includegraphics[width=.5\textwidth,trim=5 6 0 2,clip]{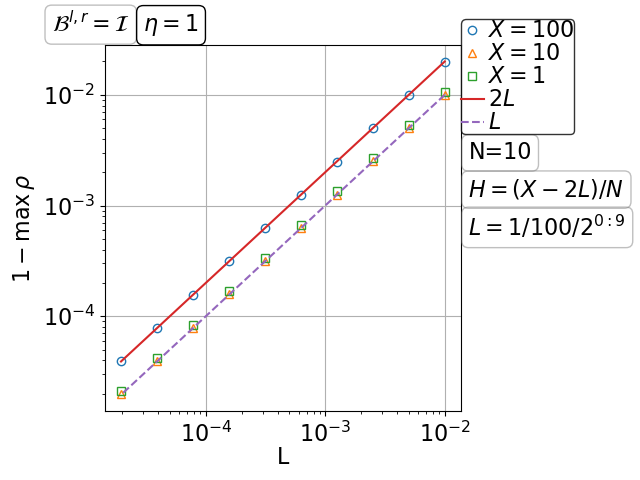}
  \caption{Convergence of the parallel Schwarz method with Dirichlet transmission for diffusion on
    a {fixed domain} with a {fixed number of subdomains} of {shrinking overlap}.}
  \label{figpdl}
\end{figure}

\end{paragraph}


\subsubsection{Parallel Schwarz method with Taylor of order zero transmission for the diffusion
  problem}

By using the Taylor of order zero transmission condition, see Table~\ref{tabB}, the subdomain
problem away from the boundary $\{0, X\}\times [0,Y]$ is a domain truncation of the problem on the
infinite pipe $(-\infty, \infty)\times [0,Y]$. It is interesting to check how the original boundary
condtion on $\{0, X\}\times [0,Y]$ influences the convergence. At least, we expect the Schwarz
method to work when the original boundary operator is also Taylor of order zero:
$\mathcal{B}^{l,r}=\mathcal{B}^{l,r}_j$, because then the original problem is a domain trunction of
the infinite pipe problem. In the following paragraphs, we will study the convergence of parallel
Schwarz for the Dirichlet/Neumann/Taylor $\mathcal{B}^{l,r}$ separately. The literature on a general
theory of the optimized Schwarz method with Robin transmission conditions is rather
sparse. \cn{lions1990schwarz} gave the first convergence proof in the non-overlapping case, without
an estimate of the convergence rate, see also \cn{deng1997}. It seems possible only at the discrete
level to have a convergence rate of the non-overlapping optimized Schwarz method. \cn{qin2006} got
the first estimate of the convergence {factor} $1-O(h^{1/2}H^{-1/2})$ with an {optimized} choice of
the Robin parameter; see also \cn{qin2008}, \cn{xu2010}, \cn{lui2009}, \cn{Loisel},
\cn{liu2014robin}, \cn{GH15}, \cn{GH18}. In the overlapping case, the literature becomes even
sparser, and there is only the work of \cn{loisel2010} to our knowledge.

\begin{paragraph}{Convergence with increasing number of fixed size subdomains}

  With the number of subdomains $N\to\infty$ and the subdomain width $H$, the overlap width $L$ fixed,
  two regimes can be observed in the top halves of Figure~\ref{figpt0nn}, \ref{figpt0nd},
  \ref{figpt0nt0}. In one regime, see the first row of each figure, the maximum point of the
  convergence factor $\rho(\xi)$ tends to $\xi=0$ as the number of subdomains $N\to\infty$.  The other
  regime appears when the overlap width $L$ is sufficiently small, see the second row of each
  figure, in which the maximum point of $\rho(\xi)$ is almost fixed at the critical point of
  $\rho_{\infty}(\xi)=\lim_{N\to\infty}\rho(\xi)$. In both regimes, $\max_{\xi}\rho=O(1)<1$ as
  $N\to \infty$.

  \begin{figure}
    \centering%
    \includegraphics[width=.56\textwidth,trim=10 10 0 6,clip]{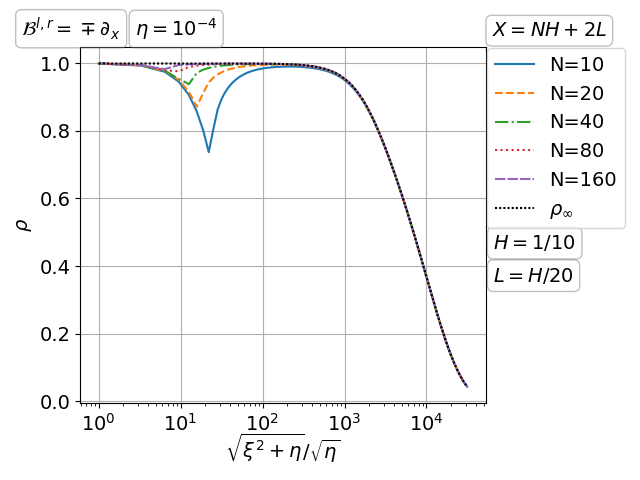}%
    \includegraphics[width=.44\textwidth,height=13em,trim=0 10 0 6,clip]{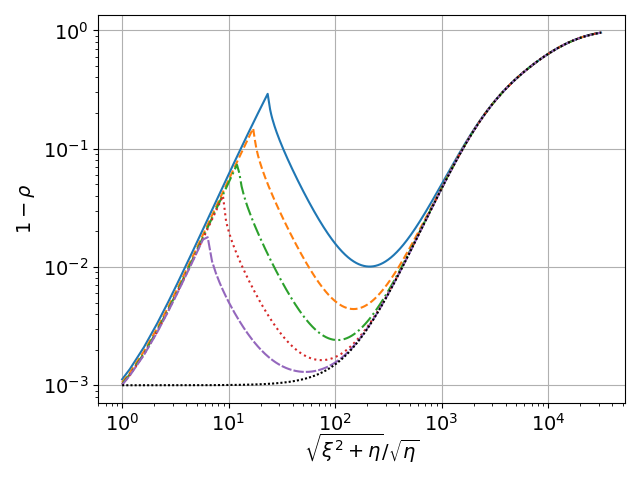}\\
    \includegraphics[width=.56\textwidth,trim=10 10 0 6,clip]{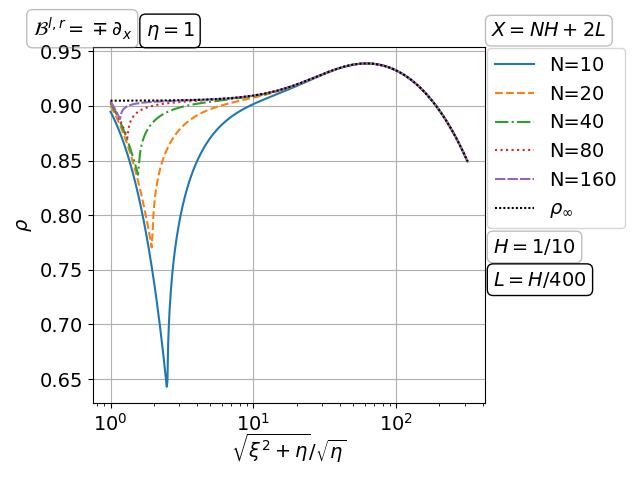}%
    \includegraphics[width=.44\textwidth,height=13em,trim=0 10 0 6,clip]{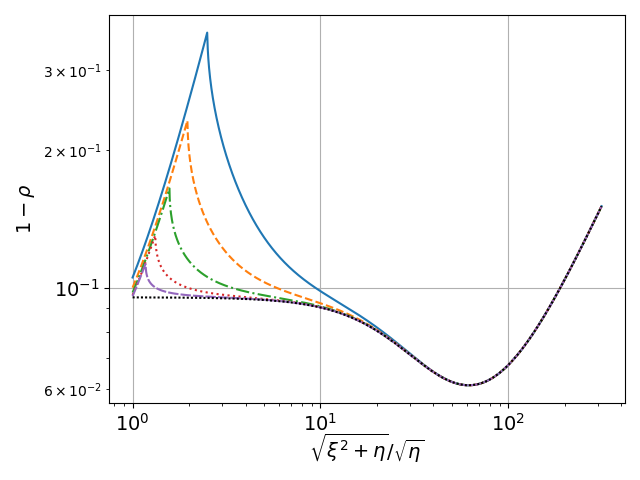}
    \includegraphics[width=.5\textwidth,trim=5 6 0 2,clip]{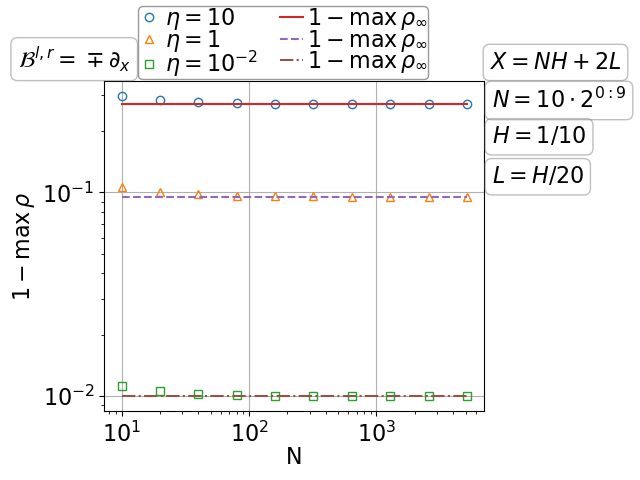}%
    \includegraphics[width=.5\textwidth,trim=5 6 0 2,clip]{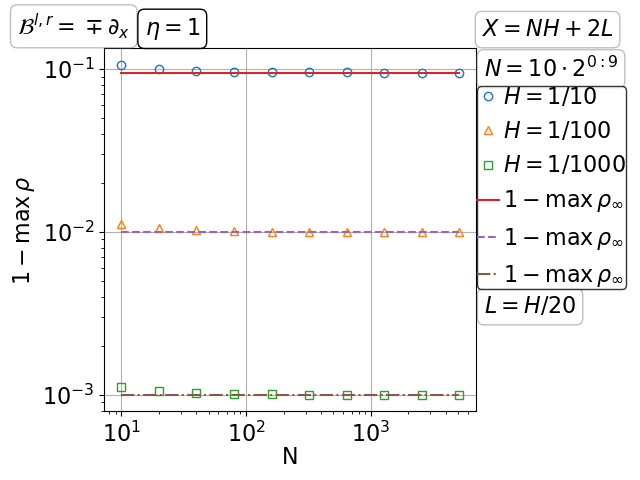}\\
    \includegraphics[width=.5\textwidth,trim=5 6 0 2,clip]{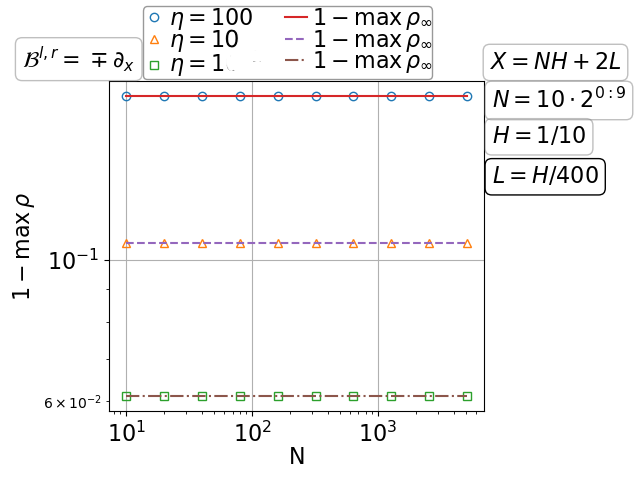}%
    \includegraphics[width=.5\textwidth,trim=5 6 0 2,clip]{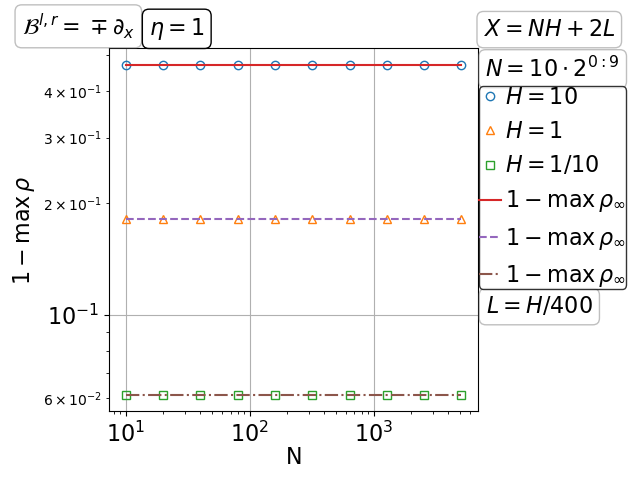}
    \caption{Convergence of the parallel Schwarz method with Taylor of order zero
      transmission for the Neumann problem of diffusion with increasing number of
      fixed size subdomains.}
    \label{figpt0nn}
  \end{figure}
  
  \begin{figure}
    \centering%
    \includegraphics[width=.56\textwidth,trim=10 10 0 6,clip]{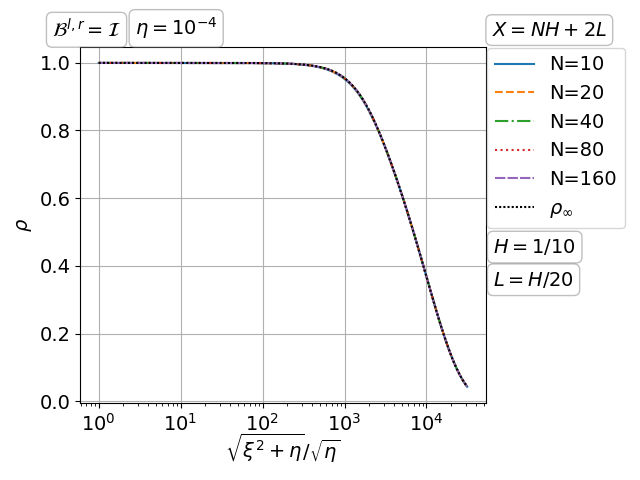}%
    \includegraphics[width=.44\textwidth,height=13em,trim=0 10 0 6,clip]{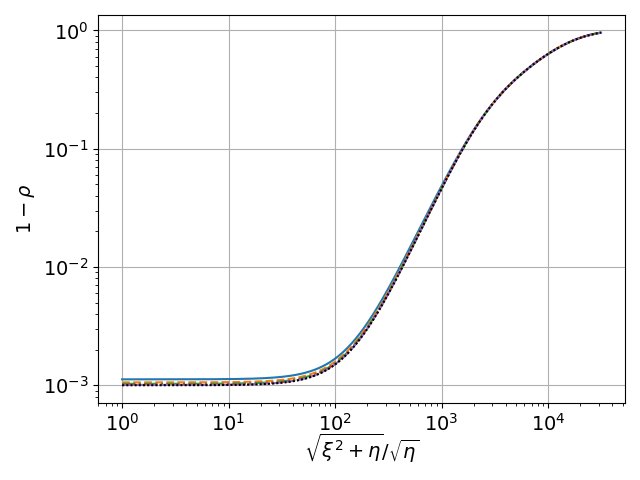}\\
    \includegraphics[width=.56\textwidth,trim=10 10 0 6,clip]{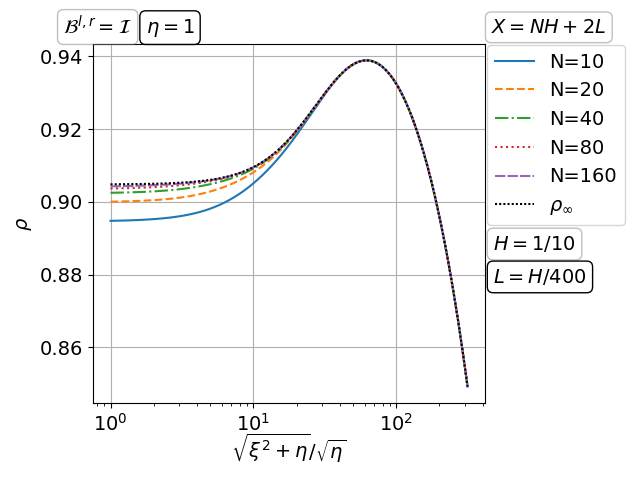}%
    \includegraphics[width=.44\textwidth,height=13em,trim=0 10 0 6,clip]{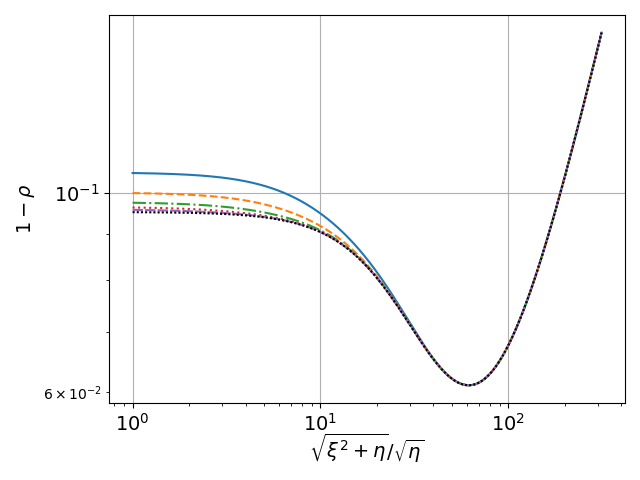}
    \includegraphics[width=.5\textwidth,trim=5 6 0 2,clip]{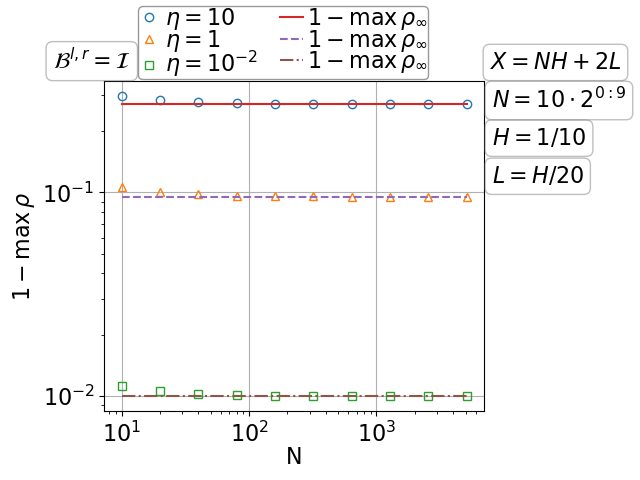}%
    \includegraphics[width=.5\textwidth,trim=5 6 0 2,clip]{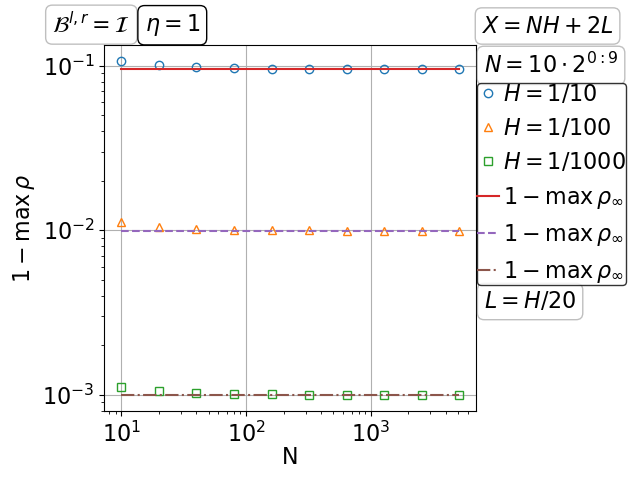}\\
    \includegraphics[width=.5\textwidth,trim=5 6 0 2,clip]{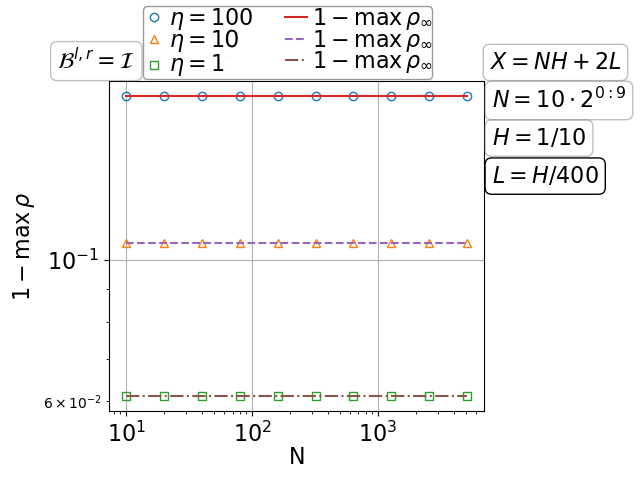}%
    \includegraphics[width=.5\textwidth,trim=5 6 0 2,clip]{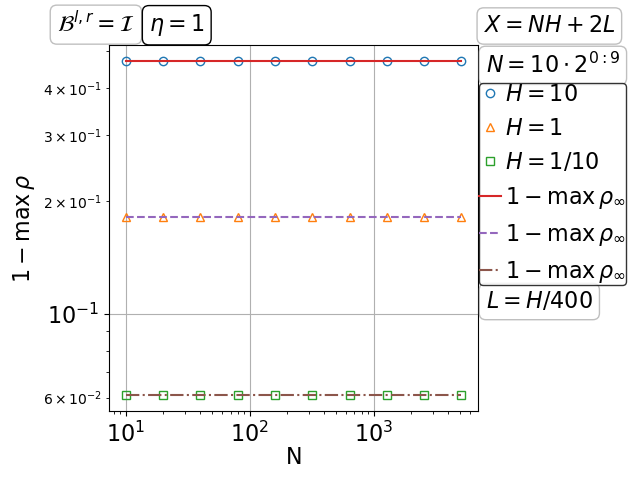}
    \caption{Convergence of the parallel Schwarz method with Taylor of order zero
      transmission for {the Dirichlet problem} of diffusion with increasing number
      of fixed size subdomains.}
    \label{figpt0nd}
  \end{figure}
  
  \begin{figure}
    \centering
    \includegraphics[width=.56\textwidth,trim=10 10 0 6,clip]{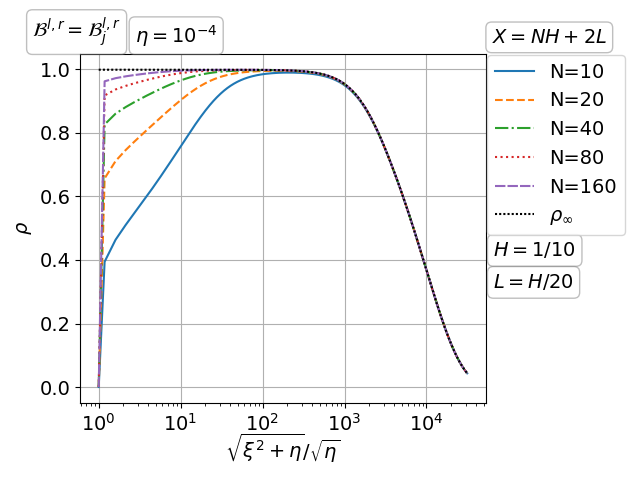}%
    \includegraphics[width=.44\textwidth,height=13em,trim=0 10 0 6,clip]{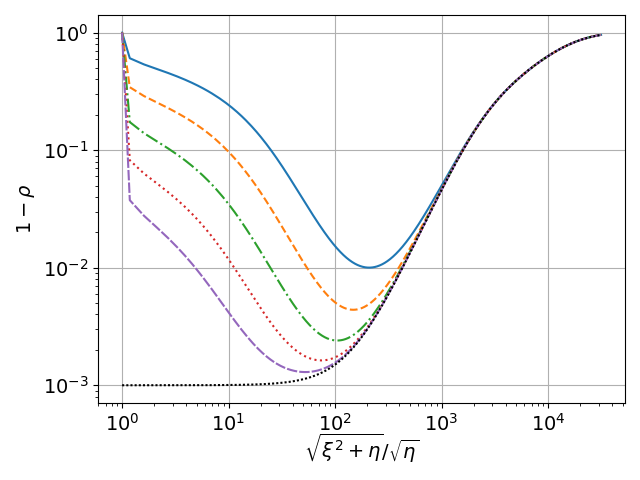}\\
    \includegraphics[width=.56\textwidth,trim=10 10 0 6,clip]{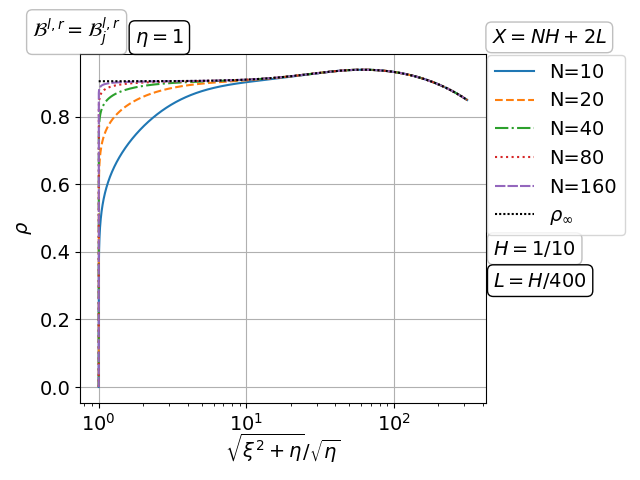}%
    \includegraphics[width=.44\textwidth,height=13em,trim=0 10 0 6,clip]{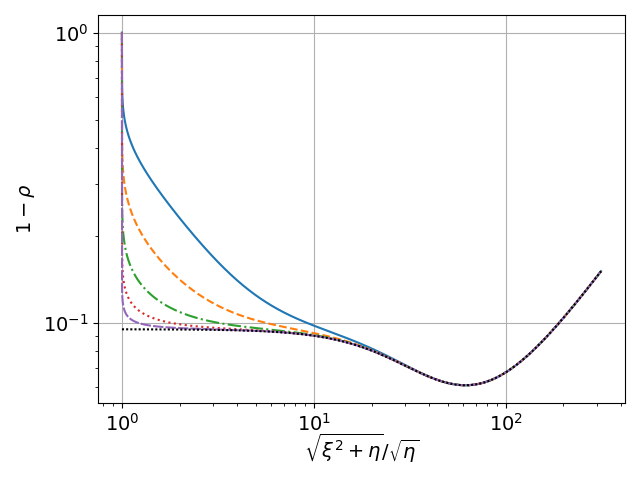}\\
   \includegraphics[width=.5\textwidth,trim=5 6 0 2,clip]{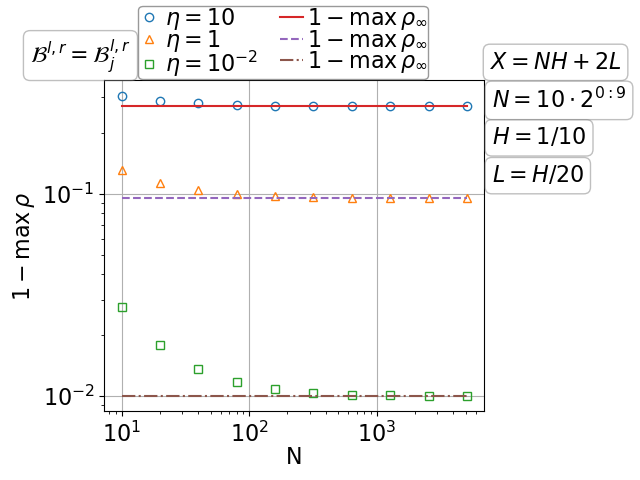}%
   \includegraphics[width=.5\textwidth,trim=5 6 0 2,clip]{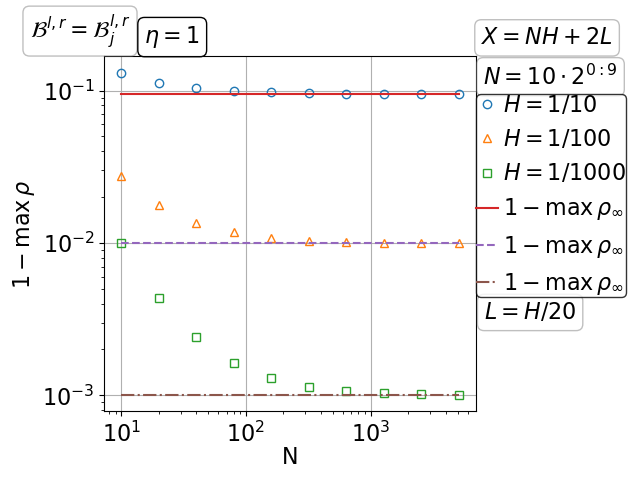}\\
   \includegraphics[width=.5\textwidth,trim=5 6 0 2,clip]{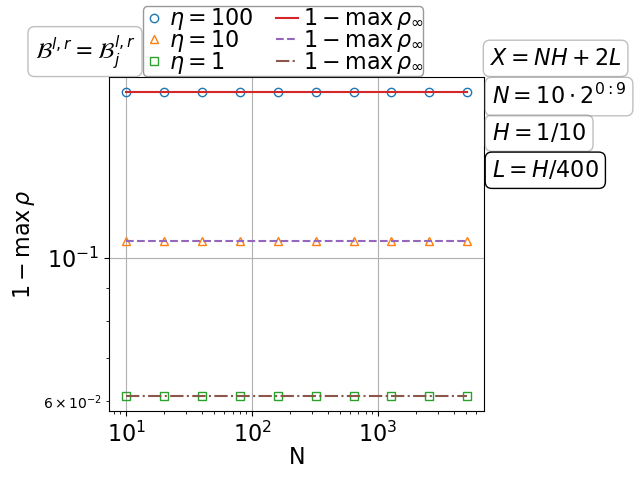}%
   \includegraphics[width=.5\textwidth,trim=5 6 0 2,clip]{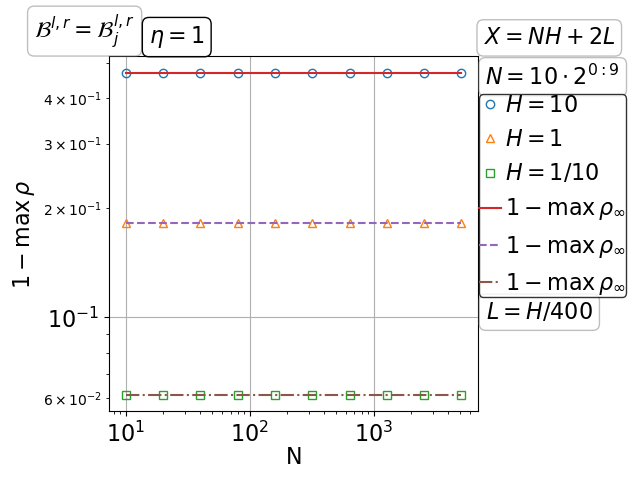}
   \caption{Convergence of the parallel Schwarz method with Taylor of order zero transmission
     for {the infinite pipe} diffusion with increasing number of fixed size
     subdomains.}
    \label{figpt0nt0}
  \end{figure}

\end{paragraph}

\begin{paragraph}{Convergence on a fixed domain with increasing number of subdomains}

  With the number of subdomains $N\to \infty$ and the domain width $X$ fixed, it follows that the
  subdomain width $H\to 0$.  The convergence for the Neumann problem is studied in
  Figure~\ref{figpt01n}.  Recall that $\rho_{\infty}:=\lim_{N\to\infty}\rho$ for fixed $H$, $L$ but
  $X=NH+2L$.  From the top half of Figure~\ref{figpt01n}, we find that the convergence factor
  $\rho(\xi)$ attains its maximum at $\xi=0$ when the overlap width $L=O(H)$ is not too small and
  at the critical point of $\rho_{\infty}(\xi)$ when $L=O(H^2)$ is sufficiently small. In both
  regimes, it holds that $\max_{\xi}\rho=1-O(N^{-1})$.  The difference is in how the hidden factor
  depends on $\eta$ and $L$: in the first regime $\max_{\xi}\rho=1-O(\sqrt{\eta}N^{-1})$
  independent of $L$, while in the latter regime
  $\max_{\xi}\rho=1-O(\eta^{1/4}\sqrt{L}N^{-1})$. Note that we have the same (up to a constant
  factor) dependence on $L$ and $\eta$ as in \R{TaylorRho} from the two-subdomain analysis. The
  convergence for the Dirichlet problem and the infinite pipe problem are studied in
  Figure~\ref{figpt01d} and Figure~\ref{figpt01t0}. Albeit the graphs of $\rho$ look different, the
  maximum of $\rho$ depends on $N$ in the same way as for the Neumann problem.

  \begin{figure}
    \centering%
    \includegraphics[width=.56\textwidth,trim=10 10 0 6,clip]{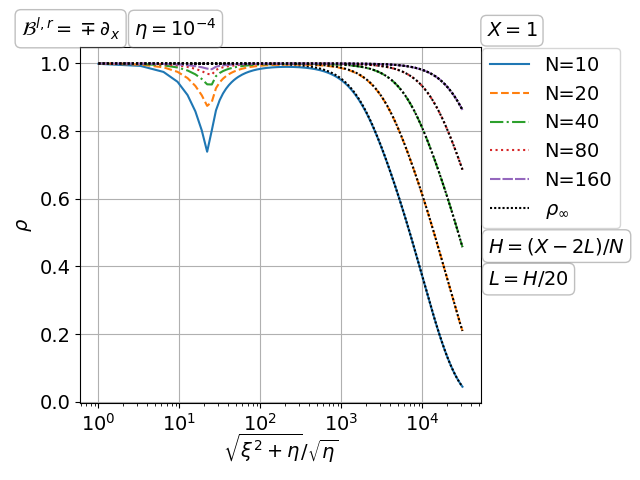}%
    \includegraphics[width=.44\textwidth,height=13em,trim=0 10 0 6,clip]{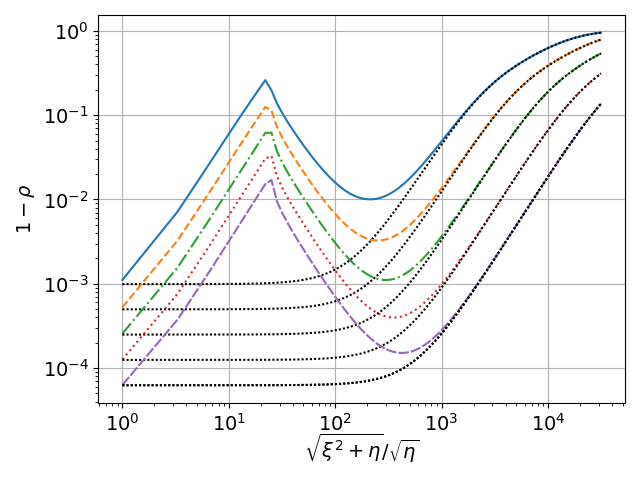}\\
    \includegraphics[width=.56\textwidth,trim=10 10 0 6,clip]{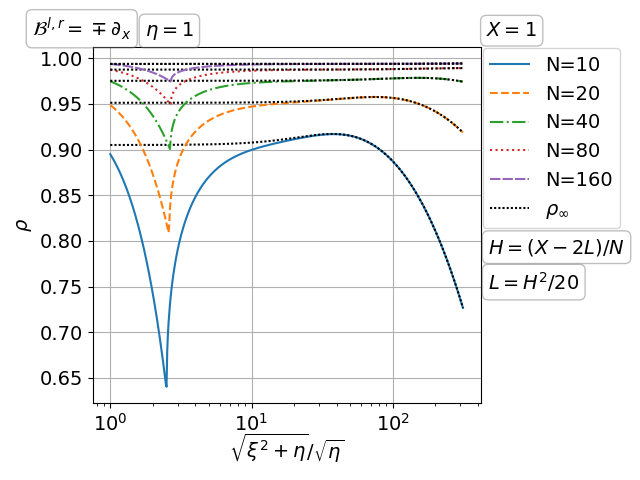}%
    \includegraphics[width=.44\textwidth,height=13em,trim=0 10 0 6,clip]{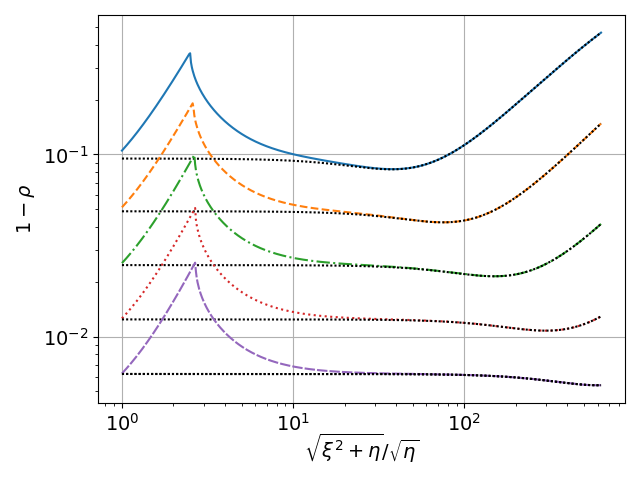}\\
    \includegraphics[width=.5\textwidth,trim=5 6 0 2,clip]{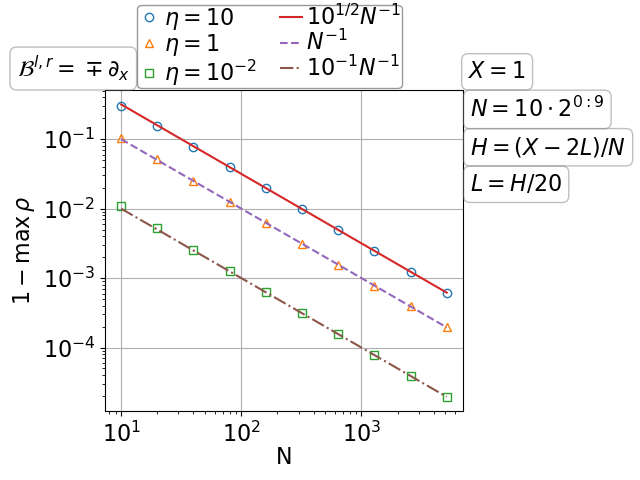}%
    \includegraphics[width=.5\textwidth,trim=5 6 0 2,clip]{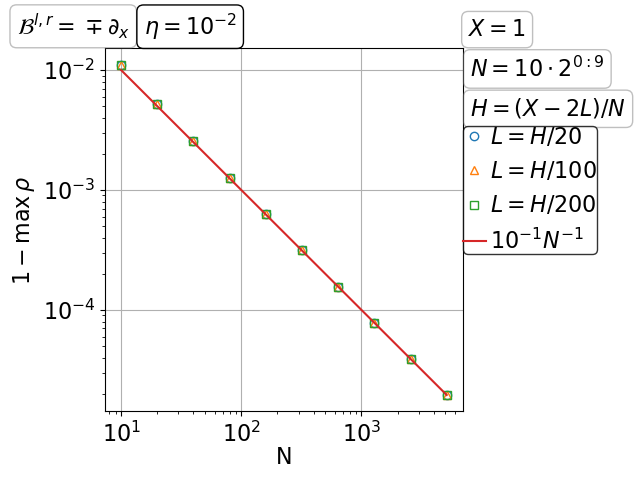}\\
    \includegraphics[width=.5\textwidth,trim=5 6 0 2,clip]{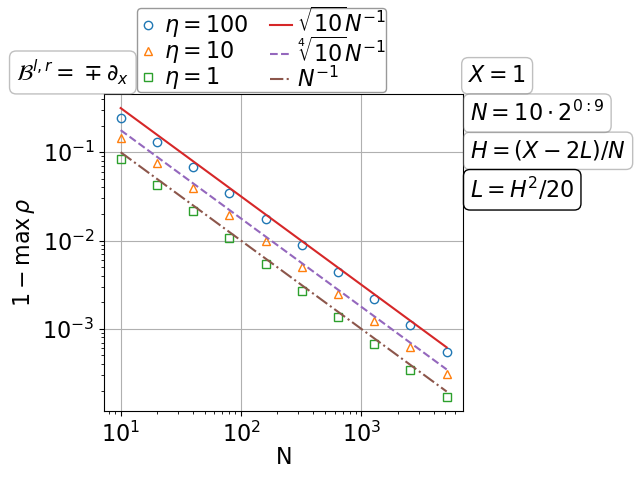}%
    \includegraphics[width=.5\textwidth,trim=5 6 0 2,clip]{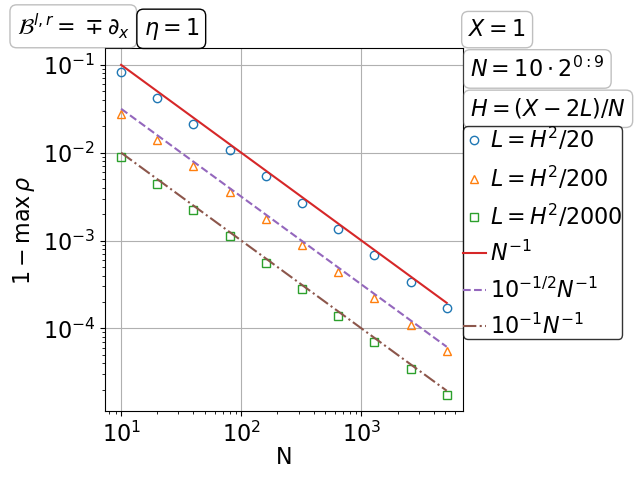}
    \caption{Convergence of the parallel Schwarz method with Taylor of order zero transmission for
      {the Neumann} problem of diffusion on a {fixed domain} with {increasing number of
        subdomains}.}
    \label{figpt01n}
  \end{figure}

  \begin{figure}
    \centering%
    \includegraphics[width=.56\textwidth,trim=10 10 0 6,clip]{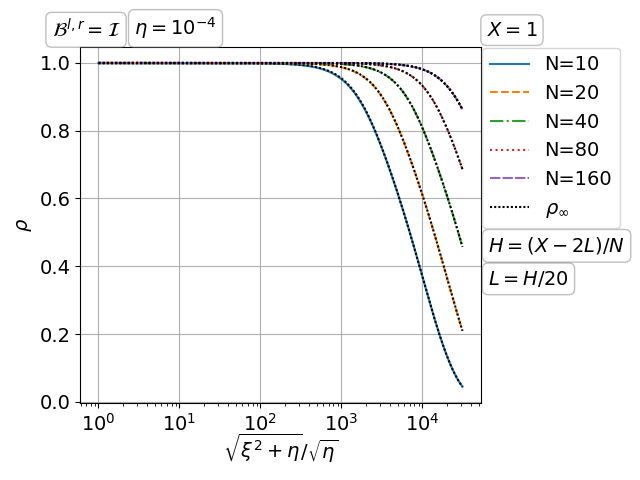}%
    \includegraphics[width=.44\textwidth,height=13em,trim=0 10 0 6,clip]{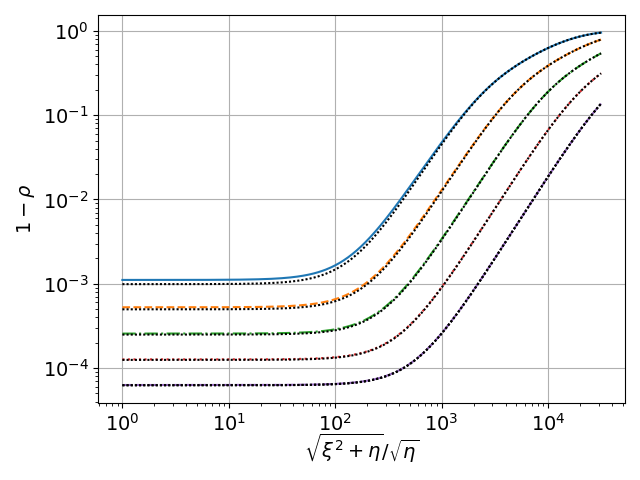}\\
    \includegraphics[width=.56\textwidth,trim=10 10 0 6,clip]{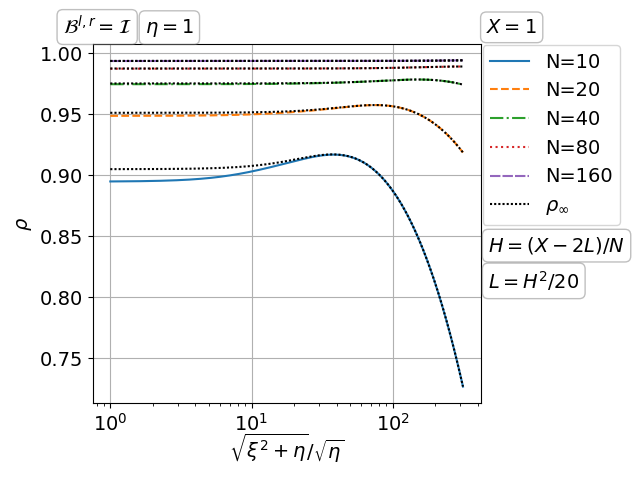}%
    \includegraphics[width=.44\textwidth,height=13em,trim=0 10 0 6,clip]{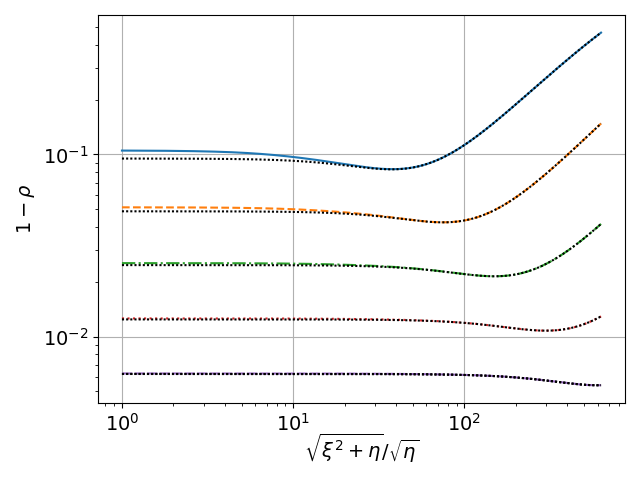}\\
    \includegraphics[width=.5\textwidth,trim=5 6 0 2,clip]{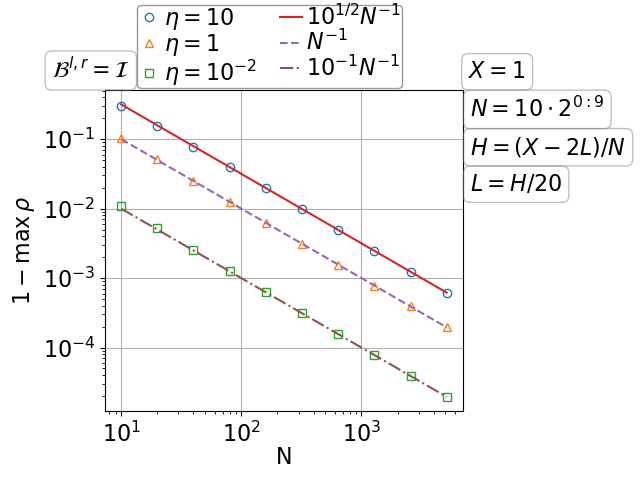}%
    \includegraphics[width=.5\textwidth,trim=5 6 0 2,clip]{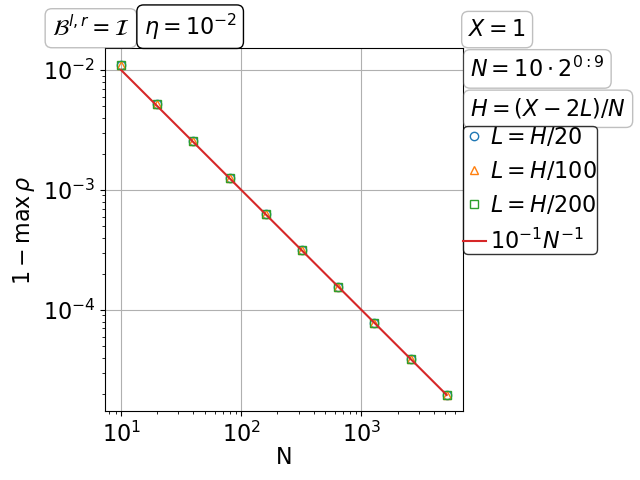}\\
    \includegraphics[width=.5\textwidth,trim=5 6 0 2,clip]{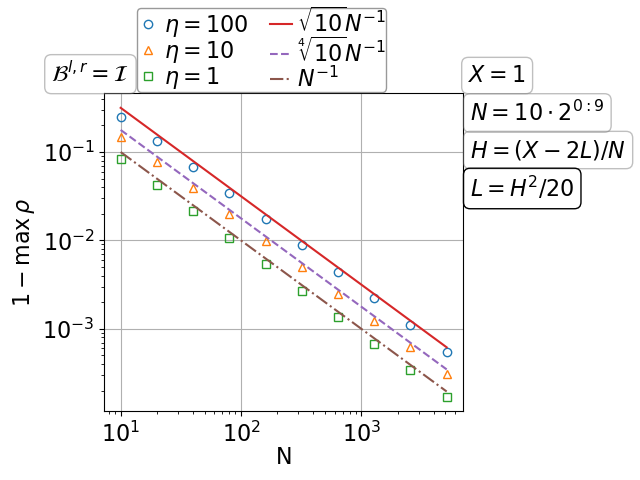}%
    \includegraphics[width=.5\textwidth,trim=5 6 0 2,clip]{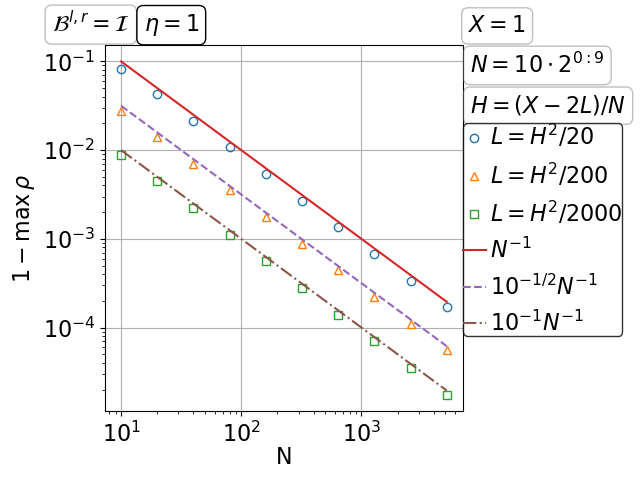}
    \caption{Convergence of the parallel Schwarz method with Taylor of order zero transmission for
      {the Dirichlet} problem of diffusion on a fixed domain with increasing number of subdomains.}
    \label{figpt01d}
  \end{figure}

  \begin{figure}
    \centering%
    \includegraphics[width=.56\textwidth,trim=10 10 0 6,clip]{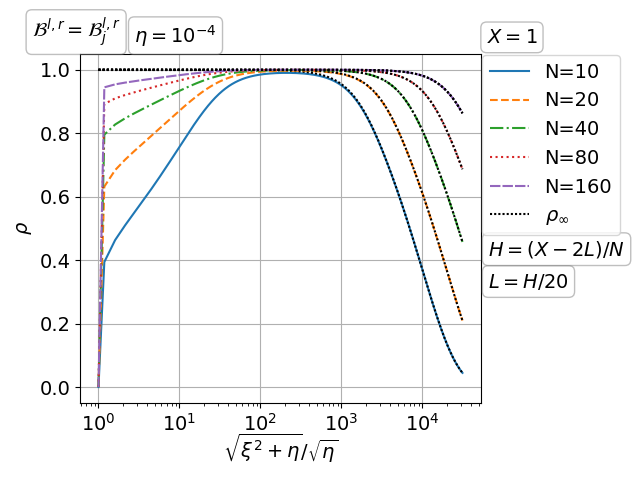}%
    \includegraphics[width=.44\textwidth,height=13em,trim=0 10 0 6,clip]{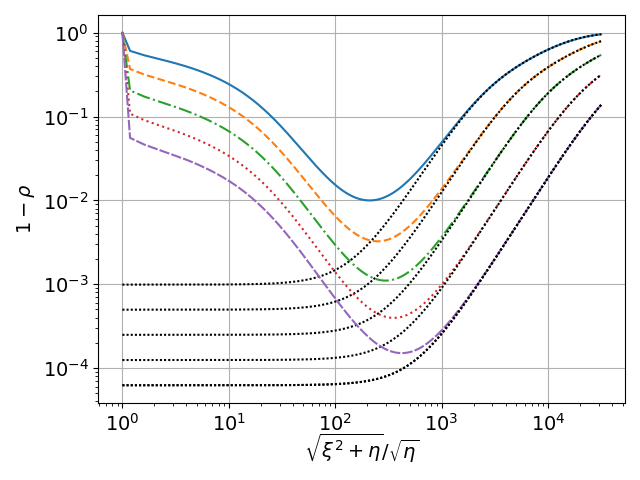}\\
    \includegraphics[width=.56\textwidth,trim=10 10 0 6,clip]{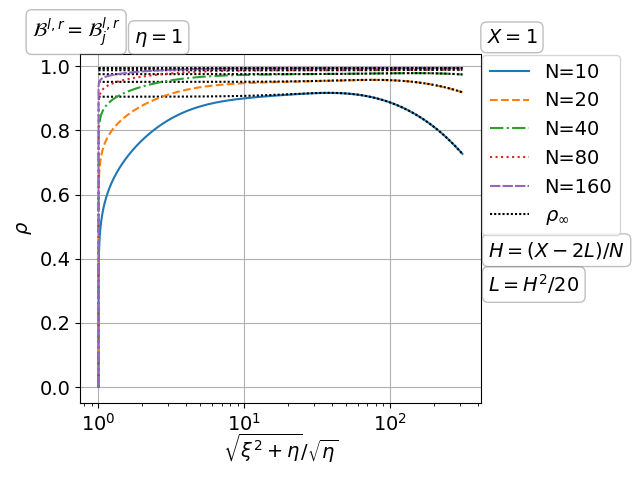}%
    \includegraphics[width=.44\textwidth,height=13em,trim=0 10 0 6,clip]{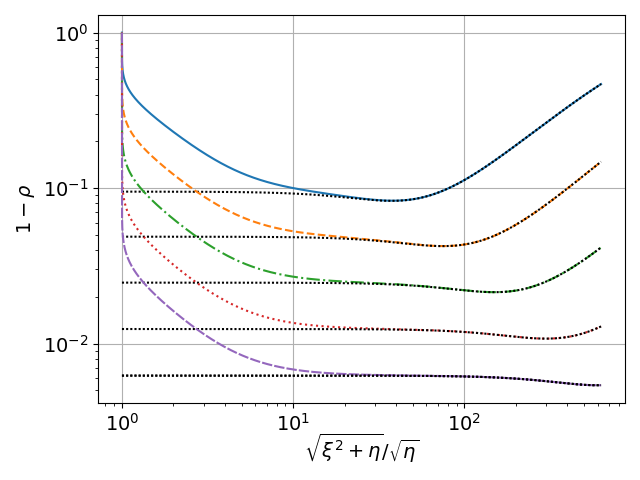}\\
    \includegraphics[width=.5\textwidth,trim=5 6 0 2,clip]{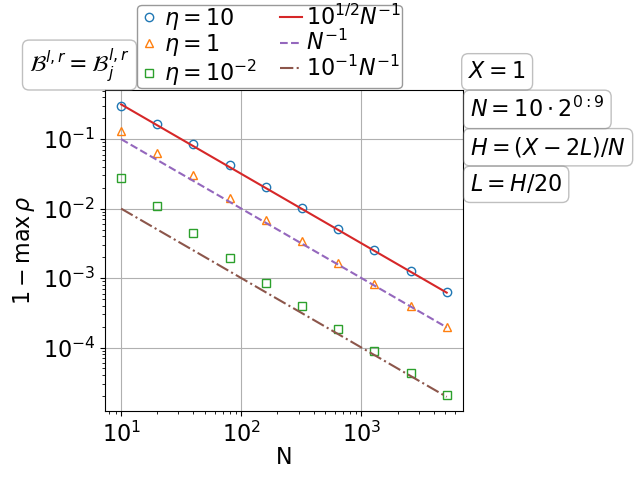}%
    \includegraphics[width=.5\textwidth,trim=5 6 0 2,clip]{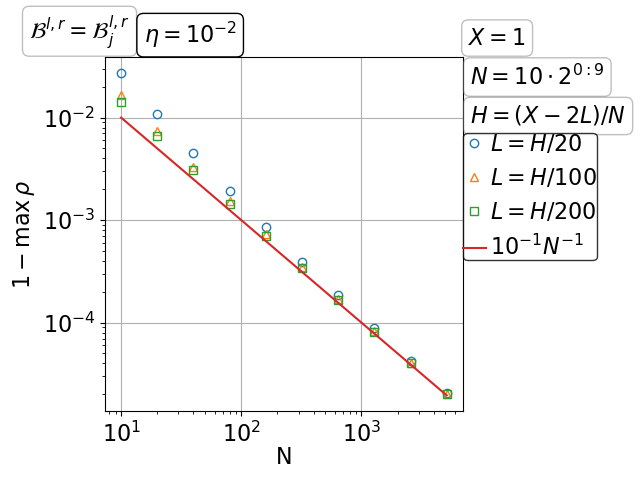}\\
    \includegraphics[width=.5\textwidth,trim=5 6 0 2,clip]{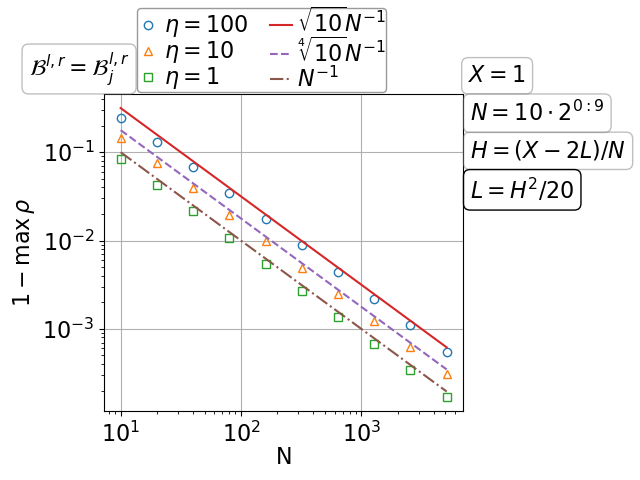}%
    \includegraphics[width=.5\textwidth,trim=5 6 0 2,clip]{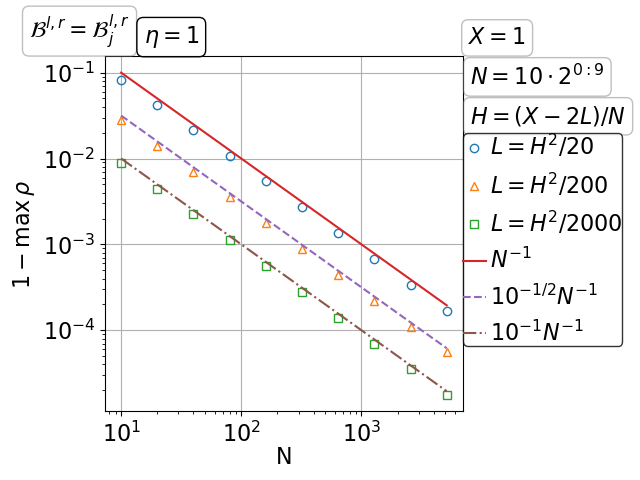}
    \caption{Convergence of the parallel Schwarz method with Taylor of order zero transmission for
      {the infinite pipe} diffusion on a fixed domain with increasing number of subdomains.}
    \label{figpt01t0}
  \end{figure}

\end{paragraph}


\subsubsection{Parallel Schwarz method with PML transmission for the diffusion problem}

By using the PML transmission condition, the subdomain problem away from the boundary
$\{0, X\}\times[0,Y]$ can be a very good domain truncation of the problem on the infinite pipe
$(-\infty, \infty)\times [0,Y]$, because PML can make the reflection coefficient
\[
      R=\frac{\hat{\mathcal{S}}-\sqrt{\xi^2+\eta}}{\hat{\mathcal{S}}+\sqrt{\xi^2+\eta}}=\pm\mathrm{e}^{-(2+\gamma)D\sqrt{\xi^2+\eta}}\qquad\text{($\hat{\mathcal{S}}$
        given in \R{eqhats})} 
\]
arbitrarily small by increasing its numerical cost related to the PML
width $D$ and PML strength $\gamma$. If the original boundary condition is also given by PML, \ie,
$\mathcal{B}^{l,r}=\mathcal{B}^{l,r}_j$, then the original problem does approximate the infinite
pipe problem and so {we can expect that the Schwarz method will perform well}. What if
$\mathcal{B}^{l,r}$ is Dirichlet or Neumann? What condition should be used on the external boundary
of the PML? We will address these questions in the following paragraphs.

\begin{paragraph}{Convergence with increasing number of fixed size subdomains}

  The study is carried out on a growing chain of fixed size subdomains. We first consider the
  Neumann problem in Figure~\ref{figppnn}.  The first row is the convergence factor $\rho(\xi)$ from
  using the Neumann condition on the PML external boundaries, while the second row is from using the
  Dirichlet condition. We see that their asymptotics $\rho_{\infty}(\xi)=\lim_{N\to\infty}\rho(\xi)$
  have little difference but the Neumann terminated PML is better than the Dirichlet terminated PML
  for moderate number of subdomains $N$, which is reasonable because the original domain $\Omega$ is
  equipped with the Neumann condition. In the bottom half of Figure~\ref{figppnn}, we see that
  $\max_{\xi}\rho=O(1)<1$ with the constant linearly dependent on the coefficient $\eta$ and the
  PML width $D$. If we compare this figure with Figure~\ref{figpdn} and Figure~\ref{figpdl}, we can
  find many similarities. That is, the PML for diffusion behaves like an overlap ({see also patch substructuring methods \cite{gander2007analysis}, and references therein}). But the PML can be
  of arbitrary width, while the overlap width can not exceed the subdomain width. The same scaling
  is observed for the Dirichlet problem in Figure~\ref{figppnd} and for the truncated infinite pipe
  problem in Figure~\ref{figppnp}.  For a moderate number of subdomains, the Dirichlet terminated PML is
  favorable for the original Dirichlet problem.

  \begin{figure}
    \centering%
    \includegraphics[width=.56\textwidth,trim=10 10 0 6,clip]{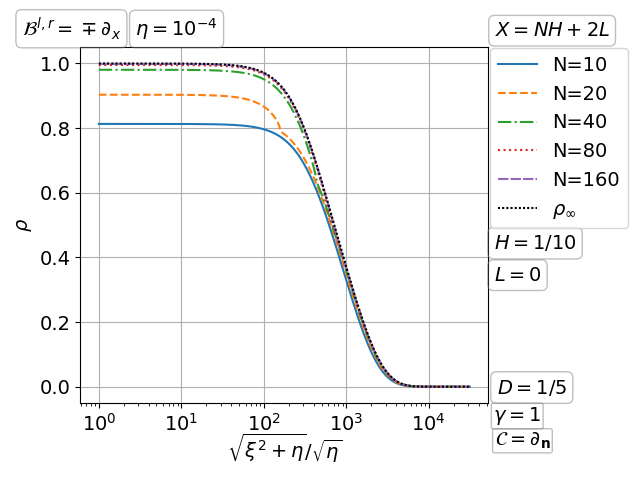}%
    \includegraphics[width=.44\textwidth,height=13em,trim=0 10 0 6,clip]{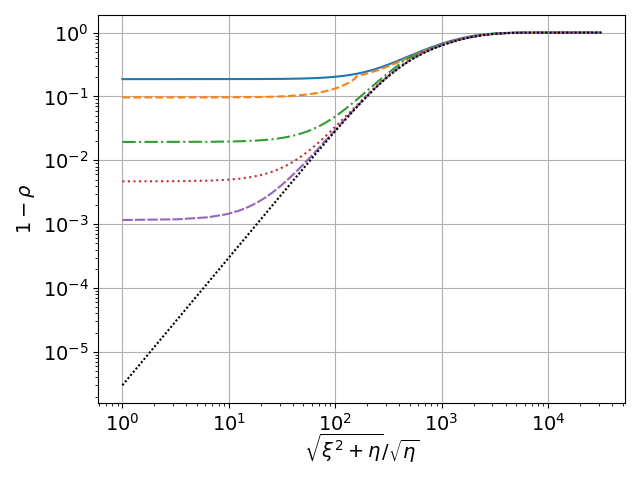}\\
    \includegraphics[width=.56\textwidth,trim=10 10 0 6,clip]{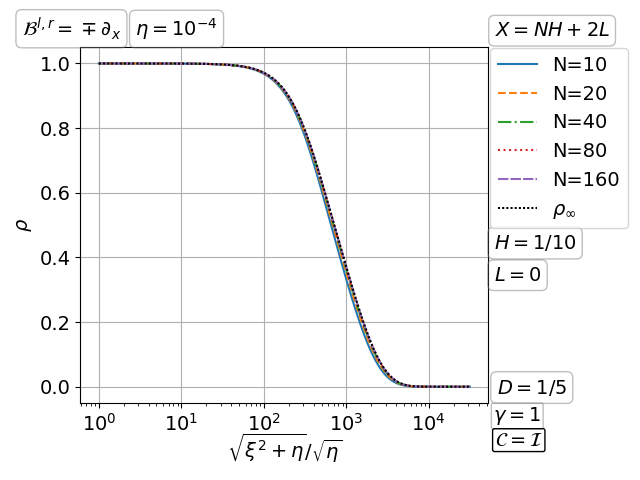}%
    \includegraphics[width=.44\textwidth,height=13em,trim=0 10 0 6,clip]{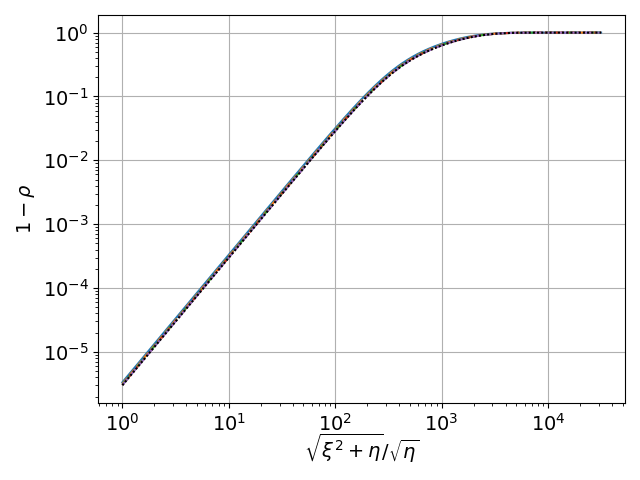}\\
    \includegraphics[width=.5\textwidth,trim=5 6 0 2,clip]{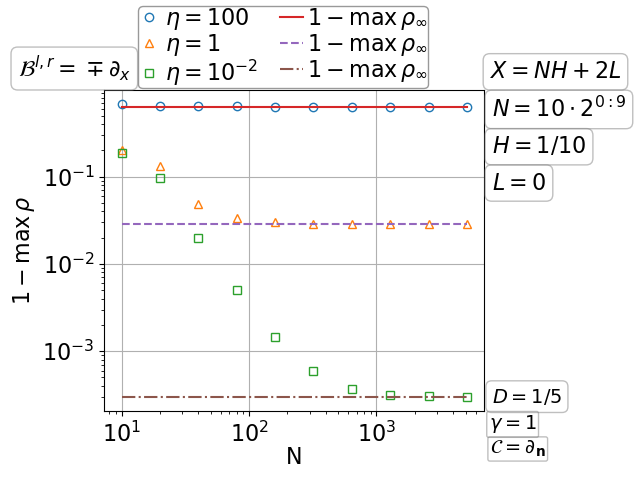}%
    \includegraphics[width=.5\textwidth,trim=5 6 0 2,clip]{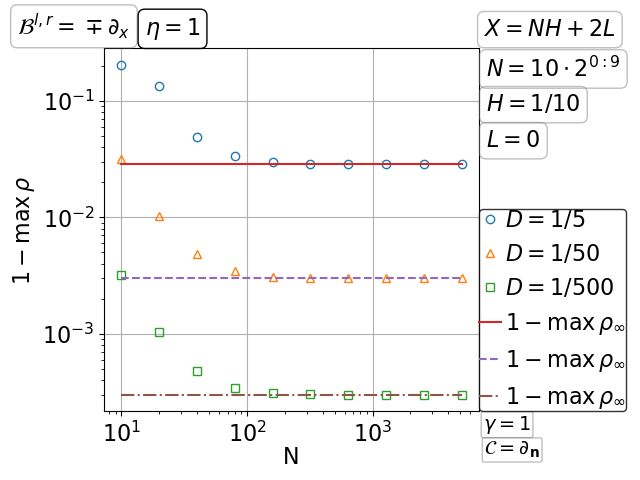}\\
    \includegraphics[width=.5\textwidth,trim=5 6 0 2,clip]{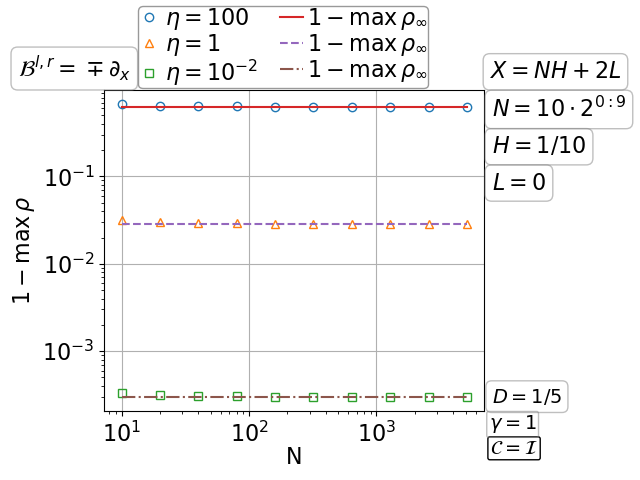}%
    \includegraphics[width=.5\textwidth,trim=5 6 0 2,clip]{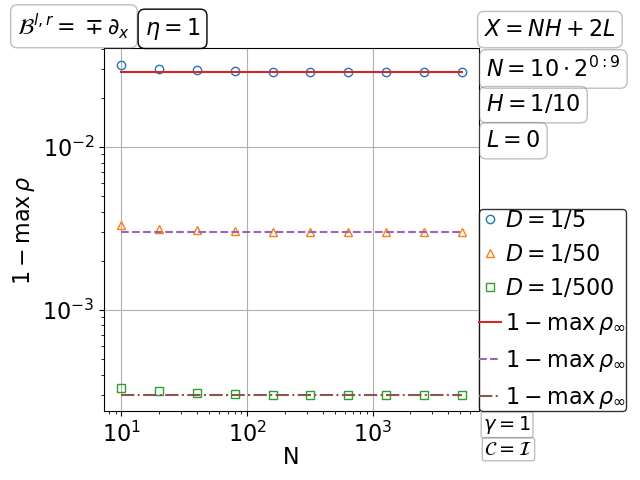}
    \caption{Convergence of the parallel Schwarz method with PML transmission for the Neumann
      problem of diffusion with increasing number of fixed size subdomains.}
    \label{figppnn}
  \end{figure}

  \begin{figure}
    \centering%
    \includegraphics[width=.56\textwidth,trim=10 10 0 6,clip]{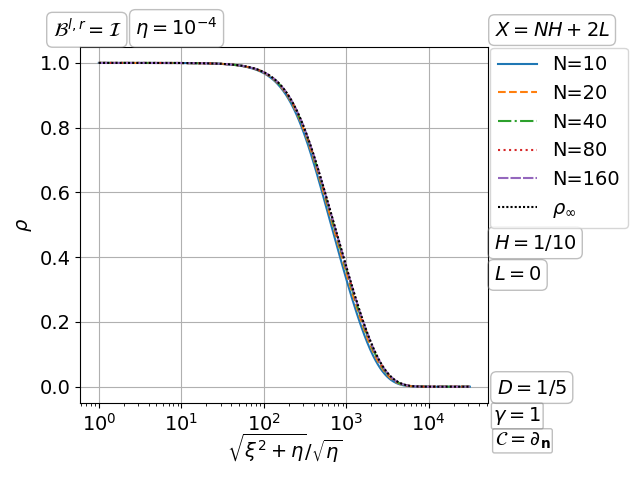}%
    \includegraphics[width=.44\textwidth,height=13em,trim=0 10 0 6,clip]{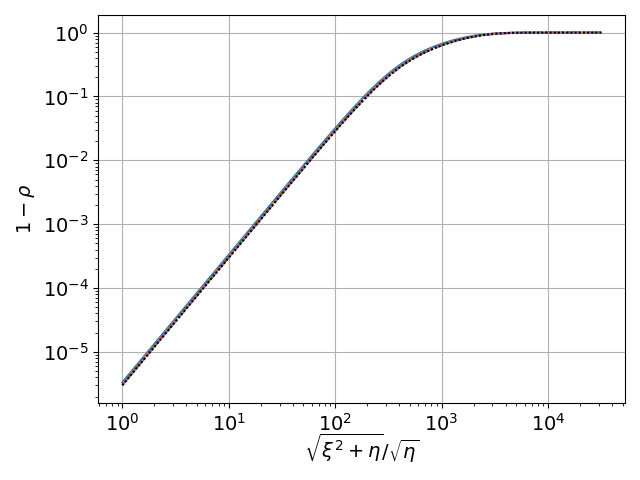}\\
    \includegraphics[width=.56\textwidth,trim=10 10 0 6,clip]{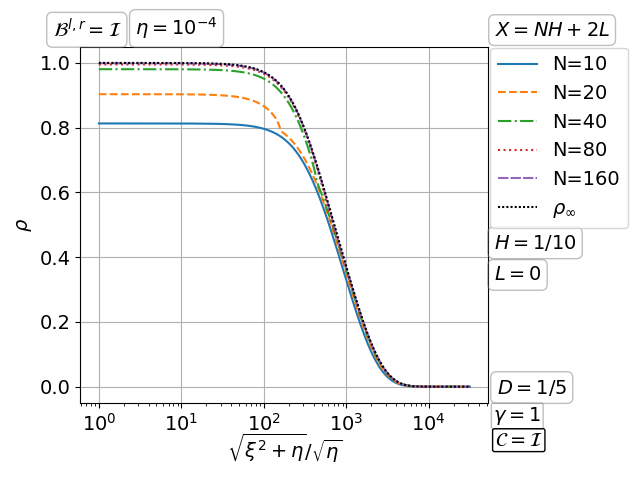}%
    \includegraphics[width=.44\textwidth,height=13em,trim=0 10 0 6,clip]{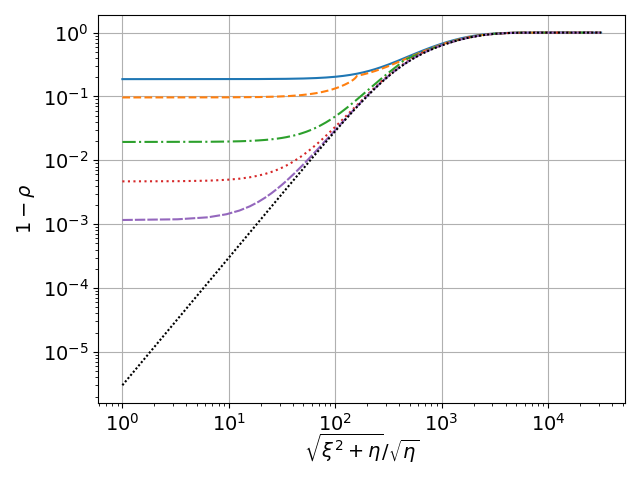}\\
    \includegraphics[width=.5\textwidth,trim=5 6 0 2,clip]{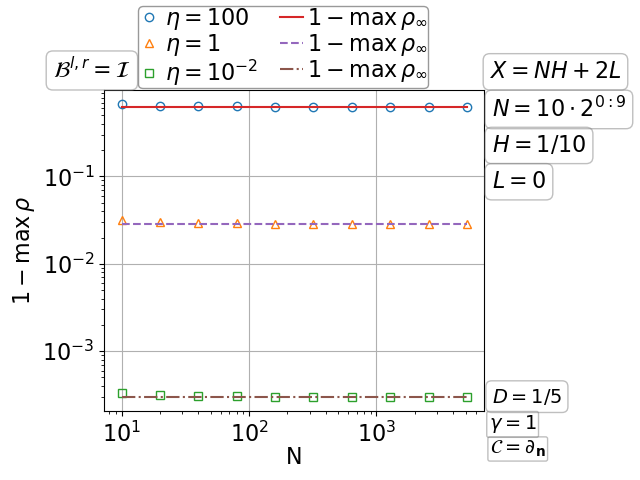}%
    \includegraphics[width=.5\textwidth,trim=5 6 0 2,clip]{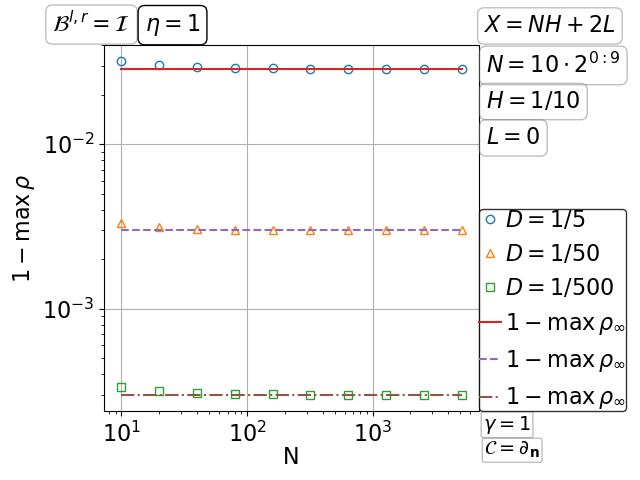}\\
    \includegraphics[width=.5\textwidth,trim=5 6 0 2,clip]{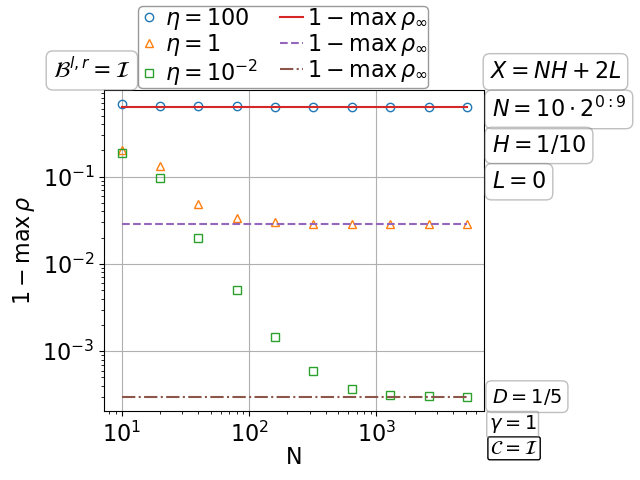}%
    \includegraphics[width=.5\textwidth,trim=5 6 0 2,clip]{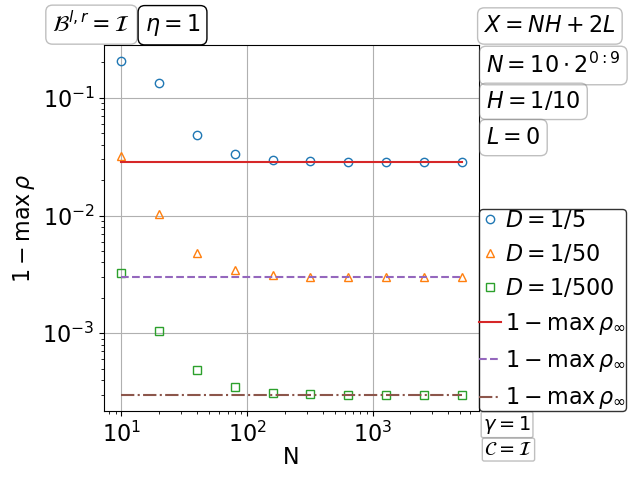}
    \caption{Convergence of the parallel Schwarz method with PML transmission for the
      Dirichlet problem of diffusion with increasing number of fixed size subdomains.}
    \label{figppnd}
  \end{figure}

  \begin{figure}
    \centering%
    \includegraphics[width=.56\textwidth,trim=10 10 0 6,clip]{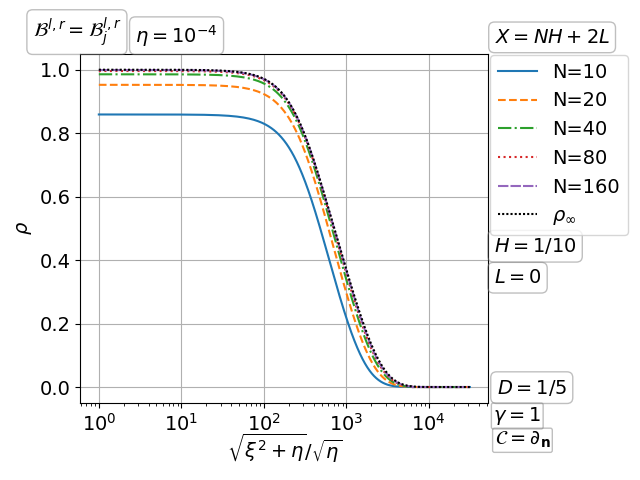}%
    \includegraphics[width=.44\textwidth,height=13em,trim=0 10 0 6,clip]{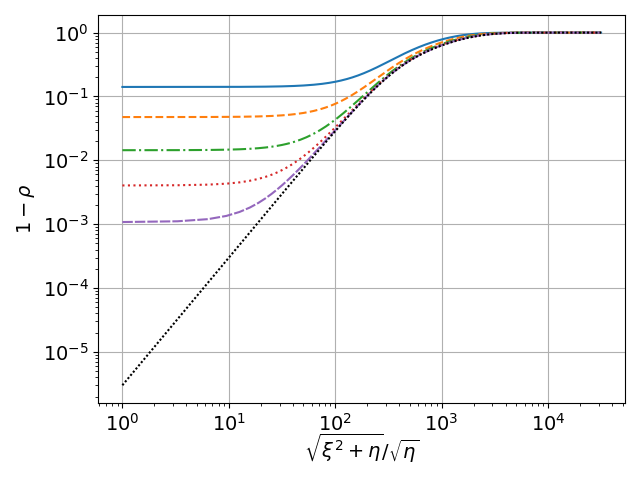}\\
    \includegraphics[width=.56\textwidth,trim=10 10 0 6,clip]{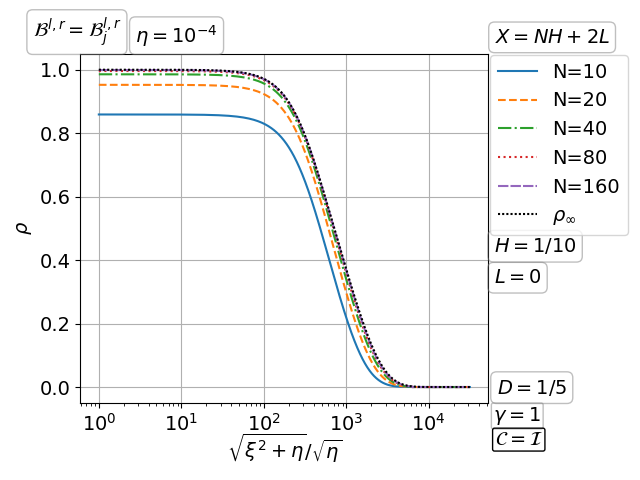}%
    \includegraphics[width=.44\textwidth,height=13em,trim=0 10 0 6,clip]{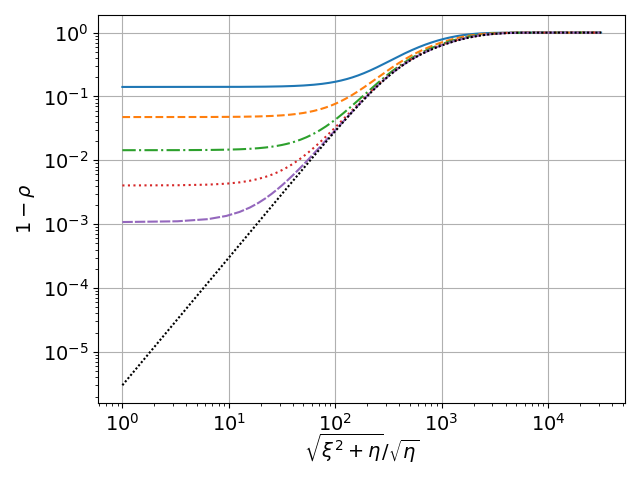}\\
    \includegraphics[width=.5\textwidth,trim=5 6 0 2,clip]{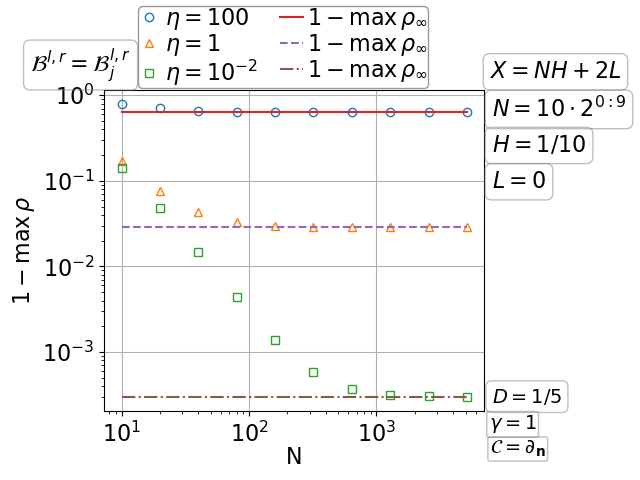}%
    \includegraphics[width=.5\textwidth,trim=5 6 0 2,clip]{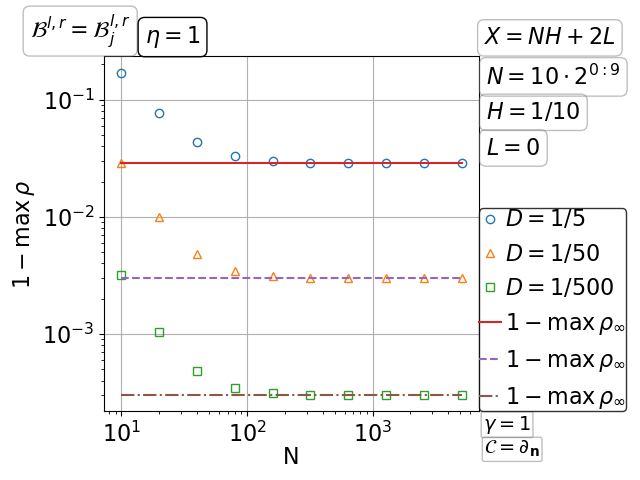}\\
    \includegraphics[width=.5\textwidth,trim=5 6 0 2,clip]{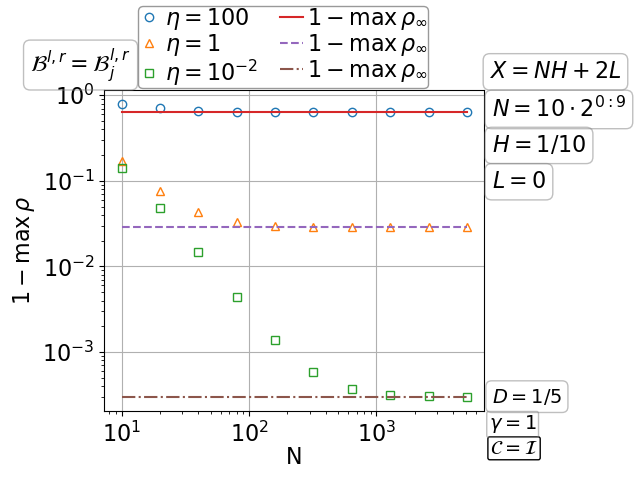}%
    \includegraphics[width=.5\textwidth,trim=5 6 0 2,clip]{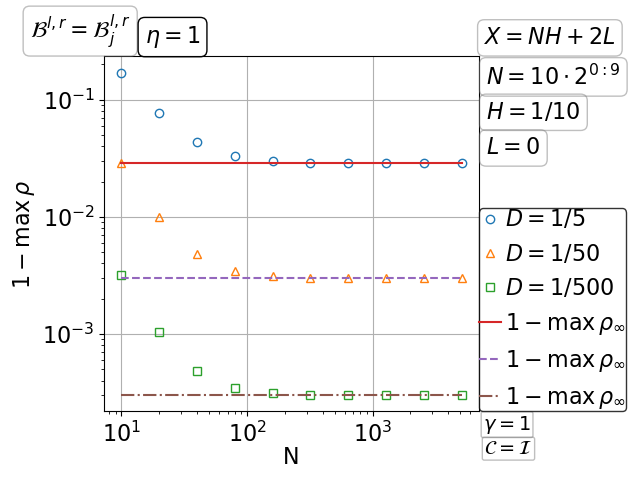}
    \caption{Convergence of the parallel Schwarz method with PML transmission for the infinite
      pipe diffusion with increasing number of fixed size subdomains.}
    \label{figppnp}
  \end{figure}

\end{paragraph}

\begin{paragraph}{Convergence on a fixed domain with increasing number of subdomains}

  In this scaling, the subdomain width $H=(X-2L)/N\to 0$ as the number of subdomains $N\to \infty$
  and the domain width $X$ is fixed. Different from the overlap width $L$, the PML width $D$ is not
  bound to $H$ and so it can be fixed.  From the top half of Figure~\ref{figpp1n}, we can find that
  $\rho_{\infty}=\lim_{N\to\infty}\rho$ (the limit taken for each fixed $H$) is tending to the
  constant one as $H\to0$ and the change of $1-\rho$ with growing $N$ looks the same as the change
  of $1-\rho_{\infty}$.  The convergence deterioration is estimated in the bottom half of
  Figure~\ref{figpp1n} as $1-\max_{\xi}\rho=O(N^{-1})$ with a linear dependence on the PML width
  $D$.  The condition on the external boundary of the PML plays a significant role for small
  $\eta>0$; see the first column of the bottom half of Figure~\ref{figpp1n}.  If the PML is
  terminated with the same condition as for the original domain, \ie,
  $\mathcal{C}=\mathcal{B}^{l,r}$, then $1-\max_{\xi}\rho$ tends to be robust for small $\eta>0$;
  otherwise, the convergence deteriorates with $\eta\to0^{+}$. A similar phenomenon appears for the
  classical Dirichlet transmission, see the earlier Figure~\ref{figpd1}, in particular, the first
  column of the bottom half. All the above observations apply equally to the Dirichlet problem
    (Figure~\ref{figpp1d}) and the infinite pipe problem (Figure~\ref{figpp1p}).

  \begin{figure}
    \centering%
    \includegraphics[width=.56\textwidth,trim=10 10 0 6,clip]{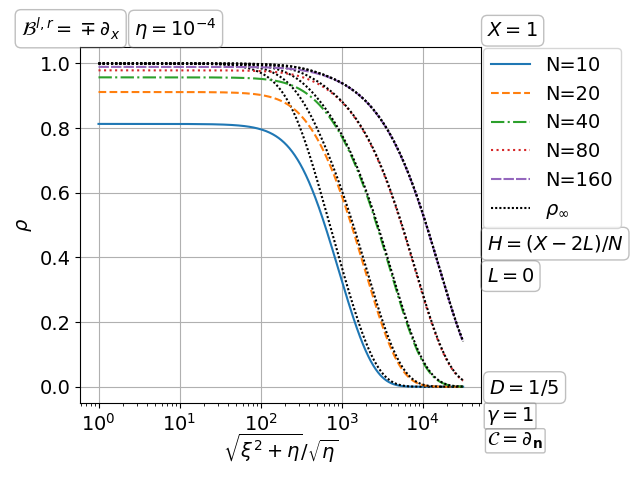}%
    \includegraphics[width=.44\textwidth,height=13em,trim=0 10 0 6,clip]{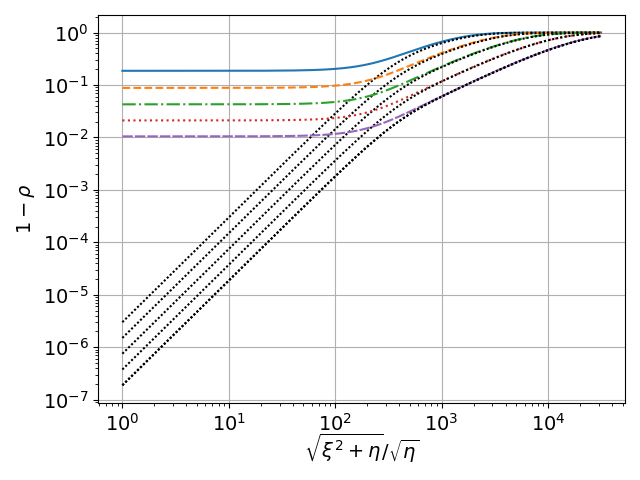}\\
    \includegraphics[width=.56\textwidth,trim=10 10 0 6,clip]{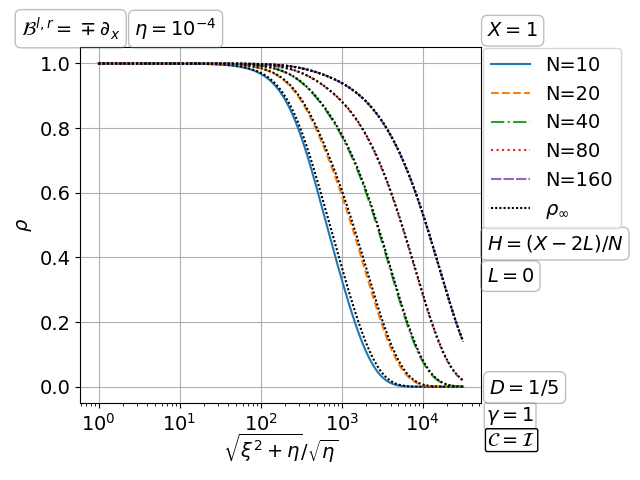}%
    \includegraphics[width=.44\textwidth,height=13em,trim=0 10 0 6,clip]{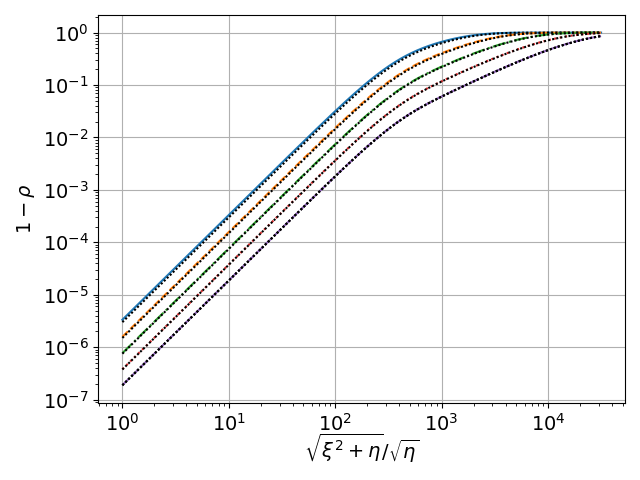}\\
    \includegraphics[width=.5\textwidth,trim=5 6 0 2,clip]{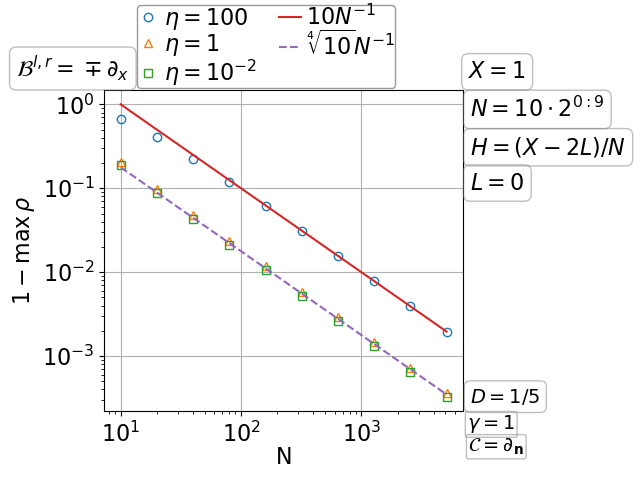}%
    \includegraphics[width=.5\textwidth,trim=5 6 0 2,clip]{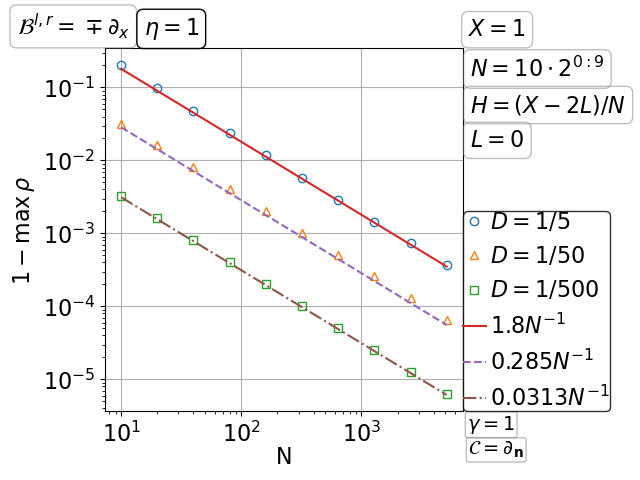}\\
    \includegraphics[width=.5\textwidth,trim=5 6 0 2,clip]{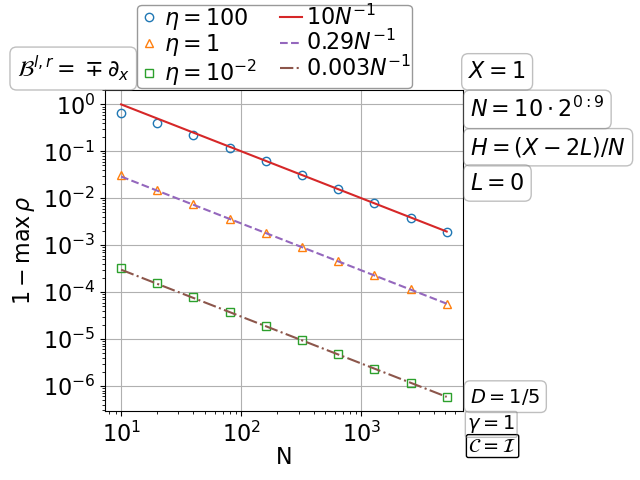}%
    \includegraphics[width=.5\textwidth,trim=5 6 0 2,clip]{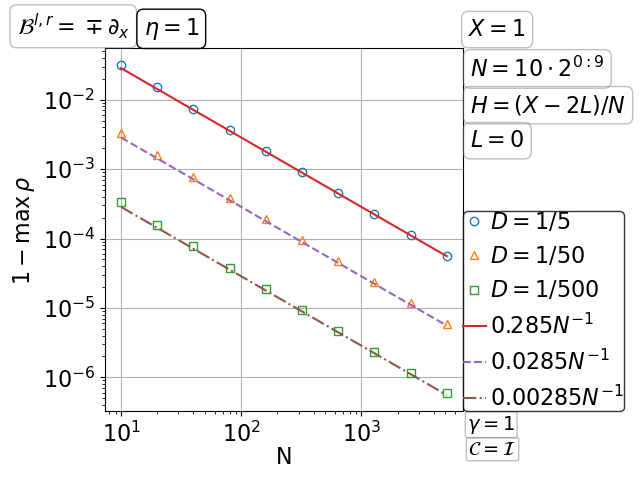}
    \caption{Convergence of the parallel Schwarz method with PML transmission for the Neumann
      problem of diffusion on a fixed domain with increasing number of subdomains.}
    \label{figpp1n}
  \end{figure}

  \begin{figure}
    \centering%
    \includegraphics[width=.56\textwidth,trim=10 10 0 6,clip]{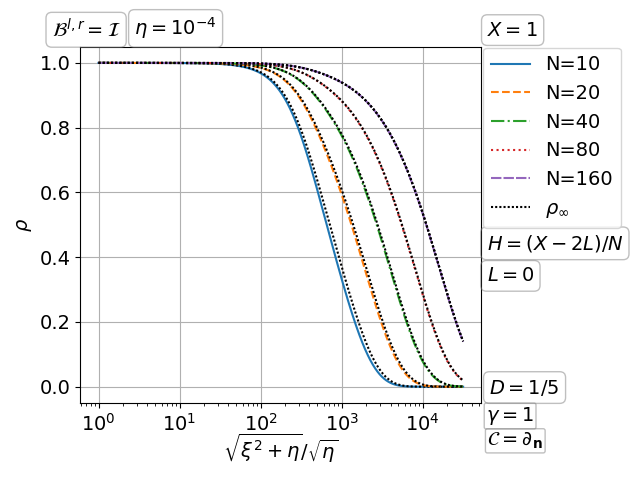}%
    \includegraphics[width=.44\textwidth,height=13em,trim=0 10 0 6,clip]{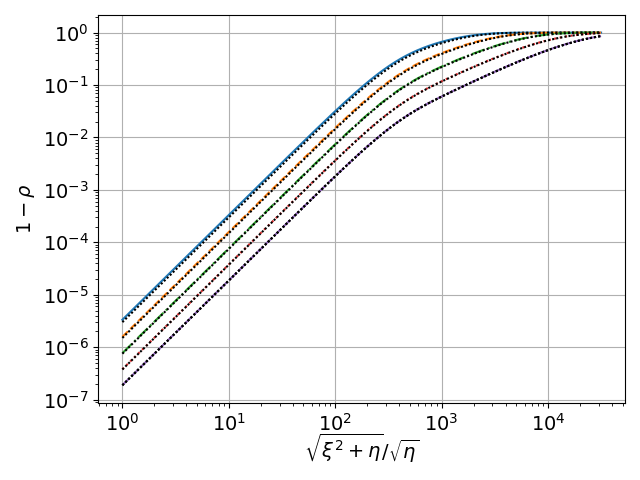}\\
    \includegraphics[width=.56\textwidth,trim=10 10 0 6,clip]{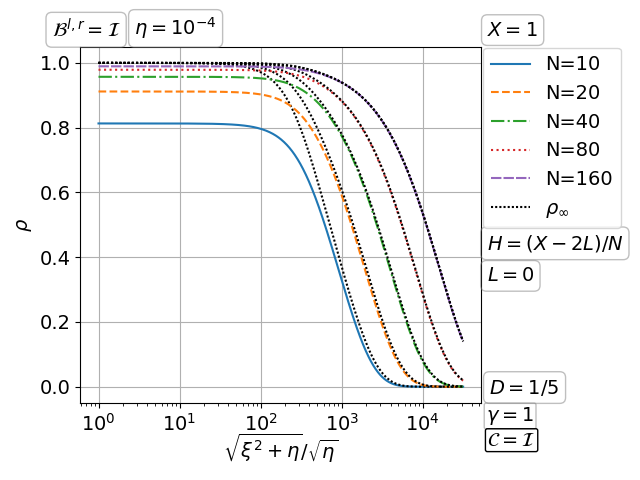}%
    \includegraphics[width=.44\textwidth,height=13em,trim=0 10 0 6,clip]{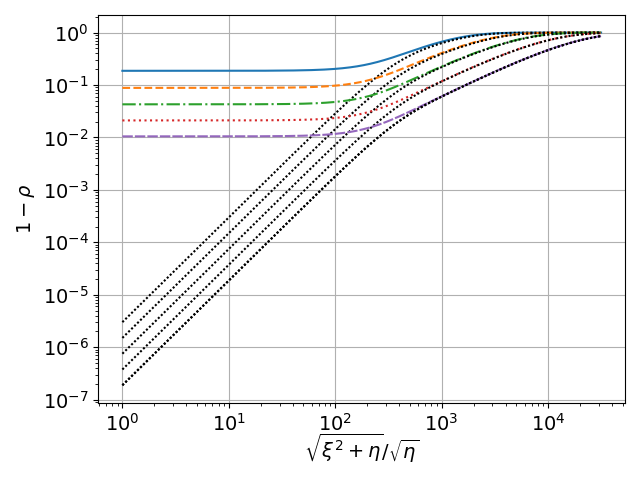}\\
    \includegraphics[width=.5\textwidth,trim=5 6 0 2,clip]{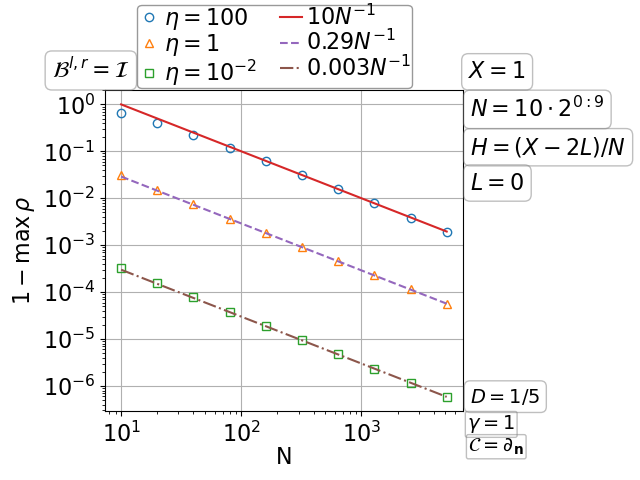}%
    \includegraphics[width=.5\textwidth,trim=5 6 0 2,clip]{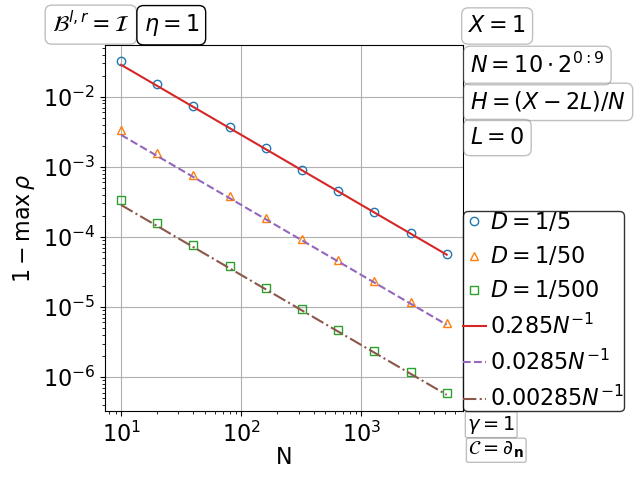}\\
    \includegraphics[width=.5\textwidth,trim=5 6 0 2,clip]{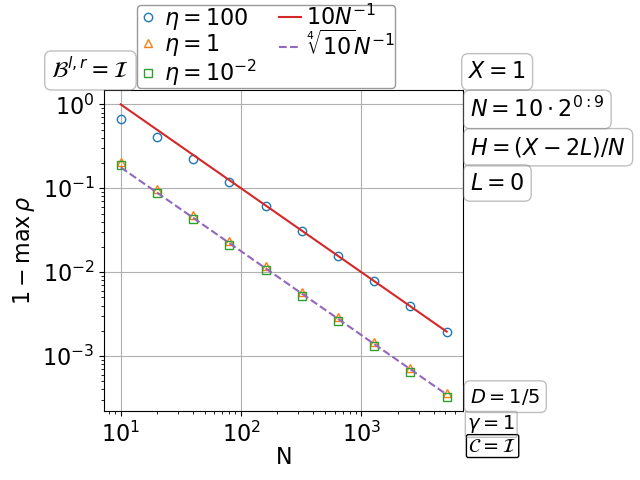}%
    \includegraphics[width=.5\textwidth,trim=5 6 0 2,clip]{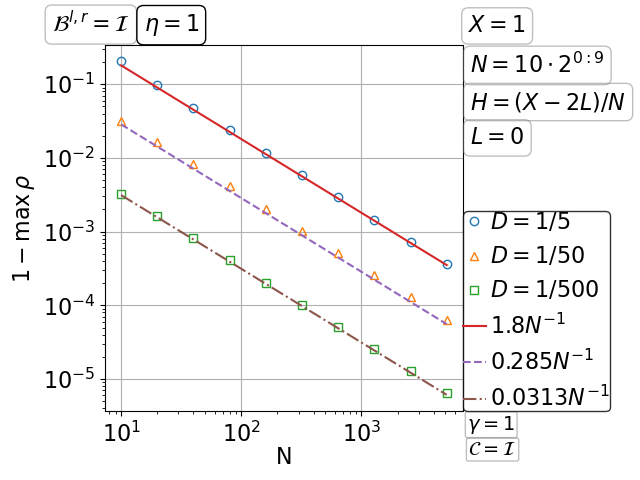}
    \caption{Convergence of the parallel Schwarz method with PML transmission for the
      Dirichlet problem of diffusion on a fixed domain with increasing number of subdomains.}
    \label{figpp1d}
  \end{figure}

  \begin{figure}
    \centering%
    \includegraphics[width=.56\textwidth,trim=10 10 0 6,clip]{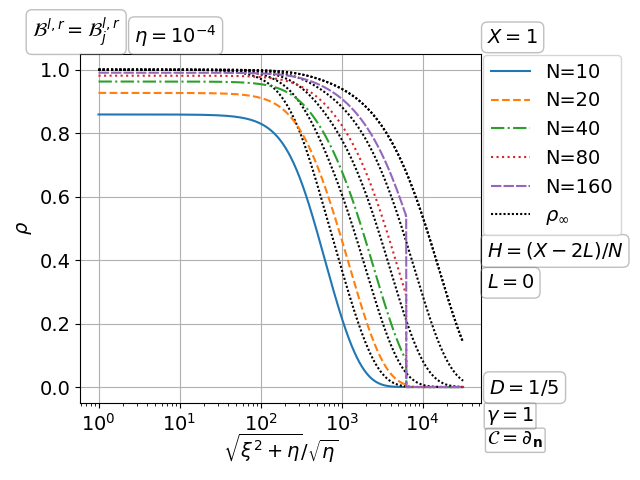}%
    \includegraphics[width=.44\textwidth,height=13em,trim=0 10 0 6,clip]{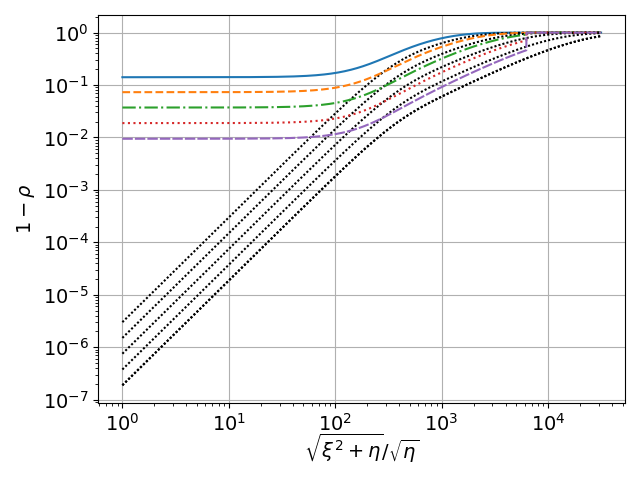}\\
    \includegraphics[width=.56\textwidth,trim=10 10 0 6,clip]{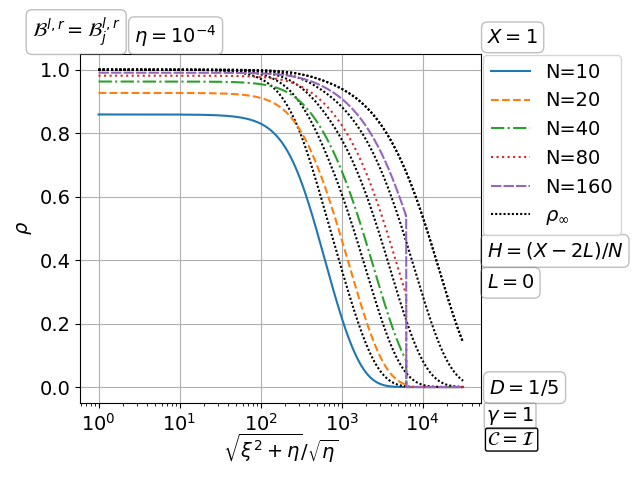}%
    \includegraphics[width=.44\textwidth,height=13em,trim=0 10 0 6,clip]{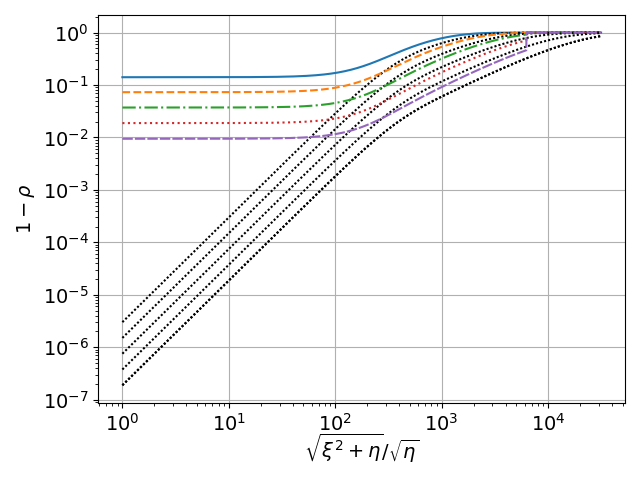}\\
    \includegraphics[width=.5\textwidth,trim=5 6 0 2,clip]{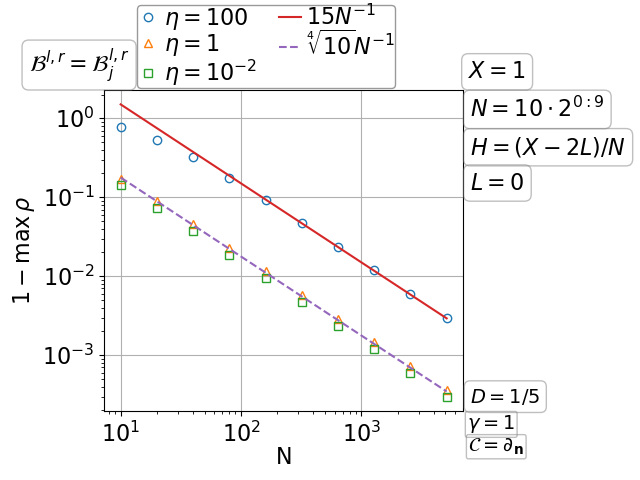}%
    \includegraphics[width=.5\textwidth,trim=5 6 0 2,clip]{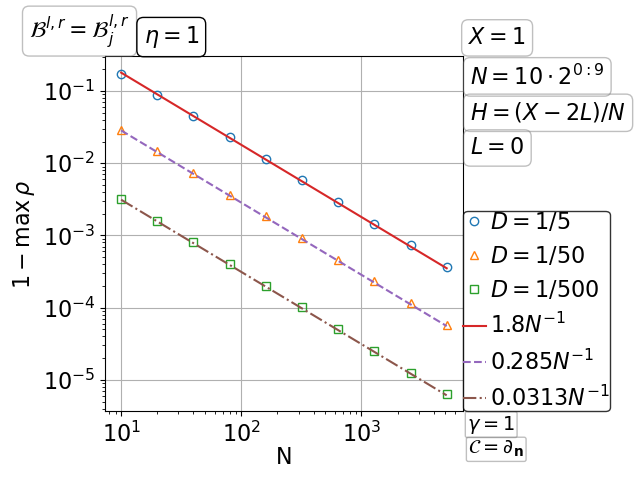}\\
    \includegraphics[width=.5\textwidth,trim=5 6 0 2,clip]{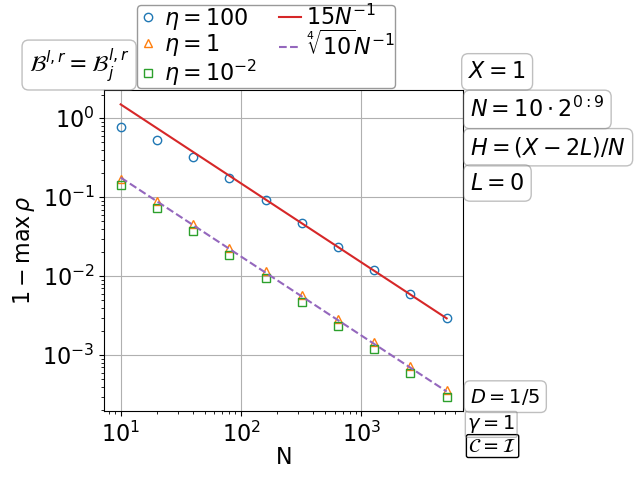}%
    \includegraphics[width=.5\textwidth,trim=5 6 0 2,clip]{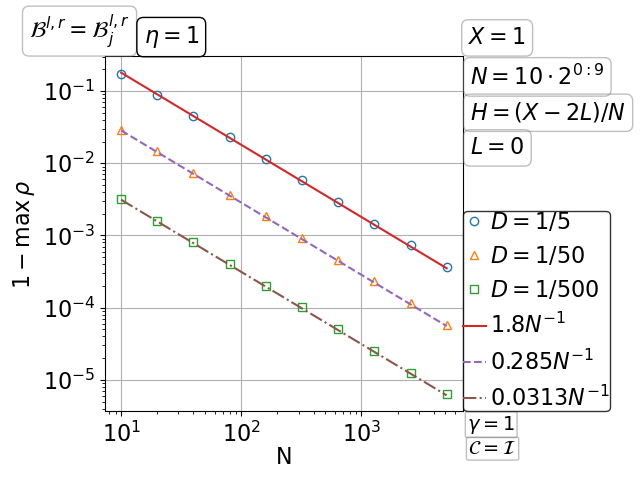}
    \caption{Convergence of the parallel Schwarz method with PML transmission for the infinite
      pipe diffusion on a fixed domain with increasing number of subdomains.}
    \label{figpp1p}
  \end{figure}

\end{paragraph}




\subsection{Double sweep Schwarz methods for the diffusion problem}


Double sweep Schwarz methods differ from parallel Schwarz methods in the order of updating subdomain
solutions, which is analogous to the difference between the symmetric Gauss-Seidel and Jacobi
iterations. Moreover, the block symmetric Gauss-Seidel method and the block Jacobi method are
special double sweep and parallel Schwarz methods with minimal overlap and Dirichlet
transmission condition. On {the one hand}, it is typical that the Gauss-Seidel iteration (sweep in only one
order of the unknowns) converges twice as fast as the Jacobi iteration, and the symmetric
Gauss-Seidel method is no more than twice as fast as the Gauss-Seidel method; see
\eg~\cn{hackbusch1994iterative}.  On {the other hand}, the optimal double sweep Schwarz method
converges in one iteration, much faster than the optimal parallel Schwarz method that converges in
$N$ iterations.  Of course, it comes at a price: the double sweep is inherently sequential between
subdomains, while the parallel Schwarz method allows all the subdomain problems to be solved
simultaneously in one iteration. {Our goal in this subsection is to investigate the convergence speed in the general setting
between these two limits.}


\subsubsection{Double sweep Schwarz method with Dirichlet transmission for the diffusion problem}

The method proposed by \cn{Schwarz:1870:UGA} uses Dirichlet transmission conditions and
overlap. It solves the subdomain problems in alternating order, now also known as double sweep
\cite{NN97}. At the matrix level, the classical Schwarz method can be viewed as an improvement of
the block symmetric Gauss-Seidel method by adding overlaps between blocks. So, yet another name for
the double sweep Schwarz method is the symmetric multiplicative Schwarz method \cite[Section
1.6]{TWbook}. It can be seen from \cn[Theorem 4.1]{bramble1991convergence} that the convergence {factor}
of the method is bounded from above by $1-O(H^2)$ for an overlap $L=O(H)$. In the classical
context with Dirichlet transmission, the symmetric multiplicative Schwarz method is rarely used
because there would be no benefit in convergence \cite{holst1997schwarz} compared to the (single
sweep) multiplicative Schwarz method.

\begin{paragraph}{Convergence with increasing number of fixed size subdomains}

  In the top half of Figure~\ref{figddn}, the convergence factor $\rho(\xi)$ of the double sweep
  Schwarz method is shown for growing $N$ (number of subdomains).  There seems to be a limiting
  curve $\rho_{\infty}:=\lim_{N\to\infty}\rho$ independent of the original boundary condition on
  $\{0, X\}\times [0, Y]$.  From the bottom half of Figure~\ref{figddn}, the scaling of the double
  sweep iteration turns out to be $1-\max_{\xi}\rho=O(1)\in(0,1)$ with a linear dependence on the
  overlap width $L$. The dependence on the coefficient $\eta>0$ is linear for both Neumann and
  Dirichlet problems.  Comparing with the parallel iteration studied in Figure~\ref{figpdn}, the
  double sweep iteration shown in Figure~\ref{figddn} does converge faster. More precisely,
  comparing the botttom left quandrants of the two figures, we find
  the double sweep iteration is about twice as fast as the parallel iteration. 

  
  \begin{figure}
    \centering \includegraphics[width=.56\textwidth,trim=10 10 0
    6,clip]{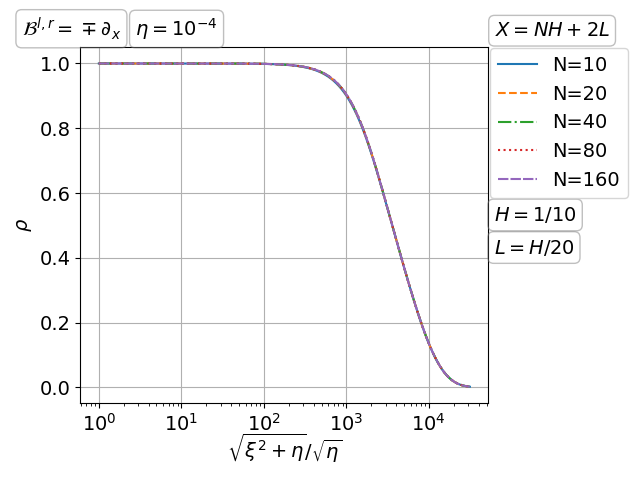}%
    \includegraphics[width=.44\textwidth,height=13em,trim=0 10 0 6,clip]{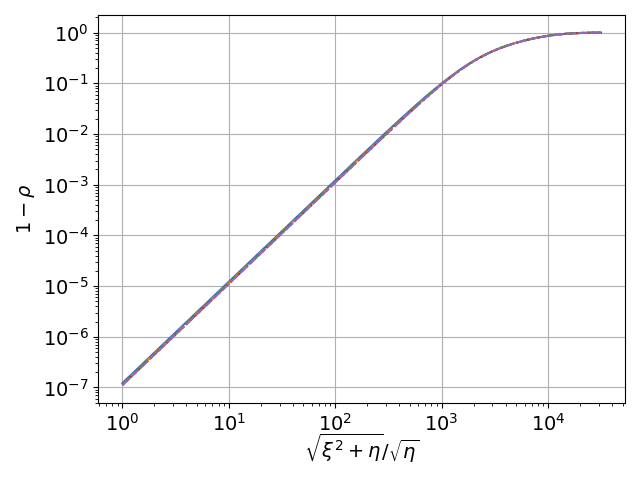}\\
    \includegraphics[width=.56\textwidth,trim=10 10 0
    6,clip]{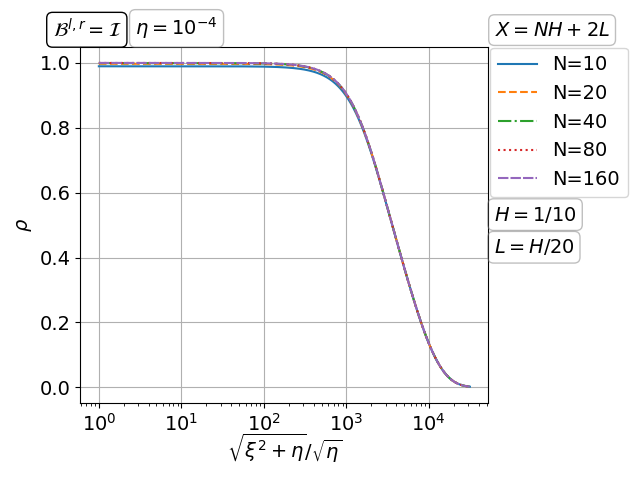}%
    \includegraphics[width=.44\textwidth,height=13em,trim=0 10 0 6,clip]{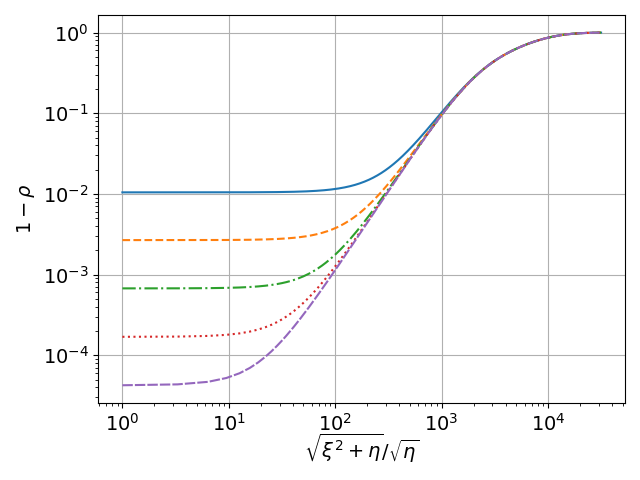}\\
    \includegraphics[width=.5\textwidth,trim=5 6 0
    2,clip]{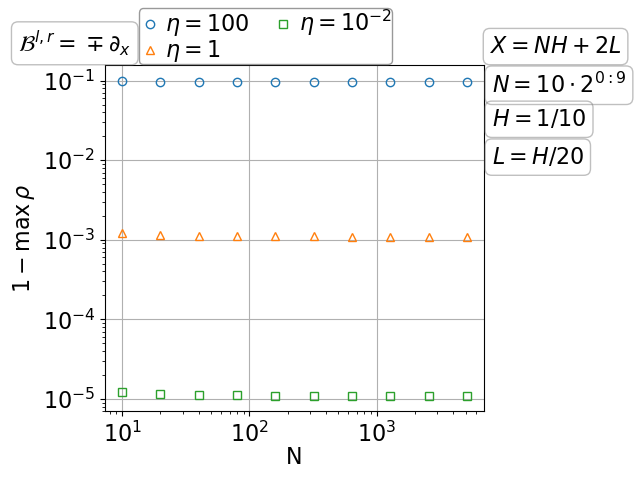}%
    \includegraphics[width=.5\textwidth,trim=5 6 0
    2,clip]{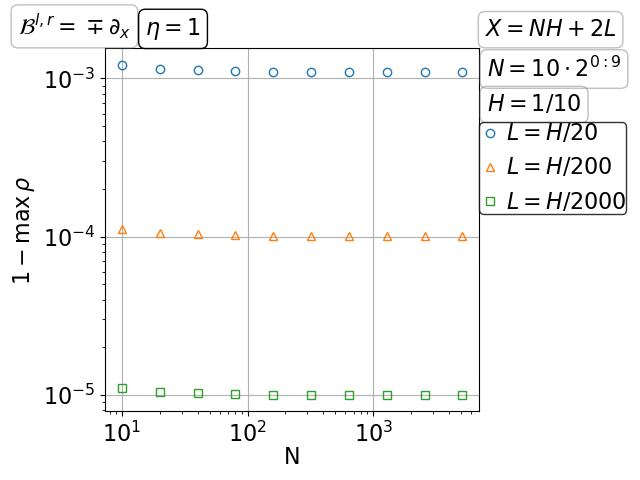}
    \includegraphics[width=.5\textwidth,trim=5 6 0
    2,clip]{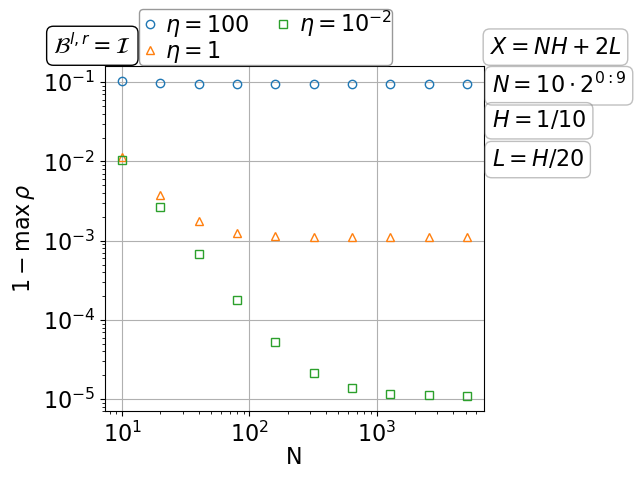}%
    \includegraphics[width=.5\textwidth,trim=5 6 0
    2,clip]{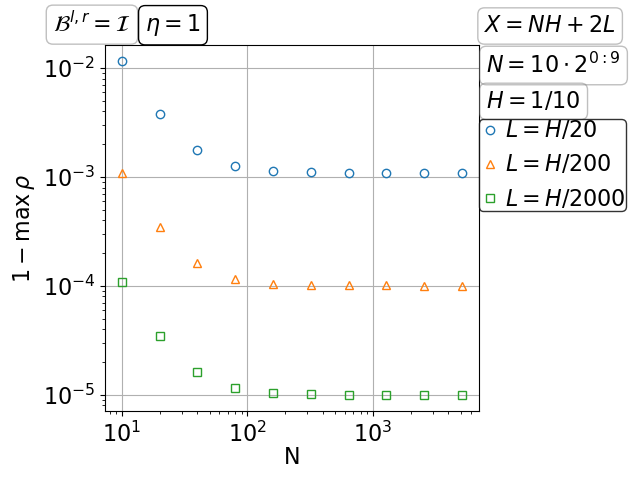}
    \caption{Convergence of the double sweep Schwarz method with Dirichlet transmission for
      diffusion with increasing number of fixed size subdomains.}
    \label{figddn}
  \end{figure}

\end{paragraph}

\begin{paragraph}{Convergence on a fixed domain with increasing number of subdomains}

  We find from the top half of Figure~\ref{figdd1} that the graph of the convergence factor
  $\rho(\xi)$ for the double sweep Schwarz method tends to the constant {$1$} when we refine the
  decomposition on a fixed domain. For small $\eta>0$, the convergence is much faster for the
  Dirichlet problem than for the Neumann problem, a phenomenon observed also with the parallel
  Schwarz method in Figure~\ref{figpd1}. The scaling of $\max_{\xi}\rho(\xi)=\rho(0)$ is illustrated
  in the bottom half of Figure~\ref{figdd1}. Note that a large overlap $L=O(H)$ is
  considered. We find $\max_{\xi}\rho=1-O(N^{-2})$, the same as of the parallel Schwarz
  method shown in Figure~\ref{figpd1}. The dependence on the coefficient $\eta>0$ and the overlap
  width $L$ is also the same as before. A more {careful comparison of} the two figures shows that the double
  sweep iteration is about twice as fast as the parallel iteration.
  
  \begin{figure}
    \centering \includegraphics[width=.56\textwidth,trim=10 10 0
    6,clip]{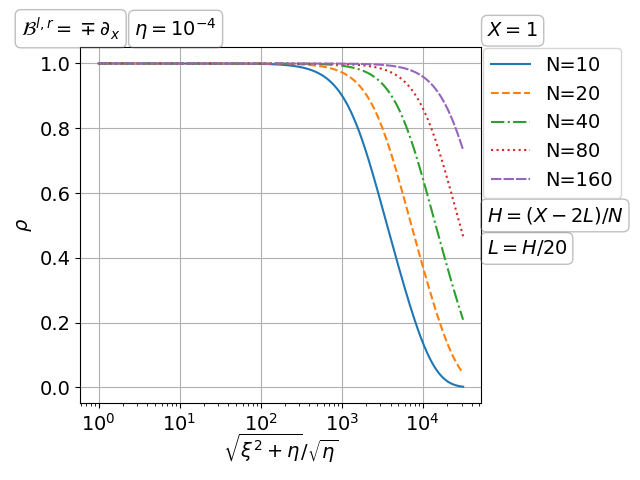}%
    \includegraphics[width=.44\textwidth,height=13em,trim=0 10 0 6,clip]{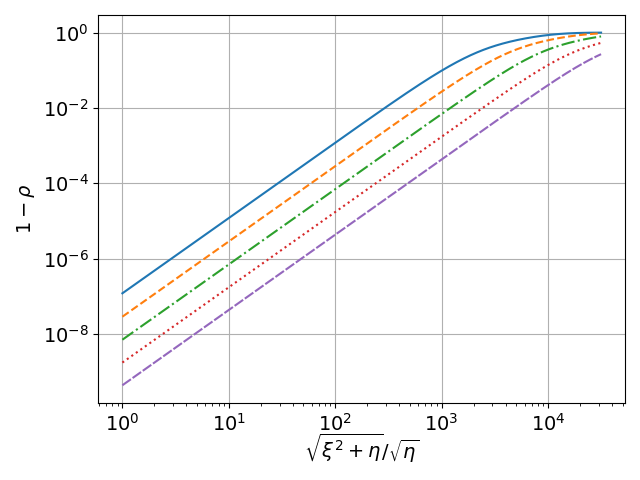}\\
    \includegraphics[width=.56\textwidth,trim=10 10 0
    6,clip]{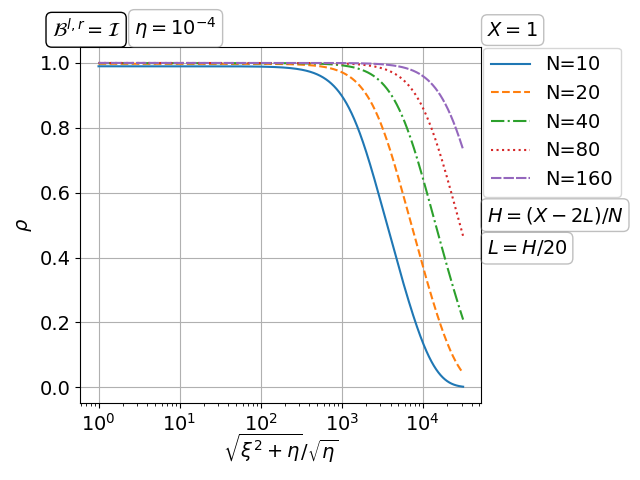}%
    \includegraphics[width=.44\textwidth,height=13em,trim=0 10 0 6,clip]{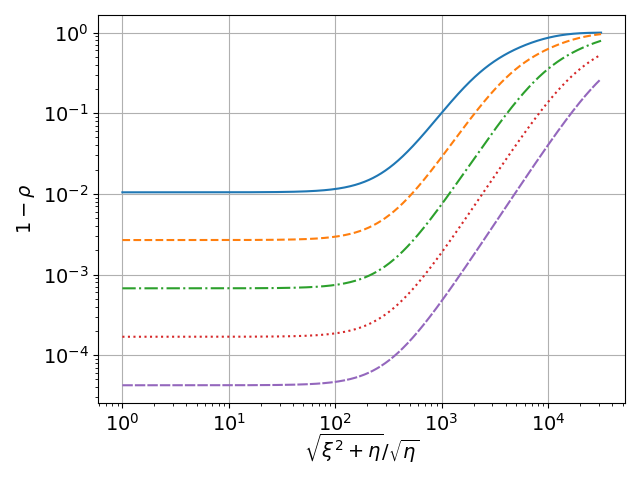}\\
    \includegraphics[width=.5\textwidth,trim=5 6 0
    2,clip]{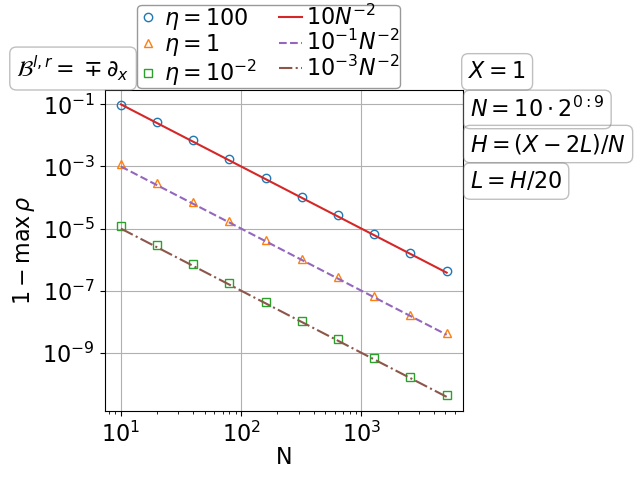}%
    \includegraphics[width=.5\textwidth,trim=5 6 0
    2,clip]{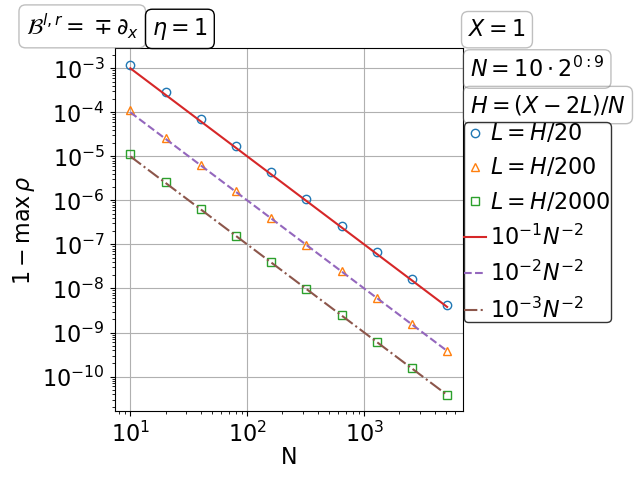}\\
    \includegraphics[width=.5\textwidth,trim=5 6 0
    2,clip]{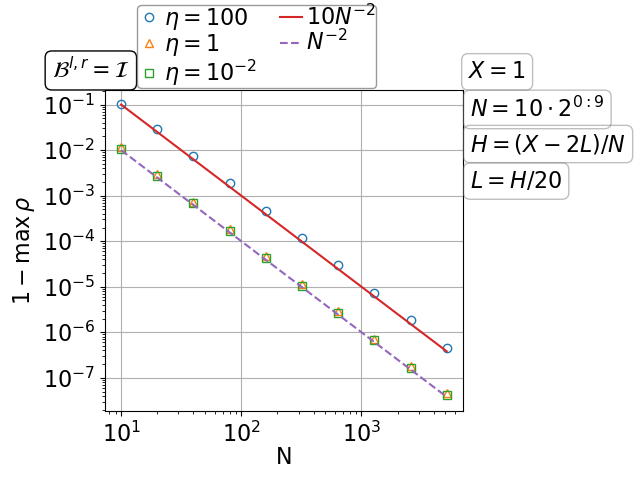}%
    \includegraphics[width=.5\textwidth,trim=5 6 0
    2,clip]{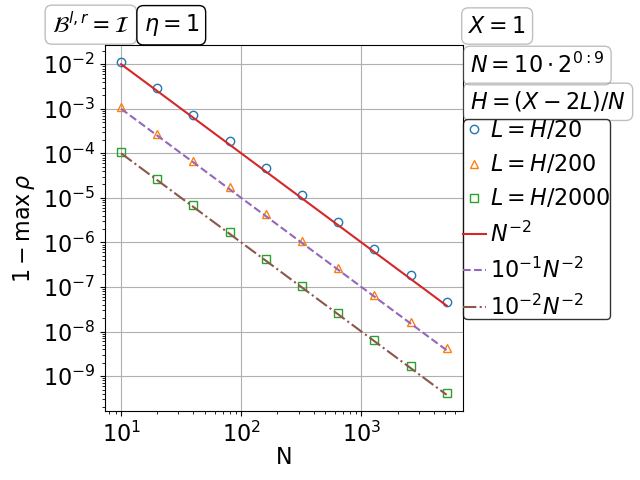}
    \caption{Convergence of the double sweep Schwarz method with Dirichlet transmission for
      diffusion on a fixed domain with increasing number of subdomains.}
    \label{figdd1}
  \end{figure}
  
\end{paragraph}


\subsubsection{Double sweep Schwarz method with Taylor of order zero transmission for the
  diffusion problem}

The Taylor of order zero condition is designed for domain truncation
of unbounded problems. It is interesting to see how such transmission
conditions work for other problems. As we did for the parallel
Schwarz method, three types of boundary value problems -- the
Dirichlet, Neumann and infinite pipe problems will all be
investigated.  Another important goal is to find out how much is
gained from the parallel iteration to the double sweep iteration.

\begin{paragraph}{Convergence with increasing number of fixed size subdomains}

  For the double sweep Schwarz method, there also seems to be a
  limiting curve $\rho_{\infty}=\lim_{N\to\infty}\rho$ independent of
  the boundary condition defined by $\mathcal{B}^{l,r}$; see
  Figure~\ref{figdt0n}. The maximum of $\rho$ is attained at an
  asymptotic point away from $\xi=0$ as $N\to\infty$. Since the
  scaling is found independent of $\mathcal{B}^{l,r}$, only one
  boundary condition is illustrated; see the bottom row of
  Figure~\ref{figdt0n}. The observation is $\rho = O(1)<1$ with a
  square-root dependence on the overlap width $L$. Compared to the
  subplot in the third row and first column of Figure~\ref{figpt0nd},
  the double sweep iteration is three to four times as fast as the
  parallel iteration.
  
  \begin{figure}
    \centering \includegraphics[width=.56\textwidth,trim=10 10 0
    6,clip]{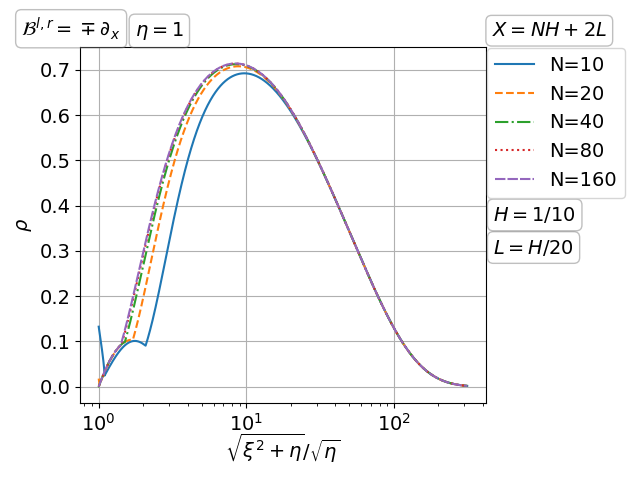}%
    \includegraphics[width=.44\textwidth,height=13em,trim=0 10 0 6,clip]{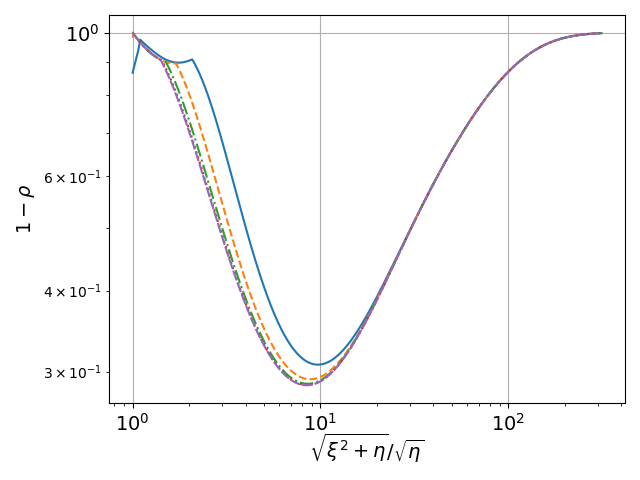}\\
    \includegraphics[width=.56\textwidth,trim=10 10 0
    6,clip]{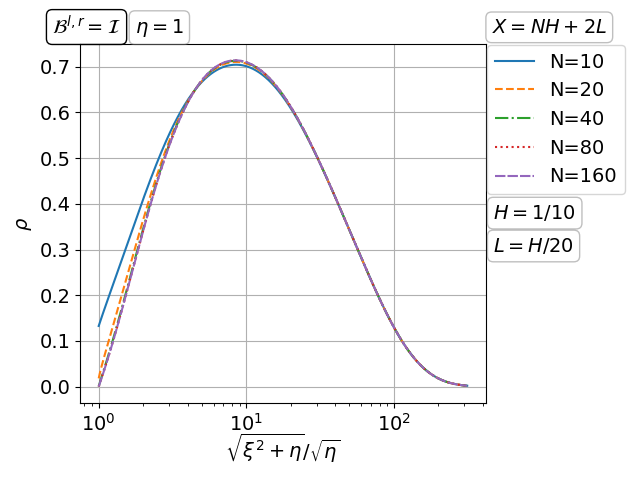}%
    \includegraphics[width=.44\textwidth,height=13em,trim=0 10 0 6,clip]{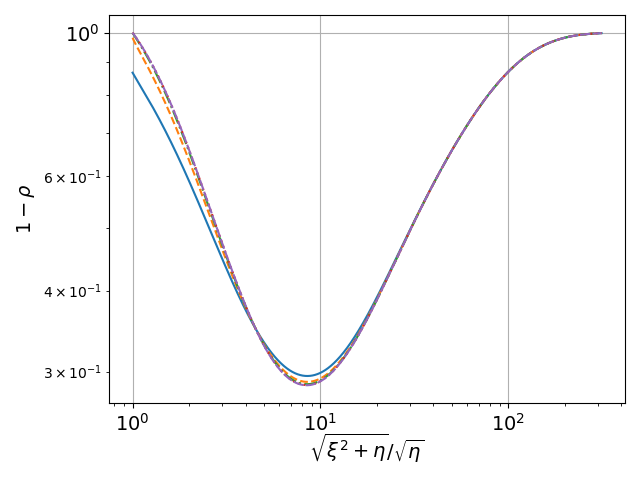}\\
    \includegraphics[width=.56\textwidth,trim=10 10 0
    6,clip]{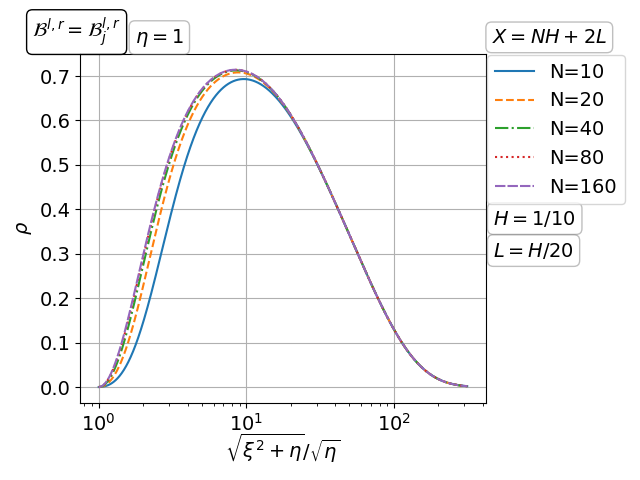}%
    \includegraphics[width=.44\textwidth,height=13em,trim=0 10 0
    6,clip]{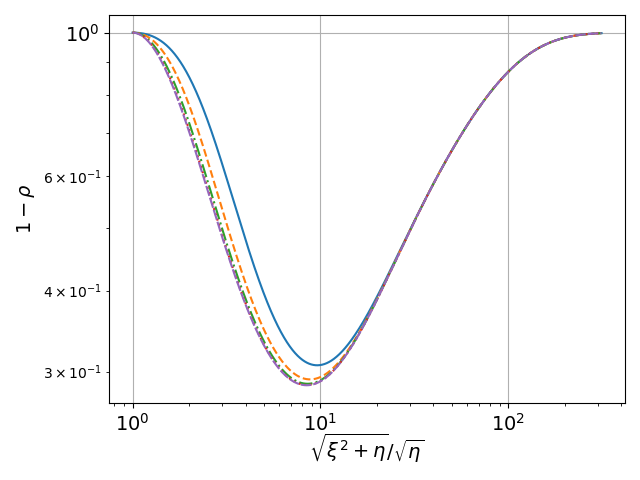}\\
    \includegraphics[width=.5\textwidth,trim=5 6 0
    2,clip]{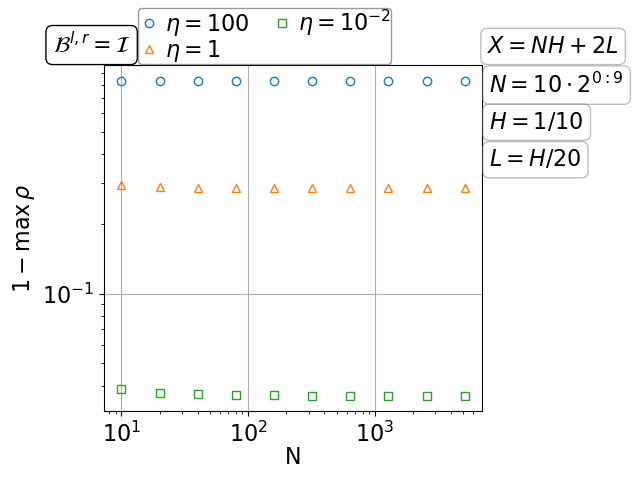}%
    \includegraphics[width=.5\textwidth,trim=5 6 0
    2,clip]{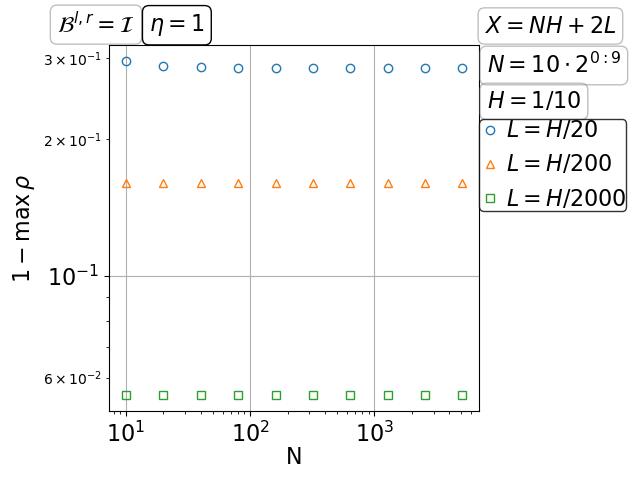}
    \caption{Convergence of the double sweep Schwarz method with Taylor of order zero
      transmission for diffusion with increasing number of fixed size
      subdomains.}
    \label{figdt0n}
  \end{figure}

\end{paragraph}

\begin{paragraph}{Convergence on a fixed domain with increasing number of subdomains}

  In the first three rows of Figure~\ref{figdt01}, we see a deterioration of the convergence as we
  refine the decomposition of a fixed domain: a peak of the convergence factor $\rho$ rises and
  expands to the right to form a plateau. The shape and location of the plateu are almost the same
  for the three types of boundary value problems. Clearly, the double sweep Schwarz method is not a
  smoother. The scaling of $\max_{\xi}\rho$ is illustrated in the bottom row of
  Figure~\ref{figdt01} for the Dirichlet problem. We find
  $\max_{\xi}\rho=1-O(N^{-1})$. The hidden constant factor in $O(N^{-1})$ has a
  square-root dependence on the coefficient $\eta>0$ and a mild dependence on the overlap width
  $L\to 0^+$.  Compared to the parallel iteration explored in Figure~\ref{figpt01d}, the double
  sweep iteration scales the same way but converges $3\sim 4$ times as fast.
  
  \begin{figure}
    \centering \includegraphics[width=.56\textwidth,trim=10 10 0
    6,clip]{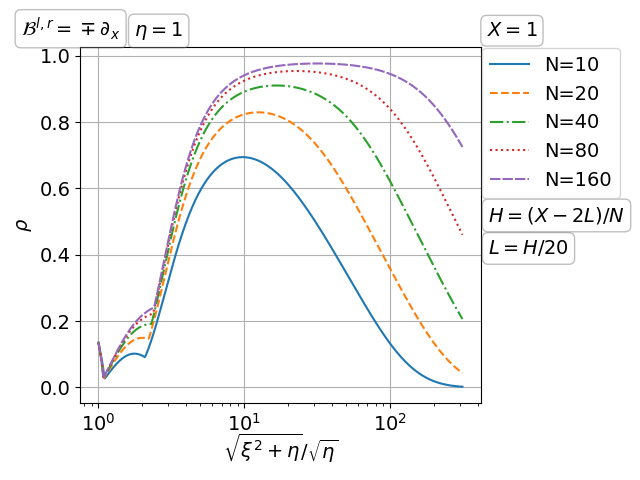}%
    \includegraphics[width=.44\textwidth,height=13em,trim=0 10 0 6,clip]{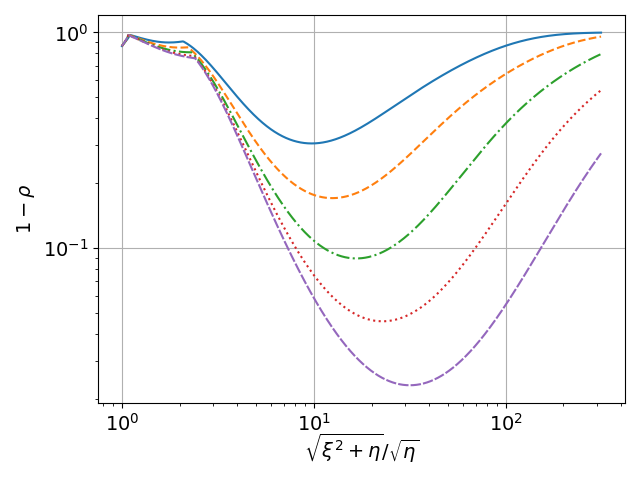}\\
    \includegraphics[width=.56\textwidth,trim=10 10 0
    6,clip]{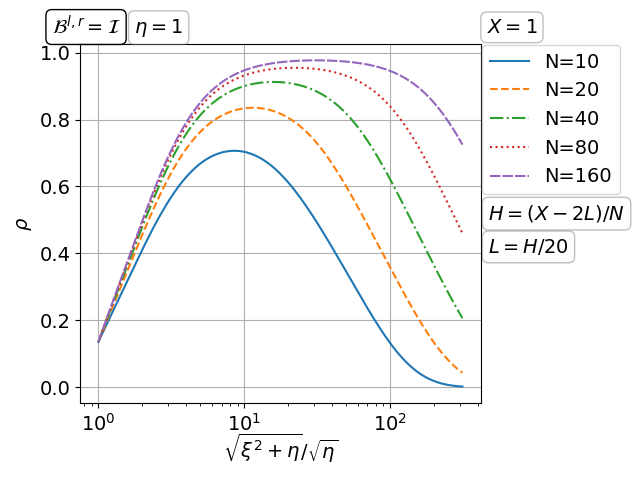}%
    \includegraphics[width=.44\textwidth,height=13em,trim=0 10 0 6,clip]{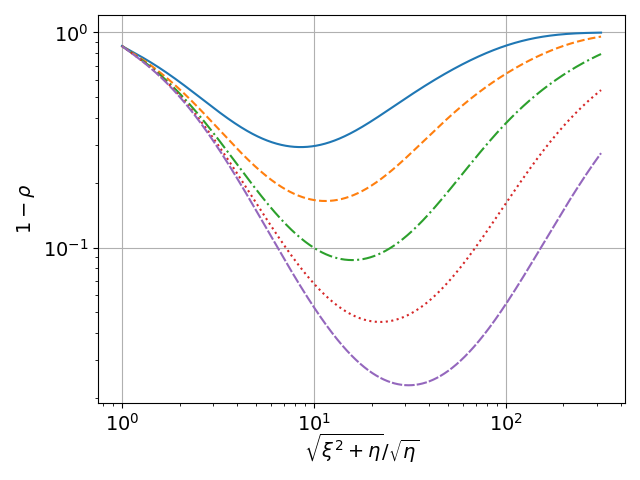}\\
    \includegraphics[width=.56\textwidth,trim=10 10 0
    6,clip]{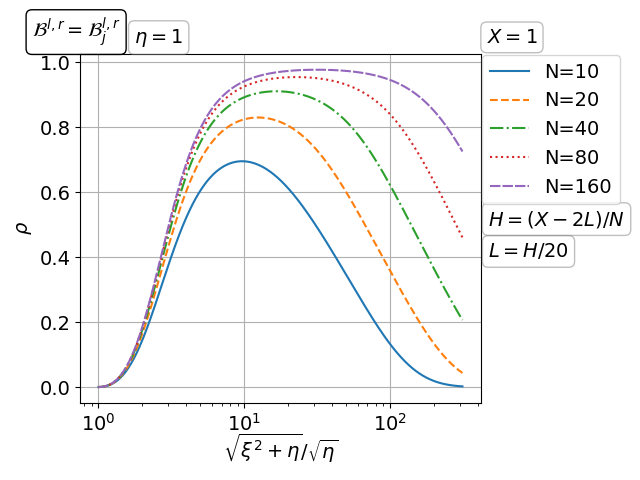}%
    \includegraphics[width=.44\textwidth,height=13em,trim=0 10 0
    6,clip]{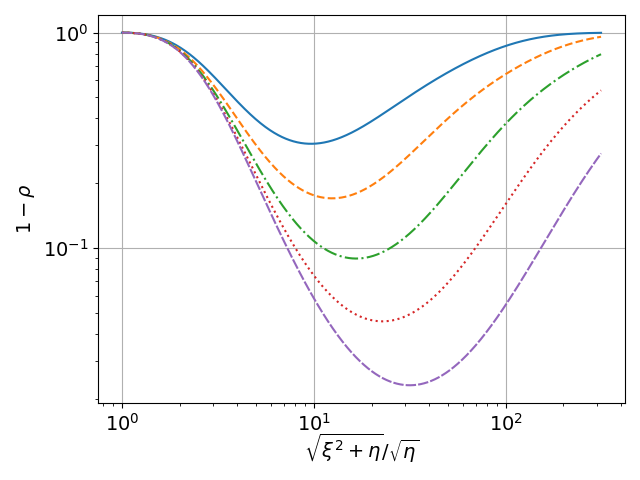}\\
    \includegraphics[width=.5\textwidth,trim=5 6 0
    2,clip]{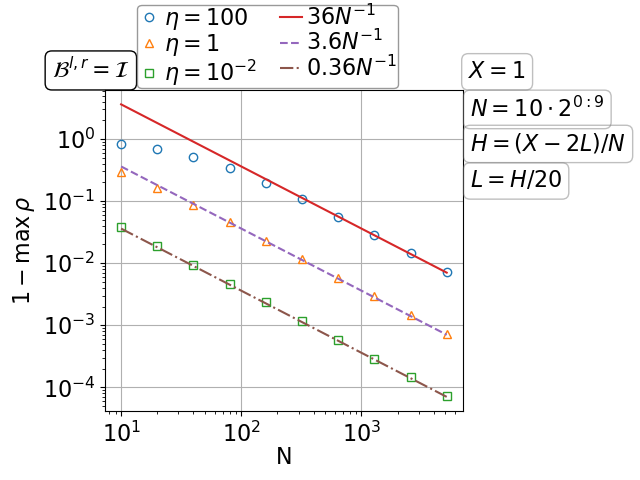}%
    \includegraphics[width=.5\textwidth,trim=5 6 0
    2,clip]{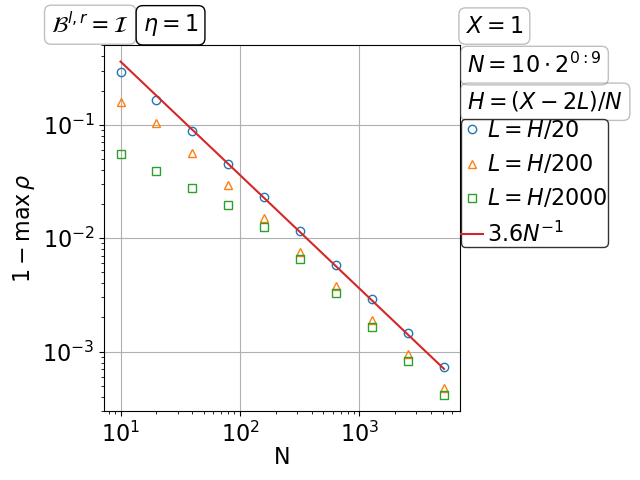}
    \caption{Convergence of the double sweep Schwarz method with Taylor of order zero
      transmission for diffusion on a fixed domain with increasing number of
      subdomains.}
    \label{figdt01}
  \end{figure}
 
\end{paragraph}


\subsubsection{Double sweep Schwarz method with PML transmission for the diffusion problem}

So far, the transmission conditions that we have explored for the double sweep Schwarz methods are
the Dirichlet and Taylor of order zero conditions, which both rely on the overlap width $L>0$ to
control the high frequency error. Compared to their parallel Schwarz counterparts, these double
sweep Schwarz methods do not improve the scaling orders in $N$ (number of subdomains) but only the
constant factors. Will the comparison carry over to the PML condition? For the parallel Schwarz
methods on a fixed domain, we have seen that a fixed PML condition leads to the convergence rate
$\max_{\xi}\rho=1-O(N^{-1})$, the same as the Taylor of order zero condition does, which
is not strange given that the optimal parallel Schwarz method converges in $N$ iterations. But the
optimal double sweep Schwarz method converges in just one iteration. So, if a double sweep Schwarz
method deteriorates with growing $N$, then we can owe the deterioration to the non-optimal
transmission condition, \eg, the Taylor of order zero condition. Can the PML condition as a more
accurate domain truncation technique push the double sweep Schwarz method toward the optimal one?
How good a PML in terms of the PML width $D$ and strength $\gamma$ do we need to achieve the constant
scaling {independent of} $N$? Let us find out the answers below.

\begin{paragraph}{Convergence with increasing number of fixed size subdomains}

  Three types of boundary value problems -- Neumann, Dirichlet and
  infinite pipe problems are explored in individual figures; see
  Figure~\ref{figdpnn}, Figure~\ref{figdpnd} and
  Figure~\ref{figdpnp}. In each case, we test both the Dirichlet and
  Neumann terminating conditions for the PML. For example, in
  Figure~\ref{figdpnn}, the first row is for the Neumann terminated
  PML and the second row is for the Dirichlet terminated PML. We see
  as $N\to \infty$, the graph of $\rho(\xi)$ tends to some limiting
  profile, independent of the PML terminating condition and the
  original boundary condition. The existence of
  $\lim_{N\to\infty}\rho<1$ implies the scaling
  $\max_{\xi}\rho(\xi)=\rho(0)=O(1)<1$ for all $N$, which is
  confirmed with scaling plots in the bottom halves of the
  figures. For moderate $N$, we find it better to use the same
  condition as for the original problem to terminate the PML. As
  expected, the convergence rate improves for bigger PML width $D$ but
  deteriorates for smaller coefficient $\eta>0$. Compared to the
  parallel Schwarz method shown in Figure~\ref{figppnn},
  Figure~\ref{figppnd} and Figure~\ref{figppnp}, the double sweep
  method is about $10$ times as fast.
  
  \begin{figure}
    \centering \includegraphics[width=.56\textwidth,trim=10 10 0 6,clip]{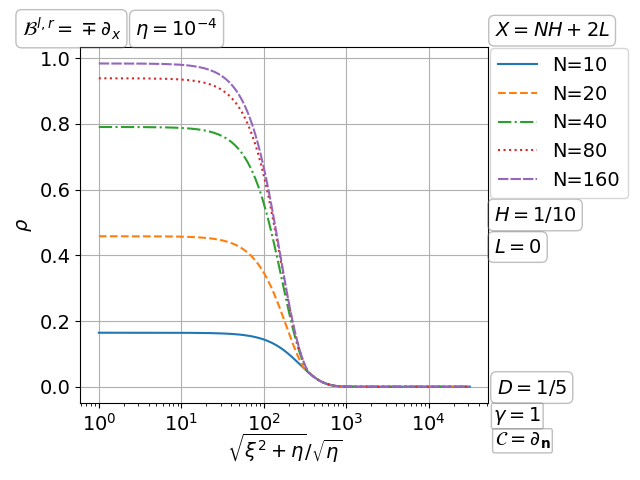}%
    \includegraphics[width=.44\textwidth,height=13em,trim=0 10 0 6,clip]{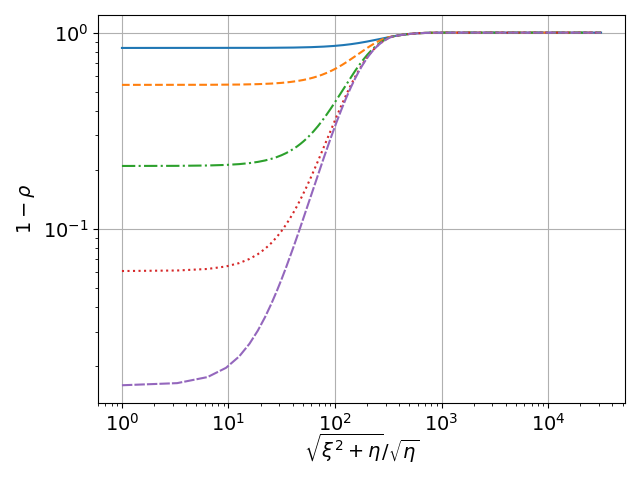}\\
    \includegraphics[width=.56\textwidth,trim=10 10 0 6,clip]{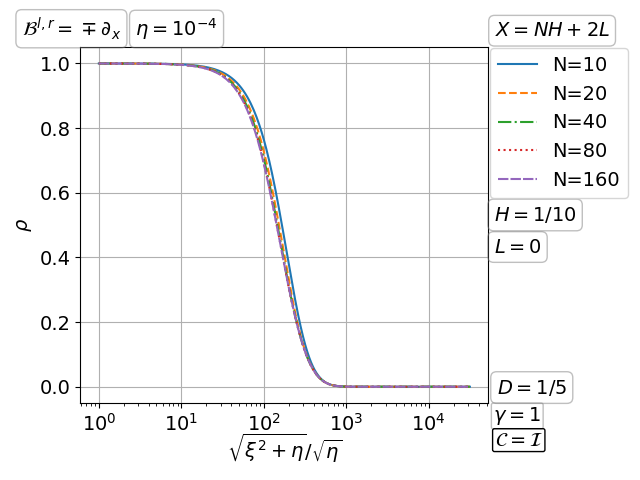}%
    \includegraphics[width=.44\textwidth,height=13em,trim=0 10 0 6,clip]{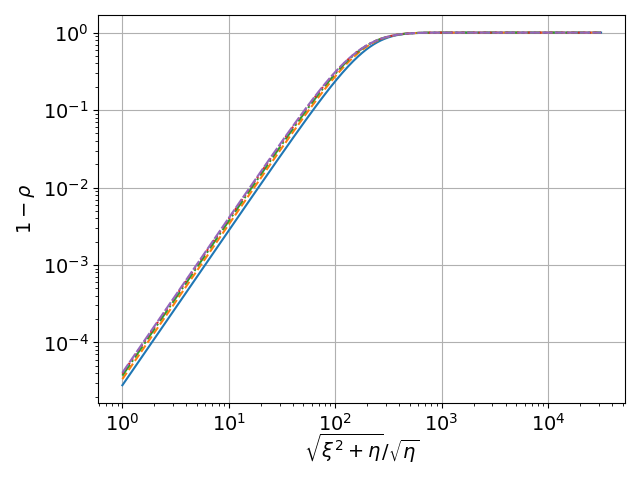}\\
    \includegraphics[width=.5\textwidth,trim=5 6 0 2,clip]{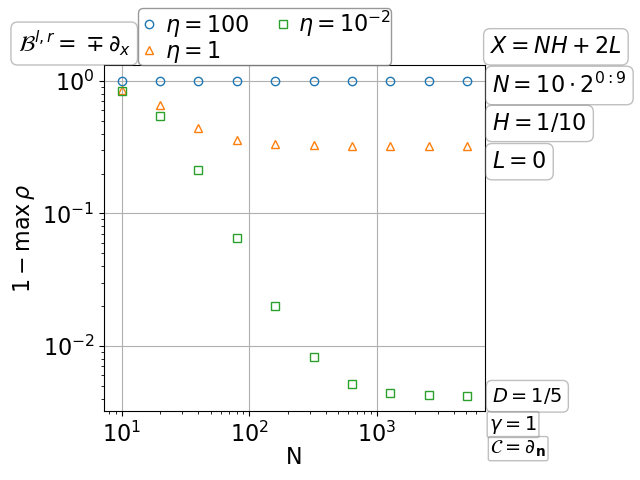}%
    \includegraphics[width=.5\textwidth,trim=5 6 0 2,clip]{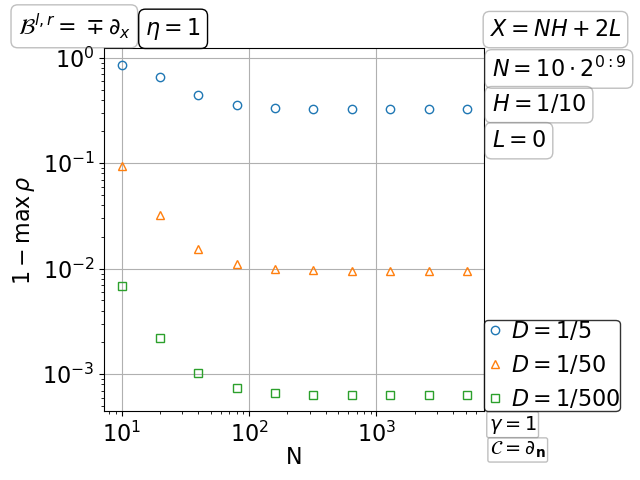}
    \includegraphics[width=.5\textwidth,trim=5 6 0 2,clip]{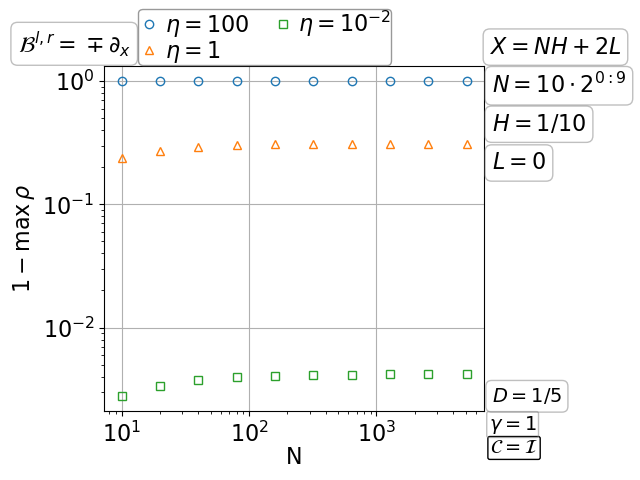}%
    \includegraphics[width=.5\textwidth,trim=5 6 0 2,clip]{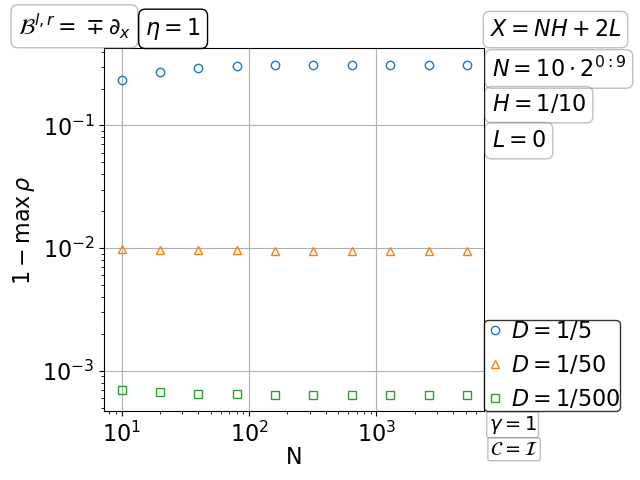}
    \caption{Convergence of the double sweep Schwarz method with PML transmission for the Neumann
      problem of diffusion with increasing number of fixed size subdomains.}
    \label{figdpnn}
  \end{figure}

  \begin{figure}
    \centering%
    \includegraphics[width=.56\textwidth,trim=10 10 0 6,clip]{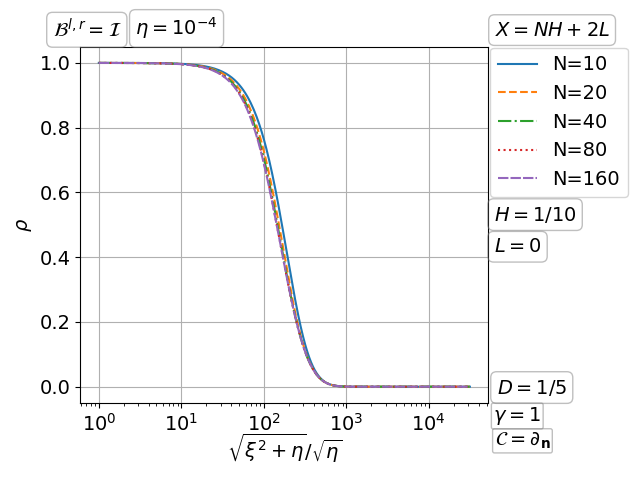}%
    \includegraphics[width=.44\textwidth,height=13em,trim=0 10 0 6,clip]{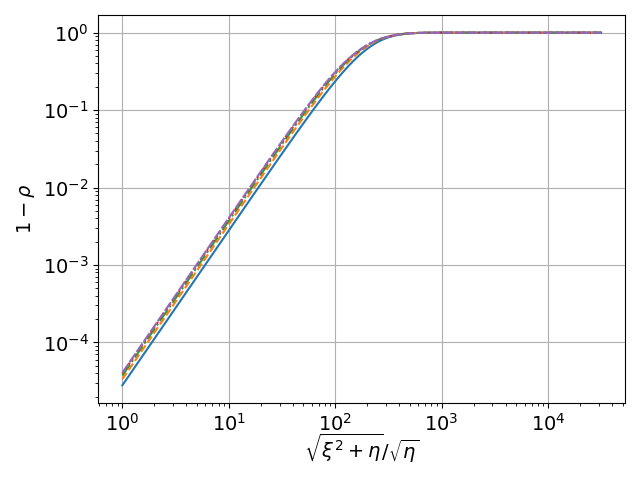}\\
    \includegraphics[width=.56\textwidth,trim=10 10 0 6,clip]{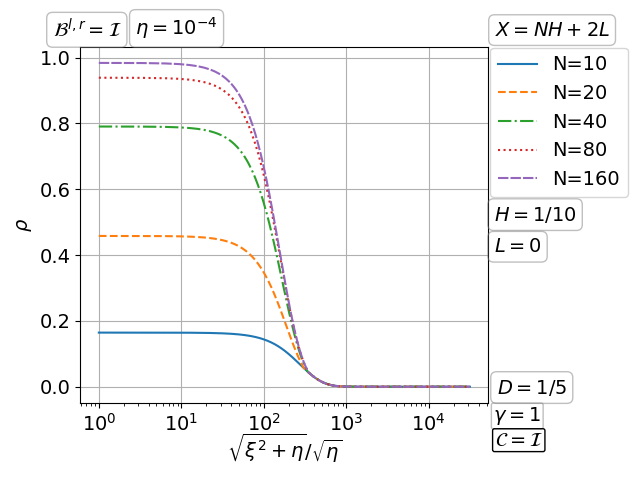}%
    \includegraphics[width=.44\textwidth,height=13em,trim=0 10 0 6,clip]{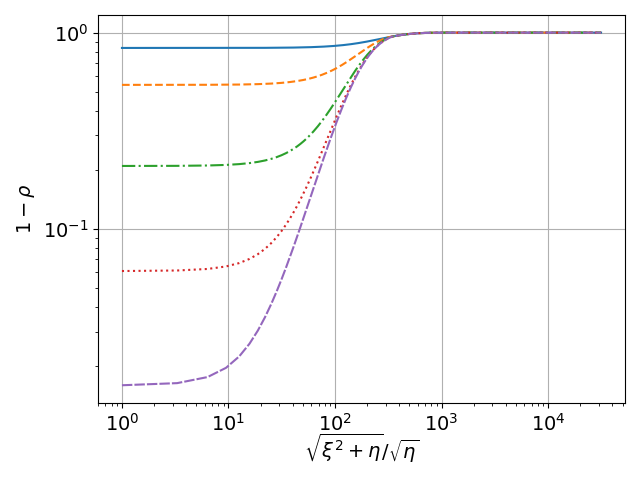}\\
    \includegraphics[width=.5\textwidth,trim=5 6 0 2,clip]{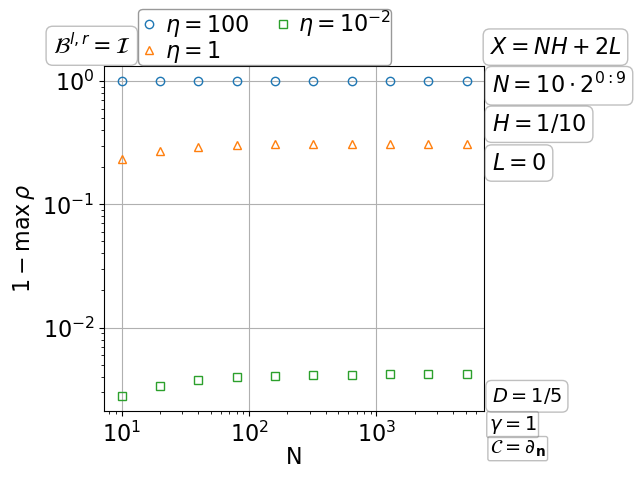}%
    \includegraphics[width=.5\textwidth,trim=5 6 0 2,clip]{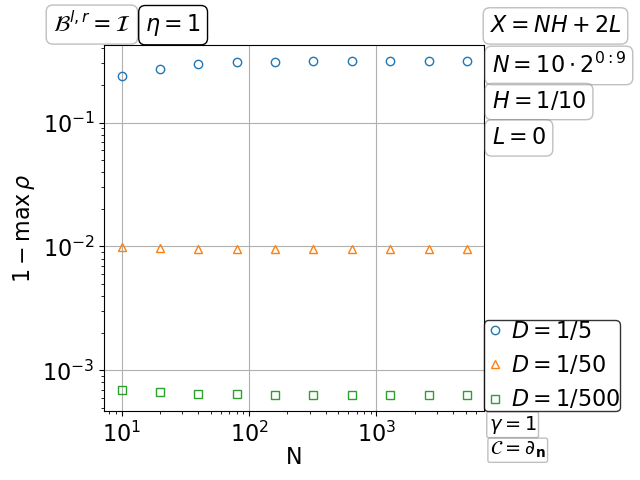}
    \includegraphics[width=.5\textwidth,trim=5 6 0 2,clip]{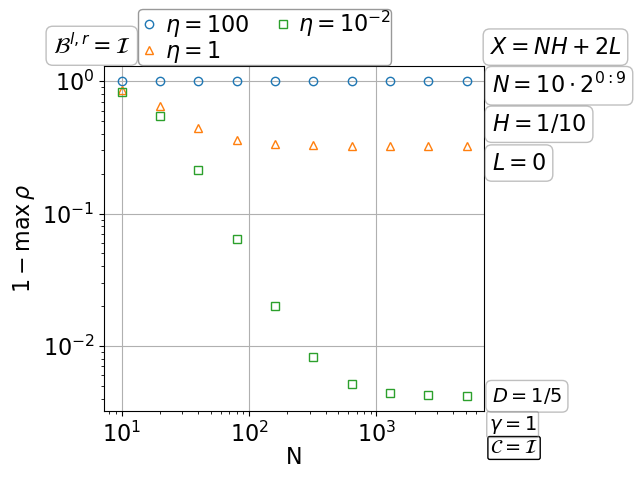}%
    \includegraphics[width=.5\textwidth,trim=5 6 0 2,clip]{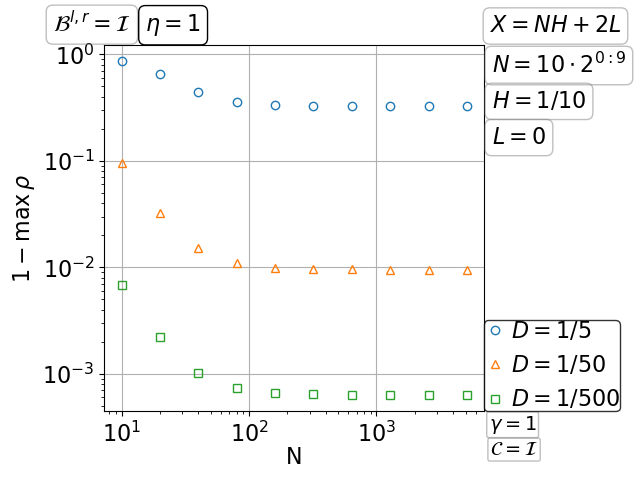}
    \caption{Convergence of the double sweep Schwarz method with PML transmission for the
      Dirichlet problem of diffusion with increasing number of fixed size subdomains.}
    \label{figdpnd}
  \end{figure}

  \begin{figure}
    \centering%
    \includegraphics[width=.56\textwidth,trim=10 10 0 6,clip]{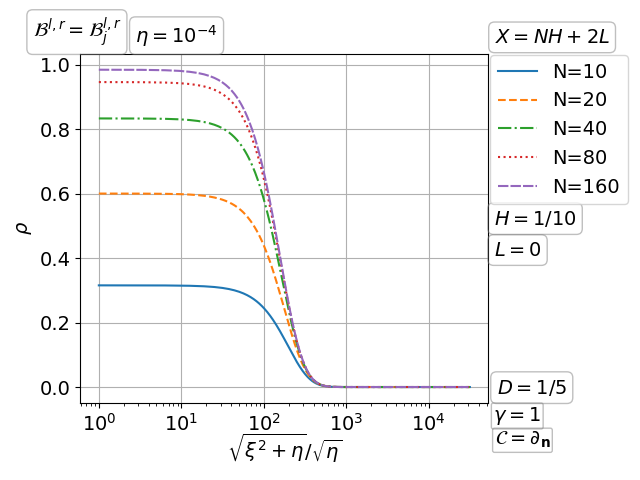}%
    \includegraphics[width=.44\textwidth,height=13em,trim=0 10 0 6,clip]{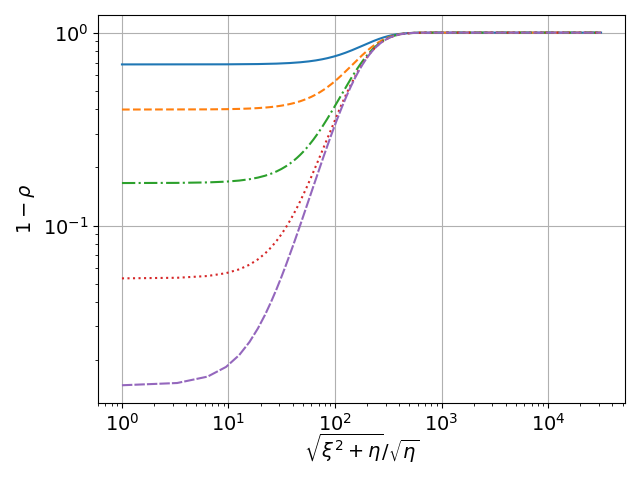}\\
    \includegraphics[width=.56\textwidth,trim=10 10 0 6,clip]{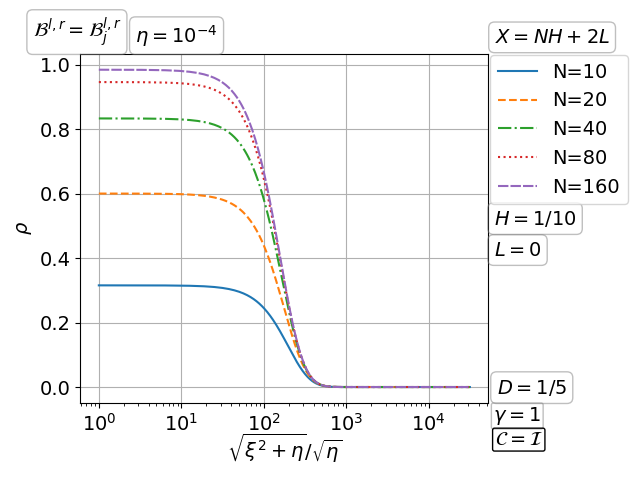}%
    \includegraphics[width=.44\textwidth,height=13em,trim=0 10 0 6,clip]{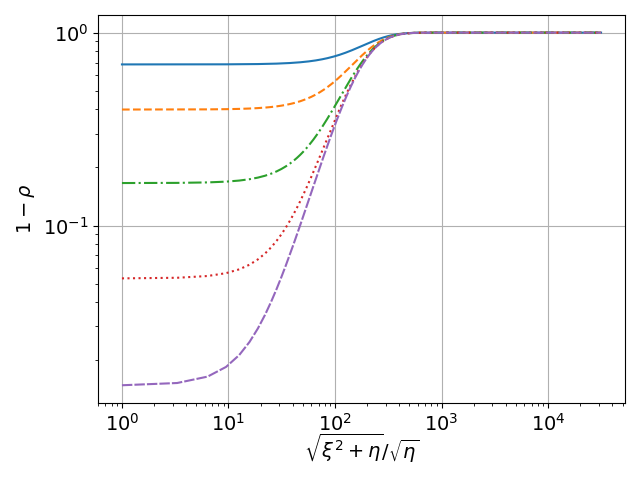}\\
    \includegraphics[width=.5\textwidth,trim=5 6 0 2,clip]{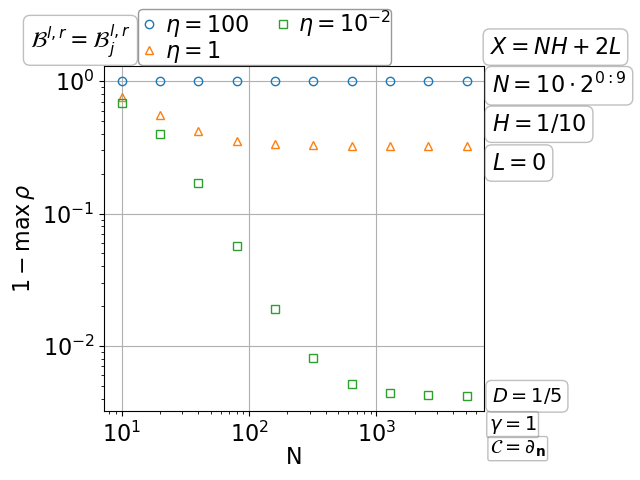}%
    \includegraphics[width=.5\textwidth,trim=5 6 0 2,clip]{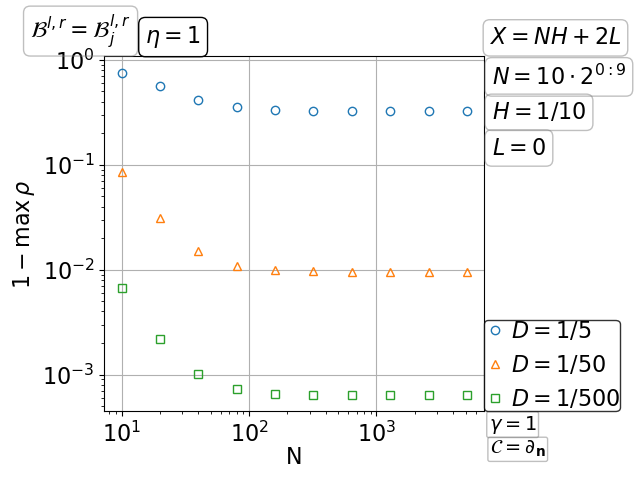}
    \includegraphics[width=.5\textwidth,trim=5 6 0 2,clip]{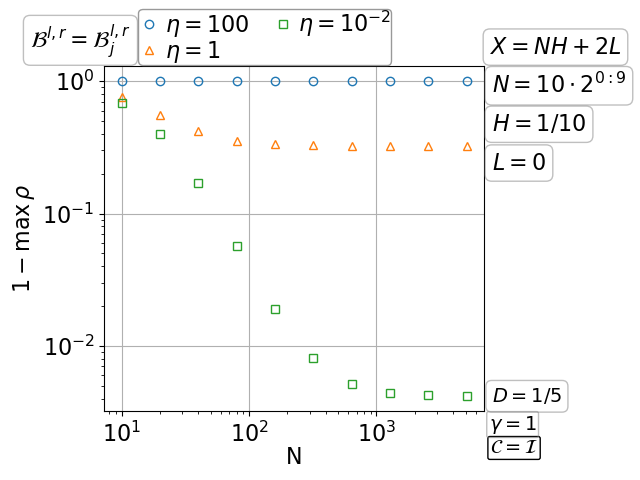}%
    \includegraphics[width=.5\textwidth,trim=5 6 0 2,clip]{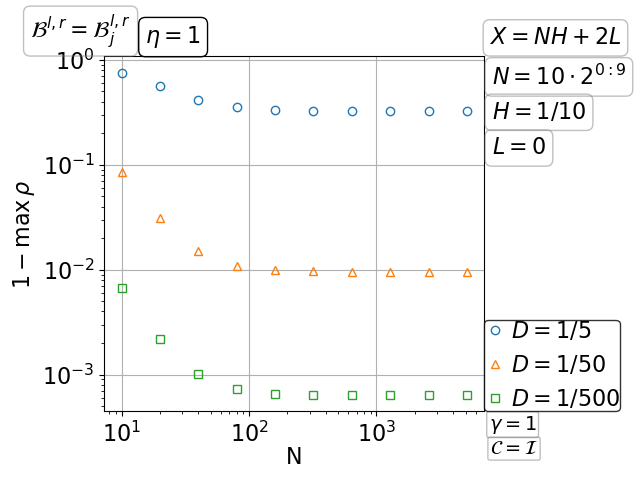}
    \caption{Convergence of the double sweep Schwarz method with PML transmission for the infinite
      pipe diffusion with increasing number of fixed size subdomains.}
    \label{figdpnp}
  \end{figure}
  
\end{paragraph}

\begin{paragraph}{Convergence on a fixed domain with increasing number of subdomains}

  In this scaling, we solve a fixed problem with more and more subdomains. Since we are using the
  PML transmission, the PML width $D$ can be fixed rather than shrinking -- an advantage over the
  overlap. The convergence behavior of the double sweep Schwarz method is illustrated in
  Figure~\ref{figdp1n}, Figure~\ref{figdp1d} and Figure~\ref{figdp1p}. Intriguingly, by fixing $D$,
  the double sweep
  Schwarz method becomes scalable, which does neither happen to the parallel+PML combination nor to
  the double sweep+Taylor combination!  Not only the maximum of $\rho$ but also the whole graph of
  $\rho$ is scalable. That is, a limiting profile of $\rho$ seems to exist as $N\to\infty$.
  Moreover, the terminating condition of PML has a remarkable impact, \eg, when $\eta=10^{-2}$ and
  $D=1/5$, the difference can be by a factor of about 200. The convergence deterioration with small
  coefficient $\eta>0$ is mild when the good PML termination is used.  Also, it seems that
  $\max_{\xi}\rho$ decays exponentially to zero as $D$ increases. Hence, given the PML strength
  $\gamma$, a fixed PML width $D$ is not only sufficient for the scalability but also necessary, for
  which more evidence can be found in \cn{GZDD25}.
  
  \begin{figure}
    \centering%
    \includegraphics[width=.56\textwidth,trim=10 10 0 6,clip]{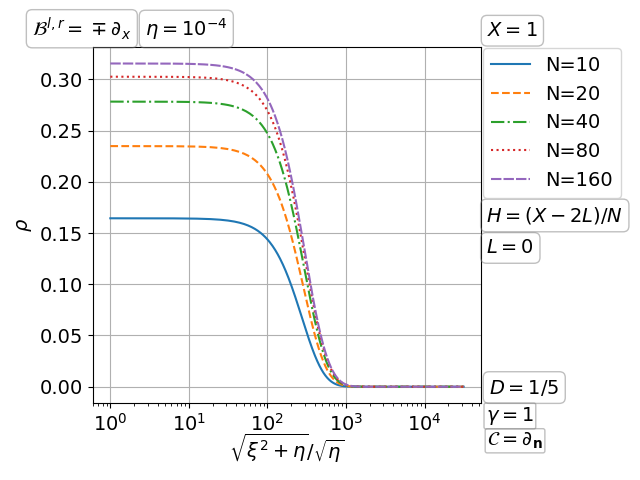}%
    \includegraphics[width=.44\textwidth,height=13em,trim=0 10 0 6,clip]{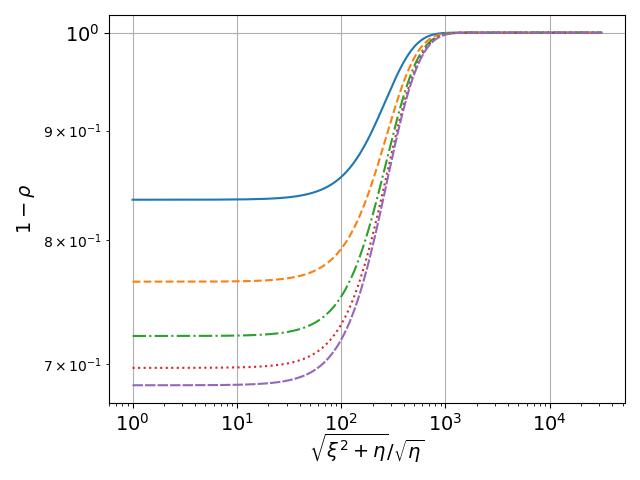}\\
    \includegraphics[width=.56\textwidth,trim=10 10 0 6,clip]{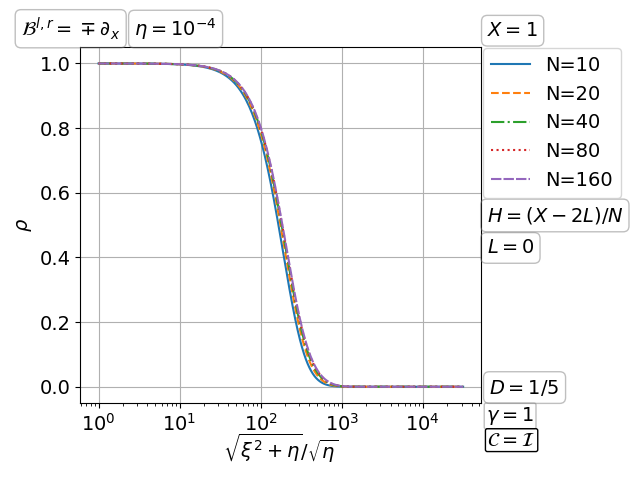}%
    \includegraphics[width=.44\textwidth,height=13em,trim=0 10 0 6,clip]{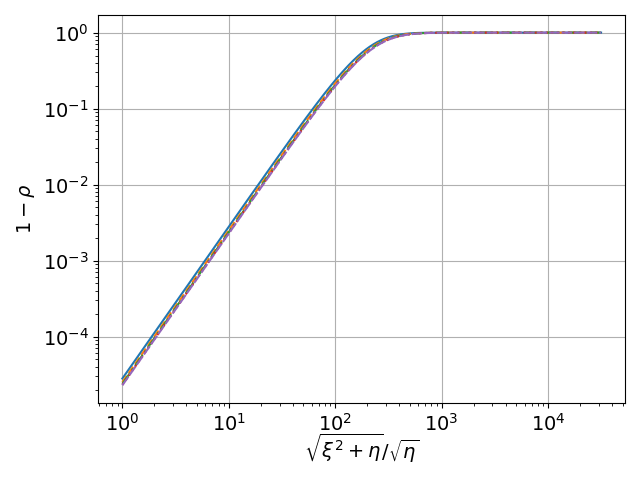}\\
    \includegraphics[width=.5\textwidth,trim=5 6 0 2,clip]{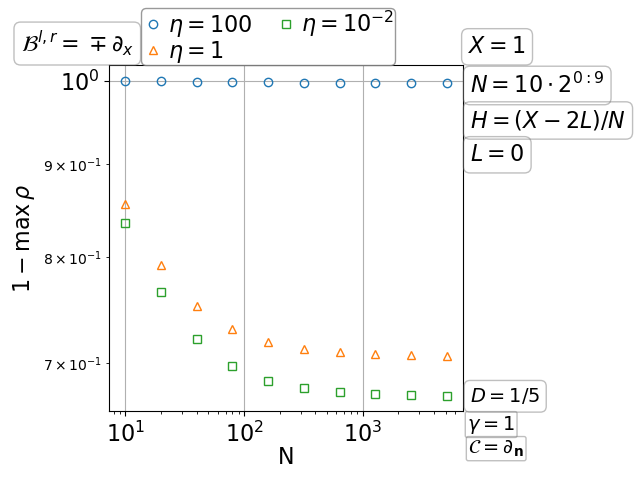}%
    \includegraphics[width=.5\textwidth,trim=5 6 0 2,clip]{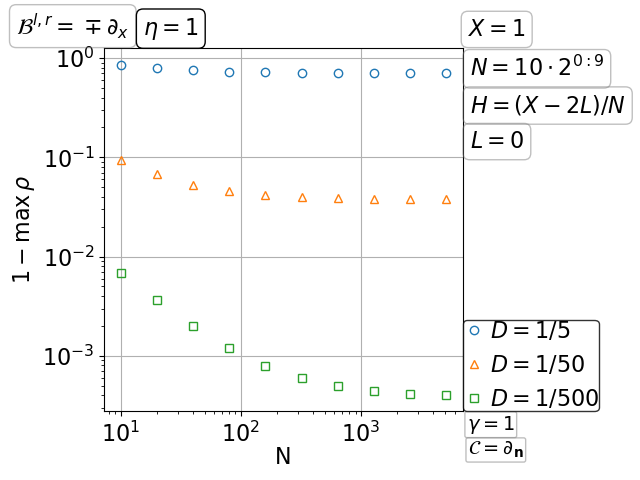}\\
    \includegraphics[width=.5\textwidth,trim=5 6 0 2,clip]{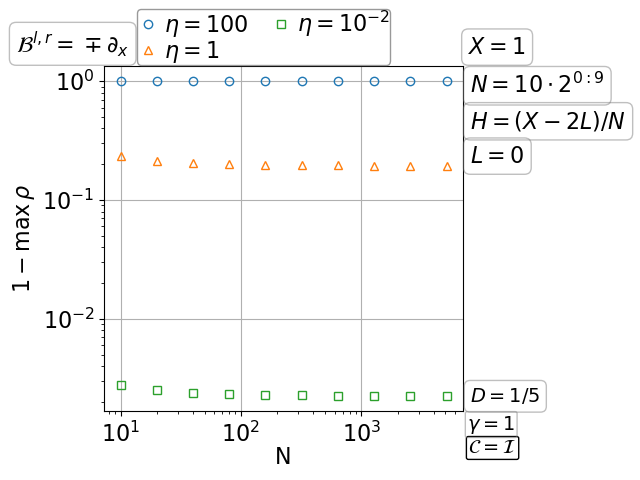}%
    \includegraphics[width=.5\textwidth,trim=5 6 0 2,clip]{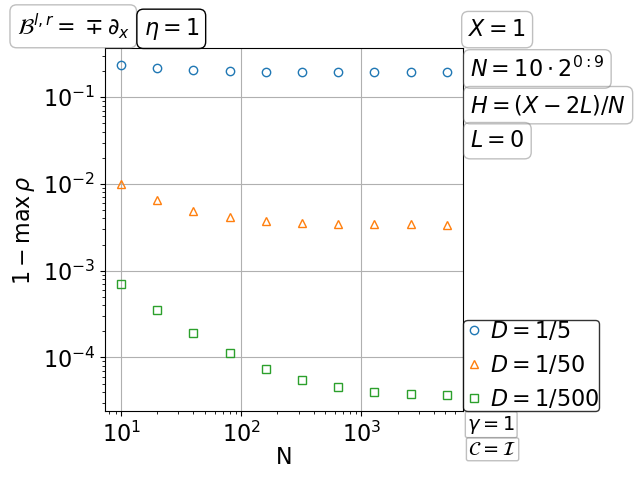}
    \caption{Convergence of the double sweep Schwarz method with PML transmission for the Neumann
      problem of diffusion on a fixed domain with increasing number of subdomains.}
    \label{figdp1n}
  \end{figure}

  \begin{figure}
    \centering%
    \includegraphics[width=.56\textwidth,trim=10 10 0 6,clip]{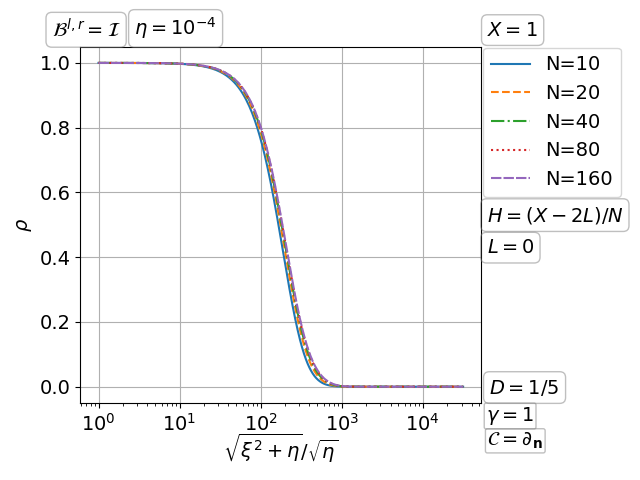}%
    \includegraphics[width=.44\textwidth,height=13em,trim=0 10 0 6,clip]{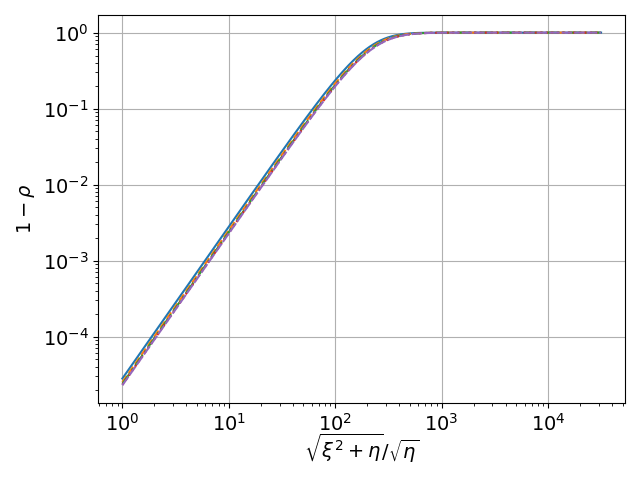}\\
    \includegraphics[width=.56\textwidth,trim=10 10 0 6,clip]{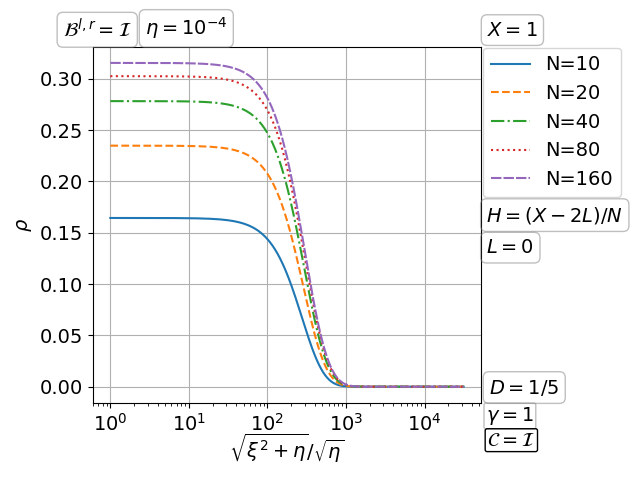}%
    \includegraphics[width=.44\textwidth,height=13em,trim=0 10 0 6,clip]{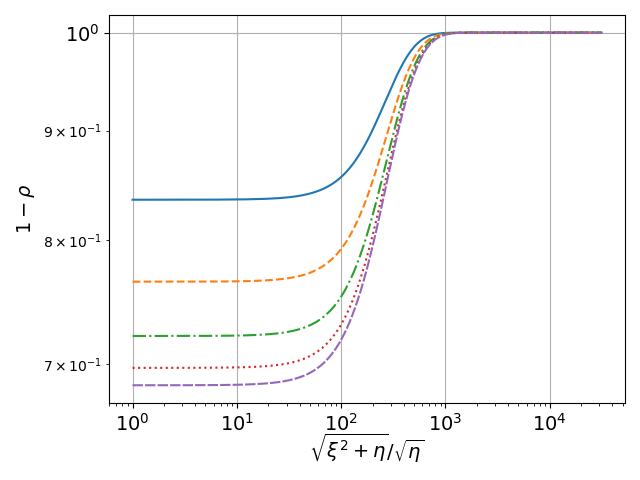}\\
    \includegraphics[width=.5\textwidth,trim=5 6 0 2,clip]{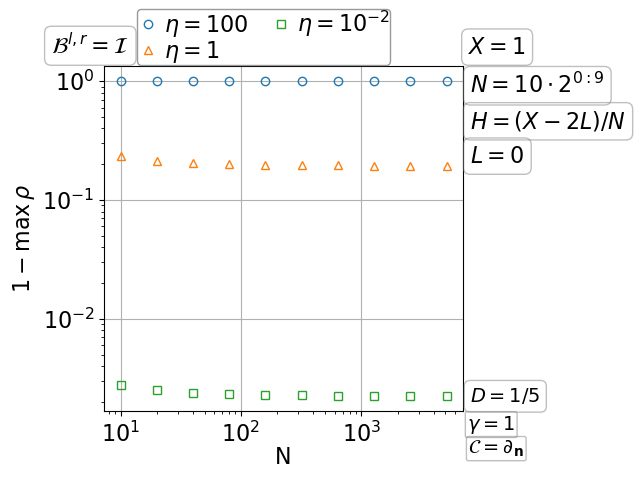}%
    \includegraphics[width=.5\textwidth,trim=5 6 0 2,clip]{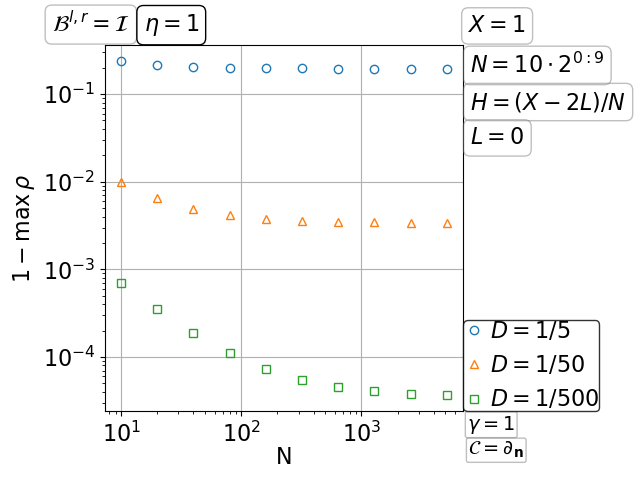}\\
    \includegraphics[width=.5\textwidth,trim=5 6 0 2,clip]{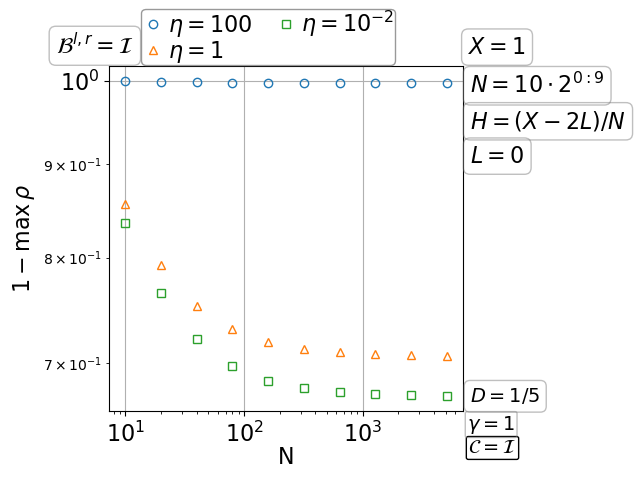}%
    \includegraphics[width=.5\textwidth,trim=5 6 0 2,clip]{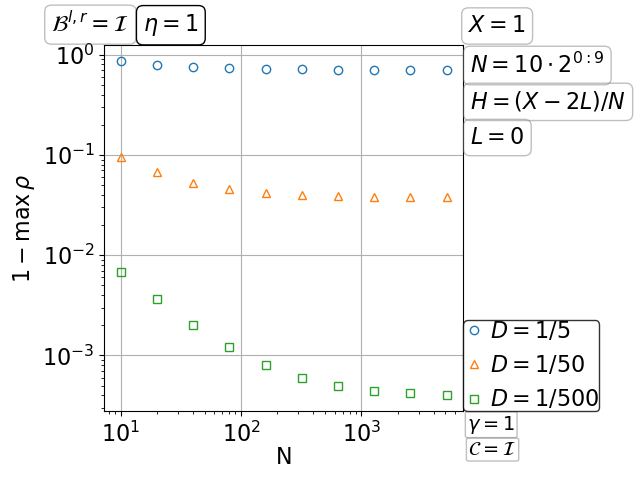}
    \caption{Convergence of the double sweep Schwarz method with PML transmission for the
      Dirichlet problem of diffusion on a fixed domain with increasing number of subdomains.}
    \label{figdp1d}
  \end{figure}

  \begin{figure}
    \centering%
    \includegraphics[width=.56\textwidth,trim=10 10 0 6,clip]{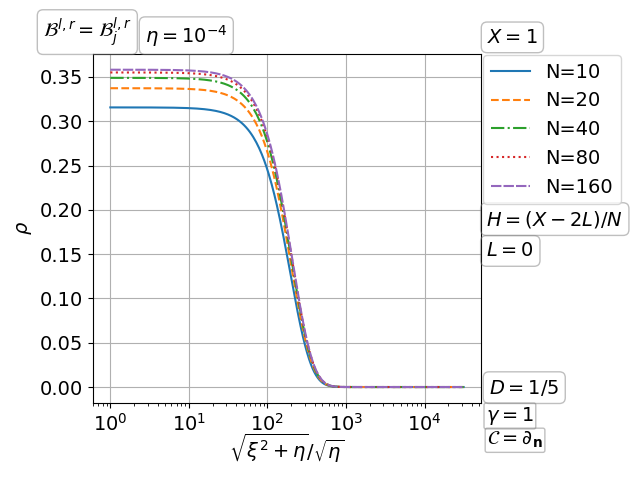}%
    \includegraphics[width=.44\textwidth,height=13em,trim=0 10 0 6,clip]{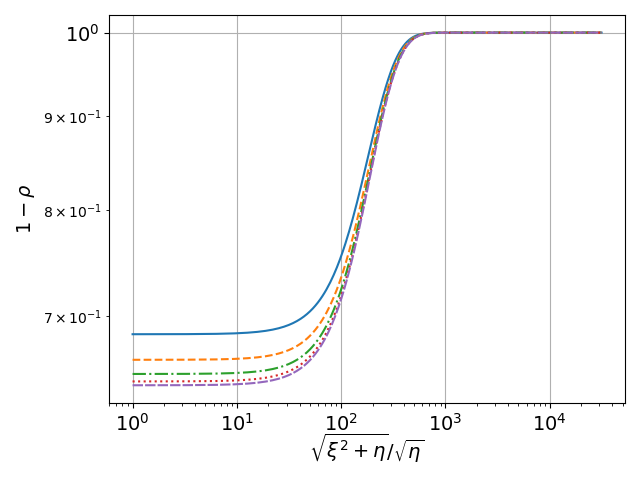}\\
    \includegraphics[width=.56\textwidth,trim=10 10 0 6,clip]{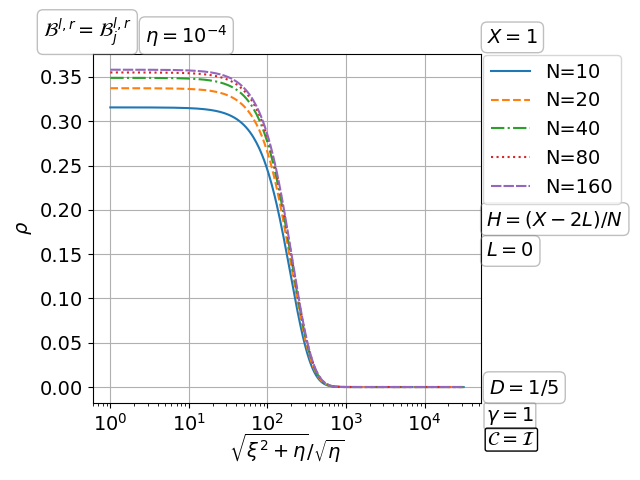}%
    \includegraphics[width=.44\textwidth,height=13em,trim=0 10 0 6,clip]{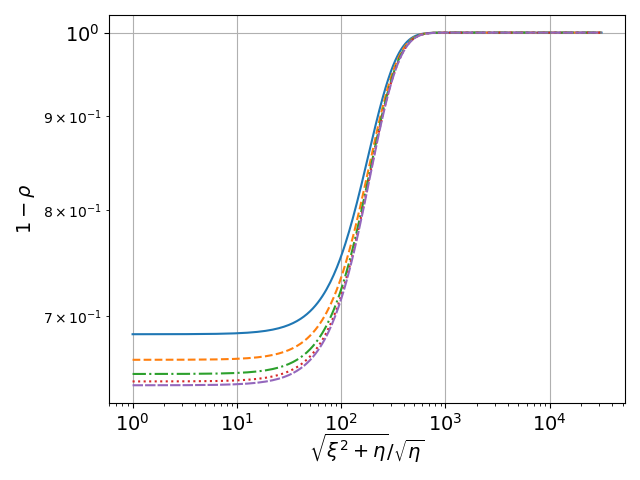}\\
    \includegraphics[width=.5\textwidth,trim=5 6 0 2,clip]{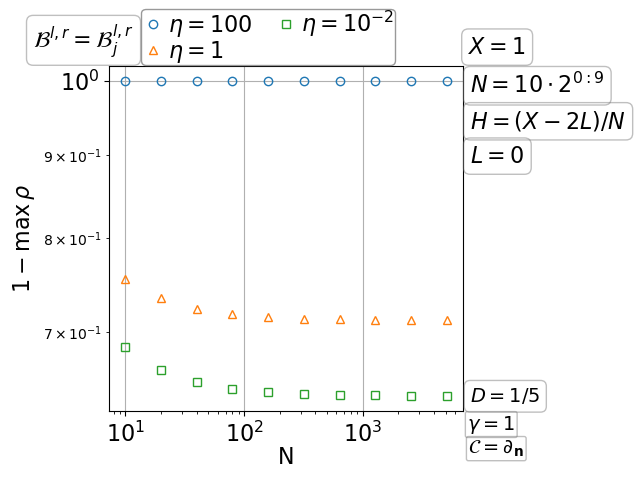}%
    \includegraphics[width=.5\textwidth,trim=5 6 0 2,clip]{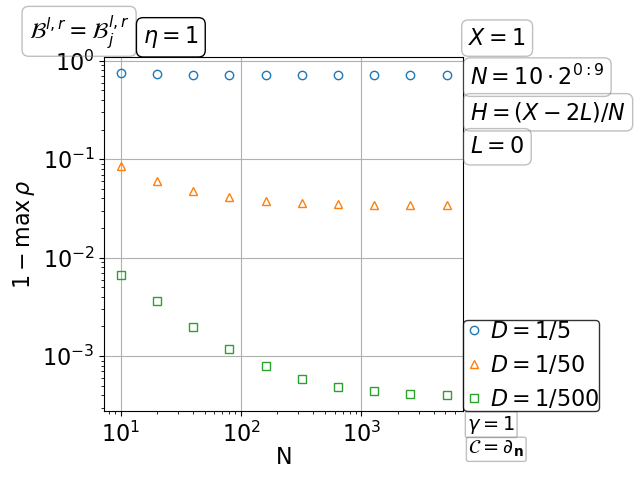}\\
    \includegraphics[width=.5\textwidth,trim=5 6 0 2,clip]{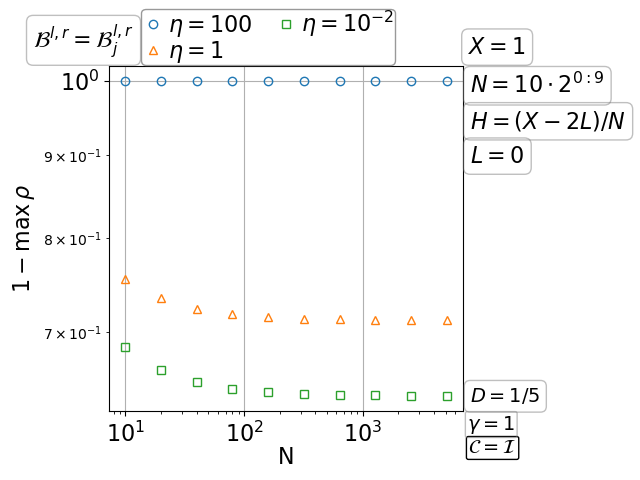}%
    \includegraphics[width=.5\textwidth,trim=5 6 0 2,clip]{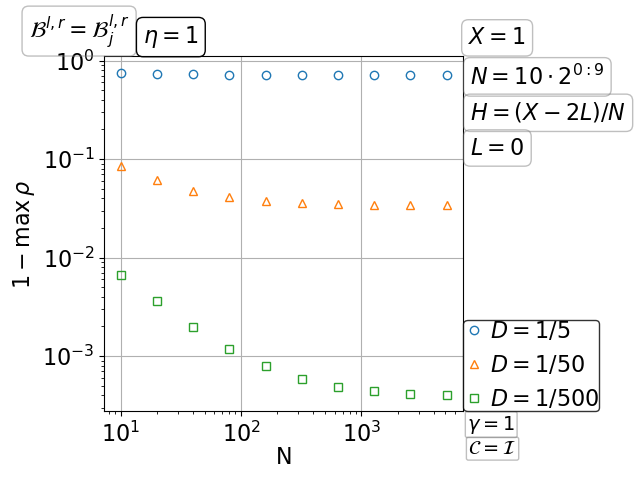}
    \caption{Convergence of the double sweep Schwarz method with PML transmission for the infinite
      pipe diffusion on a fixed domain with increasing number of subdomains.}
    \label{figdp1p}
  \end{figure}

\end{paragraph}


\subsection{Parallel Schwarz method for the free space wave problem}\label{secpsfs}


The free space wave problem corresponds to \R{eqprb} with $\mathcal{L}_x=-\partial_{xx}$,
$\eta=-\omega^2$, $\mathcal{B}^{l,r}=\mathcal{B}_j^{l,r}$, and $\mathcal{L}_y$, $\mathcal{B}^{b,t}$
related to absorbing conditions on bottom and top. If it is the Taylor of order zero condition, then
$\mathcal{L}_y=-\partial_{yy}$, $\mathcal{B}^{b,t}=\mp \partial_y -\I \omega$ and the generalised
Fourier frequency $\xi$ from the Sturm-Liouville problem \R{eqsturm} is on a curve in the complex
plane; see Figure~\ref{figxit0}. For the PML condition on bottom and top, the domain
$(0,X)\times(0,1)$ is extended to $(0,X)\times (-D, 1+D)$,
$\mathcal{L}_y=-\tilde{s}\partial_y(\tilde{s}\partial_y)$, $\mathcal{B}^{b,t}=\mp \partial_y$ and
$\xi=0, \tilde{\xi}\pi, 2\tilde{\xi}\pi, 3\tilde{\xi}\pi, ...$, where $\tilde{s}=1$ on $[0,1]$,
$\tilde{s}=(1-\I\tilde{\sigma}(-x))^{-1}$ on $(-D, 0)$, $\tilde{s}=(1-\I\tilde{\sigma}(x-1))^{-1}$
on $(1,1+D)$, $\int_0^D\tilde{\sigma}(x)\D{x}=\frac{1}{2}D\gamma$ and
$\tilde{\xi}=((2-\I\gamma)D+1)^{-1}$.  The generalised Fourier frequency $\xi$ is on a straight line
in the complex plane. For example, when $\gamma=5\pi/\omega$ and $D=1$, the range of $\xi$ is shown
in Figure~\ref{figxipml}.

\begin{figure}
  \centering
  \includegraphics[scale=.385]{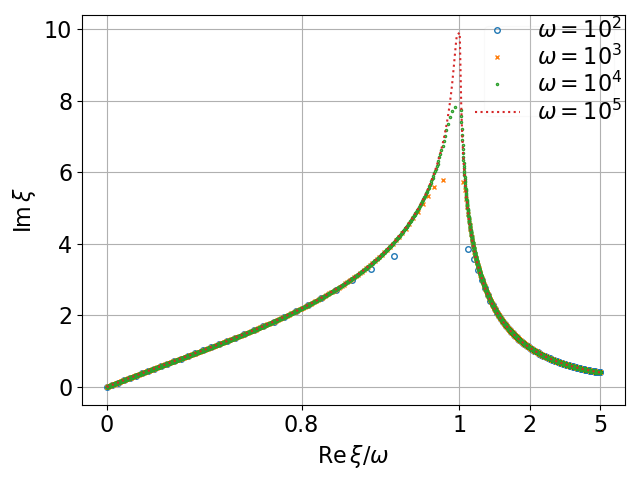}~
  \includegraphics[scale=.385]{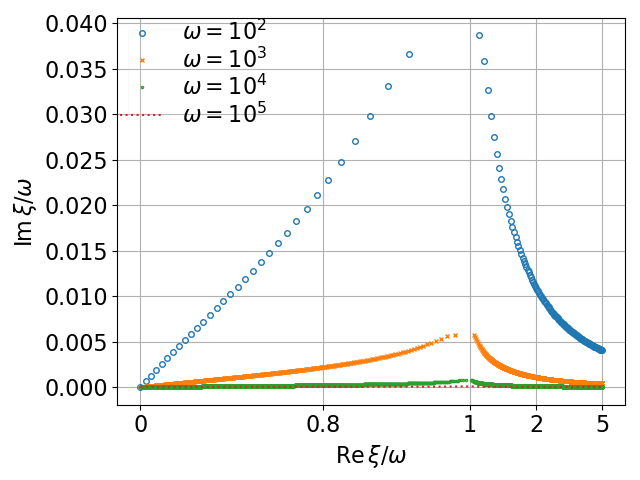}
  \caption{Generalised Fourier frequency $\xi$ from the Sturm-Liouville problem \R{eqsturm} with
    $Y=1$, $\mathcal{L}_y=-\partial_{yy}$, $\mathcal{B}^{b,t}=\mp \partial_y -\I \omega$ (Taylor of
    order zero condition).}
  \label{figxit0}
\end{figure}

\begin{figure}
  \centering
  \includegraphics[scale=.385]{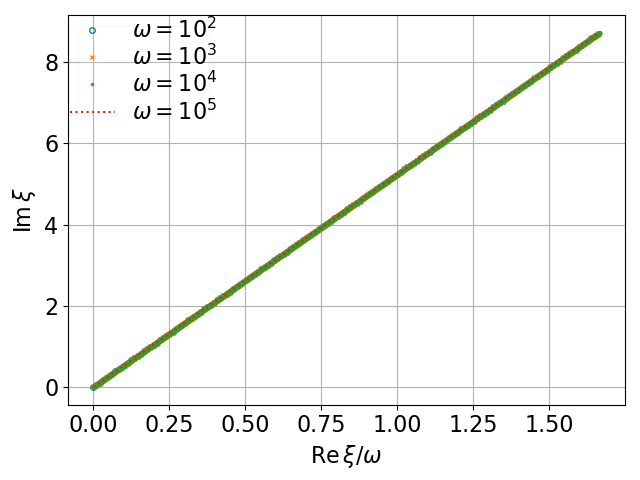}~
  \includegraphics[scale=.385]{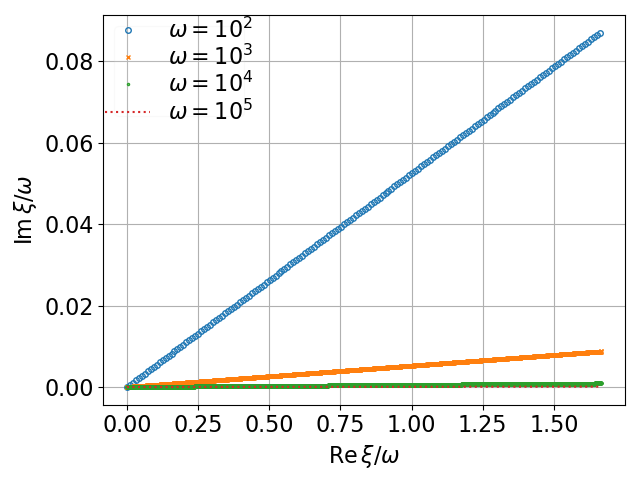}
  \caption{Generalised Fourier frequency $\xi$ from the Sturm-Liouville problem \R{eqsturm} on the
    extended domain $(-D,1+D)$ with the PML condition \R{eqpml}.}
  \label{figxipml}
\end{figure}

We note that the convergence factor $\rho$ can be viewed as a function of the rescaled Fourier
frequency $\xi_{\omega}:=\xi/\omega$, the rescaled subdomain width $H_{\omega}:=\omega H$, the
rescaled overlap width $L_{\omega}:=\omega L$ and the number of subdomains $N$.  But the range of the
generalised Fourier frequency $\xi$ depends on the wavenumber $\omega$ through the original boundary
condition on top and bottom. Increasing $\omega$ is not only to have more sampling points
$\frac{\Re \xi}{\omega}\in \mathbb{R}$ but also to decrease
$\frac{\Im \xi}{\omega}$; see again Figure~\ref{figxit0} and Figure~\ref{figxipml}. So, the
dependence of $\rho$ on $\omega$ is of particular interest besides the dependence on $\xi$, $H$, $L$
and $N$.

There are not many theoretical results on the convergence of Schwarz methods for the Helmholtz
equation. \cn{Despres} in his thesis proposed the first Schwarz method for the Helmholtz
equation. He replaced the classical overlapping Dirichlet transmission with the non-overlapping
Taylor of order zero transmission, and he proved the convergence of the resulting Schwarz iteration
in general geometry, arbitrary decomposition and variable media as long as part of the original
boundary is equipped with the Taylor of order zero condition. That idea has been further developed
and generalised. \cn{CGJ00} showed that for a decomposition without cross points the convergence of
the relaxed Schwarz iteration can be geometric if the Taylor of order zero transmission
$\partial_{\mathbf{n}}+\I\omega$ is enhanced by a square-root operator to
$\partial_{\mathbf{n}}+\I\omega\sqrt{\alpha-\beta\omega^{-2}\partial_{yy}}$. \cn{Claeys:2019:ADD}
analyzed the discrete version and showed that the convergence rate is uniform in the mesh size. For
recent progress along the direction of nonlocal transmission conditions, see \cn{lecouvez2014quasi},
\cn{collino2020exponentially}, \cn{parolin}, \cn{Claeys:2021:RTO}, \cn{claeys2021non}. For local
transmission conditions, the convergence rate of the Schwarz preconditioned Krylov iteration was first
analyzed by \cn{GrahamSpenceZou} and then generalised by \cn{Gonghetero}, \cn{BCNT}. Besides the
above general theories, convergence for domain decomposition in a rectangle has also been
studied. \cn{GongPS} considered the high wavenumber asymptotic. A variational interpretation of the
Schwarz preconditioner was discussed in \cn{Gongvariational}, and an analysis at the discrete level
was given in \cn{GongRAS}. \cn{Chen13a} gave the first convergence estimate of the double sweep
Schwarz method with PML transmission. A recursive double sweep Schwarz method was analyzed by
\cn{du2020pure}. For the waveguide problem, we refer to \cn{KimZhang}, \cn{Kimsweep}, \cn{KimPML}.


\subsubsection{Parallel Schwarz method with Taylor of order zero transmission for the free space
  wave problem}

In this case, we assume that the original boundary condition is also
the Taylor of order zero condition, \ie,
$\mathcal{B}^{b,t}=\mathcal{B}^{l,r}=\mathcal{B}^{l,r}_j$. According
to \cn{Despres}, the parallel Schwarz method with non-overlapping
Taylor of order zero transmission converges. However, the convergence
rate for the evanescent modes is very slow. To mitigate the situation,
here we shall add an overlap, though the convergence is no longer
guaranteed by any theory. On the one hand, divergence was observed if
the overlap exceeds a threshold related to the wavenumber $\omega$ and
the subdomain width $H$\footnote{The restriction of small overlap disappears when $H$ is big engouh, \eg H-12 as shown for two subdomains in Figure and Table. In that case, a fixed generous overlap can lead to convergence which is robust in the wavenumber.}. On the other hand, intuitively, we still
expect convergence if the overlap is sufficiently small, which we
will find to be the case from the following paragraphs, see also
  the two subdomain case in Section \ref{2SubSec}.
Indeed, we carried out a scaling test of shrinking overlap $L\to 0$ on
  a fixed domain with 10 subdomains and fixed wavenumber and found the same
  scaling as in \R{TaylorRhoRRHelmholtz}.

\begin{paragraph}{Convergence with increasing number of fixed size subdomains}

  The graph of the convergence factor $\rho=\rho(\xi)$ is shown in the
  top half of Figure~\ref{figfspt0n}. Note that, for a finite number
  of subdomains $N$, $\rho(0)=0$ because the Taylor expansion in the
  transmission condition is exactly at the point $\xi=0$, but $\rho$
  grows drastically in the neighborhood of $\xi=0$. Actually we see
  that $\rho_{\infty}:=\lim_{N\to\infty}\rho$ is tending to one as
  $\xi\to0$. Note also that there is no singularity at $\Re\xi=\omega$
  because $\Im\xi\ne0$ due to the top and bottom Taylor of order zero
  conditions (see Figure~\ref{figxit0}), and the convergence of the
  evanescent modes corresponding to $\Re\xi>\omega$ is good and
  independent of $N$. Given the limiting graph $\rho_{\infty}$, we see
  that as $N\to\infty$, the maximum point of $\rho$ decreases toward
  zero. Since $\xi$ is discrete and $\rho(0)=0$, it seems that for
  sufficiently large $N$ the maximum of $\rho$ will be attained at the
  first nonzero Fourier frequency $\xi_1$ (close to $\pi$) and the
  maximum value tends to $\rho_{\infty}(\xi_1)$. But this asymptotic
  is hardly observed up to $N=5200$ in the bottom half of
  Figure~\ref{figfspt0n}.
  Rather, we find $\max_{\xi}\rho =
  1-O(N^{-1})$ in most cases. An exception occurs in the bottom left
  subplot when $\omega=100$, $H=1/20$ and $L=H/40$, for which
  $\max_{\xi}\rho=1-O(N^{-3/2})$ is observed.

  \begin{figure}
    \centering%
    \includegraphics[width=.56\textwidth,trim=10 10 0 6,clip]{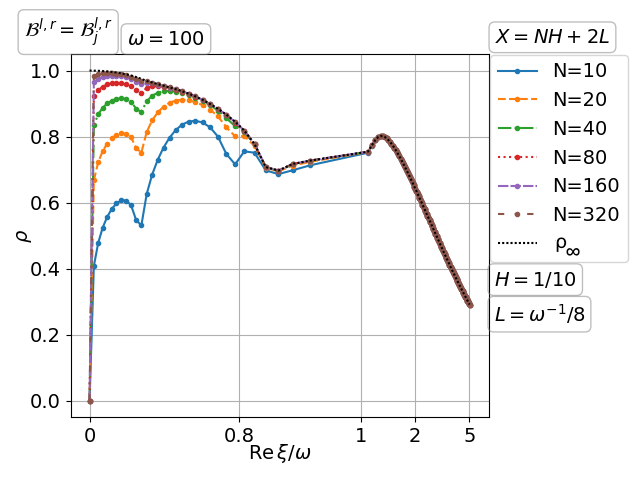}%
    \includegraphics[width=.44\textwidth,height=13em,trim=0 10 0 6,clip]{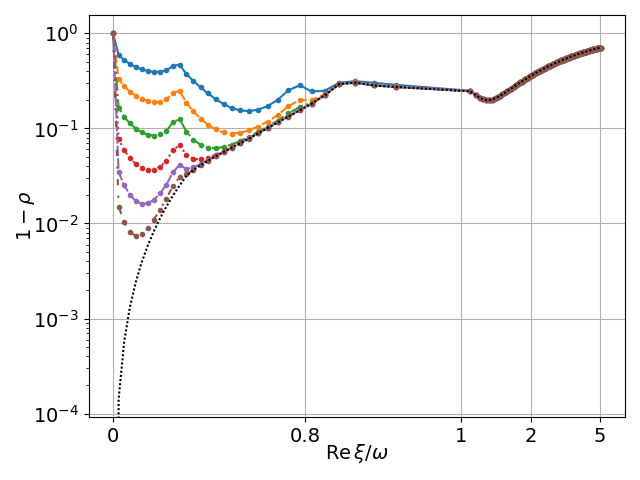}\\
    \includegraphics[width=.56\textwidth,trim=10 10 0 6,clip]{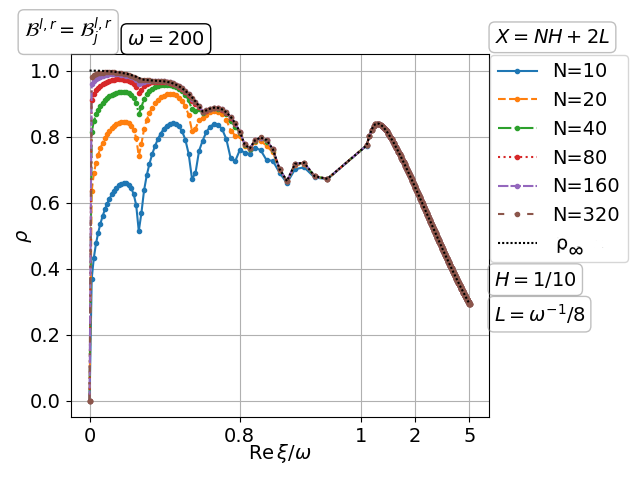}%
    \includegraphics[width=.44\textwidth,height=13em,trim=0 10 0 6,clip]{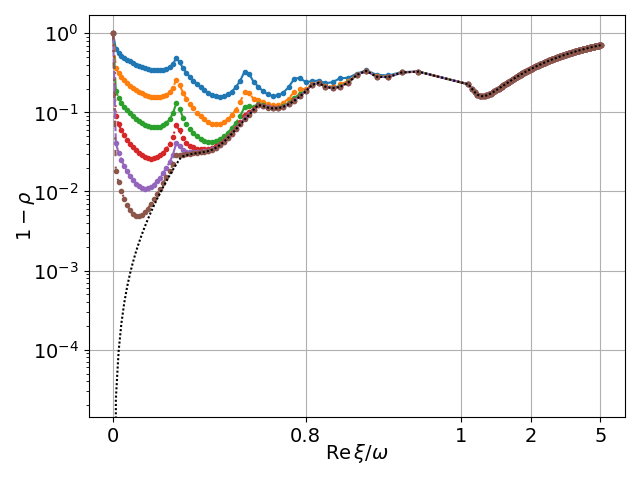}\\
    \includegraphics[width=.5\textwidth,trim=5 6 0 2,clip]{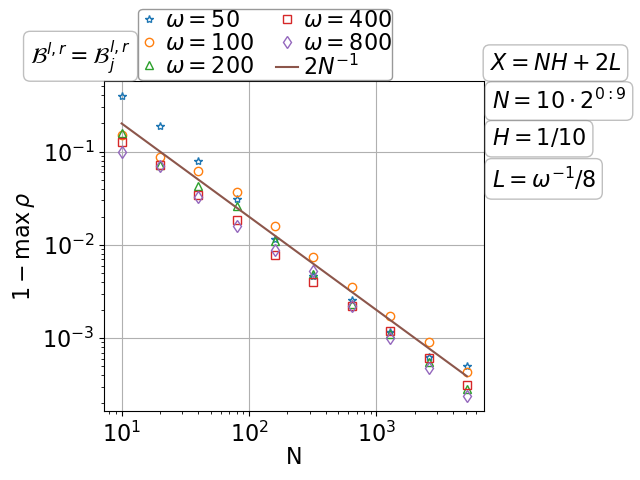}%
    \includegraphics[width=.5\textwidth,trim=5 6 0 2,clip]{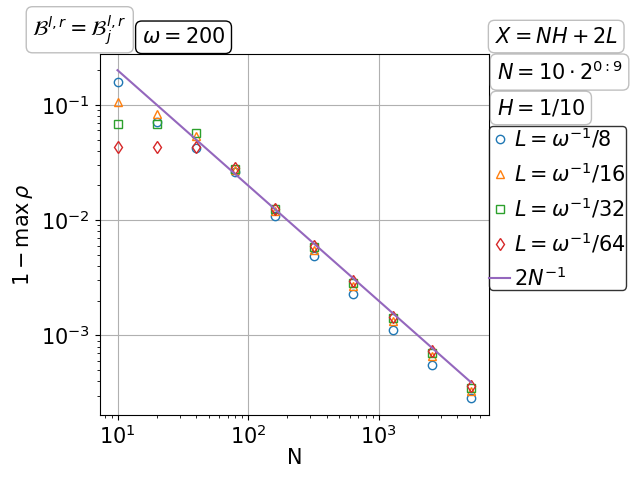}\\
    \includegraphics[width=.5\textwidth,trim=5 6 0 2,clip]{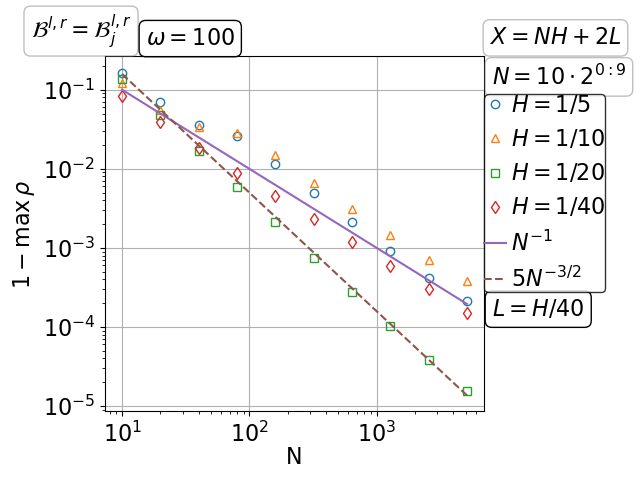}%
    \includegraphics[width=.5\textwidth,trim=5 6 0 2,clip]{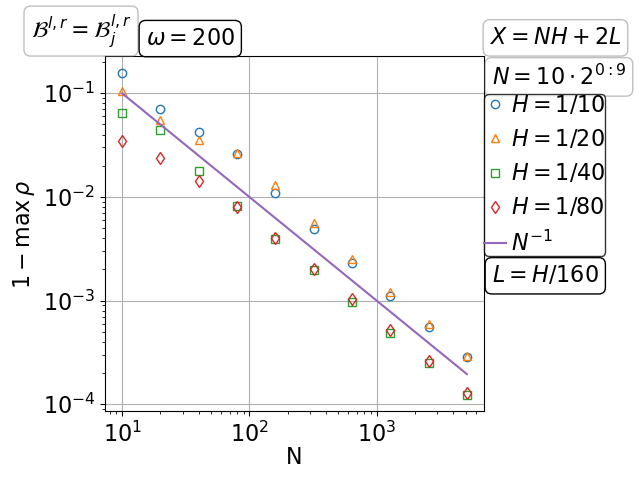}%
    \caption{Convergence of the parallel Schwarz method with Taylor of order zero transmission for
      the free space wave with increasing number of fixed size subdomains.}
    \label{figfspt0n}
  \end{figure}

\end{paragraph}

\begin{paragraph}{Convergence on a fixed domain with increasing number of subdomains}

  Now we increase the number of subdomains $N$ for a fixed domain width $X$ and a fixed wavenumber
  $\omega$. This time all the modes have slower convergence for bigger $N$; see the top half of
  Figure~\ref{figfspt01}. Away from $\xi=0$, the behavior of $\rho$ can be explained by that of the
  limiting curve $\rho_{\infty}$ with the corresponding subdomain width $H=(X-2L)/N\to0$. As $N$
  grows, the maximum point of $\rho$ can change between the two regions $\Re\xi/\omega<1$ and
  $\Re\xi/\omega>1$. This is because smaller $L$ benefits convergence of the propagating modes but
  hinders convergence of the evanescent modes. Nevertheless, a scaling of $\max_{\xi}\rho$ with
  $N\to\infty$ still appears; see the bottom half of Figure~\ref{figfspt01}. It is estimated that
  $\max_{\xi}\rho=1-O(N^{-5/3})$ if $L=O(H)$ is sufficiently small and
  $\max_{\xi}\rho=1-O(N^{-5/2})$ if $L=O(H^2)$ is sufficiently small.

    \begin{figure}
    \centering%
    \includegraphics[width=.56\textwidth,trim=10 10 0 6,clip]{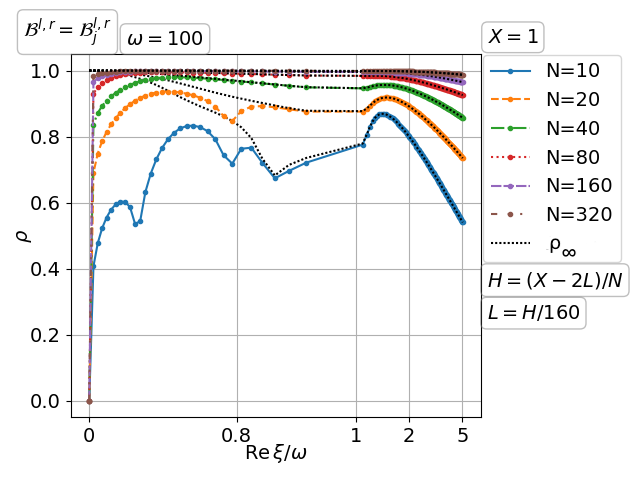}%
    \includegraphics[width=.44\textwidth,height=13em,trim=0 10 0 6,clip]{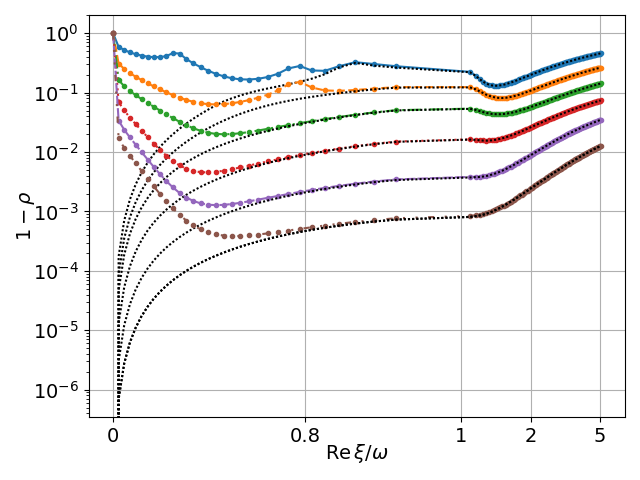}\\
    \includegraphics[width=.56\textwidth,trim=10 10 0 6,clip]{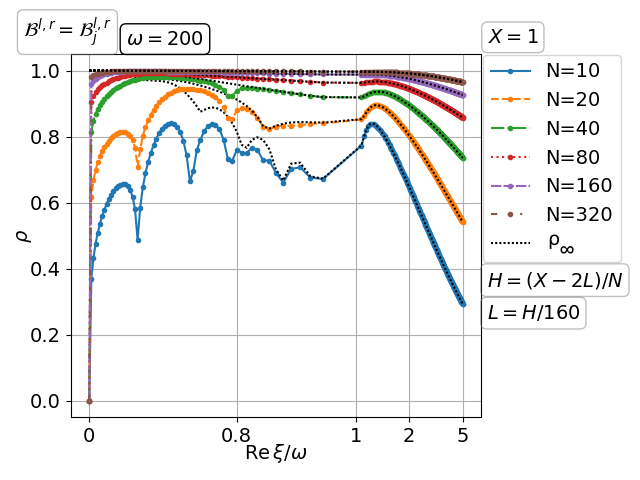}%
    \includegraphics[width=.44\textwidth,height=13em,trim=0 10 0 6,clip]{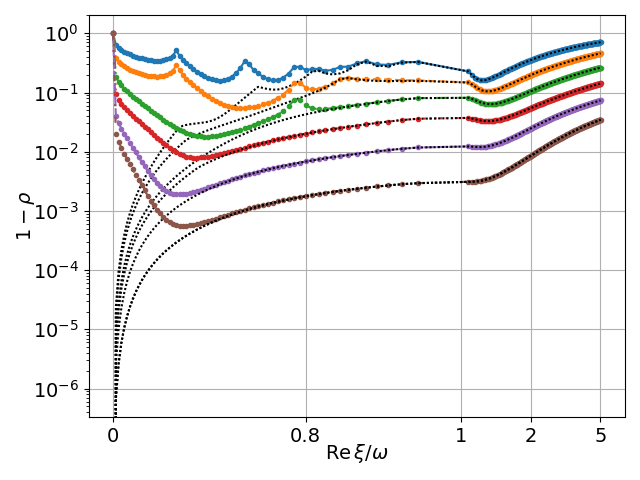}\\
    \includegraphics[width=.5\textwidth,trim=5 6 0 2,clip]{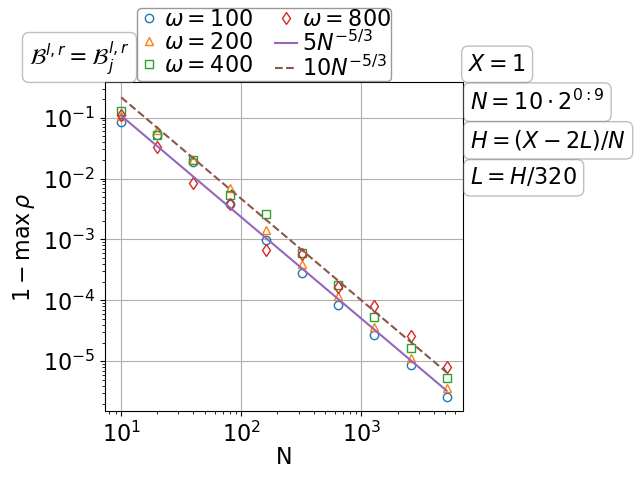}%
    \includegraphics[width=.5\textwidth,trim=5 6 0 2,clip]{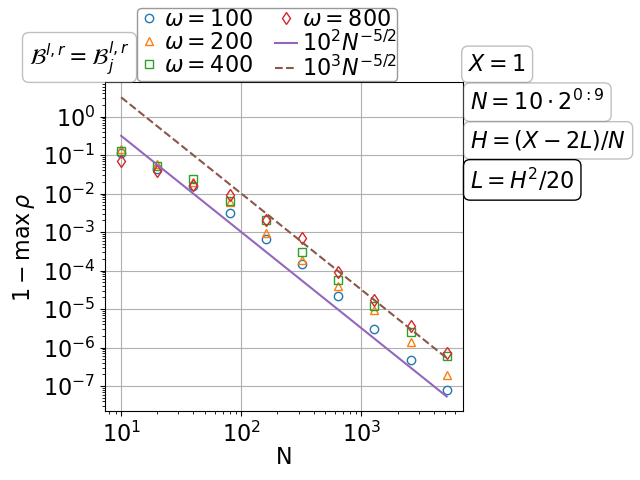}\\
    \includegraphics[width=.5\textwidth,trim=5 6 0 2,clip]{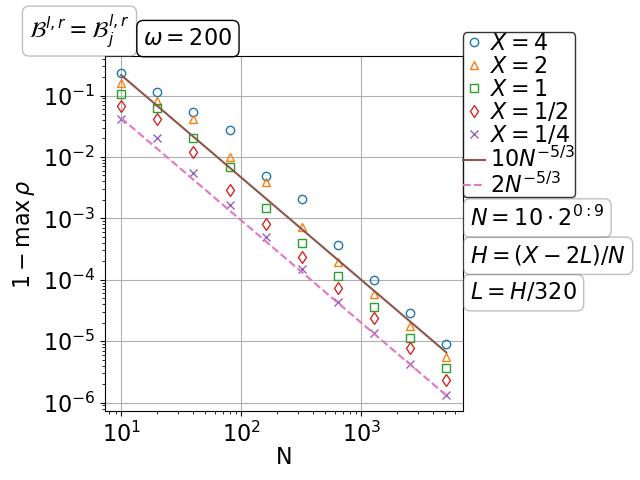}%
    \includegraphics[width=.5\textwidth,trim=5 6 0 2,clip]{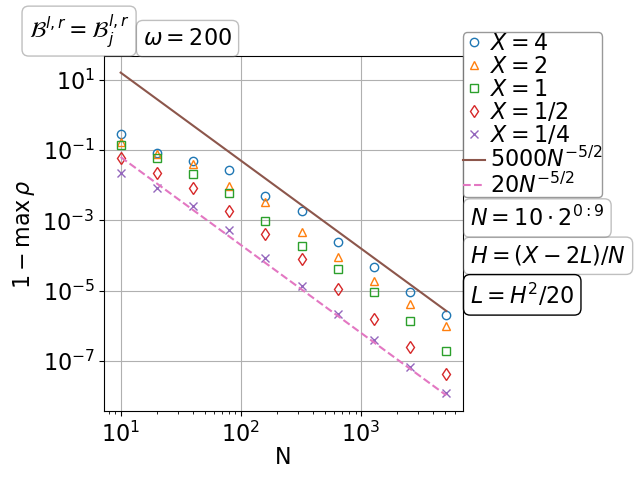}%
    \caption{Convergence of the parallel Schwarz method with Taylor of order zero transmission for
      the free space wave on a fixed domain with increasing number of subdomains.}
    \label{figfspt01}
  \end{figure}
  
\end{paragraph}

\begin{paragraph}{Convergence on a fixed domain with a fixed number of subdomains for increasing
    wavenumber}

  We show the graph of the convergence factor $\rho$ for the domain width $X=1$, the number of
  subdomains $N=40$, the overlap width $L=\omega^{-1}/40$ and growing wavenumber $\omega$ in the top
  half of Figure~\ref{figfspt0w}. We find that the graph of $1-\rho$ has a limiting profile in the
  propagating region $\Re\xi/\omega<1$ as $\omega\to\infty$ and has a unique local minimum in the
  evanescent region $\Re\xi/\omega>1$ which decreases as $\omega\to\infty$. So, the Schwarz method
  is preasymptotically robust about $\omega$ until the local maximum of $\rho$ over
  $\Re\xi/\omega>1$ becomes dominating --a phenomenon observed numerically in \cn{GZ16}. The
  asymptotic scaling is estimated in the bottom half of Figure~\ref{figfspt0w} as
  $\max_{\xi}\rho=\max_{\Re\xi>\omega}\rho=1-O(\omega^{-9/20})$ for $L=O(\omega^{-1})$ and
  $1-O(\omega^{-2/3})$ for $L=O(\omega^{-3/2})$.
  
  \begin{figure}
    \centering%
    \includegraphics[width=.56\textwidth,trim=10 10 0 6,clip]{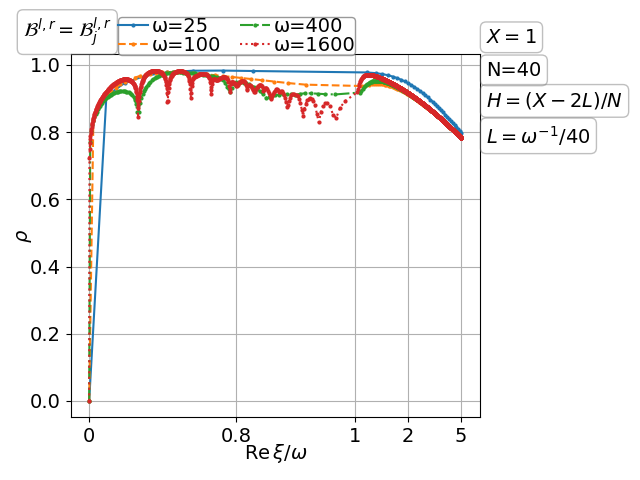}%
    \includegraphics[width=.44\textwidth,height=13em,trim=0 10 0 6,clip]{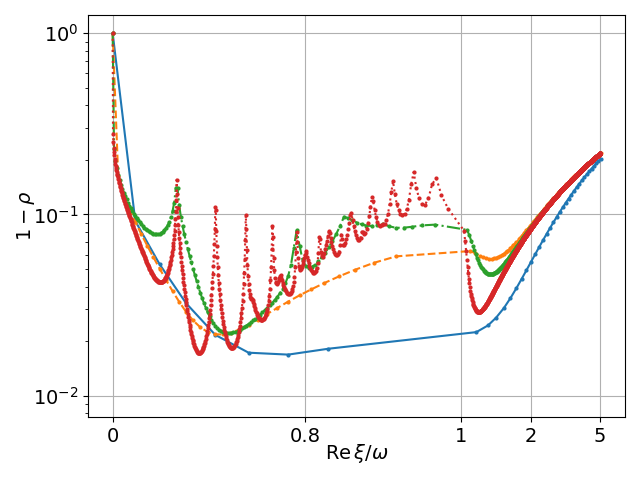}\\
    \includegraphics[width=.5\textwidth,trim=5 6 0 2,clip]{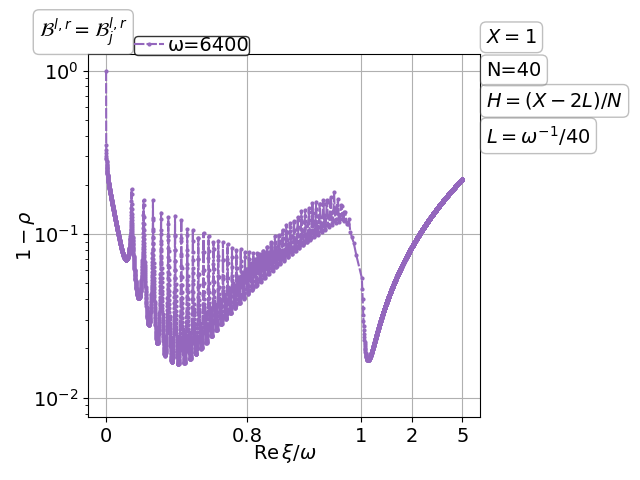}%
    \includegraphics[width=.5\textwidth,trim=5 6 0 2,clip]{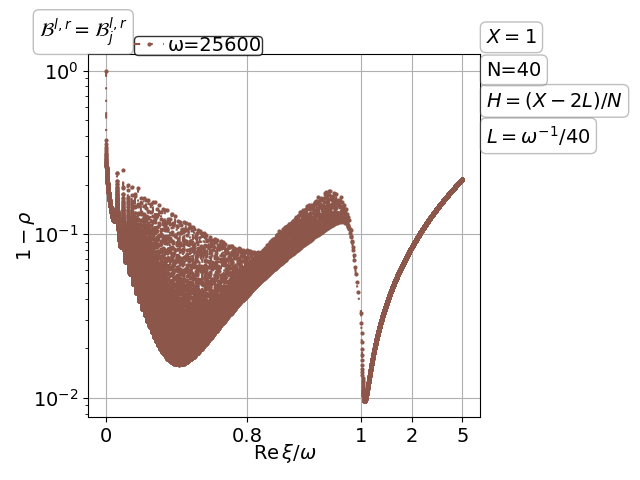}\\
    \includegraphics[width=.5\textwidth,trim=5 6 0 2,clip]{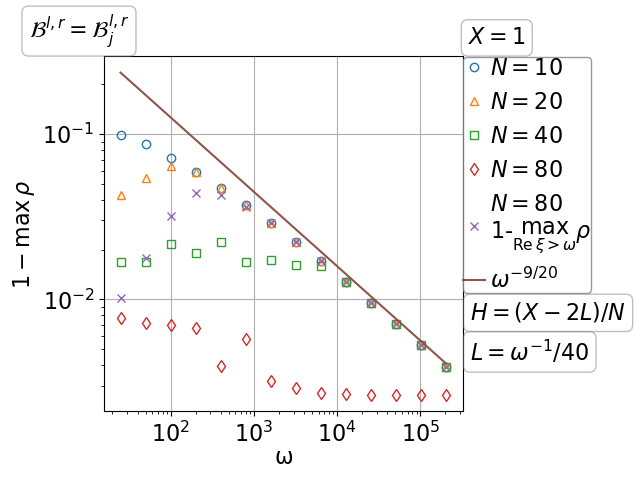}%
    \includegraphics[width=.5\textwidth,trim=5 6 0 2,clip]{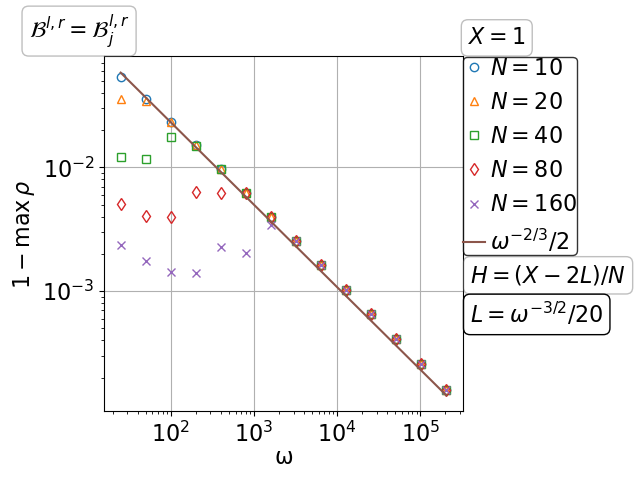}\\
    \includegraphics[width=.5\textwidth,trim=5 6 0 2,clip]{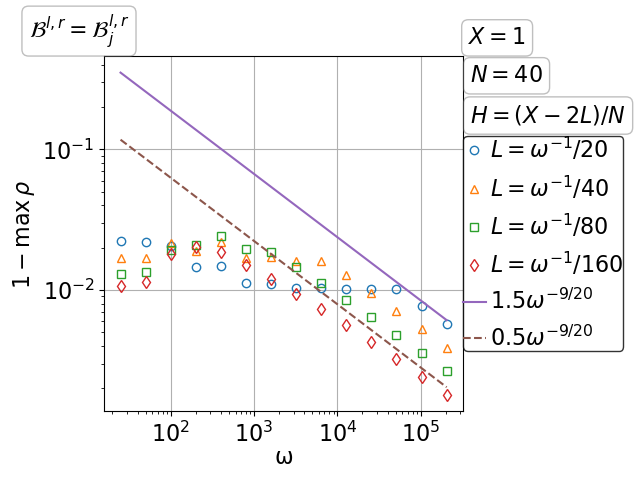}%
    \includegraphics[width=.5\textwidth,trim=5 6 0 2,clip]{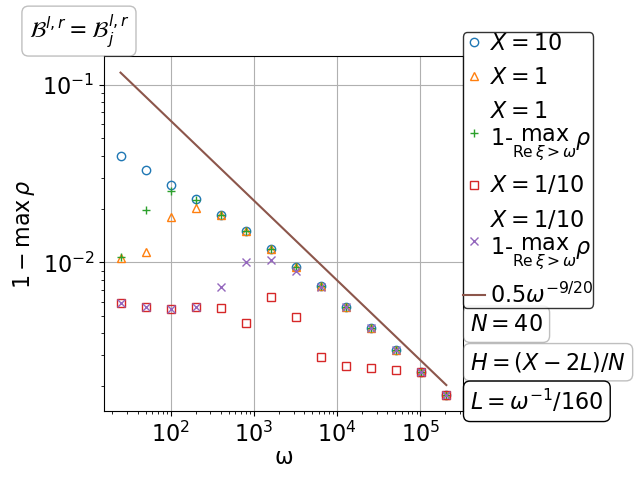}%
    \caption{Convergence of the parallel Schwarz method with Taylor of order zero transmission for
      the free space wave on a fixed domain with a fixed number of subdomains for increasing
      wavenumber.}
    \label{figfspt0w}
  \end{figure}

\end{paragraph}

\begin{paragraph}{Convergence on a fixed domain with number of subdomains increasing with the
    wavenumber}

  The scaling here is more challenging because both the increasing wavenumber and the shrinking
  subdomain width force the overlap width $L$ to be small, and $L=O(\omega^{-1})$ is not small
  enough for convergence. We use $L=O(\omega^{-3/2})$ in the top half of Figure~\ref{figfspt0wh}
  for the graph of the convergence factor $\rho=\rho(\xi)$. Both the local maxima of $\rho$ in the
  two regions $\Re\xi<\omega$ and $\Re\xi>\omega$ increase with $N$, with the first one increasing
  faster and the seccond one dominating in the preasymptotic regime. The scaling of $\max_{\xi}\rho$ is
  estimated in the bottom half of Figure~\ref{figfspt0wh}.  Asymptotically
  $\max_{\xi}\rho=1-O(N^{-2})$ for $L=O(\omega^{-3/2})$ and $\max_{\xi}\rho=1-O(N^{-5/3})$ for
  $L=O(\omega^{-2})$, while the preasymptotic scaling is better.
  
  \begin{figure}
    \centering%
    \includegraphics[width=.56\textwidth,trim=10 10 0 6,clip]{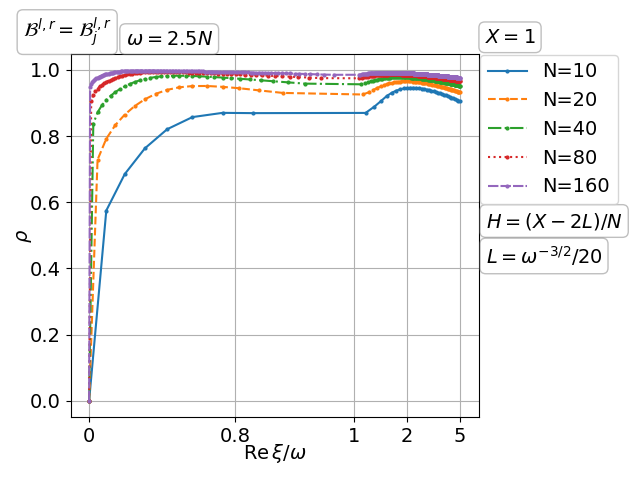}%
    \includegraphics[width=.44\textwidth,height=13em,trim=0 10 0 6,clip]{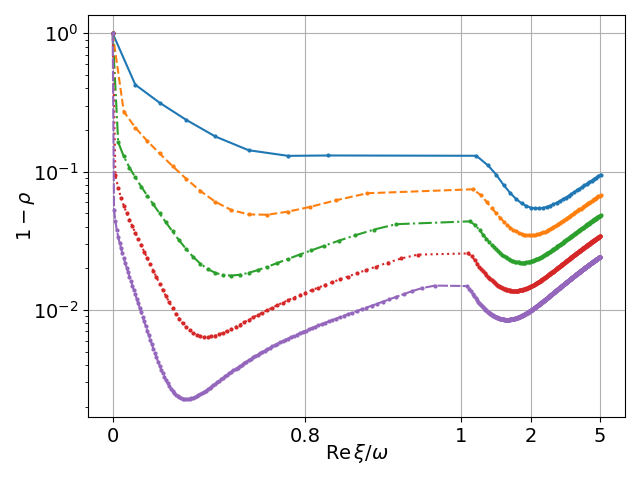}\\
    \includegraphics[width=.56\textwidth,trim=10 10 0 6,clip]{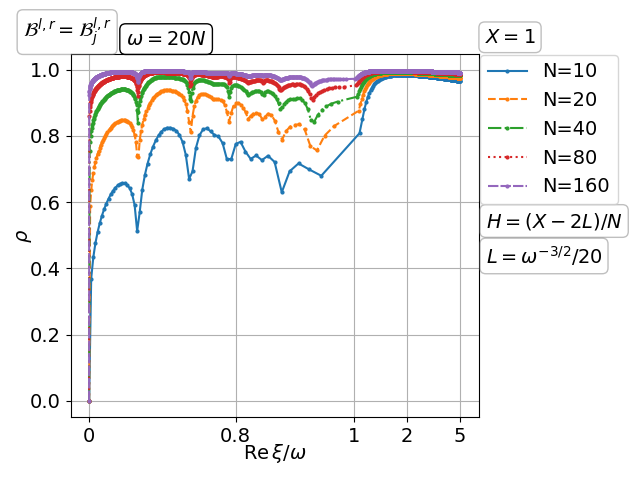}%
    \includegraphics[width=.44\textwidth,height=13em,trim=0 10 0 6,clip]{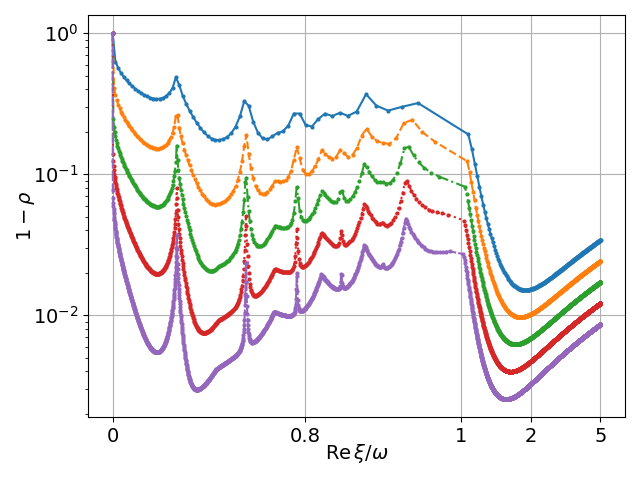}\\
    \includegraphics[width=.5\textwidth,trim=5 6 0 2,clip]{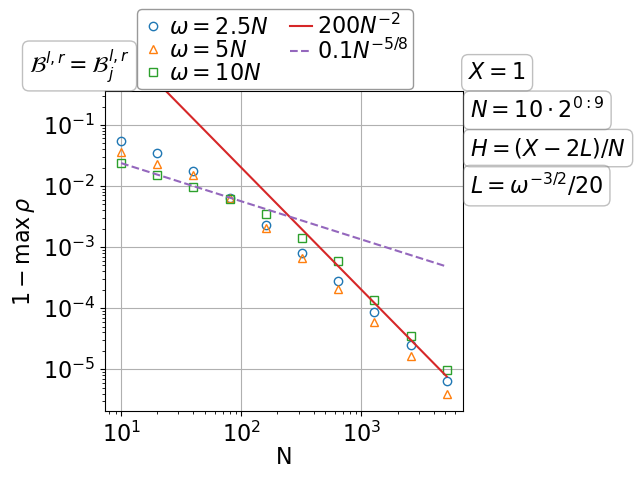}%
    \includegraphics[width=.5\textwidth,trim=5 6 0 2,clip]{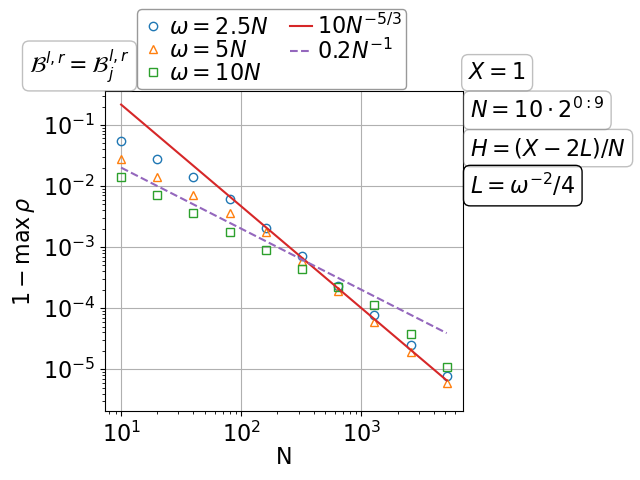}\\
    \includegraphics[width=.5\textwidth,trim=5 6 0 2,clip]{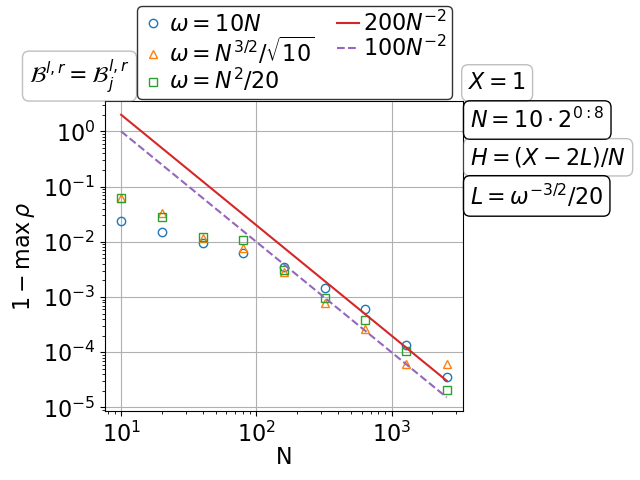}%
    \includegraphics[width=.5\textwidth,trim=5 6 0 2,clip]{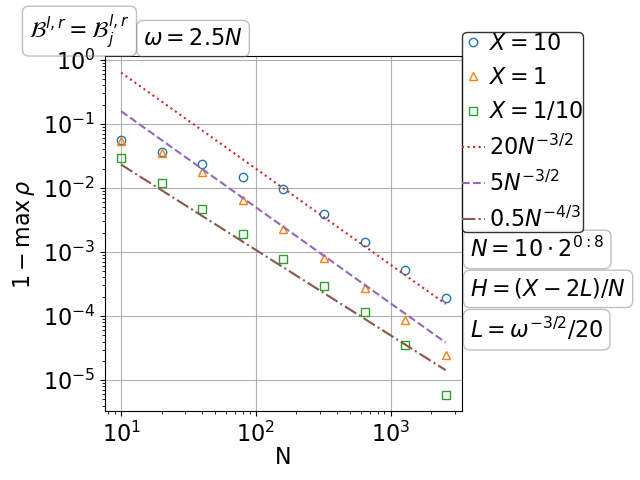}%
    \caption{Convergence of the parallel Schwarz method with Taylor of order zero transmission for
      the free space wave on a fixed domain with number of subdomains increasing with the
      wavenumber.}
    \label{figfspt0wh}
  \end{figure}

\end{paragraph}


\subsubsection{Parallel Schwarz method with PML transmission for the free space wave problem}

In this case, we assume that the original boundary condition is also
the PML condition, \ie,
$\mathcal{B}^{b,t}=\mathcal{B}^{l,r}=\mathcal{B}_j^{l,r}$. For
analysis purposes, the PML on the left and right is regarded as a
boundary condition involving \R{eqsw}, while the PML on top and bottom
is treated as part of the extended domain. The use of PML transmission
conditions for parallel Schwarz methods can be traced back to
\cn{Toselli}, see also \cn{Schadle}, \cn{SZBKS}, \cn{GuddatiDD20}. It
is combined with a coarse space in \cn{astaneh}. Since the PML
condition can be made arbitrarily close to the exact transparent
condition, \ie, the PML Dirichlet-to-Neumann operator \R{eqsw} can
approximate the exact Dirichlet-to-Neumann operator to arbitrary
accuracy, we can expect the Schwarz method to converge as soon as the
PML is sufficiently accurate; see also \cn[Theorem 5.1]{NN97}. This
has been first quantified by \cn{Chen13a}, though for the double sweep
Schwarz method. There is a pending issue about how accurate the PML
needs to be to ensure a robust convergence. We shall address the issue
at the continuous level for the parallel Schwarz method under
different scalings in this subsection.

\begin{paragraph}{Convergence with increasing number of fixed size subdomains}

  On the first row of Figure~\ref{figfsppn}, we check the influence of the terminating condition of
  the PML. One effect is from the top and bottom PMLs. The Neumann terminated PMLs give the zero frequency
  $\xi=0$, while the Dirichlet terminated PMLs do not. This has already made a difference on the
  convergence. The limiting convergence factor $\rho_{\infty}$ for the Neumann terminated PMLs is
  actually greater than one at $\xi=0$, while that for the Dirichlet terminated PMLs is less than one
  over the range of frequencies. Otherwise, quantitively the terminating condition of the PMLs makes little
  difference. The graph of the convergence factor $\rho$ in the top half of Figure~\ref{figfsppn}
  looks somewhat similar to Figure~\ref{figppnp} for the diffusion problem: both are better at 
  higher frequencies $\xi$. Roughly speaking, the PML works like an overlap but with all the modes
  decaying inside it, and the top and bottom PMLs make the decaying faster for higher frequencies. As
  we have encountered for the diffusion problem, the parallel Schwarz method with a fixed PML for the
  wave problem also slows down with increasing number of subdomains $N$. A logarithmic growth of the
  PML width $D$ with $N$ does not change the scaling; see the third row of Figure~\ref{figfsppn},
  where the dependence on the wavenumber $\omega$ is also measured. Then, in the last row of the
  figure, we recorded the dependence on $D$ and the subdomain width $H$. We actually find that
  $1-\max_{\xi}\rho=O(N^{-1})$ improves its hidden factor as $D$ grows and is essentially
  independent of $H$ and $\omega$.
  
  \begin{figure}
    \centering%
    \includegraphics[width=.5\textwidth,trim=5 6 0 2,clip]{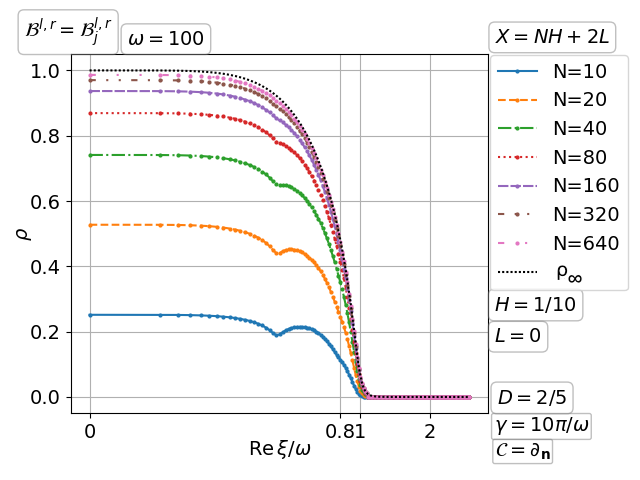}%
    \includegraphics[width=.5\textwidth,trim=5 6 0 2,clip]{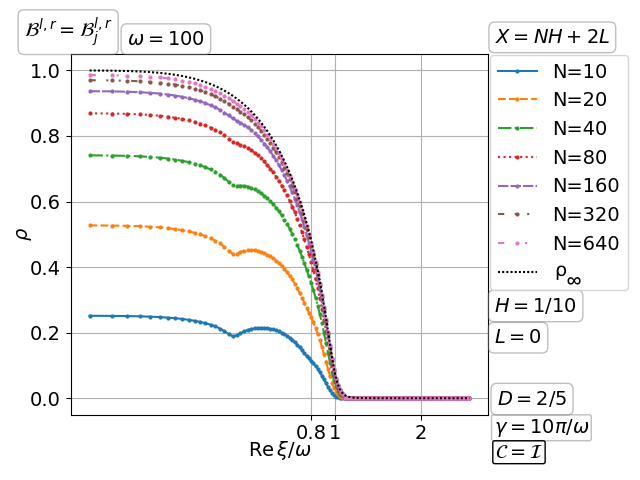}\\
    \includegraphics[width=.56\textwidth,trim=10 10 0 6,clip]{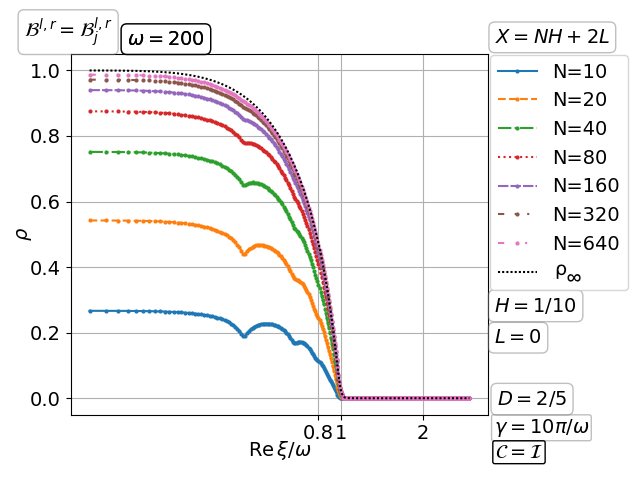}%
    \includegraphics[width=.44\textwidth,height=13em,trim=0 10 0 6,clip]{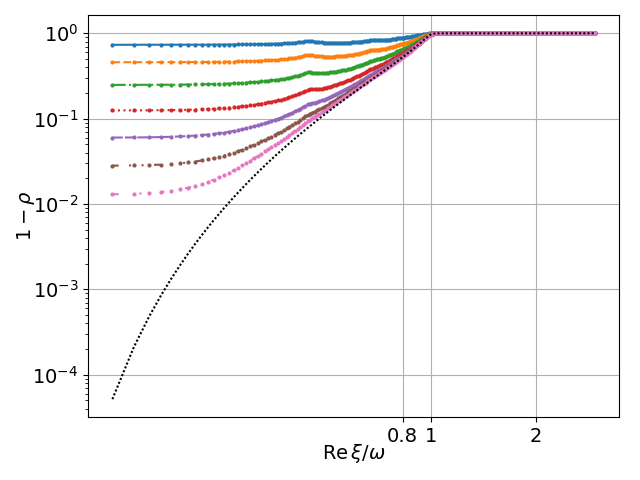}\\
    \includegraphics[width=.5\textwidth,trim=5 6 0 2,clip]{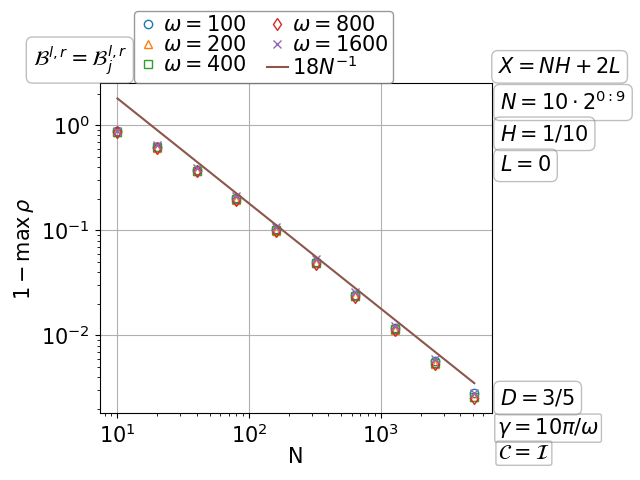}%
    \includegraphics[width=.5\textwidth,trim=5 6 0 2,clip]{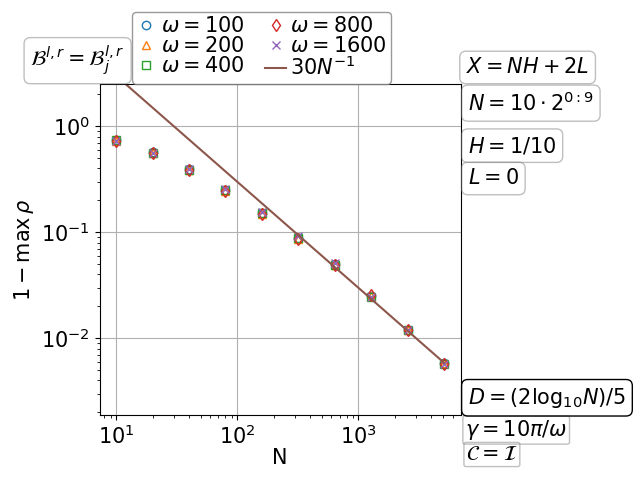}\\
    \includegraphics[width=.5\textwidth,trim=5 6 0 2,clip]{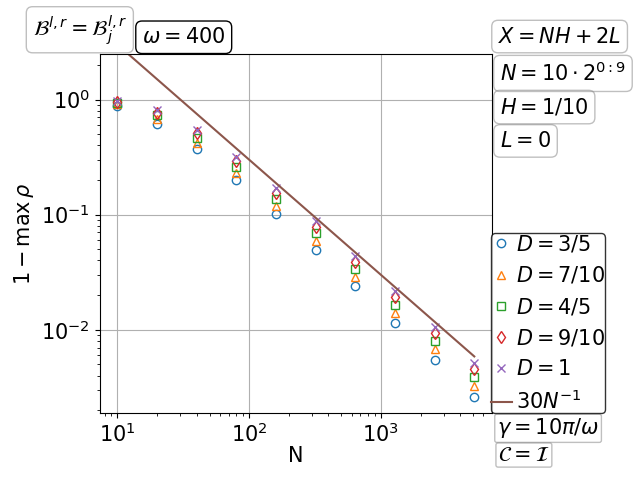}%
    \includegraphics[width=.5\textwidth,trim=5 6 0 2,clip]{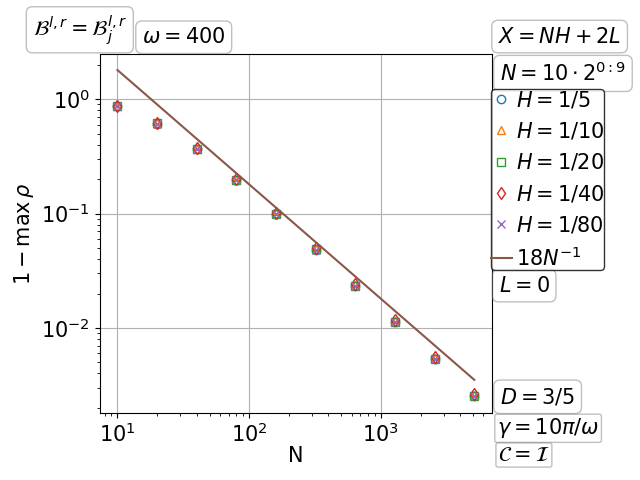}%
    \caption{Convergence of the parallel Schwarz method with PML transmission for
      the free space wave with increasing number of fixed size subdomains.}
    \label{figfsppn}
  \end{figure}

\end{paragraph}

\begin{paragraph}{Convergence on a fixed domain with increasing number of subdomains}

  Now we consider solving a fixed problem with increasing number of subdomains $N$. In the top half
  of Figure~\ref{figfspp1}, the graph of the convergence factor $\rho=\rho(\xi)$ for each $N$ is
  plotted along with the limiting convergence factor $\rho_{\infty}$ with the corresponding
  $H=X/N$. In this case, $\rho$ is much better than $\rho_{\infty}$, similar to that for the
  diffusion problem (see Figure~\ref{figpp1p}). The scaling of $\max_{\xi}\rho$ and its dependence
  on the wavenumber $\omega$ are measured on the third row of Figure~\ref{figfspp1}. On the right,
  the PML width $D$ grows logarithmically with $N$, and we see that when $D$ becomes  sufficiently large, $\max_{\xi}\rho$ drops to numerical
  zero. However, this does not mean {that the convergence does not depend on the number of
    subdomains $N$,} because the optimal parallel Schwarz iteration is nilpotent of index $N$. The
  last row of the figure checks the dependence on $D$ and the domain width $X$. We find
  $\max_{\xi}\rho=1-O(N^{-1})$ which improves its hidden factor as $D$ grows and depends very
  mildly on $X$.

    \begin{figure}
    \centering%
    \includegraphics[width=.56\textwidth,trim=10 10 0 6,clip]{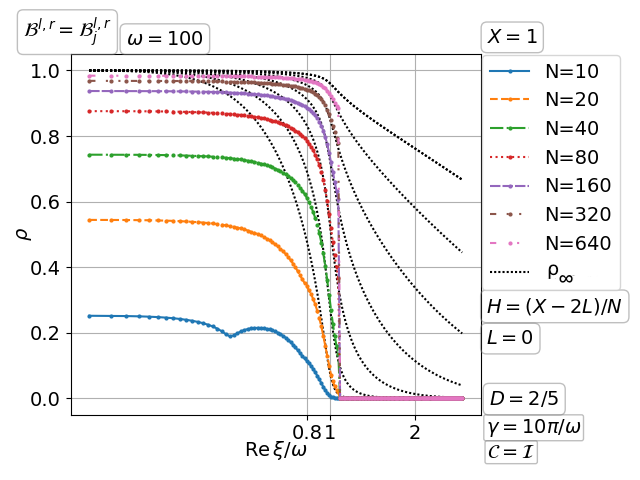}%
    \includegraphics[width=.44\textwidth,height=13em,trim=0 10 0 6,clip]{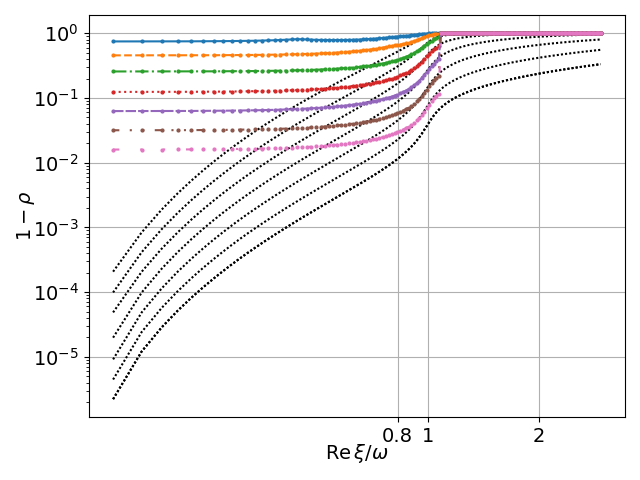}\\
    \includegraphics[width=.56\textwidth,trim=10 10 0 6,clip]{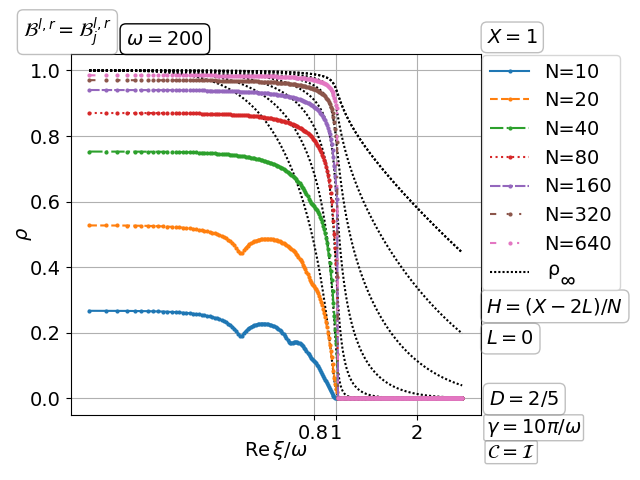}%
    \includegraphics[width=.44\textwidth,height=13em,trim=0 10 0 6,clip]{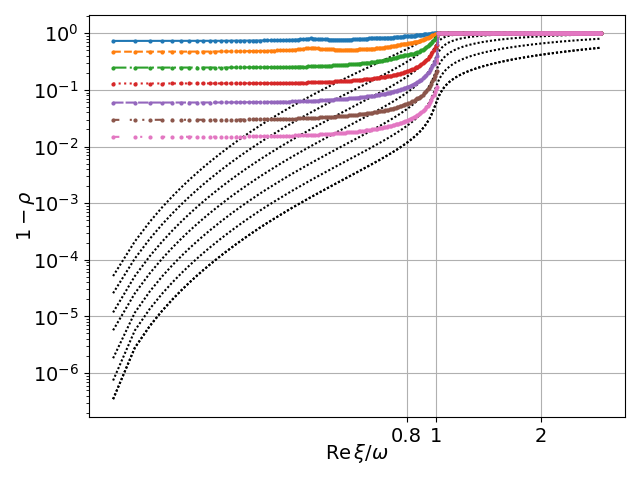}\\
    \includegraphics[width=.5\textwidth,trim=5 6 0 2,clip]{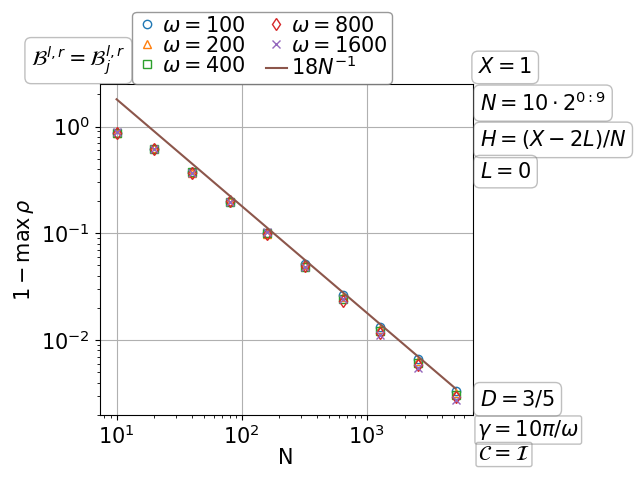}%
    \includegraphics[width=.5\textwidth,trim=5 6 0 2,clip]{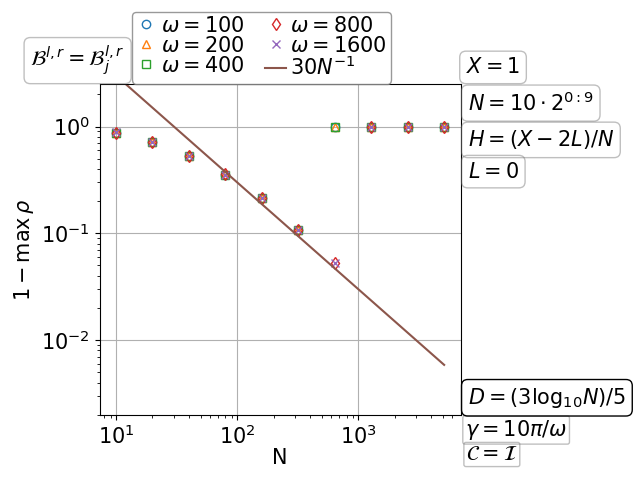}\\
    \includegraphics[width=.5\textwidth,trim=5 6 0 2,clip]{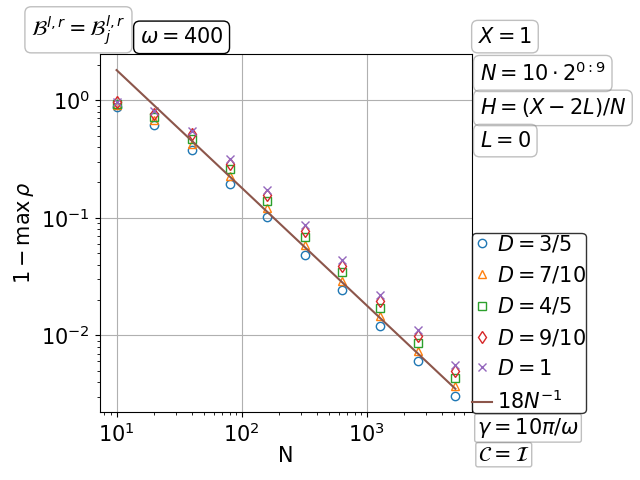}%
    \includegraphics[width=.5\textwidth,trim=5 6 0 2,clip]{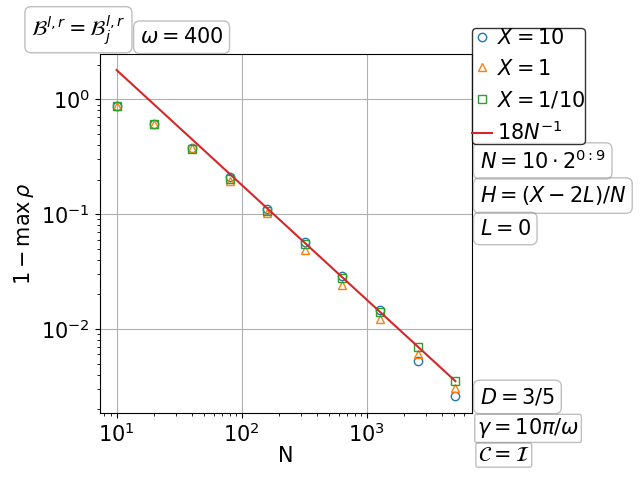}%
    \caption{Convergence of the parallel Schwarz method with PML transmission for
      the free space wave on a fixed domain with increasing number of subdomains.}
    \label{figfspp1}
  \end{figure}
  
\end{paragraph}

\begin{paragraph}{Convergence on a fixed domain with a fixed number of subdomains for increasing
    wavenumber}

  The scaling with the wavenumber $\omega$ is studied while all the other parameters are fixed. A
  limiting profile of the convergence factor $\rho$ as $\omega\to\infty$ is indicated in the top
  half of Figure~\ref{figfsppw}. The scaling of $\max_{\xi}\rho$ with $\omega$ and its dependence on
  the number of subdomains $N$, the PML width $D$ and the domain width $X$ are explored one by one in
  the bottom half of the figure. We find $\max_{\xi}\rho=O(1)<1$ which increases with $N$
  (superlinearly at high wavenumber $\omega$), decays exponentially with $D$ and depends mildly on
  $X$.

  \begin{figure}
    \centering%
    \includegraphics[width=.5\textwidth,trim=5 6 0 2,clip]{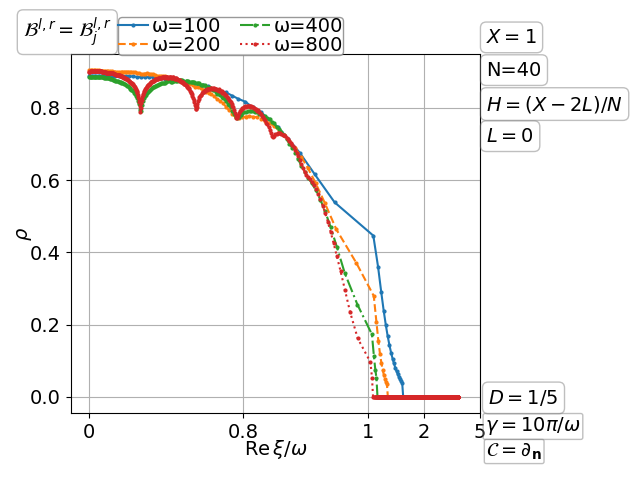}%
    \includegraphics[width=.5\textwidth,trim=5 6 0 2,clip]{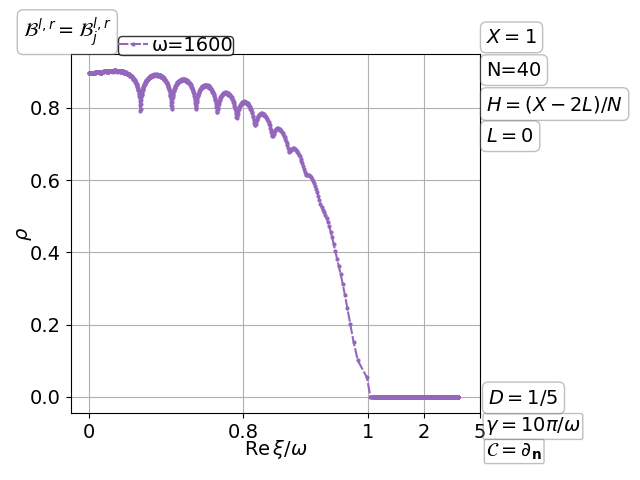}\\
    \includegraphics[width=.5\textwidth,trim=5 6 0 2,clip]{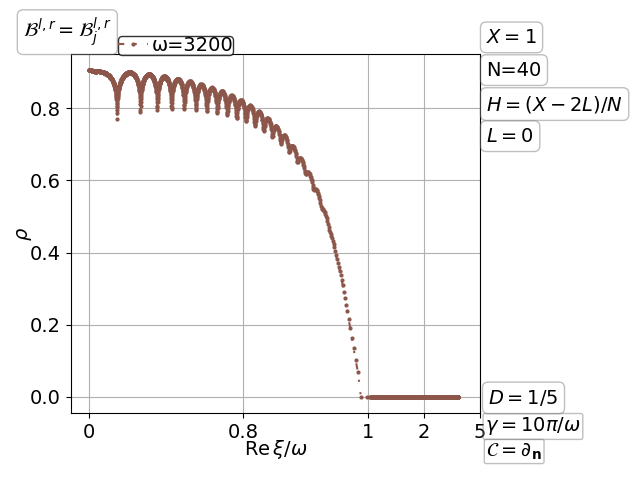}%
    \includegraphics[width=.5\textwidth,trim=5 6 0 2,clip]{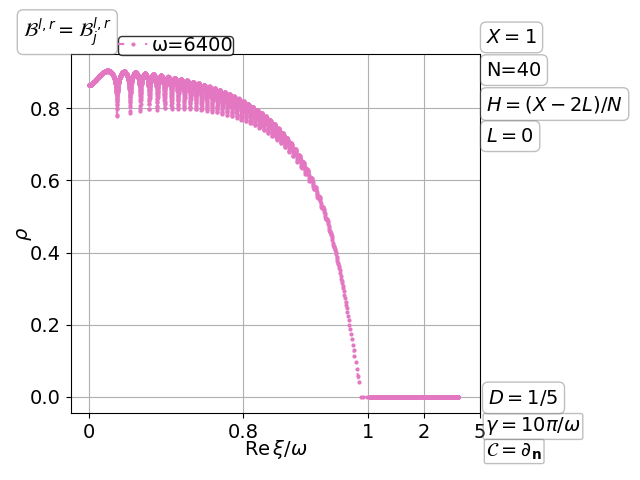}\\
    \includegraphics[width=.5\textwidth,trim=5 6 0 2,clip]{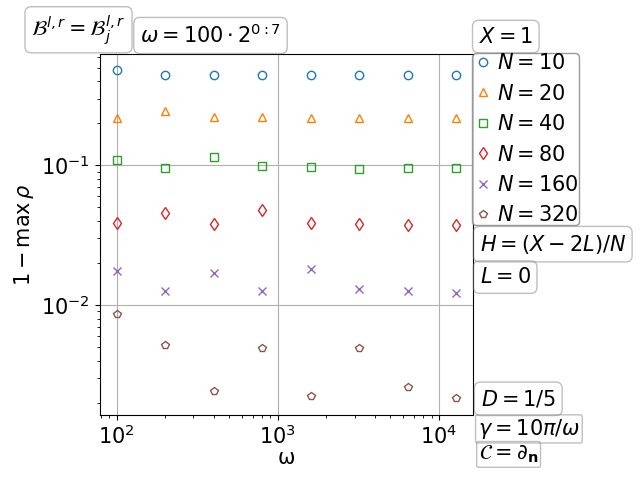}%
    \includegraphics[width=.5\textwidth,trim=5 6 0 2,clip]{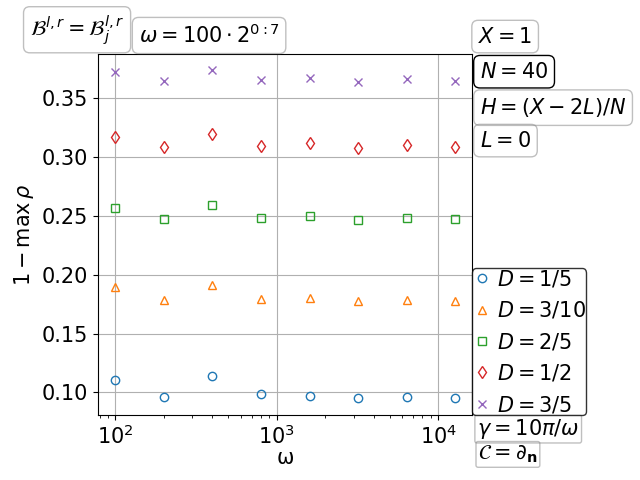}\\
    \includegraphics[width=.5\textwidth,trim=5 6 0 2,clip]{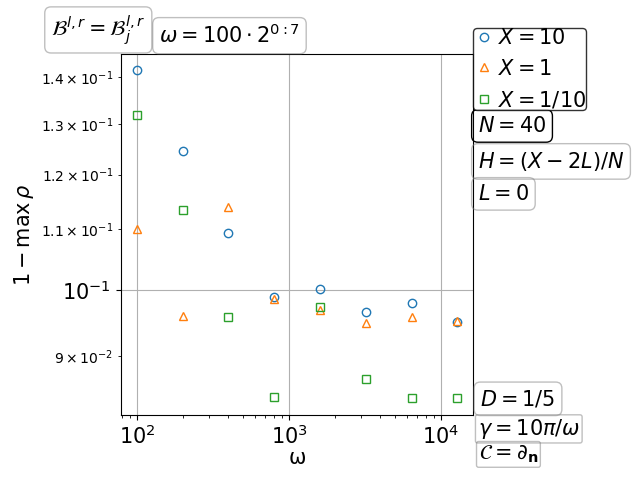}%
    \includegraphics[width=.5\textwidth,trim=5 6 0 2,clip]{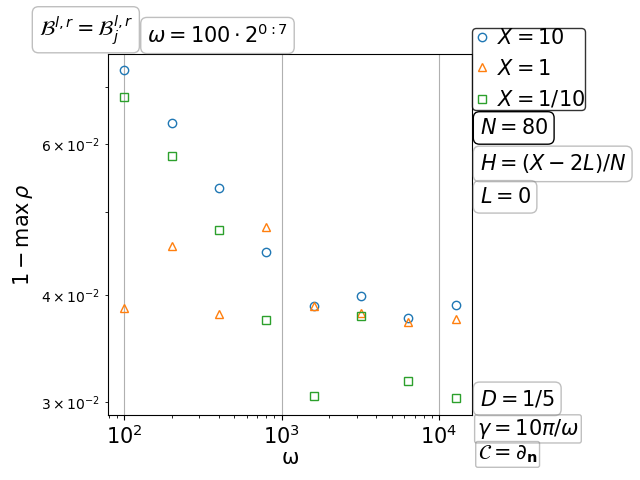}%
    \caption{Convergence of the parallel Schwarz method with PML transmission for
      the free space wave on a fixed domain with a fixed number of subdomains for increasing
      wavenumber.}
    \label{figfsppw}
  \end{figure}

\end{paragraph}

\begin{paragraph}{Convergence on a fixed domain with number of subdomains increasing with the
    wavenumber}

  In the previous scalings, we have found that the subdomain width $H$ and the wavenumber $\omega$
  have little influence on the convergence of the parallel Schwarz method with a fixed PML
  transmission, and the convergence slows down mainly due to an increasing number of subdomains
  $N$. While larger PML width $D$ accelerates the convergence and can even make the convergence
  factor $\rho$ numerical zero, the dependence on $N$ is inherent from the nilpotency index of the
  optimal parallel Schwarz iteration matrix which has the symbol entries
    $a_j=c_j=\exp(-\I\sqrt{\omega^2-\xi^2}H)$ of modulus about $1$ for the propagating modes
    ($\Re\xi<\omega$). Now in Figure~\ref{figfsppwh}, we increase also the wavenumber $\omega$ with
  $N$. It can be seen that the scaling $\max_{\xi}\rho=1-O(N^{-1})$ is the same as for fixed
  $\omega$ in Figure~\ref{figfspp1}.
  
  \begin{figure}
    \centering%
    \includegraphics[width=.56\textwidth,trim=10 10 0 6,clip]{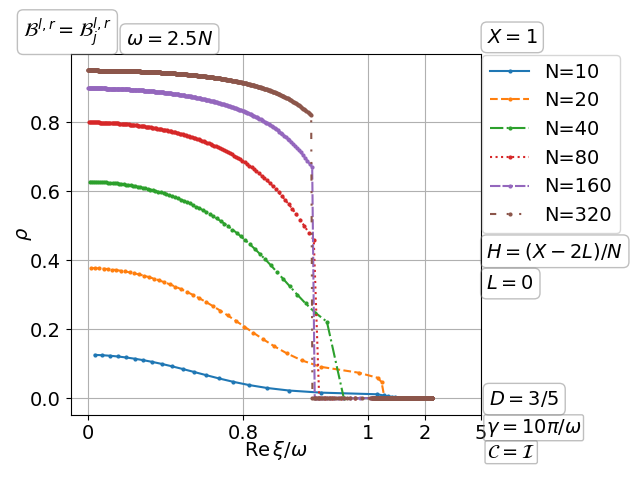}%
    \includegraphics[width=.44\textwidth,height=13em,trim=0 10 0 6,clip]{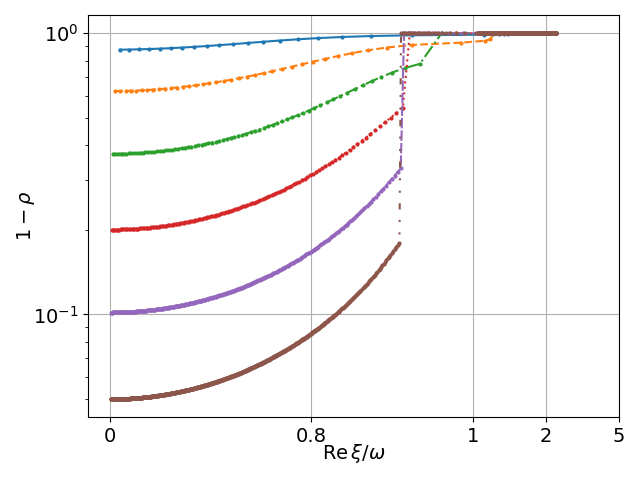}\\
    \includegraphics[width=.56\textwidth,trim=10 10 0 6,clip]{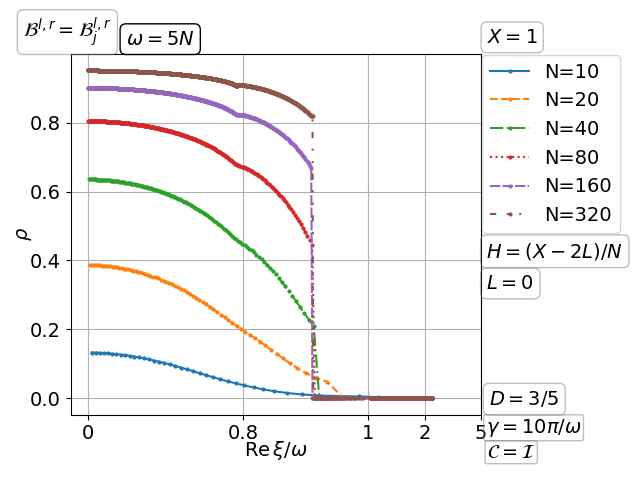}%
    \includegraphics[width=.44\textwidth,height=13em,trim=0 10 0 6,clip]{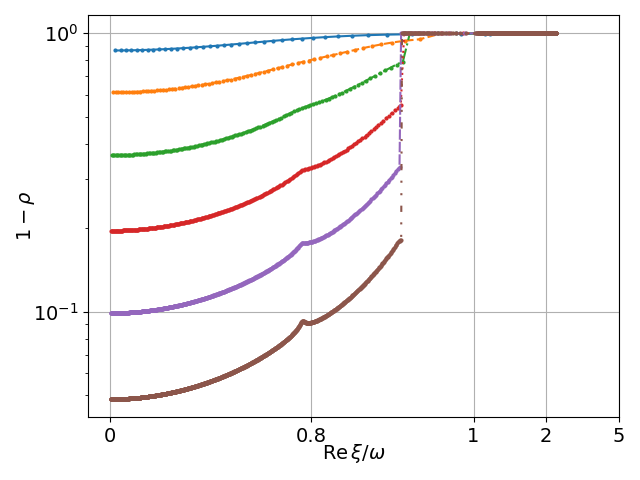}\\
    \includegraphics[width=.5\textwidth,trim=5 6 0 2,clip]{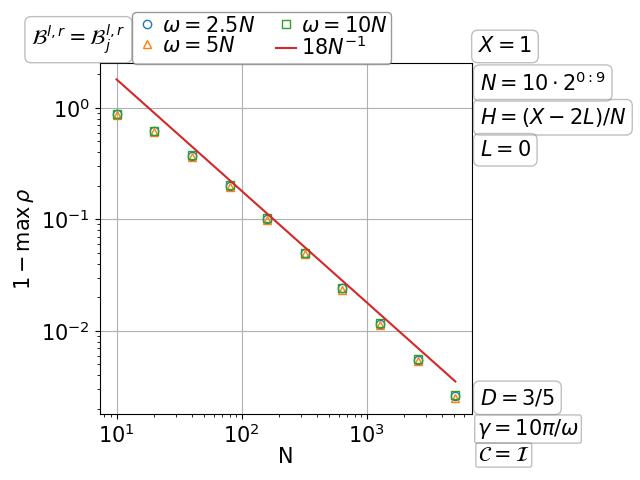}%
    \includegraphics[width=.5\textwidth,trim=5 6 0 2,clip]{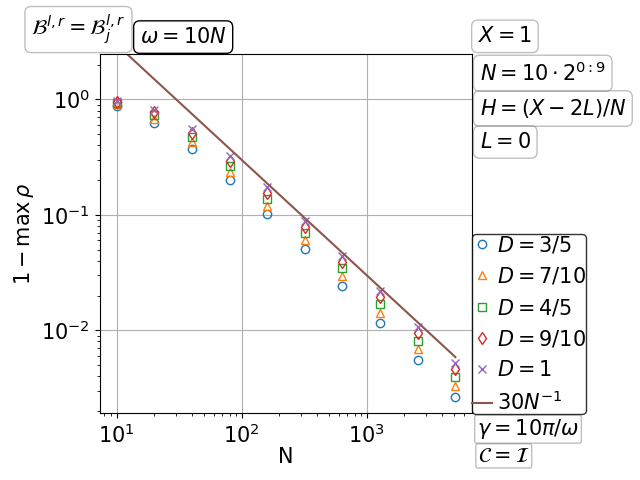}\\
    \includegraphics[width=.5\textwidth,trim=5 6 0 2,clip]{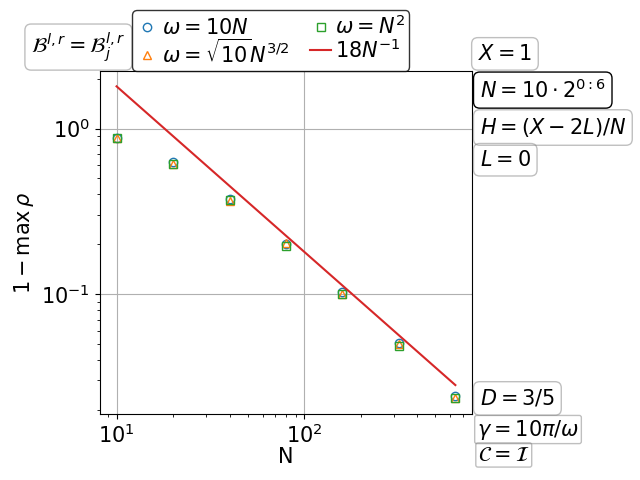}%
    \includegraphics[width=.5\textwidth,trim=5 6 0 2,clip]{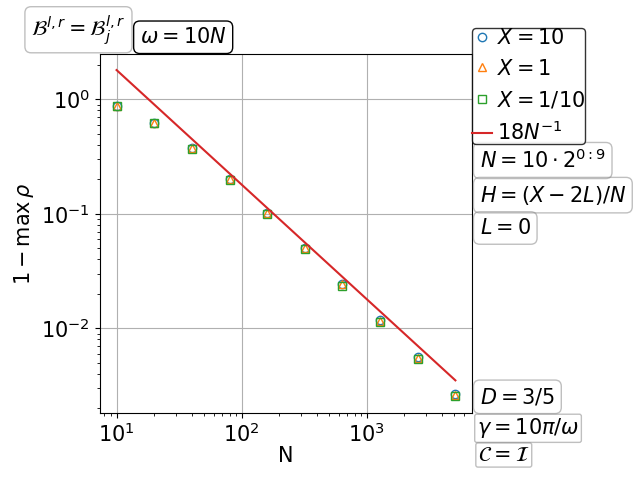}%
    \caption{Convergence of the parallel Schwarz method with PML transmission for the free space
      wave on a fixed domain with number of subdomains increasing with the wavenumber.}
    \label{figfsppwh}
  \end{figure}

\end{paragraph}


\subsection{Double sweep Schwarz method for the free space wave problem}


We have already seen for the parallel Schwarz methods that the free space wave problem is
significantly more difficult than the diffusion problem. For example, with fixed size subdomains,
the free space wave problem requires the parallel Schwarz iteration number to grow linearly with
number of subdomains $N$ even with the wavenumber $\omega$ fixed, while the diffusion problem
requires only a constant iteration number\footnote{\label{fnoteN}This is best understood for the optimal parallel Schwarz method of nilpotency index $N$ which has the nonzero symbol entries $a_j=c_j=\exp(-\sqrt{\xi^2+\eta}H)$. In the diffusion problem, $\eta>0$ so the nonzero entries are less than $1$ independent of $N$. But in the wave problem, $\eta=-\omega^2<0$ so the nonzero entries are of modulus about $1$ when $\Re\xi<\omega$.}. The Taylor of order zero transmission and the PML
transmission lead to the same scalings for the diffusion problem, while for the wave problem a
sufficient PML is necessary to guarantee convergence of the parallel Schwarz method and leads to
much faster convergence than the Taylor of order zero transmission conditions do. We also note that
the parallel Schwarz method with a fixed PML on a fixed domain has the same scalings of the
convergence rate for the wave and the diffusion problems.

Another angle of view is to compare the parallel Schwarz methods with their double sweep
counterparts, which has been presented for the diffusion problem. We have seen that the dependence
of the convergence rate on the number of subdomains can be removed by using together a fixed PML
transmission and the double sweep Schwarz iteration.

Now we are going to study the double sweep Schwarz methods for the free space wave problem. Do they
scale with fixed size subdomains, given that the parallel Schwarz methods do for the diffusion
problem but not for the wave problem? With Taylor of order zero transmission without overlap, the
parallel Schwarz method is proved to converge \cite{Despres}. Do we still expect convergence
when we change only the parallel iteration to the double sweep iteration? How do the double sweep
Schwarz methods depend on the wavenumber?  Will a fixed PML make the convergence scalable with the
number of subdomains like it does for the diffusion problem?


\subsubsection{Double sweep Schwarz method with Taylor of order zero transmission for the free
  space wave problem}

For the free space problem, we assume that the original boundary condition is also the Taylor of
order zero condition, \ie, $\mathcal{B}^{b,t}=\mathcal{B}^{l,r}=\mathcal{B}^{l,r}_j$. There is no
theoretical convergence result for the double sweep Schwarz method with Taylor of order zero
transmission. Our experience with the parallel Schwarz method is that a sufficiently small overlap
allows convergence of both evanescent and propagating modes. We shall try different overlap width
$L$ in the following study.

\begin{paragraph}{Convergence with increasing number of fixed size subdomains}

  When the domain grows by adding fixed size subdomains along one direction and the wavenumber
  $\omega$ is fixed, we find a suitable overlap width can be $L=O(\omega^{-1})$; see the top half
  of Figure~\ref{figfsdt0n}. Although not shown in the figure, now not only too large overlap can
  cause divergence of some propgating modes but also too small overlap can cause divergence of some
  evanescent modes! From the figure, we can see that a limiting curve
  $\rho_{\infty}:=\lim_{N\to\infty}\rho$ exists for the double sweep Schwarz method, and $\rho$
  grows with $N$ towards $\rho_{\infty}$. The tendency indicates quite good convergence at large
  $N$, which is verified in the bottom half of the figure. We find $\max_{\xi}\rho=O(1)<1$ which
  deteriorates with increasing wavenumber $\omega$, shrinking overlap width $L$ and shrinking
  subdomain width $H$.

  \begin{figure}
    \centering%
    \includegraphics[width=.56\textwidth,trim=10 10 0 6,clip]{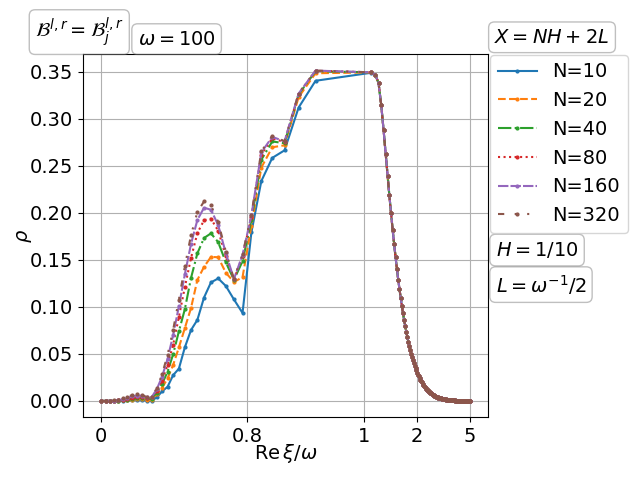}%
    \includegraphics[width=.44\textwidth,height=13em,trim=0 10 0 6,clip]{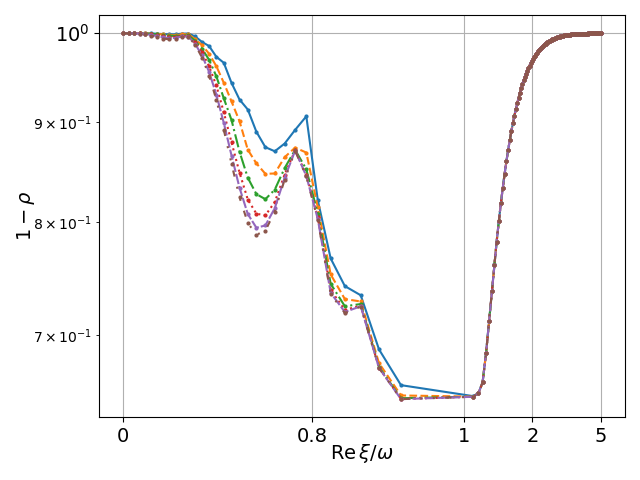}\\
    \includegraphics[width=.56\textwidth,trim=10 10 0 6,clip]{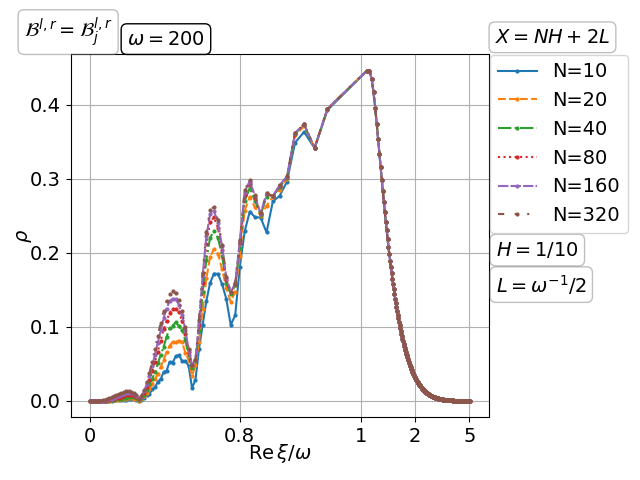}%
    \includegraphics[width=.44\textwidth,height=13em,trim=0 10 0 6,clip]{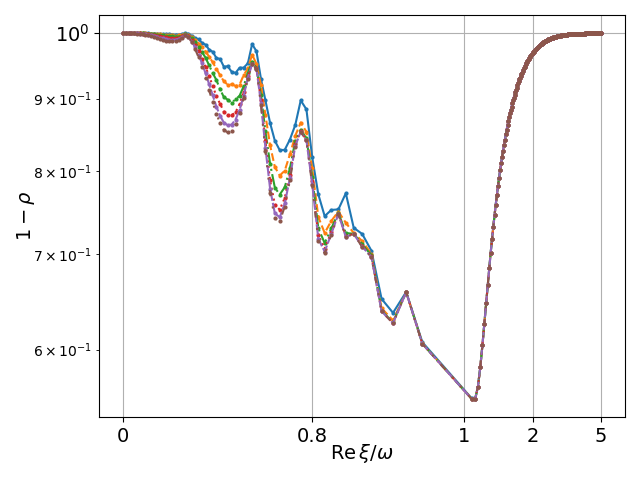}\\
    \includegraphics[width=.5\textwidth,trim=5 6 0 2,clip]{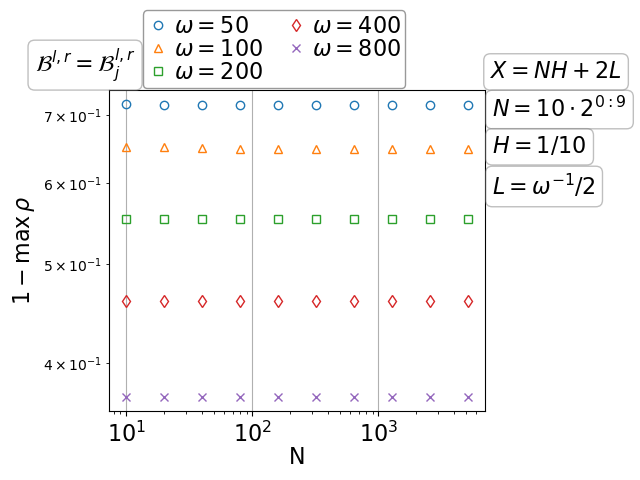}%
    \includegraphics[width=.5\textwidth,trim=5 6 0 2,clip]{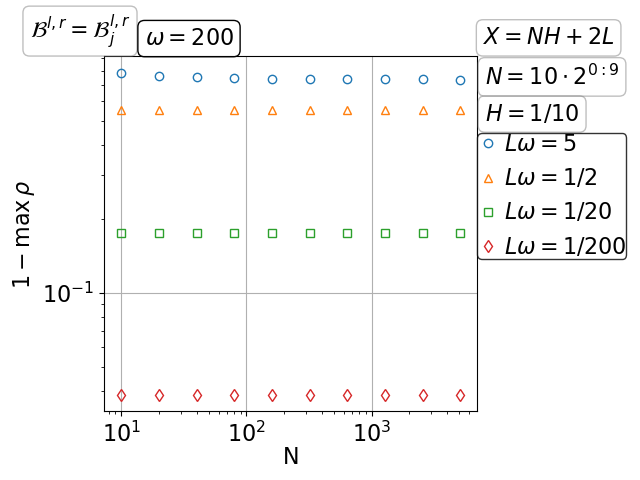}\\
    \includegraphics[width=.5\textwidth,trim=5 6 0 2,clip]{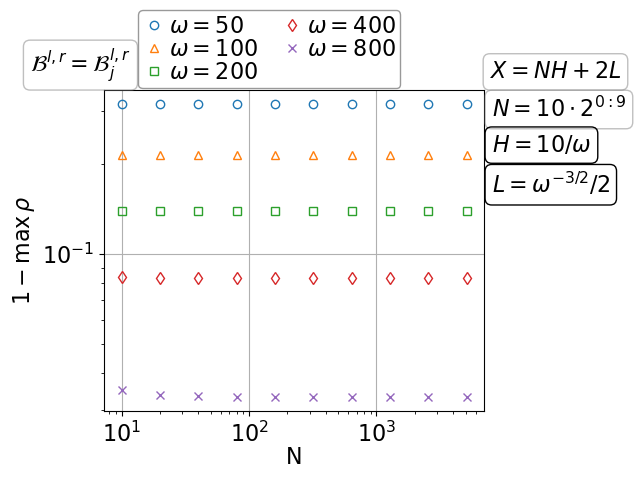}%
    \includegraphics[width=.5\textwidth,trim=5 6 0 2,clip]{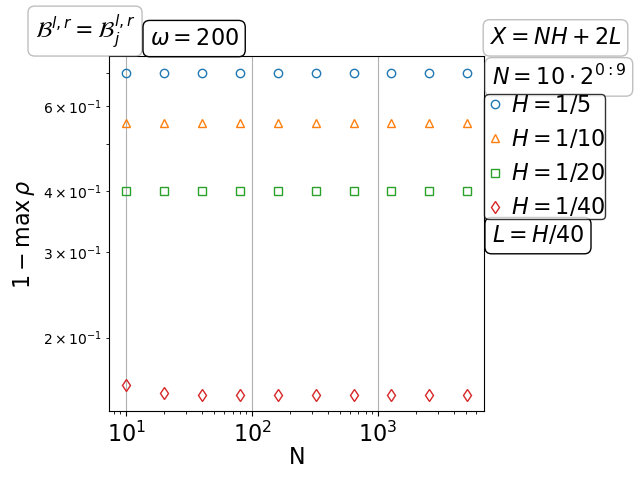}%
    \caption{Convergence of the double sweep Schwarz method with Taylor of order zero transmission
      for the free space wave with increasing number of fixed size subdomains.}
    \label{figfsdt0n}
  \end{figure}

\end{paragraph}

\begin{paragraph}{Divergence on a fixed domain with increasing number of subdomains}

  As we mentioned before, the overlap width $L$ should be small for convergence of propagating modes
  and also should be large for convergence of evanescent modes. The two opposite requirements become
  difficult to satisfy when the subdomain width $H$ is small. Figure~\ref{figfsdt01} illustrates
  this: we see that the double sweep Schwarz method can not converge for
  these examples with whatever overlap. In other words, under the scaling with increasing number of
  subdomains on a fixed domain, the double sweep Schwarz method with Taylor of order zero transmission
  eventually diverges.

    \begin{figure}
    \centering%
    \includegraphics[width=.5\textwidth,trim=5 6 0 2,clip]{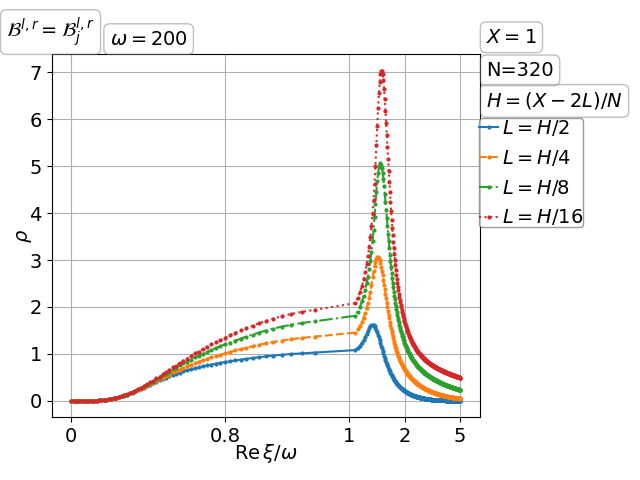}%
    \includegraphics[width=.5\textwidth,trim=5 6 0 2,clip]{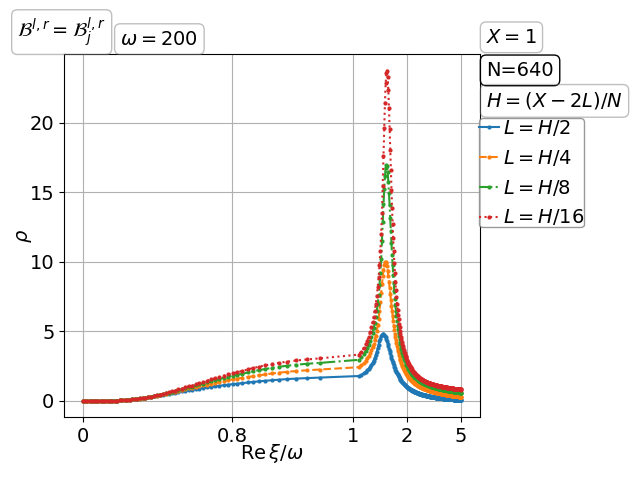}\\
    \includegraphics[width=.5\textwidth,trim=5 6 0 2,clip]{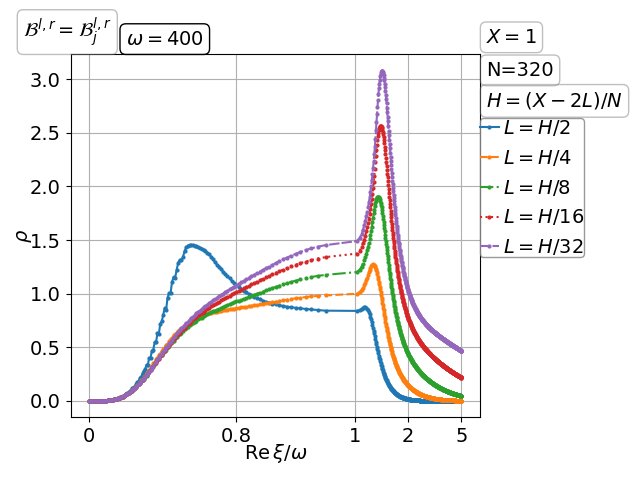}%
    \includegraphics[width=.5\textwidth,trim=5 6 0 2,clip]{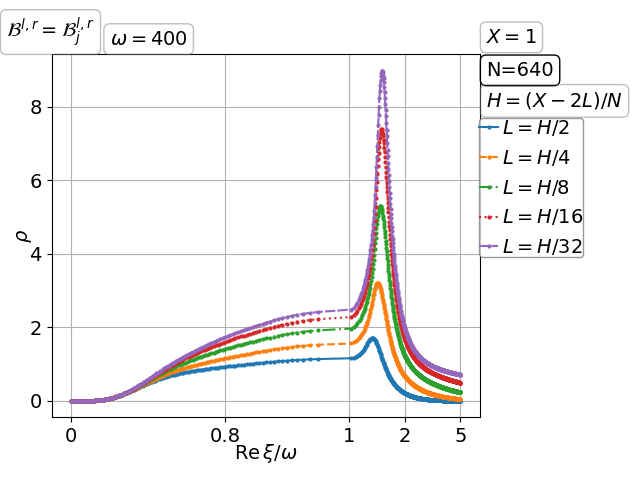}
    \caption{Divergence of the double sweep Schwarz method with Taylor of order zero transmission
      for the free space wave on a fixed domain with increasing number of subdomains.}
    \label{figfsdt01}
  \end{figure}
  
\end{paragraph}

\begin{paragraph}{Convergence on a fixed domain with a fixed number of subdomains for increasing
    wavenumber}

  As we learned from the previous paragraphs, when the subdomain width is sufficiently large, the
  double sweep Schwarz method with Taylor of order zero transmission converges with a suitable overlap
  width. In the convergent regime, we can still study the scaling with the wavenumber $\omega$; see
  Figure~\ref{figfsdt0w}. The top half illustrates the changing graph of $\rho=\rho(\xi)$ with
  $\omega$, which shows a limiting profile away from a neighborhood of $\Re\xi=\omega$, and its
  maximum attained near $\Re\xi=\omega$ increases with $\omega$. The bottom half of the figure shows
  the estimate $\max_{\xi}\rho=1-O(\omega^{-9/20})$ for large $\omega$, with the hidden factor
  independent of number of subdomains $N$ and the domain width $X$.

  \begin{figure}
    \centering%
    \includegraphics[width=.56\textwidth,trim=10 10 0 6,clip]{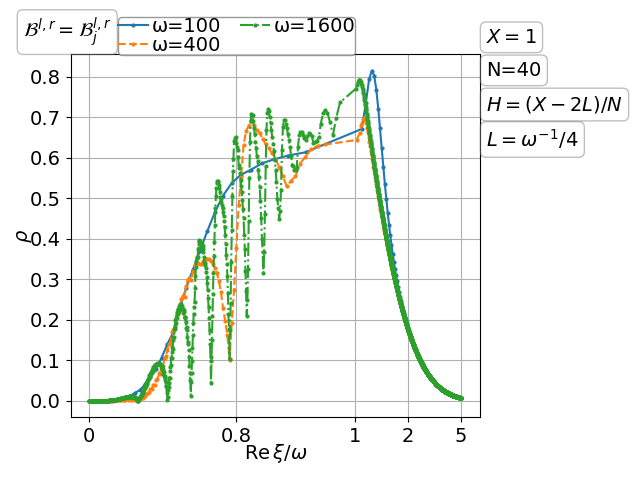}%
    \includegraphics[width=.44\textwidth,height=13em,trim=0 10 0 6,clip]{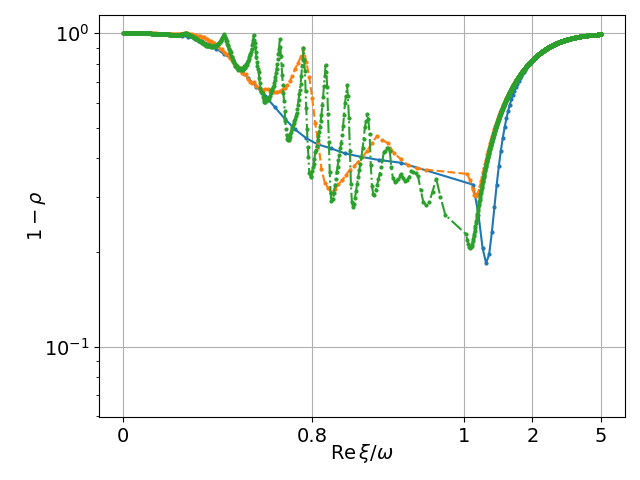}\\
    \includegraphics[width=.5\textwidth,trim=5 6 0 2,clip]{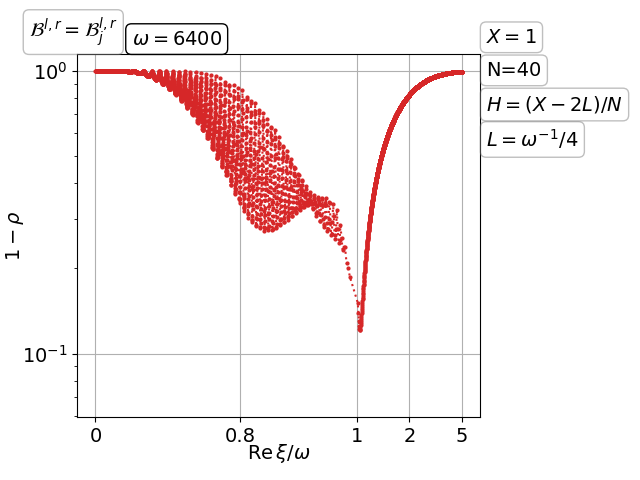}%
    \includegraphics[width=.5\textwidth,trim=5 6 0 2,clip]{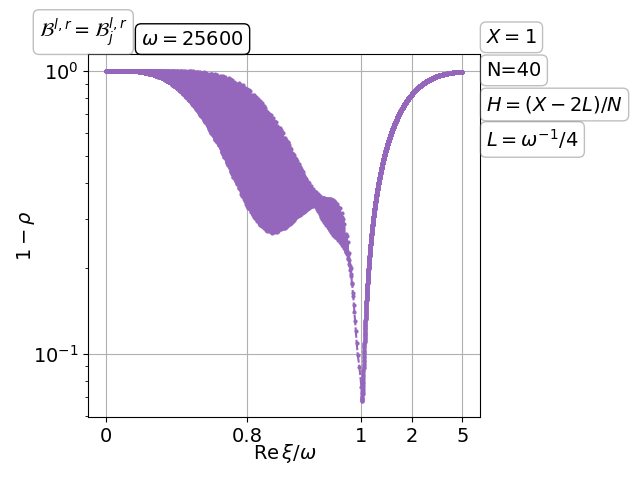}\\
    \includegraphics[width=.5\textwidth,trim=5 6 0 2,clip]{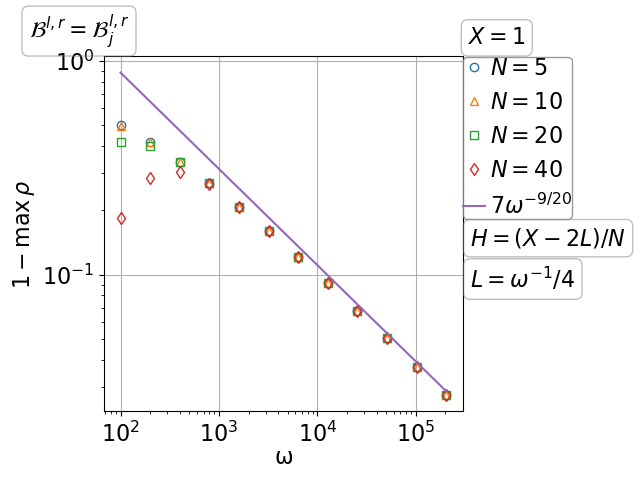}%
    \includegraphics[width=.5\textwidth,trim=5 6 0 2,clip]{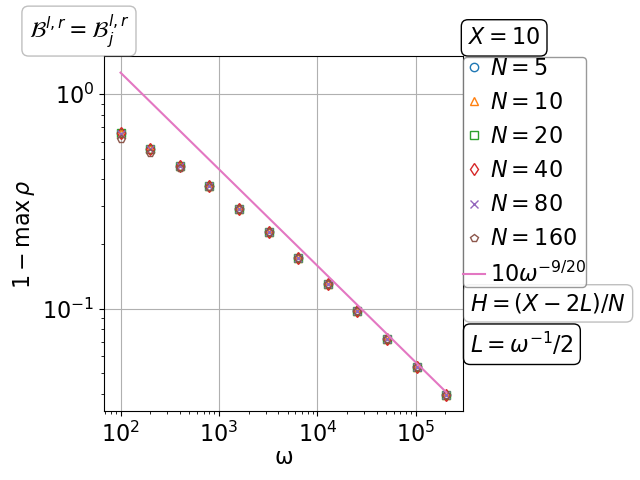}\\
    \includegraphics[width=.5\textwidth,trim=5 6 0 2,clip]{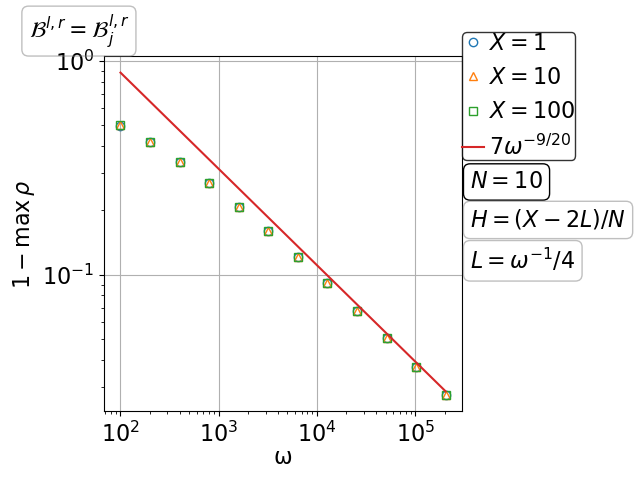}%
    \includegraphics[width=.5\textwidth,trim=5 6 0 2,clip]{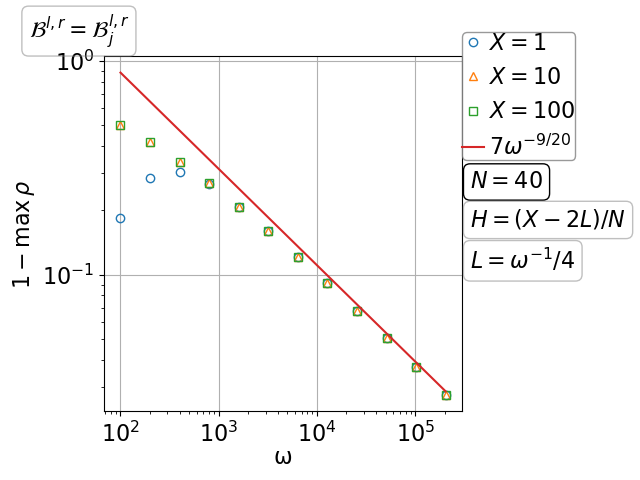}%
    \caption{Convergence of the double sweep Schwarz method with Taylor of order zero transmission
      for the free space wave on a fixed domain with a fixed number of subdomains for increasing
      wavenumber.}
    \label{figfsdt0w}
  \end{figure}

\end{paragraph}

\begin{paragraph}{Divergence on a fixed domain with number of subdomains increasing with the
    wavenumber}

  As we mentioned before, convergence of the double sweep Schwarz method with Taylor of order zero
  transmission requires a sufficienly large subdomain width $H$. So, on a fixed domain under the
  scaling $N\to\infty$, the method eventually must diverge. That the wavenumber increases here with
  $N$ does not change the situation. As can be seen in Figure~\ref{figfsdt0wh}, divergence of some
  propagating modes is persistent at large $N$ for any overlap width $L$ including $L=0$.
  
  \begin{figure}
    \centering%
    \includegraphics[width=.5\textwidth,trim=5 6 0 2,clip]{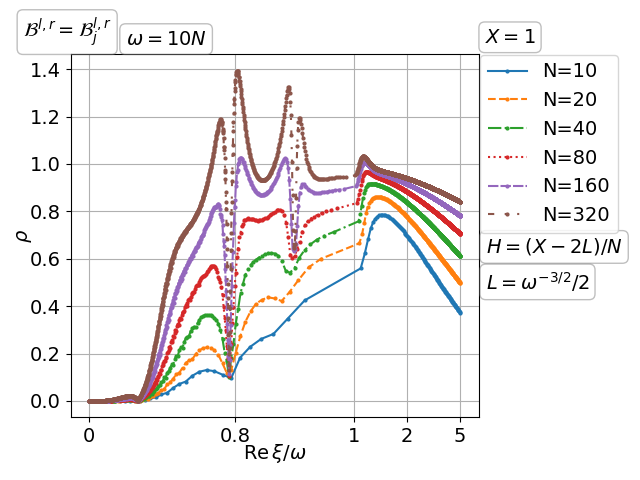}%
    \includegraphics[width=.5\textwidth,trim=5 6 0 2,clip]{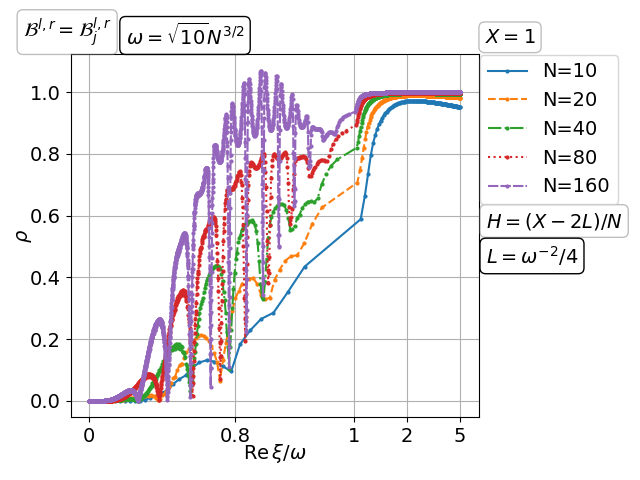}\\
    \includegraphics[width=.5\textwidth,trim=5 6 0 2,clip]{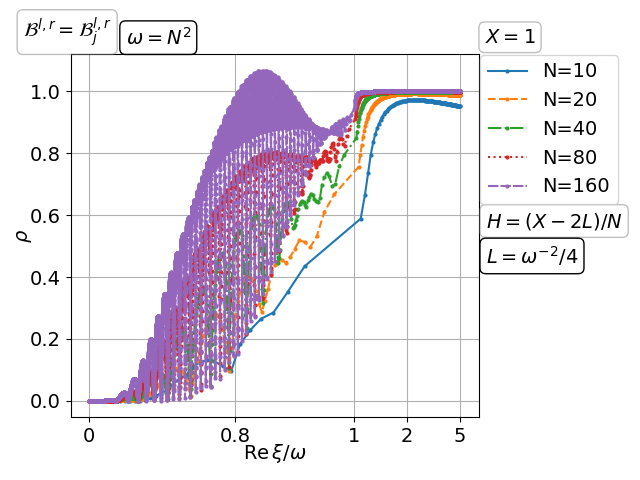}%
    \includegraphics[width=.5\textwidth,trim=5 6 0 2,clip]{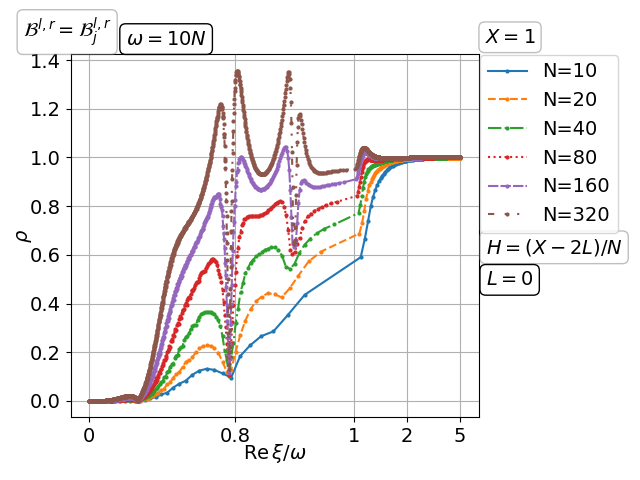}
    \caption{Divergence of the double sweep Schwarz method with Taylor of order zero transmission
      for the free space wave on a fixed domain with number of subdomains increasing with the
      wavenumber.}
    \label{figfsdt0wh}
  \end{figure}

\end{paragraph}


\subsubsection{Double sweep Schwarz method with PML transmission for the free space wave problem}

For the free space problem, the PML is used all around the domain. The top and bottom PML is
treated as part of the extended domain. For analysis purposes, the left and right PML for the subdomains
and the domain goes into the Dirichlet-to-Neumann operator \R{eqsw} in the boundary condition listed
in Table~\ref{tabB}. The double sweep Schwarz method with PML transmission for the free space wave
problem is the most noticeable case in the last decade. Impressive numerical experiments have been
shown in the literature; see \eg~\cn{EY2}, \cn{Poulson}, \cn{Stolk}, \cn{StolkImproved},
\cn{Chen13a}, \cn{Chen13b}, \cn{ZD}, \cn{ZepedaNested}, \cn{xiang2019double}. Recently, it was
applied to an inverse problem \cite{eslaminia}. An interesting attempt is to use double sweeps in
small groups of connected subdomains for accelerating the corresponding parallel Schwarz method
\cite{vion2018}. A question that has not been answered in theory is how many discrete layers of PML
are needed for scalable iterations. \cn{Chen13a} have an estimate at the continuous level indicating
that a PML width $D=O(N\log\omega)$ is sufficient. It was claimed based on numerical experiments
that a logarithmic growth of discrete layers of PML could be sufficient for increasing wavenumber
$\omega$ but fixed oscillations $\omega H$ in the subdomain width $H$; see \eg~\cn{Poulson}. In
this subsection, we shall investigate the question at the continuous level. Note that due to the
small values of $\rho$ the vertical axes in this subsection will be $\rho$ or $\max_{\xi}\rho$
instead of $1-\rho$ or $1-\max_{\xi}\rho$!

\begin{paragraph}{Convergence with increasing number of fixed size subdomains}

  We first try with a fixed PML. As we add more fixed size subdomains
  along one direction to the domain, we see a rapid growth of $\rho$
  in a neighborhood of $\xi=0$ from the first row of
  Figure~\ref{figfsdpn}, which eventually leads to divergence. Then,
  we turn to use a logarithmic growth of the PML width $D$ with the
  number of subdomains $N$ in the second row of the figure. We see
  $\rho$ is decreasing rapidly with $N$ over all the range of
  $\xi$. In the third row and the first column, we try with a smaller
  constant factor for the logarithmic growth of $D$, and we see
  divergence again. The next subplot shows the exponential (or faster)
  decay of $\max_{\xi}\rho$ with increasing $D$. We test the scaling
  of $\max_{\xi}\rho$ with different wavenumber $\omega$ and subdomain
  width $H$ in the last row of the figure. Given $D=(\log_{10}N)/10$,
  the dependence of $\max_{\xi}\rho=O(N^{-2})$ on $\omega$ and $H$ is
  negligible. Actually, larger $\omega$ and smaller $H$ can even give
  smaller $\max_{\xi}\rho$.
  
  \begin{figure}
    \centering%
    \includegraphics[width=.5\textwidth,trim=5 6 0 2,clip]{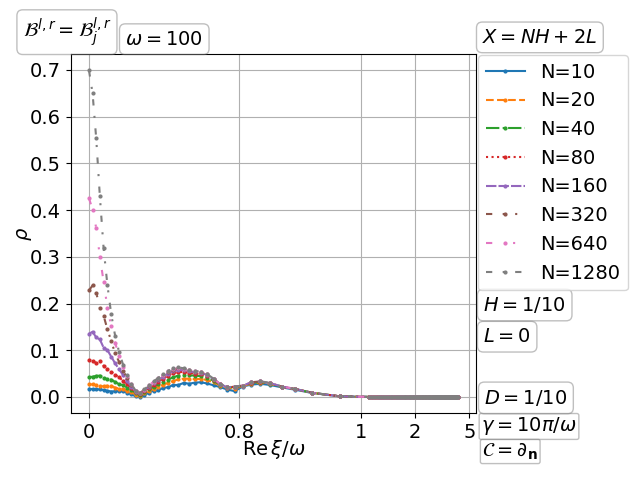}%
    \includegraphics[width=.5\textwidth,trim=5 6 0 2,clip]{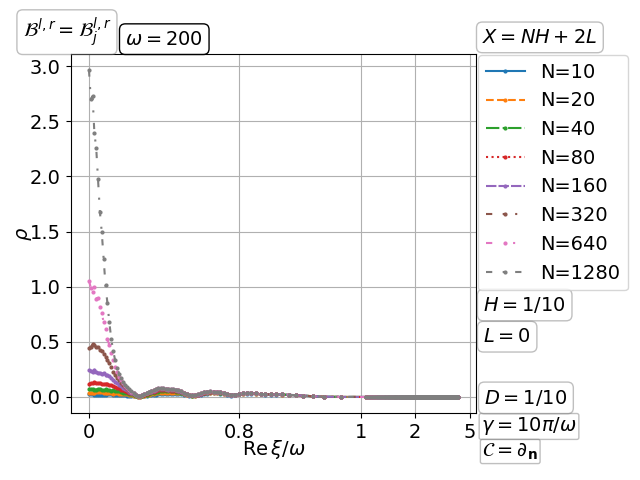}\\
    \includegraphics[width=.5\textwidth,trim=5 6 0 2,clip]{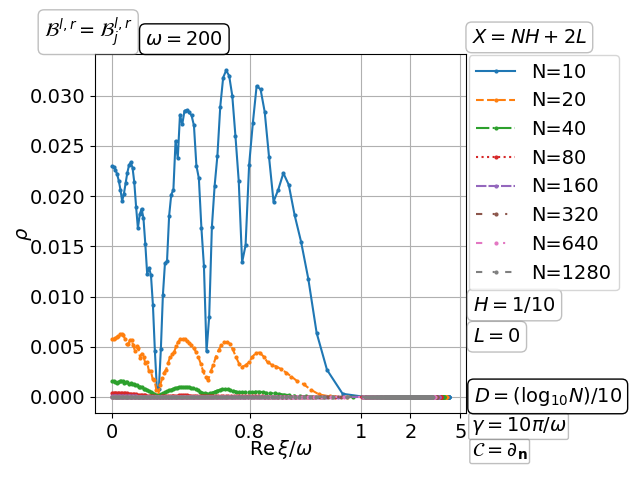}%
    \includegraphics[width=.5\textwidth,trim=5 6 0 2,clip]{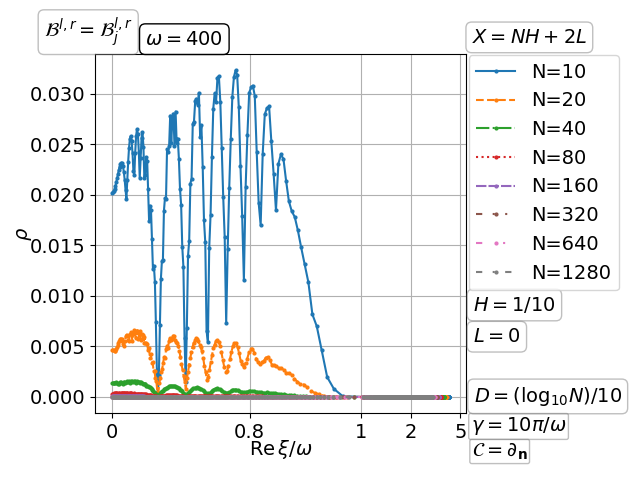}\\
    \includegraphics[width=.5\textwidth,trim=5 6 0 2,clip]{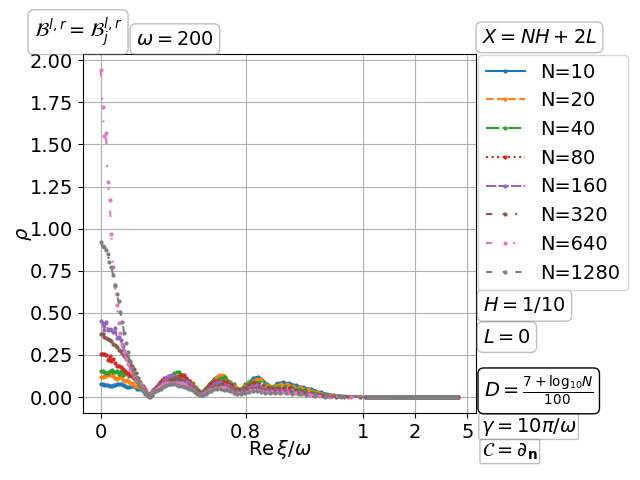}%
    \includegraphics[width=.5\textwidth,trim=5 6 0 2,clip]{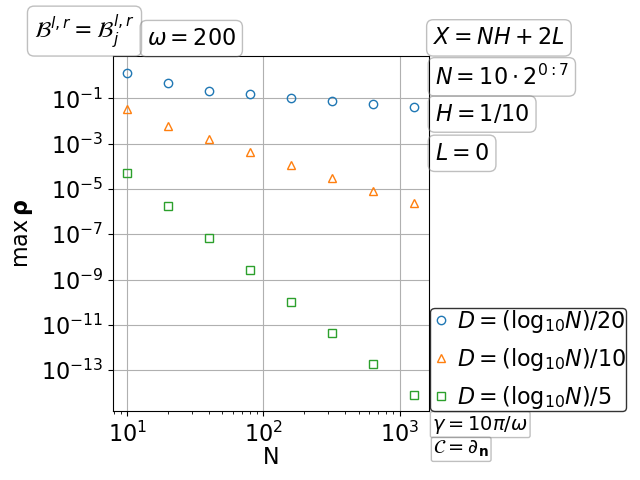}\\
    \includegraphics[width=.5\textwidth,trim=5 6 0 2,clip]{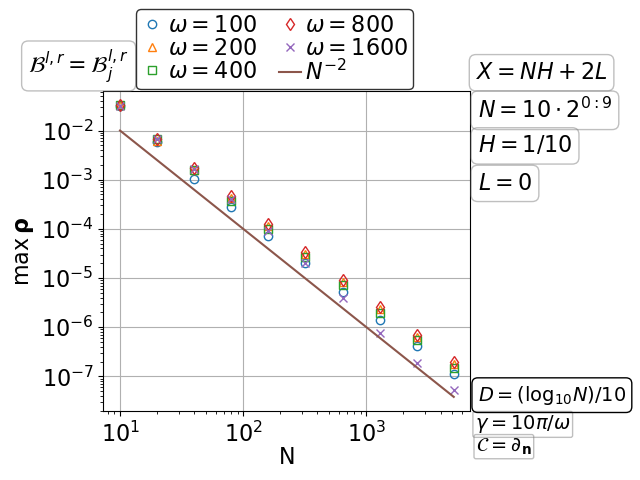}%
    \includegraphics[width=.5\textwidth,trim=5 6 0 2,clip]{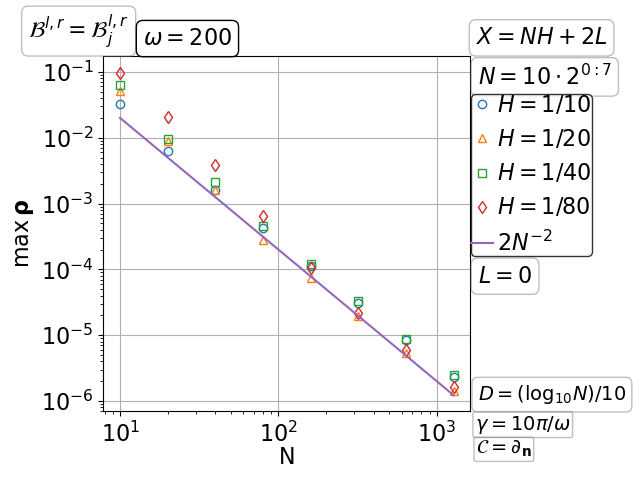}%
    \caption{Convergence and divergence of the double sweep Schwarz method with PML transmission
      for the free space wave with increasing number of fixed size subdomains. (Scaling of
      $\max_{\xi}\rho$ not $1-\max_{\xi}\rho$!)}
    \label{figfsdpn}
  \end{figure}

\end{paragraph}

\begin{paragraph}{Convergence on a fixed domain with increasing number of subdomains}

  Based on the previous paragraph, we choose a sufficiently large PML width $D$ to grow
  logarithmically with the number of subdomains $N$, now on a fixed domain. From the top half
  of Figure~\ref{figfsdp1}, we can see that the convergence factor $\rho$ decreases rapidly with
  $N$. Increasing the wavenumber $\omega$ only introduces more oscillations and changes little the
  envelope profile. We next show the constant PML width $D$ can still not work; see the first
  subplot of the third row. In the following subplots, the tendency of $\max_{\xi}\rho\to0$ as
  $N\to\infty$ is illustrated for different values of $\omega$, $D$ and the domain width $X$. Given
  $D=(\log_{10}N)/10$, the speed of $\max_{\xi}\rho\to0$ is faster than quadratic for all the listed
  values of $\omega$ and $X$. But the speed strongly depends on $D$ because $\max_{\xi}\rho$ decays
  exponentially as $D$ increases.

  \begin{figure}
    \centering%
    \includegraphics[width=.5\textwidth,trim=5 6 0 2,clip]{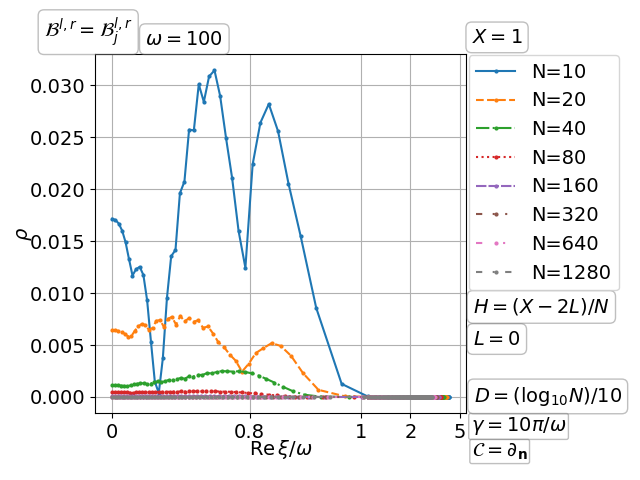}%
    \includegraphics[width=.5\textwidth,trim=5 6 0 2,clip]{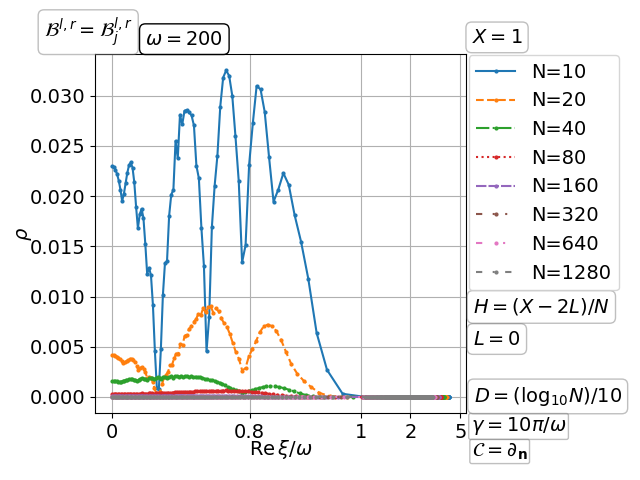}\\
    \includegraphics[width=.5\textwidth,trim=5 6 0 2,clip]{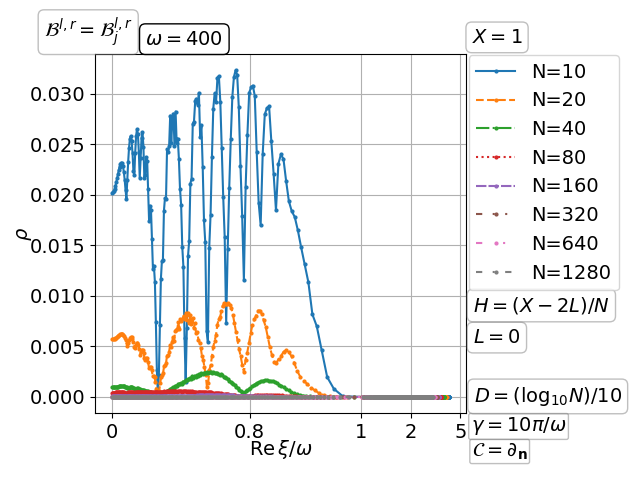}%
    \includegraphics[width=.5\textwidth,trim=5 6 0 2,clip]{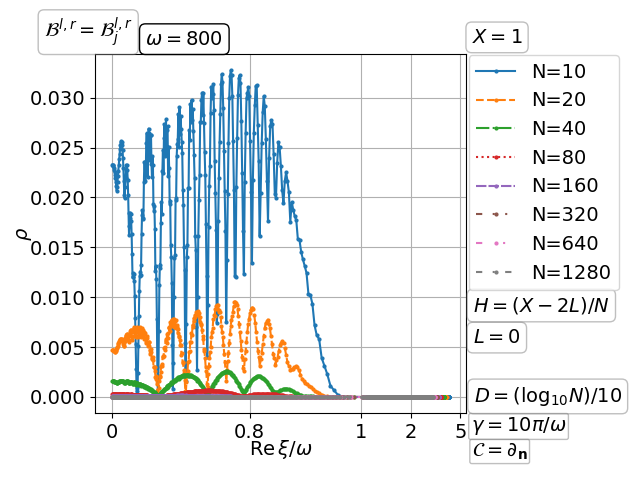}\\
    \includegraphics[width=.5\textwidth,trim=5 6 0 2,clip]{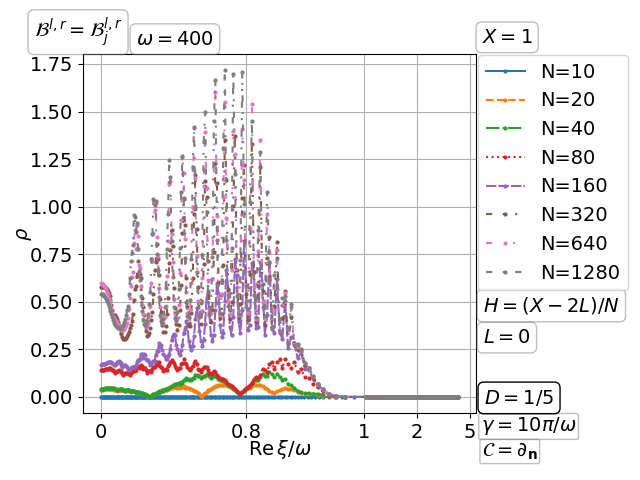}%
    \includegraphics[width=.5\textwidth,trim=5 6 0 2,clip]{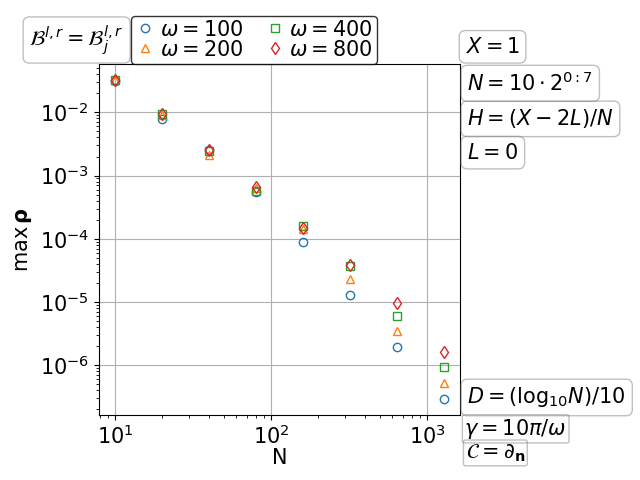}\\
    \includegraphics[width=.5\textwidth,trim=5 6 0 2,clip]{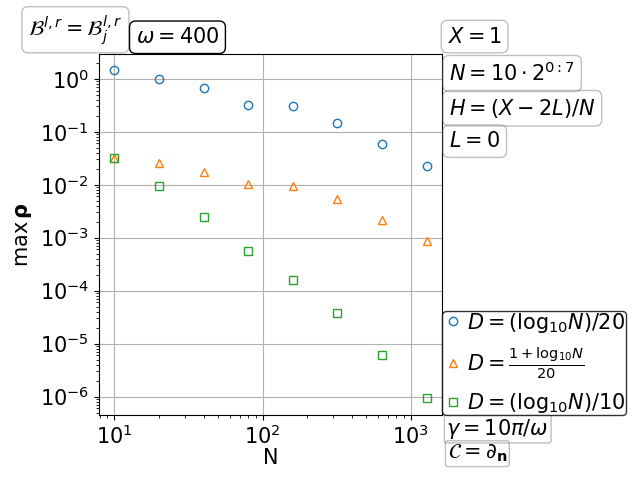}%
    \includegraphics[width=.5\textwidth,trim=5 6 0 2,clip]{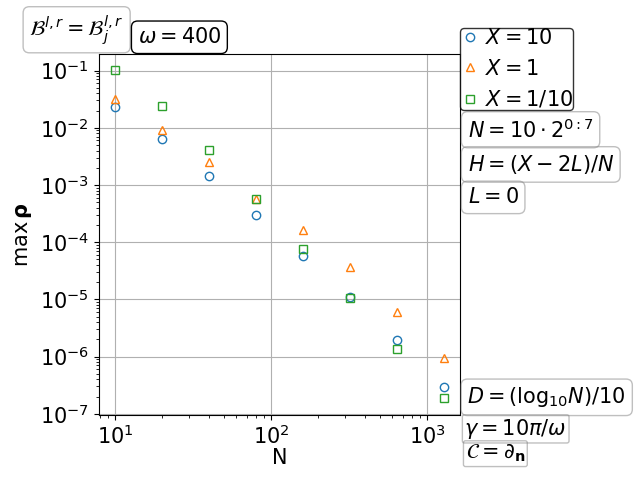}
    \caption{Convergence and divergence of the double sweep Schwarz method with PML transmission
      for the free space wave on a fixed domain with increasing number of subdomains. (Scaling of
      $\max_{\xi}\rho$ not $1-\max_{\xi}\rho$!)}
    \label{figfsdp1}
  \end{figure}
  
\end{paragraph}

\begin{paragraph}{Convergence on a fixed domain with a fixed number of subdomains for increasing
    wavenumber}

  Now we study the scaling with the wavenumber $\omega$ alone. The PML width $D$ is taken as
  constant with respect to $\omega$. In the top half of Figure~\ref{figfsdpw}, we see that the graph
  of the convergence facor $\rho=\rho(\xi)$ tends to a limiting profile as $\omega\to\infty$. So,
  asymptotically $\max_{\xi}\rho=O(1)$ is independent of $\omega$, as verified in the bottom half
  of Figure~\ref{figfsdpw}. At the same time, we see $\max_{\xi}\rho$ grows with the number of
  subdomains $N$, decays exponentially with $D$, and grows with shrinking domain width
  $X$. Moreover, we find also that $D$ needs to be sufficiently big for convergence at large
  $N$ and small $X$; see particularly the last subplot where divergence appears at $N=80$ and
  $X=1/10$.
  
  \begin{figure}
    \centering%
    \includegraphics[width=.5\textwidth,trim=5 6 0 2,clip]{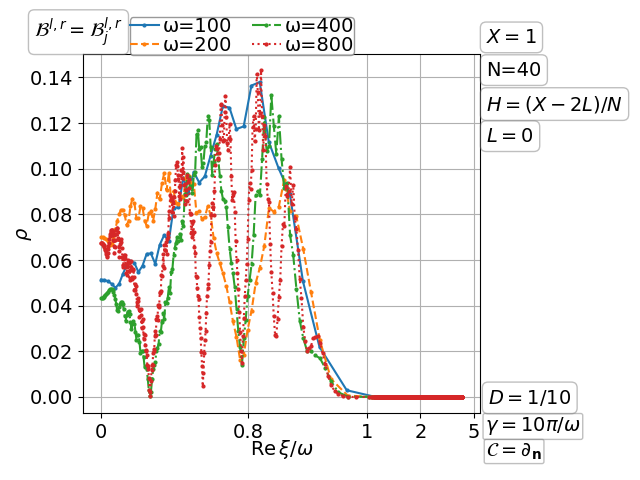}%
    \includegraphics[width=.5\textwidth,trim=5 6 0 2,clip]{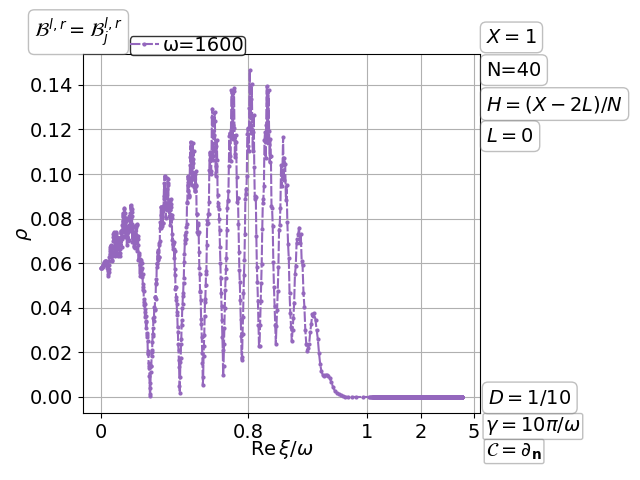}\\
    \includegraphics[width=.5\textwidth,trim=5 6 0 2,clip]{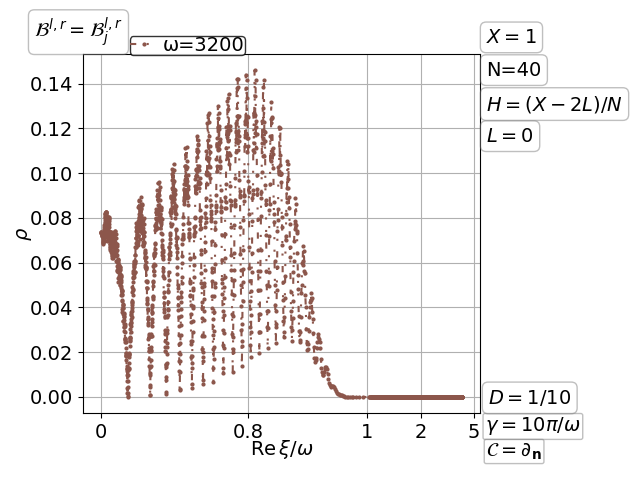}%
    \includegraphics[width=.5\textwidth,trim=5 6 0 2,clip]{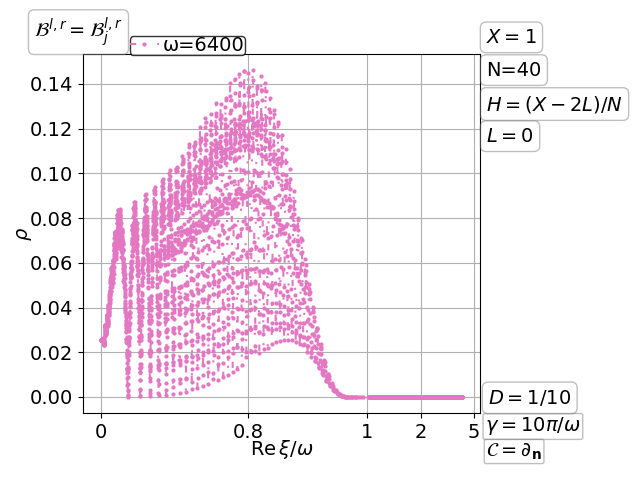}\\
    \includegraphics[width=.5\textwidth,trim=5 6 0 2,clip]{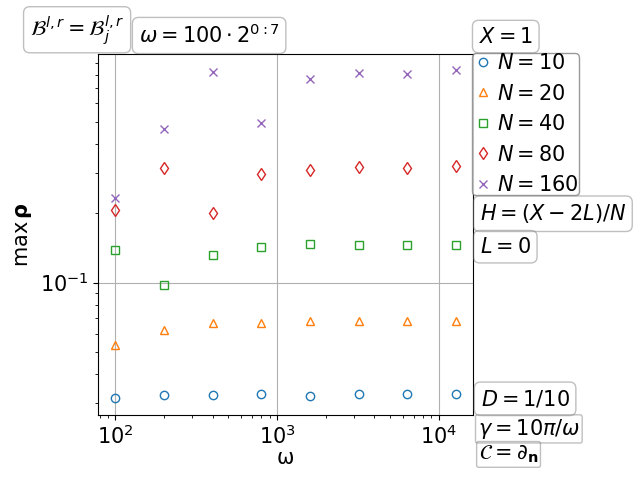}%
    \includegraphics[width=.5\textwidth,trim=5 6 0 2,clip]{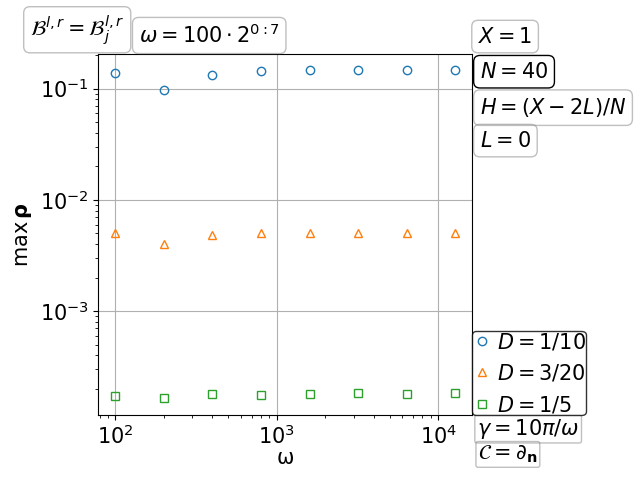}\\
    \includegraphics[width=.5\textwidth,trim=5 6 0 2,clip]{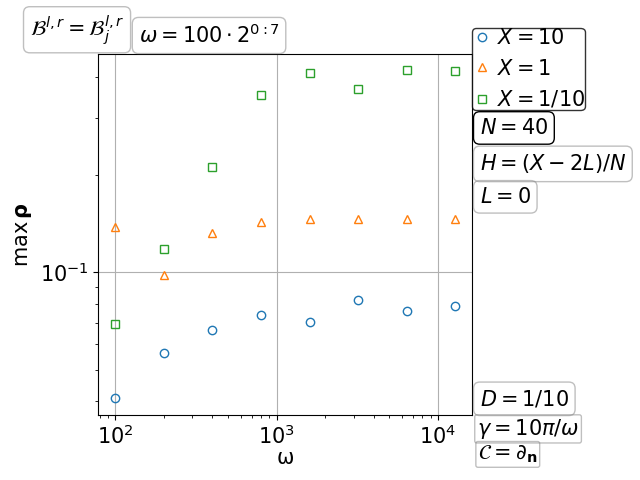}%
    \includegraphics[width=.5\textwidth,trim=5 6 0 2,clip]{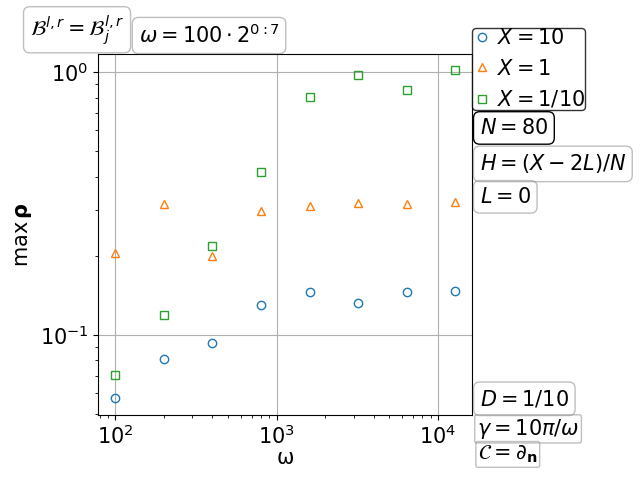}%
    \caption{Convergence and divergence of the double sweep Schwarz method with PML transmission
      for the free space wave on a fixed domain with a fixed number of subdomains for increasing
      wavenumber. (Scaling of $\max_{\xi}\rho$ not $1-\max_{\xi}\rho$!)}
    \label{figfsdpw}
  \end{figure}

\end{paragraph}

\begin{paragraph}{Convergence on a fixed domain with number of subdomains increasing with the
    wavenumber}

  This scaling can be viewed as a combination of the previous two scalings. As learned before, a
  logarithmic growth of the PML width $D$ with the number of subdomains $N$ is necessary for
  convergence, now verified also for this scaling in the first row of Figure~\ref{figfsdpwh}. Then,
  in the second row we see that a larger wavenumber $\omega$ causes more osciallations of the convergence
  factor $\rho=\rho(\xi)$ with little change of the maximum of $\rho$. Given $D=(\log_{10}N)/10$,
  the scaling of $\max_{\xi}\rho$ is about $O(N^{-2})$; see the bottom half of the figure. We can
  find that $\max_{\xi}\rho$ has a mild dependence on $\omega$ and $X$ but an exponential decay with
  $D$.  In particular, in the third row and the last column, we see that different $D$ gives a 
  different power law decay of $\max_{\xi}\rho$ with $N$.
  
  \begin{figure}
    \centering%
    \includegraphics[width=.5\textwidth,trim=5 6 0 2,clip]{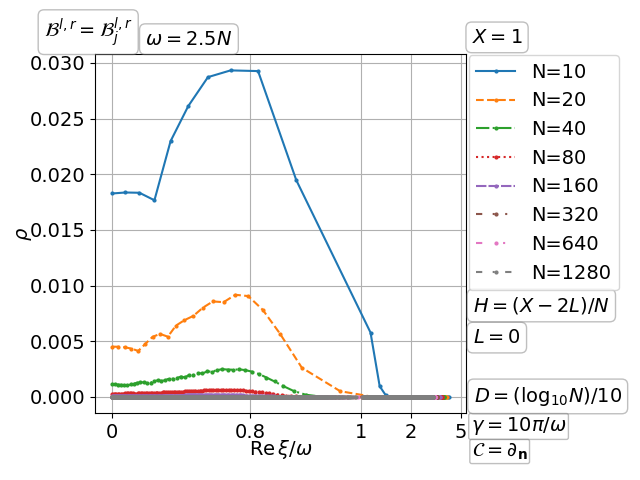}%
    \includegraphics[width=.5\textwidth,trim=5 6 0 2,clip]{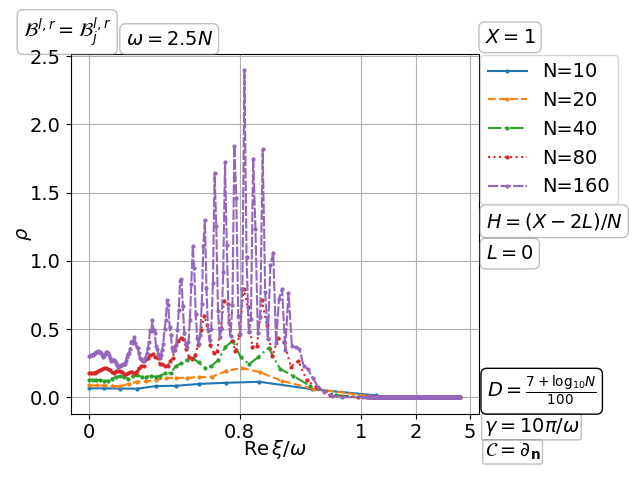}\\
    \includegraphics[width=.5\textwidth,trim=5 6 0 2,clip]{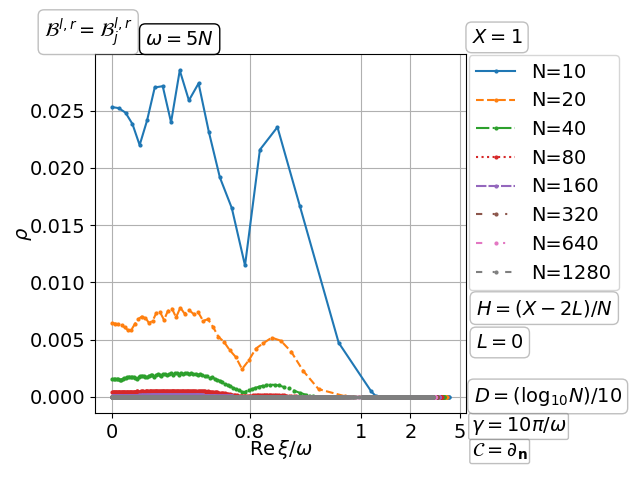}%
    \includegraphics[width=.5\textwidth,trim=5 6 0 2,clip]{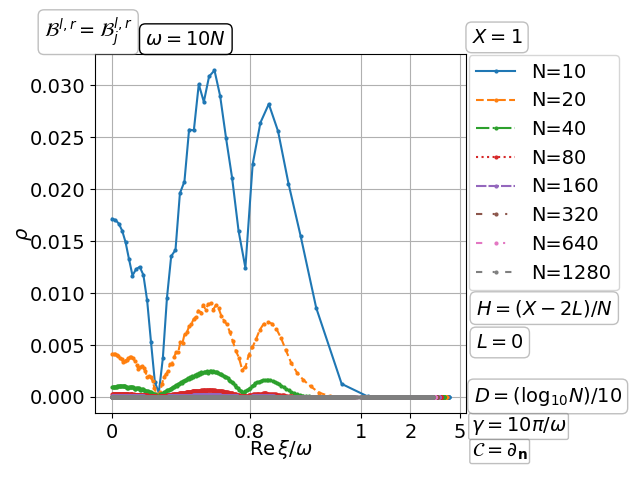}\\
    \includegraphics[width=.5\textwidth,trim=5 6 0 2,clip]{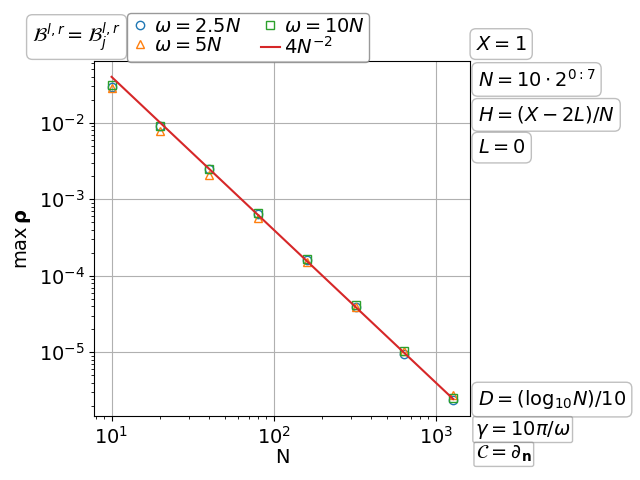}%
    \includegraphics[width=.5\textwidth,trim=5 6 0 2,clip]{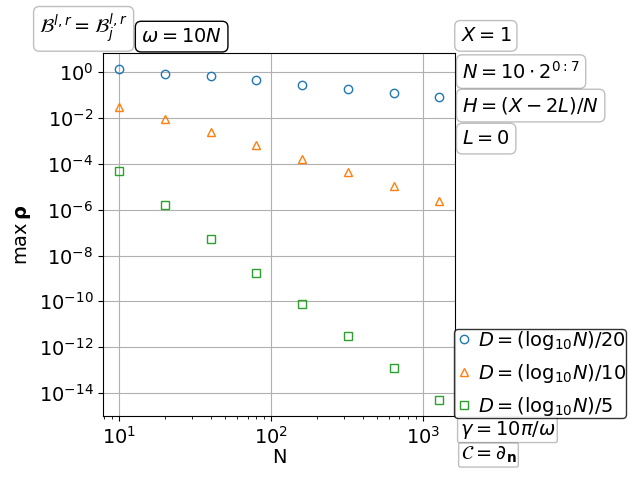}\\
    \includegraphics[width=.5\textwidth,trim=5 6 0 2,clip]{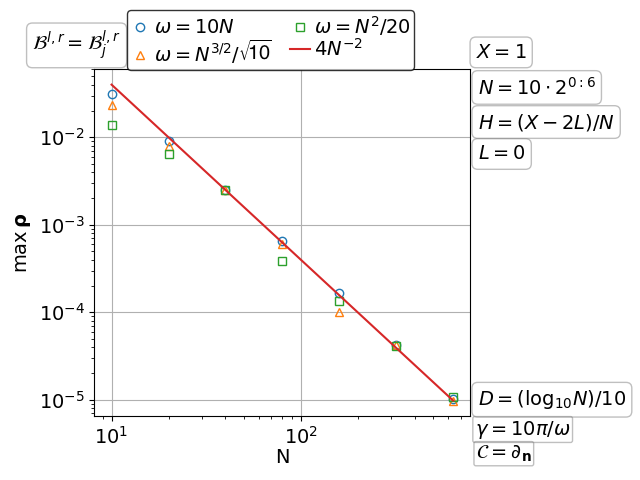}%
    \includegraphics[width=.5\textwidth,trim=5 6 0 2,clip]{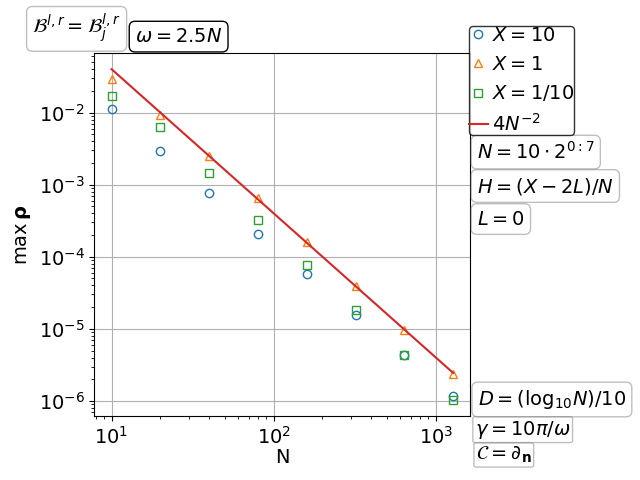}%
    \caption{Convergence and divergence of the double sweep Schwarz method with PML transmission
      for the free space wave on a fixed domain with number of subdomains increasing with the
      wavenumber. (Scaling of $\max_{\xi}\rho$ not $1-\max_{\xi}\rho$!)}
    \label{figfsdpwh}
  \end{figure}

\end{paragraph}

\subsection{Parallel Schwarz methods for the layered medium wave problem}


This case differs from the free space wave problem (see section~\ref{secpsfs}) only in $\eta$. We
assume that the inhomogeneity is bounded, the medium is layer-wise constant in a box, and the
exterior medium is constant. Specifically, $\eta=-\omega^2/v^2$ with the wave velocity given as one
of the following:
\begin{itemize}
\item[vel1] or vel1($v_1$), one inclusion in the full space: $v=v_1$ when
  $(x,y)\in (\frac{1}{3}X,\frac{2}{3}X)\times(0,1)$, and $v=1$ on
  $\mathbb{R}^2-(\frac{1}{3}X,\frac{2}{3}X)\times(0,1)$;
\item[vel2] or vel2($v_1$), two inclusions in the full space: $v=v_1$ when
  $(x,y)\in (\frac{1}{6}X,\frac{1}{3}X)\times(0,1)$ or
  $(x,y)\in (\frac{2}{3}X,\frac{5}{6}X)\times(0,1)$, and $v=1$ otherwise in $\mathbb{R}^2$;
\item[vell] or vel($l$,$v_1$), $l$ inclusions in the full space: $v=v_1$ when
  $(x,y)\in ((j-1)W,jW)\times(0,1)$ for $j=2,4,..,2l$, $v=1$ otherwise in $\mathbb{R}^2$ and
  $W=\frac{X}{2l+1}$.
\end{itemize}
So, the medium interfaces are parallel to the subdomain interfaces. In fact, it is more favorable
for convergence to choose the domain decomposition along the other direction such that the subdomain
interfaces are perpendicular to the medium interfaces. But in practical applications, it is very
possible to have the situation that we choose to study here. For example, if the top and bottom
boundary conditions are Neumann or Dirichlet, a decomposition along the other direction will be hard
for convergence. Moreover, a velocity model may contain horizontal, vertical and curved interfaces
along different directions, which makes reflections traveling back and forth between subdomains
unavoidable.

Numerical experiments of the double sweep Schwarz method with a matrix Schur complement transmission
based on the constant-medium extension have shown the difficulty of convergence in layered media
\cite{gander2019class}. An analysis and approximation of the variable-medium Dirichlet-to-Neumann
operator is carried out by \cn{layer2}, and more recently they proposed a matrix optimization
approach to the infinite element method \cite{hohage2021learned,PreussThesis}. A boundary integral
transmission condition is proposed by \cn{layer1}. The idea of \cn{heikkola} consists in extending
each layer (used as subdomain) to a slab in the physical domain, which can be thought as using a
physical PML, and approximately solving the extended subdomain problem (with zero source in the
extension) by a fast direct solver. The impact of reflected waves in the direction of decomposition
was early recognized by \cn{Schadle} for the PML transmission, see also \cn{ZepedaNested}.

Our goal in this subsection is to explore the convergence factor $\rho$ based on the Fourier
analysis, as we did in the previous part of this review. A main difference in this case is that the
interface-to-interface operators $a_j$, $b_j$, $c_j$, $d_j$ need to be solved from subdomain layered
medium problems. In this case, Taylor of order zero transmission with overlap can diverge for some
medium distribution, and without overlap it hardly gives good convergence, albeit it does converge
without overlap when $\mathcal{B}^r_j=\mathcal{B}_{j+1}^l$ as guaranteed by \cn{Despres}. The
situation can be improved by using relaxation and nonlocal transmission conditions
\cite{collino2020exponentially}. For simplicity, we will focus on the PML transmission which has
been numerically tested for layered media problems \cite{Vion,leng2019additive,lengdiag}.


\begin{paragraph}{Convergence with increasing number of fixed size layers as subdomains}

  We use medium layers as subdomains. So, the subdomain interfaces are aligned with the medium
  interfaces. It is then a question which wave velocity to take in the PML equation along an
  interface. In Figure~\ref{figlmppn} (except the last subplot), the same PML Dirichlet-to-Neumann
  is used for $\mathcal{B}_j^{r}$ and $\mathcal{B}_{j+1}^l$, and in particular, the domain averaged
  velocity $\bar{v}=\frac{1}{|\Omega|}\int_{\Omega}v$ is used in all the PML equations. The choice
  of $\mathcal{B}_j^{r}=\mathcal{B}_{j+1}^{l}$ here seems important for convergence, as emphasized
  in general convergence proofs of parallel optimized Schwarz methods without overlap
  \cite{lions1990schwarz,collino2020exponentially}. For example, if we take the wave velocity in the
  right neighborhood of $\{X_j^r\}\times(0,Y)$ for $\mathcal{B}_j^r$ but the wave velocity in the
  left neighborhood of $\{X_{j+1}^l\}\times(0,Y)$ for $\mathcal{B}_{j+1}^l$, we will get divergence
  no matter how large or strong a PML is used, see the last subplot of Figure~\ref{figlmppn}. In the
  top half of Figure~\ref{figlmppn}, the graphs of $\rho=\rho(\xi)$ tell us roughly that lower
  frequency modes converge slower, higher contrast in the wave velocity leads to slower convergence
  for modest number of subdomains $N$, and increasing PML width $D$ has a limited positive impact on
  convergence of low frequency modes. In the following three subplots, the scaling of
  $1-\max_{\xi}\rho$ with $N$ and its dependence on $\omega$, $D$ and contrast in the wave velocity
  $v_1$ are illustrated. Specifically, $1-\max_{\xi}\rho$ decreases with $N$ almost independently of
  $\omega$ for an initial stage and then stays at a constant which comes later for larger $\omega$.
  
\begin{figure}
  \centering%
  \includegraphics[width=.5\textwidth,trim=5 6 0 2,clip]{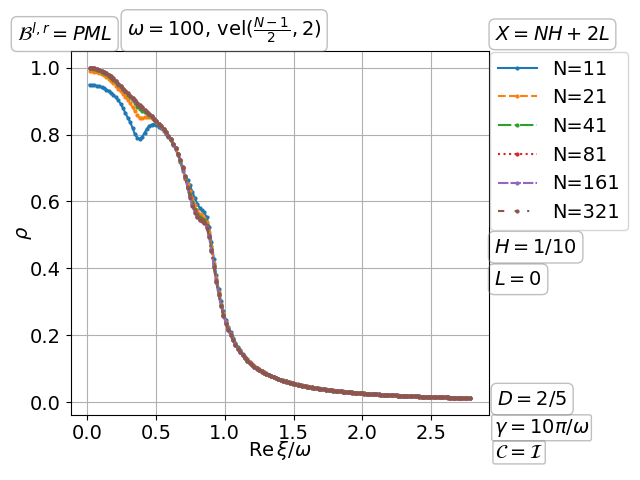}%
  \includegraphics[width=.5\textwidth,trim=5 6 0 2,clip]{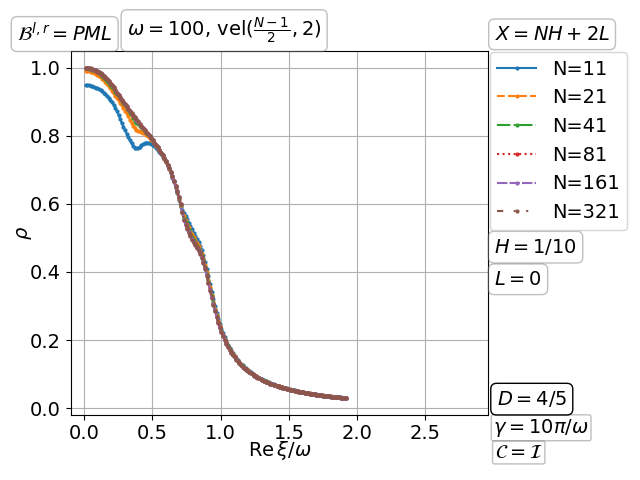}\\
  \includegraphics[width=.5\textwidth,trim=5 6 0 2,clip]{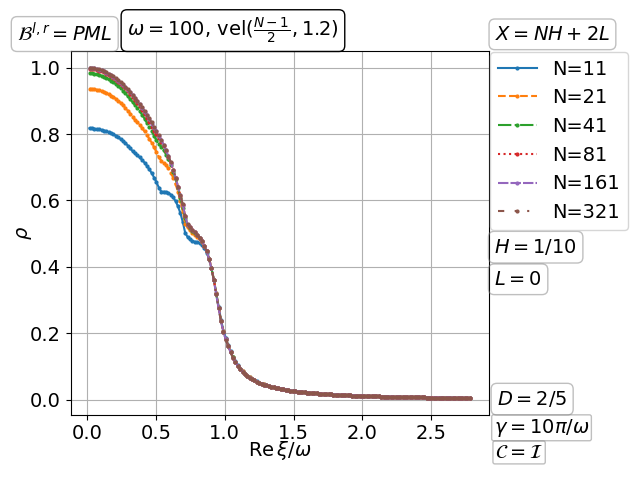}%
  \includegraphics[width=.5\textwidth,trim=5 6 0 2,clip]{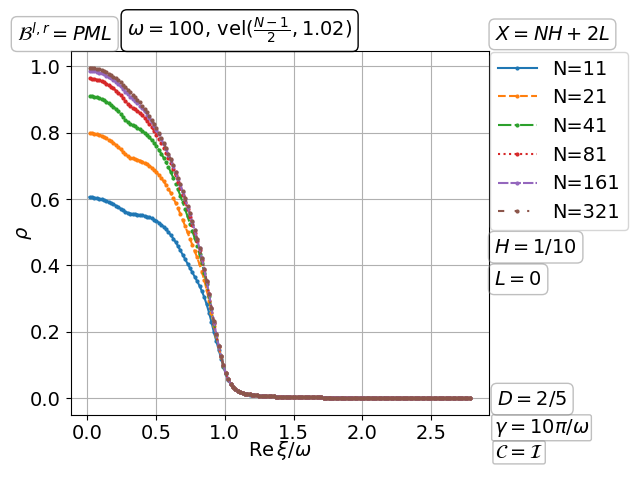}\\
  \includegraphics[width=.5\textwidth,trim=5 6 0 2,clip]{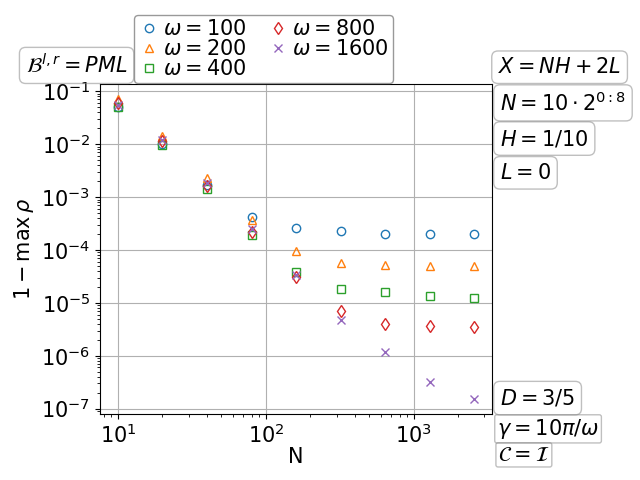}%
  \includegraphics[width=.5\textwidth,trim=5 6 0 2,clip]{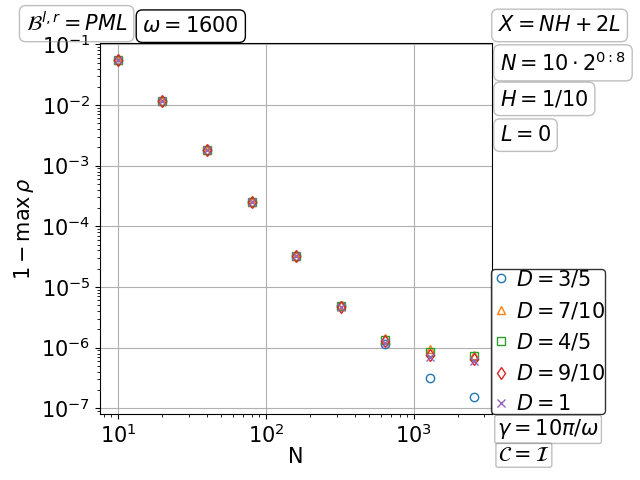}\\
  \includegraphics[width=.5\textwidth,trim=5 6 0 2,clip]{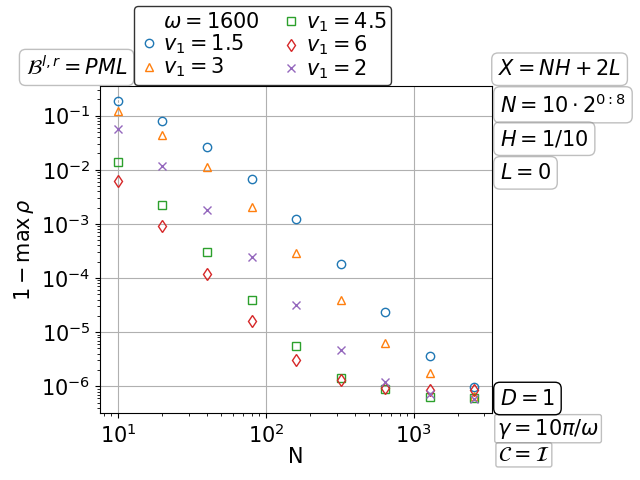}%
  \includegraphics[width=.5\textwidth,trim=5 6 0 2,clip]{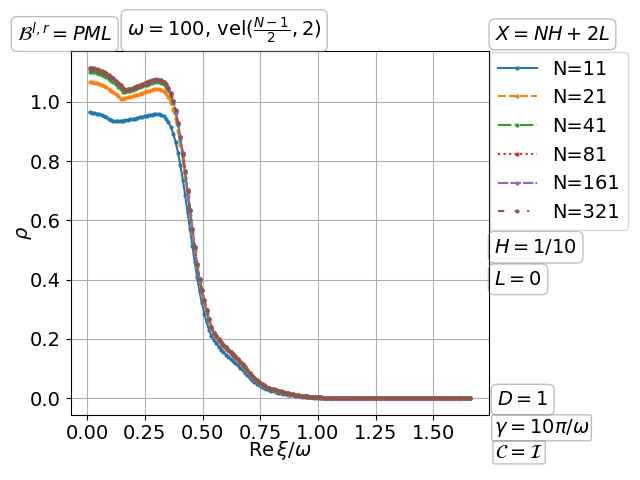}%
  \caption{Convergence ($\mathcal{B}_j^r=\mathcal{B}_{j+1}^{l}$ except bottom right) and divergence
    ($\mathcal{B}_j^r\ne\mathcal{B}_{j+1}^{l}$ bottom right) of the parallel Schwarz method with PML
    transmission for the full space wave problem with fixed size medium layers as subdomains.}
  \label{figlmppn}
\end{figure}

\end{paragraph}

\begin{paragraph}{Convergence on a fixed domain with increasing number of subdomains} In this case,
  the medium property in the domain is also fixed as we increase the number of subdomains. The
  subdomain interfaces are not aligned with the medium interfaces. Here, it turns out not to be a
  good idea (divergence observed) to take the domain average wave velocity as we did in the last
  paragraph. Instead, we simply use the values of the wave velocity at subdomain interfaces for the
  PML equations.  Figure~\ref{figlmpp1} compares the `vel1' and `vel2' velocity models in two
  columns. It can be seen that the convergence for `vel2' is slower than for `vel1'. Their
  convergence rates $\max_{\xi}\rho$ both deteriorate to $1$ linearly with $N^{-1}\to0$ but are
  independent of $\omega$. The convergence also deteriorates with higher contrast in the wave
  velocity $v_1$. Roughly, $1-\max_{\xi}\rho$ looks like $O(v_1^{-1})$ for `vel1' and
  $O(v_1^{-2})$ for `vel2'.
  
\begin{figure}
  \centering%
  \includegraphics[width=.5\textwidth,trim=5 6 0 2,clip]{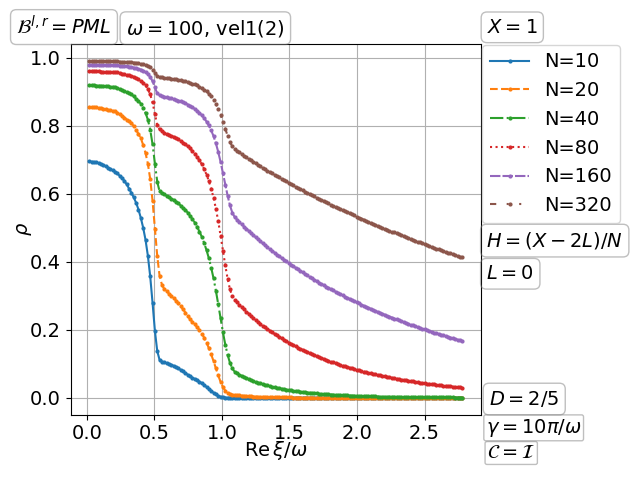}%
  \includegraphics[width=.5\textwidth,trim=5 6 0 2,clip]{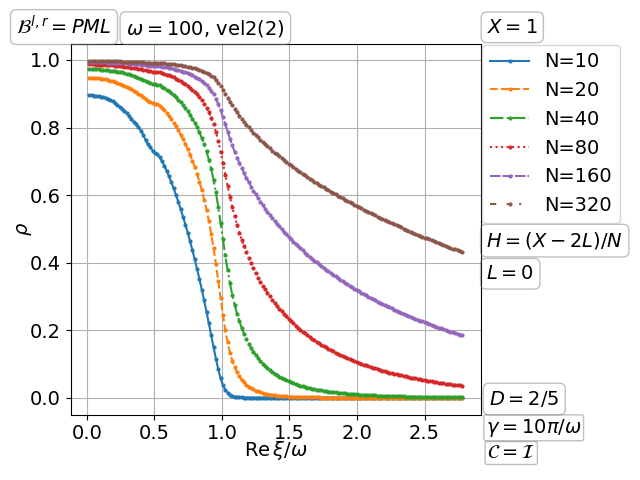}\\
  \includegraphics[width=.5\textwidth,trim=5 6 0 2,clip]{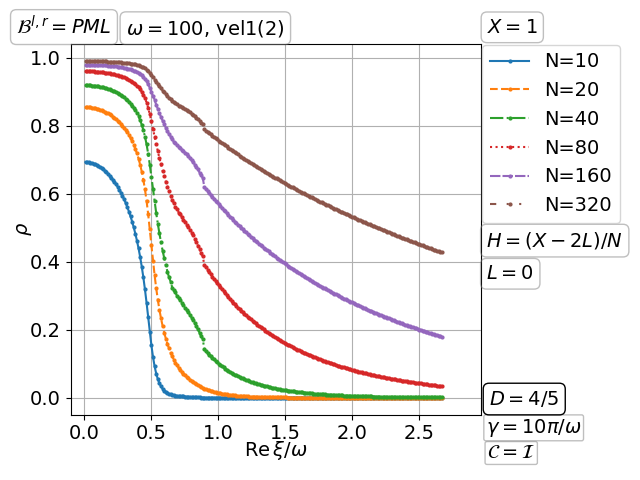}%
  \includegraphics[width=.5\textwidth,trim=5 6 0 2,clip]{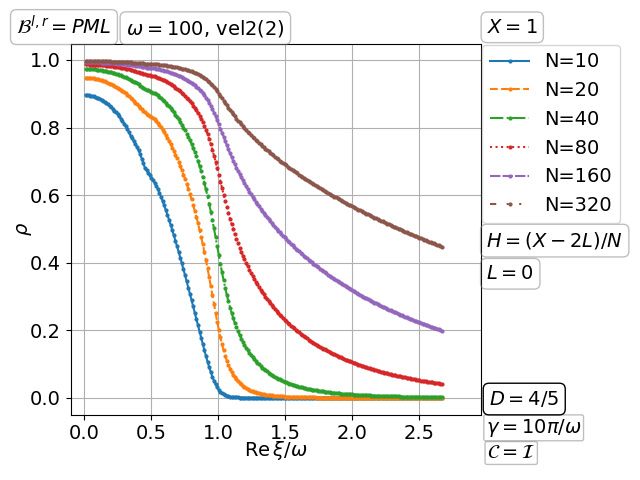}\\
  \includegraphics[width=.5\textwidth,trim=5 6 0 2,clip]{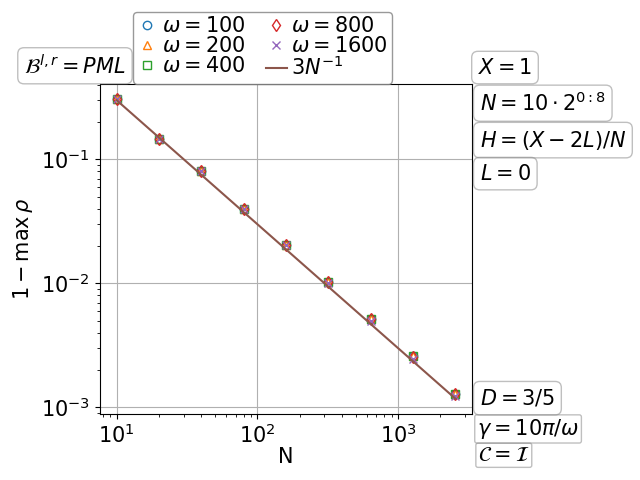}%
  \includegraphics[width=.5\textwidth,trim=5 6 0 2,clip]{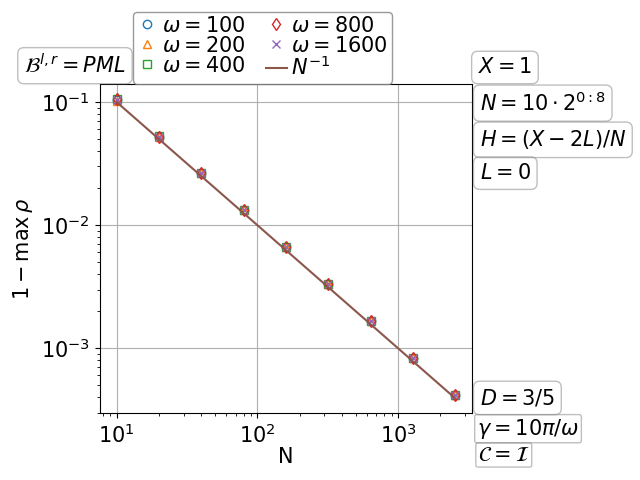}\\
  \includegraphics[width=.5\textwidth,trim=5 6 0 2,clip]{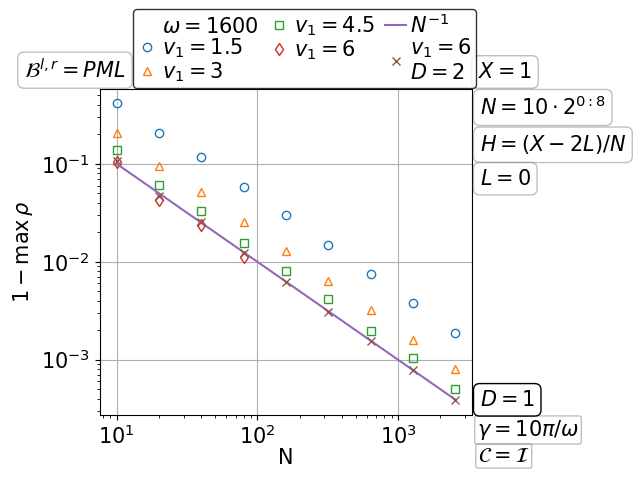}%
  \includegraphics[width=.5\textwidth,trim=5 6 0 2,clip]{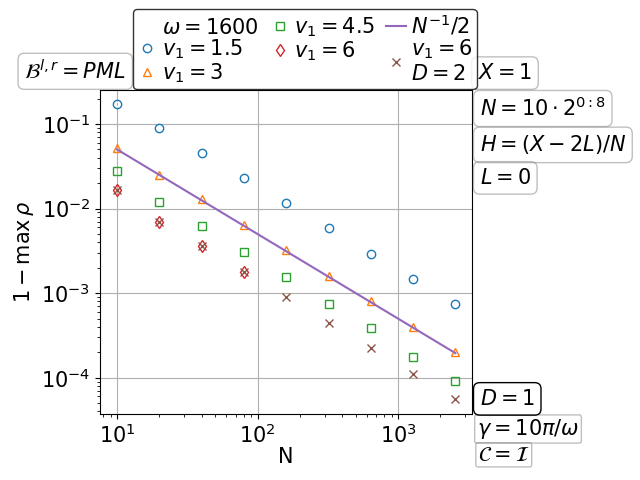}
  \caption{Convergence of the parallel Schwarz method with PML transmission for the full space
    wave problem with one inclusion (left) and two inclusions (right) on a fixed domain with
    increasing number of subdomains.}
  \label{figlmpp1}
\end{figure}

\end{paragraph}



\subsection{Double sweep Schwarz methods for the layered medium wave problem}

As seen in the constant medium case, Taylor of order zero transmission can hardly work with the
double sweep iteration. So, we will focus solely on the PML transmission. The readers are referred
to the beginning part of the previous subsection for the velocity models used here.

\begin{paragraph}{Convergence and divergence with increasing number of fixed size layers as
    subdomains}

  In this case, we found it is better to use the neighbor's wave velocity for the PML equation of a
  subdomain. But even for this choice, divergence is observed for a large number of layers (as
  subdomains); see Figure~\ref{figlmdpn}.
  
  \begin{figure}
  \centering%
  \includegraphics[width=.5\textwidth,trim=5 6 0 2,clip]{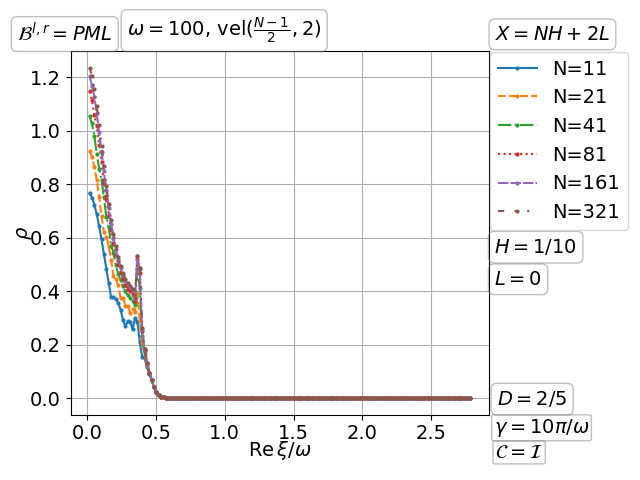}%
  \includegraphics[width=.5\textwidth,trim=5 6 0 2,clip]{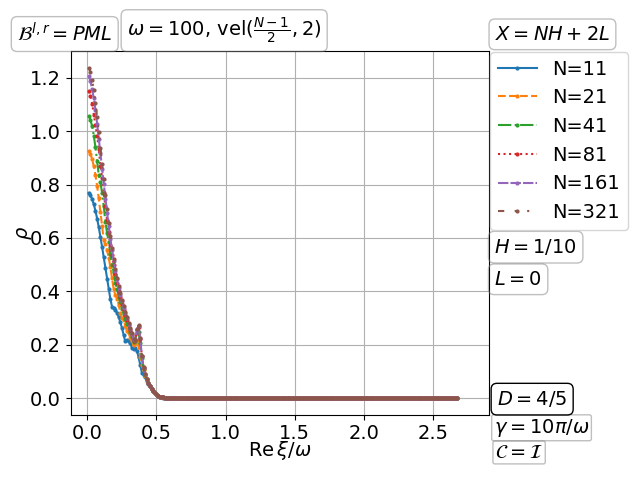}\\
  \includegraphics[width=.5\textwidth,trim=5 6 0 2,clip]{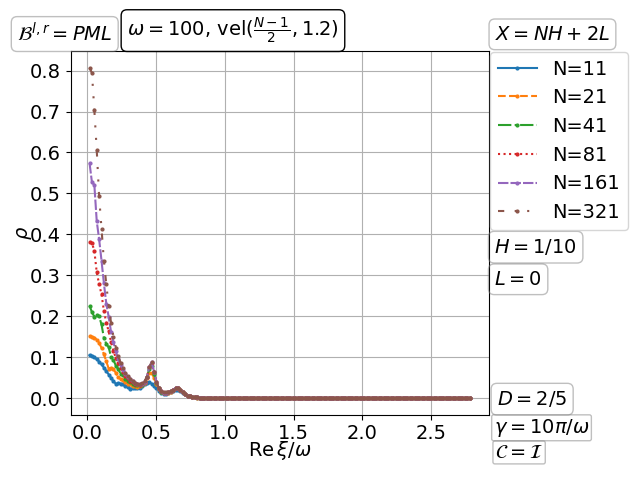}%
  \includegraphics[width=.5\textwidth,trim=5 6 0 2,clip]{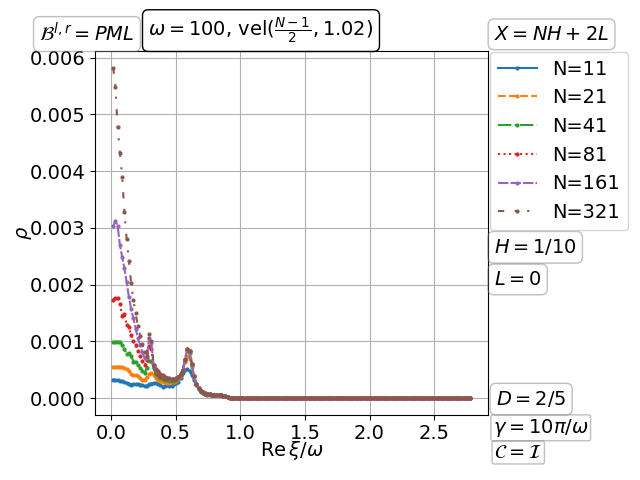}
  \caption{Convergence and divergence of the double sweep Schwarz method with PML transmission for
    the full space wave problem with fixed size medium layers as subdomains.}
  \label{figlmdpn}
\end{figure}

\end{paragraph}

\begin{paragraph}{Convergence on a fixed domain with increasing number of subdomains}

  Compared to the parallel iteration previously shown in Figure~\ref{figlmpp1}, the double sweep
  iteration converges significantly faster (see Figure~\ref{figlmdp1}) and does not seem to
  deteriorate with a larger number of subdomains $N$. For `vel1', the convergence rate
  $\max_{\xi}\rho$ is independent of the wavenumber $\omega$. But for `vel2', $\max_{\xi}\rho$
  oscillates with $\omega$. Higher contrast in the wave velocity $v_1$ leads to slower
  convergence. In the bottom right subplot, we see a deterioration of the convergence factor
  when the inclusions become smaller and more, and in particular divergence with more than
    three inclusions.
  
  \begin{figure}
  \centering%
  \includegraphics[width=.5\textwidth,trim=5 6 0 2,clip]{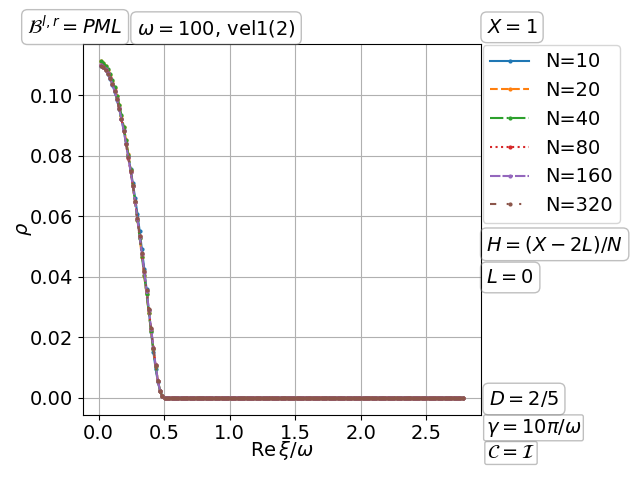}%
  \includegraphics[width=.5\textwidth,trim=5 6 0 2,clip]{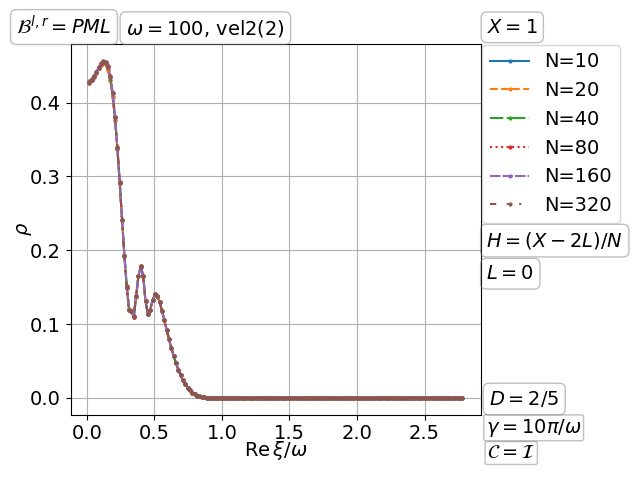}\\
  \includegraphics[width=.5\textwidth,trim=5 6 0 2,clip]{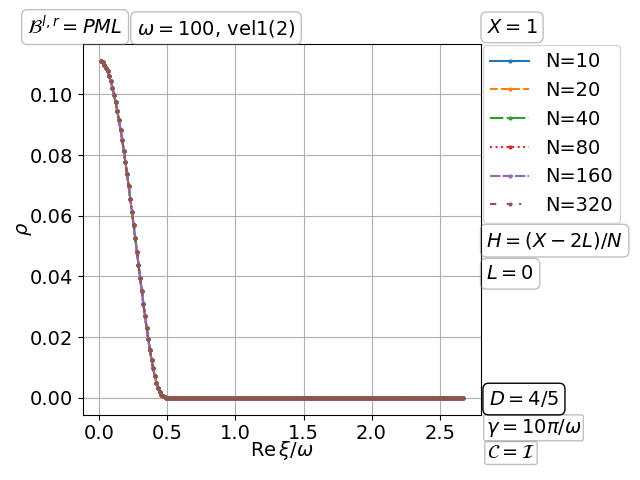}%
  \includegraphics[width=.5\textwidth,trim=5 6 0 2,clip]{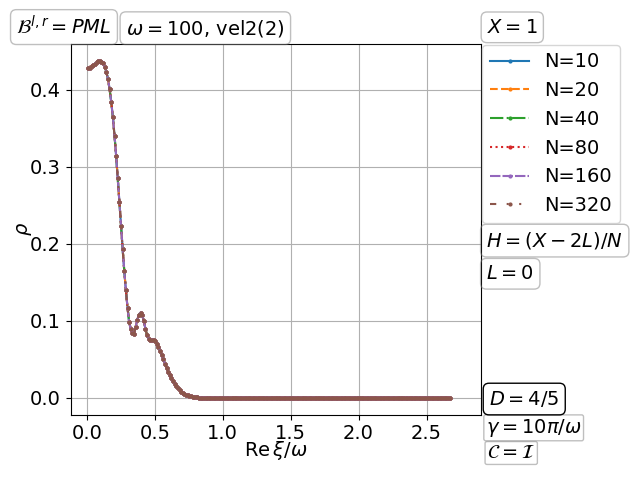}\\
  \includegraphics[width=.5\textwidth,trim=5 6 0 2,clip]{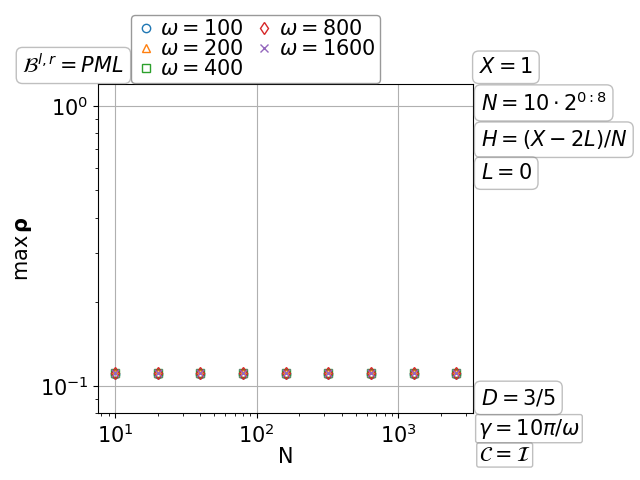}%
  \includegraphics[width=.5\textwidth,trim=5 6 0 2,clip]{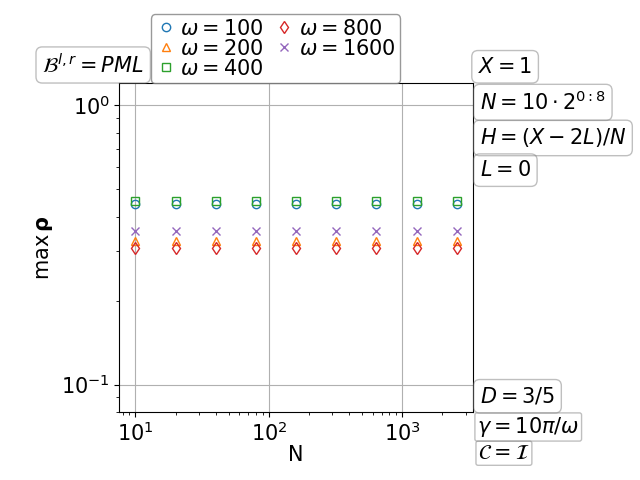}\\
  \includegraphics[width=.5\textwidth,trim=5 6 0 2,clip]{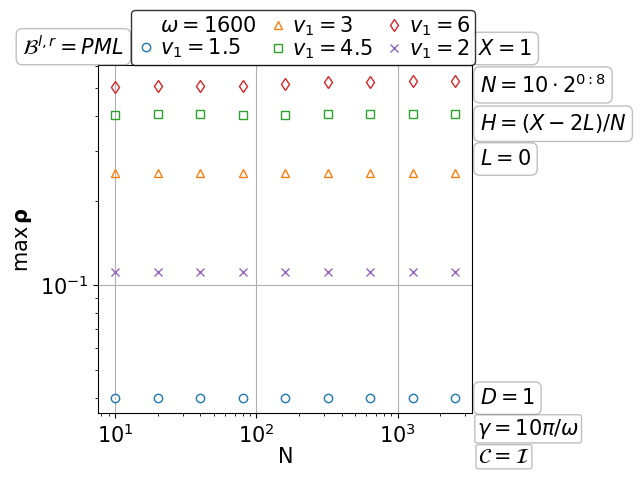}%
  \includegraphics[width=.5\textwidth,trim=5 6 0 2,clip]{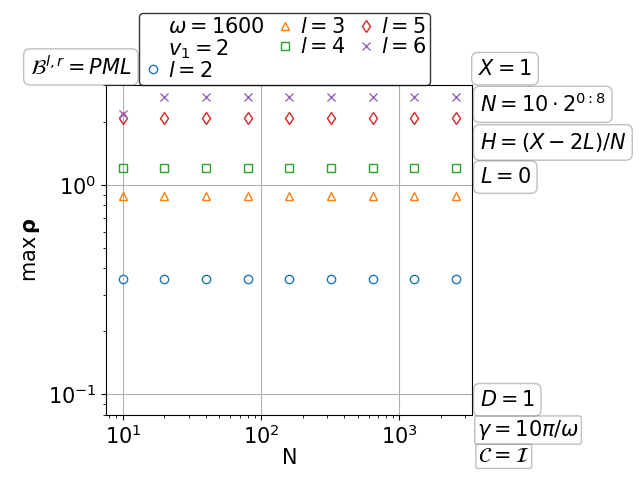}
  \caption{Convergence of the double sweep Schwarz method with PML transmission for the full space
    wave problem with one inclusion (left), two inclusions (right except bottom) and $l$ inclusions
    (bottom right) on a fixed domain with increasing number of subdomains. (Scaling of
    $\max_{\xi}\rho$ not $1-\max_{\xi}\rho$!)}
  \label{figlmdp1}
\end{figure}
  
\end{paragraph}

\vskip0.8em With these results and the results in the previous subsection on layered medium wave
problems, we have seen the limitations of PML transmission conditions. Is there a better way? For
example, are the nonlocal transmission conditions \cite{collino2020exponentially} or the learned
infinite elements \cite{hohage2021learned,PreussThesis} more robust for layered media? To what
extent can PML work for general variable media problems? For example, how to explain the success
seen in the existing literature for numerous geophysical applications?  Can other techniques be
combined with transmission conditions as a remedy?  We emphasize the recent progress of some
techniques for solving wave problems: for example, coarse spaces
\cite{CDKN,bonazzoli2018,bootland2020comparison} or deflation
\cite{dwarka2020scalable,dwarka2021towards,bootland2021inexact}, time domain solvers
\cite{grote2019,grote2020,appelo2020,stolk2021}, absorption or shifted-Laplace
\cite{GGS,GSV,GrahamSpenceZou,hocking2021}, $\mathcal{H}$-matrix
\cite{beams2020,lorca2021,liu2021sparse,bonev2021hierarchical}, DPG \cite{petrides2021} and
high-frequency asymptotic \cite{lu2016,fang2018,jacobs2021}. There are plenty of questions still to
be answered in the future.

%
%
\section{Schwarz methods with cross points}
%
%

In a domain decomposition, a cross point is where more than two subdomains intersect. If a cross
point is on the boundary of a subdomain, a transmission pattern thereof has to be decided. This is
important especially at the discrete level when the cross point is associated with a degree of
freedom. For detailed discussions, see \cn{gander2016cross}, \cn{GK2}, \cn{Loisel}. In particular,
different treatments of cross points for high-order (\eg~PML) transmission conditions have recently
been proposed by \cn{modave2020non}, \cn{daithesis}, \cn{dai2021}, \cn{royer}, \cn{DNT2}. See also
\cn{Claeys:2021:RTO}, \cn{claeys2021non} for nonlocal transmission conditions. A related issue is to
deal with a corner point where the normal direction is not unique
\cite{chniti2006improved,chniti2009improved,DNT1}.

Our emphasis in this section is on optimal Schwarz methods that converge exactly in finite
steps{, \ie\ methods that are nilpotent}. But before that, we mention quickly a couple of workable treatments of cross points in a {\it
  general} decomposition that are practically useful albeit non-optimal.

From the viewpoint of the subdomain boundary, a cross point is where the interfaces of the subdomain
with two different neighbors join. In a non-overlapping decomposition, a simple approach is to
include the cross point for transmission on both the interfaces with the two neighbors. For example,
in the non-overlapping case ($\Omega_{ij}=\tilde{\Omega}_{ij}$) of the decomposition in Figure~1.2
(left), the method of \cn{Despres} for the time-harmonic wave problem $(-\Delta+\omega^2)u=f$ in
$\Omega$ uses $(-\partial_{x}+\I\,\omega)u_{ij}=(-\partial_{x}+\I\,\omega)u_{i-1,j}$ on
$\{X_i^l\}\times[Y_j^b,Y_j^t]$ and
$(-\partial_{y}+\I\,\omega)u_{ij}=(-\partial_{y}+\I\,\omega)u_{i,j-1}$ on
$[X_i^l,X_i^r]\times\{Y_j^b\}$, and any degrees of freedom associated with the cross point at
$(X_i^l,Y_j^b)$ will be taken into account for both of the transmission conditions. In the
substructured form of the Schwarz method that reduces the unkowns to the interfaces, this
cross point treatment means introducing an unknown (data for the transmission condition, also called
dual variable or Lagrange multiplier) at the cross point for each incident interface of the
subdomain. Another approach is to keep the original degrees of freedom (also called primal
variables) and the corresponding equations at cross points unique (unsplit), then separate them out
by a Schur complement from the other primal and the dual variables \cite{Bendali}.

In an overlapping decomposition, the interfaces of a subdomain with its different neighbors can
overlap. To avoid redundant transmission, a partition of unity of the subdomain boundary is used. A
very natural choice is from a non-overlapping decomposition of the domain. For example, in
Figure~1.2 we can take $\partial\Omega_{ij}\cap\tilde{\Omega}_{i-1,j}$ as the interface for
transmission to $\Omega_{ij}$ from $\Omega_{i-1,j}$. But cross points where two interfaces of a
subdomain join still exist. For example, in Figure~1.2, we have the cross point where
$\partial\Omega_{ij}\cap\overline{\tilde{\Omega}}_{i-1,j}$ joins with
$\partial\Omega_{ij}\cap\overline{\tilde{\Omega}}_{i-1,j-1}$. At the discrete level, one can still
assign the cross point uniquely to one of the incident interface, and there is no problem for the
implementation of Schwarz methods based on subdomain iterates or interface substructuring. The
deferred correction form provides another approach, which starts from an initial guess (or the
previous iterate) of the solution on the original domain, takes the residual to subdomains for local
corrections, and make a global correction by gluing the local corrections. To minimise communication
between subdomains, it is practically common to use a non-overlapping decomposition of the domain
for gluing the local corrections, which gives the Restricted Additive Schwarz (RAS) method proposed
by \cn{cai1999restricted} with Dirichlet transmission conditions, and the Optimized Restricted
Additive Schwarz (ORAS) method proposed by \cn{St-Cyr07}, with general Robin-like interface
conditions. Moreover, it was shown in \cn{St-Cyr07} that, under some algebraic assumptions on the
gluing scheme, ORAS is equivalent to the parallel Schwarz method with the corresponding transmission
conditions. In the case of non-equivalence, the transmission data coming from the glued global
iterate can be a mix of multiple subdomain solutions. However, there is no convergence theory
(except using the maximum principle) for RAS and ORAS as of today. \cn{haferssas2017additive}
analyzed a symmetrised version of RAS and ORAS using the same non-overlapping partition-of-unity for
both the restriction/prolongation operators. \cn{GrahamSpenceZou} analyzed a symmetrised optimized
Schwarz method for the Helmholtz equation. See \cn{Gonghetero}, \cn{BCNT} for more analysis of
symmetrised Schwarz preconditioners for indefinite problems.

Now we turn to the main question: {is there an optimal Schwarz method beyond the sequential
  decomposition?} For example, Figure~1.2 (right) is a sequential decomposition and Figure~1.2
(left) is not. A difficulty arises even in Figure~1.2 (right) when the domain is $X$-periodic in
$x$, then the first subdomain is coupled to the last subdomain by identifying the boundary
$\{0\}\times[0,Y]$ with $\{X\}\times[0,Y]$. The difficulty is that an interface
$\{X_i^l\}\times[0,Y]$ can not separate the original problem into two regions because the region on
the left is coupled to the region on the right not only through the interface but also through the
periodic condition. Similarly, an annular domain decomposed into annular sectors has the same
problem. The issue from a loop of subdomains was recognized early by \cn{nier1998remarques}. It
arises also in a decomposition with cross points; as posed by \cn{NRS94}. For example, in Figure~1.2
(left) the interface $\{X_i^l\}\times[Y_j^b,Y_j^t]$ can not separate the domain into two regions
because there is also a loop in the subdomain adjacency relation. Why do we need an interface
separating the domain into two regions? Because that is the case for the domain truncation and
transmission to work as in the sequential decomposition.

The key of domain truncation and transmission can be understood in analogy to Gaussian
elimination. If there is no source term in the complementary domain of a subdomain, one needs only
to approximate the Schur complement which corresponds to a transparent boundary condition along the
interface separating the subdomain and its complementary domain, such that the subdomain solution
coincides with the solution of the original problem. If there is a source in the complementary
domain, one needs only to know the solution corresponding to the source in an arbitrarily small
outer (viewed from the subdomain) neighborhood of the interface, and to take the outer source into
the subdomain through a transmission condition (conveniently using the same boundary operator as in
the transparent boundary condition), which is like a back substitution in Gaussian elimination.

So, how did the recent work of \cn{leng2019additive}, \cn{taus2020sweeps} revive the domain
truncation and transmission in a checkerboard decomposition like Figure~1.2 (left)? To understand
their idea which is truly creative and a big step from the formerly existing optimal Schwarz
methods, it is very worth to read the original work. In this review, we present what we have
learned. First, it is good to have conceptually a domain truncation of the original problem into a
patch including the target subdomain, and use the truncated problem on the patch for transmission
into the subdomain. For example, in Figure~1.2 (left), a patch including $\Omega_{ij}$ can be
$\Xi_{i,j}^l:=\bar{\Omega}_{1j}\cup\cdots\cup\bar{\Omega}_{ij}$. A PML surrounding $\Xi_{i,j}^l$ is
used for domain truncation of the original problem, and we denote the PML augmented patch by
$\hat{\Xi}_{ij}^l$. Note that $\Omega_{ij}$ is also augmented with PML for domain truncation of the
original problem, and the PML augmented subdomain is denoted by $\hat{\Omega}_{ij}$. A transmission
will take place through the interface
$\hat{\Gamma}_{ij}^l:=\hat{\Omega}_{ij}\cap\{(x,y): x=X_i^l\}$. Let
$\hat{\Omega}_{ij}^l:=\hat{\Omega}_{ij}\cap\{(x,y): x\ge X_i^l\}$.
We assume the PML on top (and bottom) of $\Xi_{1j}$, $\Xi_{2j}$, ..., $\Omega_{1j}$, $\Omega_{2j}$,
... are conforming (\ie, the same) along their interfaces and in their overlaps, and the PML on the
right of $\Xi_{ij}^l$ and $\Omega_{ij}$ are also the same. In the following lemma, we show
conceptually how the sources on the left of $\Omega_{ij}$ can be transmitted into $\Omega_{ij}$.

\begin{lemma}\label{lem4.1}
  Suppose the source $f$ of the original problem \R{eqprb} vanishes on $(X_i^l,X]\times[0,Y]$ and
  $[0,X]\times([0,Y]-[Y_j^b,Y_j^t])$; see Figure~\ref{lem41}. Let $\hat{v}_{ij}^l$ be the solution
  of the truncated problem on $\hat{\Xi}_{ij}^l$. Let $\hat{u}_{ij}^{l}$ be the solution of
  \begin{equation}
    \begin{aligned}
      (\mathcal{L}_x+\mathcal{L}_y+\eta)\hat{u}_{ij}^{l}&=0 & &\text{ on }\hat{\Omega}_{ij}^l,\\
      \mathcal{C}\hat{u}_{ij}^l&=0 & &\text{ on }[X_i^l, X_i^r+D]\times\{Y_j^b-D, Y_j^t+D\},\\
      \mathcal{C}\hat{u}_{ij}^l&=0 & &\text{ on }\{X_i^r+D\}\times[Y_j^b-D, Y_j^t+D],\\
      \mathcal{B}\hat{u}_{ij}^l&=\mathcal{B}\hat{v}_{ij}^l & &\text{ on
      }\hat{\Gamma}_{ij}^l=\{X_i^l\}\times[Y_j^b-D, Y_j^t+D],
    \end{aligned}
    \label{eqtl}
  \end{equation}
  where $\mathcal{C}$ is the terminating condition of the PML, and $\mathcal{B}$ is any boundary
  operator that makes the problem well-posed. Then, $\hat{u}_{ij}^l$ equals the restriction of
  $\hat{v}_{ij}^l$ onto $\hat{\Omega}_{ij}^l$.
  \label{lemtl}
\end{lemma}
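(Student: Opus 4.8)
The plan is to prove the identity by a restriction-and-uniqueness argument: I will verify that the restriction of $\hat{v}_{ij}^l$ to $\hat{\Omega}_{ij}^l$ solves \emph{exactly} the boundary value problem \R{eqtl} that characterises $\hat{u}_{ij}^l$, and then invoke well-posedness of \R{eqtl} to conclude that the two coincide. Set $w:=\hat{v}_{ij}^l|_{\hat{\Omega}_{ij}^l}$. The geometric fact underlying everything is that $\hat{\Omega}_{ij}^l=\hat{\Omega}_{ij}\cap\{x\ge X_i^l\}$ is a subregion of the larger PML-augmented patch $\hat{\Xi}_{ij}^l$, and that, by the conforming-PML hypotheses stated before the lemma, the right PML layer on $[X_i^r,X_i^r+D]$ and the top and bottom PML layers are literally the same for $\Omega_{ij}$ and for $\Xi_{ij}^l$ on their overlap.

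First I would check the interior equation. Because $\hat{v}_{ij}^l$ solves the truncated problem on $\hat{\Xi}_{ij}^l$, it satisfies $(\mathcal{L}_x+\mathcal{L}_y+\eta)\hat{v}_{ij}^l=f$ there, with $\mathcal{L}_x$, $\mathcal{L}_y$ carrying the complex PML stretching inside the PML layers. Restricted to $\hat{\Omega}_{ij}^l$ these stretched operators agree pointwise with those of \R{eqtl}, precisely because the right PML and the top and bottom PML of $\Xi_{ij}^l$ and of $\Omega_{ij}$ are conforming, so $w$ obeys the same operator. The right-hand side vanishes on $\hat{\Omega}_{ij}^l$: its non-PML interior sits inside $(X_i^l,X]\times[0,Y]$, where $f\equiv0$ by hypothesis, while the surrounding PML layers carry no source (those lying in $[0,X]\times([0,Y]-[Y_j^b,Y_j^t])$ by the support hypothesis, the remainder by source-freeness of the PML extension). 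Hence $(\mathcal{L}_x+\mathcal{L}_y+\eta)w=0$ on $\hat{\Omega}_{ij}^l$.

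Next I would verify the boundary conditions of \R{eqtl}. The right outer edge $\{X_i^r+D\}\times[Y_j^b-D,Y_j^t+D]$ and the top and bottom outer edges $[X_i^l,X_i^r+D]\times\{Y_j^b-D,Y_j^t+D\}$ of $\hat{\Omega}_{ij}^l$ are portions of the outer PML boundary of $\hat{\Xi}_{ij}^l$, where $\hat{v}_{ij}^l$ satisfies the terminating condition $\mathcal{C}(\cdot)=0$; by conformity this is the same operator $\mathcal{C}$ on the same geometry, so $\mathcal{C}w=0$ there. Finally, the cut $\hat{\Gamma}_{ij}^l=\{X_i^l\}\times[Y_j^b-D,Y_j^t+D]$ is an interior line of $\hat{\Xi}_{ij}^l$ across which $\hat{v}_{ij}^l$ is regular, so its trace data $\mathcal{B}\hat{v}_{ij}^l$ are well defined, and $\mathcal{B}w=\mathcal{B}\hat{v}_{ij}^l$ holds by construction. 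Thus $w$ satisfies every line of \R{eqtl}.

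Since $\mathcal{B}$ is chosen so that \R{eqtl} is well-posed, its solution is unique, and therefore $\hat{u}_{ij}^l=w=\hat{v}_{ij}^l|_{\hat{\Omega}_{ij}^l}$, which is the claim. The main obstacle is not the uniqueness step but the bookkeeping that makes the restriction an exact solution of the subproblem: one must confirm that the complex stretching functions, and hence all coefficients of the PML-modified operators, coincide pointwise on the shared right and top/bottom layers, that the terminating operator $\mathcal{C}$ and the outer boundary geometry match there, and that no spurious source is introduced in these shared layers. All of this is exactly what the conforming-PML assumptions guarantee; once they are in force, the equality is forced rather than approximate, in contrast to the usual situation where a PML only approximates a transparent condition.
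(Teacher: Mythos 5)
Your proposal is correct and follows exactly the paper's argument: check that the restriction of $\hat{v}_{ij}^l$ to $\hat{\Omega}_{ij}^l$ satisfies every line of \R{eqtl} and conclude by uniqueness. The paper states this in a single sentence; you simply spell out the verification (conforming PML coefficients, vanishing source, matching terminating and trace conditions) that the paper leaves implicit.
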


\begin{figure}
  \centering
  \includegraphics[scale=.3]{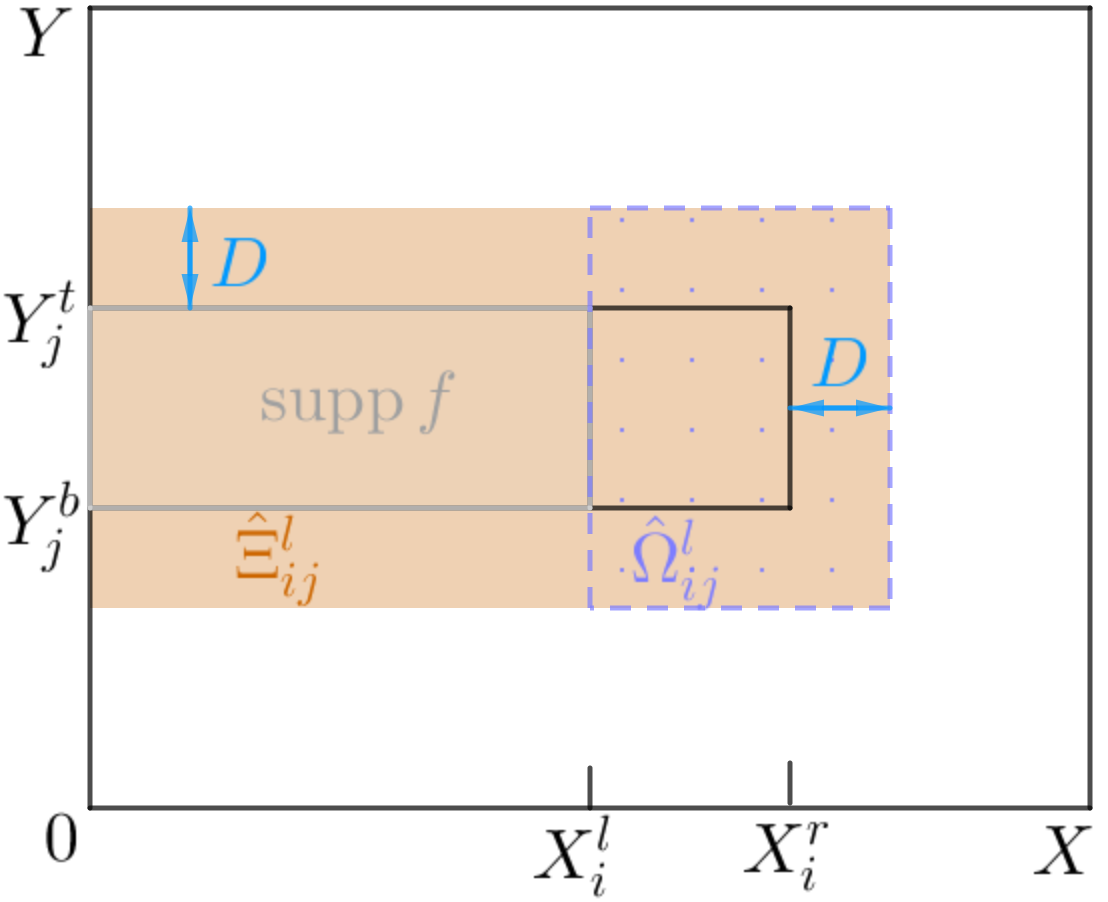}
  \caption{Illustration of Lemma \ref{lem4.1} for the transmission in the patch $\hat{\Xi}_{ij}^l$.}
  \label{lem41}
\end{figure}

\begin{proof}
  Since \R{eqtl} has the unique solution $\hat{u}_{ij}^l$ and the restriction of $\hat{v}_{ij}^l$
  onto $\hat{\Omega}_{ij}^l$ satisfies \R{eqtl}, the two must be equal.
\end{proof}

\begin{remark}
  If the PML of $\hat{\Xi}_{ij}^l$ is exactly transparent, then the restriction of $\hat{v}_{ij}^l$
  onto $\Xi_{ij}^l$ coincides with the solution of the original problem, and so is the restriction of
  $\hat{u}_{ij}^l$ onto $\Omega_{ij}$. A transparent PML does exist at the continuous level, see
  \eg~\cn{yang2021truly}.
\end{remark}

\begin{remark}
  To solve \R{eqtl}, we need only the knowledge of $\hat{v}_{ij}^l$ in a left neighborhood of
  $\hat{\Gamma}_{ij}^l$ to evaluate the transmission data $\mathcal{B}\hat{v}_{ij}^l$. The truncated
  problem on the patch $\hat{\Xi}_{ij}^l$ is never solved directly (like the original problem
  on $\Omega$ is never solved directly). In the algorithm to be detailed later, the transmission
  data is obtained from a solution defined on $\hat{\Omega}_{i-1,j}$. In a deferred correction form,
  the residual instead of the transmission data is evaluated, which gives rise to the source
  transfer method \cite{Chen13a}; see \cn{CGZ} for their relation.
\end{remark}

\begin{remark}
  Since we also want to resolve the local source $f_{ij}$ -- the restriction of $f$ onto ${\Omega}_{ij}$,
  the truncated problem on $\hat{\Omega}_{ij}$ will be used. It is then convenient to take
  $\mathcal{B}$ as $-\partial_x+\mathcal{S}$ with $\mathcal{S}$ the PML Dirichlet-to-Neumann
  operator \R{eqsd} or \R{eqsw} defined by the PML $\hat{\Omega}_{ij}-\hat{\Omega}_{ij}^l$. In this
  case, the problem \R{eqtl} is extended to $\hat{\Omega}_{ij}$ with $\mathcal{S}$ unfolded
  \cite{gander2019class}.
\end{remark}

\begin{remark}
  Note that we can replace in Lemma~\ref{lemtl} the patch and the subdomain with an arbitrary domain
  $\Xi$ and an arbitrary subdomain $\Omega_*\subset\Xi$ of any shape, with the transmission
  condition put on $\partial\Omega_*-\partial\Xi$ and the source $f$ vanishing on $\Omega_*$, as
  long as the problems on $\Xi$ and $\Omega_*$ are well-posed. See also \cn{leng2019additive} for
  the source transfer version.
\end{remark}

Similarly, we can define the patches $\Xi_{ij}^r$, $\Xi_{ij}^t$, $\Xi_{ij}^b$ for the transmission
of the sources outside the right/top/bottom of $\Omega_{ij}$; see Figure~\ref{figpatch}
(left). There are still four corner regions outside $\Omega_{ij}$. For the source to the bottom left
of $\Omega_{ij}$, we have conceptually the patch
$\Xi_{ij}^{bl}:=\cup_{{c\le i;\ r\le j}}\bar{\Omega}_{cr}$ and the PML augmented patch
$\hat{\Xi}_{ij}^{bl}$; see Figure~\ref{figpatch} (right).  Let
$\hat{\Omega}_{ij}^{bl}:=\hat{\Omega}_{ij}\cap\{(x,y): x\ge X_i^l, y\ge Y_j^b\}$. A transmission
will take place through the interface
$\hat{\Gamma}_{ij}^{bl}:=\hat{\Omega}_{ij}\cap\partial\hat{\Omega}_{ij}^{bl}$. Although the source
to be transmitted into $\Omega_{ij}$ is to the bottom left of $\Omega_{ij}$, \ie, on
$\Xi_{i-1,j-1}^{bl}$, a patch containing both $\Omega_{ij}$ and $\Xi_{i-1,j-1}^{bl}$ has to be
larger for the PML to work. That is why the patch $\Xi_{ij}^{bl}$ is used. As we mentioned in the
above remarks following Lemma~\ref{lemtl}, the PML augmented problem on $\hat{\Omega}_{ij}$ can be
solved with either the transmission data on $\hat{\Gamma}_{ij}^{bl}$ or the residual in a top-right
neighborhood of $\hat{\Gamma}_{ij}^{bl}$ which in turn is acquired from a glued version of subdomain
solutions on $\Omega_{i-1,j}$, $\Omega_{i-1,j-1}$ and $\Omega_{i,j-1}$. The resulting solution
$\hat{u}_{ij}^{bl}$ on $\hat{\Omega}_{ij}$ then has the same restriction onto
$\hat{\Omega}_{ij}^{bl}$ as does the conceptual solution $\hat{v}_{ij}^{bl}$ for the source located
to the bottom left of $\Omega_{ij}$. In the same way, we can define the PML augmented patches
$\hat{\Xi}_{ij}^{br}$, $\hat{\Xi}_{ij}^{tr}$, $\hat{\Xi}_{ij}^{tl}$ for the transmission of the
sources located to the bottom-right/top-right/top-left of $\Omega_{ij}$.

\begin{figure}
  \centering
  \includegraphics[scale=.35]{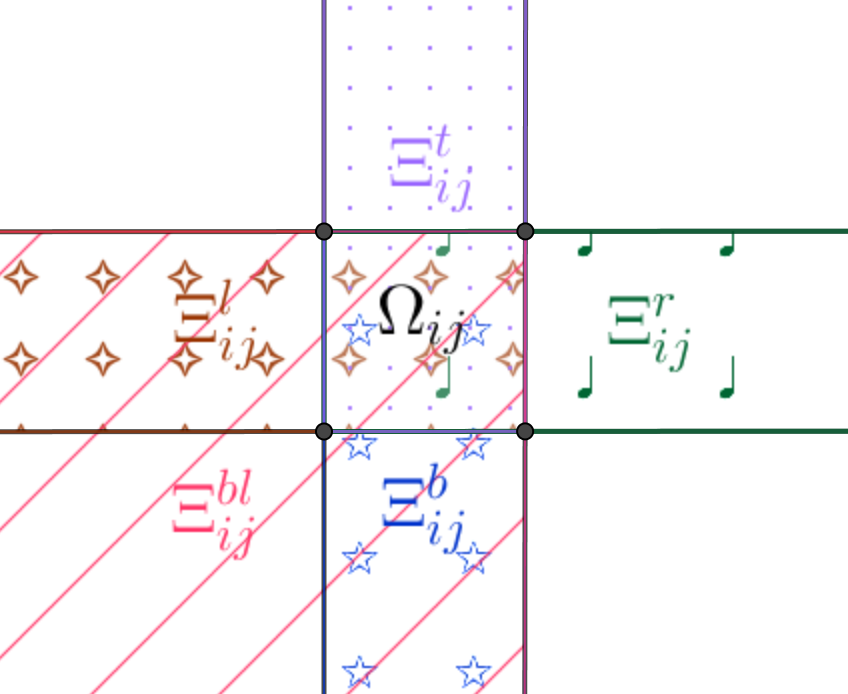}\quad\quad%
  \includegraphics[scale=.3]{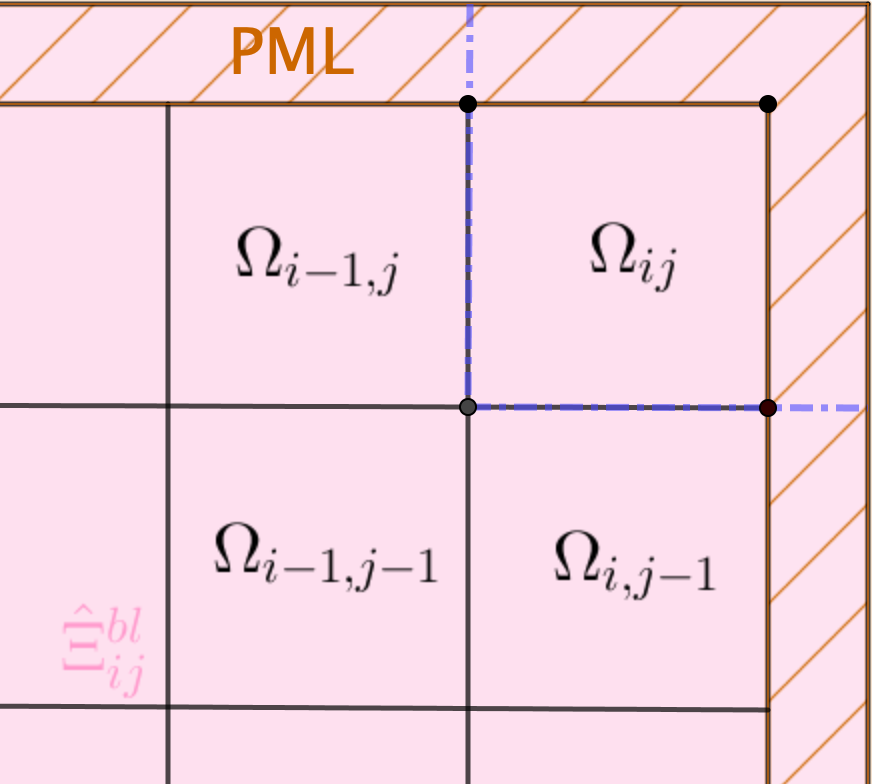}
  \caption{Patches containing the target subdomain $\Omega_{ij}$ (left), and the bottom left patch
    augmented with PML $\hat{\Xi}_{ij}^{bl}$ for the non-overlapping decomposition (right).}
  \label{figpatch}
\end{figure}

Let $\hat{u}_{ij}^c$ be the solution of the truncated problem on $\hat{\Omega}_{ij}$ corresponding
to the original problem \R{eqprb} with the source $f$ vanishing outside $\Omega_{ij}$. Clearly, if
we have the solutions $\hat{u}_{ij}^{l,r,b,t,bl,br,tl,tr,c}$ and the PML is exactly transparent,
then their sum on $\Omega_{ij}$ equals the solution of the original problem with the source $f$
distributed anywhere on $\Omega$, based on the linearity of \R{eqprb}. The remaining question is how
to get the transmission data needed for $\hat{u}_{ij}^{l,r,b,t,bl,br,tl,tr}$ on
$\hat{\Omega}_{ij}$. This is based on the following induction on the subscripts $i$, $j$. Only the
transmission data for $\hat{u}_{ij}^{l, bl}$ are illustrated.

\begin{lemma}
  Let the domain decomposition be non-overlapping. Suppose the PML is exactly transparent and the
  solutions $\hat{u}_{i-1,j}^{l,c}$ are available on $\hat{\Omega}_{i-1,j}$. Let
  $\hat{\Omega}_{i-1,j}^{lr}:=\hat{\Omega}_{i-1,j}\cap\{(x,y): X_{i-1}^l\le x\le X_{i-1}^r\}$. Then
  $\hat{u}_{i-1,j}^l+\hat{u}_{i-1,j}^c$ and $\hat{v}_{i,j}^l$ have the same restriction onto
  $\hat{\Omega}_{i-1,j}^{lr}$ and in particular the same trace $\mathcal{B}\hat{v}_{i,j}^l$ on
  $\hat{\Gamma}_{ij}^l$ for any boundary operator $\mathcal{B}$.
  \label{lemil}
\end{lemma}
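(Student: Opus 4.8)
The plan is to run the same uniqueness-by-construction argument as in the proof of Lemma~\ref{lemtl}, but now feeding it with linearity of \R{eqprb} and the exact transparency of the PML. Because the decomposition is non-overlapping we have $X_{i-1}^r=X_i^l$, so the source region that drives $\hat v_{ij}^l$ — everything with $x\le X_i^l$ in the strip $[Y_j^b,Y_j^t]$ — is the disjoint union of $\Omega_{i-1,j}$ itself and the region strictly to its left. First I would therefore split, by linearity, $\hat v_{ij}^l=\hat v^{\mathrm c}+\hat v^{\mathrm{left}}$ on the patch $\hat\Xi_{ij}^l$, where $\hat v^{\mathrm c}$ carries only the local source $f_{i-1,j}$ in $\Omega_{i-1,j}$ and $\hat v^{\mathrm{left}}$ carries only the sources with $x\le X_{i-1}^l$.

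Next I would identify each summand on $\hat\Omega_{i-1,j}^{lr}$. For $\hat v^{\mathrm c}$, exact transparency of the PML means it restricts on the physical region to the solution of the original problem driven by $f_{i-1,j}$; but $\hat u_{i-1,j}^c$ is by definition the truncated solution on $\hat\Omega_{i-1,j}$ for that very source, hence restricts to the same original-problem solution, and since $\hat\Omega_{i-1,j}^{lr}\subset\hat\Omega_{i-1,j}$ the two agree there. For $\hat v^{\mathrm{left}}$, its source vanishes on $(X_{i-1}^l,X]\times[0,Y]$ and off the strip $[Y_j^b,Y_j^t]$, so Lemma~\ref{lemtl} at index $i-1$ identifies $\hat u_{i-1,j}^l$ with the restriction of $\hat v_{i-1,j}^l$ onto $\hat\Omega_{i-1,j}^l$; transparency again forces $\hat v^{\mathrm{left}}$ and $\hat v_{i-1,j}^l$ to coincide on their common physical region (both equal the original solution for the left source), and as $\hat\Omega_{i-1,j}^{lr}\subset\hat\Omega_{i-1,j}^l$ I conclude $\hat v^{\mathrm{left}}=\hat u_{i-1,j}^l$ on $\hat\Omega_{i-1,j}^{lr}$.

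Adding the two identifications yields $\hat v_{ij}^l=\hat u_{i-1,j}^l+\hat u_{i-1,j}^c$ on $\hat\Omega_{i-1,j}^{lr}$; since $\hat\Gamma_{ij}^l=\{X_i^l\}\times[Y_j^b-D,Y_j^t+D]$ and $X_i^l=X_{i-1}^r$, this set is precisely the right boundary face of $\hat\Omega_{i-1,j}^{lr}$, so the equality of the functions there gives $\mathcal B\hat v_{ij}^l=\mathcal B(\hat u_{i-1,j}^l+\hat u_{i-1,j}^c)$ on $\hat\Gamma_{ij}^l$ for every boundary operator $\mathcal B$. The hard part here is not analysis but bookkeeping: one must check that the top and bottom PML terminations on $\hat\Xi_{ij}^l$, on $\hat\Xi_{i-1,j}^l$ and on $\hat\Omega_{i-1,j}$ are genuinely conforming, and that the patches and subdomains nest as claimed, so that each ``restriction of a patch solution equals restriction of a subdomain solution'' holds literally rather than only up to a PML reflection. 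This is exactly where the standing conforming-PML assumption and the exact-transparency hypothesis are consumed; once they are granted, every identification above is an instance of the one-line uniqueness argument already used for Lemma~\ref{lemtl}.
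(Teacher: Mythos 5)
Your argument is correct and is essentially the paper's own proof: you split the source driving $\hat v_{ij}^l$ into the part supported on $\Omega_{i-1,j}$ and the part to its left (the paper writes this as $f=\chi_{i-1,j}f+(1-\chi_{i-1,j})f$), identify the first summand with $\hat u_{i-1,j}^c$ and the second with $\hat u_{i-1,j}^l$ via Lemma~\ref{lemtl} and exact transparency, and add. The only difference is presentational — the paper phrases the splitting through a solution operator $\hat V_{ij}^l$ on the patch — so no further comment is needed.
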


\begin{proof}
  By Lemma~\ref{lemtl}, $\hat{u}_{i-1,j}^l$ coincides with $\hat{v}_{i-1,j}^l$ on
  $\hat{\Omega}_{i-1,j}^{l}\supset \hat{\Omega}_{i-1,j}^{lr}$. Let $\hat{V}_{ij}^l$ be the solution
  operator taking a source in $\Xi_{ij}^l$ as input for the truncated problem on
  $\hat{\Xi}_{ij}^l$. Let $\chi_{i-1,j}$ be the indicator function of $\Omega_{i-1,j}$. Let $f$ be
  the source in $\Xi_{i-1,j}^l$ (the domain on the left side of $\Omega_{ij}$) for obtaining
  $\hat{v}_{ij}^l$. Then, $f=\chi_{i-1,j}f+(1-\chi_{i-1,j})f$ and
  $\hat{v}_{ij}^l=\hat{V}_{ij}^l(\chi_{i-1,j}f) + \hat{V}_{ij}^l((1-\chi_{i-1,j})f)$. Since the PML
  is exact, $\hat{u}_{i-1,j}^c=\hat{V}_{ij}^l(\chi_{i-1,j}f)$ and
  $\hat{v}_{i-1,j}^l=\hat{V}_{ij}^l((1-\chi_{i-1,j})f)$ on $\hat{\Omega}_{i-1,j}^{lr}$. Hence,
  $\hat{u}_{i-1,j}^c+\hat{u}_{i-1,j}^l=\hat{v}_{ij}^l$ on $\hat{\Omega}_{i-1,j}^{lr}$.
\end{proof}

\begin{remark}
  Since we rely on the linearity or superposition principle, we need a partition of unity to
  restrict the source to the non-overlapping subdomains. So the source across the interfaces needs
  to be treated carefully. For a square integrable source, the partition is naturally done by the
  variational form. For a singular source \eg\ Dirac line source or point source on the interface,
  one needs to split the source, \eg, regard the source belonging to the subdomain on the left of
  the interface. Similarly, the source at a cross point needs a careful partition of unity.
\end{remark}

\begin{lemma}
  Let the domain decomposition be non-overlapping. Suppose the PML is exactly transparent and the
  solutions $\hat{u}_{i-1,j}^{b,bl}$ on $\hat{\Omega}_{i-1,j}$, $\hat{u}_{i,j-1}^{l,bl}$ on
  $\hat{\Omega}_{i,j-1}$ and $\hat{u}_{i-1,j-1}^{l,b,bl,c}$ on $\hat{\Omega}_{i-1,j-1}$ are
  given. Let
  $\hat{\Omega}_{i-1,j}^{lrb}:=\hat{\Omega}_{i-1,j}\cap\{(x,y): X_{i-1}^l\le x\le X_{i-1}^r, y\ge
  Y_j^b\}$ and
  $\hat{\Omega}_{i,j-1}^{lbt}:=\hat{\Omega}_{i,j-1}\cap\{(x,y): x\ge X_{i}^l, Y_{j-1}^b\le y\le
  Y_{j-1}^t\}$. Then $\hat{u}_{i-1,j}^{b}+\hat{u}_{i-1,j}^{bl}=\hat{v}_{i,j}^{bl}$ on
  $\hat{\Omega}_{i-1,j}^{lrb}$, $\hat{u}_{i,j-1}^{l}+\hat{u}_{i,j-1}^{bl}=\hat{v}_{i,j}^{bl}$ on
  $\hat{\Omega}_{i,j-1}^{lbt}$, and
  $\hat{u}_{i-1,j-1}^{l}+\hat{u}_{i-1,j-1}^{b}+\hat{u}_{i-1,j-1}^{bl}+\hat{u}_{i-1,j-1}^{c}=\hat{v}_{i,j}^{bl}$
  on $\Omega_{i-1,j-1}$.
  \label{lemibl}
\end{lemma}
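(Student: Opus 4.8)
The plan is to prove each of the three identities by the same three-ingredient argument already used for Lemma~\ref{lemil}: a partition of the source $f$ according to which neighbouring subdomain it sits in, the superposition principle coming from the linearity of \R{eqprb}, and the exactness of the PML together with Lemma~\ref{lemtl} to replace the conceptual patch solutions $\hat{v}$ by the already-available subdomain solutions $\hat{u}$. Throughout I write $\hat{V}_{ij}^{bl}$ for the solution operator sending a source supported in $\Xi_{i-1,j-1}^{bl}$ to the solution of the truncated problem on $\hat{\Xi}_{ij}^{bl}$, and I repeatedly use that, because the PMLs are conforming and exactly transparent, the field produced by $\hat{V}_{ij}^{bl}$ from a source supported in a sub-patch coincides, on the interior common to both truncations, with the field produced by the corresponding smaller solution operator.

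For the first identity I decompose the source $f$, supported in $\Xi_{i-1,j-1}^{bl}=\cup_{c\le i-1,\,r\le j-1}\bar{\Omega}_{cr}$, into the part $f_1$ lying in the column directly below $\Omega_{i-1,j}$, namely $\cup_{r\le j-1}\bar{\Omega}_{i-1,r}$, and the remainder $f_2$ supported in $\Xi_{i-2,j-1}^{bl}$. By linearity $\hat{v}_{ij}^{bl}=\hat{V}_{ij}^{bl}(f_1)+\hat{V}_{ij}^{bl}(f_2)$. The source $f_1$ is exactly the bottom source of $\Omega_{i-1,j}$, so by PML exactness $\hat{V}_{ij}^{bl}(f_1)$ agrees with $\hat{v}_{i-1,j}^{b}$, hence with $\hat{u}_{i-1,j}^{b}$ by Lemma~\ref{lemtl}, on $\hat{\Omega}_{i-1,j}^{lrb}\subset\hat{\Omega}_{i-1,j}^{b}$; the source $f_2$ is the bottom-left source of $\Omega_{i-1,j}$, so likewise $\hat{V}_{ij}^{bl}(f_2)$ agrees with $\hat{v}_{i-1,j}^{bl}=\hat{u}_{i-1,j}^{bl}$ on $\hat{\Omega}_{i-1,j}^{lrb}\subset\hat{\Omega}_{i-1,j}^{bl}$. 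Adding the two gives $\hat{v}_{ij}^{bl}=\hat{u}_{i-1,j}^{b}+\hat{u}_{i-1,j}^{bl}$ on $\hat{\Omega}_{i-1,j}^{lrb}$. The second identity is the mirror image under interchange of the roles of $x$ and $y$: one splits $f$ into the part directly to the left of $\Omega_{i,j-1}$, namely $\cup_{c\le i-1}\bar{\Omega}_{c,j-1}$, and the part $\Xi_{i-1,j-2}^{bl}$ to its bottom left, obtaining $\hat{u}_{i,j-1}^{l}+\hat{u}_{i,j-1}^{bl}=\hat{v}_{ij}^{bl}$ on $\hat{\Omega}_{i,j-1}^{lbt}\subset\hat{\Omega}_{i,j-1}^{l}\cap\hat{\Omega}_{i,j-1}^{bl}$.

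For the third identity I carry out the full four-way partition at the diagonal subdomain $\Omega_{i-1,j-1}$, which is the top-right corner of the source block $\Xi_{i-1,j-1}^{bl}$. I write $f$ as the sum of its restriction to $\Omega_{i-1,j-1}$ (the centre source), to the row $\cup_{c\le i-2}\bar{\Omega}_{c,j-1}$ directly to the left, to the column $\cup_{r\le j-2}\bar{\Omega}_{i-1,r}$ directly below, and to $\Xi_{i-2,j-2}^{bl}$ to the bottom left. Applying $\hat{V}_{ij}^{bl}$ termwise and using PML exactness (with Lemma~\ref{lemtl} for the three transmitted pieces) identifies the four contributions on $\Omega_{i-1,j-1}$ with $\hat{u}_{i-1,j-1}^{c}$, $\hat{u}_{i-1,j-1}^{l}$, $\hat{u}_{i-1,j-1}^{b}$ and $\hat{u}_{i-1,j-1}^{bl}$ respectively; here one checks that $\Omega_{i-1,j-1}$ lies inside each of the regions $\hat{\Omega}_{i-1,j-1}^{l}$, $\hat{\Omega}_{i-1,j-1}^{b}$, $\hat{\Omega}_{i-1,j-1}^{bl}$ on which Lemma~\ref{lemtl} applies, which is immediate from $x\ge X_{i-1}^{l}$ and $y\ge Y_{j-1}^{b}$ on the subdomain, while the centre piece needs only exactness since it carries no transmission interface. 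Summing the four pieces yields $\hat{v}_{ij}^{bl}$ on $\Omega_{i-1,j-1}$.

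The step I expect to be the main obstacle is the careful bookkeeping of the partition of the source together with the invocation of PML exactness: one must be sure that each sub-source, transported by the large-patch operator $\hat{V}_{ij}^{bl}$, truly produces the same field on the relevant region as the smaller-patch operator does, which requires both the transparency of the PML and the conformity assumption on the PMLs stated before the lemma; one must also treat sources living on the interfaces between subdomains with the partition-of-unity caveat noted in the remark after Lemma~\ref{lemil}. Once exactness is granted, each identity reduces to superposition plus an elementary containment check, so no genuine computation is needed.
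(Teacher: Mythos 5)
Your proposal is correct and follows exactly the route the paper intends: the paper's proof of this lemma is just ``similar to the proof of Lemma~\ref{lemil}, details omitted,'' and your argument is precisely that template carried out in full --- partition the bottom-left source relative to each of the three neighbouring subdomains (two pieces for $\Omega_{i-1,j}$ and $\Omega_{i,j-1}$, four pieces for the diagonal neighbour $\Omega_{i-1,j-1}$), apply superposition, and use PML exactness with Lemma~\ref{lemtl} to identify each piece with the corresponding available subdomain solution on the stated region. The containment checks and the interface partition-of-unity caveat you flag are the right ones.
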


\begin{proof}
  Similar to the proof of Lemma~\ref{lemil}, so the details are omitted.
\end{proof}

\begin{remark}
  Based on Lemma~\ref{lemibl}, the solutions $\hat{u}_{i-1,j}^{b}+\hat{u}_{i-1,j}^{bl}$,
  $\hat{u}_{i,j-1}^{l}+\hat{u}_{i,j-1}^{bl}$ and
  $\hat{u}_{i-1,j-1}^{l}+\hat{u}_{i-1,j-1}^{b}+\hat{u}_{i-1,j-1}^{bl}+\hat{u}_{i-1,j-1}^{c}$ can be
  glued along the interfaces by a partition of unity to a function on the L-shaped block
  $\overline{\hat{\Omega}_{i-1,j}^{lrb}}\cup \overline{\hat{\Omega}_{i,j-1}^{lbt}}\cup
  \overline{\Omega_{i-1,j-1}}$ which equals $\hat{v}_{ij}^{bl}$. Then, the function can be used for
  evaluation of the transmission data or residual.
\end{remark}

Now we can present the L-(diagonal) sweep preconditioner of \cn{taus2020sweeps}, \cn{lengdiag},
\cn{leng2020trace}. For simplicity, let us consider a non-overlapping decomposition with $3\times 3$
subdomains. In the following we first list the steps corresponding to a sweep from bottom left to
top right.
\begin{itemize}
\item[1$^\circ$] For each $(i,j)$, solve the truncated problem on $\hat{\Omega}_{ij}$ with the local
  source $f$ restricted onto $\Omega_{ij}$, and denote the solution by $\hat{u}_{ij}^c$.
\item[2$^\circ$] Solve for $\hat{u}_{21}^l$ on $\hat{\Omega}_{21}$ and $\hat{u}_{12}^b$ on
  $\hat{\Omega}_{12}$ with the transmission data provided by $\hat{u}_{11}^c$.
\item[3$^\circ$] Solve for $\hat{u}_{31}^l$ on $\hat{\Omega}_{31}$, $\hat{u}_{13}^b$ on
  $\hat{\Omega}_{13}$ and $\hat{u}_{22}^{l,b,bl}$ on $\hat{\Omega}_{22}$ with the transmission data
  provided by $\hat{u}_{21}^l+\hat{u}_{21}^c$, $\hat{u}_{12}^b+\hat{u}_{12}^c$, $\hat{u}_{12}^c$,
  $\hat{u}_{21}^c$ and a glued function from $\{\hat{u}_{11}^c$, $\hat{u}_{12}^b$,
  $\hat{u}_{21}^l\}$.
\item[4$^\circ$] Solve for $\hat{u}_{32}^{l,b,bl}$ and $\hat{u}_{23}^{l,b,bl}$ (details similar to
  $3^\circ$).
\item[5$^\circ$] Solve for $\hat{u}_{33}^{l,b,bl}$.
\end{itemize}
In parallel to the above steps $2^\circ$-$5^\circ$, the sweep from top right to bottom left, the
sweep from bottom right to top left and the sweep from top left to bottom right can be performed
simultaneously. Note that the latter two sweeps produce also the solutions with the superscripts
$l, r, b, t$, which can be shared with the former two sweeps so that any already available solutions
will not be computed again.  After finishing the four parallel sweeps, we just need to add the nine
solutions on each subdomain.
\begin{itemize}
\item[6$^\circ$] Define the subdomain approximate solutions $\tilde{u}_{ij}:=\sum_{*}\hat{u}_{ij}^*$
  on $\Omega_{ij}$ with $*\in\{l, r, b, t, bl, br, tl, tr, c\}$. Define the approximate solution
  $\tilde{u}$ on $\Omega$ by gluing $\tilde{u}_{ij}$ with a partition of unity.
\end{itemize}
Note that the solutions inside each of the steps $2^\circ$-$5^\circ$ can be computed in parallel,
albeit from step to step the execution is sequential. For a pipeline parallel implementation of the
preconditioned Krylov iteration for multiple right hand sides, see \cn{leng2020trace}.

It can be seen that the L-(diagonal) sweep preconditioner is an exact solver if the PML is exactly
transparent. In this sense, we say it is an optimal Schwarz method. In the non-exact setting, it is
not clear to us whether a subdomain iterative method in the spirit of the classical Schwarz method
can be formulated. That is, can we explain the preconditioned Richardson iteration with the
L-(diagonal) sweep preconditioner as some subdomain iteration, like the relation between the ORAS
preconditioner and the parallel Schwarz method \cite{St-Cyr07}?

The above idea of transmission in PML truncated patches was first proposed by \cn{leng2019additive}
for a \emph{parallel} Schwarz preconditioner which can be described as follows. Let the checkerboard
domain decomposition be non-overlapping. Let $\hat{u}_{ij}^{i'j'}$ be a solution on
$\hat{\Omega}_{ij}$ that approximates on $\Omega_{ij}$ the solution of the original problem with the
source $f_{i'j'}$ that is a partition of unity of the original source onto
$\bar{\Omega}_{i'j'}$. The preconditioner constructs on $\Omega_{ij}$ step by step the solutions for
farer and farer sources $f_{i'j'}$. The computation of $\hat{u}_{ij}^{i'j'}$ is totally parallel
between different $(i,j)$'s. In each of the following steps, the computation is for all $(i,j)$. It
is sufficient to describe the first few steps for understanding.
\begin{itemize}
\item[0$^\circ$] Compute $\hat{u}_{ij}^{ij}$.
\item[1$^\circ$] Compute $\hat{u}_{ij}^{i'j'}$ using the transmission data from
  $\hat{u}_{i'j'}^{i'j'}$ for $(i',j')\in\{(i\pm1,j), (i,j\pm 1)\}$.
\item[2$^{\circ}$] Compute $\hat{u}_{ij}^{i\pm2,j}$ using the transmission data from
  $\hat{u}_{i\pm1,j}^{i\pm2,j}$. Also, compute $\hat{u}_{ij}^{i,j\pm2}$ using
  $\hat{u}_{i,j\pm1}^{i,j\pm2}$. Moreover, compute $\hat{u}_{ij}^{i+1,j+1}$ using a glued function
  from $\hat{u}_{i+1,j+1}^{i+1,j+1}$, $\hat{u}_{i+1,j}^{i+1,j+1}$ and
  $\hat{u}_{i,j+1}^{i+1,j+1}$. Similarly, compute $\hat{u}_{ij}^{i-1,j+1}$, $\hat{u}_{ij}^{i-1,j-1}$
  and $\hat{u}_{ij}^{i+1,j-1}$.
\item[3$^{\circ}$] Compute $\hat{u}_{ij}^{i'j'}$ for $(i',j'): |i'-i|+|j'-j|=3$. If $i'<i, j'=j$,
  using the transmission data from $\hat{u}_{i-1,j}^{i'j}$. If $i'>i$, $j'=j$, using
  $\hat{u}_{i+1,j}^{i'j}$. Similarly, using $\hat{u}_{i,j\pm 1}^{ij'}$ if $i'=i$. If $i'>i, j'>j$,
  using a glued function from $\hat{u}_{i+1,j+1}^{i'j'}$, $\hat{u}_{i+1,j}^{i'j'}$ and
  $\hat{u}_{i,j+1}^{i'j'}$. Similarly, using appropriate solutions from the preceding two steps in
  the other cases of $(i', j')$ compared to $(i, j)$.
\end{itemize}
Since each step only looks back for the subdomain solutions obtained in the preceding two steps and
our goal is to get the sum $u_{ij}:=\sum_{(i',j')}\hat{u}_{ij}^{i'j'}$ on $\Omega_{ij}$, in each
step we can add the solutions obtained into $u_{ij}$ and discard any solutions that are no longer
needed. Moreover, in step 3 since the transmission data for $\hat{u}_{ij}^{i+1,j+2}$ and
  $\hat{u}_{ij}^{i+2,j+1}$ are taken from the same neighbors in the same patch
  $\hat{\Xi}_{ij}^{tr}$, we can first add the two transmission data and then compute directly the
  sum $\hat{u}_{ij}^{i+1,j+2}+\hat{u}_{ij}^{i+2,j+1}$ rather than compute $\hat{u}_{ij}^{i+1,j+2}$
  and $\hat{u}_{ij}^{i+2,j+1}$ individually.


Another approach to an optimal Schwarz method for a checkerboard decomposition is based on a
recursive application of an optimal Schwarz method for a sequential decomposition. For example, with
the decomposition in Figure~1.2 (left), we can combine the subdomains in each column to a block
$\Omega_{i}:=\cup_{j}\Omega_{ij}$. For the sequential decomposition
$\bar{\Omega}=\cup_i\bar{\Omega}_i$, an optimal Schwarz method by exact transparent PML transmission
can be applied. Then, the problem on each block augmented with the PML is solved again by an optimal
Schwarz method using the decomposition of the PML augmented $\Omega_i$ induced from the physical
  decomposition $\Omega_{i}=\cup_{j}\Omega_{ij}$. Such recursive Schwarz methods were proposed by
\cn{LiuYingRecur}; \cn{ZepedaNested}, and \cn{du2020pure} who gave also a convergence analysis. A
recursive optimal parallel Schwarz method converges in $N^2$ steps on a checkerboard of $N\times N$
subdomains, with each step costing one subdomain solve in wall-clock time. A recursive optimal
double sweep Schwarz method converges in one sequential double-double sweep costing $4N^2$ subdomain
solves in wall-clock time. For comparison, the new optimal parallel Schwarz method
\cite{leng2019additive} converges in $2N-1$ steps with each step costing one subdomain solve in
wall-clock time. The optimal L-(diagonal) sweep Schwarz method converges in four sweeps, with each
sweep parallel to the others and costing $2N-1$ subdomain solves in wall-clock time if the
subdomains on the same diagonal are treated in parallel. Of course, if only one worker is available
for computing, the recursive double sweep Schwarz method is competitive as a sequential solver.

\section{Conclusions}

While working on this review over the last two years, we discovered
many new results on optimized Schwarz methods based on domain
truncation, and each time realized that there are more further open
questions that it would be very interesting to research.  In
particular, we think the following research questions would be of
great interest for Schwarz methods based on domain truncation and
optimized Schwarz methods:

\begin{itemize}

\item What is the best overlap to be used with the zeroth order Taylor transmission condition for
  the Helmholtz equation? We have seen that for thin subdomains the overlap must be small enough for
  the method to converge, but it then deteriorates when the overlap goes to zero, so there must be
  an optimal overlap size for best performance. How this best choice depends on the Helmholtz
  frequency and decomposition is an open problem. We have however also seen that for large enough
  subdomains and generous overlap, the method can become robust in the wavenumber.

\item What is the asymptotic optimized parameter of the Robin
  condition for the free space wave problem on a bounded domain? Based
  on Fourier analysis, we discovered that with enough absorption in
  the original boundary conditions put on the domain on which the
  problem is posed, the asymptotic dependence of the convergence
  factor on the overlap is comparable to the much simpler screened
  Laplace problem, and the optimized transmission parameters have a
  clear asymptotic behavior, but their dependence on the wave number
  and geometry is not yet known.

\item What are the limiting spectra of double sweep Schwarz methods as
  the number of subdomains goes to infinity? The limiting spectra provide
  a very interesting new and accurate technique to study the
  convergence of Schwarz methods when the number of subdomains becomes
  large, and permit to obtain rather sharp estimates in several of
  the situations we have studied here, but not all of them.

\item What are the optimized parameters for many subdomains in a strip
  or checkerboard decomposition? For the strip decomposition case,
  there are first new results for a complex diffusion problem in
  \cn{KyriakisDD26}, \cn{ThesisAlex}, which indicate that the two
  subdomain asymptotic results also hold in the case of many
  subdomains, but for Helmholtz type problems this is still a largely
  open field.

\item Can we rigorously prove the many detailed convergence results
  and parameter dependence of the convergence factors we obtained from
  the many subdomain Fourier analysis? To this end one needs to be
  able to estimate accurately the spectral radii or norms of the
  substructured iteration matrices from Fourier analysis we studied
  here numerically.

\item When does the parallel or double sweep Schwarz method with PML
  converge for layered media? We have seen that PML can not capture
  the behavior of solutions in layered media, and often even the
  excellent damping properties do not suffice for the Schwarz methods
  based on domain truncation to work. Our concrete iteration matrices
  based on Fourier analysis provide however an excellent tool to get
  more fundamental theoretical insight into this.

\item Is there a better transmission condition for layered media? A
  first fundamental contribution into this direction is the recent PhD
  thesis \cn{PreussThesis} on learned infinite elements, which construct
  transmission conditions based on approximating the symbol of the Dirichlet
  to Neumann operator for the layered outer medium. Strong
  assumptions on the separability are however currently needed for
  this approach to be successful, and further research should be very
  fruitful into this direction.

\item Finally, the research on more general decompositions including
  cross points is today largely open: is it possible to get a precise
  convergence factor in a checkerboard decomposition, and to optimize
  transmission conditions in this case, or estimate the PML depth
  needed for good performance? And what would be good coarse space
  components for these problems and methods?

\end{itemize}

We hope that our present snapshot of the state of the art of Schwarz
methods based on domain truncation and optimized Schwarz methods, and
the above list of challenging open research questions will lead
to further progress in this fascinating and challenging field of
powerful domain decomposition methods, which can not be studied using
classical abstract Schwarz framework techniques.

\section{Appendix A}\label{AppendixA}

The following simple Matlab code allows the user to experiment with
nilpotent Schwarz methods based on block LU decompositions. It is
currently not known in the presence of cross points what kind of
transmission operators this approach generates, but it works for
arbitrary discretized partial differential equations.
{\small
\verbatiminput{example9dom.m}
}
\section{Appendix B}\label{AppendixB}

The following Maple commands can be used to compute many of the
formulas in the two subdomain analysis for the screened Laplace
problem, and also easily be modified to compute the corresponding
Helmholtz results.  There are many useful tricks in these Maple
commands, as indicated by the comments on the right. Note also that we
absorbed the term in the denominator of the solutions for {\tt E1} and
{\tt E2} obtained by Maple in the constants $A_1$ and $A_2$ in our
expressions used in \eqref{ScreenedLaplaceSols} to simplify the
expressions, without affecting the resulting convergence factor.
{\small
\verbatiminput{mapleTaylorEta.txt}
}

\bibliographystyle{actaagsm}
\bibliography{references}
\label{lastpage}

\end{document}